\def\chaptermark#1{}
\newcounter{appendix}
\def\chapter{%
  \if@openright\cleardoublepage\else\clearpage\fi
  \thispagestyle{plain}\global\@topnum\z@
  \@afterindenttrue \secdef\@chapter\@schapter}
\def\@chapter[#1]#2{\ifx\chaptername\appendixname\refstepcounter{appendix}
\else \refstepcounter{chapter}\fi
  \ifnum\c@secnumdepth<\z@ \let\@secnumber\@empty
  \else \let\@secnumber\thechapter \fi
  \typeout{\chaptername\space\@secnumber}%
  \def\@toclevel{0}%
  \ifx\chaptername\appendixname \@tocwriteb\tocappendix{chapter}{#2}%
  \else \@tocwriteb\tocchapter{chapter}{#2}\fi
  \chaptermark{#1}%
  \addtocontents{lof}{\protect\addvspace{10\p@}}%
  \addtocontents{lot}{\protect\addvspace{10\p@}}%
  \@makechapterhead{#2}\@afterheading}
\def\@schapter#1{\typeout{#1}%
  \let\@secnumber\@empty
  \def\@toclevel{0}%
  \ifx\chaptername\appendixname \@tocwriteb\tocappendix{chapter}{#1}%
  \else \@tocwriteb\tocchapter{chapter}{#1}\fi
  \chaptermark{#1}%
  \addtocontents{lof}{\protect\addvspace{10\p@}}%
  \addtocontents{lot}{\protect\addvspace{10\p@}}%
  \@makeschapterhead{#1}\@afterheading}
\newcommand\chaptername{Chapter}
\def\@makechapterhead#1{\global\topskip 7.5pc\relax
  \begingroup
  \fontsize{\@xivpt}{18}\bfseries\centering
    \ifnum\c@secnumdepth>\m@ne
      \leavevmode \hskip-\leftskip
      \rlap{\vbox to\z@{\vss
          \centerline{\normalsize\mdseries
              \uppercase\@xp{\chaptername}\enspace\thechapter}
          \vskip 3pc}}\hskip\leftskip\fi
     #1\par \endgroup
  \skip@34\p@ \advance\skip@-\normalbaselineskip
  \vskip\skip@ }
\def\@makeschapterhead#1{\global\topskip 7.5pc\relax
  \begingroup
  \fontsize{\@xivpt}{18}\bfseries\centering
  #1\par \endgroup
  \skip@34\p@ \advance\skip@-\normalbaselineskip
  \vskip\skip@ }
\def\appendix{\par
  \c@appendix\z@ \c@section\z@
  \let\chaptername\appendixname
  \def\thechapter{\@Alph\c@appendix}
  }
\newcounter{chapter}
\newif\if@openright
\numberwithin{section} {chapter}
\theoremstyle{plain}
\newtheorem{thm}{Theorem}[section]
\newtheorem{corollary}[thm]{Corollary}
\newtheorem{proposition}[thm]{Proposition}
\newtheorem{lemma}[thm]{Lemma}
\newtheorem{definition}[thm]{Definition}
\theoremstyle{definition}
\newtheorem{remark}[thm]{Remark}
\newtheorem{construction}[thm]{Construction}
\newtheorem{example}[thm]{Example}
\newcommand{\PO}{\mathrm{PoSet}}
\newcommand{\calS}{\mathcal{S}}
\newcommand{\calE}{\mathcal{E}}
\newcommand{\R}{\mathbb{R}}
\newcommand{\N}{\mathbb{N}}
\newcommand{\Z}{\mathbb{Z}}
\newcommand{\E}{\mathbb{E}}
\newcommand{\Cat}{\mathrm{Cat}}
\newcommand{\Hom}{\mathrm{Hom}}
\newcommand{\Fun}{\mathrm{Fun}}
\newcommand{\Top}{\mathrm{Top}}
\newcommand{\bS}{\mathbb{S}}
\newcommand{\calC}{\mathcal{C}}
\newcommand{\calD}{\mathcal{D}}
\newcommand{\Sp}{\mathrm{S}\mathrm{p}}
\newcommand{\id}{\mathrm{id}}
\newcommand{\colim}{\mathrm{colim}}
\DeclareMathOperator*{\hocolim}{hocolim}
\DeclareMathOperator{\invlim}{\underleftarrow{\mathrm{lim}}}
\DeclareMathOperator{\dirlim}{\underrightarrow{\mathrm{lim}}}
\newcommand{\Alg}{\mathrm{Alg}}
\newcommand{\CoAlg}{\mathrm{CoAlg}}
\newcommand{\Fix}{\mathrm{Fix}}
\newcommand{\xto}{\xrightarrow}
\newcommand{\Cyc}{\mathrm{Cyc}\Sp^{\mathrm{gen}}}
\newcommand{\CycO}{\mathrm{Cyc}\Sp^O}
\newcommand{\Cycn}{\mathrm{Cyc}\Sp}
\newcommand{\Map}{\mathrm{Map}}
\newcommand{\Fp}{\mathbb{F}_p}
\newcommand{\F}{{\mathcal{F}}}
\newcommand{\map}{\mathrm{map}}
\newcommand{\THH}{\mathrm{THH}}
\newcommand{\TC}{\mathrm{TC}}
\newcommand{\THC}{\mathrm{TC}}
\newcommand{\TR}{\mathrm{TR}}
\newcommand{\Aut}{\mathrm{Aut}}
\newcommand{\can}{\mathrm{can}}
\newcommand{\KU}{\mathrm{KU}}
\newcommand{\Q}{\mathbb{Q}}
\newcommand{\C}{\mathbb{C}}
\newcommand{\bF}{\mathbb{F}}
\newcommand{\HH}{\mathrm{HH}}
\newcommand{\HC}{\mathrm{HC}}
\newcommand{\op}{\mathrm{op}}
\newcommand{\Tor}{\mathrm{Tor}}
\newcommand{\lax}{\mathrm{lax}}
\newcommand{\calO}{\mathcal{O}}
\newcommand{\act}{\mathrm{act}}
\newcommand{\Free}{\mathrm{Free}}
\newcommand{\Ex}{\mathrm{Ex}}
\newcommand{\Lex}{\mathrm{Lex}}
\newcommand{\ev}{\mathrm{ev}}
\newcommand{\pt}{\mathrm{pt}}
\newcommand{\B}{\mathrm{B}}
\newcommand{\bP}{\mathbb{P}}
\newcommand{\fib}{\mathrm{fib}}
\newcommand{\tr}{\mathrm{tr}}
\newcommand{\T}{\mathbb{T}}
\newcommand{\Tp}{{C_{p^\infty}}}
\newcommand{\gen}{\mathrm{gen}}
\newcommand{\Fin}{\mathrm{Fin}}
\newcommand{\cofib}{\mathrm{cofib}}
\newcommand{\Ass}{\mathrm{Ass}}
\newcommand{\Nm}{\mathrm{Nm}}
\newcommand{\Cut}{\mathrm{Cut}}
\newcommand{\Mod}{\mathrm{Mod}}
\newcommand{\Ind}{\mathrm{Ind}}
\newcommand{\ind}{\mathrm{ind}}
\newcommand{\calM}{\mathcal{M}}
\newcommand{\calN}{\mathcal{N}}
\newcommand{\sd}{\mathrm{sd}}
\newcommand{\GL}{\mathrm{GL}}
\newcommand{\cyc}{\mathrm{cyc}}
\newcommand{\nat}{\rho}
\newcommand{\Sk}{\mathrm{Sk}}
\newenvironment{altenumerate}
   {\begin{list}
      {\textup{(\theenumi)} }
      {\usecounter{enumi}
       \setlength{\labelwidth}{0pt}
       \setlength{\labelsep}{2pt}
       \setlength{\leftmargin}{0pt}
       \setlength{\itemsep}{\the\smallskipamount}
       \renewcommand{\theenumi}{\roman{enumi}}
      }}
   {\end{list}}
\begin{document}

\title{On Topological Cyclic Homology}
\author{Thomas Nikolaus and Peter Scholze}
\begin{abstract}
Topological cyclic homology is a refinement of Connes-Tsygan's cyclic homology which was introduced by B\"okstedt--Hsiang--Madsen in 1993 as an approximation to algebraic $K$-theory. There is a trace map from algebraic $K$-theory to topological cyclic homology, and a theorem of Dundas--Goodwillie--McCarthy asserts that this induces an equivalence of relative theories for nilpotent immersions, which gives a way for computing $K$-theory in various situations. The construction of topological cyclic homology is based on genuine equivariant homotopy theory, the use of explicit point-set models, and the elaborate notion of a cyclotomic spectrum.

The goal of this paper is to revisit this theory using only homotopy-invariant notions. In particular, we give a new construction of topological cyclic homology. This is based on a new definition of the $\infty$-category of cyclotomic spectra: We define a cyclotomic spectrum to be a spectrum $X$ with $S^1$-action (in the most naive sense) together with $S^1$-equivariant maps $\varphi_p: X\to X^{tC_p}$ for all primes $p$. Here $X^{tC_p}=\cofib(\Nm: X_{hC_p}\to X^{hC_p})$ is the Tate construction. On bounded below spectra, we prove that this agrees with previous definitions. As a consequence, we obtain a new and simple formula for topological cyclic homology. 

In order to construct the maps $\varphi_p: X\to X^{tC_p}$ in the example of topological Hochschild homology we introduce and study Tate diagonals for spectra and Frobenius homomorphisms of commutative ring spectra. In particular we prove a version of the Segal conjecture for the Tate diagonals and relate these Frobenius homomorphisms to power operations.
\end{abstract}
\date{\today}
\maketitle

\tableofcontents

\section*{Introduction}
\renewcommand{\thesection}{1}

This paper grew out of an attempt to understand how much information is stored in topological cyclic homology, or more precisely in the cyclotomic spectrum calculating topological cyclic homology.

Let $A$ be an associative and unital ring. The $K$-theory spectrum $K(A)$ of $A$ can be defined as the group completion of the $\E_\infty$-monoid of finite projective $A$-modules. This is an important invariant of $A$ that is very hard to compute in practice. For this reason, various approximations to $K(A)$ have been studied, notably Connes-Tsygan's cyclic homology $\HC(A)$, and its variant, negative cyclic homology $\HC^-(A)$. These are obtained from the Hochschild homology $\HH(A)$, which (if $A$ is flat over $\mathbb Z$) is obtained as the geometric realization of the simplicial object
\[\xymatrix{
 \cdots \ar[r]<4.5pt>\ar[r]<1.5pt>\ar[r]<-4.5pt>\ar[r]<-1.5pt> & A \otimes_{\mathbb Z} A \otimes_{\mathbb Z} A  \ar[r]<3pt>\ar[r]\ar[r]<-3pt>  & A\otimes_{\mathbb Z} A \ar[r]<1.5pt>\ar[r]<-1.5pt> & A\ .
}\]
In fact, this is a cyclic object in the sense of Connes, which essentially means that there is a $\mathbb Z/(n+1)$-action on the $n$-th term, which commutes in a suitable way with the structure maps; here, this action is given by the obvious permutation of tensor factors. On the geometric realization of a cyclic object, there is a canonical continuous action of the circle group $\mathbb T = S^1$, so $\mathbb T$ acts on the topological space $\HH(A)$. One can also regard $\HH(A)$, via the Dold--Kan correspondence, as an object of the $\infty$-derived category $\mathcal D(\mathbb Z)$. One can then define cyclic homology as the homotopy orbits
\[
\HC(A) = \HH(A)_{h\mathbb T}
\]
taken in the $\infty$-derived category $\mathcal D(\mathbb Z)$, and similarly negative cyclic homology as the homotopy fixed points
\[
\HC^-(A) = \HH(A)^{h\mathbb T}
\]
of the circle action.\footnote{For a comparison with classical chain complex level definitions, cf.~e.g.~\cite{Hoyoiscyclic}.} A calculation of Connes shows that if $A$ is a smooth (commutative) $\mathbb Q$-algebra, then $\HC^-(A)$ is essentially given by de Rham cohomology of $A$. In this way, $\HC^-(A)$ can be regarded as a generalization of de Rham cohomology to noncommutative rings.

The following important theorem is due to Goodwillie. To state it we consider the object $\HC^-(A) \in \calD(\Z)$ as a generalized Eilenberg-MacLane spectrum. 

\begin{thm}[Goodwillie, \cite{Goodwillie}]\label{thm:goodwillie} There is a trace map
\[
\tr: K(A)\to \HC^-(A)\ ,
\]
which is functorial in $A$. If $A\to \overline{A}$ is a surjection of associative and unital 
algebras with nilpotent kernel, then the diagram
\[\xymatrix{
K(A)_\Q\ar[r]\ar[d] & \HC^-(A \otimes \Q)\ar[d]\\
K(\overline{A})_\Q\ar[r] & \HC^-(\overline{A}\otimes\Q)
}\]
is homotopy cartesian. Here $-_\Q$ denotes rationalization of a spectrum.
\end{thm}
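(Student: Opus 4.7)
The plan is to first construct the trace map, then reduce the cartesian square to the case of a square-zero extension, and finally handle that case via a calculus-of-functors argument.

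To construct the trace $\tr : K(A) \to \HC^-(A)$, one uses the cyclic bar construction. A finitely generated projective $A$-module $M$ has an identity endomorphism, representing a canonical class in $\HH_0(\mathrm{End}_A(M))$; combined with the $\T$-action on the cyclic bar construction and Morita invariance (replacing $\mathrm{End}_A(M)$ by $A$), this produces $\T$-equivariant maps $\bS \to \HH(A)$ functorial in $M$. After group completion these assemble into a map $\tr : K(A) \to \HH(A)^{h\T} = \HC^-(A)$, the Dennis--Waldhausen trace, manifestly functorial in $A$.

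For the cartesian square, set $I = \ker(A \to \overline A)$ and define relative theories $K(A, I) := \fib(K(A) \to K(\overline A))$ and $\HC^-(A, I) := \fib(\HC^-(A) \to \HC^-(\overline A))$. The square is homotopy cartesian precisely when $\tr : K(A, I)_\Q \to \HC^-(A \otimes \Q, I \otimes \Q)$ is an equivalence. Using the filtration $I \supset I^2 \supset \cdots \supset I^n = 0$, each successive quotient is a square-zero ideal, so by induction and the long exact sequences of fibers it suffices to treat the trivial square-zero extension $A = \overline A \oplus M$ by an $\overline A$-bimodule $M$.

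In the square-zero case, both $M \mapsto K(\overline A \oplus M, M)_\Q$ and $M \mapsto \HC^-(\overline A \oplus M, M)_\Q$ are analytic functors of $M$ vanishing at $M = 0$, and the trace gives a natural transformation between them. It then suffices to show that the induced map on first Goodwillie derivatives is an equivalence; analyticity and compatibility with filtered colimits then propagate this to all nilpotent $M$. The derivative of $K_\Q$ on square-zero extensions is computed via the Loday--Quillen--Tsygan theorem, identifying the rational $K$-theory of matrix algebras with primitives in cyclic homology, while the derivative of $\HC^-$ is computed directly from the cyclic bar construction of $M$ over $\overline A$, and these match.

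The main obstacle is this last step: establishing that the Dennis trace actually implements the Loday--Quillen--Tsygan identification on the linearized functors, and justifying the convergence argument that extends the equivalence from free (or matrix) bimodules to general nilpotent ideals. Goodwillie's original approach performs an explicit computation on free tensor algebras and then uses density; a modern alternative is to factor the trace through $\TC$ and invoke the Dundas--Goodwillie--McCarthy theorem integrally, using that $\TC_\Q \simeq \HC^-_\Q$ on $\Q$-algebras, at the cost of much heavier machinery.
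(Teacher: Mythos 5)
The paper does not prove this statement: it is quoted in the introduction with attribution to Goodwillie, and no argument for it appears anywhere in the text. So the only thing to measure your proposal against is Goodwillie's original proof, of which it is a broadly correct outline — but one with genuine gaps at precisely the decisive steps, only some of which you acknowledge.

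First, the reduction is incomplete. The filtration $I\supset I^2\supset\cdots$ reduces you to square-zero extensions, but a square-zero extension need not be split, so you cannot immediately pass to ``the trivial square-zero extension $A=\overline{A}\oplus M$.'' The standard fix is to resolve a general square-zero extension degreewise by split ones, which forces you into simplicial rings; you then need the nontrivial theorems that relative $K$-theory and relative $\HC^-$ both commute with geometric realization and both satisfy connectivity estimates (a $k$-connected map of simplicial rings induces a $(k+1)$-connected map of relative theories). Simplicial rings never appear in your sketch, yet the entire convergence argument — the sentence ``analyticity and compatibility with filtered colimits then propagate this'' — lives there and has no content without these estimates.

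Second, the identification of the linearizations is the whole theorem, not a final verification. The derivative of $M\mapsto K(\overline{A}\oplus M)_\Q$ is rational stable $K$-theory, which Goodwillie computes by comparing the homology of relative Volodin spaces (equivalently, of congruence subgroups of $\GL$) with the Lie algebra homology of their Malcev completions; only then does Loday--Quillen--Tsygan enter, converting Lie algebra homology of $\mathfrak{gl}$ into cyclic homology. Checking that the trace map you constructed — rather than some abstract equivalence — realizes this identification is a further delicate point. Your fallback of factoring through $\TC$ and invoking Dundas--Goodwillie--McCarthy is logically admissible but not a proof one can wave at: the comparison of relative $\TC_\Q$ with relative $\HC^-_\Q$ itself requires an argument, since the fiber of $\TC\to\THH^{h\T}$ involves a profinite completion and one must check it contributes nothing to the relative rational theory of a nilpotent extension. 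In short, the skeleton is right, but every load-bearing bone is missing.
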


The trace map $K(A)\to \HC^-(A)$ is often referred to as the Jones--Goodwillie trace map; the composite $K(A)\to \HH(A)$ with the projection $\HC^-(A) = \HH(A)^{h\T}\to \HH(A)$ was known before, and is called the Dennis trace map.

An important problem was to find a generalization of this theorem to the non rational case. This was eventually solved through the development of topological cyclic homology.

As it is more natural, we start from now on directly in the more general setting of an associative and unital ring spectrum $A$, i.e.~an $\E_1$-algebra in the $\infty$-category of spectra $\Sp$ in the language of \cite{HA}. An example is given by Eilenberg-MacLane spectra associated with usual associative and unital rings, but for the next step it is important to switch into the category of spectra.

Namely, one looks at the topological Hochschild homology $\THH(A)$ of $A$, which is given by the geometric realization of the simplicial object
\[\xymatrix{
 \cdots \ar[r]<4.5pt>\ar[r]<1.5pt>\ar[r]<-4.5pt>\ar[r]<-1.5pt> & A \otimes_{\mathbb S} A \otimes_{\mathbb S} A  \ar[r]<3pt>\ar[r]\ar[r]<-3pt>  & A\otimes_{\mathbb S} A \ar[r]<1.5pt>\ar[r]<-1.5pt> & A\ .
}\]
Here, we write $\otimes_{\mathbb S}$ for the symmetric monoidal tensor product of spectra, which is sometimes also called the smash product; the base $\mathbb S\in \Sp$ denotes the sphere spectrum. Again, this simplicial object is actually a cyclic object, and so there is a canonical $\mathbb T$-action on $\THH(A)$.

We remark that we have phrased the previous discussion in the $\infty$-category of spectra, so we get $\THH(A)$ as a $\mathbb T$-equivariant object in the $\infty$-category $\Sp$, i.e.~as an object of the functor $\infty$-category $\Fun(B\mathbb T,\Sp)$, where $B\mathbb T\simeq \mathbb C P^\infty$ is the topological classifying space of $\T$.

One could then define ``topological negative cyclic homology'' as
\[
\THC^-(A) = \THH(A)^{h\mathbb T}\ ;
\]
we warn the reader that this is not a standard definition, but a close variant has recently been investigated by Hesselholt, \cite{2016arXiv160201980H}. There is a still a trace map $\tr: K(A)\to \THC^-(A)$, but the analogue of Theorem~\ref{thm:goodwillie} does not hold true. However, $\THC^-(A)$ is interesting: If $A$ is a smooth $\mathbb F_p$-algebra, then $\THC^-(A)$ is essentially given by de Rham--Witt cohomology of $A$ over $\mathbb F_p$. This fact is related to computations of Hesselholt, \cite{HesselholtdRW}, but in a more precise form it is \cite[Theorem 1.10]{2018arXiv180203261B}.

It was an insight of B\"okstedt--Hsiang--Madsen, \cite{BHM}, how to correct this. Their definition of topological cyclic homology $\TC(A)$,\footnote{We warn the novice in the theory that topological cyclic homology does not relate to topological Hochschild homology in the same way that cyclic homology relates to Hochschild homology; rather, topological cyclic homology is an improved version of ``topological negative cyclic homology'' taking into account extra structure.} however, requires us to lift the previous discussion to a $1$-categorical level first. Namely, we use the symmetric monoidal $1$-category of orthogonal spectra $\Sp^O$, cf.~Definition~\ref{def:orthspectrum} below. We denote the symmetric monoidal tensor product in this category by $\wedge$ and refer to it as the smash product, as in this $1$-categorical model it is closely related to the smash product of pointed spaces. It is known that any $\E_1$-algebra in $\Sp$ can be lifted to an associative and unital ring in $\Sp^O$, and we fix such a lift $\widetilde{A}$. In this case, we can form the cyclic object
\[\xymatrix{
 \cdots \ar[r]<4.5pt>\ar[r]<1.5pt>\ar[r]<-4.5pt>\ar[r]<-1.5pt> & \widetilde{A} \wedge \widetilde{A} \wedge \widetilde{A}  \ar[r]<3pt>\ar[r]\ar[r]<-3pt>  & \widetilde{A} \wedge \widetilde{A} \ar[r]<1.5pt>\ar[r]<-1.5pt> & \widetilde{A}\ ,
}\]
whose geometric realization defines a $\mathbb T$-equivariant object $\THH(\widetilde{A})$ in the $1$-category $\Sp^O$.\footnote{We need to assume here that $A$ is cofibrant, otherwise the smash products need to be derived.} The crucial observation now is that this contains a wealth of information.

In fact, B\"okstedt--Hsiang--Madsen use a slightly different cyclic object, called the B\"okstedt construction; i.e., they are using a different $1$-categorical model for the smash product, cf.~Definition~\ref{def:boekstedt} below.\footnote{With this modification, one does not need to assume that $A$ is cofibrant, only a very mild condition on basepoints (see Lemma \ref{lemproper}).} The relation between these constructions is the subject of current investigations, and we refer to \cite{sixauthors} and \cite{TCcomp} for a discussion of this point. In the following, we denote by $\THH(\widetilde{A})$ the realization of the cyclic object defined through the B\"okstedt construction.

Now the surprising statement is that for any $n\geq 1$, the \emph{point-set fixed points}
\[
\THH(\widetilde{A})^{C_n}\in \Sp^O
\]
under the cyclic subgroup $C_n\subseteq \mathbb T$ of order $n$, are well-defined in the sense that a homotopy equivalence $\widetilde{A}\to \widetilde{A}^\prime$ induces a homotopy equivalence $\THH(\widetilde{A})^{C_n}\to \THH(\widetilde{A}^\prime)^{C_n}$.\footnote{For this statement, it is necessary to use the B\"okstedt construction; a priori, there is no reason that the $C_n$-fixed points have any homotopy invariant meaning, and different constructions can lead to different $C_n$-fixed points. Additionally one has to impose further point-set conditions on $A$, cf.~\cite[Definition 4.7 and Theorem 8.1]{MR3558224}, or derive the fixed points functor.} If $n=p$ is prime, this follows from the existence of a natural cofiber sequence
\[
\THH(\widetilde{A})_{hC_p}\to \THH(\widetilde{A})^{C_p}\to \Phi^{C_p} \THH(\widetilde{A})\ ,
\]
where $\Phi^{C_p}$ denotes the so-called geometric fixed points, and a natural $\mathbb T$-equivariant equivalence
\[
\Phi_p: \Phi^{C_p} \THH(\widetilde{A})\buildrel\simeq\over\longrightarrow \THH(\widetilde{A})\ ,
\]
which is a special property of $\THH(\widetilde{A})$. This leads to the statement that $\THH(\widetilde{A})$ is an orthogonal cyclotomic spectrum, which is a $\mathbb T$-equivariant object $X$ of $\Sp^O$ together with commuting $\mathbb T$-equivariant equivalences
\[
\Phi_p: \Phi^{C_p} X\buildrel\simeq\over\longrightarrow X\ ,
\]
cf.~Definition~\ref{def:orthogonalcyclo}. Here, on the left-hand side, there is a canonical action of $\mathbb T/C_p$, which we identify with $\mathbb T$ via the $p$-th power map. The above construction gives a functor
\[
\widetilde{A}\mapsto (\THH(\widetilde{A}),(\Phi_p)_{p\in \bP}): \Alg(\Sp^O)\to \CycO
\]
from associative and unital rings in $\Sp^O$ to the category of orthogonal cyclotomic spectra $\CycO$. Here and in the following, $\bP$ denotes the set of primes. Defining a suitable notion of weak equivalences of orthogonal cyclotomic spectra, this functor factors over the $\infty$-category of $\E_1$-algebras in $\Sp$, and we denote this functor by
\[
A\mapsto (\THH(A),(\Phi_p)_p): \Alg_{\E_1}(\Sp)\to \Cyc\ ,
\]
where $\Cyc$ denotes the $\infty$-category obtained from $\CycO$ by inverting weak equivalences; we refer to objects of $\Cyc$ as genuine cyclotomic spectra.

One possible definition for the topological cyclic homology $\TC(A)$ is now as the mapping space
\[
\TC(A) = \Map_{\Cyc}((\mathbb S,(\Phi_p)_{p\in \bP}),(\THH(A),(\Phi_p)_{p\in \bP}))
\]
in the $\infty$-category $\Cyc$, where $(\mathbb S,(\Phi_p)_{p\in \bP})$ denotes the cyclotomic sphere spectrum (obtained for example as $\THH(\mathbb S)$); in fact, this mapping space refines canonically to a spectrum.\footnote{This mapping spectrum $\TC(A)$ is equivalent to Goodwillie's integral TC, see Remark~\ref{integralTC}.} A more explicit definition directly in terms of $\THH(\widetilde{A})^{C_n}$ for varying $n$ was given by B\"okstedt--Hsiang--Madsen; we refer to Section~\ref{sec:tccomp} for the definition. We remark that there is a natural map
\[
\TC(A)\to \THC^-(A) = \THH(A)^{h\mathbb T}
\]
which comes from a forgetful functor from $\Cyc$ to the $\infty$-category of $\mathbb T$-equi\-va\-riant objects in $\Sp$.

Finally, we can state the analogue of Theorem~\ref{thm:goodwillie}.

\begin{thm}[Dundas--Goodwillie--McCarthy, \cite{Local}]\label{thm:DGMcC} There is a trace map
\[
\tr: K(A)\to \TC(A)\ ,
\]
called the cyclotomic trace map, functorial in $A$. If $A\to \overline{A}$ is a map of connective associative and unital algebras in $\Sp$ such that $\pi_0 A\to \pi_0 \overline{A}$ has nilpotent kernel, then
\[\xymatrix{
K(A)\ar[r]\ar[d] & \TC(A)\ar[d]\\
K(\overline{A})\ar[r] & \TC(\overline{A})
}\]
is homotopy cartesian.
\end{thm}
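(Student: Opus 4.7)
The plan is to reduce the statement to a computation for trivial square-zero extensions and then invoke the identification of stable $K$-theory with $\THH$. First, one needs to construct the cyclotomic trace $K(A) \to \TC(A)$; by the $\infty$-categorical description of $\Cyc$ developed earlier in the paper, this amounts to producing a natural morphism $\bS\to \THH(A)$ in $\Cyc$. Such a refinement of the classical Dennis trace exists because the cyclic bar construction computing $\THH$ carries its cyclotomic structure canonically, and one checks that the Dennis trace is compatible with the Frobenius maps $\varphi_p$, either directly or by appealing to a universal property of $K$-theory as the initial additive invariant of small stable $\infty$-categories.

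For the cartesian square it suffices to show that the fiber $F(A) = \fib(K(A)\to \TC(A))$ satisfies $F(A)\simeq F(\bar A)$ under the hypotheses. Using the Postnikov filtration $A\to \cdots\to \tau_{\leq 1} A\to \pi_0 A$, each of whose stages is a square-zero extension by a shifted connective bimodule, together with the filtration of the nilpotent ideal $I=\ker(\pi_0 A\to \pi_0 \bar A)$ by its powers $I\supset I^2\supset\cdots$, one reduces to the case of a square-zero extension $B\to A$ of connective $\E_1$-ring spectra by a connective $A$-bimodule $M$. A further reduction using that such extensions are classified by Andr\'e--Quillen-type cotangent cohomology brings us to the trivial split extension $B = A\oplus M$.

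For the trivial extension, the fiber $\fib(K(A\oplus M)\to K(A))$ is, by definition, the first-order stable $K$-theory of $A$ with coefficients in $M$. The theorem of Dundas--McCarthy identifies this with the relative $\THH(A;M)\simeq \THH(A)\otimes_{A\otimes A^{\op}} M$. On the $\TC$ side, the analogous analysis gives the same answer: when $M$ is square-zero, the iterated tensor powers of $M$ that would contribute to a nontrivial $C_p$-action vanish in the relevant range, so the Frobenius maps $\varphi_p$ on the fiber are nullhomotopic and $\TC$ of the fiber collapses to its underlying $\THH$. Combining with the trace identification then yields the desired cartesian square.

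The main obstacle in this program is the Dundas--McCarthy identification of stable $K$-theory with $\THH$. This is a nontrivial input that ultimately rests on Waldhausen's additivity theorem and a Goodwillie-calculus computation of the first derivative of the functor $M \mapsto K(A\oplus M)$. Equally delicate is verifying that this identification refines to an equivalence of cyclotomic spectra, not merely of underlying spectra; it is precisely this enhancement that allows the trace to land in $\TC$ rather than just $\THC^-$, and it is what distinguishes Theorem~\ref{thm:DGMcC} from the rational Theorem~\ref{thm:goodwillie}.
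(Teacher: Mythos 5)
This statement is not proved in the paper at all: it is the Dundas--Goodwillie--McCarthy theorem, quoted from \cite{Local} as background, so there is no in-paper argument to compare against. Judged on its own, your sketch follows the broad shape of the actual DGM proof (trace construction, reduction to square-zero extensions, input from stable $K$-theory $\simeq \THH$), but it has genuine gaps at the two places where the theorem is actually hard.

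First, the construction of the trace is not "a natural morphism $\bS\to \THH(A)$ in $\Cyc$" -- since $\TC(A)=\map_{\Cyc}(\bS,\THH(A))$, such a morphism is a single point of $\TC(A)$ (the image of $1\in K_0(A)$), not a map out of the spectrum $K(A)$. One genuinely needs either the $S_\bullet$-construction machinery or the universal property of $K$-theory as the initial additive invariant, and then a check that the resulting Dennis trace refines through the cyclotomic structure; you gesture at this but do not supply it. Second, and more seriously, your claim that for a split square-zero extension the Frobenii on the relative term are null so that "relative $\TC$ collapses to $\THH(A;M)$" is false: the relative $\TC$ of $A\oplus M$ over $A$ is a nontrivial cyclotomic object (computed by Hesselholt and by Lindenstrauss--McCarthy in terms of $\TR$ of the cyclic tensor powers of $M$), and it only agrees with relative $K$-theory \emph{after stabilization}, i.e.\ on the level of first Goodwillie derivatives in $M$. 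Relatedly, $\fib(K(A\oplus M)\to K(A))$ is not "by definition" stable $K$-theory; stable $K$-theory is $\colim_n\Omega^n\fib(K(A\oplus M[n])\to K(A))$. The heart of the DGM theorem is the analyticity/convergence argument showing that agreement of these derivatives (plus the nilpotence hypothesis and a limit argument over the $I$-adic and Postnikov filtrations) forces agreement of the full relative theories; that step is absent from your outline and cannot be replaced by the "classification of extensions by cotangent cohomology" reduction you propose.
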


Despite the elegance of this definition, it is hard to unravel exactly how much information is stored in the cyclotomic spectrum $(\THH(A),(\Phi_p)_p)$, and what its homotopy invariant meaning is. Note that for applications, one usually assumes that $A$ is connective, as only then Theorem~\ref{thm:DGMcC} applies. In the connective case, it turns out that one can completely understand all the structure.

\begin{definition} A cyclotomic spectrum is a $\mathbb T$-equivariant object $X$ in the $\infty$-category $\Sp$ together with $\mathbb T/C_p\simeq \mathbb T$-equivariant maps $\varphi_p: X\to X^{tC_p}$ for all primes $p$. Let $\Cycn$ denote the $\infty$-category of cyclotomic spectra.
\end{definition}

Here,
\[
X^{tC_p} = \cofib(X_{hC_p}\xto{\Nm} X^{hC_p})
\]
denotes the Tate construction. Note that if $(X,(\Phi_p)_{p\in \bP})$ is a genuine cyclotomic spectrum, then $X$ endowed with the maps
\[
\varphi_p: X\simeq \Phi^{C_p} X\to X^{tC_p}
\]
is a cyclotomic spectrum, using the natural maps $\Phi^{C_p} X\to X^{tC_p}$ fitting into the diagram
\[\xymatrix{
X_{hC_p}\ar@{=}[d]\ar[r] & X^{C_p}\ar[d]\ar[r] & \Phi^{C_p} X\ar[d]\\
X_{hC_p}\ar[r] & X^{hC_p}\ar[r] & X^{tC_p}\ .
}\]
Note that contrary to the case of orthogonal cyclotomic spectra, we do not ask for any compatibility between the maps $\varphi_p$ for different primes $p$.

Our main theorem is the following.

\begin{thm}\label{thm:intromain} The forgetful functor $\Cyc\to \Cycn$ is an equivalence of $\infty$-categories when restricted to the full subcategories of bounded below spectra.
\end{thm}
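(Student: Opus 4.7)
The plan is to build an explicit inverse to the forgetful functor $\Cyc \to \Cycn$ on bounded below objects. As a first reduction, I would work one prime at a time: the cyclotomic structures at different primes in both $\Cyc$ and $\Cycn$ are imposed independently (beyond the common underlying $\T$-spectrum), so it suffices to show the corresponding $p$-typical statement, namely that on bounded below spectra a $\T$-spectrum with a genuine equivalence $\Phi^{C_p} X \simeq X$ is the same datum as a $\T$-spectrum with a single map $\varphi_p \colon X \to X^{tC_p}$.

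The key technical input is the Tate orbit lemma, asserting that for bounded below $X$ with a $C_{p^2}$-action, $(X_{hC_p})^{tC_p} \simeq 0$. Iterating this yields the Tate fixed point formula
\[ X^{tC_{p^{n+1}}} \simeq \bigl(X^{tC_p}\bigr)^{hC_{p^n}} \]
for bounded below $\T$-spectra, along with a Borel-theoretic description of the canonical map. Given a bounded below naive $p$-typical cyclotomic spectrum $(X,\varphi_p)$, I would recursively define candidate genuine fixed points by the pullback
\[ X^{C_{p^n}} := X^{hC_{p^n}} \times_{(X^{tC_p})^{hC_{p^{n-1}}}} X^{C_{p^{n-1}}}, \]
where the map $X^{C_{p^{n-1}}} \to (X^{tC_p})^{hC_{p^{n-1}}}$ arises by induction from $\varphi_p$. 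The plan is then to verify that this tower, together with its evident restriction and Frobenius maps, defines a genuine cyclotomic spectrum whose image in $\Cycn$ reproduces $(X,\varphi_p)$.

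For the other direction, I would take a bounded below genuine cyclotomic spectrum $Y$ and compare its actual tower $Y^{C_{p^n}}$ with the tower built by the above pullback from the underlying naive data. Using the defining cofiber sequence $Y_{hC_{p^{n-1}}} \to Y^{C_{p^{n-1}}} \to \Phi^{C_{p^{n-1}}} Y \simeq Y$ together with the Tate orbit lemma, one inductively checks that the comparison map is an equivalence. Combined with the construction above, this yields both essential surjectivity and fully faithfulness of the forgetful functor on bounded below objects.

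The main obstacle is $\infty$-categorical coherence. While the pullbacks are clear objectwise, one must exhibit the reconstruction as an $\infty$-functor and identify it as a genuine inverse (or adjoint) of the forgetful functor. I would accomplish this by modeling both categories as lax equalizers: $\Cycn$ as the lax equalizer of $\mathrm{id}$ and $\prod_p (-)^{tC_p}$ acting on $\Sp^{B\T}$, and $\Cyc$ as a matching lax equalizer built from the genuine fixed point functors; the Tate fixed point formula then identifies these two lax equalizers on bounded below inputs. The bounded below hypothesis is essential: without it, the Tate orbit vanishing fails, and there can exist naive cyclotomic spectra with no genuine lift.
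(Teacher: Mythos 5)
Your $p$-typical argument is essentially the paper's proof: the Tate orbit lemma gives the identification $X^{tC_{p^{n}}}\simeq (X^{tC_p})^{hC_{p^{n-1}}}$ for bounded below $X$, the genuine fixed points are reconstructed as the iterated pullback you write down, and the comparison is organized by exhibiting the forgetful functor as a composite $\Fix_{\Phi^{C_p}}\hookrightarrow \CoAlg_{\Phi^{C_p}}\to \CoAlg_{(-)^{tC_p}}$ whose two right adjoints (a limit of iterated pullbacks, and Borel completion) have counits that are equivalences on underlying spectra. One small caution: the cofiber sequence you invoke in the reverse direction should read $Y_{hC_{p^{m}}}\to Y^{C_{p^{m}}}\to (\Phi^{C_p}Y)^{C_{p^{m-1}}}$; the version with $\Phi^{C_{p^{m}}}Y$ as third term is only correct for $m=1$.

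The genuine gap is your opening reduction. It is true by definition that $\Cycn$ imposes the maps $\varphi_p$ independently at each prime, but it is \emph{not} true by definition for $\Cyc=(\T\Sp_\F)^{h\mathbb N_{>0}}$: an object there carries equivalences $\Phi_n\colon X\simeq \Phi^{C_n}X$ for all $n$, together with homotopy-coherent compatibilities identifying $\Phi^{C_p}\Phi^{C_q}X$ with $\Phi^{C_{pq}}X$ and the composite structure maps. Showing that on bounded below objects this coherence data is contractible — so that a genuine cyclotomic spectrum really is just an independent family of $p$-typical structures — is itself part of the content of the theorem and cannot be assumed at the outset. The mechanism that makes it work is that the relevant objects are (after applying the right adjoint) Borel complete, and for a Borel complete $G$-spectrum with $G$ not a $p$-group one has $\Phi^G X\simeq 0$; hence $\Phi^{C_p}\Phi^{C_q}$ vanishes for distinct primes $p\neq q$ and all the cross-prime coherence conditions become conditions in a contractible space. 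Without an argument of this kind, your first sentence begs the question for the integral statement, and you would only have proved the $p$-typical equivalence $\Cyc_p\simeq \Cycn_p$. So you need either to run the whole adjunction argument simultaneously for the commuting family $(\Phi^{C_p})_{p}$, or to first prove the vanishing of mixed geometric fixed points on Borel complete objects and deduce from it that the bounded below part of $\Cyc$ decomposes prime by prime.
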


In particular, we get the following alternative formula for $\TC(A)$.

\begin{corollary}\label{cor:formulatc} For any connective ring spectrum $A\in \Alg_{\E_1}(\Sp)$, there is a natural fiber sequence
\[
\TC(A)\to \THH(A)^{h\mathbb T}\xto{ (\varphi_p^{h\T}-\can)_{p\in \bP}} \prod_{p\in \bP} (\THH(A)^{tC_p})^{h\T}\ ,
\]
where
\[
\can: \THH(A)^{h\T}\simeq (\THH(A)^{hC_p})^{h(\T/C_p)}=(\THH(A)^{hC_p})^{h\T}\to (\THH(A)^{tC_p})^{h\T}
\]
denotes the canonical projection, using the isomorphism $\T/C_p\cong \T$ in the middle identification.
\end{corollary}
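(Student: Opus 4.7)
The plan is to combine Theorem~\ref{thm:intromain} with an explicit presentation of $\Cycn$ as a lax equalizer, and then to unwind the mapping space formula.

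Since $A$ is connective, $\THH(A)$ is connective and in particular bounded below, so Theorem~\ref{thm:intromain} gives
\[
\TC(A) = \Map_{\Cyc}\bigl(\bS, \THH(A)\bigr) \simeq \Map_{\Cycn}\bigl(\bS, \THH(A)\bigr),
\]
where the cyclotomic sphere $\bS \in \Cycn$ carries the Tate-diagonal structure maps $\Delta_p: \bS \to \bS^{tC_p}$ inherited from $\bS \simeq \THH(\bS)$.

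Next, I would observe that $\Cycn$ is, by definition, the lax equalizer of the two endofunctors $\id$ and $\prod_{p \in \bP} (-)^{tC_p}$ on $\Fun(B\T, \Sp)$, where in the second functor the target $\T$-action uses the identification $\T/C_p \simeq \T$ via the $p$th-power map. The general formula for mapping spaces in a lax equalizer of stable $\infty$-categories yields a natural fiber sequence
\[
\Map_{\Cycn}\bigl((X, \varphi), (Y, \psi)\bigr) \to \Map_{\Fun(B\T,\Sp)}(X, Y) \xrightarrow{d} \prod_{p} \Map_{\Fun(B\T,\Sp)}\bigl(X, Y^{tC_p}\bigr),
\]
with $d(f) = \bigl(\psi_p \circ f - f^{tC_p} \circ \varphi_p\bigr)_{p}$. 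Specialising to $X = \bS$ and $Y = \THH(A)$, and using the adjunction $\Map_{\Fun(B\T,\Sp)}(\bS, Z) \simeq Z^{h\T}$, the term $f \mapsto \psi_p \circ f$ becomes $\varphi_p^{h\T}$, while the term $f \mapsto f^{tC_p} \circ \Delta_p$ becomes a natural map $\THH(A)^{h\T} \to (\THH(A)^{tC_p})^{h\T}$.

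The main obstacle is to identify this second map with the canonical projection $\can$ of the statement. Both maps define natural transformations $(-)^{h\T} \Rightarrow ((-)^{tC_p})^{h\T}$ of functors on $\Fun(B\T,\Sp)$, and I would check they coincide by unwinding the construction of the Tate diagonal on the sphere: $\Delta_p^\bS$ is engineered so that pre-composition with it, followed by Tate, produces exactly the canonical factorization
\[
Z^{h\T} = (Z^{hC_p})^{h\T/C_p} \to (Z^{tC_p})^{h\T/C_p} = (Z^{tC_p})^{h\T}
\]
coming from $Z^{hC_p} \to Z^{tC_p}$. Once this identification is in place at the level of the universal example $Z = \bS$, naturality in $Z$ and the reduction of general $Z \in \Fun(B\T, \Sp)$ to this case completes the proof; the difference map then reads $(\varphi_p^{h\T} - \can)_{p \in \bP}$ as claimed.
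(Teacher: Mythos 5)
Your argument is correct and is essentially the paper's own route: Theorem~\ref{thm:intromain} reduces $\TC(A)$ to the mapping spectrum $\map_{\Cycn}(\bS,\THH(A))$, and the fiber sequence is then the lax-equalizer mapping-space formula (Proposition~\ref{prop:laxeq}~(ii), packaged as Proposition~\ref{prop:formula}). The identification of $f\mapsto f^{tC_p}\circ\Delta_p$ with $\can$ is exactly the corepresentability argument you sketch: $Z^{h\T}=\Map_{\Sp^{B\T}}(\bS,Z)$, so both transformations are determined by their value on $\id_{\bS}$, which in each case is the cyclotomic sphere's structure map $\bS\to\bS^{hC_p}\to\bS^{tC_p}$.
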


\begin{remark}\label{rem:equalizer} One could rewrite the above fiber sequence also as an equalizer
\[
\TC(A) = \mathrm{Eq}\left(\xymatrix{\THH(A)^{h\mathbb T}\ar[rr]<1.5pt>^-{(\varphi_p^{h\T})_{p\in \bP}}\ar[rr]<-1.5pt>_-{\can} &&\prod_{p\in \bP} (\THH(A)^{tC_p})^{h\T}}\right)\ ,
\]
which may be more appropriate for comparison with point-set level descriptions of spectra, where it is usually impossible to form differences of maps of spectra. As this is not a concern in the $\infty$-category of spectra, we will stick with the more compact notation of a fiber sequence.
\end{remark}

In fact, cf.~Lemma~\ref{lemtate2}, the term $(\THH(A)^{tC_p})^{h\T}$ can be identified with the $p$-completion of $\THH(A)^{t\T}$, and thus $\prod_p (\THH(A)^{tC_p})^{h\T}$ can be identified with the profinite completion of $\THH(A)^{t\T}$. Thus there is a functorial fiber sequence
\[
\TC(A) \to \THH(A)^{h\T} \to \left(\THH(A)^{t\T}\right)^\wedge \ .
\]

Intuitively, Theorem~\ref{thm:intromain} says that the only extra structure present on $\THH(A)$ besides its $\mathbb T$-action is a Frobenius for every prime $p$. If $p$ is invertible on $A$, this is actually no datum, as then $\THH(A)^{tC_p} = 0$. For example, if $A$ is a smooth $\mathbb F_p$-algebra, we see that the only extra structure on $\THH(A)$ besides the $\T$-action is a Frobenius $\varphi_p$. Recall that $\THH(A)$ with its $\T$-action gave rise to $\THC^-(A)$, which was essentially the de Rham--Witt cohomology of $A$. Then the extra structure amounts to the Frobenius on de Rham--Witt cohomology. Under these interpretations, the formula for $\TC(A)$ above closely resembles the definition of syntomic cohomology, cf.~\cite{FontaineMessing}. We note that the analogy between $\TC$ and syntomic cohomology has been known, and pursued for example by Kaledin, \cite{Kaledin}, \cite{Kaledin2}, see also at the end of the introduction of \cite{MR1072979} for an early suggestion of such a relation. As explained to us by Kaledin, our main theorem is closely related to his results relating cyclotomic complexes and filtered Dieudonn\'e modules.

By Theorem~\ref{thm:intromain}, the information stored in the genuine cyclotomic spectrum $(\THH(A),(\Phi_p)_p)$ can be characterized explicitly. In order for this to be useful, however, we need to give a direct construction of this information. In other words, for $A\in \Alg_{\E_1}(\Sp)$, we have to define directly a $\T/C_p\cong \T$-equivariant Frobenius map
\[
\varphi_p: \THH(A)\to \THH(A)^{tC_p}\ .
\]
We will give two discussions of this, first for associative algebras, and then indicate a much more direct construction for $\E_\infty$-algebras.

Let us discuss the associative case for simplicity for $p=2$. Note that by definition, the source $\THH(A)$ is the realization of the cyclic spectrum
\[\xymatrix{
 \cdots \ar[r]<3pt>\ar[r]\ar[r]<-3pt>  & A\otimes_{\mathbb S} A\ar@(ul,ur)^{C_2} \ar[r]<1.5pt>\ar[r]<-1.5pt> & A\ .
}\]
By simplicial subdivision, the target $\THH(A)^{tC_2}$ is given by $-^{tC_2}$ applied to the geometric realization of the $\Lambda_2^\op$-spectrum starting with
\[\xymatrix{
 \cdots \ar[r]<7.5pt>\ar[r]<-7.5pt>\ar[r]<4.5pt>\ar[r]<1.5pt>\ar[r]<-4.5pt>\ar[r]<-1.5pt>& A\otimes_{\mathbb S} A\otimes_{\mathbb S} A \otimes_{\mathbb S} A\ar@(ul,ur)^{C_4}  \ar[r]<4.5pt>\ar[r]<1.5pt>\ar[r]<-4.5pt>\ar[r]<-1.5pt> & A\otimes_{\mathbb S} A\ar@(ul,ur)^{C_2}\ ;
}
\]
here, $\Lambda_2^\op$ denotes a certain category lying over the cyclic category $\Lambda^\op$. We will in fact construct a map from the $\Lambda^\op$-spectrum
\[\xymatrix{
 \cdots \ar[r]<3pt>\ar[r]\ar[r]<-3pt>  & A\otimes_{\mathbb S} A\ar@(ul,ur)^{C_2} \ar[r]<1.5pt>\ar[r]<-1.5pt> & A
}\]
towards the $\Lambda^\op$-spectrum starting with
\[\xymatrix{
 \cdots \ar[r]<3pt>\ar[r]\ar[r]<-3pt>& (A\otimes_{\mathbb S} A\otimes_{\mathbb S} A \otimes_{\mathbb S} A)^{tC_2}\ar@(ul,ur)^{C_2} \ar[r]<1.5pt>\ar[r]<-1.5pt> & (A\otimes_{\mathbb S} A)^{tC_2}\ .
}\]
Here, the number of arrows has reduced (corresponding to factorization over the projection $\Lambda_2^\op\to \Lambda^\op$), as some arrows become canonically equal after applying $-^{tC_2}$. Constructing such a map is enough, as the geometric realization of this $\Lambda^\op$-spectrum maps canonically towards $\THH(A)^{tC_2}$ (but the map is not an equivalence), as the geometric realization is a colimit.

We see that for the construction of the map $\varphi_2$, we need to construct a map
\[
A\to (A\otimes_{\mathbb S} A)^{tC_2}\ ;
\]
this will induce the other maps by the use of suitable symmetric monoidal structures. This is answered by our second theorem. In the statement, we use that $X\mapsto X^{tC_p}$ is a lax symmetric monoidal functor, which we prove in Theorem~\ref{thm:tatelaxsymm}.

\begin{thm}\label{thm:introtatediag} For a prime $p$, consider the lax symmetric monoidal functor
\[
T_p: \Sp\to \Sp: X\mapsto (\underbrace{X\otimes_{\mathbb S}\ldots\otimes_{\mathbb S} X}_{p\ \mathrm{factors}})^{tC_p}\ ,
\]
where $C_p$ acts by cyclic permutation of the $p$ factors. Then $T_p$ is exact, and there is a unique lax symmetric monoidal transformation
\[
\Delta_p:\id\to T_p: X\to (X\otimes_{\mathbb S}\ldots\otimes_{\mathbb S} X)^{tC_p}\ .
\]

Moreover, if $X$ is bounded below, i.e.~there is some $n\in \mathbb Z$ such that $\pi_i X = 0$ for $i<n$, then the map
\[
X\to T_p(X) = (X\otimes_{\mathbb S}\ldots\otimes_{\mathbb S} X)^{tC_p}
\]
identifies $T_p(X)$ with the $p$-completion of $X$.
\end{thm}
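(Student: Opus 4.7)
The plan is to establish the three claims in turn: exactness of $T_p$, existence and uniqueness of $\Delta_p$, and identification of $T_p(X)$ with the $p$-completion of $X$ for bounded below $X$.

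For \textbf{exactness}, I would exploit the fact that the Tate construction $(-)^{tC_p}$ annihilates induced $C_p$-spectra of the form $C_p\otimes Z$. Decomposing $(X\oplus Y)^{\otimes p}=\bigoplus_{S\subseteq\{1,\ldots,p\}} X^{\otimes S}\otimes Y^{\otimes S^c}$ under the $C_p$-action by cyclic permutation: since $p$ is prime, any proper nonempty subset $S$ has a free $C_p$-orbit, so the corresponding summand is induced and vanishes after Tate. Only $S=\emptyset$ and $S=\{1,\ldots,p\}$ survive, giving $T_p(X\oplus Y)\simeq T_p(X)\oplus T_p(Y)$. For a cofiber sequence $X\to Y\to Z$, I would analogously filter $Y^{\otimes p}$ by ``how many tensor factors lie in the image of $X$''; the associated graded pieces are $X^{\otimes p}$, $Z^{\otimes p}$, and mixed terms $\bigoplus_{|S|=i,\,0<i<p} X^{\otimes S}\otimes Z^{\otimes S^c}$, each of which is induced for the same reason. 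After Tate, only the endpoints remain, producing a cofiber sequence $T_p(X)\to T_p(Y)\to T_p(Z)$.

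For \textbf{existence and uniqueness of $\Delta_p$}, with exactness in hand, I would use that the identity functor $\id\colon\Sp\to\Sp$ is the free colimit-preserving symmetric monoidal endofunctor of $\Sp$ generated by $\bS\mapsto\bS$. Consequently, lax symmetric monoidal natural transformations $\id\to T_p$ correspond bijectively to $E_\infty$-ring maps $\bS\to T_p(\bS)=\bS^{tC_p}$, where the $E_\infty$-structure on $\bS^{tC_p}$ comes from the lax symmetric monoidality of $T_p$ established earlier in the paper. Since $\bS$ is the initial object of $\CAlg(\Sp)$, this mapping space is contractible, yielding a unique $\Delta_p$.

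For the \textbf{$p$-completion identification}, I begin with $X=\bS$: by the classical Segal conjecture for $C_p$ (Lin for $p=2$ and Gunawardena for odd $p$), $\bS^{tC_p}$ is the $p$-completion of $\bS$, and one checks that $\Delta_p$ realizes the $p$-completion map. To propagate to general bounded below $X$, I would use exactness of $T_p$ together with the Postnikov tower: cofiber sequences of Postnikov sections reduce the claim to Eilenberg--MacLane spectra $H\pi$, and further to $H\F_p$ and $H\Q$ via short exact sequences of abelian groups. In parallel, verify that $T_p(X)$ is $p$-complete for bounded below $X$, using its module structure over $T_p(\bS)=\bS_p^\wedge$, and that $\Delta_p$ induces an $\F_p$-homology equivalence (reducing again to $X=\bS$). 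The \textbf{main obstacle} is this last propagation step: the Tate construction does not commute with arbitrary limits or filtered colimits, so one must carefully control the interaction of $T_p$ with the Postnikov tower on bounded below spectra, using convergence of the Tate spectral sequence in this range; without the bounded-below hypothesis the statement fails.
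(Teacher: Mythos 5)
Your exactness argument and your treatment of existence/uniqueness of $\Delta_p$ are correct and essentially the paper's: the orbit decomposition of $(X\oplus Y)^{\otimes p}$ and the filtration of the $p$-fold tensor power of an extension, with the mixed (induced) summands killed by $(-)^{tC_p}$, is Proposition~\ref{prop:tatediagonaladditive}, and the uniqueness of the lax symmetric monoidal transformation is the initiality of $\id_{\Sp}$ among \emph{exact} lax symmetric monoidal endofunctors of $\Sp$ (\cite[Corollary 6.9(1)]{Nik}); note that the relevant universal property must be stated for exact rather than colimit-preserving targets, since $T_p$ is exact but does not preserve infinite colimits.

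The gaps are all in the $p$-completion statement. First, $p$-completeness of $T_p(X)$ does \emph{not} follow from the module structure over $T_p(\bS)=\bS^\wedge_p$: modules over $\bS^\wedge_p$ need not be $p$-complete (e.g.\ $H\Q_p$). The paper instead proves directly (Lemma~\ref{lem:tatecomplete}) that $Y^{tC_p}$ is $p$-complete whenever $Y$ is bounded below, using Postnikov convergence and the fact that $\widehat{H}^\ast(C_p,M)$ is $p$-torsion. Second, and more seriously, the reduction ``to $H\Fp$ and $H\Q$ via short exact sequences of abelian groups'' is not available: after reducing mod $p$ (which is legitimate, by exactness) one is left with $HV$ for $V$ an arbitrary, possibly infinite-dimensional, $\Fp$-vector space, and such a $V$ is an infinite direct sum of copies of $\Fp$, not a finite extension. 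Since $(-)^{hC_p}$, and hence $T_p$, is not known to commute with infinite direct sums of spectra that are not bounded above, the case $X=H\Fp$ does not formally imply the case $X=HV$. This is exactly where the paper has to work: it introduces the truncated functors $T_p^{(n)}(X)=(\tau_{\leq n}(X^{\otimes p}))^{tC_p}$, which do commute with direct sums, shows $T_p\simeq\lim_n T_p^{(n)}$, and uses finiteness of each $\pi_i T_p^{(n)}(H\Fp)$ together with a Mittag--Leffler/pro-isomorphism argument to pass to infinite $V$. Your ``main obstacle'' remark points at the Postnikov (limit) direction, which is the easy half; the missing idea is the colimit direction. Finally, ``reducing again to $X=\bS$'' is not a reduction: $T_p$ applied to $H\Fp$ is in no way controlled by $T_p(\bS)$, since $H\Fp$ is not built from $\bS$ by finitely many cells. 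The actual base case of the induction is $X=H\Fp$, which is an independent hard Adams-spectral-sequence computation (Lin, Gunawardena, Jones--Wegmann, Lun\o e-Nielsen--Rognes); the sphere case is then a consequence of the general bounded-below statement, not its source.
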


This is related to results of Lun\o e-Nielsen--Rognes, \cite{LNR}. They call the functor $T_p$ the topological Singer construction, and they prove the second part of the theorem when $X$ is in addition of finite type. Note that the second part of the theorem is a generalization of the Segal conjecture for the group $C_p$ (proved by Lin, \cite{Lin}, and Gunawardena, \cite{gunawardena}) which states that $\mathbb S^{tC_p}$
is equivalent to the $p$-completion of the sphere spectrum $\mathbb S$.

Now we can explain the direct construction of $\THH(A)$ as a cyclotomic spectrum in case $A$ is an $\E_\infty$-algebra. Indeed, by a theorem of McClure-Schw\"anzl-Vogt, \cite{MR1473888}, $\THH(A)$ is the initial $\T$-equivariant $\E_\infty$-algebra equipped with a non-equivariant map of $\E_\infty$-algebras $A\to \THH(A)$. This induces a $C_p$-equivariant map $A\otimes_{\mathbb S}\ldots\otimes_{\mathbb S} A\to \THH(A)$ by tensoring, and thus a map of $\E_\infty$-algebras
\[
(A\otimes_{\mathbb S}\ldots\otimes_{\mathbb S} A)^{tC_p}\to \THH(A)^{tC_p}\ .
\]
On the other hand, by the Tate diagonal, we have a map of $\E_\infty$-algebras
\[
A\to (A\otimes_{\mathbb S}\ldots\otimes_{\mathbb S} A)^{tC_p}\ .
\]
Their composition gives a map of $\E_\infty$-algebras
\[
A\to (A\otimes_{\mathbb S}\ldots\otimes_{\mathbb S} A)^{tC_p}\to \THH(A)^{tC_p}\ ,
\]
where the target has a residual $\T/C_p\cong \T$-action. By the universal property of $\THH(A)$, this induces a unique $\T$-equivariant map $\varphi_p: \THH(A)\to \THH(A)^{tC_p}$ of $\E_\infty$-algebras, as desired.\\

Finally, let us give a summary of the contents of the different chapters; we refer to the individual introductions to the chapters for more details. In Chapter 1, we prove basic results about the Tate construction. Next, in Chapter 2, we prove Theorem~\ref{thm:intromain}. In Chapter 3, we discuss the cyclotomic spectrum $\THH(A)$ intrinsically to our new approach and relate it to previous constructions. In the final Chapter 4, we discuss various examples from the point of view of this paper.

In place of giving a Leitfaden for the whole paper, let us give an individual Leitfaden for several different topics covered in this paper. In all cases, the Tate construction as summarized in I.1, I.3, is relevant.\\

\noindent \begin{tabular}{ll}
New construction of $\THH$: & II.1 $\to $ III.1 $\to $ III.2 $\to $ III.3\\
Classical construction of $\THH$: & II.2 $\to $ II.3 $\to $ III.4 $\to $ III.5\\
Comparison results: & I.2 $\to $ II.4 $\to $ II.5 $\to $ II.6 $\to $ III.6\\
$\THH$ for commutative rings: & III.1 $\to $ Proposition III.3.1 $\to $ IV.2 $\to $ IV.4 $\leftarrow $ I.2\\
\end{tabular}\\

Moreover, there are three appendices. In Appendix~\ref{app:symmmon}, we review some facts about symmetric monoidal $\infty$-cate\-gories that we use. Most importantly, we prove that for 
a symmetric monoidal model category $\calM$ with associated $\infty$-category $\mathcal C=N(\calM)[W^{-1}]$, the $\infty$-category $\mathcal C$ is again symmetric monoidal, and the localization functor $N(\calM)\to \mathcal C$ is lax symmetric monoidal (and symmetric monoidal when restricted to cofibrant objects), cf.~Theorem~\ref{DKlocalizationsymmon}. We deduce this from a general functoriality assertion for the Dwyer--Kan localization, which seems to be new. In Appendix~\ref{app:cyclic}, we recall some basic results about Connes' cyclic category $\Lambda$, and related combinatorial categories, as well as the geometric realization of cyclic spaces. Finally, in Appendix~\ref{app:colim} we summarize some facts about homotopy colimits in various categories of spaces and spectra.
\\

{\bf Acknowledgements:}
We thank Tobias Barthel, David Gepner, John Greenlees, Lars Hesselholt, Mike Hopkins, Dmitry Kaledin, Jacob Lurie, Wolfgang L\"uck, Akhil Mathew, Matthew Morrow, Irakli Patchkoria, Charles Rezk, John Rognes, Steffen Sagave, Peter Teichner and Marco Varisco for fruitful discussions and help concerning different aspects of this project. Moreover, we thank Tobias Barthel, Vladimir Hinich, Akhil Mathew, Irakli Patchkoria, Steffen Sagave and Peter Teichner for comments on a draft and an anonymous referee for many helpful remarks. This work was started while the second author was a Clay Research Fellow.

\renewcommand{\thesection}{\Roman{chapter}.\arabic{section}}
\chapter{The Tate construction}\label{ch:tate}

Recall that if $G$ is a finite group acting on an abelian group $M$, then the Tate cohomology $\widehat{H}^i(G,M)$ is defined by splicing together cohomology and homology. More precisely, $\widehat{H}^i(G,M) = H^i(G,M)$ if $i>0$, $\widehat{H}^{i-1}(G,M) = H_{-i}(G,M)$ if $i<0$, and there is an exact sequence
\[
0\to \widehat{H}^{-1}(G,M)\to M_G\buildrel{\Nm_G}\over\longrightarrow M^G\to \widehat{H}^0(G,M)\to 0\ .
\]
Our goal in this chapter is to discuss the $\infty$-categorical analogue of this construction.

In Section~\ref{sec:tateconstruction} we recall the norm map $\Nm_G: X_{hG}\to X^{hG}$ defined for any finite group $G$ acting on a spectrum $X$, following the presentation of Lurie, \cite[Section 6.1.6]{HA}. Using this, we can define the Tate construction $X^{tG} = \cofib(\Nm_G: X_{hG}\to X^{hG})$. In Section~\ref{sec:tateorbitlemma}, we prove a very concrete result about the vanishing of the Tate construction which is the essential computational input into Theorem~\ref{thm:intromain}. In Section~\ref{sec:tatemonoidal}, we prove that the Tate construction is lax symmetric monoidal, which is required for many arguments later. This is a classical fact, however all constructions of this lax symmetric monoidal structure that we know use some form of genuine equivariant homotopy theory. By contrast, our construction is direct, and moreover proves that the lax symmetric monoidal structure is unique (and therefore necessarily agrees with other constructions). The argument uses Verdier quotients of stable $\infty$-categories, and we prove some basic results about them in Theorem~\ref{thm:verdier} and Theorem~\ref{thm:verdiermult}, amplifying the discussion in \cite[Section 5]{BlumbergGepnerTabuada}. In Section~\ref{sec:farrelltate}, we use these ideas to construct norm maps in much greater generality, following ideas of Klein, \cite{KleinDualizing}, and verify that the resulting Tate constructions, classically known as Farrell--Tate cohomology, are again lax symmetric monoidal in many cases. This is used later only in the case of the circle $\T$.

\section{The Tate construction for finite groups}\label{sec:tateconstruction}

We start with some brief recollections on norm maps in $\infty$-categorical situations. For this, we follow closely \cite[Section 6.1.6]{HA}. The following classes of $\infty$-categories will be relevant to us.

\begin{definition} Let $\calC$ be an $\infty$-category.
\begin{altenumerate}
\item[{\rm (i)}] The $\infty$-category $\calC$ is \emph{pointed} if it admits an object which is both initial and final; such objects are called zero objects.
\item[{\rm (ii)}] The $\infty$-category $\calC$ is \emph{preadditive} if it is pointed, finite products and finite coproducts exist, and for any two objects $X,Y\in \calC$, the map
\[
\left(\begin{array}{cc} \mathrm{id}_X & 0 \\ 0 & \mathrm{id}_Y\end{array}\right): X\sqcup Y\to X\times Y
\]
is an equivalence. Here, $0\in \Hom_{\calC}(X,Y)$ denotes the composition $X\to 0\to Y$ for any zero object $0\in \calC$.
\end{altenumerate}
\emph{If $\calC$ is preadditive, we write $X\oplus Y$ for $X\sqcup Y\simeq X\times Y$. If $\calC$ is preadditive, then $\pi_0 \Hom_\calC(X,Y)$ acquires the structure of a commutative monoid for all $X,Y\in \calC$.}
\begin{altenumerate}
\item[{\rm (iii)}] The $\infty$-category $\calC$ is \emph{additive} if it is preadditive and $\pi_0 \Hom_\calC(X,Y)$ is a group for all $X,Y\in \calC$.
\item[{\rm (iv)}] The $\infty$-category $\calC$ is \emph{stable} if it is additive, all finite limits and colimits exist, and the loop functor $\Omega: \calC\to \calC: X\mapsto 0\times_X 0$ is an equivalence.
\end{altenumerate}
\end{definition}

We refer to \cite[Chapter 1]{HA} for an extensive discussion of stable $\infty$-categories. These notions are also discussed in \cite{GGN}. Note that in \cite[Section 6.1.6]{HA}, Lurie uses the term semiadditive in place of preadditive.

\begin{definition}\label{def:equivobject} Let $G$ be a group, and $\calC$ an $\infty$-category. A $G$-equivariant object in $\calC$ is a functor $BG\to \calC$, where $BG$ is a fixed classifying space for $G$. The $\infty$-category $\calC^{BG}$ of $G$-equivariant objects in $\calC$ is the functor $\infty$-category $\Fun(BG,\calC)$.
\end{definition}

\begin{remark} We are tempted to write $\calC^G$ in place of $\calC^{BG}$, which is closer to standard usage by algebraists. However, this leads to conflicts with the notation $\calC^X$ for a general Kan complex $X$ that we will use momentarily. This conflict is related to the fact that algebraists write $H^\ast(G,M)$ for group cohomology of $G$ acting on a $G$-module $M$, where topologists would rather write $H^\ast(BG,M)$.
\end{remark}

\begin{remark} Applying this definition in the case of the $\infty$-category of spectra $\calC=\Sp$, one gets a notion of $G$-equivariant spectrum. This notion is different from the notions usually considered in \emph{equivariant stable homotopy theory},\footnote{It is even more naive than what is usually called naive.} and we discuss their relation in Section~\ref{sec:genuine} below. To avoid possible confusion, we will refer to $G$-equivariant objects in $\Sp$ as \emph{spectra with $G$-action} instead of $G$-equivariant spectra.
\end{remark}

\begin{definition}\label{deforbits} Let $G$ be a group, and $\calC$ an $\infty$-category.
\begin{enumerate}
\item[{\rm (i)}] Assume that $\calC$ admits all colimits indexed by $BG$. The \emph{homotopy orbits} functor is given by
\[
-_{hG}: \calC^{BG}\to \calC: (F: BG\to \calC)\mapsto \colim_{BG} F\ .
\]
\item[{\rm (ii)}] Assume that $\calC$ admits all limits indexed by $BG$. The \emph{homotopy fixed points} functor is given by
\[
-^{hG}: \calC^{BG}\to \calC: (F: BG\to \calC)\mapsto \lim\nolimits_{BG} F\ .
\]
\end{enumerate}
\end{definition}

\begin{remark}
Note that in the setting of Definition \ref{deforbits} it might be tempting to drop the word `'homotopy'', i.e. to refer to these objects as ``orbits'' and ``fixed points'' and denote them by  $-_G$ and $-^G$ since in an $\infty$-categorical setting this is the only thing that makes sense. However, later in the paper we will need some elements of equivariant homotopy theory, so that there is another spectrum referred to  as ``fixed points''. In order to avoid confusion we use the prefix ``homotopy''. 
\end{remark}

Now assume that $G$ is finite, and that $\calC$ is a sufficiently nice $\infty$-category, in a sense that will be made precise as we go along. We  construct a norm map
\[
\Nm_G: X_{hG}\to X^{hG}
\]
as a natural transformation of functors $-_{hG}\to -^{hG}: \calC^{BG}\to \calC$. The construction will be carried out in several steps.

For any Kan complex $X$, let $\calC^X=\Fun(X,\calC)$ be the functor $\infty$-category. For any map $f: X\to Y$ of Kan complexes, there is a pullback functor $f^\ast: \calC^Y=\Fun(Y,\calC)\to \Fun(X,\calC)=\calC^X$. We denote the left, resp.~right, adjoint of $f^\ast$ by $f_!$, resp.~$f_\ast$, if it exists. In light of \cite[Proposition 4.3.3.7]{HTT}, we will also refer to $f_!$, resp.~$f_\ast$, as the left, resp.~right, Kan extension along $f: X\to Y$.

As an example, note that if $f: BG\to \ast$ is the projection to a point, then the resulting functors $f_!,f_\ast: \calC^{BG}\to \calC$ are given by $-_{hG}$ and $-^{hG}$, respectively.

We will often use the following construction, where we make the implicit assumption that all functors are defined, i.e.~that $\calC$ has sufficiently many (co)limits.

\begin{construction}\label{constr:norm} Let $f: X\to Y$ be a map of Kan complexes, and let $\delta: X\to X\times_Y X$ be the diagonal. Assume that there is a natural transformation
\[
\Nm_\delta: \delta_!\to \delta_\ast
\]
of functors $\calC^X\to \calC^{X\times_Y X}$, and that $\Nm_\delta$ is an equivalence.

Let $p_0,p_1: X\times_Y X\to X$ denote the projections onto the first and second factor. We get a natural transformation
\[
p_0^\ast\to \delta_\ast \delta^\ast p_0^\ast\simeq \delta_\ast\buildrel{\Nm_\delta^{-1}}\over\simeq \delta_!\simeq \delta_! \delta^\ast p_1^\ast\to p_1^\ast\ ,
\]
and by adjunction a map $\id_{\calC^X}\to p_{0\ast} p_1^\ast$. Now consider the diagram
\[\xymatrix{
X\times_Y X\ar[r]^{p_0}\ar[d]_{p_1} & X\ar[d]_f\\
X\ar[r]^f& Y.
}\]
By \cite[Lemma 6.1.6.3]{HA}, cf.~\cite[Definition 4.7.4.13]{HA} for the definition of right adjointable diagrams, the natural transformation $f^\ast f_\ast\to p_{0\ast} p_1^\ast$ is an equivalence. We get a natural transformation $\mathrm{id}_{\calC^X}\to f^\ast f_\ast$ of functors $\calC^X\to \calC^X$, which is adjoint to a natural transformation $\Nm_f:  f_!\to f_\ast$ of functors $\calC^X\to \calC$.
\end{construction}

Recall that a map $f: X\to Y$ of Kan complexes is $(-1)$-truncated if all fibers of $f$ are either empty or contractible.\footnote{Here and in the following, all fibers of maps of Kan complexes are understood to be homotopy fibers.} Equivalently, $\delta: X\to X\times_Y X$ is an equivalence. In this case, $\Nm_\delta$ exists tautologically.

\begin{lemma}[{\cite[Proposition 6.1.6.7]{HA}}] If $\calC$ is a pointed $\infty$-category, then the functors $f_!, f_\ast$ exist for all $(-1)$-truncated maps $f: X\to Y$, and $\Nm_f: f_!\to f_\ast$ is an equivalence.$\hfill \Box$
\end{lemma}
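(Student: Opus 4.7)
The strategy is a pointwise reduction along the fibers of $f$. Since $f$ is $(-1)$-truncated, the homotopy fiber $X_y = X\times_Y\{y\}$ is either empty or contractible for every $y\in Y$, and both the existence of the adjoints and the equivalence property of $\Nm_f$ can be checked one fiber at a time.

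For the existence of $f_!$ and $f_\ast$, I would appeal to the pointwise formula for Kan extensions in $\infty$-categories, which computes $(f_!F)(y)\simeq \colim_{X_y} F|_{X_y}$ and $(f_\ast F)(y)\simeq \lim_{X_y} F|_{X_y}$. When $X_y$ is empty, these are the initial, resp.\ terminal, object of $\calC$; because $\calC$ is pointed, both are canonically the zero object, which exists by hypothesis. When $X_y$ is contractible with chosen point $\tilde y$, both the colimit and the limit of any diagram $X_y\to \calC$ exist and are canonically equivalent to $F(\tilde y)$. So $f_!$ and $f_\ast$ are defined on all of $\calC^X$, and moreover the canonical comparison $(f_!F)(y)\to (f_\ast F)(y)$ is, fiberwise, a map between two descriptions of the same object.

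To conclude that $\Nm_f$ is an equivalence, I would use base-change to reduce to the case $Y=\{y\}$. The construction of $\Nm_f$ in Construction~\ref{constr:norm} is compatible with pullback along any map $g:Y'\to Y$: because $\delta$ itself base-changes to the diagonal of the pullback $f':X\times_Y Y'\to Y'$, the Beck--Chevalley square with vertical maps $p_0,p_1$ and horizontal maps $f,f$ also base-changes along $g$, and one checks that $\Nm_f$ restricts to $\Nm_{f'}$. Taking $Y'=\{y\}$, we reduce to showing that $\Nm_{f_y}$ is an equivalence for $f_y:X_y\to \{y\}$.

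In the empty case $X_y=\emptyset$ the functor $\infty$-category $\calC^{X_y}$ is contractible, both $(f_y)_!$ and $(f_y)_\ast$ are the zero object of $\calC$, and any map between zero objects is an equivalence. In the contractible case $X_y\simeq \{y\}$, the map $f_y$ is itself an equivalence, so $\delta:X_y\to X_y\times_{\{y\}} X_y$ is an equivalence of contractible Kan complexes. Then $\delta^\ast$, $\delta_!$ and $\delta_\ast$ are all mutually inverse equivalences, $\Nm_\delta$ is canonically the identity, and unwinding Construction~\ref{constr:norm} collapses the composite $p_0^\ast \to \delta_\ast\delta^\ast p_0^\ast \simeq \delta_!\delta^\ast p_1^\ast \to p_1^\ast$ to the identity natural transformation on $\id_{\calC^X}$, whence $\Nm_{f_y}=\id$. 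The main obstacle is this last diagram chase: morally it is immediate, but making all the coherences in the Beck--Chevalley comparison rigorous is the only genuine work.
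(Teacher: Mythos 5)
The paper offers no proof of this lemma at all—it simply cites \cite[Proposition 6.1.6.7]{HA}—and your fiberwise argument is precisely the proof given there: establish existence of $f_!,f_\ast$ via the pointwise Kan-extension formula (empty fibers yield the zero object, which exists because $\calC$ is pointed; contractible fibers yield the value itself), then check $\Nm_f$ is an equivalence after restricting to each point of $Y$, where either everything is a zero object or $f$ is an equivalence. Your proposal is correct and takes essentially the same route as the cited source.
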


Thus, if $\calC$ is pointed, we can now play the game for $0$-truncated maps $f: X\to Y$, as then $\delta: X\to X\times_Y X$ is $(-1)$-truncated, and so $\Nm_\delta$ exists and is an equivalence. We say that a $0$-truncated map $f: X\to Y$ has finite fibers if all fibers of $f$ are equivalent to finite sets.

\begin{lemma}[{\cite[Proposition 6.1.6.12]{HA}}] If $\calC$ is a preadditive $\infty$-category, then the functors $f_!, f_\ast$ exist for all $0$-truncated maps $f: X\to Y$ with finite fibers, and $\Nm_f: f_!\to f_\ast$ is an equivalence.$\hfill \Box$
\end{lemma}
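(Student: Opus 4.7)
The plan is to bootstrap from the previous lemma (which handles $(-1)$-truncated maps in any pointed $\infty$-category) by applying Construction~\ref{constr:norm} and then reducing the verification that $\Nm_f$ is an equivalence to the case of a finite discrete set mapping to a point, where the statement becomes the definition of preadditivity.

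First I would observe that if $f: X\to Y$ is $0$-truncated, then the diagonal $\delta: X\to X\times_Y X$ is $(-1)$-truncated. Since $\calC$ is preadditive it is in particular pointed, so by the previous lemma $\delta_!$ and $\delta_\ast$ exist and $\Nm_\delta$ is an equivalence. Thus Construction~\ref{constr:norm} produces the natural transformation $\Nm_f: f_!\to f_\ast$, provided the two Kan extensions exist. Existence can be shown pointwise: for $y\in Y$ the fiber $X_y=f^{-1}(y)$ is a finite set, so by the pointwise formulae for Kan extensions (\cite[Proposition 4.3.3.7]{HTT}) one has $(f_!F)(y)\simeq \bigsqcup_{x\in X_y}F(x)$ and $(f_\ast F)(y)\simeq \prod_{x\in X_y}F(x)$, both of which exist because $\calC$ admits finite (co)products.

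Next, I would reduce the equivalence statement to the case $Y=\ast$. The formation of $f_!$ and $f_\ast$ is compatible with base change along any $y:\ast\to Y$, by the right and left adjointability of the associated pullback squares (which follows for $f_!$ tautologically, and for $f_\ast$ by applying \cite[Lemma 6.1.6.3]{HA} to the Beck-Chevalley square, noting that $y$ and the base change $y'$ are both $(-1)$-truncated so the norms there are equivalences). One then checks that Construction~\ref{constr:norm} is natural with respect to this base change; thus $y^\ast \Nm_f$ is identified with $\Nm_{f_y}$ for $f_y: X_y\to \ast$. Since equivalences in $\calC^Y$ are detected pointwise on $Y$, it suffices to treat the case $Y=\ast$ and $X$ a finite set.

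In that case, writing $X=\{x_1,\ldots,x_n\}$, one has $X\times X$ the finite set of pairs $(x_i,x_j)$, and the diagonal $\delta:X\hookrightarrow X\times X$ is the inclusion of the diagonal subset. The norm $\Nm_\delta$ identifies $\delta_!$ with $\delta_\ast$, and both send $F\in \calC^X$ to the family which is $F(x_i)$ on the diagonal and a zero object off the diagonal, because on a $(-1)$-truncated map the left and right Kan extensions are extension by zero (this is the content of the previous lemma). Unwinding Construction~\ref{constr:norm}, one sees that the natural transformation $\mathrm{id}_{\calC^X}\to p_{0\ast}p_1^\ast$ is, on the $i$-th coordinate, the family $(F(x_j))_j$ of ``matrix entries'' which is $\mathrm{id}_{F(x_i)}$ for $j=i$ and $0$ otherwise. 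Passing to the adjoint, the map $\Nm_f:\bigsqcup_i F(x_i)\to \prod_i F(x_i)$ is then precisely the map whose matrix has identities on the diagonal and zeroes elsewhere, which is the canonical map from coproduct to product considered in the definition of preadditivity. By induction on $n$ using the preadditivity axiom this map is an equivalence.

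The main obstacle I anticipate is the bookkeeping in the two middle steps: carefully verifying that Construction~\ref{constr:norm} is compatible with base change in $Y$, and then matching the formal description of $\Nm_f$ unwound from that construction with the explicit ``identity matrix'' map in the finite discrete case. Once these identifications are in place, the preadditivity axiom closes the argument.
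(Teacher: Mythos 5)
Your proof is correct, and it is essentially the standard argument: the paper itself gives no proof here, deferring entirely to the citation \cite[Proposition 6.1.6.12]{HA}, whose proof proceeds exactly as you describe (pointwise existence via finite fibers, base change to reduce to $Y=\ast$, and identification of $\Nm_f$ with the canonical map $\bigsqcup_i F(x_i)\to \prod_i F(x_i)$, which is an equivalence by preadditivity and induction). The only step you leave as a routine check — naturality of Construction~\ref{constr:norm} under base change along points of $Y$ — is indeed routine and is handled the same way in the cited source.
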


Therefore, if $\calC$ is preadditive, we can go one step further, and pass to $1$-truncated maps $f: X\to Y$. We say that a $1$-truncated map $f: X\to Y$ is a relative finite groupoid if all fibers of $f$ have finitely many connected components, and each connected component is the classifying space of a finite group. Note that if $f: X\to Y$ is a relative finite groupoid, then $\delta: X\to X\times_Y X$ is a $0$-truncated map with finite fibers.

\begin{definition}\label{def:norm} Let $\calC$ be a preadditive $\infty$-category which admits limits and colimits indexed by classifying spaces of finite groups. Let $f: X\to Y$ be a map of Kan complexes which is a relative finite groupoid. The norm map
\[
\Nm_f: f_!\to f_\ast
\]
is the natural transformation of functors $\calC^X\to \calC^Y$ given by Construction~\ref{constr:norm}.
\end{definition}

\begin{example} Let $\calC$ be a preadditive $\infty$-category which admits limits and colimits indexed by $BG$ for some finite group $G$. Applying the previous definition in the special case of the projection $f: BG\to \ast$, we get a natural transformation
\[
\Nm_G: X_{hG}\to X^{hG}
\]
of functors $\calC^{BG}\to \calC$.
\end{example}

\begin{example} Let $\calC$ be a preadditive $\infty$-category which admits limits and colimits indexed by $BG$ for some finite group $G$. Assume that $G$ is a normal subgroup of some (topological) group $H$. In this case, for any $H$-equivariant object $X\in \calC^{BH}$, the $G$-homotopy orbits and $G$-homotopy fixed points acquire a remaining $H/G$-action. More precisely, consider the projection $f: BH\to B(H/G)$. Then, by \cite[Proposition 6.1.6.3]{HA}, the functors
\[
f_!,f_\ast: \calC^{BH}\to \calC^{B(H/G)}
\]
sit in commutative diagrams
\[\xymatrix{
\calC^{BH}\ar[r]^{f_!}\ar[d] & \calC^{B(H/G)}\ar[d] & & \calC^{BH}\ar[r]^{f_\ast}\ar[d] & \calC^{B(H/G)}\ar[d]\\
\calC^{BG}\ar[r]^{-_{hG}} & \calC & & \calC^{BG}\ar[r]^{-^{hG}} & \calC,
}\]
where the vertical functors are the forgetful functors. By abuse of notation, we sometimes denote these functors simply by
\[
-_{hG}, -^{hG}: \calC^{BH}\to \calC^{B(H/G)}\ .
\]
We claim that in this situation, the natural transformation $\Nm_G: -_{hG}\to -^{hG}$ of functors $\calC^{BG}\to \calC$ refines to a natural transformation $\Nm_f: f_!\to f_\ast$ of functors $\calC^{BH}\to \calC^{B(H/G)}$, i.e.~$\Nm_G$ is $H/G$-equivariant. Indeed, $\Nm_f$ is a special case of Definition~\ref{def:norm}.
\end{example}

\begin{definition}\label{def:tate} Let $\calC$ be a stable $\infty$-category which admits all limits and colimits indexed by $BG$ for some finite group $G$. The Tate construction is the functor
\[
-^{tG}: \calC^{BG}\to \calC: X\mapsto \cofib(\Nm_G: X_{hG}\to X^{hG})\ .
\]
If $G$ is a normal subgroup of a (topological) group $H$, we also write $-^{tG}$ for the functor
\[
-^{tG}: \calC^{BH}\to \calC^{B(H/G)}: X\mapsto \cofib(\Nm_f: f_! X\to f_\ast X)\ ,
\]
where $f: BH\to B(H/G)$ denotes the projection.
\end{definition}

We will use this definition in particular in the case where $\calC=\Sp$ is the $\infty$-category of spectra. More generally, one can apply it to the $\infty$-category $R\text{-}\Mod$ of module spectra over any associative ring spectrum $R\in \Alg(\Sp)$; this includes the case of the $\infty$-derived category of $S$-modules for some usual associative ring $S$ by taking $R=HS$. However, limits and colimits in $R-\Mod$ are compatible with the forgetful functor $R-\Mod\to \Sp$, as are the norm maps, so that the resulting Tate constructions are also compatible.

If one takes the Tate spectrum of an Eilenberg-MacLane spectrum $HM$ for a $G$-module $M$ then one gets back classical Tate cohomology, more precisely $\pi_i(HM^{tG}) \cong \widehat{H}^{-i}(G,M)$. As an example we have $\widehat{H}^*(C_p,\Z) \cong \bF_p[t^{\pm 1}]$ with $t$ of degree 2. We see in this example that there is a canonical ring structure on Tate (co)homology. This is in fact a general phenomenon, which is encoded in the statement that the functor
\[
-^{tG}: \Sp^{BG}\to \Sp
\]
admits a canonical lax symmetric monoidal structure. We will discuss this further below in Section~\ref{sec:tatemonoidal}. For now, we will ignore all multiplicative structure.
\section{The Tate Orbit Lemma}\label{sec:tateorbitlemma}

In this section, we prove the key lemma in the proof of Theorem~\ref{thm:intromain}, which we call the Tate orbit lemma. Let $X$ be a spectrum with a $C_{p^2}$-action for a prime $p$. The homotopy orbits $X_{hC_p}$ and the homotopy fixed points $X^{hC_p}$ have a remaining $C_{p^2}/C_p$-action which allows us to take Tate spectra. 

\begin{lemma}[Tate orbit lemma]\label{lem:tateorbit} Let $X$ be a spectrum with a $C_{p^2}$-action. Assume that $X$ is bounded below, i.e.~there exists some $n\in \mathbb Z$ such that $\pi_i(X) = 0$ for $i<n$. Then
\[
(X_{hC_p})^{t(C_{p^2}/C_p)} \simeq 0.
\]
\end{lemma}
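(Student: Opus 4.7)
The plan is to reformulate the statement via the octahedral axiom as a descent question for the Tate construction, reduce to Eilenberg--MacLane spectra using the bounded-below hypothesis, and conclude by a spectral-sequence comparison.

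As a first step, I would apply the octahedral axiom to the two-step factorization of the $C_{p^2}$-norm map. Using the identifications $X_{hC_{p^2}}\simeq (X_{hC_p})_{h(C_{p^2}/C_p)}$ and $X^{hC_{p^2}}\simeq (X^{hC_p})^{h(C_{p^2}/C_p)}$, the norm factors canonically as
\[
X_{hC_{p^2}}\xrightarrow{\Nm_{C_{p^2}/C_p}}(X_{hC_p})^{h(C_{p^2}/C_p)}\xrightarrow{(\Nm_{C_p})^{h(C_{p^2}/C_p)}}X^{hC_{p^2}},
\]
and applying the octahedral axiom produces a canonical fiber sequence
\[
(X_{hC_p})^{t(C_{p^2}/C_p)}\longrightarrow X^{tC_{p^2}}\longrightarrow (X^{tC_p})^{h(C_{p^2}/C_p)}.
\]
Thus it suffices to prove that for bounded-below $X$, the second map $X^{tC_{p^2}}\to (X^{tC_p})^{h(C_{p^2}/C_p)}$ is an equivalence.

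Next, I would reduce to Eilenberg--MacLane spectra. Both functors $X\mapsto X^{tC_{p^2}}$ and $X\mapsto (X^{tC_p})^{h(C_{p^2}/C_p)}$ on $\Sp^{BC_{p^2}}$ are exact, and the comparison between them is natural. For $X$ bounded below, the Postnikov tower $X\simeq \lim_n \tau_{\leq n}X$ converges, and the relevant Tate and homotopy fixed-point spectral sequences are strongly convergent; this is precisely where the bounded-below hypothesis enters. Thus one can reduce to $\tau_{\leq n}X$, and then by induction on the Postnikov length and the long exact sequence in homotopy, to the case $X=HM$ for a $C_{p^2}$-module $M$. For $X=HM$, the left side has Tate spectral sequence with $E_2$-page $\widehat H^{-*}(C_{p^2};M)$, while the right side has homotopy fixed-point spectral sequence with $E_2$-page $H^*(C_{p^2}/C_p;\widehat H^{-*}(C_p;M))$. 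The Lyndon--Hochschild--Serre spectral sequence for the extension $1\to C_p\to C_{p^2}\to C_p\to 1$ identifies these two $E_2$-pages, since both compute $\widehat H^*(C_{p^2};M)$, and the natural comparison map realizes this identification.

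The main obstacle is the reduction to the Eilenberg--MacLane case: since neither $(-)^{tC_p}$ nor $(-)^{h(C_{p^2}/C_p)}$ preserves bounded-belowness (for instance $(H\mathbb{F}_p)^{tC_p}$ is $2$-periodic), one cannot rely on a naive connectivity estimate for the Postnikov filtration fibers. Instead, the convergence argument must be controlled via the structure of the Tate spectral sequences themselves, which requires careful use of the bounded-below hypothesis. A secondary obstacle is making the Lyndon--Hochschild--Serre comparison in the final step fully rigorous, since the Tate construction is not a derived functor in the classical sense; one likely needs either to work with an explicit periodic free resolution of $\mathbb{Z}$ over $\mathbb{Z}[C_{p^2}]$ or to argue $\infty$-categorically. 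The specific non-split structure of $1\to C_p\to C_{p^2}\to C_p\to 1$ is essential: for the split extension $C_p\times C_p$ the analogous Tate orbit vanishing would fail.
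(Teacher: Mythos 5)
Your reformulation via the octahedral axiom is correct: the factorization of the $C_{p^2}$-norm through $(X_{hC_p})^{h(C_{p^2}/C_p)}$ does produce the fiber sequence $(X_{hC_p})^{t(C_{p^2}/C_p)}\to X^{tC_{p^2}}\to (X^{tC_p})^{h(C_{p^2}/C_p)}$, so the lemma is indeed equivalent to the assertion that $X^{tC_{p^2}}\to (X^{tC_p})^{h(C_{p^2}/C_p)}$ is an equivalence for bounded-below $X$ (this is Lemma~\ref{lemtet} of the paper, which the paper deduces from the Tate orbit lemma; you are proposing to run the implication backwards, which is legitimate). The Postnikov reduction also works and is not as delicate as you fear: one does not need the functors to preserve bounded-belowness, only that they commute with the limit over $\tau_{\leq n}X$, which follows because $Y_{hG}\to(\tau_{\leq n}Y)_{hG}$ is $n$-connected and $-^{hG}$ commutes with limits (Lemma~\ref{lem:tateconvergence}). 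The further dévissage from a general $C_{p^2}$-module $M$ to $\bF_p$ with trivial action needs some care ($p$-completeness of the Tate construction, filtering $M/p$ by trivial modules), but this is routine.

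The genuine gap is the final step. The two $E_2$-pages you propose to compare are not isomorphic, and there is no Lyndon--Hochschild--Serre spectral sequence of the mixed form $H^*(C_{p^2}/C_p;\widehat H^*(C_p;M))\Rightarrow\widehat H^*(C_{p^2};M)$ in general. Already for $M=\bF_p$ with trivial action, the homotopy fixed point spectral sequence for $(H\bF_p^{tC_p})^{h(C_{p^2}/C_p)}$ has $E_2^{i,j}\cong\bF_p$ for every $i\geq 0$ and every $j$, which is far larger than $\widehat H^*(C_{p^2};\bF_p)\cong\bF_p$ in each degree; whether the differentials kill exactly the excess is precisely the content of the lemma, and it is \emph{false} for the split extension $C_p\times C_p$ (as you note yourself). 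So the statement ``both $E_2$-pages compute $\widehat H^*(C_{p^2};M)$'' asserts the conclusion rather than proving it, and your proposal contains no mechanism by which the non-splitness of $1\to C_p\to C_{p^2}\to C_p\to 1$ actually enters. The paper supplies exactly this missing mechanism: it shows that each two-term Postnikov piece $\tau_{[2i,2i+1]}\bF_{p,hC_p}$, viewed $C_{p^2}/C_p$-equivariantly, has nonzero extension class in $H^2(C_p,\bF_p)$ (detected by the degree count $\dim_{\bF_p}H^1(C_{p^2},\bF_p)=1$ in the genuine LHS spectral sequence for ordinary cohomology), and that the unique such nonsplit two-term complex is $\bF_p[C_p]\xrightarrow{\gamma-1}\bF_p[C_p]$, which is induced and hence has vanishing $tC_p$. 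You would need to incorporate an argument of this kind — some concrete input showing the differentials in your fixed-point spectral sequence are forced by the extension class — before the proof is complete.
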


\begin{lemma}[Tate fixpoint lemma]\label{lem:tatefixpoint} Assume that $X$ is bounded above, i.e.~there exists some $n\in \mathbb Z$ such that $\pi_i(X) = 0$ for $i>n$. Then
\[
(X^{hC_p})^{t(C_{p^2}/C_p)} \simeq 0.
\]
\end{lemma}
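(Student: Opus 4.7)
The plan is to argue in direct parallel with the Tate orbit lemma (Lemma~\ref{lem:tateorbit}): reduce via the Postnikov filtration to the case of an Eilenberg--MacLane spectrum with $C_{p^2}$-action, and then carry out the explicit computation.

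For the reductions, the functor $F\colon X\mapsto(X^{hC_p})^{t(C_{p^2}/C_p)}$ on $\Sp^{BC_{p^2}}$ is exact (composition of exact functors of stable $\infty$-categories), and on the full subcategory of $C_{p^2}$-spectra with $\pi_i(X) = 0$ for $i > n$ (any fixed $n$) it commutes with filtered colimits: the relevant homotopy fixed point and Tate spectral sequences live in a half-plane supported on only finitely many bidegrees per stem, which gives strong convergence and commutation with filtered colimits on bounded-above objects. Writing $X = \colim_{k\to-\infty}\tau_{\geq k}X$ reduces to the case where $X$ is bounded on both sides. Induction along the (then finite) Postnikov tower, via the $C_{p^2}$-equivariant cofiber sequences $\Sigma^{k}H\pi_k X\to\tau_{\geq k}X\to\tau_{\geq k+1}X$ and exactness of $F$, reduces further to $X = HM$ for a $\Z[C_{p^2}]$-module $M$. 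Using exactness of $F$ in $M$, filtered colimits, and composition series, one reduces to $M$ a simple $\bF_\ell[C_{p^2}]$-module for some prime $\ell$. For $\ell\neq p$, the spectrum $HM^{hC_p}$ is $\ell$-local with $C_{p^2}/C_p$-action, and the Tate construction vanishes on such spectra, since the norm map is multiplication by $p$ up to units on $\ell$-local homotopy groups and is therefore an equivalence.

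The main obstacle is the case $\ell = p$, where $M = \bF_p$ with trivial $C_{p^2}$-action is the only simple module (as $\bF_p[C_{p^2}]$ is local); one must show directly that $(H\bF_p^{hC_p})^{t(C_{p^2}/C_p)}\simeq 0$. The crucial subtlety, and the source of the difficulty, is that even though $C_{p^2}$ acts trivially on $H\bF_p$, the residual $C_{p^2}/C_p$-structure on $H\bF_p^{hC_p}$ is nontrivial as an object of $\Sp^{B(C_{p^2}/C_p)}$: it encodes the nontrivial extension $1\to C_p\to C_{p^2}\to C_{p^2}/C_p\to 1$ (as witnessed, for example, by $(H\bF_p^{hC_p})^{h(C_{p^2}/C_p)} = H\bF_p^{hC_{p^2}}$, which is not the homotopy fixed points for the product group). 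This nontriviality produces precisely the differentials in the iterated homotopy fixed point / Lyndon--Hochschild--Serre spectral sequence that kill the Tate cohomology and yield the desired vanishing. This is the dual of the analogous computation in the Tate orbit lemma, and is where the real work lies.
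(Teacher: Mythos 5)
Your overall strategy coincides with the paper's: reduce by Postnikov towers to Eilenberg--MacLane spectra, dispose of the prime-to-$p$ part by the vanishing of the $C_p$-Tate construction on spectra on which $p$ acts invertibly, and isolate the case $M=\bF_p$ with trivial action, where the nontriviality of the residual $C_{p^2}/C_p$-structure on $H\bF_p^{hC_p}$ (detected by the Lyndon--Hochschild--Serre spectral sequence of $1\to C_p\to C_{p^2}\to C_{p^2}/C_p\to 1$) is exactly what makes the Tate construction vanish. The reductions are essentially sound, with one exception: ``composition series'' do not reduce a general $M$ to simple $\bF_\ell[C_{p^2}]$-modules, because a finitely generated module with a free part (e.g.\ $M=\Z$ with trivial action) admits no composition series. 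An extra step is needed here; the paper resolves $M$ by torsion-free modules, deduces from the $M/p$ case that multiplication by $p$ is an automorphism of $(HM^{hC_p})^{t(C_{p^2}/C_p)}$, and then passes to $M[\tfrac 1p]$, where the Tate construction vanishes for the same elementary reason you invoke for $\ell\neq p$. (Alternatively one could first observe that the rationalization contributes nothing.)

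The genuine gap is that the case $M=\bF_p$ --- which you yourself identify as ``where the real work lies'' --- is asserted rather than proved. Saying that the Lyndon--Hochschild--Serre differentials ``kill the Tate cohomology'' names the expected mechanism but is not an argument: the Tate spectral sequence for $(H\bF_p^{hC_p})^{t(C_{p^2}/C_p)}$ has $E_2$-term $\widehat H^i(C_p,H^j(C_p,\Fp))$, equal to $\Fp$ in every bidegree with $j\geq 0$, and one must actually exhibit the differentials that cancel everything. The paper organizes this in Lemma~\ref{lem:tatefp}: each two-step truncation $\tau_{[-2i-1,-2i]}\Fp^{hC_p}$ is a $(C_{p^2}/C_p)$-equivariant object of $\calD(\Fp)$ with exactly two cohomology groups $\Fp$; if its extension class in $H^2(C_p,\Fp)$ is nonzero it is equivalent to the induced complex $\Fp[C_p]\xrightarrow{\gamma-1}\Fp[C_p]$, whose Tate construction vanishes; and nonsplitness is verified by noting that a splitting would force $H^1(C_{p^2},\Fp)\cong\Fp^2$ via the Lyndon--Hochschild--Serre spectral sequence, contradicting $\dim_{\Fp}H^1(C_{p^2},\Fp)=1$. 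One then splices over $i$ and concludes with the convergence lemma. Without some such argument your proposal does not establish the lemma.
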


\begin{example} \label{counterexamples}
We give some examples to show that neither the Tate orbit lemma nor the Tate fixpoint lemma are satisfied for all spectra.\footnote{One can, however, check that they hold for all chain complexes, as then both $(X_{hC_p})^{t(C_{p^2}/C_p)}$ and $(X^{hC_p})^{t(C_{p^2}/C_p)}$ are modules over the $\E_\infty$-ring spectrum $(H\Z^{hC_p})^{t(C_{p^2}/C_p)}\simeq 0$.}
\begin{altenumerate}
\item For the sphere spectrum $\bS$ with trivial $C_{p^2}$-action we claim that $(\bS^{hC_p})^{tC_{p^2}/C_p}$ is equivalent to the $p$-completion of the sphere spectrum. In particular, it is nontrivial and $\bS$ does not satisfy the conclusion of the Tate fixpoint lemma.

To see this, note that there is a fiber sequence
\[
\bS_{hC_p}\to \bS^{hC_p}\to \bS^{tC_p}\ ,
\]
and it follows from the Segal conjecture that $\bS^{tC_p}\simeq \bS^\wedge_p$ is the $p$-completion of the sphere spectrum, cf.~Remark~\ref{remark:segal} below. Since the map $\bS \to \bS^{tC_p}$ that leads to this equivalence is equivariant for the trivial action on the source (cf.~Example \ref{examplesTHH}~(ii)) it follows that the residual $C_{p^2}/C_p$-action on $\bS^{tC_p}$ is trivial. Applying $-^{t(C_{p^2}/C_p)}$ to the displayed fiber sequence, we get a fiber sequence
\[
(\bS_{hC_p})^{t(C_{p^2}/C_p)}\to (\bS^{hC_p})^{t(C_{p^2}/C_p)}\to (\bS^\wedge_p)^{t(C_{p^2}/C_p)}\ .
\]
Here, the first term is $0$ by Lemma~\ref{lem:tateorbit}. The last term is given by
\[
(\bS^\wedge_p)^{t(C_{p^2}/C_p)}=(\bS^\wedge_p)^{tC_p}\simeq \bS^{tC_p}\simeq \bS^\wedge_p\ ,
\]
using Lemma~\ref{lem:tatecomplete} in the middle identification, and the Segal conjecture in the last identification. Thus, we see that
\[
(\bS^{hC_p})^{t(C_{p^2}/C_p)}\simeq \bS^\wedge_p\ ,
\]
as desired.

\item Dualizing the previous example, one can check that the Anderson dual of the sphere spectrum does not satisfy the Tate orbit lemma. We leave the details to the reader.

\item Let $X = \KU$ be periodic complex $K$-theory with trivial $C_{p^2}$-action. We claim that $X$ does not satisfy the Tate orbit lemma. We will prove in Lemma~\ref{lemtet} below that for every spectrum $X$ that satisfies the Tate orbit lemma, we get an equivalence $(X^{tC_p})^{h(C_{p^2}/C_p)} \simeq X^{tC_{p^2}}$. We now check that this cannot be the case for $X=\KU$. First, we compute that
$$
\pi_*(\KU^{tC_p}) \cong \KU_*((x)) / [p](x) \cong \KU_*((x)) / \big((x-1)^p-1\big)
$$
where we have used a well known formula for Tate spectra of complex oriented cohomology theories, see e.g.~\cite[Proposition 3.20]{MR1361893} or \cite[Section 2]{MR1638030}. This is a rational spectrum as one can see by a direct computation. The homotopy groups are given by the cyclotomic field $\Q_p(\zeta_p)$ in even degrees and $0$ in odd degrees. The remaining $C_{p^2}/C_p$-action on this spectrum extends to a $\T/C_{p}$-action. This implies that it acts trivially on homotopy groups. Since we are in a rational situation this implies that taking further homotopy fixed points does not have any effect, i.e.
$$\pi_*((\KU^{tC_p})^{h(C_{p^2}/C_p)}) \cong \KU_* \otimes \Q_p(\zeta_p)$$
The Tate-spectrum for the $C_{p^2}$-action on the other hand has homotopy groups
$$
\pi_*(\KU^{tC_{p^2}}) \cong \KU_*((x)) / [p^2](x) 
$$
The $p^2$-series of the multiplicative group law is given by 
$ [p^2](x) = (x-1)^{p^2} - 1$. This spectrum is again rational, as in this ring $x^{p^2}$ is divisible by $p$, so that inverting $x$ also inverts $p$. In fact, one computes that
$$\pi_*(\KU^{tC_{p^2}}) \cong \KU_* \otimes (\Q_p(\zeta_p)\oplus \Q_p(\zeta_{p^2}))$$
But the homotopy groups of this spectrum have a different dimension over $\Q_p$ than the homotopy groups of $(\KU^{tC_p})^{h(C_{p^2}/C_p)}$. Thus we get that $(\KU^{tC_p})^{h(C_{p^2}/C_p)} \not\simeq \KU^{tC_{p^2}}$. Therefore $\KU$ with trivial $C_{p^2}$-action does not satisfy the Tate orbit lemma.
\end{altenumerate}
\end{example}

We try to keep our proof of Lemma~\ref{lem:tateorbit} and Lemma~\ref{lem:tatefixpoint} as elementary as possible; in particular, we want to avoid the use of homotopy coherent multiplicative structures. If one is willing to use the multiplicativity of the Tate construction (see Section \ref{sec:tatemonoidal}), one can give a shorter proof.\footnote{Indeed, as below, one reduces the problem to Eilenberg--MacLane spectra $X$, in which case both $(X_{hC_p})^{t(C_{p^2}/C_p)}$ and $(X^{hC_p})^{t(C_{p^2}/C_p)}$ are modules over the $\E_\infty$-ring spectrum $(H\Z^{hC_p})^{t(C_{p^2}/C_p)}$. Thus, it suffices to see that $(H\Z^{hC_p})^{t(C_{p^2}/C_p)}\simeq 0$, which follows from half of Lemma~\ref{lem:tateabgroup}.}

The following lemma is the key computation in the proof of Lemma~\ref{lem:tateorbit} and Lemma~\ref{lem:tatefixpoint}.

\begin{lemma}\label{lem:tatefp} Let $X=H\Fp$ with trivial $C_{p^2}$-action. For any integer $i\geq 0$, we have
\[
(\tau_{[2i,2i+1]} X_{hC_p})^{t(C_{p^2}/C_p)}\simeq 0\ ,\ (\tau_{[-2i-1,-2i]} X^{hC_p})^{t(C_{p^2}/C_p)}\simeq 0\ .
\]
Moreover, the conclusion of Lemma~\ref{lem:tateorbit} and Lemma~\ref{lem:tatefixpoint} holds for $X$.
\end{lemma}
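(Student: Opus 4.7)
The plan is to identify the two-term Postnikov truncations of $X_{hC_p}$ and $X^{hC_p}$ via their equivariant $k$-invariants, and then to run the Tate spectral sequence.

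Since $C_{p^2}$ is abelian, the conjugation action of $C_{p^2}/C_p$ on the normal subgroup $C_p$ is trivial, and hence the induced $C_{p^2}/C_p$-action on $\pi_*(X_{hC_p}) \cong H_*(BC_p;\Fp)$ and on $\pi_*(X^{hC_p}) \cong H^{-*}(BC_p;\Fp)$ is trivial. Consequently, the truncation $\tau_{[2i,2i+1]}X_{hC_p}$, regarded as a $C_{p^2}/C_p$-equivariant $H\Fp$-module, is classified by a single $k$-invariant $\kappa_i \in H^2(B(C_{p^2}/C_p);\Fp) \cong \Fp$, and similarly for $\tau_{[-2i-1,-2i]}X^{hC_p}$. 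The crucial geometric input is that each $\kappa_i$ is the nonzero class; this reflects the fact that the extension $1 \to C_p \to C_{p^2} \to C_{p^2}/C_p \to 1$ does not split. For $i=0$ this can be seen as follows: if $\kappa_0$ were zero, then $\tau_{[0,1]}X_{hC_p}$ would split as $H\Fp \oplus \Sigma H\Fp$ with trivial $C_{p^2}/C_p$-action, and applying $(-)_{h(C_{p^2}/C_p)}$ (together with the identification $(X_{hC_p})_{h(C_{p^2}/C_p)} \simeq X_{hC_{p^2}}$) would force $\pi_1(X_{hC_{p^2}}) = H_1(BC_{p^2};\Fp)$ to be two-dimensional, contradicting $H_1(BC_{p^2};\Fp) \cong \Fp$. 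The higher $\kappa_i$ reduce to $\kappa_0$ by multiplication with the degree-$2$ generator of $\pi_*(X_{hC_p})$, which lifts equivariantly since the action on homotopy is trivial; a dual analysis handles $\tau_{[-2i-1,-2i]}X^{hC_p}$.

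Once $\kappa_i \neq 0$ is in hand, apply the exact functor $(-)^{t(C_{p^2}/C_p)}$ to the cofiber sequence
\[
\tau_{[2i,2i+1]}X_{hC_p} \longrightarrow \Sigma^{2i}H\Fp \xrightarrow{\ \kappa_i\ } \Sigma^{2i+2}H\Fp
\]
of $C_{p^2}/C_p$-equivariant $H\Fp$-modules. The right-hand map becomes multiplication by $\kappa_i$ on $\Sigma^{2i}(H\Fp)^{t(C_{p^2}/C_p)}$, whose homotopy groups $\widehat{H}^{-*}(C_{p^2}/C_p;\Fp)$ are $2$-periodic under multiplication by any nonzero degree-$2$ class. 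Thus the map is an equivalence, and its fiber $(\tau_{[2i,2i+1]}X_{hC_p})^{t(C_{p^2}/C_p)}$ vanishes. The parallel argument with degrees shifted down gives the vanishing of $(\tau_{[-2i-1,-2i]}X^{hC_p})^{t(C_{p^2}/C_p)}$.

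For the moreover, the same $k$-invariants govern the $d_2$-differential of the Tate spectral sequence
\[
E_2^{s,t} = \widehat{H}^s(C_{p^2}/C_p;\pi_t X_{hC_p}) \Longrightarrow \pi_{t-s}\bigl((X_{hC_p})^{t(C_{p^2}/C_p)}\bigr),
\]
which converges because $X_{hC_p}$ is bounded below. Explicitly, $d_2 : E_2^{s,2i} \to E_2^{s+2,2i+1}$ is multiplication by $\kappa_i$, hence an isomorphism. Therefore $E_3^{s,2i}=0$, and $E_3^{s,2i+1}=0$ as well, because $E_2^{s,2i+1}$ lies entirely in the image of the isomorphism $d_2$ from $(s-2,2i)$. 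Thus $E_3 \equiv 0$ and $(X_{hC_p})^{t(C_{p^2}/C_p)} \simeq 0$, which is the Tate orbit lemma for $X$. The Tate fixpoint lemma for $X$ follows by the analogous spectral sequence argument applied to the bounded-above spectrum $X^{hC_p}$. The main obstacle is verifying that each $\kappa_i$ is the nonzero class; once this geometric identification is in place, the remaining work is routine spectral-sequence bookkeeping.
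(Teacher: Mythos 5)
Your overall strategy is the same as the paper's: identify each two-term truncation by its extension class in $H^2(C_{p^2}/C_p;\Fp)\cong\Fp$, prove that class is nonzero, and conclude from the $2$-periodicity of $\widehat{H}^\ast(C_p;\Fp)$. Your base case ($i=0$ on the orbit side) is correct and is the homological mirror of the paper's argument, which compares the Lyndon--Hochschild--Serre spectral sequence with $\dim H^1(C_{p^2};\Fp)=1$; the endgame given $\kappa_i\neq 0$ is also fine, as is the spectral-sequence bookkeeping for the ``moreover'' (though, as in the paper, one should really pass through $Y^{tG}\simeq\lim_n(\tau_{\leq n}Y)^{tG}$ rather than invoke convergence of a whole-plane Tate spectral sequence).

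The genuine gap is the reduction of the higher $\kappa_i$ to $\kappa_0$ on the homotopy orbit side. The ring $\pi_\ast X_{hC_p}=H_\ast(BC_p;\Fp)$ under the Pontryagin product is a divided power algebra tensor an exterior algebra (it is dual to the primitively generated Hopf algebra $H^\ast(BC_p;\Fp)$), so multiplication by the degree-$2$ generator carries the generator of $H_{2i}$ to $(i+1)$ times the generator of $H_{2i+2}$ and vanishes whenever $p\mid i+1$; it therefore cannot identify $\tau_{[2i,2i+1]}X_{hC_p}$ with $\Sigma^{2i}\tau_{[0,1]}X_{hC_p}$ for all $i$. Separately, the claim that this multiplication ``lifts equivariantly since the action on homotopy is trivial'' is not a valid inference: to make multiplication by a class equivariant you must lift the class to $\pi_\ast$ of the $C_{p^2}/C_p$-homotopy fixed points, and the nonvanishing of the $\kappa_i$ is precisely the statement that the action, while trivial on homotopy, is not trivializable. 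The paper repairs both defects at once by taking cap product with a class $\alpha\in H_{2i+1}(C_{p^2};\Fp)$ mapping to a generator of $H_{2i+1}(C_p;\Fp)$ (resp.\ cup product with $\alpha\in H^{2i}(C_{p^2};\Fp)$ restricting to a generator): because $\alpha$ lives on the ambient group $C_{p^2}$, equivariance is automatic, and these products are isomorphisms on the relevant homotopy groups, identifying all truncations in both families with the single object $\tau_{[-1,0]}X^{hC_p}$. Your cup-product reduction would survive on the fixed-point family once $t$ is taken to be restricted from $H^2(C_{p^2};\Fp)$; it is only the orbit family where the Pontryagin product fails and a cap product with an ambient class is needed.
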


\begin{proof} In the proof, we use freely the computation of $H^i(C_p,\Fp)=H_i(C_p,\Fp)=\Fp$, the ring structure on cohomology, and the module structure on homology, as well as the similar results for $C_{p^2}$. These structures are standard and can be found in most books about cohomology of groups, e.g. \cite[Chapter XII, �7]{MR0077480}. 

We start with the following vanishing result.

\begin{lemma} Let $A\in \calD(\Fp)^{BC_p}$ be a $C_p$-equivariant chain complex of $\Fp$-vector space whose only nonzero cohomology groups are $H^0(A,\Fp)=H^1(A,\Fp)=\Fp$, with (necessarily) trivial $C_p$-action. Assume that the extension class in 
\[
\pi_0 \Map_{\calD(\Fp)^{BC_p}}(\Fp[-1],\Fp[1]) = H^2(C_p,\Fp)=\Fp\ .
\]
corresponding to $A$ is nonzero.
Then $A^{tC_p}\simeq 0$.
\end{lemma}

\begin{proof} We note that there is a unique $A$ up to  equivalence with the given properties. Consider the complex
\[
B: \Fp[C_p]\buildrel {\gamma - 1}\over\longrightarrow \Fp[C_p]\ ,
\]
where $\gamma$ acts by the cyclic permutation action on $C_p$. The $C_p$-action on $B$ is given by the regular representation $\Fp[C_p]$ in both degrees. One checks directly that the only nonzero cohomology groups of $B$ are $H^0(B,\Fp)=H^1(B,\Fp)=\Fp$, and $B^{tC_p}\simeq 0$, as $\Fp[C_p]^{tC_p}\simeq 0$ because $\Fp[C_p]$ is an induced $C_p$-module. In particular, $B$ must correspond to a nonzero class in $H^2(C_p,\Fp)$, as otherwise $B^{tC_p}\simeq \Fp^{tC_p}\oplus \Fp[-1]^{tC_p}\neq 0$. Thus, $A\simeq B$, and $A^{tC_p}\simeq B^{tC_p}\simeq 0$.
\end{proof}

Now take $A=\tau_{[2i,2i+1]} \mathbb F_{p,hC_p}[-2i-1]$ respectively $A=\tau_{[-2i-1,-2i]} \Fp^{hC_p}[2i]$. By the usual computation of $H^i(C_p,\Fp)=\Fp$ and $H_i(C_p,\Fp)=\Fp$, we see that the only nonzero cohomology groups of $A$ are given by $H^0(A,\Fp)=H^1(A,\Fp)=\Fp$. It remains to see that $A$ is not $C_{p^2}/C_p$-equivariantly split.

Note that there is a class $\alpha\in H_{2i+1}(C_{p^2},\Fp)=\Fp$ respectively $\alpha\in H^{2i}(C_{p^2},\Fp)=\Fp$ which projects onto a generator of $H_{2i+1}(C_p,\Fp)$ respectively $H^{2i}(C_p,\Fp)$. Cap respectively cup product with $\alpha$ induces a $C_{p^2}/C_p$-equivariant equivalence
\[
\tau_{[-1,0]} \Fp^{hC_p}\simeq A\ .
\]
Thus, it suffices to prove that $\tau_{[-1,0]} \Fp^{hC_p}$ is not $C_{p^2}/C_p$-equivariantly split.

Now we look at the contributions in total degree $1$ to the $E_2$-spectral sequence
\[
H^i(C_{p^2}/C_p,H^j(C_p,\Fp))\Rightarrow H^{i+j}(C_{p^2},\Fp)\ .
\]
If $\tau_{[-1,0]} \Fp^{hC_p}$ is $C_{p^2}/C_p$-equivariantly split, the differential
\[
H^0(C_{p^2}/C_p,H^1(C_p,\Fp))\to H^2(C_{p^2}/C_p,H^0(C_p,\Fp))
\]
is zero, which implies that both
\[
H^0(C_{p^2}/C_p,H^1(C_p,\Fp))=\Fp\ ,\ H^1(C_{p^2}/C_p,H^0(C_p,\Fp))=\Fp
\]
survive the spectral sequence, which would imply that $H^1(C_{p^2},\Fp)\cong \Fp^2$. However, $H^1(C_{p^2},\Fp)\cong \Fp$ is $1$-dimensional, which is a contradiction.

It remains to prove the last sentence of the lemma. Note that by splicing together the result for different $i$, we see that for all $i\geq 0$,
\[
(\tau_{\leq 2i+1} X_{hC_p})^{t(C_{p^2}/C_p)}\simeq 0\ ,\ (\tau_{\geq -2i-1} X^{hC_p})^{t(C_{p^2}/C_p)}\simeq 0\ .
\]
Now the result follows from the convergence lemma below.
\end{proof}

\begin{lemma}\label{lem:tateconvergence} Let $Y$ be a spectrum with $G$-action for some finite group $G$.
\begin{altenumerate}
\item The natural maps
\[\begin{aligned}
Y^{hG}&\to \lim\nolimits_n (\tau_{\leq n} Y)^{hG}\ ,\\
Y_{hG}&\to \lim\nolimits_n (\tau_{\leq n} Y)_{hG}\ ,\\
Y^{tG}&\to \lim\nolimits_n (\tau_{\leq n} Y)^{tG}
\end{aligned}\]
are equivalences.
\item The natural maps
\[\begin{aligned}
\colim_n (\tau_{\geq -n} Y)^{hG}&\to Y^{hG}\ ,\\
\colim_n (\tau_{\geq -n} Y)_{hG}&\to Y_{hG}\ ,\\
\colim_n (\tau_{\geq -n} Y)^{tG}&\to Y^{tG}
\end{aligned}\]
are equivalences.
\end{altenumerate}
\end{lemma}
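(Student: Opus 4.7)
The plan is to reduce everything to two inputs. First, any spectrum with $G$-action is reconstructed from its Postnikov and Whitehead towers, i.e.\ $Y \simeq \lim_n \tau_{\leq n} Y$ and $Y \simeq \colim_n \tau_{\geq -n} Y$ in $\Sp^{BG}$; this holds because the standard $t$-structure on $\Sp$ is both left and right complete, and truncations, limits and colimits in $\Sp^{BG}=\Fun(BG,\Sp)$ are all computed on underlying spectra. Second, the functor $-_{hG}\colon \Sp^{BG}\to \Sp$ preserves connectivity and the functor $-^{hG}\colon \Sp^{BG}\to \Sp$ preserves coconnectivity, both facts being immediate from the standard group homology and cohomology spectral sequences.

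Given these inputs, two of the six equivalences are entirely formal: since $-^{hG}$ is a right adjoint it commutes with limits, yielding $Y^{hG}\simeq \lim_n(\tau_{\leq n}Y)^{hG}$, and since $-_{hG}$ is a left adjoint it commutes with colimits, yielding $\colim_n(\tau_{\geq -n}Y)_{hG}\simeq Y_{hG}$.

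For $Y_{hG}\simeq \lim_n(\tau_{\leq n}Y)_{hG}$, I apply the exact functor $-_{hG}$ to the fiber sequence $\tau_{\geq n+1}Y\to Y\to \tau_{\leq n}Y$ in $\Sp^{BG}$ and pass to the limit over $n$, producing a fiber sequence
\[
\lim\nolimits_n(\tau_{\geq n+1}Y)_{hG}\to Y_{hG}\to \lim\nolimits_n(\tau_{\leq n}Y)_{hG}\ .
\]
Connectivity preservation makes $(\tau_{\geq n+1}Y)_{hG}$ into an $(n+1)$-connective spectrum, so on every fixed homotopy group the tower of the leftmost terms is eventually zero; the Milnor $\lim^1$ exact sequence then gives $\lim_n(\tau_{\geq n+1}Y)_{hG}\simeq 0$ and the comparison map is an equivalence. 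Dually, applying $-^{hG}$ to $\tau_{\geq -n}Y\to Y\to \tau_{\leq -n-1}Y$ and taking the colimit yields a cofiber sequence
\[
\colim_n(\tau_{\geq -n}Y)^{hG}\to Y^{hG}\to \colim_n(\tau_{\leq -n-1}Y)^{hG}
\]
(filtered colimits commute with finite limits in $\Sp$). By coconnectivity preservation, $(\tau_{\leq -n-1}Y)^{hG}$ is $(-n-1)$-coconnective, so on each fixed $\pi_k$ the filtered colimit is eventually zero, the third term vanishes, and the left-hand map is an equivalence.

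The statements for $-^{tG}$ follow by the same pattern: apply $\lim_n$ (respectively $\colim_n$) to the defining fiber sequence $(\tau_{\leq n}Y)_{hG}\to (\tau_{\leq n}Y)^{hG}\to (\tau_{\leq n}Y)^{tG}$ (respectively to the Whitehead analogue), use that limits preserve fiber sequences and that filtered colimits preserve finite limits in $\Sp$, and deduce the remaining equivalence from the two already proved by the long exact sequence of homotopy groups. I expect the main technical subtlety to be the Milnor $\lim^1$ step: one must convert the connectivity preservation of $-_{hG}$ into the vanishing of $\lim^1$ of the tower $\{\pi_{k+1}(\tau_{\geq n+1}Y)_{hG}\}$, and once this is in hand everything else is a formal consequence of exactness and of the compatibility of filtered colimits with finite limits.
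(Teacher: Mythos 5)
Your proof is correct and follows essentially the same route as the paper: the $-^{hG}$/limit and $-_{hG}$/colimit cases are formal, the remaining two are connectivity (resp.\ coconnectivity) estimates for $-_{hG}$ (resp.\ $-^{hG}$), and the Tate cases follow by passing to the (co)fiber of the norm sequence. Your Milnor $\lim^1$ step is just a spelled-out version of the paper's observation that $Y_{hG}\to(\tau_{\leq n}Y)_{hG}$ is $n$-connected.
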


\begin{proof} In the first part, the result for $-^{hG}$ is clear as limits commute with limits. It remains to prove the result for $-_{hG}$, as then the case of $-^{tG}$ follows by passing to the cofiber. But for any $n$, the map
\[
Y_{hG}\to (\tau_{\leq n} Y)_{hG}
\]
is $n$-connected, as $Y\to \tau_{\leq n} Y$ is $n$-connected and taking homotopy orbits only increases connectivity. Passing to the limit $n\to \infty$ gives the result. The second part follows in the same way.
\end{proof}

\begin{lemma}\label{lem:tateabgroup} Let $M$ be an abelian group with $C_{p^2}$-action, and let $X=HM$ be the corresponding Eilenberg-MacLane spectrum with $C_{p^2}$-action. Then $X$ satisfies the conclusion of Lemma~\ref{lem:tateorbit} and Lemma~\ref{lem:tatefixpoint}.
\end{lemma}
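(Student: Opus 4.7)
The strategy is to reduce to Lemma~\ref{lem:tatefp} — which establishes the conclusion for $X = H\mathbb{F}_p$ with trivial $C_{p^2}$-action — via three successive devissages on $M$. Write $F(X) := (X_{hC_p})^{t(C_{p^2}/C_p)}$ and $G(X) := (X^{hC_p})^{t(C_{p^2}/C_p)}$; these are exact functors $\Sp^{BC_{p^2}} \to \Sp$ whose vanishing on $X = HM$ is the content of the lemma.

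\emph{Step 1 (Bockstein reduction to $\mathbb{F}_p$-modules).} Apply $F$ and $G$ to the multiplication-by-$p$ cofiber sequence $HM \xrightarrow{\cdot p} HM \to HM/p$. The homotopy groups $\pi_* F(HM)$ and $\pi_* G(HM)$ are built as iterated extensions from Tate cohomology groups $\hat{H}^*(C_{p^2}/C_p; -)$ of an order-$p$ group, and are hence $p$-torsion; so multiplication by $p$ acts by zero on them. Conversely, if the lemma holds for $HM/p$, then the induced cofiber sequences force $\cdot p$ to also be an equivalence on $F(HM)$ and $G(HM)$, forcing both to vanish. Now $HM/p$ has only two nontrivial homotopy groups $\pi_0 = M/pM$ and $\pi_1 = M[p]$, both $\mathbb{F}_p[C_{p^2}]$-modules, and its Postnikov cofiber sequence $\Sigma H(M[p]) \to HM/p \to H(M/pM)$ reduces the problem to the case $M = N$ is an $\mathbb{F}_p[C_{p^2}]$-module.

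\emph{Step 2 (Composition series for finitely generated modules).} The group algebra $\mathbb{F}_p[C_{p^2}]$ is local Artinian with residue field $\mathbb{F}_p$ carrying the trivial action. Any finitely generated $\mathbb{F}_p[C_{p^2}]$-module $N$ therefore admits a finite composition series whose successive subquotients are $\mathbb{F}_p$ with trivial action. Applying $F$ and $G$ to the resulting chain of cofiber sequences and invoking Lemma~\ref{lem:tatefp} at each step yields $F(HN) = G(HN) = 0$ for such $N$.

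\emph{Step 3 (Filtered colimits for general modules).} For a general $\mathbb{F}_p[C_{p^2}]$-module $N$, write $N = \colim_\alpha N_\alpha$ as a filtered colimit of finitely generated submodules, so $HN \simeq \colim_\alpha HN_\alpha$ in $\Sp^{BC_{p^2}}$. Then $HN_{hC_p}$ is a filtered colimit of uniformly connective spectra (homotopy orbits commute with colimits), while $HN^{hC_p}$ is a filtered colimit of uniformly coconnective ones — the latter holds because finite-group cohomology commutes with filtered colimits, so the natural map $\colim_\alpha (HN_\alpha)^{hC_p} \to HN^{hC_p}$ is an isomorphism on $\pi_*$ and hence an equivalence. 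The main technical point is then that $(-)^{t(C_{p^2}/C_p)}$ commutes with each of these filtered colimits: using Lemma~\ref{lem:tateconvergence} to reduce to truncated, bounded inputs, the Tate spectral sequence is then supported in a bounded region and its $E_\infty$-page commutes with filtered colimits of modules (again because Tate cohomology of a finite group commutes with such colimits). Combining with the vanishing from Step 2 yields $F(HN) = G(HN) = 0$, completing the proof.

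The principal technical challenge will be the last commutation of the Tate construction with the filtered colimit in Step 3, in particular on the TFL side where $HN^{hC_p}$ is uniformly bounded above rather than below, so that the standard strong-convergence argument for the Tate spectral sequence must be invoked in its dual form.
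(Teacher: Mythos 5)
Your Steps 1 and 2 are sound in outline (modulo a small imprecision: the homotopy groups of $(X_{hC_p})^{t(C_{p^2}/C_p)}$ and $(X^{hC_p})^{t(C_{p^2}/C_p)}$ are not killed by $p$ but only by powers of $p$ in each degree, resp.\ $p$-complete by Lemma~\ref{lem:tatecomplete} on the orbit side, which still suffices to conclude vanishing once $p$ acts invertibly). The fatal problem is Step 3, on the Tate orbit lemma side. You assert that $(-)^{t(C_{p^2}/C_p)}$ commutes with the filtered colimit $\colim_\alpha (HN_\alpha)_{hC_p}$. But these spectra are connective and \emph{not} uniformly bounded above (their homotopy is $H_*(C_p,N_\alpha)$ in all nonnegative degrees), and homotopy fixed points --- hence the Tate construction --- do not commute with filtered colimits of such spectra. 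Your proposed fix via Lemma~\ref{lem:tateconvergence} does not close the gap: writing $Y^{tC_p}\simeq \lim_n(\tau_{\leq n}Y)^{tC_p}$ and commuting the truncated Tate constructions past the colimit over $\alpha$ leaves you needing to exchange $\lim_n$ with $\colim_\alpha$, which is exactly the illegitimate step. Nor does knowing $(\tau_{\leq n}Y_\alpha)^{tC_p}$ individually help, since these truncations do \emph{not} have vanishing Tate construction even when $Y_\alpha^{tC_p}=0$. This is precisely the obstruction the paper flags explicitly: ``For the Tate orbit lemma, we do not see a direct way to prove that one can pass to filtered colimits.'' A Mittag--Leffler argument as in Theorem~\ref{thm:generalsegal} also fails here, because the pro-vanishing of the towers for the individual $N_\alpha$ is not uniform in $\alpha$ when the $N_\alpha$ vary.

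The paper's route avoids this by reorganizing the d\'evissage so that the only infinitary step occurs for a trivial module. On the orbit side it first shows $(HM_{hC_p})^{t(C_{p^2}/C_p)}$ is $p$-complete (Lemma~\ref{lem:tatecomplete}), reduces mod $p$ and to $p$-torsion-free $M$, and then uses $(\gamma-1)^{p^2}=0$ to filter \emph{any} $\bF_p[C_{p^2}]$-module (not just a finitely generated one) in finitely many steps with subquotients that are possibly infinite-dimensional $\bF_p$-vector spaces with \emph{trivial} action. For such $M=\bigoplus\bF_p$ it does not claim that the full Tate construction commutes with the direct sum; instead it observes that the functors $(\tau_{[2i,2i+1]}H(-)_{hC_p})^{t(C_{p^2}/C_p)}$ land in uniformly bounded spectra and therefore do commute with direct sums, so each graded piece vanishes for the full $M$ by Lemma~\ref{lem:tatefp}; the splicing is then an inverse limit of zeros via Lemma~\ref{lem:tateconvergence}, with no limit--colimit exchange. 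You would need to adopt something like this truncation-by-truncation argument (after reducing to trivial action) to rescue your Step 3; as written, the proof does not go through. (On the fixed point side your Step 3 is fine and matches the paper, since there everything stays uniformly coconnective.)
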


\begin{proof} We start with the Tate fixpoint lemma. First, observe that the functor taking an abelian group $M$ with $C_{p^2}$-action to
\[
(HM^{hC_p})^{t(C_{p^2}/C_p)} = \cofib((HM^{hC_p})_{h(C_{p^2}/C_p)}\to HM^{hC_{p^2}})
\]
commutes with filtered colimits. Indeed, it suffices to check this for the functors sending $M$ to $HM^{hC_p}$ and $HM^{hC_{p^2}}$ (as homotopy orbits and cofibers are colimits and thus commute with all colimits), where it is the observation that group cohomology $H^i(C_p,M)$ and $H^i(C_{p^2},M)$ commutes with filtered colimits.

Thus, we can assume that $M$ is a finitely generated abelian group. We can also assume that $M$ is torsion-free, by resolving $M$ by a $2$-term complex of torsion-free abelian groups with $C_{p^2}$-action. Now $M/p$ is a finite-dimensional $\Fp$-vector space with $C_{p^2}$-action. As $C_{p^2}$ is a $p$-group, it follows that $M/p$ is a successive extension of $\Fp$ with trivial $C_{p^2}$-action. Thus, $M/p$ satisfies the Tate fixpoint lemma by Lemma~\ref{lem:tatefp}. It follows that multiplication by $p$ induces an automorphism of $(HM^{hC_p})^{t(C_{p^2}/C_p)}$. Thus, we can pass to the filtered colimit $M[\tfrac 1p]$ of $M$ along multiplication by $p$, and assume that multiplication by $p$ is an isomorphism on $M$. Thus, the same is true for $Y=HM^{hC_p}$, and then $Y^{t(C_{p^2}/C_p)} = 0$ by the following standard observation.

\begin{lemma}\label{lem:taterational} Let $Y$ be a spectrum with $C_p$-action such that multiplication by $p$ is an isomorphism on $\pi_i Y$ for all $i\in \Z$. Then $Y^{tC_p}\simeq 0$.
\end{lemma}

\begin{proof} The assumptions imply that $\pi_i Y^{hC_p} = (\pi_i Y)^{C_p}$, $\pi_i Y_{hC_p} = (\pi_i Y)_{C_p}$, and the norm map $(\pi_i Y)_{C_p}\to (\pi_i Y)^{C_p}$ is an isomorphism, giving the claim.
\end{proof}

For the Tate orbit lemma, we do not see a direct way to prove that one can pass to filtered colimits. However, in this case, $(HM_{hC_p})^{t(C_{p^2}/C_p)}$ is $p$-complete by the next lemma.

\begin{lemma}\label{lem:tatecomplete}
Let $X$ be a spectrum with $C_p$-action which is bounded below. Then $X^{tC_p}$ is $p$-complete and equivalent to $(X^\wedge_p)^{tC_p}$.
\end{lemma}

\begin{proof}
Since $-^{tC_p}$ is an exact functor it commutes with smashing with the Moore spectrum $\bS/p$. Thus the canonical map
$$
X^{tC_p} \to (X^\wedge_p)^{tC_p}
$$
is a $p$-adic equivalence. If $X$ is bounded below then so is $X^\wedge_p$. Thus it suffices to show the first part of the lemma.

By Lemma~\ref{lem:tateconvergence} and the fact that limits of $p$-complete spectra are $p$-complete, we can assume that $X$ is bounded. We can then filter $X$ by Eilenberg-MacLane spectra, and reduce to an Eilenberg-MacLane spectrum $X=HM[i]$ concentrated in a single degree $i$. But the Tate cohomology $\widehat{H}^\ast(C_p,M)$ is $p$-torsion. Thus also $X^{tC_p}$ is $p$-torsion, and in particular $p$-complete.
\end{proof}

As $(HM_{hC_p})^{t(C_{p^2}/C_p)}$ is $p$-complete, it suffices to show that
\[
(HM_{hC_p})^{t(C_{p^2}/C_p)} / p\simeq 0
\]
in order to prove $(HM_{hC_p})^{t(C_{p^2}/C_p)}\simeq 0$. As above, we can also assume that $M$ is $p$-torsion free. In that case,
\[
(HM_{hC_p})^{t(C_{p^2}/C_p)} / p \simeq (H(M/p)_{hC_p})^{t(C_{p^2}/C_p)}\ ,
\]
so we can assume that $M$ is killed by $p$. If $\gamma$ is a generator of $C_{p^2}$, it follows that $(\gamma-1)^{p^2}=\gamma^{p^2}-1=0$ on $M$, so that $M$ has a filtration of length at most $p^2$ whose terms have trivial $C_{p^2}$-action. Therefore, we can further assume that $M$ has trivial $C_{p^2}$-action.

Thus, $M$ is an $\Fp$-vector space with trivial $C_{p^2}$-action. Applying Lemma~\ref{lem:tatefp}, we see that for all $i\geq 0$
\[
(\tau_{[2i,2i+1]} HM_{hC_p})^{t(C_{p^2}/C_p)}\simeq 0\ ,
\]
as this functor commutes with infinite direct sums. This implies
\[
(HM_{hC_p})^{t(C_{p^2}/C_p)}\simeq 0
\]
as in the final paragraph of the proof of Lemma~\ref{lem:tatefp}.
\end{proof}

\begin{proof}[Proof of Lemma~\ref{lem:tateorbit}, Lemma~\ref{lem:tatefixpoint}.] Consider first Lemma~\ref{lem:tateorbit}. We have to show that $(X_{hC_p})^{t(C_{p^2}/C_p)} = 0$ for $X$ bounded below. Equivalently, we have to prove that the norm map
\[
X_{hC_{p^2}}\to (X_{hC_p})^{hC_p}
\]
is an equivalence. By Lemma~\ref{lem:tateconvergence}~(i), both sides are given by the limit of the values at $\tau_{\leq n} X$. Thus, we can assume that $X$ is bounded. Filtering $X$, we may then assume that $X=HM[i]$ is an Eilenberg-MacLane spectrum, where the result is given by Lemma~\ref{lem:tateabgroup}.

For Lemma~\ref{lem:tatefixpoint}, one argues similarly, using Lemma~\ref{lem:tateconvergence}~(ii) and Lemma~\ref{lem:tateabgroup}.
\end{proof}

\section{Multiplicativity of the Tate construction}\label{sec:tatemonoidal}

Let $G$ be a finite group. Recall that, as a right adjoint to the symmetric monoidal functor $\Sp\to \Sp^{BG}$ sending a spectrum to the spectrum with trivial $G$-action, the homotopy fixed point functor $-^{hG}: \Sp^{BG}\to \Sp$ admits a canonical lax symmetric monoidal structure, cf.~\cite[Corollary 7.3.2.7]{HA}. We refer to Appendix \ref{app:symmmon} for a general discussion of symmetric monoidal $\infty$-categories. In this section, we show that 
the functor $-^{tG}: \Sp^{BG}\to \Sp$ admits a unique lax symmetric monoidal structure which makes the natural transformation $-^{hG}\to -^{tG}$ lax symmetric monoidal. More precisely:

\begin{thm}\label{thm:tatelaxsymm} 
The space consisting of all pairs of a lax symmetric monoidal structure on the functor $-^{tG}$ together with a lax symmetric monoidal refinement of the natural transformation $-^{hG}\to -^{tG}$ is contractible.
\end{thm}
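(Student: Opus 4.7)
The plan is to realize $-^{tG}$ as a Verdier quotient of $-^{hG}$ inside a suitable functor $\infty$-category, and to invoke the symmetric monoidal Verdier quotient machinery (Theorem~\ref{thm:verdier} and Theorem~\ref{thm:verdiermult}) to obtain both existence and uniqueness at once. The starting point is that the trivial-action functor $\mathrm{triv}: \Sp \to \Sp^{BG}$ is symmetric monoidal, so its right adjoint $-^{hG}$ carries a canonical lax symmetric monoidal structure by \cite[Corollary 7.3.2.7]{HA}. The problem thus reduces to showing that this structure descends uniquely along the cofiber map $-^{hG}\to -^{tG}=\cofib(\Nm_G: -_{hG}\to -^{hG})$.

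The key pointwise computation is a projection formula check. For $X,Y\in\Sp^{BG}$, one has
\[
X_{hG}\otimes Y^{hG} \simeq (X\otimes \mathrm{triv}(Y^{hG}))_{hG},
\]
and the counit $\mathrm{triv}(Y^{hG})\to Y$ induces a natural factorization $X_{hG}\otimes Y^{hG}\to (X\otimes Y)_{hG}\to (X\otimes Y)^{hG}$. Consequently the composite
\[
X_{hG}\otimes Y^{hG}\to X^{hG}\otimes Y^{hG}\to (X\otimes Y)^{hG}\to (X\otimes Y)^{tG}
\]
is canonically null, and similarly with the roles of $X,Y$ swapped. Hence the lax multiplication of $-^{hG}$ factors through the quotient $X^{tG}\otimes Y^{tG}\to (X\otimes Y)^{tG}$ on binary products. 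This is the unit-level shadow of the statement that the relevant class of morphisms we are quotienting by forms a tensor ideal.

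To upgrade this to a fully coherent lax symmetric monoidal refinement whose space is contractible, I would organize the data as follows. The cofiber sequence $-_{hG}\to -^{hG}\to -^{tG}$ realizes $-^{tG}$ as a Verdier quotient in a stable functor $\infty$-category equipped with an appropriate symmetric monoidal refinement tracking the lax structure. The projection formula computation above globalizes to the statement that the sub-$\infty$-category of functors being quotiented is a symmetric monoidal ideal, with the coherences controlled by the counit of the $\mathrm{triv}\dashv -^{hG}$ adjunction. By Theorem~\ref{thm:verdiermult}, a symmetric monoidal structure on the quotient making the quotient map lax symmetric monoidal exists and is unique (i.e.\ the space of such is contractible). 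Specializing this universal property to our setting yields the theorem.

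The main obstacle is the coherent verification that the relevant sub-$\infty$-category is a symmetric monoidal ideal: while the unary projection formula gives the base case, one needs that all higher tensor compositions of $-_{hG}$-factors with $-^{hG}$-factors still map to zero in $-^{tG}$, compatibly in all arities. This is precisely the content that must be fed into Theorem~\ref{thm:verdiermult}, and it is the place where the adjunction $\mathrm{triv}\dashv -^{hG}$ must be used in its full symmetric monoidal form to produce the infinite tower of coherences, rather than just the single binary comparison map.
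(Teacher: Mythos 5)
Your overall strategy --- obtain existence and uniqueness simultaneously from the symmetric monoidal Verdier quotient machinery of Theorem~\ref{thm:verdiermult} --- is the right one, but you apply it in the wrong category, and the step you yourself flag as ``the main obstacle'' is a genuine gap that your setup makes hard rather than easy. Theorem~\ref{thm:verdiermult} quotients a symmetric monoidal stable \emph{source category} $\calC$ by a $\otimes$-ideal of \emph{objects} $\calD\subseteq\calC$; it does not quotient a lax symmetric monoidal functor by a sub-functor. To ``realize $-^{tG}$ as a Verdier quotient of $-^{hG}$ inside a suitable functor $\infty$-category'' you would need a symmetric monoidal structure on $\Fun^{\Ex}(\Sp^{BG},\Sp)$ (Day convolution), a proof that the ideal generated by $-_{hG}$ is a tensor ideal there, and the full tower of multiplicative coherences for the norm map --- none of which the paper sets up and none of which your binary projection-formula computation supplies. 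Even that binary check is incomplete: to conclude nullity of $X_{hG}\otimes Y^{hG}\to (X\otimes Y)^{tG}$ you implicitly identify the lax multiplication restricted to $X_{hG}\otimes Y^{hG}$ with the map produced by the projection formula and the counit, and that identification is itself a coherence you would have to establish.

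The paper's proof avoids all of this by choosing the ideal on the other side of the functor: take $\calC=\Sp^{BG}$ and $\calD=\Sp^{BG}_{\ind}$, the stable subcategory generated by induced spectra. There the $\otimes$-ideal property is the one-line observation $X\otimes(\bigoplus_{g\in G}Z)\simeq\bigoplus_{g\in G}(X\otimes Z)$ (Lemma~\ref{lem:propindbg}~(ii)), and $-^{tG}$ kills $\calD$ because the norm map is an equivalence on induced objects. The remaining --- and essential --- ingredient, which is entirely absent from your proposal, is Lemma~\ref{lem:propindbg}~(iii): every $X\in\Sp^{BG}$ is the canonical filtered colimit of $\cofib(Y\to X)$ over $Y\in(\Sp^{BG}_{\ind})_{/X}$, and applying $-^{hG}$ to this colimit computes $X^{tG}$. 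This is what identifies the universal lax symmetric monoidal approximation of $-^{hG}$ factoring over $\calC/\calD$ (Theorem~\ref{thm:verdiermult}~(ii)) with the functor $-^{tG}$, rather than with some other localization. Without an analogue of this identification your argument would at best produce a contractible space of structures on \emph{some} quotient functor, not on $-^{tG}$ itself. Finally, note that the paper must also handle the set-theoretic issue that $\Sp^{BG}$ is not small, by passing to $\kappa$-compact objects.
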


It is well-known in genuine equivariant homotopy theory that the Tate construction admits a canonical lax symmetric monoidal structure, cf.~the discussion after Proposition~\ref{prop:hesselholtmadsen} below; however, we are not aware that the uniqueness assertion was known before. Moreover, we do not know a previous construction of the lax symmetric monoidal structure on the Tate construction that avoids the use of genuine equivariant homotopy theory. 

For the proof, we need to use some properties of Verdier localizations of stable $\infty$-categories. These are discussed in \cite[Section 5]{BlumbergGepnerTabuada}, but we need some finer structural analysis.

\begin{definition}\label{def:tensorideal} Let $\calC$ be a stable $\infty$-category.
\begin{altenumerate}
\item A stable subcategory of $\calC$ is a full subcategory $\calD\subseteq \calC$ such that $\calD$ is stable and the inclusion $\calD\subseteq \calC$ is exact.
\item Assume that $\calC$ is a stably symmetric monoidal stable $\infty$-category\footnote{By ``stably symmetric monoidal'' we mean that the tensor product is exact in each variable separately.}. A stable subcategory $\calD\subseteq \calC$ is a $\otimes$-ideal if for all $X\in \calC$ and $Y\in \calD$, one has $X\otimes Y\in \calD$.
\end{altenumerate}
\end{definition}

We need the following results about the Verdier localization of stable $\infty$-categories.

\begin{thm}\label{thm:verdier}
Let $\calC$ be a small, stable $\infty$-category and $\calD\subseteq \calC$ a stable sub\-cate\-gory.\begin{altenumerate}
\item Let $W$ be the collection of all arrows in $\calC$ whose cone lies in $\calD$. Then the Dwyer--Kan localization $\calC/\calD := \calC[W^{-1}]$ 
\footnote{For an $\infty$-category $\calC$ and a class of edges $W \subseteq \calC_1$ the Dwyer--Kan localization $\calC[W^{-1}]$ is the universal $\infty$-category with a functor $\calC \to \calC[W^{-1}]$ that sends $W$ to equivalences. We use the slightly non standard term Dwyer--Kan localization to distinguish this concept from the related notion of Bousfield localization, which is called localization in \cite{HTT}.} is a stable $\infty$-category, and $\calC\to \calC/\calD$ is an exact functor. If $\calE$ is another stable $\infty$-category, then composition with $\calC\to\calC/\calD$ induces an equivalence between $\Fun^\Ex(\calC/\calD,\calE)$ and the full subcategory of $\Fun^\Ex(\calC,\calE)$ consisting of those functors which send all objects of $\calD$ to $0$.
\item Let $X,Y\in \calC$ with image $\overline{X},\overline{Y}\in \calC/\calD$. The mapping space in $\calC/\calD$ is given by
\[
\Map_{\calC/\calD}(\overline{X},\overline{Y})\simeq \colim_{Z\in \calD_{/Y}} \Map_\calC(X,\cofib(Z\to Y))\ ,
\]
where the colimit is filtered. In particular, the Yoneda functor
\[
\calC/\calD\to \Fun(\calC^\op,\calS) : \overline{Y}\mapsto (X\mapsto \Map_{\calC/\calD}(\overline{X},\overline{Y}))
\]
factors over the $\Ind$-category,
\[
\calC/\calD\to \Ind(\calC)\subseteq \Fun(\calC^\op,\calS)\ .
\]
This  is an exact functor of stable $\infty$-categories, and sends the image $\overline{Y}\in \calC/\calD$ of $Y\in \calC$ to the formal colimit $\colim_{Z\in \calD_{/Y}} \cofib(Z\to Y)\in \Ind(\calC)$.
\item Assume that $\calE$ is a presentable stable $\infty$-category. Then the full inclusion $\Fun^\Ex(\calC/\calD,\calE)\subseteq \Fun^\Ex(\calC,\calE)$ is right adjoint to a localization in the sense of~\cite[Definition 5.2.7.2]{HTT}. The corresponding localization functor
\[
\Fun^\Ex(\calC,\calE)\to \Fun^\Ex(\calC/\calD,\calE)\subseteq \Fun^\Ex(\calC,\calE)
\]
is given by taking an exact functor $F: \calC\to \calE$ to the composite
\[
\calC/\calD\to \Ind(\calC)\xto{\Ind(F)} \Ind(\calE)\to \calE\ ,
\]
where the first functor comes from Theorem~\ref{thm:verdier}~(ii) and the last functor is taking the colimit in $\calE$.
\end{altenumerate}
\end{thm}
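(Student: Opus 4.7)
I would construct $\calC/\calD$ as the pushout $\calC \sqcup_\calD 0$ in the $\infty$-category of small stable $\infty$-categories and exact functors; this pushout exists, cf.~\cite[Section 5]{BlumbergGepnerTabuada}, and by construction its universal property is exactly the one claimed in (i). To identify this pushout with the Dwyer--Kan localization $\calC[W^{-1}]$, I would check that the two universal properties agree on an exact functor $F \colon \calC \to \calE$: namely, $F$ inverts every arrow in $W$ iff $F|_\calD \simeq 0$. One direction uses that for $Z \in \calD$ the map $Z \to 0$ has cofiber $0 \in \calD$, hence lies in $W$, so $F(Z) \simeq 0$; the converse uses that a map $X \to Y$ with $\cofib \in \calD$ sits in a fiber sequence whose third term goes to $0$ under any $F$ vanishing on $\calD$, so $F(X) \to F(Y)$ is an equivalence by stability of $\calE$. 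The stability of $\calC/\calD$ and exactness of $\calC \to \calC/\calD$ are then formal consequences of the pushout description.

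\noindent\textbf{Plan for (ii), the main obstacle.} This is the technical heart. First I would show that the slice $\calD_{/Y}$ is filtered: stability of $\calD$ means $\calD_{/Y}$ admits finite colimits (formed as iterated pushouts and initial objects in $\calD$, which map to $Y$ in a compatible way), and any $\infty$-category admitting finite colimits is filtered. Next I would define a functor $R \colon \calC \to \Ind(\calC)$ by $Y \mapsto \text{``}\colim_{Z \in \calD_{/Y}} \cofib(Z \to Y)\text{''}$, checking functoriality via the base-change $\calD_{/Y} \to \calD_{/Y'}$, $Z \mapsto Z \sqcup_{?} \ldots$ that a map $Y \to Y'$ induces on slices. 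I would then verify $R(Z) \simeq 0$ for $Z \in \calD$ using that $\mathrm{id}_Z$ is terminal in $\calD_{/Z}$, so by filteredness the colimit equals $\cofib(\mathrm{id}_Z) = 0$. By part (i), $R$ factors as $\bar R \colon \calC/\calD \to \Ind(\calC)$. The crux is to prove $\bar R$ is fully faithful; concretely this amounts to the assertion
\[
\Map_{\calC/\calD}(\overline X, \overline Y) \;\simeq\; \colim_{Z \in \calD_{/Y}} \Map_\calC(X, \cofib(Z \to Y)).
\]
My approach is to verify directly that the right-hand side defines a presheaf on $\calC$ in $X$, that the induced composition law assembles into an $\infty$-category $\calC'$ with the same objects as $\calC$ and a canonical exact functor $\calC \to \calC'$, and that this functor enjoys the universal property of the Verdier quotient --- this is essentially a calculus-of-fractions argument using the fact that $W$ is stable under cobase change (because cofibers of cofibers are cofibers, and $\calD$ is a stable subcategory). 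The main difficulty is showing associativity and unitality of the candidate composition at the $\infty$-categorical level, for which I would either adapt the dg-/pretriangulated argument of \cite[Section 5]{BlumbergGepnerTabuada} or exhibit $\calC'$ as a full subcategory of $\Ind(\calC)$ from the outset, defining $\bar R$ as the inclusion and recovering the mapping-space formula from the standard $\Ind$-formula $\Map_{\Ind(\calC)}(X, \colim Y_i) = \colim \Map_\calC(X, Y_i)$.

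\noindent\textbf{Plan for (iii).} For $\calE$ presentable and stable, the colimit functor $\colim \colon \Ind(\calE) \to \calE$ exists and is exact. Given an exact $F \colon \calC \to \calE$, I would form $\tilde F := \colim \circ \Ind(F) \colon \Ind(\calC) \to \calE$ and set $LF := \tilde F \circ \bar R \colon \calC/\calD \to \calE$ (restricted via the fully faithful $\bar R$ from (ii)), which is exact because both factors are. The natural map $F \Rightarrow LF \circ (\calC \to \calC/\calD)$ sends $Y$ to the canonical map $F(Y) = F(\cofib(0 \to Y)) \to \colim_{Z \in \calD_{/Y}} F(\cofib(Z \to Y))$ induced by $0 \in \calD_{/Y}$. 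I would verify that this map is an equivalence whenever $F$ vanishes on $\calD$ (by filteredness, every $Z \in \calD_{/Y}$ makes $F(Z) \simeq 0$ and hence $F(\cofib(Z \to Y)) \simeq F(Y)$), and that for general $F$ the composite $LF$ again vanishes on $\calD$ (obvious from $R(Z) \simeq 0$). These two facts exhibit $F \mapsto LF$ as left adjoint to the inclusion $\Fun^\Ex(\calC/\calD,\calE) \subseteq \Fun^\Ex(\calC,\calE)$ and identify it with a Bousfield localization in the sense of \cite[Definition 5.2.7.2]{HTT}.
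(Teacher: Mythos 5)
Your proposal correctly identifies the skeleton of the argument (filteredness of $\calD_{/Y}$, vanishing of the formal colimit on $\calD$, and the check of \cite[Proposition 5.2.7.4]{HTT} for (iii), which matches the paper), but the step you yourself flag as "the main difficulty" in (ii) is precisely where the content lies, and neither of your two alternatives closes it. Building an $\infty$-category $\calC'$ from candidate hom-spaces and a composition law runs straight into the coherence problem you name; and the fallback of taking $\calC'$ to be the full subcategory of $\Ind(\calC)$ on the objects $R(Y)$ does not "recover the mapping-space formula from the standard $\Ind$-formula" --- the $\Ind$-formula computes $\Map_{\calC'}(R(X),R(Y))$, but you still have to prove that this $\calC'$ satisfies the universal property of the Dwyer--Kan localization $\calC[W^{-1}]$, which is equivalent to the mapping-space formula and is exactly what is at stake. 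The paper avoids this entirely by working with the localization $\calC[W^{-1}]$ that already exists by general nonsense and computing its mapping spaces: for any class $W$, $\Map_{\calC[W^{-1}]}(\overline X,-)\simeq L(\Map_\calC(X,-))$, where $L$ is the left adjoint to the full inclusion $\Fun(\calC[W^{-1}],\calS)\subseteq\Fun(\calC,\calS)$ (Lemma~\ref{lem:compmapspacedwyerkan}, a Yoneda argument). One then identifies $L$ with $F\mapsto\bigl(X\mapsto\colim_{Z\in\calD_{/X}}F(\cofib(Z\to X))\bigr)$ by checking that this functor inverts $W$ and that $F\to L(F)$ is an equivalence when $F$ already inverts $W$, again via \cite[Proposition 5.2.7.4]{HTT}. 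No composition law ever has to be constructed by hand.

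There is also a logical circularity in your plan for (i). The theorem defines $\calC/\calD$ as the Dwyer--Kan localization and asserts it is stable; identifying it with the pushout $\calC\sqcup_\calD 0$ in stable $\infty$-categories by comparing universal properties only works once you already know $\calC[W^{-1}]$ is stable and $\calC\to\calC[W^{-1}]$ is exact (the two universal properties quantify over different test categories and different functors). In the paper, stability is deduced \emph{from} the mapping-space formula of (ii): the formula shows $\calC\to\calC[W^{-1}]$ preserves finite colimits because filtered colimits commute with finite limits in $\calS$, and dually for finite limits; stability of $\calC[W^{-1}]$ then follows by lifting (co)limit diagrams from $\calC$. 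So the correct order is (ii) first, then (i), the opposite of your plan. (A small slip: the cofiber of $Z\to 0$ is $\Sigma Z$, not $0$; the map still lies in $W$ since $\calD$ is stable, so your conclusion that exact functors inverting $W$ kill $\calD$ survives.) Your plan for (iii) is essentially the paper's.
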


We note that it follows from the definition of $\calC/\calD$ that on homotopy categories $h(\calC/\calD)=h\calC/h\calD$ is the Verdier quotient of triangulated categories. Part (iii) says that any exact functor $F: \calC\to \calE$ has a universal approximation that factors over the quotient $\calC/\calD$.

\begin{proof} First, we prove part (ii). For the moment, we write $\calC[W^{-1}]$ in place of $\calC/\calD$, as we do not yet know (i).

We start with a general observation about mapping spaces in $\calC[W^{-1}]$ for any small $\infty$-category $\calC$ with a collection $W$ of arrows. Recall that there is a model of $\calC[W^{-1}]$ which has the same objects as $\calC$, and we will sometimes tacitly identify objects of $\calC$ and $\calC[W^{-1}]$ in the following. By definition, the $\infty$-category $\calC[W^{-1}]$ has the property that for any $\infty$-category $\calD$,
\[
\Fun(\calC[W^{-1}],\calD)\subseteq \Fun(\calC,\calD)
\]
is the full subcategory of those functors taking all arrows in $W$ to equivalences. Applying this in the case $\calD=\calS$ the $\infty$-category of spaces, we have the full inclusion
\[
\Fun(\calC[W^{-1}],\calS)\subseteq \Fun(\calC,\calS)
\]
of presentable $\infty$-categories (cf.~\cite[Proposition 5.5.3.6]{HTT}) which preserves all limits (and colimits). By the adjoint functor theorem, \cite[Corollary 5.5.2.9]{HTT}, it admits a left adjoint
\[
L: \Fun(\calC,\calS)\to \Fun(\calC[W^{-1}],\calS)\ .
\]
The following lemma gives a description of the mapping spaces in $\calC[W^{-1}]$ in terms of $L$.

\begin{lemma}\label{lem:compmapspacedwyerkan} For every object $X\in \calC$ with image $\overline{X}\in \calC[W^{-1}]$, the functor
\[
\Map_{\calC[W^{-1}]}(\overline{X},-)
\]
is given by $L(\Map_\calC(X,-))$. Moreover, the diagram
\[\xymatrix{
\calC^\op\ar[r]\ar[d] & \Fun(\calC,\calS)\ar^L[d] \\
\calC[W^{-1}]^\op\ar[r] & \Fun(\calC[W^{-1}],\calS)
}\]
of $\infty$-categories commutes, where the horizontal maps are the Yoneda embeddings.
\end{lemma}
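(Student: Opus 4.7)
The plan is a double application of the Yoneda lemma. Write $p: \calC \to \calC[W^{-1}]$ for the localization functor, so that by definition the inclusion $\Fun(\calC[W^{-1}],\calS) \subseteq \Fun(\calC,\calS)$ is precomposition $p^\ast$ and $L$ is its left adjoint. First I would prove the pointwise statement that $L(\Map_\calC(X,-)) \simeq \Map_{\calC[W^{-1}]}(\overline{X},-)$ for each $X \in \calC$. For any $G \in \Fun(\calC[W^{-1}],\calS)$, the chain
$$\Map(L(\Map_\calC(X,-)),G) \simeq \Map(\Map_\calC(X,-), p^\ast G) \simeq (p^\ast G)(X) = G(\overline{X})$$
is obtained by combining the $L \dashv p^\ast$ adjunction with the Yoneda lemma inside $\Fun(\calC,\calS)$. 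On the other hand, the Yoneda lemma applied directly in $\Fun(\calC[W^{-1}],\calS)$ gives $\Map(\Map_{\calC[W^{-1}]}(\overline{X},-), G) \simeq G(\overline{X})$. These equivalences are natural in $G$, so the two objects $L(\Map_\calC(X,-))$ and $\Map_{\calC[W^{-1}]}(\overline{X},-)$ corepresent the same functor on $\Fun(\calC[W^{-1}],\calS)$, hence are canonically equivalent by the (converse direction of the) Yoneda lemma.

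For the commutativity of the displayed square I would then promote this pointwise equivalence to an equivalence of functors $\calC^\op \to \Fun(\calC[W^{-1}],\calS)$. The cleanest packaging is to regard both composites in the square as producing a bifunctor
$$\calC^\op \times \Fun(\calC[W^{-1}],\calS) \longrightarrow \calS$$
by additionally evaluating on $G$; the chain of equivalences above identifies both bifunctors with $(X,G) \mapsto G(\overline{X})$. Since the Yoneda embedding of $\Fun(\calC[W^{-1}],\calS)$ into $\Fun(\Fun(\calC[W^{-1}],\calS)^\op, \calS)$ is fully faithful, an equivalence of these bifunctors is the same datum as an equivalence of the two functors $\calC^\op \to \Fun(\calC[W^{-1}],\calS)$ one is trying to compare, and the square commutes.

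The main subtlety is the $\infty$-categorical naturality in $X$: at the level of homotopy categories, the double Yoneda argument is immediate, but one must make sure that the adjunction and Yoneda equivalences used above genuinely assemble into a natural transformation of diagrams of $\infty$-categories, not merely a pointwise equivalence. This is exactly what the bifunctor reformulation takes care of, since the adjunction counit and the Yoneda identification are already natural in both variables by construction; so no further bookkeeping is required beyond choosing a consistent model for the unit/counit of $L \dashv p^\ast$.
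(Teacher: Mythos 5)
Your proposal is correct and follows essentially the same route as the paper: the pointwise identification via the $L\dashv p^\ast$ adjunction plus the Yoneda lemma in $\Fun(\calC,\calS)$, followed by a second application of the Yoneda lemma in $\Fun(\calC[W^{-1}],\calS)$ to compare the two composites. The only cosmetic difference is in how the naturality in $X$ is packaged — the paper first factors $L\circ\Map_\calC(-,\,\cdot\,)$ through $\calC[W^{-1}]^\op$ via the universal property of the localization and then checks agreement with the Yoneda embedding after restriction to $\calC^\op$, whereas you compare the two functors directly using full faithfulness of the (co)Yoneda embedding of $\Fun(\calC[W^{-1}],\calS)$; both amount to the same double-Yoneda argument (note only that your bifunctor should be covariant in $X\in\calC$, i.e.\ defined on $\calC\times\Fun(\calC[W^{-1}],\calS)$).
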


\begin{proof} For any object $X\in \calC$ and functor $F: \calC[W^{-1}]\to \calS$, we have
\[
\Map_{\Fun(\calC[W^{-1}],\calS)}(L(\Map_\calC(X,-)),F) = \Map_{\Fun(\calC,\calS)}(\Map_\calC(X,-),F|_\calC) = F(\overline{X})
\]
by adjunction and the Yoneda lemma in $\calC$. In particular, if $f: X\to Y$ is a morphism in $W$ such that $\overline{X}\to \overline{Y}$ is an equivalence, then
\[
L(\Map_\calC(Y,-))\to L(\Map_\calC(X,-))
\]
is an equivalence by the Yoneda lemma in $\Fun(\calC[W^{-1}])$. This implies that the composite
\[
\calC^\op\to \Fun(\calC,\calS)\xto{L} \Fun(\calC[W^{-1}],\calS)
\]
factors uniquely over $\calC[W^{-1}]$. To see that the resulting functor
\[
\calC[W^{-1}]^\op\to \Fun(\calC[W^{-1}],\calS)
\]
is the Yoneda embedding, it suffices to check on the restriction to $\calC^\op$ (by the universal property of $\calC[W^{-1}]^\op$). But this follows from the first displayed equation in the proof, and the Yoneda lemma in $\Fun(\calC^\op[W^{-1}],\calS)$.
\end{proof}

In our case, we can give a description of $L$. Indeed, given any functor $F: \calC\to \calS$, we can form a new functor $L(F): \calC\to \calS$ which sends $X\in \calC$ to
\[
\colim_{Y\in \calD_{/X}} F(\cofib(Y\to X))\ .
\]
Note that as $\calD$ has all finite colimits, the index category is filtered. Also, there is a natural transformation $F\to L(F)$ functorial in $F$. If $F$ sends arrows in $W$ to equivalences, then $F\to L(F)$ is an equivalence, and in general $L(F)$ will send arrows in $W$ to equivalences. From this, one checks easily using \cite[Proposition 5.2.7.4]{HTT} that $L: \Fun(\calC,\calS)\to \Fun(\calC,\calS)$ is a localization functor with essential image given by $\Fun(\calC[W^{-1}],\calS)$, i.e.~$L$ is the desired left adjoint. Now part (ii) follows from Lemma~\ref{lem:compmapspacedwyerkan}.

Now we prove part (i). Note that it suffices to see that $\calC[W^{-1}]$ is a stable $\infty$-category for which the functor $\calC\to \calC[W^{-1}]$ is exact, as the desired universal property will then follow from the universal property of $\calC[W^{-1}]$. It follows from the formula in (ii) that $\calC\to \calC[W^{-1}]$ commutes with all finite colimits, as filtered colimits commute with finite limits in $\calS$. Applying this observation to the dual $\infty$-categories, we see that $\calC\to \calC[W^{-1}]$ also commutes with all finite limits. In particular, one checks that $\calC[W^{-1}]$ is pointed, preadditive and additive. As any pushout diagram in $\calC[W^{-1}]$ can be lifted to $\calC$, where one can take the pushout which is preserved by $\calC\to \calC[W^{-1}]$, it follows from~\cite[Corollary 4.4.2.4]{HTT} that $\calC[W^{-1}]$ has all finite colimits. Dually, it has all finite limits. The loop space functor $X\mapsto 0\times_X 0$ and the suspension functor $X\mapsto 0\sqcup_X 0$ are now defined on $\calC[W^{-1}]$ and commute with the loop space and suspension functors on $\calC$. To check whether the adjunction morphisms are equivalences, it suffices to check after restriction to $\calC$, where it holds by assumption. This finishes the proof of (i).

Part (iii) follows by a straightforward check of the criterion \cite[Proposition 5.2.7.4 (3)]{HTT}.
\end{proof}

In \cite[Section 5]{BlumbergGepnerTabuada}, the Verdier quotient is defined indirectly through the passage to $\Ind$-categories. We note that our definition agrees with theirs. More precisely, one has the following result.

\begin{proposition}\label{prop:verdierquotind} Let $\mathcal C$ be a small, stable $\infty$-category and let $\mathcal D\subseteq \mathcal C$ be a stable subcategory. Then the $\Ind$-category $\Ind(\mathcal C)$ is a presentable stable $\infty$-category, $\Ind(\mathcal D)\subseteq \Ind(\mathcal C)$ is a presentable stable subcategory, and the canonical functor
\[
\Ind(\mathcal C)\to \Ind(\mathcal C/\mathcal D)
\]
has kernel\footnote{The kernel is the full subcategory of $\Ind(\mathcal C)$ consisting of all objects that are mapped to a zero object under the functor $\Ind(\mathcal C)\to \Ind(\mathcal C/\mathcal D)$.} given by $\Ind(\mathcal D)$. Moreover, the functor $\Ind(\mathcal C)\to \Ind(\mathcal C/\mathcal D)$ is a localization in the sense of \cite[Definition 5.2.7.2]{HTT}, with fully faithful right adjoint
\[
\Ind(\mathcal C/\mathcal D)\to \Ind(\mathcal C)
\]
given by the unique colimit-preserving functor whose restriction to $\mathcal C/\mathcal D$ is given by the functor in Theorem~\ref{thm:verdier}~(ii).
\end{proposition}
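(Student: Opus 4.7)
The plan is to work in four steps. First, presentability and stability of $\Ind(\mathcal C)$ and $\Ind(\mathcal D)$ follow from the general theory of $\Ind$-completions of small stable $\infty$-categories. The fully faithful exact inclusion $\mathcal D \hookrightarrow \mathcal C$ induces a fully faithful, exact, filtered-colimit-preserving functor $\Ind(\mathcal D) \hookrightarrow \Ind(\mathcal C)$; in particular $\Ind(\mathcal D)$ is closed under filtered colimits inside $\Ind(\mathcal C)$.

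Second, I construct the candidate right adjoint: the exact functor $\mathcal C/\mathcal D \to \Ind(\mathcal C)$ of Theorem~\ref{thm:verdier}~(ii) extends uniquely, by the universal property of $\Ind$, to a colimit-preserving functor $R : \Ind(\mathcal C/\mathcal D) \to \Ind(\mathcal C)$. On the other hand, $L := \Ind(\mathcal C \to \mathcal C/\mathcal D)$ preserves all colimits between presentable $\infty$-categories, so by the adjoint functor theorem it admits a right adjoint $\widetilde R$. To identify $\widetilde R \simeq R$, I compare them on $\mathcal C/\mathcal D$: for $X \in \mathcal C$ and $\overline Y \in \mathcal C/\mathcal D$, compactness of $X$ in $\Ind(\mathcal C)$ together with the mapping-space formula of Theorem~\ref{thm:verdier}~(ii) gives
\[
\Map(X, R(\overline Y)) = \colim_{Z \in \mathcal D_{/Y}} \Map_\mathcal C(X, \cofib(Z \to Y)) \simeq \Map_{\mathcal C/\mathcal D}(\overline X, \overline Y) \simeq \Map(X, \widetilde R(\overline Y)).
\]
Since $\mathcal C$ generates $\Ind(\mathcal C)$ under filtered colimits, this identifies $\widetilde R \simeq R$ on $\mathcal C/\mathcal D$ and hence (both being filtered-colimit-preserving) everywhere.

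Third, I check that $R$ is fully faithful by showing the counit $L R \to \id$ is an equivalence. It suffices to check on $\overline Y \in \mathcal C/\mathcal D$, where
\[
L R(\overline Y) = L\bigl(\colim_{Z} \cofib(Z \to Y)\bigr) \simeq \colim_Z \overline{\cofib(Z \to Y)} \simeq \colim_Z \overline Y \simeq \overline Y,
\]
using colimit-preservation of $L$ and that $\overline Z = 0$ in $\mathcal C/\mathcal D$ for $Z \in \mathcal D$ by Theorem~\ref{thm:verdier}~(i). Thus $L$ is a Bousfield localization in the sense of \cite[Definition 5.2.7.2]{HTT}, with fully faithful right adjoint $R$.

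The main obstacle is identifying $\ker(L) = \Ind(\mathcal D)$. The inclusion $\Ind(\mathcal D) \subseteq \ker(L)$ is clear since $L$ vanishes on $\mathcal D$ and preserves colimits. For the reverse, the key computation is that for $Y \in \mathcal C$, the fiber of the unit $Y \to R L(Y) = \colim_{Z \in \mathcal D_{/Y}} \cofib(Z \to Y)$ equals $\colim_{Z \in \mathcal D_{/Y}} Z$, since filtered colimits commute with fibers in the stable $\infty$-category $\Ind(\mathcal C)$; this colimit lies in $\Ind(\mathcal D)$. For a general $Y \in \Ind(\mathcal C)$, writing $Y \simeq \colim_\alpha Y_\alpha$ as a filtered colimit of objects $Y_\alpha \in \mathcal C$ and using that $L$ and $R$ both preserve filtered colimits, one concludes $\fib(Y \to R L(Y)) \in \Ind(\mathcal D)$, using closure of $\Ind(\mathcal D)$ under filtered colimits in $\Ind(\mathcal C)$. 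Finally, if $L(Y) = 0$ then $R L(Y) = 0$, and $Y \simeq \fib(Y \to 0) \simeq \fib(Y \to R L(Y)) \in \Ind(\mathcal D)$, completing the proof.
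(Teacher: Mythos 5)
Your proof is correct and follows essentially the same route as the paper: produce the right adjoint by the adjoint functor theorem, identify it on $\mathcal C/\mathcal D$ with the functor of Theorem~\ref{thm:verdier}~(ii) using compactness, deduce the localization property by checking the (co)unit on generators, and identify the kernel via the formula $\fib(Y\to RL(Y))\simeq\colim_{Z\in\mathcal D_{/Y}}Z$. The one point you leave implicit is why $\widetilde R$ preserves filtered colimits (needed to propagate the identification $\widetilde R\simeq R$ from $\mathcal C/\mathcal D$ to all of $\Ind(\mathcal C/\mathcal D)$); this holds because $L$ takes compact objects to compact objects, which is precisely the input the paper invokes.
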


\begin{proof} Note first that by passing to $\Ind$-categories, we get a colimit-preserving map
\[
\Ind(\calC)\to \Ind(\calC/\calD)
\]
between presentable stable $\infty$-categories; by the adjoint functor theorem \cite[Corollary 5.5.2.9]{HTT}, it has a right adjoint $R$. On $\calC/\calD\subseteq \Ind(\calC/\calD)$, the functor $R$ is given by the functor from Theorem~\ref{thm:verdier}~(ii). We claim that $R$ commutes with all colimits; as it is exact, it suffices to check that it commutes with all filtered colimits. This is a formal consequence of the fact that $\Ind(\calC)\to \Ind(\calC/\calD)$ takes compact objects to compact objects \cite[Proposition 5.5.7.2]{HTT}: It suffices to check that for any $C\in \calC$ and any filtered system $\overline{C}_i\in \calC/\calD$, the map
\[
\colim_i \Hom_\calC(C, R(\overline{C}_i))\to  \Hom_{\calC/\calD}(\overline{C},\colim_i \overline{C}_i)
\]
is an equivalence, where $\overline{C}\in \calC/\calD$ is the image of $C$; indeed, the left-hand side is $\Hom_\calC(C,\colim_i R(\overline{C}_i))$, and the right-hand side is $\Hom_\calC(C,R(\colim_i \overline{C}_i))$, and the adjunction for general objects of $\Ind(\calC)$ follows by passing to a limit. But the displayed equation is an equivalence by using adjunction on the left-hand side, and that $\overline{C}\in \calC/\calD\subseteq \Ind(\calC/\calD)$ is compact.

Now it follows from \cite[Proposition 5.2.7.4]{HTT} that $\Ind(\calC)\to \Ind(\calC/\calD)$ is a localization, as the required equivalences on the adjunction map can be checked on $\calC$, where they follow from the description of $R$ in Theorem~\ref{thm:verdier}~(ii). Thus, we can regard $\Ind(\calC/\calD)\subseteq \Ind(\calC)$ as the full subcategory of local objects.

Finally, we show that the kernel of $\Ind(\calC)\to \Ind(\calC/\calD)$ is exactly $\Ind(\calD)$. Indeed, for any filtered colimit $\colim_i C_i\in \Ind(\calC)$ with $C_i\in \calC$, one can compute the localization as $R(\colim_i \overline{C}_i) = \colim_i R(\overline{C}_i)$, where the  fiber of $C_i\to R(\overline{C}_i)$ is a filtered colimit of objects of $\calD$ by Theorem~\ref{thm:verdier}~(ii). Passing to a filtered colimit, we see that the fiber of $\colim_i C_i\to R(\colim_i \overline{C}_i) = \colim_i R(\overline{C}_i)$ is in $\Ind(\calD)$; in particular, if $R(\colim_i \overline{C}_i)\simeq 0$, then $\colim_i C_i\in \Ind(\calD)$.
\end{proof}

Moreover, we need the following multiplicative properties of the Verdier quotient.

\begin{thm}\label{thm:verdiermult} Let $\calC$ be a small, stably symmetric monoidal stable $\infty$-category and $\calD\subseteq \calC$ a stable subcategory which is a $\otimes$-ideal.
\begin{altenumerate}
\item There is a unique way to simultaneously endow the Verdier quotient $\calC/\calD$ and the functor $\calC\to \calC/\calD$ with a symmetric monoidal structure. If $\calE$ is another symmetric monoidal stable $\infty$-category, then composition with $\calC\to\calC/\calD$ induces an equivalence between $\Fun_\lax^\Ex(\calC/\calD,\calE)$ and the full subcategory of $\Fun_\lax^\Ex(\calC,\calE)$ of those functors which send all objects of $\calD$ to $0$.
\item Assume that $\calE$ is a presentably symmetric monoidal stable $\infty$-category. Then the full inclusion $\Fun_\lax^\Ex(\calC/\calD,\calE)\subseteq \Fun_\lax^\Ex(\calC,\calE)$ is right adjoint to a localization in the sense of~\cite[Definition 5.2.7.2]{HTT}. The corresponding localization functor
\[
\Fun_\lax^\Ex(\calC,\calE)\to \Fun_\lax^\Ex(\calC/\calD,\calE)\subseteq \Fun_\lax^\Ex(\calC,\calE)
\]
is given by taking an exact lax symmetric monoidal functor $F: \calC\to \calE$ to the composite
\[
\calC/\calD\to \Ind(\calC)\xto{\Ind(F)} \Ind(\calE)\to \calE\ ,
\]
where the first functor comes from Theorem~\ref{thm:verdier}~(ii) and the final functor is taking the colimit in $\calE$; both functors are canonically lax symmetric monoidal.
\end{altenumerate}
\end{thm}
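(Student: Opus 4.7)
The plan is to deduce both parts from Theorem~\ref{thm:verdier} by combining it with general results on symmetric monoidal Dwyer--Kan localizations. The key observation is that the $\otimes$-ideal condition on $\calD$ translates into a $\otimes$-compatibility condition on the class $W$ of morphisms with cofiber in $\calD$: given $f \in W$ and any $X \in \calC$, the morphism $\id_X \otimes f$ again has cofiber $X \otimes \cofib(f) \in \calD$, since the tensor product is exact in each variable and $\calD$ is a $\otimes$-ideal. Hence $W$ is stable under tensoring with arbitrary objects (and, symmetrically, morphisms) of $\calC$.

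For part (i), once $W$ is $\otimes$-compatible, the general theory of monoidal Dwyer--Kan localizations (e.g.~the arguments around \cite[Proposition 4.1.7.4]{HA}, which we revisit in Appendix~\ref{app:symmmon}) furnishes a unique symmetric monoidal structure on $\calC/\calD = \calC[W^{-1}]$ such that the localization functor $\calC \to \calC/\calD$ is symmetric monoidal. The universal property for strong symmetric monoidal exact functors is then immediate from Theorem~\ref{thm:verdier}~(i) applied fiberwise over $N(\Fin_*)$. To upgrade this to lax symmetric monoidal functors, I would model a lax symmetric monoidal functor as a map of $\infty$-operads $\calC^\otimes \to \calE^\otimes$. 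Such a map factors through the localization iff the underlying functor $\calC \to \calE$ takes $W$ to equivalences, equivalently iff it sends $\calD$ to zero (since exactness of the underlying functor implies it sends the cofiber of a morphism in $W$, which lies in $\calD$, to a zero object exactly when it inverts that morphism). Thus the full subcategory description of $\Fun_\lax^\Ex(\calC/\calD,\calE)$ inside $\Fun_\lax^\Ex(\calC,\calE)$ follows, and the uniqueness statement falls out of this universal property.

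For part (ii), I would proceed as follows. By Proposition~\ref{prop:verdierquotind}, the functor $\Ind(\calC)\to \Ind(\calC/\calD)$ is a colimit-preserving localization with fully faithful right adjoint, and both sides are presentably stable; using Day convolution, they are moreover presentably symmetric monoidal, and the $\otimes$-compatibility of $W$ ensures that the localization is compatible with the symmetric monoidal structure (so its right adjoint is lax symmetric monoidal). An exact lax symmetric monoidal $F\colon \calC \to \calE$ extends by left Kan extension to a colimit-preserving lax symmetric monoidal functor $\Ind(F)\colon \Ind(\calC) \to \Ind(\calE)$; composition with the colimit functor $\Ind(\calE)\to \calE$, which is lax symmetric monoidal because $\calE$ is presentably symmetric monoidal, yields a lax symmetric monoidal functor $\widetilde F\colon \calC/\calD \to \Ind(\calC/\calD) \to \calE$. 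The unit map $F \to \widetilde F|_\calC$ is a natural transformation of lax symmetric monoidal functors which becomes an equivalence precisely on those $F$ that already send $\calD$ to zero, giving the required localization property via the criterion of \cite[Proposition 5.2.7.4]{HTT} (checked underlyingly, exactly as at the end of the proof of Theorem~\ref{thm:verdier}~(iii)).

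The main obstacle is the first step: making the symmetric monoidal refinement of the Dwyer--Kan localization and its universal property for \emph{lax} (as opposed to strong) monoidal functors fully rigorous in the $\infty$-categorical setting. This requires a careful treatment of localizations at the level of $\infty$-operads, which we carry out in Appendix~\ref{app:symmmon}; the remainder of the argument is formal transport along Theorem~\ref{thm:verdier} and Proposition~\ref{prop:verdierquotind}.
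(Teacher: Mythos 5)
Your proposal is correct and follows essentially the same route as the paper: part (i) is deduced by checking that the class $W$ of morphisms with cone in $\calD$ is stable under tensoring (using exactness of $\otimes$ and the $\otimes$-ideal condition) and then invoking the symmetric monoidal Dwyer--Kan localization machinery of Appendix~\ref{app:symmmon} (Proposition~\ref{propHinich}), with the restriction to exact functors handled by Theorem~\ref{thm:verdier}~(i); part (ii) is obtained by observing that all functors in the composite $\calC/\calD\to\Ind(\calC)\to\Ind(\calE)\to\calE$ are lax symmetric monoidal (the first via Proposition~\ref{prop:verdierquotind} and adjunction to a symmetric monoidal localization) and then checking the localization criterion on underlying functors via Theorem~\ref{thm:verdier}~(iii). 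No gaps.
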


In other words, combining part (ii) with Theorem~\ref{thm:verdier}~(iii), we see that for any exact lax symmetric monoidal functor $F: \calC\to \calE$, the universal approximation that factors over $\calC/\calD$ acquires a unique lax symmetric monoidal structure for which the relevant natural transformation is lax symmetric monoidal.

\begin{proof} We apply the results of Appendix \ref{app:symmmon}, more specifically Proposition~\ref{propHinich}. The $\infty$-category $\calC/\calD$ is by definition the Dwyer-Kan localization $\calC[W^{-1}]$ at the class $W$ of morphisms whose cone lies in $\calD \subseteq \calC$. Thus we have to check that for such a morphism $f \in W$ the tensor product $f \otimes c$ for any object $c \in \calC$ is again  in $W$. But this is clear since $\calD$ is a tensor ideal and the tensor product is exact, i.e.~the cone of $f \otimes c$ is the tensor product of the cone of $f$ with $c$. Then we invoke Proposition \ref{propHinich} to get a unique symmetric monoidal structure on $\calC/\calD$ with a symmetric monoidal refinement of the functor $\calC \to \calC/\calD$  such that for every symmetric monoidal $\infty$-category $\calE$ the functor
$$
\Fun_\lax(\calC/\calD,\calE) \to \Fun_\lax(\calC,\calE)
$$
is fully faithful with essential image those functors that send $W$ to equivalences in $\calE$. If $\calE$ is stable this functor induces equivalences between the respective full subcategories of exact functors by Theorem \ref{thm:verdier}(i) since exactness can be tested after forgetting the lax symmetric monoidal structures.

For part (ii), we note that the functor
\[
\Fun_\lax^\Ex(\calC,\calE)\to \Fun_\lax^\Ex(\calC/\calD,\calE)\to \Fun_\lax^\Ex(\calC,\calE)
\]
is well-defined, as all functors in the composition
\[
\calC/\calD\to \Ind(\calC)\xto{\Ind(F)} \Ind(\calE)\to \calE\ ,
\]
are naturally lax symmetric monoidal. For the first functor, this follows from Proposition~\ref{prop:verdierquotind}, as $\Ind(\calC/\calD)\to \Ind(\calC)$ is right adjoint to the symmetric monoidal projection $\Ind(\mathcal C)\to \Ind(\mathcal C/\mathcal D)$, and thus lax symmetric monoidal by \cite[Corollary 7.3.2.7]{HA}. To check the criterion of \cite[Proposition 5.2.7.4]{HTT}, it suffices to check it without the lax symmetric monoidal structures as a lax symmetric monoidal natural transformation is an equivalence if and only if the underlying natural transformation is an equivalence. Thus, it follows from Theorem~\ref{thm:verdier}~(iii) that it is a localization functor as claimed.
\end{proof}

Now we apply the Verdier localization in our setup. Thus, let $\calC=\Sp^{BG}$ be the stable $\infty$-category of spectra with $G$-action.

\begin{definition} Let $\Sp^{BG}_\ind\subseteq \Sp^{BG}$ be the stable subcategory generated by spectra of the form $\bigoplus_{g\in G} X$ with permutation $G$-action, where $X\in \Sp$ is a spectrum.
\end{definition}

In other words, all induced spectra are in $\Sp^{BG}_\ind$, and an object of $\Sp^{BG}$ lies in $\Sp^{BG}_\ind$ if and only if it can be built in finitely many steps by taking cones of maps between objects already known to lie in $\Sp^{BG}_\ind$. Note that not any map between induced spectra with $G$-action
\[
\bigoplus_{g\in G} X\to \bigoplus_{g\in G} Y
\]
comes from a map of spectra $X\to Y$, so that one can build many objects in $\Sp^{BG}_\ind$ which are not themselves induced.

We will need the following properties of $\Sp^{BG}_\ind$.

\begin{lemma}\label{lem:propindbg} Let $X\in \Sp^{BG}$.
\begin{altenumerate}
\item If $X\in \Sp^{BG}_\ind$, then the Tate construction $X^{tG}\simeq 0$ vanishes.
\item For all $Y\in \Sp^{BG}_\ind$, the tensor product $X\otimes Y\in \Sp^{BG}_\ind$. In other words, $\Sp^{BG}_\ind\subseteq \Sp^{BG}$ is a $\otimes$-ideal.
\item The natural maps
\[
\colim_{Y\in (\Sp^{BG}_\ind)_{/X}} Y\to X
\]
and
\[
\colim_{Y\in (\Sp^{BG}_\ind)_{/X}} \cofib(Y\to X)^{hG}\to \colim_{Y\in (\Sp^{BG}_\ind)_{/X}} \cofib(Y\to X)^{tG} = X^{tG}
\]
are equivalences.
\end{altenumerate}
\end{lemma}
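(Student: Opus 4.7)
The plan is to handle each part separately, reducing to the induced generators and exploiting exactness.

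For (i), since $-^{tG}: \Sp^{BG} \to \Sp$ is exact and $\Sp^{BG}_\ind$ is generated as a stable subcategory by the induced spectra $\Ind_e^G(X_0) = \bigoplus_{g\in G} X_0$, it suffices to check vanishing on these generators. For such $Y = \Ind_e^G(X_0)$, both $Y_{hG}$ and $Y^{hG}$ are canonically identified with $X_0$ (by the free action, respectively by the fact that induction coincides with coinduction for finite $G$), and a direct computation shows that under these identifications the norm map is an equivalence, so $Y^{tG} \simeq 0$. For (ii), the projection formula provides a natural $G$-equivariant equivalence
\[
X \otimes \Ind_e^G(Y_0) \simeq \Ind_e^G(\mathrm{Res}(X) \otimes Y_0),
\]
so tensoring with an induced object yields an induced object; exactness of $X\otimes-$ then propagates this closure property from the generators to all of $\Sp^{BG}_\ind$.

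For the first equivalence of (iii), I would observe that the compact generators of $\Sp^{BG}$, namely the shifts of $\Sigma^\infty_+ G = \Ind_e^G(\bS)$, all lie in $\Sp^{BG}_\ind$; hence $\Sp^{BG}_\ind$ generates $\Sp^{BG}$ under colimits. A standard left Kan extension argument then shows $\colim_{Y\in(\Sp^{BG}_\ind)_{/X}} Y \simeq X$. I would also note that $(\Sp^{BG}_\ind)_{/X}$ is filtered (nonempty and closed under finite colimits inherited from the stable category $\Sp^{BG}_\ind$), hence weakly contractible.

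For the remaining equivalences, part (i) together with the cofiber sequence $Y^{tG}\to X^{tG}\to \cofib(Y\to X)^{tG}$ gives natural equivalences $\cofib(Y\to X)^{tG}\simeq X^{tG}$, and the filtered colimit of this essentially constant diagram is $X^{tG}$. For the comparison between the $-_{hG}$ and $-^{tG}$ colimits, I would apply $\colim_Y$ to the fiber sequence $\cofib(Y\to X)_{hG}\to \cofib(Y\to X)^{hG}\to \cofib(Y\to X)^{tG}$; since $-_{hG}$ preserves colimits, the first equivalence of (iii) gives
\[
\colim_Y \cofib(Y\to X)_{hG} \simeq \cofib(\colim_Y Y \to X)_{hG} \simeq \cofib(X \to X)_{hG} \simeq 0,
\]
so the induced map on the middle and right terms is an equivalence. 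The principal subtlety is the set-theoretic size issue in the first equivalence of (iii)---as $\Sp^{BG}_\ind$ is not small---which I would address by restricting the colimit to a cofinal small subcategory built from finite colimits of the compact induced generators $\Ind_e^G(\bS^n)$.
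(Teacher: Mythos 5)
Parts (i) and (ii) are correct and are essentially the paper's argument: reduce to the induced generators using that the classes in question are stable subcategories, compute the norm map on $\bigoplus_{g\in G}X_0$ directly, and use the projection formula $X\otimes\Ind_e^G(Y_0)\simeq\Ind_e^G(\mathrm{Res}(X)\otimes Y_0)$ (which the paper constructs by hand from the inclusion of a wedge summand). The second half of (iii) — identifying $\colim\cofib(Y\to X)^{tG}$ with $X^{tG}$ via (i), and then killing the $-_{hG}$ term in the colimit of fiber sequences using the first equivalence — is also exactly the paper's argument.

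The gap is in your proof of the first equivalence of (iii). The ``standard left Kan extension argument'' showing $X\simeq\colim_{Y\in(\calC_0)_{/X}}Y$ applies when $\calC_0$ is a subcategory of \emph{compact} objects generating $\calC$, so that $\calC\simeq\Ind(\calC_0)$. But $\Sp^{BG}_\ind$ is not contained in the compact objects: it contains $\bigoplus_{g\in G}X$ for \emph{arbitrary} $X\in\Sp$, so $\Sp^{BG}\not\simeq\Ind(\Sp^{BG}_\ind)$ and the standard argument does not apply to the slice $(\Sp^{BG}_\ind)_{/X}$. Your proposed repair of the size issue makes the problem visible: the small subcategory built from finite colimits of the $\Ind_e^G(\bS[n])$ is \emph{not} cofinal in $(\Sp^{BG}_\ind)_{/X}$. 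Cofinality would require every map $Y\to X$ from an arbitrary induced object to admit a factorization through a compact one, and the relevant comma categories can be empty — already for $G=e$, the identity map of an infinite wedge of spheres does not factor through any finite spectrum. What is true is that the composite $\colim_{(\text{compact slice})}Y\to\colim_{(\Sp^{BG}_\ind)_{/X}}Y\to X$ is an equivalence; this gives surjectivity of $\colim\pi_iY\to\pi_iX$ but not injectivity.

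The correct argument (the paper's) checks the isomorphism on homotopy groups directly, using that the index category is filtered so that $\pi_i$ commutes with the colimit. Surjectivity: any $\alpha\in\pi_iX$ is hit by a $G$-equivariant map $\bigoplus_{g\in G}\bS[i]\to X$ from an induced object. Injectivity: given $Y\in(\Sp^{BG}_\ind)_{/X}$ and $\beta\in\ker(\pi_iY\to\pi_iX)$, the corresponding map $\bigoplus_{g\in G}\bS[i]\to Y$ composes to zero with $Y\to X$, so $\bar Y=\cofib(\bigoplus_{g\in G}\bS[i]\to Y)$ again lies in $\Sp^{BG}_\ind$, receives a map from $Y$ over $X$, and kills $\beta$; filteredness then forces $\beta=0$ in the colimit. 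This cofiber step is exactly what your Kan-extension appeal is missing, and it is where the hypothesis that $\Sp^{BG}_\ind$ is a \emph{stable} subcategory containing the compact generators is actually used.
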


Note that part (iii) says that any object of $\Sp^{BG}$ is canonically a filtered colimit (cf.~Theorem~\ref{thm:verdier}~(ii)) of objects in $\Sp^{BG}_\ind$. In particular, $\Sp^{BG}_\ind$ cannot be closed under filtered colimits.

\begin{proof} In part (i), note that the full subcategory of $\Sp^{BG}$ on which the Tate construction vanishes is a stable subcategory. Thus, to prove (i), it suffices to prove that the Tate construction vanishes on induced spectra. But for a finite group $G$, one has identifications
\[
(\bigoplus_{g\in G} X)_{hG} = X = (\bigoplus_{g\in G} X)^{hG}\ ,
\]
under which the norm map is the identity.

In part (ii), given $X\in \Sp^{BG}$, the full subcategory of all $Y\in \Sp^{BG}$ for which $X\otimes Y\in \Sp^{BG}_\ind$ is a stable subcategory. Thus, it suffices to show that $X\otimes Y\in \Sp^{BG}_\ind$ if $Y=\bigoplus_{g\in G} Z$ is an induced spectrum with $G$-action. But in that case there is a (non-equivariant) map of spectra $X\otimes Z\to X\otimes Y$ given by inclusion of one summand, which is adjoint to a $G$-equivariant map of spectra
\[
\bigoplus_{g\in G} X\otimes Z\to X\otimes Y\ ,
\]
which is easily seen to be an equivalence. But the left-hand side is an induced spectrum, so $X\otimes Y\in \Sp^{BG}_\ind$.

For part (iii), we check that for all $i\in \Z$, the map
\[
\colim_{Y\in (\Sp^{BG}_\ind)_{/X}} \pi_i Y\to \pi_i X
\]
is an isomorphism; this is enough for the first equivalence, as the colimit is filtered and thus commutes with $\pi_i$. By translation, we can assume that $i=0$. First, we check surjectivity. If $\alpha\in \pi_0 X$, then there is a corresponding map of spectra $\bS\to X$, which gives a $G$-equivariant map $Y:=\bigoplus_{g\in G} \bS\to X$ such that $\alpha$ lies in the image of $\pi_0 Y$. Similarly, one checks injectivity: If $Y\in (\Sp^{BG}_{\ind})_{/X}$ and $\beta\in \ker(\pi_0 Y\to \pi_0 X)$, then there is a corresponding map $\bS\to Y$ whose composite $\bS\to Y\to X$ is zero. Let $\bar{Y} = \cofib(\bigoplus_{g\in G} \bS\to Y)$, which is an object of $\Sp^{BG}_\ind$ which comes with an induced map to $X$, so that there is a factorization $Y\to \bar{Y}\to X$. By construction, the element $\beta$ dies in $\pi_0 \bar{Y}$. As the colimit is filtered, this finishes the proof.

For the second equivalence in (iii), note that for all $Y\in (\Sp^{BG}_\ind)_{/X}$, there are fiber sequences
\[
\cofib(Y\to X)_{hG}\to \cofib(Y\to X)^{hG}\to \cofib(Y\to X)^{tG} = X^{tG}\ .
\]
Passing to the filtered colimit over all $Y$, the left term vanishes as
\[
\colim_{Y\in (\Sp^{BG}_\ind)_{/X}} \cofib(Y\to X)_{hG} = (\colim_{Y\in (\Sp^{BG}_\ind)_{/X}} \cofib(Y\to X))_{hG} = 0
\]
by the first equivalence of (iii).
\end{proof}

Now we can prove Theorem~\ref{thm:tatelaxsymm}.

\begin{proof}[Proof of Theorem~\ref{thm:tatelaxsymm}.] First, we give an argument ignoring set-theoretic issues. In Theorem~\ref{thm:verdiermult}, we take $\calC=\Sp^{BG}$ and $\calD=\Sp^{BG}_\ind$. Thus, by Lemma~\ref{lem:propindbg}~(i) and Theorem~\ref{thm:verdier}~(i), $-^{tG}$ factors over a functor which we still denote $-^{tG}: \calC/\calD\to \Sp$. Note that, by Theorem~\ref{thm:verdiermult}~(i) and Lemma~\ref{lem:propindbg}~(ii), any lax symmetric monoidal structure on $-^{tG}$ with a lax symmetric monoidal transformation $-^{hG}\to -^{tG}$ gives rise to an exact lax symmetric monoidal functor $H: \calC/\calD\to \Sp$ such that the composite of the projection $\calC\to \calC/\calD$ with $H$ receives a lax symmetric monoidal transformation from $-^{hG}$, and conversely.

On the other hand, taking $F=-^{hG}: \calC=\Sp^{BG}\to \calE=\Sp$ in Theorem~\ref{thm:verdiermult}~(ii), there is a universal exact lax symmetric monoidal functor $H: \calC/\calD\to \calE$ with a lax symmetric monoidal transformation from $F$ to the composite of the projection $\calC\to \calC/\calD$ with $H$. It remains to see that the underlying functor of $H$ is given by $-^{tG}$. As the localization functors of Theorem~\ref{thm:verdiermult}~(ii) and Theorem~\ref{thm:verdier}~(iii) are compatible, it suffices to check this without multiplicative structures. In that case, the universal property of $H$ gives a unique natural transformation $H\to -^{tG}$, which we claim to be an equivalence. But this follows from the description of the localization functor in Theorem~\ref{thm:verdier}~(iii), the description of the functor $\calC/\calD\to \Ind(\calC)$ in Theorem~\ref{thm:verdier}~(ii), and the computation of Lemma~\ref{lem:propindbg}~(iii).

This argument does not work as written, as $\Sp^{BG}$ is not a small $\infty$-category. However, we may choose a regular cardinal $\kappa$ so that $-^{hG}$ and $-^{tG}$ are $\kappa$-accessible functors. One can then run the argument for the full subcategory of $\kappa$-compact objects $\Sp^{BG}_\kappa$ in $\Sp^{BG}$, which is also the category of $G$-equivariant objects in the full subcategory of $\kappa$-compect objects $\Sp_\kappa$ in $\Sp$. It is still lax symmetric monoidal, and we get a unique lax symmetric monoidal structure on $-^{tG}: \Sp_\kappa^{BG}\to \Sp$ which is compatible with the structure on $-^{hG}$. One may now pass to $\Ind_\kappa$-categories to get the desired result.
\end{proof}

We will need the following corollary to the uniqueness assertion in Theorem~\ref{thm:tatelaxsymm}.

\begin{corollary}\label{cor:tatelaxsymmgroup} Let $G$ be a finite group, and assume that $G$ is a normal subgroup of a (topological) group $H$. The functor $-^{tG}: \Sp^{BH}\to \Sp^{B(H/G)}$ admits a natural lax symmetric monoidal structure which makes the natural transformation $-^{hG}\to -^{tG}$ of functors $\Sp^{BH}\to \Sp^{B(H/G)}$ lax symmetric monoidal, compatibly with the natural transformation of lax symmetric monoidal functors $-^{hG}\to -^{tG}: \Sp^{BG}\to \Sp$.
\end{corollary}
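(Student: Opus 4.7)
The plan is to repeat the Verdier-quotient strategy from the proof of Theorem~\ref{thm:tatelaxsymm} in the relative setting, with target $\Sp^{B(H/G)}$ in place of $\Sp$. Write $K=H/G$ and let $f: BH\to BK$ denote the projection; then $-^{hG}=f_*: \Sp^{BH}\to \Sp^{BK}$ is naturally lax symmetric monoidal as the right adjoint of the symmetric monoidal pullback $f^*: \Sp^{BK}\to \Sp^{BH}$ (cf.~\cite[Corollary 7.3.2.7]{HA}), which plays the role of $-^{hG}: \Sp^{BG}\to \Sp$ in the absolute case.

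I would take the relative analog of $\Sp^{BG}_\ind$ to be the stable subcategory $\calD\subseteq \Sp^{BH}$ of those $H$-spectra whose restriction along $BG\to BH$ lies in $\Sp^{BG}_\ind$. The analogs of properties (i) and (ii) of Lemma~\ref{lem:propindbg} are immediate: $\calD$ is a $\otimes$-ideal in $\Sp^{BH}$ since restriction is symmetric monoidal and exact and $\Sp^{BG}_\ind$ is a $\otimes$-ideal by Lemma~\ref{lem:propindbg}(ii); and $-^{tG}: \Sp^{BH}\to \Sp^{BK}$ vanishes on $\calD$ by Lemma~\ref{lem:propindbg}(i) together with conservativity of the forgetful functor $\Sp^{BK}\to \Sp$. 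For the analog of Lemma~\ref{lem:propindbg}(iii), given $\alpha\in \pi_i X$ for $X\in \Sp^{BH}$, I would use the $H$-equivariant map $Y:=\Ind_1^H \bS[i]\to X$ adjoint to $\alpha$; by base change along $BG\to BH$, the $G$-restriction of $Y$ is $\bS[i]\otimes \Sigma^\infty_+ H$ with $G$ acting by left translation on $H$, and this lies in $\Sp^{BG}_\ind$ because the $G$-action on $H$ is free and $\Sp^{BG}_\ind$ contains $\Ind_1^G \bS=\Sigma^\infty_+ G$ and is a $\otimes$-ideal. The surjectivity and injectivity arguments on $\pi_i$ then go through verbatim as in the proof of Lemma~\ref{lem:propindbg}(iii), as does the identification of the filtered colimit of cofibers with the Tate construction, using that $-_{hG}=f_!$ commutes with colimits.

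With these three properties in hand, the argument of Theorem~\ref{thm:tatelaxsymm} applies mutatis mutandis: apply Theorem~\ref{thm:verdiermult} to $\calC=\Sp^{BH}$ (in a suitable $\kappa$-compact approximation), the $\otimes$-ideal $\calD$, presentably symmetric monoidal target $\calE=\Sp^{BK}$, and input exact lax symmetric monoidal functor $-^{hG}$. The resulting universal exact lax symmetric monoidal factorization through $\Sp^{BH}/\calD$ is identified with $-^{tG}$ via Theorem~\ref{thm:verdier}(ii),(iii) and the colimit description from the relative (iii), yielding the desired lax symmetric monoidal structure on $-^{tG}: \Sp^{BH}\to \Sp^{BK}$ with the lax symmetric monoidal refinement of $-^{hG}\to -^{tG}$. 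Compatibility with the structure of Theorem~\ref{thm:tatelaxsymm} is then forced by uniqueness: the forgetful functors $\Sp^{BH}\to \Sp^{BG}$ and $\Sp^{BK}\to \Sp$ are symmetric monoidal, $\calD$ restricts by construction into $\Sp^{BG}_\ind$, and composing the new lax symmetric monoidal $-^{tG}: \Sp^{BH}\to \Sp^{BK}$ with the forgetful functors produces a lax symmetric monoidal refinement of $-^{hG}\to -^{tG}: \Sp^{BG}\to \Sp$, which by the uniqueness assertion in Theorem~\ref{thm:tatelaxsymm} must agree with the original.

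The main obstacle I anticipate is the relative analog of Lemma~\ref{lem:propindbg}(iii), specifically verifying that $\Ind_1^H \bS[i]$ really lies in $\calD$ for general (possibly topological) $H$. This amounts to showing that $\Sigma^\infty_+ H$, viewed as a $G$-spectrum under left translation, lies in $\Sp^{BG}_\ind$; this is straightforward when $H$ admits a $G$-equivariant CW decomposition with $G$-free cells (as for $H=\T$ with $G=C_p$, which is the main case of interest, where the $p$-fold covering gives such a decomposition), but in full generality requires choosing $\kappa$ appropriately so that $\Sigma^\infty_+ H$ is built from $\kappa$-many $G$-free cells and remains $\kappa$-compact.
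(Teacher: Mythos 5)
Your route is genuinely different from the paper's. The paper does not redo the Verdier-quotient construction relatively at all: it observes that, by the contractibility statement of Theorem~\ref{thm:tatelaxsymm}, the pair (lax symmetric monoidal structure on $-^{tG}$, lax refinement of $-^{hG}\to -^{tG}$) is unique up to contractible choice, hence automatically descends along the $B(H/G)$-parametrized family of copies of $(\Sp^{BG}, -^{hG}\to -^{tG})$ given by the fibers of $BH\to B(H/G)$. Concretely, one lifts the resulting map $B(H/G)\to \mathcal A$ (pairs with only $-^{hG}$ lax monoidal) through the forgetful map from $\mathcal B$ (pairs with both functors and the transformation lax monoidal); the fibers are contractible by Theorem~\ref{thm:tatelaxsymm}, so the lift exists and is essentially unique, and taking the limit over $B(H/G)$ produces the functors $\Sp^{BH}\to \Sp^{B(H/G)}$ with everything in place, including the compatibility with the absolute case by mapping to the fiber over the basepoint. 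This argument is purely formal, uses no new computation, and works for an arbitrary topological group $H$. Your argument, by contrast, buys a relative uniqueness statement (contractibility of the space of such structures on $-^{tG}:\Sp^{BH}\to\Sp^{B(H/G)}$) in the cases where it applies, which the descent argument also yields but less explicitly.

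The gap in your approach is exactly where you suspect it, but your diagnosis and proposed fix are off. The assertion that $\mathrm{res}_G\,\Ind_1^H\bS \simeq \Sigma^\infty_+ H$ lies in $\Sp^{BG}_\ind$ ``because the $G$-action on $H$ is free'' is false in general: $\Sp^{BG}_\ind$ is the \emph{stable} subcategory generated by induced objects, so it is closed only under finite extensions, not under the infinite cell attachments needed when $H/G$ is an infinite complex. Choosing $\kappa$ larger does not help, since the failure is not set-theoretic: for instance, $G$ embeds as a closed normal subgroup of a contractible topological group $H$, and then $\mathrm{res}_G\,\Sigma^\infty_+ H\simeq \bS$ with trivial $G$-action, which is not in $\Sp^{BG}_\ind$ (its Tate construction is nonzero); in that case your class $\calD$ does not contain the generators you need and the relative analogue of Lemma~\ref{lem:propindbg}~(iii) breaks down. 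Freeness of the action is invisible in $\Fun(BG,\Sp)$; what you actually need is that $H/G$ is a finite CW complex (or that $H$ is discrete, where infinite direct sums of induced spectra are again induced). This covers $H$ compact Lie, in particular $H=\T$ and $H=C_{p^\infty}$, which is all the paper uses, so your proof is a valid alternative there; but as a proof of the corollary as stated, for arbitrary topological $H$, it is incomplete, and the paper's descent argument is the way to avoid the issue.
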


In particular, we get a lax symmetric monoidal functor
\[
-^{tC_p}: \Sp^{B\T}\to \Sp^{B(\T/C_p)}
\]
refining $-^{tC_p}: \Sp^{BC_p}\to \Sp$, where $\T$ is the circle group.

\begin{proof} We have a map from $B(H/G)$ to the space of Kan complexes equivalent to $BG$ (without base points, which however have not played any role), given by fibers of the projection $BH\to B(H/G)$. Composing with the map $S\mapsto \Sp^S$ and the natural transformation between the functors $-^{hG}$ and $-^{tG}$ to $\Sp$, we get a map from $B(H/G)$ to the $\infty$-category $\mathcal A$ whose objects are pairs of symmetric monoidal $\infty$-categories $\calC$, $\calD$ with a lax symmetric monoidal functor $F_1: \calC\to \calD$ and a functor of the underlying $\infty$-categories $F_2: \calC\to \calD$ and a natural transformation $F_1\to F_2$. We claim that we can lift this uniquely (up to a contractible choice) to a map from $B(H/G)$ to the $\infty$-category $\mathcal B$ whose objects are pairs of symmetric monoidal $\infty$-categories $\calC$, $\calD$ with two lax symmetric monoidal functors $F_1,F_2:\calC\to \calD$ and a lax symmetric monoidal transformation $F_1\to F_2$. Indeed, there is a functor $\mathcal B\to \mathcal A$, and after replacing $\mathcal B$ by an equivalent $\infty$-category, we can assume that it is a categorical fibration. By Theorem~\ref{thm:tatelaxsymm}, we know that the map
\[
B(H/G)\times_{\mathcal A} \mathcal B\to B(H/G)
\]
has contractible fibers. But this is a categorical fibration over a Kan complex, thus a Cartesian fibration by \cite[Proposition 3.3.1.8]{HTT}. As its fibers are contractible, it is a right fibration by \cite[Proposition 2.4.2.4]{HTT}, and thus a trivial Kan fibration by (the dual of) \cite[Lemma 2.1.3.4]{HTT}. In particular, the space of sections is contractible.

Thus, we can lift canonically to a functor $B(H/G)\to \mathcal B$. Taking the limit of this diagram in $\mathcal B$ (which is computed objectwise), we arrive at the desired diagram of lax symmetric monoidal functors $-^{hG}, -^{tG}: \Sp^{BH}\to \Sp^{B(H/G)}$ with a lax symmetric monoidal transformation $-^{hG}\to -^{tG}$. The resulting diagram maps to the corresponding diagram indexed by $\ast\in B(H/G)$, so it is compatible with the natural transformation of lax symmetric monoidal functors $-^{hG}\to -^{tG}: \Sp^{BG}\to \Sp$.
\end{proof}

\section{Farrell--Tate cohomology}\label{sec:farrelltate}

In this section, we briefly mention a generalization of the Tate construction to general groups $G$, including compact and infinite discrete groups. In fact, most of what we do works for a general Kan complex $S$ in place of $BG$.

Recall that Farrell, \cite{Farrelltate}, had generalized Tate cohomology to infinite discrete groups of finite virtual cohomological dimension. He essentially constructed a norm map
\[
(D_{BG}\otimes X)_{hG}\to X^{hG}
\]
for a certain $G$-equivariant object $D_{BG}$, but he worked only on the level of abelian groups. Klein generalized this to spectra, \cite{KleinDualizing}, and gave a universal characterization of the resulting cohomology theory, \cite{KleinAxioms}.

Here, we prove the following general result. It is closely related to Klein's axioms in \cite{KleinAxioms}. We have also been informed by Tobias Barthel that he has obtained similar results in current work in progress. 

\begin{thm}\label{thm:genfarrelltate} Let $S$ be a Kan complex, and consider the $\infty$-category $\Sp^S=\Fun(S,\Sp)$. Let $p: S\to \ast$ be the projection to the point. Then $p^\ast: \Sp\to \Sp^S$ has a left adjoint $p_!: \Sp^S\to \Sp$ given by homology, and a right adjoint $p_\ast: \Sp^S\to \Sp$ given by cohomology.
\begin{altenumerate}
\item The $\infty$-category $\Sp^S$ is a compactly generated presentable stable $\infty$-category. For every $s\in S$, the functor $s_!: \Sp\to \Sp^S$ takes compact objects to compact objects, and for varying $s\in S$, the objects $s_! \bS$ generate $\Sp^S$.
\item There is an initial functor $p_\ast^T: \Sp^S\to \Sp$ with a natural transformation $p_\ast\to p_\ast^T$ with the property that $p_\ast^T$ vanishes on compact objects.
\item The functor $p_\ast^T: \Sp^S\to \Sp$ is the unique functor with a natural transformation $p_\ast\to p_\ast^T$ such that $p_\ast^T$ vanishes on all compact objects, and the  fiber of $p_\ast\to p_\ast^T$ commutes with all colimits.
\item The fiber of $p_\ast\to p_\ast^T$ is given by $X\mapsto p_!(D_S\otimes X)$ for a unique object $D_S\in \Sp^S$. The object $D_S$ is given as follows. Considering $S$ as an $\infty$-category, one has a functor $\Map: S\times S\to \calS$, sending a pair $(s,t)$ of points in $S$ to the space of paths between $s$ and $t$. Then $D_S$ is given by the composite
\[
D_S: S\to \Fun(S,\calS)\xto{\Sigma_+^\infty} \Fun(S,\Sp)=\Sp^S\xto{p_\ast}\Sp\ .
\]
\item The map $p_!(D_S\otimes -)\to p_\ast$ is final in the category of colimit-preserving functors from $\Sp^S$ to $\Sp$ over $p_\ast$, i.e.~it is an assembly map in the sense of Weiss--Williams, \cite{WeissWilliams}.
\item Assume that for all $s\in S$ and $X\in \Sp$, one has $p_\ast^T(s_! X)\simeq 0$. Then there is a unique lax symmetric monoidal structure on $p_\ast^T$ which makes the natural transformation $p_\ast\to p_\ast^T$ lax symmetric monoidal.
\end{altenumerate}
\end{thm}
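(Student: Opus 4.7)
The plan is to mirror the proof of Theorem~\ref{thm:tatelaxsymm} and realize $p_\ast^T$ as the universal lax symmetric monoidal approximation of $p_\ast$ factoring through a Verdier quotient by a $\otimes$-ideal on which $p_\ast^T$ vanishes, then apply Theorem~\ref{thm:verdiermult}.

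First I would identify the relevant $\otimes$-ideal. Let $\calD \subseteq \Sp^S$ be the smallest stable subcategory containing $s_! X$ for all $s \in S$ and $X \in \Sp$. The projection formula for the inclusion $s: \ast \to S$ gives
\[
s_! X \otimes Z \simeq s_!(X \otimes s^\ast Z) \in \calD
\]
for any $Z \in \Sp^S$, so $\calD$ is a $\otimes$-ideal. By part (iv), the fiber of $p_\ast \to p_\ast^T$ is the colimit-preserving functor $p_!(D_S \otimes -)$; combined with the exactness of $p_\ast$ (a right adjoint between stable $\infty$-categories) this forces $p_\ast^T$ to be exact, and the hypothesis of (vi) upgrades, by exactness, to vanishing of $p_\ast^T$ on all of $\calD$.

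Before invoking the Verdier machinery, I would upgrade the universal property from (ii): $p_\ast^T$ is the initial exact functor receiving a natural transformation from $p_\ast$ whose target vanishes on $\calD$. If $q$ denotes this alleged initial object, then $q$ in particular vanishes on the compact objects $\Sp^S_\omega \subseteq \calD$, so (ii) produces a natural transformation $p_\ast^T \to q$; conversely $p_\ast^T$ itself lies in the class defining $q$, giving $q \to p_\ast^T$; both composites are identities by the respective universal properties, so $p_\ast^T \simeq q$.

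The main step is then to apply Theorem~\ref{thm:verdiermult}(ii). Since $p^\ast: \Sp \to \Sp^S$ is symmetric monoidal, its right adjoint $p_\ast$ is canonically lax symmetric monoidal by \cite[Corollary 7.3.2.7]{HA}. Theorem~\ref{thm:verdiermult}(ii) produces from $p_\ast$ a universal exact lax symmetric monoidal approximation $\widetilde{p}: \Sp^S/\calD \to \Sp$ equipped with a lax symmetric monoidal natural transformation $p_\ast \to \widetilde{p}$. Forgetting the monoidal refinement and invoking Theorem~\ref{thm:verdier}(iii), the underlying approximation coincides with the universal exact factorization of $p_\ast$ through $\Sp^S/\calD$, which by the preceding paragraph is $p_\ast^T$. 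This transfers a lax symmetric monoidal structure onto $p_\ast^T$, and the contractibility of the space of lax symmetric monoidal refinements of $p_\ast \to p_\ast^T$ follows from the universal property of $\widetilde{p}$ exactly as in the proof of Theorem~\ref{thm:tatelaxsymm}. The principal obstacle is set-theoretic, since $\Sp^S$ is not small while Theorem~\ref{thm:verdiermult} requires a small source; I would resolve it exactly as in the last paragraph of the proof of Theorem~\ref{thm:tatelaxsymm}, by choosing a regular cardinal $\kappa$ so that $p_\ast$, $p_\ast^T$, and the inclusion $\calD \subseteq \Sp^S$ are all $\kappa$-accessible, running the argument on the small symmetric monoidal $\infty$-category $\Sp^S_\kappa$ with its induced $\otimes$-ideal $\calD \cap \Sp^S_\kappa$, and extending by $\Ind_\kappa$.
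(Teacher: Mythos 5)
Your argument for part (vi) is correct and is essentially the paper's own proof: the paper likewise takes $\calD\subseteq\Sp^S$ to be the stable subcategory generated by the $s_!X$, observes it is a $\otimes$-ideal, and applies Theorem~\ref{thm:verdiermult}~(ii) exactly as in the proof of Theorem~\ref{thm:tatelaxsymm}, including the same $\kappa$-accessibility device for the set-theoretic issue; your extra details (the projection formula for the $\otimes$-ideal property, and the identification of $p_\ast^T$ with the universal approximation vanishing on $\calD$ via the hypothesis of (vi)) are correct elaborations of steps the paper leaves implicit. Note, however, that the statement to be proved comprises parts (i)--(v) as well, which your proposal does not address.
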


We note that by \cite[Corollary 10.2]{KleinDualizing}, the condition in (vi) is satisfied if $S=BG$ where $G$ is a compact Lie group. In fact, in the case of groups, the functor $s_!$, for $s: \ast\to BG$ the inclusion of the base point, is given by induction, and the condition is asking that the generalized Tate cohomology vanishes on induced representations.

This theorem suggests the following definition.

\begin{definition} The Spivak--Klein dualizing spectrum of $S$ is the object $D_S\in \Sp^S$ of Theorem~\ref{thm:genfarrelltate}~(iv).
\end{definition}

In classical language, this is a parametrized spectrum over $S$ in the sense of May--Sigurdsson, \cite{MaySigurdsson}, and it has been discussed in this form by Klein, \cite[Section 5]{KleinDualizing2}. By Theorem~\ref{thm:genfarrelltate}~(iv), the fiber over any point $s\in S$ is given by $\lim_{t\in S} \Sigma_+^\infty \Map(s,t)$. Klein observed that this gives a homotopy-theoretic definition of the Spivak fibration for finite CW complexes, \cite{Spivak}. In particular, if $S$ is a closed manifold of dimension $d$, then by Poincar\'e duality, $D_S$ is fiberwise a sphere spectrum shifted into degree $-d$. In the case $S=BG$ for a topological group $G$, the dualizing spectrum has also been described by Klein, \cite{KleinDualizing}. In that case, one has the formula
\[
D_{BG}=(\Sigma_+^\infty G)^{hG}\ ,
\]
regarded as a spectrum with $G$-action via the remaining $G$-action from the other side. This is a spectral version of the $\Gamma$-module
\[
H^\ast(\Gamma,\Z[\Gamma])
\]
that appears in Bieri--Eckmann duality of discrete groups, \cite{BieriEckmann}. The norm map takes the form
\[
((\Sigma_+^\infty G)^{hG}\otimes X)_{hG}\to X^{hG}\ .
\]

If the functor $p_\ast: \Sp^S\to \Sp$ commutes with all colimits, for example if $S$ is a finite CW complex, then by Theorem~\ref{thm:genfarrelltate}~(v), the map $p_!(D_S\otimes X)\to p_\ast X$ is an equivalence for all $X\in \Sp^S$, as $p_\ast$ itself is colimit-preserving. This is a topological version of Bieri--Eckmann duality, \cite{BieriEckmann}, which states that for certain discrete groups $\Gamma$, there are functorial isomorphism
\[
H_{d-i}(\Gamma,D_\Gamma\otimes X)\cong H^i(\Gamma,X)
\]
for the $\Gamma$-module $D_\Gamma=H^d(\Gamma,\Z[\Gamma])$, and some $d$ depending on $\Gamma$. In general, the norm map encodes a ``generalized Poincar\'e duality on $S$'', and the cofiber $p_\ast^T$ of the norm map can be regarded as a ``failure of generalized Poincar\'e duality on $S$''. In the proof, we will ignore all set-theoretic issues. These can be resolved by passing to $\kappa$-compact objects as in the proof of Theorem~\ref{thm:tatelaxsymm}.

\begin{proof}[Proof of Theorem~\ref{thm:genfarrelltate}.] By \cite[5.5.3.6]{HTT} and \cite[Proposition 1.1.3.1]{HA}, $\Sp^S$ is presentable and stable. As $s^\ast$ commutes with all colimits (and in particular filtered colimits), it follows that $s_!$ preserves compact objects. Now a map $f: X\to Y$ of objects in $\Sp^S$ is an equivalence if and only if for all $s\in S$ and $i\in \Z$, the induced map $\pi_i s^\ast X\to \pi_i s^\ast Y$ is an isomorphism. But $\pi_i s^\ast X=\Hom(\bS[i],s^\ast X) = \Hom(s_! \bS[i],X)$, so that the objects $s_! \bS$ are generators.

Now part (ii) follows from Theorem~\ref{thm:verdier}~(iii), applied to $\calC=\Sp^S$ and $\calD=(\Sp^S)_\omega$ the compact objects in $\calC$. By Theorem~\ref{thm:verdier}~(iii), the functor $p_\ast^T$ can be computed as follows. It sends any $X\in \Sp^S$ to
\[
\colim_{Y\in (\Sp^S)_{\omega/X}} p_\ast\cofib(Y\to X)\ .
\]
For part (iii), we claim first that $\fib(p_\ast\to p_\ast^T)$ does indeed commute with all colimits. By definition all functors are exact, so we have to show that the 
fiber commutes with all filtered colimits. For this, assume that $X=\colim_i X_i$ is a filtered colimit. In that case, as all objects of $(\Sp^S)_\omega$ are compact,
\[
(\Sp^S)_{\omega/X} = \colim_i (\Sp^S)_{\omega/X_i}\ .
\]
Commuting colimits using \cite[Proposition 4.2.3.8, Corollary 4.2.3.10]{HTT}, one sees that
\[\begin{aligned}
\fib(p_\ast(X)\to p_\ast^T(X))&=\colim_{Y\in (\Sp^S)_{\omega/X}} p_\ast Y = \colim_i \colim_{Y\in (\Sp^S)_{\omega/X_i}} p_\ast Y\\
& = \colim_i \fib(p_\ast(X_i)\to p_\ast^T(X_i))\ ,
\end{aligned}\]
as desired. For the uniqueness, note that colimit-preserving functors from $\Sp^S$ to $\Sp$ are equivalent to exact functors from $(\Sp^S)_\omega$ to $\Sp$ by \cite[Proposition 5.5.1.9]{HTT}; but we are given an exact functor from $(\Sp^S)_\omega$ to $\Sp$, namely the restriction of $p_\ast$. The same argument proves (v).

For part (iv), note that colimit-preserving functors $\Sp^S\to \Sp$ are equivalent to colimit-preserving functors $\calS^S\to \Sp$ by \cite[Corollary 1.4.4.5]{HA}. These, in turn, are given by functors $S\to \Sp$ by \cite[Theorem 5.1.5.6]{HTT}, i.e.~objects of $\Sp^S$. Unraveling, any colimit-preserving functor $F: \Sp^S\to \Sp$ is given by $F(X)=p_!(D\otimes X)$ for a unique object $D\in \Sp^S$. To identify $D_S$ in our case, we look at the family of compact objects $F: S\to (\Sp^S)_\omega$ sending $s\in S$ to $s_! \bS$, i.e.~we consider $\Delta_! \bS\in \Fun(S\times S,\Sp)$ as a functor $S\to \Fun(S,\Sp)$ for the diagonal $\Delta: S\to S\times S$. The functor $p_\ast^T$ vanishes identically, so
\[
p_!(D_S\otimes F)\simeq p_\ast F
\]
as functors $S\to \Sp$. Unraveling definitions, the left side is just $D_S$, and the right side gives the desired formula.

Finally, in part (vi), note that if $p_\ast^T$ vanishes on all objects of the form $s_! X$, then we can use Theorem~\ref{thm:verdiermult}~(ii) to produce the desired lax symmetric monoidal structure as in the proof of Theorem~\ref{thm:tatelaxsymm}, by applying it to $\calC=\Sp^S$ and $\calD$ the full subcategory generated by $s_! X$ for all $s\in S$, $X\in \Sp$, which is a $\otimes$-ideal.
\end{proof}

\begin{corollary}\label{cor:tates1} Consider the $\infty$-category $\Sp^{B\T}$ of spectra with $\T$-action. There is a natural transformation $\Sigma (-_{h\T})\to -^{h\T}$ which exhibits $\Sigma (-_{h\T})$ as the universal colimit preserving functor mapping to the target. The cofiber $-^{t\T}$ admits a unique lax symmetric monoidal structure making $-^{h\T}\to -^{t\T}$ lax symmetric monoidal.

Moreover, for any $n\geq 1$, consider the functors $-^{hC_n}, -^{tC_n}: \Sp^{B\T}\to \Sp$ which factor over $\Sp^{BC_n}$. There is a unique (lax symmetric monoidal) natural transformation $-^{t\T}\to -^{tC_n}$ making the diagram
\[\xymatrix{
-^{h\T}\ar[r]\ar[d] & -^{t\T}\ar[d] \\
-^{hC_n}\ar[r] & -^{tC_n}
}\]
of (lax symmetric monoidal) functors commute.
\end{corollary}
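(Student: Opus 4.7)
All three assertions follow from Theorem~\ref{thm:genfarrelltate} applied to $S=B\T$, together with the vanishing of the Tate construction on free (induced) spectra.

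\emph{Part 1.} Specialize Theorem~\ref{thm:genfarrelltate} to $S=B\T$, so that $p_\ast = -^{h\T}$. It produces the initial functor $p_\ast^T$ with a natural transformation $-^{h\T}\to p_\ast^T$ whose fiber is the colimit-preserving functor $p_!(D_{B\T}\otimes -)$, and by (v) this is the universal colimit-preserving functor mapping to $-^{h\T}$. It therefore suffices to identify $D_{B\T}\in\Sp^{B\T}$ with $\Sigma\bS$ equipped with the trivial $\T$-action, for then $p_!(D_{B\T}\otimes -)\simeq \Sigma(-_{h\T})$, and we define $-^{t\T}:=p_\ast^T$. By Theorem~\ref{thm:genfarrelltate}~(iv) the underlying spectrum of $D_{B\T}$ is $(\Sigma_+^\infty \T)^{h\T}$ (Klein's formula), with residual action induced by left translation on $\T$; since $\T$ is abelian this agrees with the right translation we already quotient out, so the residual action is trivial. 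The computation $(\Sigma_+^\infty \T)^{h\T}\simeq \Sigma\bS$ then follows from the $\T$-equivariant cofiber sequence $\T_+\to S^0\to S^{\mathbb{C}}$ (unit circle inside the unit disk of the standard representation), after recognizing $(S^{\mathbb{C}})^{h\T}$ via the Thom isomorphism for the tautological line bundle $L\to B\T$ and identifying $\bS^{h\T}\to (S^{\mathbb{C}})^{h\T}$ as multiplication by the Euler class of $L$.

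\emph{Part 2.} The unique lax symmetric monoidal structure on $-^{t\T}$ comes from Theorem~\ref{thm:genfarrelltate}~(vi). Its hypothesis demands $(s_! Y)^{t\T}\simeq 0$ for every $s:*\to B\T$ and $Y\in\Sp$. Writing $s_! Y \simeq \Sigma_+^\infty \T\otimes Y$ with $\T$ acting by translation on the first factor, the shearing equivalence $(t,y)\mapsto (t,ty)$ lets us assume the action on $Y$ is trivial. Then $(\Sigma_+^\infty \T\otimes Y)_{h\T}\simeq Y$, and by the identification $D_{B\T}\simeq\Sigma\bS$ from Part~1 also $(\Sigma_+^\infty \T\otimes Y)^{h\T}\simeq \Sigma Y$; the natural transformation $\Sigma(-_{h\T})\to -^{h\T}$ evaluated at $s_! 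Y$ is therefore the identity $\Sigma Y\to\Sigma Y$, and its cofiber $(s_! Y)^{t\T}$ vanishes as required.

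\emph{Part 3.} For each $n\ge 1$, the functor $-^{tC_n}:\Sp^{B\T}\to\Sp^{B(\T/C_n)}$ is defined by restricting $\T$-spectra to $C_n$-spectra (via $C_n\hookrightarrow\T$) and then applying Corollary~\ref{cor:tatelaxsymmgroup}; its composition with $-^{hC_n}\to -^{tC_n}$ receives a lax symmetric monoidal transformation from $-^{h\T}$. By the universal property of $-^{t\T}$ as the Dwyer--Kan localization of $-^{h\T}$ at the ideal of compact objects (Theorem~\ref{thm:verdier}~(iii), with the lax symmetric monoidal refinement Theorem~\ref{thm:verdiermult}~(ii)), such a lax symmetric monoidal extension $-^{t\T}\to -^{tC_n}$ exists uniquely provided $-^{tC_n}$ vanishes on the compact generators $s_! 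Y=\Sigma_+^\infty \T\otimes Y$. Restricted to $C_n\subset\T$ the action on $\T$ is still free, so $\Sigma_+^\infty \T$ is a direct sum of $n$ copies of $\Sigma_+^\infty(\T/C_n)$ permuted by $C_n$; thus $\Sigma_+^\infty\T\otimes Y$ lies in $\Sp^{BC_n}_{\mathrm{ind}}$ and its $C_n$-Tate vanishes by Lemma~\ref{lem:propindbg}~(i). The main technical input is the identification $D_{B\T}\simeq\Sigma\bS$ in Part~1, which requires careful bookkeeping of the Borel-equivariant homotopy type of $S^{\mathbb{C}}$; the remaining parts are then formal consequences of the universal properties already established in this chapter.
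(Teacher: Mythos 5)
Your overall strategy is the paper's: Theorem~\ref{thm:genfarrelltate} for the first two assertions (where the paper simply cites Klein's Theorem~10.1 and Corollary~10.2 for the identification of $D_{B\T}$ and for the hypothesis of part~(vi), which you re-derive), and the universal property of $-^{t\T}$ plus vanishing of $-^{tC_n}$ on the generators of the relevant ideal for the last assertion. But your Part~3 contains a genuine error: you claim that, restricted to $C_n\subset\T$, the spectrum $\Sigma_+^\infty\T$ "is a direct sum of $n$ copies of $\Sigma_+^\infty(\T/C_n)$ permuted by $C_n$." This is false: $\T\to\T/C_n$ is the \emph{connected} $n$-fold cover, so $\T\not\cong C_n\times(\T/C_n)$ as $C_n$-spaces (compare $H_0$: $\Z$ versus $\Z^n$), and $\Sigma_+^\infty\T$ is not an induced $C_n$-spectrum. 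The conclusion you need --- that $\Sigma_+^\infty\T|_{C_n}$ lies in $\Sp^{BC_n}_\ind$ --- is nevertheless true and repairs the argument: $\T$ has a free $C_n$-CW structure with one free $0$-cell and one free $1$-cell, giving a cofiber sequence $\Sigma_+^\infty C_n\to\Sigma_+^\infty\T\to\Sigma\,\Sigma_+^\infty C_n$ whose outer terms are induced; since $\Sp^{BC_n}_\ind$ is by definition a stable subcategory (and, as the paper notes, contains many objects that are not themselves induced), it contains $\Sigma_+^\infty\T$, whence $\Sigma_+^\infty\T\otimes Y\in\Sp^{BC_n}_\ind$ by Lemma~\ref{lem:propindbg}~(ii) and its Tate construction vanishes by part~(i).

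With that fix, your route to the vanishing $(s_!X)^{tC_n}\simeq 0$ is genuinely different from, and shorter than, the paper's: the paper reduces by Postnikov towers to $X=H\Z$ and computes $(H\Z[S^1])^{tC_n}$ by hand using the $2$-periodicity of $\pi_\ast(H\Z)^{tC_n}$, explicitly remarking that "more abstract reasons" exist --- yours is one of them. Two smaller cautions. In Part~1, there is no Thom isomorphism for $L$ with sphere-spectrum coefficients, so the identification of $(S^{\mathbb C})^{h\T}$ needs more care than your sketch suggests (this is why the paper outsources $D_{B\T}\simeq\Sigma\bS$ to Klein). In Part~2, knowing that $\Sigma(s_!Y)_{h\T}$ and $(s_!Y)^{h\T}$ are both equivalent to $\Sigma Y$ does not by itself show that the assembly map between them is an equivalence; that identification of the map is exactly the content of Klein's Corollary~10.2, which should be cited rather than asserted.
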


\begin{proof} The first part follows from Theorem~\ref{thm:genfarrelltate} and \cite[Theorem 10.1, Corollary 10.2]{KleinDualizing}. For the second part, we first check the result without lax symmetric monoidal structures. Then, by the universal property of $-^{t\T}$, it suffices to show that $-^{tC_n}$ vanishes on the spectra $s_! \bS[i]$ where $s: \ast\to B\T$ is the inclusion of a point, and $i\in \Z$. For this, it suffices to show that $s_! \bS[i]$ is compact as an object of $\Sp^{BC_n}$, i.e.~$f^\ast: \Sp^{B\T}\to \Sp^{BC_n}$ preserves compact objects, where $f: BC_n\to B\T$ is the canonical map. For this, it suffices to show that $f_\ast$ commutes with all colimits. But the fibers of $f_\ast$ are given by cohomology of $\T/C_n$, which is a finite CW complex (a circle).

To get the result with lax symmetric monoidal structures, we need to check that $-^{tC_n}$ vanishes on all induced spectra $s_! X$ for $X\in \Sp$. Although one can give more abstract reasons, let us just check this by hand. First, note that $s_!$ commutes with all limits, as its fibers are given by homology of $\T$, which is a finite CW complex; in fact, $s_!$ decreases coconnectivity by at most $1$, and so also the composite $(s_!)_{hC_n}$ commutes with Postnikov limits as in Lemma~\ref{lem:tateconvergence}~(ii). Thus, by taking the limit over all $\tau_{\leq n} X$, we can assume that $X$ is bounded above, say $X$ is coconnective. Now all functors commute with filtered colimits, so one can assume that $X$ is bounded, and then that $X$ is an Eilenberg-MacLane spectrum, in fact $X=H\Z$. It remains to compute $-^{tC_n}$ on $H\Z[S^1]=s_! H\Z$. As the homotopy of $(H\Z[S^1])^{tC_n}$ is a module over the homotopy of $(H\Z)^{tC_n}$ which is a $2$-periodic ring, it suffices to see that $\pi_i (H\Z[S^1])_{hC_n}$ vanishes for $i>1$. But this is given by $\pi_i(H\Z[S^1/C_n])\cong \pi_i H\Z[S^1]$, which is nonzero only in degrees $0$ and $1$.
\end{proof}

We will need the following small computation. It also follows directly from the known computation of $\pi_\ast(H\Z)^{t\T}$ as the ring $\Z((X))$, but we argue directly with our slightly inexplicit definition of $(H\Z)^{t\T}$.

\begin{lemma}\label{lem:tates1cn} Consider $H\Z\in \Sp^{B\T}$ endowed with the trivial $\T$-action. The natural map $(H\Z)^{t\T}\to (H\Z)^{tC_n}$ induces isomorphisms
\[
\pi_i(H\Z)^{t\T}/n\simeq \pi_i(H\Z)^{tC_n}\ .
\]
\end{lemma}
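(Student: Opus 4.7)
My plan is to compute both sides explicitly using the cofiber sequences from Corollary~\ref{cor:tates1}. The cofiber sequence $\Sigma(H\Z)_{h\T}\to (H\Z)^{h\T}\to (H\Z)^{t\T}$ combined with $\pi_i(H\Z)^{h\T}=H^{-i}(B\T;\Z)$ (equal to $\Z$ in non-positive even degrees and $0$ otherwise) and $\pi_i(H\Z)_{h\T}=H_i(B\T;\Z)$ (equal to $\Z$ in non-negative even degrees and $0$ otherwise) yields, via the long exact sequence, $\pi_{2k}(H\Z)^{t\T}=\Z$ for all $k\in\Z$ and vanishing in odd degrees. An analogous computation using $(H\Z)_{hC_n}\to (H\Z)^{hC_n}\to (H\Z)^{tC_n}$ together with the standard $H^*(BC_n;\Z)$ and $H_*(BC_n;\Z)$ gives $\pi_{2k}(H\Z)^{tC_n}=\Z/n$ and again $0$ in odd degrees; correspondingly $\pi_{2k}(H\Z)^{t\T}/n=\Z/n$, so both sides of the claimed isomorphism are abstractly isomorphic as graded abelian groups.

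Next I identify the natural map $\gamma:(H\Z)^{t\T}\to (H\Z)^{tC_n}$ on $\pi_*$ using the commuting square of $\mathbb{E}_\infty$-rings
\[\xymatrix{
(H\Z)^{h\T}\ar[r]\ar[d] & (H\Z)^{t\T}\ar[d]^{\gamma}\\
(H\Z)^{hC_n}\ar[r] & (H\Z)^{tC_n}
}\]
from Corollary~\ref{cor:tates1}, with left vertical the restriction along $BC_n\to B\T$. In non-positive even degrees $i=-2k$, the top horizontal induces an isomorphism on $\pi_i$ (the flanking terms in the LES vanish) and the left vertical restricts the generator $u^k\in H^{2k}(B\T;\Z)=\Z$ to the mod-$n$ generator of $H^{2k}(BC_n;\Z)=\Z/n$; commutativity then forces $\pi_i\gamma$ to be reduction mod $n$. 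In particular $\gamma$ factors as $(H\Z)^{t\T}\to (H\Z)^{t\T}/n\xrightarrow{\bar\gamma}(H\Z)^{tC_n}$, and $\bar\gamma$ is an isomorphism on $\pi_i$ for all non-positive even $i$. For positive even $i=2k$, I will use that $\bar\gamma$ is a map of $\mathbb{E}_\infty$-rings and that both sides are $2$-periodic, with the Euler class $u\in\pi_{-2}$ invertible and $\bar\gamma(u)$ equal to the Euler class $v\in\pi_{-2}(H\Z)^{tC_n}=\Z/n$ (a unit, as verified above); then $\bar\gamma(u^{-k})=v^{-k}$ is a generator of $\pi_{2k}(H\Z)^{tC_n}$ for every $k\geq 1$, yielding the isomorphism in positive degrees.

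The hard step is verifying the $2$-periodicity, specifically the invertibility of $u\in\pi_{-2}(H\Z)^{t\T}$, without invoking the classical computation $\pi_*(H\Z)^{t\T}=\Z((u))$ which the lemma is designed to avoid. The candidate inverse is the generator of $\pi_2(H\Z)^{t\T}=\Z$ produced by the connecting isomorphism $\pi_2(H\Z)^{t\T}\cong\pi_1\Sigma(H\Z)_{h\T}=\pi_0(H\Z)_{h\T}=\Z$, and the required relation $u\cdot u^{-1}=1\in\pi_0(H\Z)^{t\T}$ should be deduced by tracing the unit of $H\Z$ through the $\mathbb{E}_\infty$-multiplication on $(H\Z)^{t\T}$ together with its compatibility, via the connecting map, with the $(H\Z)^{h\T}$-module structure on $(H\Z)_{h\T}$; the analogous periodicity statement for $(H\Z)^{tC_n}$ is the classical $2$-periodicity of Tate cohomology of cyclic groups.
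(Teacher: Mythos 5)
Your computation of both sides as graded abelian groups, and your identification of the map in degrees $i\leq 0$ via the restriction $H^{*}(B\T;\Z)\to H^{*}(BC_n;\Z)$, are both correct and agree with what the paper calls the "classical" part of the statement. The gap is exactly where you locate it: the invertibility of $u\in\pi_{-2}(H\Z)^{t\T}$. This is not a detail to be filled in later -- it is equivalent to the classical computation $\pi_*(H\Z)^{t\T}\cong\Z[u^{\pm1}]$, which the paper explicitly says it wants to avoid invoking, precisely because with the Farrell--Tate definition of $-^{t\T}$ used here (cofiber of the universal colimit-preserving approximation, Corollary~\ref{cor:tates1}) the multiplicative structure is produced abstractly and products across the "orbit/fixed-point boundary" are not accessible by inspection. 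Moreover, the specific mechanism you propose does not close: the candidate inverse $w\in\pi_2(H\Z)^{t\T}$ is detected by the connecting map $\partial\colon\pi_2(H\Z)^{t\T}\xrightarrow{\sim}\pi_0(H\Z)_{h\T}$, but the product $u\cdot w$ lives in $\pi_0(H\Z)^{t\T}$, where $\partial$ lands in $\pi_{-2}(H\Z)_{h\T}=0$; so $(H\Z)^{h\T}$-linearity of the connecting map gives no information about $u\cdot w$, and a naive five-lemma argument on the multiplication-by-$u$ map of fiber sequences fails for the same reason (the induced map $\pi_1\Sigma(H\Z)_{h\T}\to\pi_{-1}\Sigma(H\Z)_{h\T}$ is zero). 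Establishing $u\cdot w=1$ honestly requires the multiplicative Tate spectral sequence or an equivalent identification of $(H\Z)^{t\T}$ with a localization of $(H\Z)^{h\T}$, i.e., the very input you are trying to avoid.

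The paper sidesteps all of this with a shift trick. The counit $H\Z[S^1]=s_!H\Z\to H\Z$ has cofiber $H\Z[2]$ with (necessarily trivial) $\T$-action, and both $-^{t\T}$ and $-^{tC_n}$ vanish on the induced object $H\Z[S^1]$ (the latter is checked in the proof of Corollary~\ref{cor:tates1}). Hence one gets natural equivalences $(H\Z)^{t\T}\simeq(H\Z)^{t\T}[2]$ and $(H\Z)^{tC_n}\simeq(H\Z)^{tC_n}[2]$ compatible with the comparison map, and iterating reduces the assertion in any positive degree to the non-positive range, where your group-cohomology argument applies. If you want to salvage your route, replace the appeal to invertibility of $u$ by this cofiber-sequence periodicity; otherwise the positive-degree half of your proof is unsupported.
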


\begin{proof} The result is classical on $\pi_i$ for $i\leq 0$, as it becomes a result about group cohomology. Note that by adjunction there is a natural map $H\Z[S^1]\to H\Z$, and $-^{t\T}$ and $-^{tC_n}$ vanish on $H\Z[S^1]$ (the latter by the proof of the previous corollary). On the other hand, the cone of $H\Z[S^1]\to H\Z$ is given by $H\Z[2]$ with necessarily trivial $\T$-action. Inductively, we can replace $H\Z$ by $H\Z[2i]$ for all $i\geq 0$, and then the result is clear in any given  degree by taking $i$ large enough.
\end{proof}

\chapter{Cyclotomic spectra}\label{ch:cyclotomic}

In this chapter, we define the $\infty$-category of cyclotomic spectra. More precisely, we define two variants: One is the classical definition in terms of genuine equivariant homotopy theory, as an orthogonal $\T$-spectrum $X$ together with commuting $\T/C_p\cong \T$-equivariant equivalences $\Phi_p: \Phi^{C_p} X\simeq X$. The other variant is $\infty$-categorical, and is a $\T$-equivariant object $X$ in the $\infty$-category $\Sp$, together with $\T\cong \T/C_p$-equivariant maps $\varphi_p: X\to X^{tC_p}$. The goal of this chapter is to prove that these notions are equivalent on bounded below objects.

First, we define the $\infty$-category of cyclotomic spectra in Section~\ref{sec:prelim}. Next, we review the genuine equivariant homotopy theory going into the classical definition of cyclotomic spectra and topological Hochschild homology in Sections~\ref{sec:genuine} and~\ref{sec:gencyclotomic}. In Section~\ref{sec:tccomp}, we give a direct proof of Corollary~\ref{cor:formulatc}. We note that a part of the proof, Corollary~\ref{cor:segalinduct}, establishes a generalization of Ravenel's observation, \cite{Ravenelsegalconj}, that the Segal conjecture for $C_p$ implies the Segal conjecture for $C_{p^n}$, cf.~also~\cite{TsalidisTHHdescent}, \cite{BBLNR}. Afterwards, in Sections~\ref{sec:coalgebras} and~\ref{sec:proofequivalence}, we prove Theorem~\ref{thm:intromain}.
 
\section{Cyclotomic spectra and TC}\label{sec:prelim}

In this section we give our new definition of cyclotomic spectra. The following definition gives the objects of an $\infty$-category that will be defined in Definition~\ref{def:cyclotomicinftycat}.

In the following we will use that for a spectrum $X$ with $\T$-action the spectrum $X^{tC_p}$ can be equipped with the residual $\T/{C_p}$-action which is identified with a $\T$-action using the $p$-th power map $\T/C_p\cong\T$.
Moreover $C_{p^\infty} \subseteq \T$ denotes the subgroup of elements of $p$-power torsion which is isomorphic to the Pr\"ufer group  $\Q_p/\Z_p$.\footnote{One has $C_{p^\infty}\cong \Q_p/\Z_p \cong \Q/\Z_{(p)}\cong \Z[\tfrac 1p]/\Z$, but the second author is very confused by the notation $\Z/p^\infty$ that is sometimes used, and suggests to change it to any of the previous alternatives, or to $p^{-\infty}\Z/\Z$.} Again we have a canonical identification $C_{p^\infty}/C_p\cong C_{p^\infty}$ given by the $p$-th power map.

\begin{definition}\label{def:cyclotomic}$ $
\begin{altenumerate}
\item[{\rm (i)}] A cyclotomic spectrum is a spectrum $X$ with $\T$-action together with $\T$-equivariant maps $\varphi_p: X \to X^{tC_p}$ for every prime $p$.
\item[{\rm (ii)}] For a fixed prime $p$, a $p$-cyclotomic spectrum is a spectrum $X$ with $C_{p^\infty}$-action and a $C_{p^\infty}$-equivariant map $\varphi_p: X \to X^{tC_p}$. \end{altenumerate}
\end{definition}

\begin{example} \label{examplesTHH}We give some examples of cyclotomic spectra.
\begin{altenumerate}
\item For every associative and unital ring spectrum $R\in \Alg_{\E_1}(\Sp)$, the topological Hoch\-schild homology $\THH(R)$ is a cyclotomic spectrum, cf.~Section~\ref{sec:thhnaive} below.
\item Consider the sphere spectrum $\bS$ equipped with the trivial $\T$-action. There are canonical maps $\varphi_p: \bS \to \bS^{tC_p}$ given as the composite $\bS\to \bS^{hC_p}\to \bS^{tC_p}$. These maps are $\T\cong \T/C_p$-equivariant: To make them equivariant, we need to lift the map $\bS\to \bS^{tC_p}$ to a map $\bS\to (\bS^{tC_p})^{h(\T/C_p)}$, as the $\T$-action on $\bS$ is trivial. But we have a natural map
\[
\bS \to \bS^{h\T} \simeq (\bS^{hC_p})^{h(\T/C_p)}\to (\bS^{tC_p})^{h(\T/C_p)}\ .
\]
This defines a cyclotomic spectrum which is in fact equivalent to $\THH(\bS)$. Note, cf.~Remark~\ref{remark:segal} below, that it is a consequence of the Segal conjecture that the maps $\varphi_p$ are $p$-completions. We refer to this cyclotomic spectrum as the cyclotomic sphere and denote it also by $\bS$.
\item For every cyclotomic spectrum we get a $p$-cyclotomic spectrum by restriction. In particular we can consider $\THH(R)$ and $\bS$ as $p$-cyclotomic spectra and do not distinguish these notationally.
\end{altenumerate}
\end{example}

\begin{remark}\label{remarktaction} One can define a slightly different notion of $p$-cyclotomic spectrum, as a spectrum with a $\T$-action and a $\T\cong \T/C_p$-equivariant map $\varphi_p: X \to X^{tC_p}$. This is what has been used (in a different language) in the literature before \cite{BlumbergMandell}. We however prefer to restrict the action to a $\Tp$-action since this is sufficient for the definition of $\TC(-,p)$ below and makes the construction more canonical; in particular, it is necessary for the interpretation of $\TC(-,p)$ as the mapping spectrum from $\bS$ in the $\infty$-category of $p$-cyclotomic spectra. 
The functor from the $\infty$-category of $p$-cyclotomic spectra with $\T$-action to the $\infty$-category of $p$-cyclotomic spectra as defined above is fully faithful when restricted to the subcategory of $p$-complete and bounded below objects (so that $X^{tC_p}$ is also $p$-complete by Lemma~\ref{lem:tatecomplete}). However, not every $\Tp$-action on a $p$-complete spectrum extends to a $\T$-action since a $\Tp$-action can act non-trivially on homotopy groups. 
\end{remark}

Let us now make the definition of cyclotomic spectra more precise by defining the relevant $\infty$-categories.

As in Definition~\ref{def:equivobject}, we denote the $\infty$-category of spectra equipped with a $\T$-action by $\Sp^{B\T} = \Fun(B\T,\Sp)$. Note that here, $\T$ is regarded as a topological group, and $B\T\simeq \C\mathrm{P}^\infty$ denotes the corresponding topological classifying space. We warn the reader again that this notion of $G$-equivariant spectrum is different from the notions usually considered in equivariant stable homotopy theory, and we discuss their relation in Section~\ref{sec:genuine} below.

Note that the $\infty$-category of cyclotomic spectra is the $\infty$-category of $X\in \Sp^{B\T}$ together with maps $X\to X^{tC_p}$. This is a special case of the following general definition.

\begin{definition}\label{def:laxeq} Let $\calC$ and $\calD$ be $\infty$-categories, and $F,G: \calC\to \calD$ functors. The \emph{lax equalizer}\footnote{In accordance to classical 2-category theory it would be more precise to call this construction an \emph{inserter}. But since we want to emphasize the relation to the equalizer we go with the current terminology. We thank Emily Riehl for a discussion of this point.} 
 of $F$ and $G$ is the $\infty$-category
\[
\mathrm{LEq}(F,G) = \mathrm{LEq}\left( \xymatrix{\calC \ar^F[r]<2pt> \ar_G[r]<-2pt> &  \calD} \right)
\]
defined as the pullback
\[\xymatrix{
\mathrm{LEq}(F,G) \ar[r] \ar[d] & \calD^{\Delta^1} \ar[d]^{(\ev_0,\ev_1)}\\
\calC \ar[r]^-{(F,G)} &  \calD \times \calD
}\]
of simplicial sets. In particular, objects of $\mathrm{LEq}(F,G)$ are given by pairs $(c,f)$ of an object $c\in \calC$ and a map $f: F(c)\to G(c)$ in $\calD$.
\end{definition}

\begin{proposition}\label{prop:laxeq} Consider a cartesian diagam
\[\xymatrix{
\mathrm{LEq}(F,G) \ar[r] \ar[d] & \calD^{\Delta^1} \ar[d]^{(\ev_0,\ev_1)}\\
\calC \ar[r]^-{(F,G)} &  \calD \times \calD
}\]
as in Definition~\ref{def:laxeq}.
\begin{altenumerate}
\item[{\rm (i)}] The pullback
\[\xymatrix{
\mathrm{LEq}(F,G) \ar[r] \ar[d] & \calD^{\Delta^1} \ar[d]^{(\ev_0,\ev_1)}\\
\calC \ar[r]^-{(F,G)} &  \calD \times \calD
}\]
is a homotopy Cartesian diagram of $\infty$-categories.
\item[{\rm (ii)}] Let $X,Y\in \mathrm{LEq}(F,G)$ be two objects, given by pairs $(c_X,f_X)$, $(c_Y,f_Y)$, where $c_X,c_Y\in \calC$, and $f_X: F(c_X)\to G(c_X)$, $f_Y: F(c_Y)\to G(c_Y)$ are maps in $\calD$. Then the space $\Map_{\mathrm{LEq}(F,G)}(X,Y)$ is given by the equalizer
\[
\Map_{\mathrm{LEq}(F,G)}(X,Y) \simeq \mathrm{Eq}\Big(\xymatrix{\Map_\calC(c_X,c_Y) \ar^-{(f_X)^* G}[rr]<2pt> \ar_-{(f_Y)_* F}[rr]<-2pt> && \Map_{\calD}(F(c_X),G(c_Y))}\Big)\ .
\]
Moreover, a map $f: X\to Y$ in $\mathrm{LEq}(F,G)$ is an equivalence if and only if its image in $\calC$ is an equivalence.
\item[{\rm (iii)}] If $\calC$ and $\calD$ are stable $\infty$-categories and $F$ and $G$ are exact, then $\mathrm{LEq}(F,G)$ is a stable $\infty$-category, and the functor $\mathrm{LEq}(F,G)\to \calC$ is exact.
\item[{\rm (iv)}] If $\calC$ is presentable, $\calD$ is accessible, $F$ is colimit-preserving and $G$ is accessible, then $\mathrm{LEq}(F,G)$ is presentable, and the functor $\mathrm{LEq}(F,G)\to \calC$ is colimit-preserving.
\item[{\rm (v)}]  Assume that $p: K \to \mathrm{LEq}(F,G)$ is a diagram such that the composite diagram $K \to \mathrm{LEq}(F,G) \to \calC$ admits a limit and this limit is preserved by the functor $G: \calC \to \calD$. Then $p$ admits a limit and the functor $\mathrm{LEq}(F,G) \to \calC$ preserves this limit. 
\end{altenumerate}
\end{proposition}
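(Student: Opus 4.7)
The plan is to construct the limit object explicitly and then verify the universal property via the mapping space formula from part (ii). Write the diagram $p : K \to \mathrm{LEq}(F,G)$ informally as $k \mapsto (c_k, f_k : F(c_k) \to G(c_k))$. By hypothesis, $c_\bullet : K \to \calC$ admits a limit $c = \lim_K c_k$ and $G$ preserves it, so $G(c) \simeq \lim_K G(c_k)$.

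The first step is to build the candidate $(c, f) \in \mathrm{LEq}(F,G)$. Composing the universal maps $c \to c_k$ with $F$ and then with $f_k$ produces a cone $F(c) \to G \circ c_\bullet$ in $\Fun(K, \calD)$; the cone condition is encoded in the data of $p$, since a morphism in $\mathrm{LEq}(F,G)$ is precisely a square expressing compatibility of the $f_k$. Using $G(c) \simeq \lim_K G(c_k)$, this cone induces a unique map $f : F(c) \to G(c)$, giving an object $(c, f)$ of $\mathrm{LEq}(F,G)$ equipped with a natural cone down to $p$.

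To verify that $(c, f)$ is the limit, fix an arbitrary $(c', f') \in \mathrm{LEq}(F,G)$. Part (ii) gives
\[
\Map_{\mathrm{LEq}(F,G)}\bigl((c',f'),(c,f)\bigr) \simeq \mathrm{Eq}\Bigl(\Map_\calC(c',c) \rightrightarrows \Map_\calD(F(c'),G(c))\Bigr),
\]
with the two arrows $g \mapsto f \circ F(g)$ and $g \mapsto G(g) \circ f'$. Since $c$ is a limit, $\Map_\calC(c',c) \simeq \lim_K \Map_\calC(c',c_k)$, and since $G$ preserves that limit, $\Map_\calD(F(c'),G(c)) \simeq \lim_K \Map_\calD(F(c'),G(c_k))$. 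Equalizers commute with limits, so after interchanging $\mathrm{Eq}$ with $\lim_K$ and applying (ii) a second time to the inner equalizers, we obtain $\lim_K \Map_{\mathrm{LEq}(F,G)}\bigl((c',f'),(c_k,f_k)\bigr)$, as required. Preservation of this limit by $\mathrm{LEq}(F,G) \to \calC$ is immediate from the construction, since the underlying $\calC$-object of $(c,f)$ is $c = \lim_K c_k$.

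The main technical obstacle lies in the coherent identification of the two parallel arrows on either side of the $\mathrm{Eq}$/$\lim_K$ interchange: one must check that under the two displayed limit identifications the "post-compose with $f$" and "pre-compose with $f'$" arrows assemble naturally from their counterparts at each $k$. This amounts to the naturality of the formula (ii) in both variables, which is routine $\infty$-categorical bookkeeping. An alternative, perhaps cleaner, route is to exploit the pullback description of $\mathrm{LEq}(F,G)$ from part (i): limits in $\calD^{\Delta^1}$ are computed pointwise, so once the target-side pointwise limit $\lim_K G(c_k)$ is identified with $G(c)$ via the hypothesis on $G$, the arrow $f : F(c) \to G(c)$ constructed above is precisely the image in $\calD^{\Delta^1}$ of a compatible cone, and the pullback square produces the limit in $\mathrm{LEq}(F,G)$ with projection to $\calC$ equal to $c$.
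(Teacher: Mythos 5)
Your proposal addresses only part (v) of the proposition; parts (i)--(iv) are not treated at all, and part (ii) is used as a black box. That is a real gap relative to the stated result: (ii) in particular requires an argument (the paper computes $\Map_{\mathrm{LEq}(F,G)}(X,Y)$ as a homotopy pullback of $\Hom^R$-spaces, using that $\Hom^R_{\calD^{\Delta^1}}(f_X,f_Y)\to \Hom^R_\calD(F(c_X),F(c_Y))\times \Hom^R_\calD(G(c_X),G(c_Y))$ is a Kan fibration, and then identifies the right vertical map as an equalizer), and (i), (iii), (iv) each need separate, if short, arguments. If your intent was to prove only (v) assuming the earlier parts, you should say so explicitly.

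For part (v) itself, your argument is correct and is essentially the paper's, which simply dualizes its proof of (iv). The paper's two steps are: (a) a cone $p^\triangleleft:K^\triangleleft\to \mathrm{LEq}(F,G)$ is a limit cone as soon as its image in $\calC$ is one --- this follows from the Yoneda characterization of limits, the mapping-space formula of (ii), and the hypothesis that $G$ preserves the limit; and (b) such a cone exists, because the map $K\times\Delta^1\to\calD$ encoding the edges $f_k$ extends over $K^\triangleleft\times\Delta^1$ by the universal property of the limit diagram $G(a)$. Your main argument constructs the object $(c,f)$ first and then verifies the universal property by interchanging $\mathrm{Eq}$ with $\lim_K$; this is the same computation as (a), just unwound, and the coherence issues you flag (assembling the cone $F(c)\to G\circ c_\bullet$ and matching the two parallel arrows across the limit identifications) are exactly what the paper's simplicial-set-level construction in (b) and the abstract Yoneda formulation in (a) are designed to avoid. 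Your ``alternative, cleaner route'' via the pullback description of $\mathrm{LEq}(F,G)$ is precisely the paper's construction, so I would promote it from an aside to the actual proof.
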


In part (ii), we denote for any $\infty$-category $\calC$ by
\[
\Map_\calC: \calC^\op\times \calC\to \calS
\]
the functor defined in \cite[Section 5.1.3]{HTT}. It is functorial in both variables, but its definition is rather involved. There are more explicit models for the mapping spaces with less functoriality, for example the Kan complex $\Hom^R_\calC(X,Y)$ with $n$-simplices given by the set of $n+1$-simplices $\Delta^{n+1}\to \calC$ whose restriction to $\Delta^{\{0,\ldots,n\}}$ is constant at $X$, and which send the vertex $\Delta^{\{n+1\}}$ to $Y$, cf.~\cite[Section 1.2.2]{HTT}.

In part (iv), recall that a functor between accessible $\infty$-categories is accessible if it commutes with small $\kappa$-filtered colimits for some (large enough) $\kappa$. For the definition of accessible and presentable $\infty$-categories, we refer to \cite[Definition 5.4.2.1, Definition 5.5.0.1]{HTT}.

\begin{proof} For part (i), it is enough to show that the functor $(\ev_0,\ev_1): \calD^{\Delta^1}\to \calD\times \calD$ of $\infty$-categories is a categorical fibration, which follows from \cite[Corollary 2.3.2.5, Corollary 2.4.6.5]{HTT}.

For part (ii), note that it follows from the definition of $\Hom^R$ that there is a pullback diagram of simplicial sets
\[\xymatrix{
\Hom^R_{\mathrm{LEq}(F,G)}(X,Y)\ar[r]\ar[d] & \Hom^R_{\calD^{\Delta^1}}(f_X,f_Y)\ar[d]\\
\Hom^R_\calC(c_X,c_Y)\ar[r] & \Hom^R_\calD(F(c_X),F(c_Y))\times \Hom^R_\calD(G(c_X),G(c_Y))\ .
}\]
This is a homotopy pullback in the Quillen model structure, as the map
\[
\Hom^R_{\calD^{\Delta^1}}(f_X,f_Y)\to \Hom^R_\calD(F(c_X),F(c_Y))\times \Hom^R_\calD(G(c_X),G(c_Y))
\]
is a Kan fibration by \cite[Corollary 2.3.2.5, Lemma 2.4.4.1]{HTT}. Therefore, the diagram
\[\xymatrix{
\Map_{\mathrm{LEq}(F,G)}(X,Y)\ar[r]\ar[d] & \Map_{\calD^{\Delta^1}}(f_X,f_Y)\ar[d]\\
\Map_\calC(c_X,c_Y)\ar[r] & \Map_\calD(F(c_X),F(c_Y))\times \Map_\calD(G(c_X),G(c_Y))
}\]
is a pullback in $\calS$. To finish the identification of mapping spaces in part (ii), it suffices to observe that the right vertical map is the equalizer of
\[\xymatrix{
\Map_\calD(F(c_X),F(c_Y))\times \Map_\calD(G(c_X),G(c_Y))\ar[r]<2pt> \ar[r]<-2pt> & \Map_{\calD}(F(c_X),G(c_Y))\ ,
}\]
which follows by unraveling the definitions. For the final sentence of (ii), note that if $f: X\to Y$ is a map in $\mathrm{LEq}(F,G)$ which becomes an equivalence in $\calC$, then by the formula for the mapping spaces, one sees that
\[
f^\ast: \Map_{\mathrm{LEq}(F,G)}(Y,Z)\to \Map_{\mathrm{LEq}(F,G)}(X,Z)
\]
is an equivalence for all $Z\in \mathrm{LEq}(F,G)$; thus, by the Yoneda lemma, $f$ is an equivalence.

For part (iii), note first that $\calD\times \calD$ and $\calD^{\Delta^1}$ are again stable by \cite[Proposition 1.1.3.1]{HA}. Now the pullback of a diagram of stable $\infty$-categories along exact functors is again stable by \cite[Proposition 1.1.4.2]{HA}.

For part (iv), note first that by \cite[Proposition 5.4.4.3]{HTT}, $\calD\times \calD$ and $\calD^{\Delta^1}$ are again accessible. By \cite[Proposition 5.4.6.6]{HTT}, $\mathrm{LEq}(F,G)$ is accessible. It remains to see that $\mathrm{LEq}(F,G)$ admits all small colimits, and that $\mathrm{LEq}(F,G)\to \calC$ preserves all small colimits.

Let $K$ be some small simplicial set with a map $p: K\to \mathrm{LEq}(F,G)$. First, we check that an extension $p^\triangleright: K^\triangleright\to \mathrm{LEq}(F,G)$ is a colimit of $p$ if the composite $K^\triangleright\to \mathrm{LEq}(F,G)\to \calC$ is a colimit of $K\to \mathrm{LEq}(F,G)\to \calC$. Indeed, this follows from the Yoneda characterization of colimits \cite[Lemma 4.2.4.3]{HTT}, the description of mapping spaces in part (ii), and the assumption that $F$ preserves all small colimits. Thus, it remains to see that we can always extend $p$ to a map $p^\triangleright: K^\triangleright\to \mathrm{LEq}(F,G)$ whose image in $\calC$ is a colimit diagram.

By assumption, the composite $K\to \mathrm{LEq}(F,G)\to \calC$ can be extended to a colimit diagram $a: K^\triangleright\to \calC$. Moreover, $F(a): K^\triangleright\to \calD$ is still a colimit diagram, while $G(a): K^\triangleright\to \calD$ may not be a colimit diagram. However, we have the map $K\to \mathrm{LEq}(F,G)\to \calD^{\Delta^1}$, which is given by a map $K\times \Delta^1\to \calD$ with restriction to $K\times \{0\}$ equal to $F(a)|_K$, and restriction to $K\times \{1\}$ equal to $G(a)|_K$. By the universal property of the colimit $F(a)$, this can be extended to a map $D: K^\triangleright\times \Delta^1\to \calD$ with restriction to $K^\triangleright\times \{0\}$ given by $F(a)$, and restriction to $K^\triangleright\times \{1\}$ given by $G(a)$. Thus, $(a,D)$ defines a map
\[
p^\triangleright: K^\triangleright\to \calC\times_{\calD\times \calD} \calD^{\Delta^1} = \mathrm{LEq}(F,G)
\]
whose image in $\calC$ is a colimit diagram, as desired.

For part (v), use the dual argument to the one given in part (iv).
\end{proof}

Now we can define the $\infty$-categories of ($p$-)cyclotomic spectra. Recall from Definition~\ref{def:tate} the functors
\[
-^{tC_p}: \Sp^{B\T}\to \Sp^{B(\T/C_p)}\simeq \Sp^{B\T}
\]
and
\[
-^{tC_p}: \Sp^{B\Tp}\to \Sp^{B(\Tp/C_p)}\simeq \Sp^{B\Tp}\ .
\]

\begin{definition}\label{def:cyclotomicinftycat}{\ }\begin{altenumerate}
\item The $\infty$-category of cyclotomic spectra is the lax equalizer
\[
\Cycn := \mathrm{LEq}\left( \xymatrix{\Sp^{B\T} \ar[r]<2pt> \ar[r]<-2pt> &  \prod_{p\in \bP} \Sp^{B\T}} \right)
\]
where the two functors have $p$-th components given by the functors $\id: \Sp^{B\T} \to \Sp^{B\T}$ and $-^{tC_p}: \Sp^{B\T} \to \Sp^{B(\T/C_p)}\simeq \Sp^{B\T}$.
\item The $\infty$-category of $p$-cyclotomic spectra is the lax equalizer
 \[
\Cycn_p := \mathrm{LEq}\left( \xymatrix{\Sp^{BC_{p^\infty}} \ar[r]<2pt> \ar[r]<-2pt> & \Sp^{BC_{p^\infty}}} \right)
\]
of the functors $\id: \Sp^{B\Tp} \to \Sp^{B\Tp}$ and $-^{tC_p}: \Sp^{B\Tp} \to \Sp^{B(\Tp/C_p)}\simeq \Sp^{B\Tp}$. 
\end{altenumerate}
\end{definition}

\begin{corollary} The $\infty$-categories $\Cycn$ and $\Cycn_p$ are presentable stable $\infty$-categories. The forgetful functors $\Cycn\to \Sp$, $\Cycn_p\to \Sp$ reflect equivalences, are exact, and preserve all small colimits.
\end{corollary}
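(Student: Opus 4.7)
The plan is a direct application of Proposition~\ref{prop:laxeq}; essentially all the content is already packaged in parts (ii), (iii), (iv) of that proposition, and the work is just checking the hypotheses for the two specific lax equalizers defining $\Cycn$ and $\Cycn_p$. I will treat $\Cycn$ since the $p$-cyclotomic case is entirely analogous.

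First I verify stability and exactness using part (iii). The target $\Sp^{B\T} = \Fun(B\T,\Sp)$ is stable because $\Sp$ is stable and stability is preserved by functor categories (\cite[Proposition 1.1.3.1]{HA}); the product $\prod_{p\in \bP}\Sp^{B\T}$ is therefore stable as well. The identity functor is obviously exact. The Tate construction $-^{tC_p}\colon \Sp^{B\T}\to \Sp^{B\T}$ is exact because, by Definition~\ref{def:tate}, it is the cofiber of the norm map $\Nm\colon -_{hC_p}\to -^{hC_p}$, and both homotopy orbits and homotopy fixed points are exact as left resp.\ right adjoints between stable $\infty$-categories; cofibers of natural transformations of exact functors into a stable $\infty$-category are again exact. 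Hence Proposition~\ref{prop:laxeq}(iii) implies that $\Cycn$ is stable and the forgetful functor $\Cycn\to \Sp^{B\T}$ is exact. Composing with the exact forgetful functor $\Sp^{B\T}\to \Sp$ gives the claim for the forgetful functor to $\Sp$.

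Next I verify presentability and colimit preservation using part (iv). The $\infty$-category $\Sp^{B\T}$ is presentable by \cite[Proposition 5.5.3.6]{HTT}, and the countable product $\prod_{p\in \bP}\Sp^{B\T}$ is presentable (in particular accessible) as a small product of presentable $\infty$-categories. The first functor $\id$ preserves all colimits. The second functor, with $p$-th component $-^{tC_p}$, is accessible: its two building blocks $-_{hC_p}$ and $-^{hC_p}$ are accessible (the first preserves all colimits; the second preserves all limits and is accessible by \cite[Proposition 5.4.7.7]{HTT} as limits along a small diagram), and cofibers of accessible functors are accessible. Proposition~\ref{prop:laxeq}(iv) then yields that $\Cycn$ is presentable and that $\Cycn\to \Sp^{B\T}$ preserves all small colimits; composing with the colimit-preserving forgetful functor $\Sp^{B\T}\to \Sp$ gives that $\Cycn\to \Sp$ preserves all small colimits.

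Finally, the reflection of equivalences is precisely the last sentence of Proposition~\ref{prop:laxeq}(ii): a morphism in $\Cycn$ is an equivalence if and only if its image in $\Sp^{B\T}$ is an equivalence. Since a morphism of $\T$-equivariant spectra is an equivalence iff its underlying morphism of spectra is an equivalence (equivalences in a functor category are detected pointwise), the composite forgetful functor to $\Sp$ also reflects equivalences. The only place any non-trivial work is required is the accessibility of $-^{tC_p}$; this is the one point I expect to take slightly more than a single line, though it follows from standard accessibility results. Everything else is a direct appeal to the lax equalizer formalism already established in Proposition~\ref{prop:laxeq}.
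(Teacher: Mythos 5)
Your proposal is correct and follows essentially the same route as the paper: the paper's proof likewise observes that $-^{tC_p}$ is accessible as the cofiber of functors admitting adjoints, that the forgetful functor $\Sp^{BG}\to\Sp$ preserves colimits and reflects equivalences, and then invokes Proposition~\ref{prop:laxeq}. Your slightly more detailed verification of the hypotheses (stability, exactness, accessibility) is all standard and matches the intended argument.
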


\begin{proof} The Tate spectrum functor $X\mapsto X^{tC_p}$ is accessible as it is the cofiber of functors which admit adjoints, cf.~\cite[Proposition 5.4.7.7]{HTT}. Moreover, the forgetful functor $\Sp^{BG}\to \Sp$ preserves all small colimits and reflects equivalences by \cite[Corollary 5.1.2.3]{HTT}; in particular, it is exact. Now all statements follow from Proposition~\ref{prop:laxeq}.
\end{proof}
 
For every pair of objects in a stable $\infty$-category $\calC$, the mapping space refines to a mapping spectrum. In fact, by \cite[Corollary 1.4.2.23]{HA}, for every $X\in \calC^\op$, the left-exact functor
\[
\Map(X,-): \calC\to \calS
\]
from the stable $\infty$-category $\calC$ lifts uniquely to an exact functor
\[
\map(X,-): \calC\to \Sp\ .
\]
Varying $X$, i.e.~by looking at the functor
\[
\calC^\op\to \Fun^\Lex(\calC,\calS)\simeq \Fun^\Ex(\calC,\Sp)\ ,
\]
we get a functor $\map: \calC^\op\times \calC\to \Sp$.

\begin{definition}\label{def:naivetc}$ $\begin{altenumerate}
\item Let $\big(X,(\varphi_p)_{p\in \bP}\big)$ be a cyclotomic spectrum. The integral topological cyclic homology $\TC(X)$ of $X$ is the mapping spectrum $\map_{\Cycn}(\bS, X)\in \Sp$.

\item Let $(X,\varphi_p)$ be a $p$-cyclotomic spectrum. The $p$-typical topological cyclic homology $\TC(X,p)$ is the mapping spectrum $\map_{\Cycn_p}(\bS, X)$.

\item Let $R\in \Alg_{\E_1}(\Sp)$ be an associative ring spectrum. Then $\TC(R) := \TC(\THH(R))$ and $\TC(R,p) := \TC(\THH(R),p)$.
\end{altenumerate}
\end{definition}

In fact, by Proposition~\ref{prop:laxeq}~(ii), it is easy to compute $\TC(X)$ and $\TC(X,p)$.

\begin{proposition}\label{prop:formula}$ $
\begin{altenumerate}
\item Let $\big(X,(\varphi_p)_{p\in \bP}\big)$ be a cyclotomic spectrum. There is a functorial fiber sequence
\[
\TC(X) \to  X^{h\T} \xto{(\varphi_p^{h\T} - \can)_{p \in \bP}} \prod_{p \in \bP}(X^{tC_p})^{h\T}\ ,
\]
where the maps are given by
\[
\varphi_p^{h\T}: X^{h\T}\to (X^{tC_p})^{h\T}
\]
and
\[
\can: X^{h\T} \simeq (X^{hC_p})^{h(\T/C_p)}\simeq (X^{hC_p})^{h\T}\to (X^{tC_p})^{h\T}\ ,
\]
where the middle equivalence comes from the $p$-th power map $\T/C_p\cong \T$.

\item Let $(X,\varphi_p)$ be a $p$-cyclotomic spectrum. There is a functorial fiber sequence 
\[
\TC(X,p) \to X^{h\Tp} \xto{\varphi_p^{h\Tp} - \can} (X^{tC_p})^{h\Tp}\ ,
\]
with notation as in part (i).
\end{altenumerate}
\end{proposition}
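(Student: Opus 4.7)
The plan is to read off the formula directly from Proposition~\ref{prop:laxeq}(ii), applied to $\Cycn$ and $\Cycn_p$ viewed as lax equalizers as in Definition~\ref{def:cyclotomicinftycat}. Since both $\Cycn$ and $\Cycn_p$ are stable, mapping spaces promote to mapping spectra, and the equalizer of two parallel maps between spectra is automatically the fiber of their difference. Thus the essential work is to compute the two maps in the equalizer and identify them with $\varphi_p^{h\T}$ and $\can$.

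First I would apply Proposition~\ref{prop:laxeq}(ii) to $\Cycn=\mathrm{LEq}\bigl(\id,\prod_p (-)^{tC_p}\bigr)$ with source the cyclotomic sphere $\bS$ (whose underlying $\T$-spectrum is $\bS$ with trivial action, and whose structure maps $\varphi_p^{\bS}: \bS\to \bS^{tC_p}$ are as in Example~\ref{examplesTHH}(ii)) and target $X$. This gives
\[
\Map_{\Cycn}(\bS,X)\simeq \mathrm{Eq}\!\left(\xymatrix{\Map_{\Sp^{B\T}}(\bS,X) \ar@<2pt>[rr]\ar@<-2pt>[rr] && \prod_{p}\Map_{\Sp^{B\T}}(\bS,X^{tC_p})}\right),
\]
where the top map sends $g\mapsto \varphi_p\circ g$ (the ``$(f_Y)_\ast F$'' map from Proposition~\ref{prop:laxeq}(ii)) and the bottom map sends $g\mapsto g^{tC_p}\circ \varphi_p^{\bS}$ (the ``$(f_X)^\ast G$'' map). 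Under the natural equivalence $\map_{\Sp^{B\T}}(\bS,Y)\simeq Y^{h\T}$ coming from the adjunction between trivial action and homotopy fixed points, the first map is manifestly $\varphi_p^{h\T}$.

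Next I would identify the second map with $\can$. By construction in Example~\ref{examplesTHH}(ii), the $\T$-equivariant structure map $\varphi_p^{\bS}$ factors as
\[
\bS \to \bS^{h\T}\simeq (\bS^{hC_p})^{h(\T/C_p)} \to (\bS^{tC_p})^{h(\T/C_p)},
\]
i.e.~it is adjoint to the canonical class $1\in (\bS^{tC_p})^{h\T}$. Given $g\in \map_{\Sp^{B\T}}(\bS,X)\simeq X^{h\T}$, the composite $g^{tC_p}\circ \varphi_p^{\bS}$ is therefore the image of $1$ under
\[
(\bS^{tC_p})^{h\T}\xrightarrow{\,g^{tC_p}\,} (X^{tC_p})^{h\T},
\]
and by naturality of the identification $\map_{\Sp^{B\T}}(\bS,-)\simeq (-)^{h\T}$ in the target this equals the canonical map applied to $g$, i.e.~$\can: X^{h\T}\simeq (X^{hC_p})^{h(\T/C_p)}\to (X^{tC_p})^{h(\T/C_p)}\simeq (X^{tC_p})^{h\T}$ evaluated on $g$. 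Since the product commutes with both the mapping-space/spectrum identification and the equalizer, promoting to mapping spectra and rewriting the equalizer as a fiber of the difference yields part~(i). Part~(ii) is obtained by exactly the same argument with $\Sp^{B\T}$ replaced by $\Sp^{B\Tp}$ and the product over all primes replaced by a single factor at $p$.

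The only nontrivial point is the identification of the second map with $\can$; this is essentially bookkeeping using the explicit form of $\varphi_p^{\bS}$ from Example~\ref{examplesTHH}(ii) together with the naturality of the adjunction between the constant diagram functor and $(-)^{h\T}$. Everything else is formal from the definitions and Proposition~\ref{prop:laxeq}(ii).
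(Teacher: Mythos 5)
Your proposal is correct and follows essentially the same route as the paper: the paper's proof likewise reduces to Proposition~\ref{prop:laxeq}~(ii) via the equivalence $\Fun^\Ex(\Cycn,\Sp)\simeq \Fun^\Lex(\Cycn,\calS)$ and the observation that the fiber of a difference is the equalizer. You simply spell out the identification of the two equalizer maps with $\varphi_p^{h\T}$ and $\can$, which the paper leaves implicit.
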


\begin{proof} Note that the fiber of a difference map is equivalent to the equalizer of the two maps. By the equivalence $\Fun^\Ex(\Cycn,\Sp)\simeq \Fun^\Lex(\Cycn,\calS)$ (respectively for $\Cycn_p$) via composition with $\Omega^\infty$, it suffices to check the formulas for the mapping space. Thus, the result follows from Proposition~\ref{prop:laxeq}~(ii).
\end{proof}

\section{Equivariant stable homotopy theory}\label{sec:genuine}

In this section we introduce the necessary preliminaries to talk about genuine cyclotomic spectra and the classical definition of $\TC$. For this, we need to recall the definition of genuine equivariant spectra. There are many references, including \cite{MR866482}, \cite{MandellMay}, \cite{HillHopkinsRavenel}. We will refer to the discussion given by Schwede, \cite{SchwedeGenuine}, which has the advantage that the objects considered are the most concrete. Another reference for this model and the results is \cite[Chapter III]{Global}.

Until recently, all discussions of genuine equivariant stable homotopy theory were relying on explicit point-set models of spectra; now, an $\infty$-categorical foundation has been laid out by Barwick, \cite{BarwickMackey}, based on a result of Guillou--May, \cite{GuillouMay}. However, we prefer to stick here with the traditional explicit point-set models. In particular, Schwede uses an explicit symmetric monoidal $1$-category of spectra, the category of orthogonal spectra. In the following, all topological spaces are compactly generated weak Hausdorff spaces.

\begin{definition}[{\cite[Definition 1.1, Section 1]{SchwedeGenuine}}]\label{def:orthspectrum}$ $\begin{altenumerate}
\item[{\rm (1)}] An orthogonal spectrum $X$ is the collection of\vspace{0.05in}
\begin{altenumerate}
\item a pointed space $X_n$ for all $n\geq 0$,
\item a continuous action of the orthogonal group $O(n)$ on $X_n$, preserving the base point, and
\item base-point preserving maps $\sigma_n: X_n\wedge S^1\to X_{n+1}$,\vspace{0.05in}
\end{altenumerate}
subject to the condition that for all $n,m\geq 0$, the iterated structure maps
\[
\sigma_{n+m-1}\circ (\sigma_{n+m-2}\wedge S^1)\circ\ldots \circ (\sigma_n\wedge S^{m-1}): X_n\wedge S^m\to X_{n+m}
\]
are $O(n)\times O(m)\subseteq O(n+m)$-equivariant.\footnote{There is a more conceptual definition of orthogonal spectra as pointed, continuous functors $\mathbf{O} \to \mathrm{Top}_*$ on a topologically enriched category $\mathbf{O}$ whose objects are finite dimensional inner product spaces $V,W$ and whose mapping spaces are the Thom spaces $\mathrm{Th}(\xi)$ of the orthogonal complement bundle $\xi \to L(V,W)$ where $L(V,W)$ is the space of linear isometric embeddings. The space $\mathrm{Th}(\xi)$ is also homeomorphic to a certain subspace of the space of pointed maps $S^V \to S^W$ where $S^V$ and $S^W$ are the one-point compactifications of $V$ and $W$.}

\item[{\rm (2)}] Let $X$ be an orthogonal spectrum and $i\in \Z$. The $i$-th stable homotopy group of $X$ is
\[
\pi_i X = \colim_\sigma \pi_{i+n}(X_n)\ .
\]
\item[{\rm (3)}] A map of orthogonal spectra $f: X\to Y$ is a stable equivalence if for all $i\in \Z$, the induced map $\pi_i f: \pi_i X\to \pi_i Y$ is an isomorphism.
\end{altenumerate}
\end{definition}

Orthogonal spectra are naturally organized into a $1$-category, which we denote $\Sp^O$. An example of an orthogonal spectrum is given by the sphere spectrum $\bS\in \Sp^O$ with $\bS_n = S^n$, where $S^n$ is the one-point compactification of $\mathbb R^n$ (with its standard inner product), pointed at $\infty$, and with its natural $O(n)$-action. The category $\Sp^O$ admits a natural symmetric monoidal structure called the smash product; we refer to \cite[Section 1]{SchwedeGenuine} for the definition. The sphere spectrum is a unit object in this symmetric monoidal structure. Note that for an orthogonal spectrum the maps $X_n\to \Omega X_{n+1}$ adjunct to $\sigma_n$ need not be equivalences, so they would sometimes only be called ``prespectra''. Here, we call orthogonal spectra for which these maps are weak equivalences orthogonal $\Omega$-spectra. It is known that any orthogonal spectrum is stably equivalent to an orthogonal $\Omega$-spectrum. This is actually a special property of orthogonal spectra, and fails for symmetric spectra, where one has to be more careful with the naive definition of stable homotopy groups given above.

Moreover, cf.~\cite{MMSS}, if one inverts the stable equivalences in $N(\Sp^O)$, the resulting $\infty$-category is equivalent to the $\infty$-category $\Sp$ of spectra; under this equivalence the stable homotopy groups of an orthogonal spectrum as defined above correspond to the homotopy groups of the associated spectrum. On general orthogonal spectra, the functor $N(\Sp^O)\to \Sp$ is \emph{not} compatible with the symmetric monoidal structure: The issue is that if $f: X\to Y$ and $f^\prime: X^\prime\to Y^\prime$ are stable equivalences of orthogonal spectra, then in general $f\wedge f^\prime: X\wedge X^\prime\to Y\wedge Y^\prime$ is not a stable equivalence. However, this is true if $X,Y,X^\prime$ and $Y^\prime$ are cofibrant, and the lax symmetric monoidal functor $N(\Sp^O)\to \Sp$ is symmetric monoidal when restricted to the full subcategory of cofibrant orthogonal spectra. We refer to Appendix \ref{app:symmmon}, in particular Theorem~\ref{DKlocalizationsymmon}, for a general discussion of the relation between symmetric monoidal structures on a model category, and the associated $\infty$-category. In particular, Theorem~\ref{DKlocalizationsymmon} shows that the functor $N(\Sp^O)\to \Sp$ is lax symmetric monoidal (without restricting to cofibrant objects).

Let $G$ be a finite group. It is now easy to define a symmetric monoidal $1$-category of orthogonal $G$-spectra. The definition seems different from classical definitions which make reference to deloopings for all representation spheres; the equivalence of the resulting notions is discussed in detail in \cite[Remark 2.7]{SchwedeGenuine}.

\begin{definition}[{\cite[Definition 2.1]{SchwedeGenuine}}] An orthogonal $G$-spectrum is an orthogonal spectrum $X$ with an action of $G$, i.e.~the category $G\Sp^O$ of orthogonal $G$-spectra is given by $\Fun(BG,\Sp^O)$.
\end{definition}

The resulting category inherits a natural symmetric monoidal structure which is given by the smash product of the underlying orthogonal spectra with diagonal $G$-action. However, it is more subtle to define the relevant notion of weak equivalence; namely, the notion of equivalence used in genuine equivariant stable homotopy theory is significantly stronger than a morphism in $G\Sp^O$ inducing a stable equivalence of the underlying orthogonal spectra. Roughly, it asks that certain set-theoretic fixed points are preserved by weak equivalences.

To define the relevant notion of weak equivalence, we make use of a functor from orthogonal $G$-spectra to orthogonal spectra known as the ``geometric fixed points''.

\begin{definition}\label{def:genuineequiv}$ $\begin{altenumerate}
\item Let $X$ be an orthogonal $G$-spectrum, and let $V$ be a representation of $G$ on an $n$-dimensional inner product space over $\mathbb R$. We define a new based space
\[
X(V) = \mathcal L(\mathbb R^n,V)_+\wedge_{O(n)} X_n\ ,
\]
cf.~\cite[Equation (2.2)]{SchwedeGenuine}, which comes equipped with a $G$-action through the diagonal $G$-action. Here, $\mathcal{L}(\mathbb R^n,V)$ denotes the linear isometries between $V$ and $\mathbb R^n$, which has a natural action by $O(n)$ via precomposition making it an $O(n)$-torsor.
\item Let $\rho_G$ denote the regular representation of $G$. Define an orthogonal spectrum $\Phi^G X$, called the geometric fixed points of $X$, whose $n$-th term is given by
\[
(\Phi^G X)_n = X(\mathbb R^n\otimes \rho_G)^G\ ,
\]
cf.~\cite[Section 7.3]{SchwedeGenuine}, where the fixed points are the set-theoretic fixed points.
\item Let $f: X\to Y$ be a map of orthogonal $G$-spectra. Then $f$ is an equivalence if for all subgroups $H\subseteq G$, the map $\Phi^H X\to \Phi^H Y$ is a stable equivalence of orthogonal spectra.
\end{altenumerate}
\end{definition}

By \cite[Theorem 7.11]{SchwedeGenuine}, the definition of equivalence in part (iii) agrees with the notion of $\underline{\pi}_\ast$-isomorphism used in \cite{SchwedeGenuine}.

\begin{proposition}[{\cite[Proposition 7.14]{SchwedeGenuine}}] The functor $\Phi^G: G\Sp^O\to \Sp^O$ has a natural lax symmetric monoidal structure. When $X$ and $Y$ are cofibrant $G$-spectra, the map $\Phi^G X\wedge \Phi^G Y\to \Phi^G(X\wedge Y)$ is a stable equivalence.
\end{proposition}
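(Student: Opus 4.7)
The plan is to construct the lax symmetric monoidal structure level-wise from the definition $(\Phi^G X)_n = X(\R^n \otimes \rho_G)^G$, and then to establish the strong monoidality on cofibrant objects by a cell induction that reduces to an explicit computation on free cells.

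For the construction, I would use that for any pair of finite-dimensional $G$-representations $V, W$ the external pairing of orthogonal spectra provides a natural $G$-equivariant map $X(V) \wedge Y(W) \to (X \wedge Y)(V \oplus W)$ (this is part of the universal property of the internal smash product as the left Kan extension of the external one). Composing with the natural inclusion $X(V)^G \wedge Y(W)^G \hookrightarrow (X(V) \wedge Y(W))^G$ gives
\[
X(V)^G \wedge Y(W)^G \longrightarrow (X \wedge Y)(V \oplus W)^G.
\]
Specializing to $V = \R^m \otimes \rho_G$, $W = \R^n \otimes \rho_G$ and using the canonical $G$-isometry $V \oplus W \cong \R^{m+n} \otimes \rho_G$ yields bilinear maps $(\Phi^G X)_m \wedge (\Phi^G Y)_n \to \Phi^G(X \wedge Y)_{m+n}$ compatible with the $O(m) \times O(n)$-actions and with the structure maps. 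These assemble to a natural transformation $\mu_{X,Y}: \Phi^G X \wedge \Phi^G Y \to \Phi^G(X \wedge Y)$, and the associativity, symmetry and unit coherences reduce to the corresponding coherences for the external smash product and for the lax monoidal functor $(-)^G$ on pointed $G$-spaces.

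For the strong monoidality on cofibrant objects, I would perform a cell induction. By the small object argument every cofibrant $G$-spectrum is built iteratively from generating cofibrations of the form $F_V(G/H_+ \wedge \partial D^n_+) \to F_V(G/H_+ \wedge D^n_+)$, where $F_V$ is the left adjoint to evaluation at the $G$-representation $V$. Since $\wedge$ is left Quillen, $\Phi^G$ commutes with the relevant filtered colimits along cofibrations, and $\mu$ is compatible with pushout products, a standard argument reduces the claim to the case $X = F_V(G/H_+)$, $Y = F_W(G/K_+)$.

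The explicit computation then uses the identification
\[
\Phi^G F_V(G/H_+) \;\simeq\; \begin{cases} F_{V^G}(S^0) & \text{if } H = G, \\ \ast & \text{otherwise,} \end{cases}
\]
which follows from $(G/H_+)^G = S^0$ for $H = G$ and $=\ast$ otherwise, combined with the identification of geometric fixed points of suspension spectra. Both sides of $\mu$ vanish unless $H = K = G$, in which case the assertion reduces to the evident isomorphism $F_{V^G}(S^0) \wedge F_{W^G}(S^0) \simeq F_{V^G \oplus W^G}(S^0) = F_{(V \oplus W)^G}(S^0)$. The main obstacle is the reduction step: verifying at the point-set level that $\Phi^G$ commutes with the pushout-product construction well enough for the cell induction to go through. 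This hinges on a careful analysis of how $\Phi^G$ interacts with the smash product of general (non-cofibrant) orthogonal $G$-spectra, which is precisely what is carried out in detail in Schwede \cite{SchwedeGenuine}.
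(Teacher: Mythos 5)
The paper does not prove this proposition itself; it cites Schwede's Proposition 7.14, and your outline reproduces exactly the argument given there: the lax structure arises from the external smash pairing $X(V)\wedge Y(W)\to (X\wedge Y)(V\oplus W)$ followed by $G$-fixed points (where in fact $(A\wedge B)^G = A^G\wedge B^G$ is an equality, not just an inclusion), and strong monoidality on cofibrant objects follows by cell induction down to free spectra $F_V(G/H_+)$ using $\Phi^G F_V(G/H_+)\simeq F_{V^G}((G/H_+)^G)$. Your sketch is correct, and you have correctly isolated the only genuinely technical point — that set-theoretic fixed points commute with the pushout-product and sequential colimit constructions only because these are formed along h-cofibrations/closed embeddings — in the deferred reduction step.
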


Now we pass to the corresponding $\infty$-categories. 

\begin{definition} Let $G$ be a finite group.
\begin{altenumerate}
\item The $\infty$-category of genuine $G$-equivariant spectra is the $\infty$-category $G\Sp$ obtained from $N(G\Sp^O)$ by inverting equivalences of orthogonal $G$-spectra. It inherits a natural symmetric monoidal structure compatible with the smash product of cofibrant orthogonal $G$-spectra.
\item Assume that $H$ is a subgroup of $G$. The geometric fixed point functor
\[
\Phi^H: G\Sp\to \Sp
\]
is the symmetric monoidal functor obtained from $\Phi^H: G\Sp^O\to \Sp^O$ by restricting to cofibrant orthogonal $G$-spectra, and inverting equivalences of orthogonal $G$-spectra.
\end{altenumerate}
\end{definition}

We note that it is very important to pay attention to the notational difference between the 1-category $G\Sp^O$ and the $\infty$-category $G\Sp$. Since our arguments and results will mix the two worlds this can otherwise easily lead to confusion. Eventually we will only be interested in the $\infty$-category $G\Sp$ but the most convenient way to construct functors or give proofs is often to work in the model. 

There is another fixed point functor for orthogonal $G$-spectra, taking an orthogonal $G$-spectrum $X$ to the orthogonal spectrum whose $n$-th term is given by the set-theoretic fixed points $X_n^G$. Unfortunately, if $f: X\to Y$ is an equivalence of orthogonal $G$-spectra, the induced map $(X_n^G)_n\to (Y_n^G)_n$ of orthogonal spectra need not be a stable equivalence. However, this is true for orthogonal $G$-$\Omega$-spectra, cf.~\cite[Section 7.1]{SchwedeGenuine} and \cite[Definition 3.18]{SchwedeGenuine} for the definition of orthogonal $G$-$\Omega$-spectra which refers to the deloopings of Definition~\ref{def:genuineequiv}~(i) for general representations of $V$. Thus, one can derive the functor, and get a functor of $\infty$-categories
\[
-^H: G\Sp\to \Sp
\]
for all subgroups $H$ of $G$, usually simply called the fixed point functor. To better linguistically distinguish it from other fixed point functors, we refer to it as the genuine fixed point functor. As the trivial representation embeds naturally into $\rho_G$, there is a natural transformation $-^H\to \Phi^H$ of functors $G\Sp\to \Sp$. Moreover, all functors and natural transformations in sight are lax symmetric monoidal, cf.~\cite[Proposition 7.13, Proposition 7.14]{SchwedeGenuine}.

\begin{remark} It was proved by Guillou--May, \cite{GuillouMay}, that one can describe $G\Sp$ equivalently in terms of the data of the spectra $X^H$ for all subgroups $H\subseteq G$, equipped with the structure of a spectral Mackey functor; i.e., roughly, for any inclusion $H^\prime\subseteq H$, one has a restriction map $X^H\to X^{H^\prime}$ and a norm map $X^{H^\prime}\to X^H$, and if $g\in G$, there is a conjugation equivalence $X^H\simeq X^{g^{-1}Hg}$, satisfying many compatibilities. From this point of view, one can define the $\infty$-category $G\Sp$ without reference to a point-set model for the category of spectra, as done by Barwick, \cite{BarwickMackey}. The advantage of this point of view is that it makes it easier to generalize: For example one can replace $\Sp$ by other stable $\infty$-categories, or the group by a profinite group.
\end{remark}

Note that any equivalence of orthogonal $G$-spectra is in particular an equivalence of the underlying orthogonal spectra (as $\Phi^{\{e\}} X$ is the underlying orthogonal spectrum). It follows that there is a natural functor
\[
G\Sp\to \Sp^{BG}
\]
from genuine $G$-equivariant spectra to spectra with $G$-action. In particular, there is yet another fixed point functor, namely the homotopy fixed points
\[
-^{hH}: G\Sp\to \Sp^{BG}\to \Sp\ .
\]
As the set-theoretic fixed points map naturally to the homotopy fixed points, one has a natural transformation of lax symmetric monoidal functors $-^H\to -^{hH}$ from $G\Sp$ to $\Sp$. However, this map does not factor over the geometric fixed points, and in fact the geometric fixed points have no direct relation to the homotopy fixed points.

We need the following result about the functor $G\Sp\to \Sp^{BG}$. Recall that we are now in an $\infty$-categorical setting and the next results are purely formulated in terms of these $\infty$-categories. The proofs will of course use their presentation through relative categories such as $\Sp^O$.

\begin{thm}\label{thm:borelcompletion} The functor $G\Sp\to \Sp^{BG}$ admits a fully faithful right adjoint $B_G: \Sp^{BG}\to G\Sp$. The essential image of $B_G$ is the full subcategory of all $X\in G\Sp$ for which the natural map $X^H\to X^{hH}$ is an equivalence for all subgroups $H\subseteq G$; we refer to these objects as Borel-complete.
\end{thm}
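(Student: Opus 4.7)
The plan is to produce $B_G$ as a right adjoint via the adjoint functor theorem, identify it with the classical Borel construction in an orthogonal $G$-spectrum model, and then read off both full faithfulness and the characterization of the essential image from that explicit presentation. I would begin by observing that $G\Sp$ and $\Sp^{BG}$ are both presentable stable $\infty$-categories ($G\Sp$ being the Dwyer--Kan localization of the combinatorial model category of orthogonal $G$-spectra), and that the forgetful functor $U\colon G\Sp \to \Sp^{BG}$ preserves all small colimits, since colimits on both sides are ultimately computed on underlying orthogonal spectra. The adjoint functor theorem \cite[Corollary 5.5.2.9]{HTT} then supplies the right adjoint $B_G$.

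To identify $B_G$ concretely, I would fix a contractible free $G$-CW space $EG$ and, for $Y \in \Sp^{BG}$ modeled by an orthogonal $G$-spectrum $\widetilde Y$, set $B_G Y \simeq F(EG_+, \widetilde Y)$ in orthogonal $G$-spectra. The adjunction is checked from
\[
\Map_{G\Sp}(X, F(EG_+, \widetilde Y)) \simeq \Map_{G\Sp}(X \wedge EG_+, \widetilde Y) \simeq \Map_{\Sp^{BG}}(UX, Y),
\]
where the second step uses that $X \wedge EG_+$ is built from free $G$-cells, so that mapping out of it in $G\Sp$ depends only on the underlying spectrum with $G$-action. A direct inspection then yields $(B_G Y)^H \simeq \Map^H(EG_+, \widetilde Y) \simeq Y^{hH}$ for every subgroup $H \subseteq G$, since $EG_+$ restricts to a universal free $H$-space.

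With this model in hand, full faithfulness follows immediately: the counit $UB_G Y \to Y$ is represented by $F(EG_+, \widetilde Y) \to F(S^0, \widetilde Y) = \widetilde Y$ induced by $EG \to \ast$, which is an equivalence of underlying orthogonal spectra (as $EG$ is non-equivariantly contractible) and hence an equivalence in $\Sp^{BG}$. The essential image of a fully faithful right adjoint consists of those $Z$ for which the unit $\eta_Z \colon Z \to B_G U Z$ is an equivalence; using $G$-$\Omega$-spectrum replacements as in \cite[Section 7.1]{SchwedeGenuine}, this can be tested on genuine $H$-fixed points, and there $\eta_Z^H$ realizes the canonical comparison $Z^H \to (UZ)^{hH} = Z^{hH}$. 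Hence $Z$ is Borel-complete exactly when $Z^H \to Z^{hH}$ is an equivalence for every $H \subseteq G$. The main subtlety will be verifying that the abstractly produced unit $\eta_Z^H$ really coincides with the classical comparison map; this follows from the fact that $F(EG_+, -)$ is a concrete point-set model for Borel completion, provided one works with compatible fibrant-cofibrant models throughout.
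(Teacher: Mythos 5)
Your argument is correct, and it rests on the same two facts the paper's proof does: that $F(EG_+,-)$ applied to fibrant orthogonal $G$-spectra satisfies $F(EG_+,Z)^H\simeq Z^{hH}$ for all $H\subseteq G$, and that $EG$ is non-equivariantly contractible. The difference is in the formal packaging. The paper never invokes the adjoint functor theorem: it shows directly that $L=F(EG_+,-)$ together with the transformation $\mathrm{id}\to L$ (given by constant maps) is a localization of $G\Sp$ onto the Borel-complete objects via the criterion of \cite[Proposition 5.2.7.4]{HTT}, and then identifies the localized subcategory with $\Sp^{BG}$ by observing that $L$ inverts precisely the underlying stable equivalences, and that inverting these in $N(G\Sp^O)$ yields $\Sp^{BG}$ by \cite[Theorem 9.2]{MMSS} and \cite[Proposition 1.3.4.25]{HA}. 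You instead produce $B_G$ abstractly and verify the universal property of $F(EG_+,\widetilde Y)$ by hand; the one nontrivial input your route needs, which the paper's route avoids, is the equivalence $\Map_{G\Sp}(X\wedge EG_+,\widetilde Y)\simeq \Map_{\Sp^{BG}}(UX,U\widetilde Y)$, i.e.\ that mapping out of a genuine $G$-spectrum built from free cells only sees the Borel type of the target. That is a standard cell induction, but it is the load-bearing step of your argument and should be isolated and proved (or precisely referenced), since it is exactly where "genuine" collapses to "Borel". What each approach buys: yours gives the adjunction and the formula for $B_G$ in one stroke; the paper's gives, in addition, the statement that $G\Sp\to \Sp^{BG}$ is itself a Dwyer--Kan localization, which it reuses elsewhere.

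Two small corrections. First, the counit $UB_GY\to Y$ is restriction along a point $S^0\to EG_+$ (equivalently, a homotopy inverse of the underlying equivalence $\widetilde Y=F(S^0,\widetilde Y)\to F(EG_+,\widetilde Y)$); the map "induced by $EG\to\ast$" goes in the opposite direction, since $F(-,\widetilde Y)$ is contravariant in the first variable. Second, the assertion that $U$ preserves colimits is most cleanly justified by noting that $U$ is itself a Dwyer--Kan localization at the larger class of underlying stable equivalences, or by checking it on cofibrant diagrams where colimits on both sides are computed on underlying orthogonal spectra; as stated it is a little too quick, though the conclusion is correct.
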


In other words, the $\infty$-category $\Sp^{BG}$ of spectra with $G$-action can be regarded as a full subcategory $G\Sp_B$ of the $\infty$-category $G\Sp$ consisting of the Borel-complete genuine $G$-equivariant spectra. Under the equivalence $G\Sp_B\simeq \Sp^{BG}$, the functors $-^H$ and $-^{hH}$ match for all subgroups $H$ of $G$.

\begin{proof} Fix a (compactly generated weak Hausdorff) contractible space $EG$ with free $G$-action. Consider the functor of 1-categories $L: G\Sp^O\to G\Sp^O$ sending an orthogonal $G$-spectrum $X=(X_n)_n$ to $\Map(EG,X_n)_n$, where $\Map(EG,X_n)$ is pointed at the map sending $EG$ to the base point of $X_n$, cf.~\cite[Example 5.2]{SchwedeGenuine}. As explained there, this defines a functor $G\Sp^O\to G\Sp^O$, and if $X$ is an orthogonal $G$-$\Omega$-spectrum, then $L(X)$ is again an orthogonal $G$-$\Omega$-spectrum. It follows from the usual definition
\[
X^{hH} = \Map(EH,X)^H
\]
of the homotopy fixed points (and the possibility of choosing $EH=EG$ for all subgroups $H$ of $G$) that
\[
L(X)^H = X^{hH} = L(X)^{hH}
\]
for all subgroups $H$ of $G$. In particular, using \cite[Theorem 7.11]{SchwedeGenuine}, if $f: X\to Y$ is a map of orthogonal $G$-$\Omega$-spectra which induces a stable equivalence of the underlying orthogonal spectra, for example if $f$ is an equivalence of orthogonal $G$-spectra, then $L(f)$ is an equivalence of orthogonal $G$-spectra. Moreover, $L(X)$ is always Borel-complete.

There is a natural transformation $\mathrm{id}\to L$ of functors $G\Sp^O\to G\Sp^O$, as there are natural maps $X_n\to \Map(EG,X_n)$ (compatibly with all structure maps) given by the constant maps. After restricting to orthogonal $G$-$\Omega$-spectra and inverting weak equivalences, we get a functor of $\infty$-categories $L: G\Sp\to G\Sp$ with a natural transformation $\mathrm{id}\to L$, which satisfies the criterion of \cite[Proposition 5.2.7.4]{HTT}, as follows from the previous discussion. Moreover, the image can be characterized as the full subcategory $G\Sp_B\subseteq G\Sp$ of Borel-complete objects.

As $G\Sp\to \Sp^{BG}$ factors over $L$, it remains to prove that the functor $G\Sp_B\to \Sp^{BG}$ is an equivalence of $\infty$-categories. Note that the functor $N(G\Sp^O)\xto{L} G\Sp_B$ inverts all morphisms which are equivalences of the underlying spectrum; it follows from \cite[Theorem 9.2]{MMSS} and \cite[Proposition 1.3.4.25]{HA} that inverting these in $N(G\Sp^O)$ gives $\Sp^{BG}$, so one gets a natural functor $B_G: \Sp^{BG}\to G\Sp_B$. It follows from the construction that $B_G$ and $G\Sp_B\to \Sp^{BG}$ are inverse equivalences.
\end{proof}

\begin{corollary}\label{cor:borelcompletionlaxsymm} There is a natural lax symmetric monoidal structure on the right adjoint $B_G: \Sp^{BG}\to G\Sp$, and a natural refinement of the adjunction map $\mathrm{id}\to B_G$ of functors $G\Sp\to G\Sp$ to a lax symmetric monoidal transformation.
\end{corollary}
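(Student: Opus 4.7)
The plan is to exhibit the forgetful functor $L: G\Sp \to \Sp^{BG}$ (whose existence was used throughout the discussion preceding Theorem \ref{thm:borelcompletion}) as a symmetric monoidal functor between symmetric monoidal $\infty$-categories, and then to invoke the general principle \cite[Corollary 7.3.2.7]{HA} that the right adjoint of a symmetric monoidal functor between symmetric monoidal $\infty$-categories inherits a canonical lax symmetric monoidal structure, with respect to which the unit and counit of the adjunction become lax symmetric monoidal natural transformations. Since $B_G$ is the right adjoint of $L$ by Theorem \ref{thm:borelcompletion}, this will produce both the lax symmetric monoidal structure on $B_G$ and the lax symmetric monoidal refinement of the unit $\id_{G\Sp}\to B_G\circ L$ that the statement (abbreviating $B_G\circ L$ to $B_G$) asks for.

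To verify that $L$ is symmetric monoidal, I would argue as follows. The symmetric monoidal structure on $G\Sp$ is obtained from the symmetric monoidal model category $G\Sp^O$ by Theorem~\ref{DKlocalizationsymmon} of Appendix~\ref{app:symmmon}: one first restricts to cofibrant orthogonal $G$-spectra, on which the smash product is homotopical, and then inverts the equivalences. On the $1$-categorical side, the smash product $X\wedge Y$ of two orthogonal $G$-spectra has underlying orthogonal spectrum the ordinary smash product with diagonal $G$-action, so the forgetful functor $G\Sp^O \to \Sp^O$ is strictly symmetric monoidal and its factorization through $\Fun(BG,\Sp^O)$ is also symmetric monoidal when the target carries the pointwise symmetric monoidal structure. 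Passing to $\infty$-categories, the pointwise symmetric monoidal structure on $\Fun(BG,\Sp)$ (see Appendix~\ref{app:symmmon}) is precisely the one under which $L$ becomes symmetric monoidal, because both sides use the same diagonal action formula on the smash product of underlying spectra.

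With $L$ recognized as a symmetric monoidal functor with right adjoint $B_G$, the conclusion is purely formal: \cite[Corollary 7.3.2.7]{HA} produces the canonical lax symmetric monoidal refinement of $B_G$, and the adjunction unit $\id_{G\Sp} \to B_G\circ L$ automatically upgrades to a lax symmetric monoidal natural transformation of lax symmetric monoidal functors $G\Sp \to G\Sp$. The main obstacle is not any deep computation but rather the bookkeeping needed to check that the symmetric monoidal structures in play — on $G\Sp^O$, on its Dwyer--Kan localization $G\Sp$, on $N(\Sp^O)[W^{-1}]\simeq \Sp$ and on $\Fun(BG,\Sp)$ — are all compatible under the various forgetful functors, which amounts to tracing through the symmetric monoidal localization statements of Theorem~\ref{DKlocalizationsymmon} in this concrete situation.
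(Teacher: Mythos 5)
Your proposal is correct and takes essentially the same route as the paper: the paper's proof is exactly the observation that $B_G$ is right adjoint to the symmetric monoidal forgetful functor $G\Sp\to\Sp^{BG}$, so \cite[Corollary 7.3.2.7]{HA} applies (the paper also notes that, since this is a localization, \cite[Proposition 2.2.1.9]{HA} gives the same conclusion). Your additional bookkeeping verifying that the forgetful functor is symmetric monoidal is left implicit in the paper but is the right thing to check.
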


\begin{proof} It follows from \cite[Corollary 7.3.2.7]{HA} that a right adjoint to a symmetric monoidal functor is naturally lax symmetric monoidal, with the adjunction being a lax symmetric monoidal transformation. In the relevant case of a localization, this also follows from \cite[Proposition 2.2.1.9]{HA}.
\end{proof}

To go on, we need to define a residual $G/H$-action on $X^H$ and $\Phi^H X$ in case $H$ is normal in $G$. In the case of $X^H$, this is easy: Restricted to orthogonal $G$-$\Omega$-spectra, the functor $G\Sp^O\to \Sp^O$ is given by $(X_n)_n\mapsto (X_n^H)_n$, which has an evident $G/H$-action. This functor takes orthogonal $G$-$\Omega$-spectra to orthogonal $G/H$-$\Omega$-spectra, and is compatible with composition, i.e.~$(X^H)^{G/H} = X^G$. In particular, it maps equivalences to equivalences, and we get an induced (lax symmetric monoidal) functor $G\Sp\to (G/H)\Sp$ of $\infty$-categories.

The situation is more subtle in the case of $\Phi^H X$. The issue is that the $n$-th space $X(\mathbb R^n\otimes \rho_H)^H$ has no evident $G/H$-action, as $G$ does not act on $\rho_H$. To repair this, one effectively needs to replace $\rho_H$ by a representation on which $G$-acts, and in fact in the definition of geometric fixed points, it only really matters that $\rho_H^H= \mathbb R$, and that $\rho_H$ contains all representations of $H$. This replacement is best done by choosing a complete $G$-universe in the sense of the following definition.

\begin{definition} A complete $G$-universe is a representation $\mathcal U$ of $G$ on a countably dimensional inner product $\mathbb R$-vector space that is a direct sum of countably many copies of each irreducible representation of $G$.
\end{definition}

Note that if $\mathcal U$ is a complete $G$-universe and $H\subseteq G$ is a subgroup, then it is also a complete $H$-universe. Moreover, if $H$ is normal in $G$, then the subrepresentation $\mathcal U^H$ of $H$-fixed vectors is a complete $G/H$-universe. Thus, if one has fixed a complete $G$-universe, one gets corresponding compatible complete $G^\prime$-universes for all subquotients of $G$, and in the following we will always fix a complete $G$-universe for the biggest group $G$ around, and endow all other groups with their induced complete universes.
In the following for a given universe $\mathcal U$ we write $V \in \mathcal U$ if $V$ is a finite dimensional $G$-subrepresentation of $\mathcal U$. These naturally form a poset.

\begin{definition}\label{def:geomfix} Let $G$ be a finite group with a complete $G$-universe $\mathcal U$, and let $H\subseteq G$ be a normal subgroup. The geometric fixed point functor
\[
\Phi^H_{\mathcal U} : G\Sp^O\to (G/H)\Sp^O
\]
is given by sending $X\in G\Sp^O$ to the orthogonal $G/H$-spectrum $\Phi^H_\mathcal U X$ whose $n$-th term is given by
\[
\hocolim_{V\in \mathcal U, V^H=0} X(\mathbb R^n\oplus V)^H\ ,
\]
where $O(n)$ acts on $\mathbb R^n$, and the $G$-action factors over a $G/H$-action. The structure maps are the evident maps.
\end{definition}

For this definition we use the explicit model given by the Bousfield-Kan formula for the homotopy colimit taken in compactly generated weak Hausdorff topological spaces. We review the Bousfield-Kan formula in Appendix \ref{app:colim}. 

Now we compare this model of the geometric fixed points to the definition $\Phi^H$.

\begin{lemma}
For every $G$-spectrum $X$ there is a zig-zag of stable equivalences of orthogonal spectra $\Phi^G X \to \widetilde \Phi^G_{\mathcal{U}}X \leftarrow \Phi^G_{\mathcal{U}} X$ which is natural in $X$.\footnote{We thank Stefan Schwede for a helpful discussion of this point.}
\end{lemma}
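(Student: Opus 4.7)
My plan is to introduce the intermediate $\widetilde{\Phi}^G_{\mathcal U} X$ as the orthogonal spectrum whose $n$-th space is the \emph{strict} filtered colimit
\[
(\widetilde{\Phi}^G_{\mathcal U} X)_n \;:=\; \colim_{V\in \mathcal U,\, V^G=0} X(\mathbb R^n\oplus V)^G,
\]
with structure maps induced in the evident way. With this choice, both comparison maps admit natural descriptions: $\Phi^G_{\mathcal U} X \to \widetilde{\Phi}^G_{\mathcal U} X$ is the canonical map from the Bousfield--Kan homotopy colimit to the strict colimit, while $\Phi^G X \to \widetilde{\Phi}^G_{\mathcal U} X$ is induced level by level using the decomposition $\rho_G = \mathbb R \oplus \rho_G'$, where $\rho_G'$ denotes the reduced regular representation with $(\rho_G')^G = 0$. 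Indeed, $X(\mathbb R^n\otimes \rho_G)^G = X(\mathbb R^n \oplus (\mathbb R^n\otimes \rho_G'))^G$, and this maps canonically into the colimit at the term $V_n := \mathbb R^n\otimes \rho_G'$.

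For the first comparison $\Phi^G_{\mathcal U} X \to \widetilde{\Phi}^G_{\mathcal U} X$ to be a levelwise weak equivalence, I would use that the indexing poset $\{V\in\mathcal U\,:\, V^G=0\}$ is filtered and that, for a suitably cofibrant orthogonal $G$-spectrum $X$, the transition maps $X(\mathbb R^n\oplus V)^G \to X(\mathbb R^n\oplus V')^G$ associated with inclusions $V\subseteq V'$ are closed embeddings between well-pointed spaces. Under these hypotheses, the general result reviewed in Appendix~\ref{app:colim} identifies the Bousfield--Kan homotopy colimit with the strict colimit.

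For the second comparison $\Phi^G X \to \widetilde{\Phi}^G_{\mathcal U} X$, I would argue by cofinality. The sequence $V_n = (\rho_G')^{\oplus n}$ is cofinal in $\{V\in\mathcal U : V^G=0\}$, because $\rho_G'$ contains every non-trivial irreducible $G$-representation with positive multiplicity, so any such $V$ embeds $G$-equivariantly into some $V_n$. Using that filtered colimits may be computed on a cofinal subcategory, the strict colimit $(\widetilde{\Phi}^G_{\mathcal U} X)_n$ is then identified with $\colim_k X(\mathbb R^n \oplus V_k)^G$, into which the single term at $k=n$ --- which is exactly $(\Phi^G X)_n$ --- includes. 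Passing to stable homotopy groups and swapping the $n$-colimit defining the spectrum structure with the $k$-colimit coming from the indexing category then exhibits the induced map on $\pi_\ast$ as an isomorphism.

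The main obstacle will be the colim-versus-hocolim comparison: one must verify that the diagram $V\mapsto X(\mathbb R^n\oplus V)^G$ is pointwise cofibrant enough that Bousfield--Kan agrees with the strict colimit. To descend the zig-zag from the model category $G\Sp^O$ to the $\infty$-category $G\Sp$ in a natural way, I would apply a functorial cofibrant replacement in the stable model structure; since both $\Phi^G$ and $\Phi^G_{\mathcal U}$ preserve stable equivalences between cofibrant orthogonal $G$-spectra, the zig-zag will then assemble into the required natural zig-zag of equivalences in $G\Sp$.
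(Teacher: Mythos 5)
Your strategy has a genuine gap: it only works under cofibrancy hypotheses on $X$, whereas the lemma is asserted (and later used) for \emph{every} orthogonal $G$-spectrum. Your first comparison map, from the Bousfield--Kan homotopy colimit to the strict colimit, is simply not a weak equivalence for a general diagram of spaces; you need the transition maps $X(\mathbb R^n\oplus V)^G\to X(\mathbb R^n\oplus V')^G$ to be (closed) cofibrations, which fails for arbitrary $X$, and the same hypothesis is needed again when you want $\pi_*$ to commute with the strict colimit over $k$ in your cofinality step. Moreover, Appendix~\ref{app:colim} contains no comparison between the Bousfield--Kan homotopy colimit and the strict colimit, so the result you invoke there is not available. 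The proposed repair by functorial cofibrant replacement does not rescue the statement: it produces a zig-zag through $\Phi^G(QX)$ and $\Phi^G_{\mathcal U}(QX)$, and to transport this back to $X$ you need to know that $\Phi^G_{\mathcal U}$ preserves stable equivalences --- but in the paper that fact is \emph{deduced from} this lemma (via Proposition~\ref{prop:compgeomfix} and the identification of $\Phi^{G}_{\mathcal U}$ with $\Phi^{G}$), so your argument risks circularity. There is also a smaller point-set issue with your second map: to include $(\Phi^GX)_n=X(\mathbb R^n\oplus \mathbb R^n\otimes\rho_G')^G$ into the colimit you must choose embeddings $V_n=\mathbb R^n\otimes\rho_G'\hookrightarrow\mathcal U$ with $V_n\subseteq V_{n+1}$, coherently in $n$ and compatibly with the structure maps; this can be arranged but is not automatic.

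The paper avoids all of this by choosing a different intermediate object: $\widetilde\Phi^G_{\mathcal U}X$ has $n$-th space $\hocolim_{V\in\mathcal U,\,V^G=0}X(\mathbb R^n\otimes\rho_G\oplus V)^G$, i.e.\ it inserts the regular representation \emph{and} keeps the homotopy colimit over the universe. Then both legs of the zig-zag point \emph{into} this larger homotopy colimit --- $\Phi^GX$ via the canonical map of the $V=0$ term into the hocolim, and $\Phi^G_{\mathcal U}X$ via the inclusion $\mathbb R\to\rho_G^G$ --- so no hocolim-versus-colim comparison and no choices of embeddings are needed, and the equivalences are checked on stable homotopy groups, where the Bousfield--Kan homotopy colimit becomes an honest colimit for arbitrary (not necessarily well-pointed) orthogonal spectra by the results of Appendix~\ref{app:colim}. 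If you want to salvage your route, you would have to either prove the needed cofibrancy of the $V$-diagram for general $X$ (which is false) or restructure so that strict colimits never appear; the latter essentially forces you back to the paper's construction.
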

\begin{proof}
We first define
$$
\widetilde \Phi^G_{\mathcal U} X := \hocolim_{V\in \mathcal U, V^H=0} X(\R^n \otimes \rho_G \oplus V)^G.
$$
Now there  is a map
$$
\Phi^G X = X(\R^n \otimes \rho_G \oplus 0)^G \to  \hocolim_{V\in \mathcal U, V^H=0}X(\R^n \otimes \rho_G \oplus V)^G
$$
induced from the map into the homotopy colimit and another map
$$
 \Phi^G_{\mathcal{U}} X = \hocolim_{V\in \mathcal U, V^H=0}X(\R^n  \oplus V)^G \to  \hocolim_{V\in \mathcal U, V^H=0} X(\R^n \otimes \rho_G \oplus V)^G.
$$
which is induced from the standard inclusion $\R \to (\rho_g)^G$ which sends  $1$  to the unit vector $\frac{1}{\sqrt{|G|}} \sum_{g \in G} g \in \R[G] = \rho_G$. 
Both of these maps induce maps of orthogonal spectra as we let $n$ vary. 

Now we have to check that these maps are stable equivalences. This can be checked on homotopy groups, where it is well known (note that the $\hocolim$'s become an actual colimit upon taking stable homotopy groups by the results of Appendix \ref{app:colim}).
\end{proof}

We need the following proposition.

\begin{proposition}\label{prop:compgeomfix} Let $G$ be a finite group with complete $G$-universe $\mathcal U$, and let $H\subseteq H^\prime\subseteq G$ be normal subgroups. There is a natural transformation $\Phi^{H^\prime/H}_{\mathcal{U}^H}\circ \Phi^H_{\mathcal{U}}\to \Phi^{H^\prime}_{\mathcal{U}}$ of functors $G\Sp^O\to (G/H^\prime)\Sp^O$. For each $X\in G\Sp^O$, the map
\[
\Phi^{H^\prime/H}_{\mathcal{U}^H}(\Phi^H_{\mathcal{U}}(X))\to \Phi^{H^\prime}_{\mathcal{U}}(X)
\]
is an equivalence of orthogonal $(G/H^\prime)$-spectra.
\end{proposition}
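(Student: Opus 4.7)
The plan is to unpack both sides of the proposed comparison as homotopy colimits of fixed points of $X$ evaluated at representations, construct the natural transformation from the equality of iterated set-theoretic fixed points $(X_m^H)^{H'/H}=X_m^{H'}$, and then reduce the claim that the transformation is an equivalence to a cofinality statement about the indexing posets.

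First, I would expand the $n$-th level of the iterated functor. Using that $Y=\Phi^H_{\mathcal U}X$ satisfies $Y_m=\hocolim_{V\in\mathcal U,\,V^H=0}X(\R^m\oplus V)^H$, and that smash products against free $O(n+\dim W)$-spaces commute with the relevant homotopy colimits (Appendix~\ref{app:colim}), one gets a natural stable equivalence
\[
(\Phi^{H'/H}_{\mathcal U^H}\Phi^H_{\mathcal U}X)_n\ \simeq\ \hocolim_{\substack{W\in\mathcal U^H\\ W^{H'/H}=0}}\ \hocolim_{\substack{V\in\mathcal U\\ V^H=0}} X(\R^n\oplus W\oplus V)^{H'},
\]
where I have used the identification $(X(-)^H)^{H'/H}=X(-)^{H'}$ on the pointwise level. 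The target, by definition, is
\[
(\Phi^{H'}_{\mathcal U}X)_n\ =\ \hocolim_{\substack{U\in\mathcal U\\ U^{H'}=0}} X(\R^n\oplus U)^{H'}.
\]
The desired natural transformation is then induced by the functor $(W,V)\mapsto W\oplus V$ on indexing posets: this lands in the target poset because
\[
(W\oplus V)^{H'}\ =\ W^{H'/H}\oplus V^{H'}\ \subseteq\ 0\oplus V^H\ =\ 0,
\]
and a compatible identification of $X(\R^n\oplus W\oplus V)^{H'}$ with $X(\R^n\oplus U)^{H'}$ for $U=W\oplus V$ is furnished by the tautological isometry. Functoriality in $X$ is immediate.

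To prove that the transformation is an equivalence of orthogonal $(G/H')$-spectra, it suffices to check equivalence on stable homotopy groups. Since homotopy colimits of spaces over filtered posets compute the ordinary colimit on $\pi_*$ (Appendix~\ref{app:colim}), the claim reduces to showing that the functor
\[
\Psi\colon\bigl\{(W,V)\,:\,W\in\mathcal U^H,\ W^{H'/H}=0,\ V\in\mathcal U,\ V^H=0\bigr\}\ \longrightarrow\ \bigl\{U\in\mathcal U\,:\,U^{H'}=0\bigr\},\quad (W,V)\mapsto W\oplus V,
\]
is cofinal (the source is filtered by componentwise sum). The cofinality is the heart of the argument: given any finite-dimensional $G$-subrepresentation $U\subseteq\mathcal U$ with $U^{H'}=0$, set $W:=U^H$ and let $V$ be the orthogonal complement of $U^H$ in $U$, which is again an $H'$-invariant subspace because $H$ is normal. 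Then $V^H=0$ by construction, $W^{H'/H}=U^{H'}=0$ since $H\subseteq H'$, and $U\subseteq W\oplus V$ inside $\mathcal U$. Any two such decompositions of sub-objects containing $U$ can be enlarged by a common $W'\oplus V'$, so the comma category of $(W,V)$ with $U\subseteq W\oplus V$ is filtered and nonempty, yielding cofinality.

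The main obstacle, and what warrants the most care, is the technical bookkeeping with the twisted evaluation functor $V\mapsto X(V)=\mathcal L(\R^{\dim V},V)_+\wedge_{O(\dim V)}X_{\dim V}$: one must check that the tautological isometries $\R^n\oplus W\oplus V\cong\R^n\oplus U$ are compatible with the $O(n)$-action, the residual $G/H'$-action arising from the complete universe structure on $\mathcal U$, the structure maps as $n$ varies, and the indexing maps in both homotopy colimits. Once the diagrammatic coherence is verified, the cofinality argument above upgrades the natural transformation to an equivalence of orthogonal $(G/H')$-spectra, completing the proof.
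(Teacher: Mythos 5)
Your proposal follows essentially the same route as the paper: expand the composite as a double homotopy colimit, use that fixed points commute with the Bousfield--Kan homotopy colimit, and identify the double index poset with the single one via $U=W\oplus V$, $W=U^H$, $V=(U^H)^\perp$. One small imprecision: since the target statement is an equivalence of orthogonal $(G/H')$-spectra, it does \emph{not} suffice to check underlying stable homotopy groups --- one needs the map to be an equivalence on all geometric fixed points for subgroups of $G/H'$. This is easily repaired: your functor $\Psi$ on index posets is in fact an isomorphism (not merely cofinal), so the comparison is a levelwise identification on all fixed points, which is how the paper concludes (it then bootstraps via compatibility of geometric fixed points with composition).
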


\begin{proof} Let us compute $\Phi^{H^\prime/H}_{\mathcal{U}^H}(\Phi^H_{\mathcal{U}}(X))$. It is given by the spectrum whose $n$-th term is
\[
\hocolim_{W\in\mathcal{U}^H,W^{H^\prime}=0} \Big(L(\mathbb R^{d_W},W)_+\wedge_{O(d_W)} \hocolim_{V\in\mathcal{U},V^H=0} \big(L(\mathbb R^{d_V})_+\wedge_{O(d_V)} X_{n+d_W+d_V}\big)^H\Big)^{H/H^\prime}\ .
\]
Here, $d_V$ and $d_W$ denote the dimensions of $V$ and $W$. Note that as $H$ acts trivially on $L(\mathbb R^{d_W},W)$, one can pull in this smash product; also, taking fixed points commutes with all $\hocolim$'s by Lemma \ref{fixedcommutes}. Thus, a simple rewriting gives
\[
\hocolim_{W\in \mathcal{U}^H,W^{H^\prime}=0} \hocolim_{V\in\mathcal{U},V^H=0} \Big(L(\mathbb R^{d_W+d_V},W\oplus V)_+\wedge_{O(d_W+d_V)} X_{n+d_W+d_V}\Big)^{H^\prime}\ .
\]
Combining the $\hocolim$'s into a single index category gives a map to the $\hocolim$ over all pairs $(W,V)$. But note that such pairs $(W,V)$ are equivalent to $U\in \mathcal{U}$ with $U^{H^\prime}=0$ via $U=W\oplus V$; indeed, one can recover $W=U^H$, and $V$ as the orthogonal complement of $W$. Thus, we get a map to
\[
\hocolim_{U\in\mathcal{U},U^{H^\prime}=0} \big(L(\mathbb R^{d_U},U)_+\wedge_{O(d_U)} X_{n+d_u}\big)^{H^\prime}\ ,
\]
which is precisely the $n$-th term of $\Phi^{H^\prime} X$. We leave it to the reader to verify that this is compatible with all extra structure.

It is clear that this defines a stable equivalence of the underlying orthogonal spectra, as it is even a levelwise equivalence. In particular, one sees that $\Phi^H_{\mathcal{U}}$ takes equivalences of orthogonal $G$-spectra to equivalences of orthogonal $G/H$-spectra. The compatibility of the geometric fixed point functor with composition then implies that the natural transformation $\Phi^{H^\prime/H}_{\mathcal{U}^H}(\Phi^H_{\mathcal{U}}(X))\to \Phi^{H^\prime}_{\mathcal{U}}(X)$ is an equivalence of orthogonal $G/H$-spectra.
\end{proof}

In particular, we get functors $\Phi^H: G\Sp\to (G/H)\Sp$ of $\infty$-categories which are compatible with composition; we will ignore the choice of $\mathcal{U}$ after passing to $\infty$-categories, as e.g.~by Proposition~\ref{prop:rightadjointgeometric} below, this functor is canonically independent of the choice. The natural transformation $-^H\to \Phi^H$ lifts to a natural transformation of functors $G\Sp\to (G/H)\Sp$, by restricting to orthogonal $G$-$\Omega$-spectra. One can compute the fiber of the map $-^H\to \Phi^H$ in terms of data for subgroups of $H$, cf.~\cite[Proposition 7.6]{SchwedeGenuine}. For example, there is the following result of Hesselholt--Madsen, \cite[Proposition 2.1]{HesselholtMadsen}.

\begin{proposition}\label{prop:hesselholtmadsen} Let $G$ be a cyclic group of $p$-power order, and $H=C_p\subseteq G$ the subgroup of order $p$. For $X\in G\Sp$ there is a natural fiber sequence
\[
X_{hG}\xto{N} X^G\to (\Phi^{C_p} X)^{G/C_p}
\]
of spectra, where the second map is the map induced on genuine $G/C_p$-fixed points by the natural transformation $-^{C_p}\to \Phi^{C_p}$ of lax symmetric monoidal functors $G\Sp\to (G/C_p)\Sp$.$\hfill \Box$
\end{proposition}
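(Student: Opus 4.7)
The plan is to derive the fiber sequence from the standard isotropy separation cofiber sequence. Since $G$ is a cyclic $p$-group, the subgroup $C_p$ is its unique minimal nontrivial subgroup, so the family $\mathcal F$ of subgroups $K\subseteq G$ that do \emph{not} contain $C_p$ consists solely of the trivial subgroup $\{e\}$. Choosing a free contractible $G$-CW complex $EG$, we obtain the based $G$-cofiber sequence
\[
EG_+ \longrightarrow S^0 \longrightarrow \widetilde{E}\mathcal F,
\]
where the cofiber $\widetilde{E}\mathcal F$ is characterized up to equivalence by $\widetilde{E}\mathcal F^K \simeq S^0$ if $C_p\subseteq K$ and $\widetilde{E}\mathcal F^K \simeq \ast$ otherwise.

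Smashing this sequence with a cofibrant model of $X$ and applying the exact functor $(-)^G\colon G\Sp\to\Sp$, I get a cofiber (hence fiber) sequence
\[
(EG_+\wedge X)^G \longrightarrow X^G \longrightarrow (\widetilde{E}\mathcal F\wedge X)^G.
\]
The left term I would identify with $X_{hG}$ via the Adams isomorphism: the $G$-spectrum $EG_+\wedge X$ has vanishing geometric fixed points at every nontrivial subgroup (since $EG^K=\emptyset$ there), so its norm map is an equivalence; combined with contractibility of $EG$, this gives $(EG_+\wedge X)^G \simeq (EG_+\wedge X)_{hG} \simeq X_{hG}$. Naturality of the norm forces the composite $X_{hG}\to X^G$ to be the map $N$ of the statement.

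For the right term, I would use the identity $(-)^G \simeq ((-)^{C_p})^{G/C_p}$ coming from Proposition~\ref{prop:compgeomfix} (applied to the genuine fixed point functor, whose compatibility with subgroup towers is standard) to reduce to comparing $(\widetilde{E}\mathcal F\wedge X)^{C_p}$ with $\Phi^{C_p}X$ as objects of $(G/C_p)\Sp$. The restriction of $\widetilde{E}\mathcal F$ to a $C_p$-space is a model of the classical $\widetilde{E}\mathcal F_{C_p}$ for the family of proper subgroups of $C_p$, which yields the ``$\widetilde{E}\mathcal F_{C_p}$-smashing'' presentation of $\Phi^{C_p}X$; this can then be matched with the $\hocolim$-presentation of Definition~\ref{def:geomfix} by the standard argument (cofinality of the subrepresentations of the complete universe on which $C_p$ acts without fixed vectors, combined with a cellular filtration of $\widetilde{E}\mathcal F$). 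Both sides carry an intrinsic residual $G/C_p$-action which agree under the identification, and the cofiber map $X^G\to(\widetilde{E}\mathcal F\wedge X)^G$ is by construction induced by $S^0\to\widetilde{E}\mathcal F$, which translates, after the identifications above, into the natural map $X^{C_p}\to\Phi^{C_p}X$ followed by $(-)^{G/C_p}$.

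The main obstacle is rigorously establishing the identification $(\widetilde{E}\mathcal F\wedge X)^{C_p}\simeq\Phi^{C_p}X$ together with compatibility of the residual $G/C_p$-actions and with the natural transformation $(-)^{C_p}\to\Phi^{C_p}$; this is the comparison between the $\widetilde{E}\mathcal F$-smashing and the Definition~\ref{def:geomfix} models of geometric fixed points, and it requires working with an appropriate cofibrant replacement of $X$ so that smashing and fixed points behave correctly at the point-set level. Everything else (exactness, identification of the left term, and the norm map) is essentially formal once this comparison is in place.
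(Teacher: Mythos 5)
The paper gives no proof of this proposition, citing it to Hesselholt--Madsen; your isotropy-separation argument (smash $EG_+\to S^0\to\widetilde{E}\mathcal F$ with $X$, take genuine $G$-fixed points, identify the outer terms via the Adams isomorphism and the smashing description of geometric fixed points) is precisely the standard proof given in that reference, and it is correct. The comparison you flag as the main obstacle --- identifying $(\widetilde{E}\mathcal F\wedge X)^{C_p}$ with $\Phi^{C_p}X$ compatibly with the residual $G/C_p$-action and with the transformation $-^{C_p}\to\Phi^{C_p}$ --- is exactly what Proposition~\ref{prop:rightadjointgeometric} supplies (geometric fixed points agree with genuine fixed points on the smashing localization $G\Sp_{\geq C_p}$, onto which $\widetilde{E}\mathcal F\wedge -$ projects), so you can cite that rather than redoing the universe-cofinality comparison by hand.
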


Note that if one applies this diagram to the map $X\to B_G(X)$ from $X$ to its Borel completion, one gets a commutative diagram
\[\xymatrix{
X_{hG}\ar^N[r]\ar@{=}[d] & X^G\ar[d]\ar[r] & (\Phi^{C_p} X)^{G/C_p}\ar[d]\\
X_{hG}\ar[r] & X^{hG}\ar[r] & X^{tG}\ ,
}\]
where both rows are fiber sequences, and the functors in the right square are all lax symmetric monoidal, and the transformations are lax symmetric monoidal.\footnote{For the identification of the different norm maps $X_{hG}\to X^{hG}$, one can for example use Theorem~\ref{thm:genfarrelltate}.} In particular, one sees that the functor $-^{tG}: \Sp^{BG}\to \Sp$ has a natural lax symmetric monoidal structure making the transformation $-^{hG}\to -^{tG}$ of functors $\Sp^{BG}\to \Sp$ lax symmetric monoidal. Indeed, $-^{tG}: \Sp^{BG}\to \Sp$ can be written as the composite 
$$\Sp^{BG}\xto{B_G} G\Sp\xto{(\Phi^{C_p})^{G/C_p}} \Sp$$
 of lax symmetric monoidal functors.
We note that this is the classical proof that $-^{tG}: \Sp^{BG}\to \Sp$ has a natural lax symmetric monoidal structure for cyclic groups $G$ of prime power order.\footnote{For general groups $G$ one can write $-^{tG}$ as the composition 
$Sp^{BG}\xto{B_G} G\Sp \xto{\otimes \widetilde{EG} } G\Sp\xto{-^G} \Sp$
where  $\widetilde{EG}$ is the pointed $G$-space obtained as the cofiber of the map $EG_+ \to S^0$ (see in the  proof of Proposition \ref{prop:rightadjointgeometric} for a similar construction). All three functors are lax symmetric monoidal, the middle one since it is a smashing localization of $G\Sp$.}
By Theorem~\ref{thm:tatelaxsymm}, we see that this lax symmetric monoidal structure agrees with the one constructed there.

As another application of the relation between genuine and geometric fixed points, one has the following proposition.

\begin{proposition}\label{prop:rightadjointgeometric} Let $G$ be a finite group, and $H\subseteq G$ a normal subgroup. The functor $\Phi^H: G\Sp\to (G/H)\Sp$ has a fully faithful right adjoint $R_H: (G/H)\Sp\to G\Sp$. The essential image of $R_H$ is the full subcategory $G\Sp_{\geq H}\subseteq G\Sp$ of all $X\in G\Sp$ such that $X^N\simeq 0$ for all subgroups $N\subseteq G$ that do not contain $H$. On $G\Sp_{\geq H}$, the natural transformation $-^H\to \Phi^H: G\Sp_{\geq H}\to (G/H)\Sp$ is an equivalence.
\end{proposition}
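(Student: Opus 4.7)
The plan is to realize $\Phi^H$ as the composite of a smashing localization onto $G\Sp_{\geq H}$ with the genuine $H$-fixed point functor, from which the adjunction and the equivalence $-^H\simeq\Phi^H$ on $G\Sp_{\geq H}$ become transparent.

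First, I would introduce the family $\mathcal{F}[H]$ of subgroups $N\subseteq G$ with $H\not\subseteq N$, together with the pointed $G$-space $\widetilde{E}\mathcal{F}[H]$ defined by the cofiber sequence $E\mathcal{F}[H]_+\to S^0\to \widetilde{E}\mathcal{F}[H]$, where $E\mathcal{F}[H]$ is a universal $G$-CW complex with $E\mathcal{F}[H]^N$ contractible for $N\in\mathcal{F}[H]$ and empty otherwise. Equivalently $\widetilde{E}\mathcal{F}[H]^N\simeq \ast$ if $N\in\mathcal{F}[H]$ and $\widetilde{E}\mathcal{F}[H]^N\simeq S^0$ if $N\supseteq H$. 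I would then check that the endofunctor $L_H:=(-)\wedge \Sigma^\infty \widetilde{E}\mathcal{F}[H]$ is a smashing localization of $G\Sp$ with essential image $G\Sp_{\geq H}$. Concretely, for any $X\in G\Sp$ one has $(L_H X)^N\simeq 0$ for $N\in\mathcal{F}[H]$ because the relevant fixed points of $\widetilde{E}\mathcal{F}[H]$ vanish; conversely, for $X\in G\Sp_{\geq H}$ the fiber $X\wedge E\mathcal{F}[H]_+$ vanishes, since by cellular decomposition and the Wirthm\"uller isomorphism each $(G/K)_+\wedge X$ with $K\in\mathcal{F}[H]$ decomposes into contributions of the form $X^{K'}$ with $K'$ not containing $H$ (using that $H\trianglelefteq G$), and all such vanish by hypothesis.

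Next I would invoke the classical identification $\Phi^H X\simeq (X\wedge \widetilde{E}\mathcal{F}[H])^H$ as $(G/H)$-spectra. This is a direct consequence of Definition~\ref{def:geomfix}: the homotopy colimit over finite-dimensional $V\in\mathcal{U}$ with $V^H=0$ is a standard model for smashing with $\widetilde{E}\mathcal{F}[H]$. Thus $\Phi^H$ factors as $G\Sp\xrightarrow{L_H} G\Sp_{\geq H}\xrightarrow{(-)^H} (G/H)\Sp$, and under this factorization the natural transformation $(-)^H\to \Phi^H$ is obtained by applying $(-)^H$ to the localization map $X\to L_H X$; in particular it is an equivalence on $G\Sp_{\geq H}$, where $L_H$ is the identity.

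Finally I would show that the restricted functor $(-)^H:G\Sp_{\geq H}\to (G/H)\Sp$ is an equivalence of stable $\infty$-categories, with inverse $Y\mapsto L_H(\mathrm{inf}^G_{G/H}Y)$, where $\mathrm{inf}^G_{G/H}:(G/H)\Sp\to G\Sp$ is the symmetric monoidal inflation along $G\twoheadrightarrow G/H$. Composing with $L_H$ then provides the fully faithful right adjoint $R_H:(G/H)\Sp\to G\Sp$ with essential image $G\Sp_{\geq H}$, as desired. The main obstacle is precisely this last step: one must verify that the unit $Y\to (\mathrm{inf}^G_{G/H}Y\wedge\widetilde{E}\mathcal{F}[H])^H$ and the counit $L_H(\mathrm{inf}^G_{G/H}(X^H))\to X$ are equivalences. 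Using compact generation of $G\Sp_{\geq H}$ by the spectra $\Sigma^\infty_+(G/K)$ for $K\supseteq H$, it suffices to check the counit on these generators, where it reduces to the elementary identification $(G/K)/H=(G/H)/(K/H)$; the unit is handled dually on generators $\Sigma^\infty_+(G/K)/H$ of $(G/H)\Sp$.
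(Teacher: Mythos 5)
Your proof is correct and follows essentially the same route as the paper: both realize $\Phi^H$ as the composite of the smashing localization $-\wedge\widetilde{E}\mathcal{F}[H]$ (the paper's $\widetilde{E\mathcal P}$) onto $G\Sp_{\geq H}$ with the genuine $H$-fixed point functor, and both invert the restricted functor via inflation along $G\to G/H$. The only divergence is in the final verification, and it is cosmetic: where you check the unit and counit on compact generators (note these should be the localized objects $L_H\Sigma^\infty_+(G/K)$, $K\supseteq H$, rather than $\Sigma^\infty_+(G/K)$ themselves, which do not lie in $G\Sp_{\geq H}$), the paper instead writes down explicit point-set natural transformations and uses that $\Phi^H$ reflects equivalences on $G\Sp_{\geq H}$.
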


Note that by \cite[Theorem 7.11]{SchwedeGenuine}, the condition on the essential image can be replaced by the condition $\Phi^N X\simeq 0$ for all subgroups $N\subseteq G$ that do not contain $H$. Also, the proposition implies as in Corollary~\ref{cor:borelcompletionlaxsymm} that $R_H$ has a natural lax symmetric monoidal structure. Moreover, one sees that one can redefine $\Phi^H: G\Sp\to (G/H)\Sp$ equivalently as the composition of the localization $G\Sp\to G\Sp_{\geq H}$ and $-^H$, which shows that $\Phi^H$ is independent of all choices.

\begin{proof} Choose a (compactly generated weak Hausdorff) space $E\mathcal P$ with $G$-action which has the property that $E\mathcal P^H$ is empty, but for all subgroups $N$ of $G$ that do not contain $H$, $E\mathcal P^N$ is contractible. Let $\widetilde{E\mathcal P}$ be the based $G$-space given as the homotopy cofiber of the map $E\mathcal{P}_+\to S^0$ mapping $E\mathcal P$ to the non-basepoint of $S^0$. Then $\widetilde{E\mathcal P}^N$ is contractible for all subgroups $N$ of $G$ that do not contain $H$, while $\widetilde{E\mathcal P}^N$ is given by $S^0$ if $N$ contains $H$.

Consider the functor $L: G\Sp^O\to G\Sp^O$ sending $X$ to $\widetilde{E\mathcal P}\wedge X$. By \cite[Proposition 5.4]{SchwedeGenuine}, this functor preserves equivalences of orthogonal $G$-spectra. Moreover, the natural map $S^0\to \widetilde{E\mathcal P}$ induces a natural transformation $\mathrm{id}\to L$. It follows from \cite[Proposition 7.6]{SchwedeGenuine} that for all $X\in G\Sp^O$, one has $\Phi^N L(X)\simeq 0$ if $N$ does not contain $H$, while $\Phi^N X\to \Phi^N L(X)$ is an equivalence if $N$ contains $H$. This implies that the induced functor $L: G\Sp\to G\Sp$ with its natural transformation $\mathrm{id}\to L$ satisfies the hypothesis of \cite[Proposition 5.2.7.4]{HTT}, and the essential image of $L$ is given by $G\Sp_{\geq H}$.

As $\Phi^H: G\Sp\to (G/H)\Sp$ factors over $L$, it remains to see that the functor $\Phi^H: G\Sp_{\geq H}\to (G/H)\Sp$ is an equivalence. Note that there is a functor $R^\prime: (G/H)\Sp\to G\Sp$ by letting $G$ act through its quotient $G/H$, and the composite $(G/H)\Sp\to G\Sp\xto{\Phi^H} (G/H)\Sp$ is equivalent to the identity as follows from the definition. Consider the functor $L\circ R^\prime: (G/H)\Sp\to G\Sp_{\geq H}$. We will construct natural equivalences between both composites of $L\circ R^\prime$ and $\Phi^H$ and the identity on the respective $\infty$-category. For the composite $\Phi^H\circ L\circ R^\prime\simeq \Phi^H\circ R^\prime$, we have already done this. For the other composite, we note that there is a natural transformation
\[
L\circ R^\prime\circ \Phi^H\simeq L\circ R^\prime\circ -^H\leftarrow L\simeq \mathrm{id}: G\Sp_{\geq H}\to G\Sp_{\geq H}
\]
as there is a natural transformation $R^\prime\circ -^H\to \mathrm{id}$ on orthogonal $G$-$\Omega$-spectra (given by inclusion of fixed points). To check whether this is a natural equivalence, we can apply $\Phi^H$, as $\Phi^H: G\Sp_{\geq H}\to (G/H)\Sp$ reflects equivalences; this reduces us to the assertion about the other composite that we have already proved.\end{proof}

In order to prepare for the definition of genuine cyclotomic spectra in the next section, we need to introduce variants of the previous definition when $G$ is no longer required to be finite. We will need the following two cases.

\begin{definition}\label{defprueferequivariant}$ $\begin{altenumerate}
\item The $\infty$-category $\Tp\Sp$ of genuine $\Tp$-equivariant spectra is the limit of the $\infty$-categories $C_{p^n}\Sp$ for varying $C_{p^n}\subseteq \Tp$, along the forgetful functors $C_{p^n}\Sp\to C_{p^{n-1}}\Sp$.\footnote{This $\infty$-category is equivalent to the underlying $\infty$-category of the model category studied by Degrijse, Hausmann, L\"uck, Patchkoria and Schwede in \cite{proper}. We thank Irakli Patchkoria for sharing his idea to use the Pr\"ufer group instead of the full circle group $\T$ long before this project was started.}
\item The category $\T\Sp^O$ of orthogonal $\T$-spectra is the category of orthogonal spectra with continuous $\T$-action. 
Let $\F$ be the set of finite subgroups $C_n\subseteq \T$. A map $f: X\to Y$ of orthogonal $\T$-spectra is an $\F$-equivalence if the induced map of orthogonal $C_n$-spectra is an equivalence for all finite subgroups $C_n\subseteq \T$. The $\infty$-category $\T\Sp_\F$ of $\F$-genuine $\T$-equivariant spectra is obtained by inverting the $\F$-equivalences in $N(\T\Sp^O)$.
\end{altenumerate}
\end{definition}

In both cases, the functors $-^H$ and $\Phi^H$ make sense for all finite subgroups $H$ of $\Tp$ respectively $\T$, and they satisfy the same properties as before. An obvious variant of Theorem~\ref{thm:borelcompletion} and Proposition~\ref{prop:rightadjointgeometric} holds for $\Tp\Sp$ and $\T\Sp_\F$. In the case of $\Tp\Sp$, this follows formally by passing to the limit. In the case of $\T\Sp_\F$, one has to repeat the arguments.

\begin{corollary}\label{geometricformula}
For the functors 
$R_{C_p}: \Tp\Sp \to \Tp\Sp$ and $R_{C_p}: \T\Sp_\F \to \T\Sp_F$ we have the formula
$$
(R_{C_p}X)^H \simeq \begin{cases}
X^{H/C_p} & \text{if } C_p \subseteq H \\
0 & \text{otherwise}
\end{cases}
$$
\end{corollary}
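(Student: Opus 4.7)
The plan is to deduce the formula directly from the variant of Proposition~\ref{prop:rightadjointgeometric} for $\Tp\Sp$ and $\T\Sp_\F$ together with the compatibility of genuine fixed points with iterated fixed points. Let me write $Y = R_{C_p} X$ in the argument that follows; the two cases of the formula then correspond to the two defining properties of the essential image of $R_{C_p}$.

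If $H$ is a finite subgroup with $C_p \not\subseteq H$, then the variant of Proposition~\ref{prop:rightadjointgeometric} says that $Y$ lies in the full subcategory of objects whose $N$-fixed points vanish for all subgroups $N$ not containing $C_p$; in particular $Y^H \simeq 0$. This handles the second case of the formula with no further work.

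Now suppose $C_p \subseteq H$. Since the genuine fixed point functor satisfies $(-)^H \simeq ((-)^{C_p})^{H/C_p}$ (a consequence of the compatibility of fixed points with composition, which holds for normal subgroups as discussed after Definition~\ref{def:geomfix} and in Proposition~\ref{prop:compgeomfix}), I would compute
\[
Y^H \simeq (Y^{C_p})^{H/C_p}.
\]
By the second assertion of the variant of Proposition~\ref{prop:rightadjointgeometric}, the natural map $Y^{C_p} \to \Phi^{C_p} Y$ is an equivalence since $Y$ lies in the essential image of $R_{C_p}$. The counit $\Phi^{C_p} R_{C_p} X \to X$ is an equivalence because $R_{C_p}$ is fully faithful, so $Y^{C_p} \simeq X$ as an object with residual $H/C_p$-action (which, under the $p$-th power identification, is the restriction of the original $\Tp$- or $\T$-action on $X$). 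Taking $H/C_p$-fixed points then yields $Y^H \simeq X^{H/C_p}$, as claimed.

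The only substantive point to check is that the iterated fixed point identity and the identification of the residual action are compatible at the level of $\infty$-categories (as opposed to merely for orthogonal spectra); this is where one needs the variant of Proposition~\ref{prop:rightadjointgeometric} to be set up carefully in the $\Tp\Sp$ and $\T\Sp_\F$ contexts, and it is the only place where extending from the finite-group situation is nontrivial. Once that is granted, the corollary is a direct reading-off from the adjunction and the characterization of the essential image of $R_{C_p}$.
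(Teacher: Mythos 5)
Your proof is correct and follows essentially the same route as the paper: the vanishing case is read off from the characterization of the essential image of $R_{C_p}$, and the case $C_p\subseteq H$ uses the chain $(R_{C_p}X)^H\simeq ((R_{C_p}X)^{C_p})^{H/C_p}\simeq (\Phi^{C_p}R_{C_p}X)^{H/C_p}\simeq X^{H/C_p}$, with the last step coming from full faithfulness of $R_{C_p}$. Your closing caveat about the iterated fixed point identity in the $\Tp\Sp$ and $\T\Sp_\F$ settings is a fair point that the paper handles by noting that only finite fixed points are involved, so the finite-group statements apply directly.
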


\begin{proof}
The formulas only use finite fixed points, thus we can use Proposition \ref{prop:rightadjointgeometric}. We immediately get that $(R_{C_p}X)^H \simeq 0$ if $C_p \not\subseteq H$. If $C_p \subseteq H$ then we get
$$
(R_{C_p}X)^H \simeq ((R_{C_p}X)^{C_p})^{H/{C_p}} \simeq (\Phi^{C_p} R_{C_p}X)^{H/C_p} \simeq X^{H/C_p}\ ,
$$
where the last equivalence follows since $R_{C_p}$ is fully faithful, i.e.~the counit of the adjunction is an equivalence.
\end{proof}

\section{Genuine cyclotomic spectra}\label{sec:gencyclotomic}

In this section we give the definition of the $\infty$-category of genuine cyclotomic spectra and genuine $p$-cyclotomic spectra. We start with a discussion of genuine $p$-cyclotomic spectra.

\begin{definition}\label{def:genuinepcyclo} A genuine $p$-cyclotomic spectrum is a genuine $\Tp$-spectrum $X$ together with an equivalence $\Phi_p: \Phi^{C_p} X\xto{\simeq} X$ in $\Tp\Sp$, where
\[
\Phi^{C_p} X\in (\Tp/C_p)\Sp\simeq \Tp\Sp
\]
via the $p$-th power map $\Tp/C_p\cong \Tp$. The $\infty$-category of genuine $p$-cyclotomic spectra is the equalizer
\[
\Cyc_p = \mathrm{Eq}\left( \xymatrix{\Tp\Sp\ar^{\mathrm{id}}[r]<2pt> \ar_{\Phi^{C_p}}[r]<-2pt> & \Tp\Sp} \right).
\]
\end{definition}

For every genuine $p$-cyclotomic spectrum $(X,\Phi_p)$ there is an associated $p$-cyclo\-to\-mic spectrum in the sense of Definition~\ref{def:cyclotomicinftycat}~(ii). Indeed, there is a functor $\Tp\Sp\to \Sp^{B\Tp}$, and one can compose the inverse map $X\to \Phi^{C_p} X$ with the Borel completion $\Phi^{C_p} X\to \Phi^{C_p} B_{\Tp}(X)$; the corresponding map of underlying spectra with $\Tp$-action is a $\Tp$-equivariant map $X\to X^{tC_p}$, where the $\Tp$-action on the right is the residual $\Tp/C_p$-action via the $p$-th power map $\Tp/C_p\cong \Tp$.

\begin{proposition}\label{propforgetp}
The assignment described above defines a functor 
$$
\Cyc_p \to \Cycn_p.
$$
\end{proposition}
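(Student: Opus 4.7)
The plan is to apply the universal property of the lax equalizer from Proposition~\ref{prop:laxeq}. Out of any $\infty$-category $\mathcal{A}$, a functor into $\Cycn_p = \mathrm{LEq}(\id, (-)^{tC_p})$ is the data of a functor $F : \mathcal{A} \to \Sp^{B\Tp}$ together with a natural transformation $F \Rightarrow (-)^{tC_p} \circ F$. I would take $\mathcal{A} = \Cyc_p$, $F = U \circ \pi$, where $\pi : \Cyc_p \to \Tp\Sp$ is the forgetful projection out of the equalizer and $U : \Tp\Sp \to \Sp^{B\Tp}$ is the underlying-spectrum functor of Theorem~\ref{thm:borelcompletion}. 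It then remains to construct the natural transformation.

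For this I would first build an auxiliary natural transformation $\alpha : U \circ \Phi^{C_p} \Rightarrow (-)^{tC_p} \circ U$ of functors $\Tp\Sp \to \Sp^{B\Tp}$, using the identification $\Tp/C_p \cong \Tp$ on both sides. Granted $\alpha$, I would define the arrow at $(X,\Phi_p) \in \Cyc_p$ as the composite
\[
U(X) \xrightarrow{U(\Phi_p^{-1})} U(\Phi^{C_p} X) \xrightarrow{\alpha_X} U(X)^{tC_p}\ .
\]
This is natural in $(X,\Phi_p)$: the first map depends naturally on $\Phi_p$, which is part of the equalizer datum (passage from $\Phi_p$ to $\Phi_p^{-1}$ is natural on the subcategory of equivalences), and the second is natural by construction of $\alpha$.

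To produce $\alpha$, I would exploit the Hesselholt--Madsen fiber sequence of Proposition~\ref{prop:hesselholtmadsen}. Applied to $X \in \Tp\Sp$ it gives, functorially in $X$, a morphism of fiber sequences of functors $\Tp\Sp \to \Sp^{B(\Tp/C_p)}$:
\[
\xymatrix{
U(X_{hC_p}) \ar[r] \ar@{=}[d] & U(X^{C_p}) \ar[r] \ar[d] & U(\Phi^{C_p} X) \ar@{-->}[d]^{\alpha_X} \\
U(X)_{hC_p} \ar[r] & U(X)^{hC_p} \ar[r] & U(X)^{tC_p}\ ,
}
\]
the bottom being the defining fiber sequence of the Tate construction. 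The middle vertical arrow is induced by the unit $\eta : \id \Rightarrow B_\Tp \circ U$ of the Borel-completion adjunction, using the identification $(B_\Tp Y)^{C_p} \simeq Y^{hC_p}$ from Theorem~\ref{thm:borelcompletion} (and that the counit of the adjunction is an equivalence). The rightmost vertical $\alpha_X$ is then produced by the universal property of the cofiber, and is natural in $X$ because every ingredient is.

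The main obstacle is the construction of $\alpha$. Two subtleties arise: first, Proposition~\ref{prop:hesselholtmadsen} is stated only for finite cyclic $p$-power groups, whereas here the ambient group is $\Tp$. Using the description $\Tp\Sp = \lim_n C_{p^n}\Sp$ from Definition~\ref{defprueferequivariant}, one applies the proposition at each finite level and assembles the fiber sequences compatibly; the required compatibilities under restriction along $C_{p^n}\hookrightarrow C_{p^{n+1}}$ are routine given the naturality of the norm map, of the natural transformation $(-)^{C_p} \Rightarrow \Phi^{C_p}$, and of the unit $\eta$. Second, one must verify that $\alpha_X$ lands in $\Sp^{B\Tp}$, that is, is $\Tp/C_p \cong \Tp$-equivariant; but this is automatic since the whole diagram above lives in $\Sp^{B(\Tp/C_p)}$.
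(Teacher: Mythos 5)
Your proposal is correct and follows essentially the same route as the paper: both reduce the problem to constructing a natural transformation $U\circ\Phi^{C_p}\Rightarrow (-)^{tC_p}\circ U$ from the unit of the Borel-completion adjunction together with Proposition~\ref{prop:hesselholtmadsen} (your cofiber comparison of fiber sequences is exactly the unwinding of the paper's identification of the underlying spectrum of $\Phi^{C_p}B_{\Tp}$ with $(-)^{tC_p}$), and then feed this into the universal property of the lax equalizer. The only cosmetic difference is that the paper builds the functor on all of $\mathrm{LEq}(\id,\Phi^{C_p})$ and restricts to the equalizer, whereas you invert $\Phi_p$ objectwise and invoke naturality of inversion on the subcategory of equivalences; both are fine.
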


\begin{proof} As the equalizer is a full subcategory of the lax equalizer, it suffices to construct a functor
\[
\mathrm{LEq}\left( \xymatrix{\Tp\Sp\ar^{\mathrm{id}}[r]<2pt> \ar_{\Phi^{C_p}}[r]<-2pt> & \Tp\Sp} \right)\to \mathrm{LEq}\left( \xymatrix{\Sp^{B\Tp}\ar^{\mathrm{id}}[r]<2pt> \ar_{-^{tC_p}}[r]<-2pt> & \Sp^{B\Tp}} \right)\ .
\]
But there is the natural functor $\Tp\Sp\to \Sp^{B\Tp}$ which commutes with the first functor $\mathrm{id}$ in the lax equalizer; for the second functor, there is a natural transformation between the composite 
$$\Tp\Sp\xto{\Phi^{C_p}}\Tp\Sp\to \Sp^{B\Tp}$$ 
and the composite 
$$\Tp\Sp\to \Sp^{B\Tp}\xto{-^{tC_p}} \Sp^{B\Tp},$$
by passing to the underlying spectrum in the natural transformation $\Phi^{C_p}\to \Phi^{C_p} B_{\Tp}$. For the identification of $-^{tC_p}$ with the underlying spectrum of $\Phi^{C_p} B_{\Tp}$, cf.~Proposition~\ref{prop:hesselholtmadsen}.

In this situation, one always gets an induced functor of lax equalizers, by looking at the definition (Definition \ref{def:laxeq}).
\end{proof}

Next, we  introduce the category of genuine cyclotomic spectra as described in \cite[Section 2]{HesselholtMadsen}. For the definition, we need to fix a complete $\T$-universe $\mathcal U$; more precisely, we fix
\[
\mathcal U = \bigoplus_{k\in \Z, i\geq 1} \mathbb C_{k,i}\ ,
\]
where $\T$ acts on $\mathbb C_{k,i}$ via the $k$-th power of the embedding $\T\hookrightarrow \mathbb C^\times$. For this universe, we have an identification
\[
\mathcal U^{C_n} = \bigoplus_{k\in n\Z, i\geq 1} \mathbb C_{k,i}\ ,
\]
which is a representation of $\T/C_n$. Now there is a natural isomorphism\[
\mathcal U^{C_n}\xto{\cong} \mathcal U
\]
which sends the summand $\mathbb C_{k,i} \subseteq \mathcal U^{C_n}$ for $k \in n\Z$ and $i \geq 1$ identically to the summand $\mathbb C_{k/n,i} \subseteq  \mathcal U$. This isomorphism is
equivariant for the $\T/C_n\cong \T$-action given by the $n$-th power. We get functors
\[
\Phi^{C_n}_{\mathcal U}: \T\Sp^O\to (\T/C_n)\Sp^O\simeq \T\Sp^O
\]
such that there are natural coherent $\F$-equivalences
\[
\Phi^{C_m}_{\mathcal{U}}\circ \Phi^{C_n}_{\mathcal{U}}\to \Phi^{C_{mn}}_{\mathcal{U}}\ ,
\]
as in Proposition~\ref{prop:compgeomfix}. By inverting $\F$-equivalences, one gets an action of the multiplicative monoid $\mathbb N_{>0}$ of positive integers on the $\infty$-category $\T\Sp_\F$.

\begin{definition}\label{def:genuinecyclo}
The $\infty$-category of genuine cyclotomic spectra is given by the homotopy fixed points of $\mathbb{N}_{>0}$ on the $\infty$-category of $\F$-genuine $\T$-equivariant spectra,
\[
\Cyc = \left(\T\Sp_\F\right)^{h\mathbb{N}_{>0}}\ .
\]
\end{definition}

Roughly, objects of $\Cyc$ are objects $X\in \T\Sp_\F$ together with equivalences $\Phi_n: X\simeq \Phi^{C_n} X$ for all $n\geq 1$ which are homotopy-coherently commutative.

\begin{proposition}\label{propforget} There is a natural functor
\[
\Cyc\to \Cycn = \Sp^{B\T}\times_{\prod_{p\in \bP}\Sp^{B\Tp}} \prod_{p\in \bP} \Cycn_p\ .
\]
\end{proposition}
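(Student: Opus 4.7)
The plan is to use the universal property of the pullback defining $\Cycn$: it suffices to produce (i) a functor $\Cyc \to \Sp^{B\T}$, (ii) for each prime $p$, a functor $\Cyc \to \Cycn_p$, and (iii) coherent equivalences between the two composites $\Cyc \to \Sp^{B\Tp}$ obtained from (i)+restriction and (ii)+forgetting to the underlying $\Tp$-spectrum.

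For (i), the variant of Theorem \ref{thm:borelcompletion} for $\T\Sp_\F$ provides a forgetful functor $\T\Sp_\F \to \Sp^{B\T}$. Composing with evaluation $(\T\Sp_\F)^{h\mathbb{N}_{>0}} \to \T\Sp_\F$ at the basepoint of $B\mathbb{N}_{>0}$ gives the desired functor $\Cyc \to \Sp^{B\T}$, which on objects takes the underlying spectrum with $\T$-action.

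For (ii), I would exploit the fact that $\mathbb{N}_{>0}$ is the free commutative monoid on the primes, so $\mathbb{N}_{>0} \cong \bigoplus_{p \in \bP} p^{\mathbb{N}}$. Restriction along the inclusion $p^{\mathbb{N}} \hookrightarrow \mathbb{N}_{>0}$ yields a functor of homotopy fixed points
\[
\Cyc = (\T\Sp_\F)^{h\mathbb{N}_{>0}} \longrightarrow (\T\Sp_\F)^{hp^{\mathbb{N}}},
\]
where $p^{\mathbb{N}}$ acts by iterating $\Phi^{C_p}$. The forgetful functor $\T\Sp_\F \to \Tp\Sp$ intertwines the two $\Phi^{C_p}$-endofunctors (by the compatibility of the models $\Phi^{C_p}_{\mathcal U}$ for varying ambient groups, using Proposition \ref{prop:compgeomfix} and the fact that $\mathcal U^{C_p} \cong \mathcal U$ as $\T$-universes is compatible with restriction to $\Tp$). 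This gives a further functor $(\T\Sp_\F)^{hp^{\mathbb{N}}} \to (\Tp\Sp)^{hp^{\mathbb{N}}}$. Next, one identifies $(\Tp\Sp)^{hp^{\mathbb{N}}} \simeq \Cyc_p$: objects on the left are precisely objects $X \in \Tp\Sp$ equipped with a coherent equivalence $X \simeq \Phi^{C_p} X$, which by iteration automatically yields the equalizer data in Definition \ref{def:genuinepcyclo}. Finally, composing with $\Cyc_p \to \Cycn_p$ from Proposition \ref{propforgetp} produces $\Cyc \to \Cycn_p$.

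For (iii), tracing through the constructions, both composites $\Cyc \to \Sp^{B\Tp}$ simply output the underlying spectrum of $X$ together with its restricted $\Tp$-action: in the first case by restricting the $\T$-action, in the second by the explicit construction of Proposition \ref{propforgetp}. A canonical natural equivalence between them is obtained from the commutative diagram of forgetful functors $\T\Sp_\F \to \Tp\Sp \to \Sp^{B\Tp}$ and $\T\Sp_\F \to \Sp^{B\T} \to \Sp^{B\Tp}$. Assembling (i), (ii), and (iii) along the universal property of the pullback gives the functor $\Cyc \to \Cycn$. The main obstacle is the careful homotopy-coherent bookkeeping required for (ii)--(iii), in particular the identification of $(\Tp\Sp)^{hp^{\mathbb{N}}}$ with $\Cyc_p$ and the matching of the two $\T/C_p \cong \T$ and $\Tp/C_p \cong \Tp$ identifications used to view $\Phi^{C_p} X$ as a $\T$- respectively $\Tp$-spectrum; these are consistent by design of the universe $\mathcal U$, so the compatibility is essentially tautological once one sets it up correctly.
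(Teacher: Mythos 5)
Your proposal is correct and follows essentially the same route as the paper's (much terser) proof: for each $p$ one forgets to $\Cyc_p$ by remembering only $\Phi_p$ along $\T\Sp_\F\to\Tp\Sp$, composes with Proposition~\ref{propforgetp}, and lifts the resulting functor to $\prod_p\Sp^{B\Tp}$ through $\Sp^{B\T}$ via the underlying spectrum. Your extra step identifying $(\Tp\Sp)^{hp^{\mathbb N}}\simeq\Cyc_p$ is exactly the content of Lemma~\ref{lemequiv}, so the bookkeeping you flag is already covered by the paper's machinery.
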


\begin{proof} For any $p$, we have a natural functor $\Cyc\to \Cyc_p$ given by the natural functor $\T\Sp_\F\to \Tp\Sp$, remembering only $\Phi_p$. The induced functor $\Cyc\to \prod_{p\in \bP} \Sp^{B\Tp}$ lifts to a functor $\Cyc\to \Sp^{B\T}$ by looking at the underlying spectrum with $\T$-action of $X\in \T\Sp_\F$.
\end{proof}

\begin{remark} The functor $\Cyc\to \Cycn$ a priori seems to lose a lot of information: For example, it entirely forgets all nontrivial fixed point spectra and all coherence isomorphisms between the different $\Phi_n$.
\end{remark}

In \cite[Definition 2.2]{HesselholtMadsen}, the definition of (genuine) cyclotomic spectra is different in that one takes the homotopy fixed points of $\mathbb N_{>0}$ on the category $\T\Sp^O$, i.e.~before inverting weak equivalences (or rather lax fixed points where one requires the structure maps to be weak equivalences). Let us call these objects orthogonal cyclotomic spectra.\begin{definition}\label{def:orthogonalcyclo} An orthogonal cyclotomic spectrum is an object $X\in \T\Sp^O$ together with $\F$-equivalences of orthogonal $\T$-spectra $\Phi_n: \Phi^{C_n}_{\mathcal{U}} X\to X$ in $\T\Sp^O\simeq (\T/C_n)\Sp^O$ for all $n\geq 1$, such that for all $m,n\geq 1$, the diagram
\[\xymatrix{
\Phi^{C_n}_{\mathcal{U}}(\Phi^{C_m}_{\mathcal{U}} X)\ar^\simeq[r]\ar^{\Phi^{C_n}_{\mathcal{U}}(\Phi_m)}[d] & \Phi^{C_{mn}}_{\mathcal{U}} X\ar^{\Phi_{mn}}[d] \\
\Phi^{C_n}_{\mathcal{U}} X\ar^{\Phi_n}[r] & X
}\]
commutes. A map $f: X\to Y$ of orthogonal cyclotomic spectra is an equivalence if it is an $\F$-equivalence of the underlying object in $\T\Sp^O$. Let $\CycO$ denote the category of orthogonal cyclotomic spectra.
\end{definition}

It may appear more natural to take the morphisms $\Phi^{C_n}_{\mathcal{U}} X\to X$ in the other direction, but the point-set definition of topological Hochschild homology as an orthogonal cyclotomic spectrum actually requires this direction, cf.~Section~\ref{sec:thhorth} below. We note that Hesselholt--Madsen ask that $\Phi_n$ be an equivalence of orthogonal $\T$-spectra (not merely an $\F$-equivalence), i.e.~it also induces an equivalence on the genuine $\T$-fixed points. Also, they ask that $X$ be a $\T$-$\Omega$-spectrum, where we allow $X$ to be merely a (pre)spectrum. Our definition follows the conventions of Blumberg--Mandell, \cite[Definition 4.7]{BlumbergMandell}.\footnote{Their commutative diagram in \cite[Definition 4.7]{BlumbergMandell} looks different from ours, and does not seem to ask for a relation between $\Phi_{mn}$ and $\Phi_m$, $\Phi_n$; we believe ours is the correct one, following \cite[Definition 2.2]{HesselholtMadsen}.} In \cite[Section 5]{BlumbergMandell}, Blumberg--Mandell construct a ``model-$\ast$-category'' structure on the category of orthogonal cyclotomic spectra. It is unfortunately not a model category, as the geometric fixed point functor does not commute with all colimits; this is however only a feature of the point-set model. It follows from a result of Barwick--Kan, \cite{BarwickKan}, that if one inverts weak equivalences in the model-$\ast$-category of orthogonal cyclotomic spectra, one gets the $\infty$-category $\Cyc$:

\begin{thm}\label{thm:orthocyclo} The morphism $N(\CycO)\to \Cyc$ is the universal functor of $\infty$-categories inverting the equivalences of orthogonal cyclotomic spectra.
\end{thm}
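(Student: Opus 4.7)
The plan is to identify the Dwyer--Kan localization $N(\CycO)[W^{-1}]$ with the homotopy fixed points $(\T\Sp_\F)^{h\mathbb{N}_{>0}}$ by exploiting the Blumberg--Mandell model-$\ast$-category structure on $\CycO$ together with a result of Barwick--Kan. The first step is to construct a comparison functor $F\colon N(\CycO)\to \Cyc$: an orthogonal cyclotomic spectrum is by construction a strict (point-set) lift of a $\mathbb{N}_{>0}$-equivariance structure on $X\in \T\Sp^O$ via the endofunctors $\Phi^{C_n}_{\mathcal{U}}$, with the additional requirement that the structure maps $\Phi_n$ are $\F$-equivalences. Passing to the $\infty$-category $\T\Sp_\F$, the strict compatibilities of the $\Phi_n$ assemble into a coherent $\mathbb{N}_{>0}$-homotopy-fixed-point datum, producing $F$. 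Since $F$ by construction sends $\F$-equivalences of the underlying orthogonal $\T$-spectra to equivalences in $\Cyc$, it descends to $\bar F \colon N(\CycO)[W^{-1}]\to \Cyc$.

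Next, I would invoke the Barwick--Kan theorem (or its refinement for model-$\ast$-categories, which still provides functorial cofibrant/fibrant replacements) to give a concrete model of $N(\CycO)[W^{-1}]$ in terms of the hammock localization of $(\CycO, W)$, or equivalently as the simplicial nerve of the simplicial enrichment on the subcategory of fibrant-cofibrant objects in the Blumberg--Mandell structure. With such an explicit model in hand, essential surjectivity of $\bar F$ is reasonably formal: every homotopy fixed point object in $(\T\Sp_\F)^{h\mathbb{N}_{>0}}$ can be rectified to a strictly $\mathbb{N}_{>0}$-equivariant orthogonal $\T$-spectrum because the $\Phi^{C_n}_{\mathcal{U}}$ are point-set functors satisfying the strict composition relations of Proposition~\ref{prop:compgeomfix}, so a coherent chain of equivalences in $\T\Sp_\F$ lifts to an actual orthogonal cyclotomic spectrum after a fibrant-cofibrant replacement.

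For full faithfulness, I would compare mapping spaces. On the $\Cyc$-side, $\Map_{\Cyc}(X,Y)$ is computed as the limit over $B\mathbb{N}_{>0}$ of the diagram of mapping spaces in $\T\Sp_\F$ carrying the natural $\mathbb{N}_{>0}$-equivariance, which by standard formulas translates into an equalizer-type expression involving $\Map_{\T\Sp_\F}(X,Y)$ and $\Map_{\T\Sp_\F}(\Phi^{C_n}X, \Phi^{C_n}Y)$ for all $n$. On the $N(\CycO)[W^{-1}]$-side, the hammock localization expresses $\Map(X,Y)$ in terms of zigzags of morphisms in $\CycO$, which — since the structure maps $\Phi_n$ are weak equivalences in both $X$ and in a fibrant $Y$ — are forced to become compatible (up to coherent homotopy) with the cyclotomic structure in the same equalizer pattern. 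Matching the two descriptions gives the equivalence.

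The main obstacle is precisely this last step: rectifying the strict point-set compatibilities of the $\Phi_n$ (which live in $\T\Sp^O$, not its localization) with the homotopy-coherent $\mathbb{N}_{>0}$-action on $\T\Sp_\F$. This requires care because $\Phi^{C_n}_{\mathcal{U}}$ on orthogonal spectra only commutes with $\Phi^{C_m}_{\mathcal{U}}$ up to natural $\F$-equivalence, not equality — so the strict coherence diagram in Definition~\ref{def:orthogonalcyclo} must be shown to pin down exactly the same coherent data as a $\mathbb{N}_{>0}$-homotopy-fixed-point structure in the sense of Definition~\ref{def:genuinecyclo}. I expect this to be handled by a careful analysis of the zigzag of equivalences in Proposition~\ref{prop:compgeomfix}, combined with the universal property of the Dwyer--Kan localization, which will imply that either description of the homotopy-coherent structure is the \emph{initial} such refinement of the underlying strict diagram, and hence that the two agree.
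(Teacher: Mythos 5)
The paper's own proof of this theorem is a one-line citation: the statement is exactly an instance of the general rectification result \cite[Lemma 3.24]{BarwickGlasman} for lax fixed points of a monoid acting on a relative category, so none of the work you sketch is actually carried out in the paper. Your outline has the right overall shape --- it is essentially what a from-scratch proof of that lemma would look like in this case --- but each of the three substantive steps is asserted rather than proved, and they are precisely the hard points.

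Concretely: (i) already the construction of $F\colon N(\CycO)\to\Cyc$ is nontrivial, because an object of $\CycO$ is \emph{not} a strict $\mathbb{N}_{>0}$-fixed point: the functors $\Phi^{C_n}_{\mathcal U}$ compose only up to the specified natural $\F$-equivalences $\Phi^{C_n}_{\mathcal U}\circ\Phi^{C_m}_{\mathcal U}\to\Phi^{C_{mn}}_{\mathcal U}$ of Proposition~\ref{prop:compgeomfix} (your essential-surjectivity step appeals to ``strict composition relations,'' contradicting your own final paragraph), and promoting the strictly commuting squares of Definition~\ref{def:orthogonalcyclo} to a point of the homotopy limit $(\T\Sp_\F)^{h\mathbb{N}_{>0}}$ requires producing \emph{all} higher coherences, not just the squares. (ii) Essential surjectivity is a genuine strictification theorem --- rectifying a homotopy-coherent $\mathbb{N}_{>0}$-fixed point in $\T\Sp_\F$ to an actual object of $\CycO$ --- and is not ``reasonably formal''; fibrant-cofibrant replacement in the Blumberg--Mandell model-$\ast$-structure does not by itself manufacture the strict squares, and the model-$\ast$-structure is weaker than a model structure precisely because $\Phi^{C_n}_{\mathcal U}$ fails to preserve colimits. (iii) The comparison of hammock-localization mapping spaces with the limit over $B\mathbb{N}_{>0}$ is likewise only gestured at, and your closing claim that ``either description of the homotopy-coherent structure is the initial such refinement'' is not a well-posed statement and cannot substitute for these arguments. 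The efficient route is the one the paper takes: verify that $(\CycO,W)$ satisfies the hypotheses of \cite[Lemma 3.24]{BarwickGlasman} and cite it.
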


\begin{proof} This follows from \cite[Lemma 3.24]{BarwickGlasman}.
\end{proof}

One can also make a similar discussion for $p$-cyclotomic spectra; however, in this case, our Definition~\ref{def:cyclotomicinftycat}~(ii) differs from previous definitions in that we only require an action of the subgroup $\Tp$ of $\T$. If one changes our definition to a $\T$-action, one could compare Definition~\ref{def:genuinepcyclo} with the definitions of $p$-cyclotomic spectra in \cite{HesselholtMadsen} and \cite{BlumbergMandell} as in Theorem~\ref{thm:orthocyclo}.

Finally, we can state our main theorem. It uses spectra $\TC^{\mathrm{gen}}(X,p)$ for a genuine $p$-cyclotomic spectrum $X$ and $\TC^{\mathrm{gen}}(X)$ for a genuine cyclotomic spectrum $X$, whose definition we recall in Section~\ref{sec:tccomp} below. Recall that by $\TC(X)$ resp.~$\TC(X,p)$, we denote the functors defined in Definition~\ref{def:naivetc}.

\begin{thm}\label{thm:main}$ $
\begin{altenumerate}
\item Let $X\in \Cyc$ be a genuine cyclotomic spectrum whose underlying spectrum is bounded below. Then there is an equivalence of spectra $\TC^\mathrm{gen}(X)\simeq \TC(X)$.
\item Let $X \in \Cyc_p$ be a genuine $p$-cyclotomic spectrum whose underlying spectrum is bounded below. Then there is an equivalence of spectra $\TC^\mathrm{gen}(X,p) \simeq \TC(X,p)$.
\end{altenumerate}

Moreover, the forgetful functors $\Cyc \to \Cycn$ and $\Cyc_p \to \Cycn_p$ are equivalences of $\infty$-categories when restricted to the respective subcategories of bounded below spectra.
\end{thm}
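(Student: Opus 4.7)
I would first reduce the integral case to the $p$-typical case: both $\Cyc$ and $\Cycn$ should decompose as a limit diagram over primes $p$ of the categories $\Cyc_p$ and $\Cycn_p$ along the respective forgetful maps to $\Sp^{B\mathbb{T}}$, so an equivalence $\Cyc_p\to \Cycn_p$ on bounded below objects for each $p$ will imply the integral statement. Next, for the $p$-typical comparison, the key idea is to reconstruct the tower of genuine fixed points $X^{C_{p^n}}$ of a genuine $p$-cyclotomic spectrum $X$ purely from its underlying $C_{p^\infty}$-spectrum together with the naive Frobenius $\varphi_p\colon X\to X^{tC_p}$, at least when $X$ is bounded below. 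The comparison of these reconstructed fixed points with the actual ones will give that the forgetful functor is fully faithful and essentially surjective.

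The main computation is the following: by Proposition~\ref{prop:hesselholtmadsen} applied to $G=C_{p^{n}}$ and the subgroup $C_p$, together with the genuine cyclotomic structure equivalence $\Phi^{C_p}X\simeq X$ in $\T_{p^\infty}\Sp$, one obtains a natural fiber sequence
\[
X_{hC_{p^{n}}}\xto{\Nm} X^{C_{p^{n}}}\to X^{C_{p^{n-1}}}\ ,
\]
and there is a compatible Borel square relating this to $X_{hC_{p^n}}\to X^{hC_{p^n}}\to X^{tC_{p^n}}$. Comparing, one gets a pullback description
\[
X^{C_{p^n}}\simeq X^{hC_{p^n}}\times_{(X^{tC_p})^{hC_{p^{n-1}}}}X^{C_{p^{n-1}}}\ ,
\]
provided that the canonical map from $X^{tC_{p^n}}$ down to $(X^{tC_p})^{hC_{p^{n-1}}}$ is an equivalence. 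This last statement is exactly what the Tate orbit lemma (Lemma~\ref{lem:tateorbit}) supplies for bounded below $X$: iteratively, $(X_{hC_{p^{k}}})^{t(C_{p^{k+1}}/C_{p^{k}})}\simeq 0$ forces the Tate construction to commute appropriately with the tower of subgroups. I would package this as a lemma (a ``Tate filtration lemma'' for towers of cyclic $p$-groups) derived from Lemma~\ref{lem:tateorbit} by induction on $n$, and use it to show that the right-hand pullback uses only the naive data $(X,\varphi_p)$.

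With this description in hand, I would define an explicit candidate inverse functor $F\colon \Cycn_p^{\mathrm{bb}}\to \Cyc_p$: send a bounded below $(X,\varphi_p)$ to the genuine $\T_{p^\infty}$-spectrum whose tower of genuine fixed points at $C_{p^n}$ is defined inductively by the above pullback, starting from $X^{C_{p^0}}=X$ and using $\varphi_p$ to define the maps $X^{C_{p^{n-1}}}\to (X^{tC_p})^{hC_{p^{n-1}}}$. One then checks that $\Phi^{C_p}F(X)\simeq F(X)$, endowing $F(X)$ with a $p$-cyclotomic structure, and that $F$ is inverse to the forgetful functor $\Cyc_p^{\mathrm{bb}}\to \Cycn_p^{\mathrm{bb}}$ of Proposition~\ref{propforgetp}. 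The fully faithfulness follows from the same pullback formula applied to mapping spectra, combined with the description of mapping spaces in a lax equalizer from Proposition~\ref{prop:laxeq}~(ii).

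The main obstacle is the coherence: the inductive pullback construction must be lifted to a functor of $\infty$-categories, not just a pointwise assignment. This forces one to work model-categorically (using $\T\Sp^O$ and the Bousfield--Kan-style formulas) or else to carry out the construction as a limit of lax equalizers in a way compatible with the homotopy coherent cyclotomic structure. A secondary obstacle is ensuring that the bounded below hypothesis is preserved throughout the iteration, which works because the Tate construction for a finite $p$-group only shifts the connectivity of a bounded below spectrum by a bounded amount after $p$-completion (Lemma~\ref{lem:tatecomplete}); this allows the convergence argument of Lemma~\ref{lem:tateconvergence} to be applied at each level of the tower.
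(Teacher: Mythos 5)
Your key computational input and the overall shape of the argument match the paper's: the Tate orbit lemma gives (via what the paper records as Lemma~\ref{lemtet}) the equivalence $X^{tC_{p^n}}\simeq (X^{tC_p})^{hC_{p^{n-1}}}$ for bounded below $X$, and combining this with Proposition~\ref{prop:hesselholtmadsen} yields exactly the pullback square $X^{C_{p^n}}\simeq X^{hC_{p^n}}\times_{(X^{tC_p})^{hC_{p^{n-1}}}}X^{C_{p^{n-1}}}$ of the paper's Proposition~\ref{square}, which is indeed the engine of the whole proof (both for the categorical equivalence and, though you do not carry this out, for the identification $\TC^{\gen}(X,p)\simeq\TC(X,p)$ via the analysis of $R-F$).

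However, there is a genuine gap at the point you yourself flag as ``the main obstacle,'' and it is not merely a coherence bookkeeping issue. A genuine $\Tp$-spectrum is not determined by the tower of its fixed-point spectra $X^{C_{p^n}}$ together with restriction maps; it carries the full structure of a spectral Mackey functor (transfers, Weyl actions, and all compatibilities), so ``define $F(X)$ to be the genuine spectrum whose fixed points are the iterated pullbacks'' is not a construction. The paper avoids ever building a genuine spectrum by hand: it factors the forgetful functor as $\Cyc_p=\Fix_{\Phi^{C_p}}(\Tp\Sp)\xto{\iota}\CoAlg_{\Phi^{C_p}}(\Tp\Sp)\xto{U}\Cycn_p$, produces the right adjoint of $U$ as the Borel completion $B_{\Tp}$ (landing in honest genuine spectra, with the coalgebra structure supplied by Lemma~\ref{lemborel}), produces the right adjoint $R_\iota$ of $\iota$ abstractly by the adjoint functor theorem, and only then \emph{identifies} the underlying object of $R_\iota$ with your iterated pullback (Proposition~\ref{proplimit}, Corollary~\ref{corrightad}) to see that the counit is an equivalence on underlying spectra. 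Your proposal needs either this adjoint-functor detour or some other mechanism (e.g.\ Glasman-type reconstruction of genuine spectra from geometric fixed points) to turn the pullback formula into a functor to $\Cyc_p$; the model-categorical route you suggest is not available because the construction mixes the naive Tate construction with genuine fixed points. A second, smaller gap: your reduction of the integral statement to the $p$-typical one presumes that $\Cyc$ decomposes as a limit over primes of $p$-typical pieces over $\Sp^{B\T}$, but $\Cyc=(\T\Sp_\F)^{h\mathbb N_{>0}}$ a priori contains coherence data between the $\Phi_p$ for distinct primes; the paper must prove this data is vacuous on the relevant objects (Lemmas~\ref{vanishingBorel} and~\ref{fixpointszero}, resting on the vanishing of $\Phi^{C_p}\Phi^{C_q}$ on Borel complete spectra) before any such decomposition holds.
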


\section{Equivalence of TC}\label{sec:tccomp}

In this section, we prove parts (i) and (ii) of Theorem~\ref{thm:main}. In other words, if $X$ is a genuine cyclotomic spectrum whose underlying spectrum is bounded below, then
\[
\TC^\mathrm{gen}(X)\simeq \TC(X)\ .
\]
We recall the  definition of $\TC^\mathrm{gen}(X)$ in Definition \ref{def:TCpgen}  and diagram \eqref{tcgen} below.
The proof relies on some consequences of the Tate orbit lemma (Lemma \ref{lem:tateorbit}), which we record first. Here and in the following, we identify $C_{p^n}/C_{p^m}\cong C_{p^{n-m}}$, so for example if $X$ is a spectrum with $C_{p^n}$-action, then $X^{hC_{p^m}}$ is considered as a spectrum with $C_{p^{n-m}}$-action via the identification $C_{p^n}/C_{p^m}\cong C_{p^{n-m}}$.

\begin{lemma}\label{lemtet} Let $X \in \Sp^{BC_{p^n}}$ be a spectrum with $C_{p^n}$-action that is bounded below. Then the canonical morphism 
$X^{tC_{p^n}}  \to \left(X^{tC_p}\right)^{hC_{p^{n-1}}}$ is an equivalence.
\end{lemma}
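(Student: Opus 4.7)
The plan is to argue by induction on $n$, the case $n=1$ being tautologous. For the inductive step I intend to exploit the iterated identifications $X_{hC_{p^n}} \simeq (X_{hC_p})_{hC_{p^{n-1}}}$ and $X^{hC_{p^n}} \simeq (X^{hC_p})^{hC_{p^{n-1}}}$, together with the fact that iterated norms compose (Construction~\ref{constr:norm} applied to $BC_{p^n}\to BC_{p^{n-1}}\to \ast$), to construct a commutative diagram
\[
\xymatrix{
(X_{hC_p})_{hC_{p^{n-1}}} \ar[r] \ar[d]_{\Nm} & (X^{hC_p})^{hC_{p^{n-1}}} \ar@{=}[d] \ar[r] & X^{tC_{p^n}} \ar[d] \\
(X_{hC_p})^{hC_{p^{n-1}}} \ar[r] & (X^{hC_p})^{hC_{p^{n-1}}} \ar[r] & (X^{tC_p})^{hC_{p^{n-1}}}
}
\]
whose top row is the canonical norm cofiber sequence for the $C_{p^n}$-action on $X$ and whose bottom row is $(-)^{hC_{p^{n-1}}}$ applied to the canonical norm cofiber sequence for the $C_p$-action on $X$. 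The right vertical arrow is then forced to be the canonical comparison map of the lemma, and by the octahedral axiom its fiber is identified with the cofiber of the left-hand norm, namely $(X_{hC_p})^{tC_{p^{n-1}}}$.

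Thus it suffices to show $(X_{hC_p})^{tC_{p^{n-1}}} \simeq 0$. Since homotopy orbits preserve bounded-belowness, $X_{hC_p}$ is a bounded below $C_{p^{n-1}}$-spectrum, and the inductive hypothesis then gives
\[
(X_{hC_p})^{tC_{p^{n-1}}} \simeq \bigl((X_{hC_p})^{tC_p}\bigr)^{hC_{p^{n-2}}}.
\]
Restricting the $C_{p^n}$-action on $X$ to its subgroup $C_{p^2}$, the inner term $(X_{hC_p})^{tC_p}$ is precisely $(X_{hC_p})^{t(C_{p^2}/C_p)}$, which vanishes by the Tate orbit lemma (Lemma~\ref{lem:tateorbit}) since $X$ is bounded below. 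This forces $(X_{hC_p})^{tC_{p^{n-1}}}\simeq 0$ and closes the induction.

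The main obstacle I anticipate is bookkeeping rather than mathematical: namely, verifying that the left square of the displayed diagram genuinely commutes with the indicated factorization of the $C_{p^n}$-norm through $(X_{hC_p})^{hC_{p^{n-1}}}$, and that the induced right vertical arrow coincides with the canonical map $X^{tC_{p^n}}\to (X^{tC_p})^{hC_{p^{n-1}}}$ appearing in the statement. Both should follow formally from the functoriality of Construction~\ref{constr:norm} and the Beck--Chevalley comparisons used there, but some care is required since we are composing norms along a tower of quotient maps. Once these compatibilities are in place, all of the genuine mathematical input comes from the Tate orbit lemma.
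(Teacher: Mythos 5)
Your proof is correct and is essentially the paper's argument: the same three-by-two diagram comparing the norm cofiber sequence for $C_{p^n}$ with $(-)^{hC_{p^{n-1}}}$ applied to the one for $C_p$, with all content coming from the Tate orbit lemma. The only (cosmetic) difference is that the paper shows the left vertical norm $X_{hC_{p^n}}\to (X_{hC_p})^{hC_{p^{n-1}}}$ is an equivalence by iterating the Tate orbit lemma along the tower of quotients, whereas you identify its cofiber as $(X_{hC_p})^{tC_{p^{n-1}}}$ and kill it by feeding the inductive hypothesis of the lemma itself back in; both reductions terminate in the same application of Lemma~\ref{lem:tateorbit}.
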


\begin{proof} First, by applying the Tate orbit lemma to $X_{hC_{p^{n-2}}}$, we see that for $n \geq 2$ the norm morphism
$$
N: X_{hC_{p^n}} \to (X_{hC_{p^{n-1}}})^{hC_p}
$$ 
is an equivalence. Then by induction the map
\[
N: X_{hC_{p^n}}\to (X_{hC_p})^{hC_{p^{n-1}}}
\]
is an equivalence. This norm morphism fits into a diagram
\[
\xymatrix{
X_{hC_{p^n}} \ar[d] \ar[r] & X^{hC_{p^n}}\ar[r] \ar[d] & X^{tC_{p^n}}\ar[d]\\
(X_{hC_p})^{hC_{p^{n-1}}} \ar[r]& (X^{hC_p})^{hC_{p^{n-1}}}\ar[r] & (X^{tC_p})^{hC_{p^{n-1}}} \\
}\]
which commutes since all maps in the left square are norm maps, and the right vertical map is defined as the cofiber. Now since the left and middle vertical maps are equivalences and the rows are fiber sequences it follows that the right vertical morphism is an equivalence as well.
\end{proof}

In the following lemma, we use the Tate construction for $\T$ defined in Corollary~\ref{cor:tates1}.

\begin{lemma}\label{lemtate2}
If $X$ is a bounded below spectrum with $\T$-action then $\left(X^{tC_p}\right)^{h\T}$ is $p$-complete and the canonical morphism  $X^{t\T} \to \left(X^{tC_p}\right)^{h\T}$ exhibits it as the $p$-completion of $X^{t\T}$.
\end{lemma}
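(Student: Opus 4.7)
My plan is to prove the two assertions of the lemma separately, with the second reducing to a calculation in the case $X=H\mathbb{Z}$.

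For the $p$-completeness of $(X^{tC_p})^{h\T}$, I would simply invoke Lemma \ref{lem:tatecomplete}: since $X$ is bounded below, $X^{tC_p}$ is $p$-complete. The functor $(-)^{h\T}$ is a limit (over $B\T$), and the full subcategory of $p$-complete spectra in $\Sp$ is closed under limits, so $(X^{tC_p})^{h\T}$ is $p$-complete.

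For the second assertion, my strategy is to factor the map through the inverse system of $C_{p^n}$-Tate spectra. By Lemma \ref{lemtet} applied to the restriction of $X$ to $C_{p^n}$, there is an equivalence $X^{tC_{p^n}}\simeq (X^{tC_p})^{hC_{p^{n-1}}}$ for each $n\geq 1$, compatibly in $n$ via the canonical restriction maps. Passing to the inverse limit yields $\invlim_n X^{tC_{p^n}} \simeq (X^{tC_p})^{hC_{p^\infty}}$. Next I would identify this with $(X^{tC_p})^{h\T}$ by showing the restriction $(X^{tC_p})^{h\T}\to (X^{tC_p})^{hC_{p^\infty}}$ is an equivalence: the inclusion $C_{p^\infty}\subseteq\T$ induces a mod $p$ cohomology equivalence $BC_{p^\infty}\to B\T$ (both have $\mathbb{F}_p$-cohomology $\mathbb{F}_p[x]$ with $|x|=2$, and the induced map sends generator to generator up to a unit, as one can check via $H^2(BC_{p^\infty};\mathbb{Z})=\mathrm{Ext}(\mathbb{Q}_p/\mathbb{Z}_p,\mathbb{Z})=\mathbb{Z}_p$). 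Since $X^{tC_p}$ is $p$-complete, a Postnikov-tower reduction to the $H\mathbb{F}_p$-module case turns this into an equivalence on homotopy fixed points.

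It remains to show that $X^{t\T}\to\invlim_n X^{tC_{p^n}}$ exhibits the target as the $p$-completion of the source. For $X=H\mathbb{Z}$ with trivial $\T$-action, Lemma \ref{lem:tates1cn} gives spectrum-level equivalences $(H\mathbb{Z})^{t\T}/p^n\simeq (H\mathbb{Z})^{tC_{p^n}}$ (the homotopy isomorphism upgrades via Whitehead's theorem). A direct Milnor exact sequence calculation then shows that the fiber of $(H\mathbb{Z})^{t\T}\to\invlim_n(H\mathbb{Z})^{tC_{p^n}}$ has homotopy groups given by $\mathbb{Z}_p/\mathbb{Z}$ in odd degrees and vanishing in even degrees, arising from the Milnor $\lim^1$ of the inclusions $\mathbb{Z}\hookrightarrow\mathbb{Z}_p$; the abelian group $\mathbb{Z}_p/\mathbb{Z}$ is both $p$-divisible and $p$-torsion-free, so the fiber vanishes modulo $p$ and the map is a $p$-adic equivalence. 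Combined with $p$-completeness of the target, this is the desired $p$-completion.

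For general bounded below $X$, I would pass to the Postnikov tower $X\simeq\invlim_k \tau_{\leq k}X$ and reduce via Lemma \ref{lem:tateconvergence} (and its direct $\T$-analogue, which follows from the same argument once one checks that $(-)^{h\T}$ preserves Postnikov limits up to bounded error) to the case where $X$ is bounded, and then by filtering by Eilenberg-MacLane spectra to the case $X=HA$ for an abelian group $A$ with $\T$-action; the $\T$-action on $A$ can be trivialised up to filtration using that $\mathrm{Aut}(HA)$ is discrete. The main obstacle, and the most delicate point, is verifying that the $p$-adic triviality of the fiber propagates through the Postnikov limits — this requires pairing the Milnor $\lim^1$ computations at each stage with the dualizability of $\bS/p$, which guarantees that smashing with $\bS/p$ commutes with both $(-)^{t\T}$ and $(-)^{h\T}$, so that $p$-adic vanishing of the fiber is preserved under the Postnikov limit.
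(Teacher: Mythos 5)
Your proposal follows essentially the same route as the paper: identify $\left(X^{tC_p}\right)^{h\T}$ with $\invlim_n X^{tC_{p^n}}$ via Lemma \ref{lemtet} together with the $p$-adic equivalence $C_{p^\infty}\to\T$ (both sides being $p$-complete by Lemma \ref{lem:tatecomplete}), reduce by Postnikov towers and filtrations to Eilenberg--MacLane spectra (where the $\T$-action is automatically trivial, no filtration needed), and conclude from Lemma \ref{lem:tates1cn}. The only loose end is that your reduction lands on $HA$ for an arbitrary abelian group $A$ while your base-case computation is carried out only for $H\Z$; as in the paper, one should first replace $A$ by a two-term complex of torsion-free groups, after which the fiber computation you did for $\Z$ applies verbatim with $A$ in place of $\Z$.
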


\begin{remark} This lemma leads to a further simplification of the fiber sequence
\[
\TC(X)\to X^{h\T}\xto{(\varphi_p^{h\T}-\can)_{p \in \mathbb{P}}} \prod_p (X^{tC_p})^{h\T}\ ,
\]
as one can identify the final term with the profinite completion of $X^{t\T}$ in case $X$ is a bounded below cyclotomic spectrum.
\end{remark}

\begin{proof} The canonical morphism $(X^{tC_p})^{h\T} \to (X^{tC_p})^{h\Tp}\simeq \invlim (X^{tC_p})^{hC_{p^n}}$ is an equivalence since both sides are $p$-complete by Lemma~\ref{lem:tatecomplete} and $\Tp\to \T$ is a $p$-adic equivalence. Thus, by Lemma~\ref{lemtet}, in the commutative diagram
$$
\xymatrix{
X^{t\T} \ar[r]\ar[d] & \left(X^{tC_p}\right)^{h\T} \ar[d] \\
\invlim(X^{tC_{p^n}}) \ar[r]&  \invlim (X^{tC_p})^{hC_{p^n}}\ ,
}
$$
all corners except possibly for $X^{t\T}$ are equivalent. Thus, it remains to prove that the canonical map $X^{t\T} \to \invlim(X^{tC_{p^n}})$ is a $p$-completion.

By Lemma~\ref{lem:tateconvergence}, we can assume that $X$ is bounded, so that by a filtration we can assume that $X=HM$ is an Eilenberg-MacLane spectrum. Note that in this case, the $\T$-action on $M$ is necessarily trivial. We can also assume that $M$ is torsion free by passing to a $2$-term resolution by torsion free groups. We find that
$$
\pi_i(HM^{t\T}) = \begin{cases} 
M & \text{$i$ even } \\
0 & \text{$i$ odd}
\end{cases}
$$
and
$$
\pi_i(HM^{tC_{p^n}}) = \begin{cases} 
M/{p^n} & \text{$i$ even } \\
0 & \text{$i$ odd.}
\end{cases}
$$
The maps in the limit diagram are given by the obvious projections $M\to M/p^n$ by Lemma~\ref{lem:tates1cn}, so the result follows.
\end{proof}

Now let $X$ be a genuine $p$-cyclotomic spectrum in the sense of Definition~\ref{def:genuinepcyclo}. Let us recall the definition of $\TC^\mathrm{gen}(X,p)$ by B\"okstedt--Hsiang--Madsen, \cite{BHM}. First, $X$ has genuine $C_{p^n}$-fixed points $X^{C_{p^n}}$ for all $n\geq 0$, and there are maps $F: X^{C_{p^n}}\to X^{C_{p^{n-1}}}$ for $n\geq 1$ which are the inclusion of fixed points. Moreover, for all $n\geq 1$ there are maps $R: X^{C_{p^n}}\to X^{C_{p^{n-1}}}$, and the maps $R$ and $F$ commute (coherently). The maps $R: X^{C_{p^n}}\to X^{C_{p^{n-1}}}$ arise as the composition of the map $X^{C_{p^n}}\to (\Phi^{C_p} X)^{C_{p^{n-1}}}$ that exists for any genuine $C_{p^n}$-equivariant spectrum, and the equivalence $(\Phi^{C_p} X)^{C_{p^{n-1}}}\simeq X^{C_{p^{n-1}}}$ which comes from the genuine cyclotomic structure on $X$.

These structures determine $\TC^\mathrm{gen}(X,p)$ as follows.

\begin{definition}\label{def:TCpgen} Let $X$ be a genuine $p$-cyclotomic spectrum. Define $\TR(X,p) = \invlim_R X^{C_{p^n}}$, which has an action of $F$, and 
\vspace{-1ex}
\begin{align*}
\TC^\mathrm{gen}(X,p) & := \mathrm{Eq}\left( \xymatrix{\TR(X,p) \ar^-{\mathrm{id}}[r]<2pt> \ar_-F[r]<-2pt> & \TR(X,p) }\right)\\ 
& \simeq \invlim_R \ \mathrm{Eq}\left( \xymatrix{ X^{C_{p^n}} \ar^-{R}[r]<2pt> \ar_-F[r]<-2pt> & X^{C_{p^{n-1}}} }\right)\ .
\end{align*}

\end{definition}
To compare this with our definition, we need the following lemma, which follows directly from (the discussion following) Proposition~\ref{prop:hesselholtmadsen}.

\begin{lemma}\label{lemmapullback}
Let $X$ be a genuine $C_{p^n}$-equivariant spectrum. There is a natural pullback diagram of spectra
\[\xymatrix{
 X^{C_{p^n}}\ar^-R[r]\ar[d] & (\Phi^{C_p} X)^{C_{p^{n-1}}}\ar[d] \\
 X^{hC_{p^n}} \ar[r]&  X^{tC_{p^n}}
}\]
for all $n\geq 1$.$\hfill \Box$
\end{lemma}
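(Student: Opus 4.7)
The plan is to deduce the lemma directly from Proposition~\ref{prop:hesselholtmadsen} and the definition of the Tate construction. Indeed, the paper has already essentially displayed the relevant diagram immediately after Proposition~\ref{prop:hesselholtmadsen}: for $G = C_{p^n}$ and its normal subgroup $C_p$, applying that proposition gives a natural fiber sequence
\[
X_{hC_{p^n}} \xrightarrow{N} X^{C_{p^n}} \to (\Phi^{C_p} X)^{C_{p^{n-1}}},
\]
and by definition of the Tate construction there is a natural fiber sequence
\[
X_{hC_{p^n}} \to X^{hC_{p^n}} \to X^{tC_{p^n}}.
\]
Thus the strategy is to construct a natural map between these two fiber sequences which is the identity on the common fiber $X_{hC_{p^n}}$, at which point the square in question must be a pullback (equivalently, a pushout in the stable setting) because the induced map on horizontal fibers is an equivalence.

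The key step is therefore the construction of the map of fiber sequences itself. The left square
\[\xymatrix{
X_{hC_{p^n}}\ar@{=}[d]\ar^-N[r] & X^{C_{p^n}}\ar[d] \\
X_{hC_{p^n}}\ar^-N[r] & X^{hC_{p^n}}
}\]
is the naturality square for the unit of the Borel completion adjunction (Theorem~\ref{thm:borelcompletion}), using that the norm map $X_{hC_{p^n}} \to X^{hC_{p^n}}$ of Definition~\ref{def:norm} agrees with the genuine norm from Proposition~\ref{prop:hesselholtmadsen} after forgetting to the underlying spectrum (as noted in the footnote after Proposition~\ref{prop:hesselholtmadsen}, this comparison can be obtained from Theorem~\ref{thm:genfarrelltate}). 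Passing to cofibers horizontally then produces the right-hand vertical map $(\Phi^{C_p} X)^{C_{p^{n-1}}}\to X^{tC_{p^n}}$ and shows the full diagram commutes; the map $R$ on top is by construction the composite of the canonical map $X^{C_{p^n}}\to (\Phi^{C_p} X)^{C_{p^{n-1}}}$ with the identity, matching the definition of $R$ recalled above.

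Having the map of fiber sequences with identity on fibers, the pullback square is formal in the stable $\infty$-category $\Sp$: a commutative square in a stable $\infty$-category whose induced map on horizontal fibers (equivalently, on vertical cofibers) is an equivalence is cartesian. Naturality in $X\in C_{p^n}\Sp$ follows because all ingredients (the fiber sequence of Proposition~\ref{prop:hesselholtmadsen}, the Borel completion unit, the Tate cofiber sequence, and the identification $X^{C_{p^n}} \to (\Phi^{C_p} X)^{C_{p^{n-1}}}$) are natural.

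The only mild obstacle is bookkeeping: one must ensure that the two norm maps $X_{hC_{p^n}}\to X^{hC_{p^n}}$ and the ``genuine'' norm entering Proposition~\ref{prop:hesselholtmadsen} are canonically identified so that the left square genuinely commutes, and that the resulting map on cofibers is the same map $(\Phi^{C_p} X)^{C_{p^{n-1}}}\to X^{tC_{p^n}}$ that appears in the statement. Both points are addressed in the discussion following Proposition~\ref{prop:hesselholtmadsen}, so no new work is required.
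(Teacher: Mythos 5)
Your proof is correct and is essentially the paper's own argument: the lemma is deduced from the commutative diagram displayed right after Proposition~\ref{prop:hesselholtmadsen} (the Hesselholt--Madsen fiber sequence mapped via the Borel completion unit to the Tate fiber sequence, with the identity on the common fiber $X_{hC_{p^n}}$), whence the right-hand square is cartesian by stability. The bookkeeping point about identifying the two norm maps is handled exactly as you say, via the footnote invoking Theorem~\ref{thm:genfarrelltate}.
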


Note that while the upper spectra depend on the genuine $C_{p^n}$-equivariant structure of $X$, the lower spectra only depend on the underlying spectrum with $C_{p^n}$-action.

\begin{proposition}\label{square}
Let $X$ be a genuine $C_{p^n}$-equivariant spectrum. Assume that the underlying spectrum is bounded below. Then for every $n \geq 1$ there exists a canonical pullback square
\[\xymatrix{
 X^{C_{p^{n}}}\ar[r]\ar[d] & (\Phi^{C_p} X)^{C_{p^{n-1}}}\ar[d] \\
 X^{hC_{p^{n}}} \ar[r]&  (X^{tC_p})^{hC_{p^{n-1}}}
}\]
\end{proposition}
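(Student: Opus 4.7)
The plan is to obtain the square by starting from the Hesselholt–Madsen pullback square of Lemma \ref{lemmapullback} and replacing the bottom-right corner via the equivalence provided by Lemma \ref{lemtet}. The bounded-below hypothesis enters precisely through the Tate orbit lemma, which is the hypothesis of Lemma \ref{lemtet}.

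First, applying Lemma \ref{lemmapullback} with $G = C_{p^n}$ gives a natural pullback diagram
\[\xymatrix{
 X^{C_{p^{n}}}\ar^-R[r]\ar[d] & (\Phi^{C_p} X)^{C_{p^{n-1}}}\ar[d] \\
 X^{hC_{p^{n}}} \ar[r]&  X^{tC_{p^{n}}}
}\]
in which the bottom horizontal map is the canonical projection $X^{hC_{p^n}} \to X^{tC_{p^n}}$, and the right vertical map is the unique map making the rows of the Hesselholt--Madsen diagram compatible. Next, Lemma \ref{lemtet} applies because the underlying spectrum of $X$ is bounded below, so the canonical map
\[
X^{tC_{p^n}} \xrightarrow{\;\simeq\;} \bigl(X^{tC_p}\bigr)^{hC_{p^{n-1}}}
\]
is an equivalence. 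Composing the right vertical and the bottom horizontal map of the square above with this equivalence yields the desired square, which is still a pullback square since we have only replaced one corner by an equivalent object.

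What remains is to identify the resulting maps with the canonical ones appearing in the statement. For the bottom horizontal map, naturality of the projection $-^{h(-)} \to -^{t(-)}$ under restriction of groups (combined with the identification $(X^{hC_p})^{hC_{p^{n-1}}} \simeq X^{hC_{p^n}}$ from the Borel completion, Theorem \ref{thm:borelcompletion}) shows that the composite $X^{hC_{p^n}} \to X^{tC_{p^n}} \simeq (X^{tC_p})^{hC_{p^{n-1}}}$ agrees with the map obtained by applying $-^{hC_{p^{n-1}}}$ to the canonical map $X^{hC_p} \to X^{tC_p}$. For the right vertical map, one uses naturality of the cofiber sequence of Proposition \ref{prop:hesselholtmadsen} with respect to the map $X \to B_{C_{p^n}}(X)$ to its Borel completion; both the map $(\Phi^{C_p}X)^{C_{p^{n-1}}} \to X^{tC_{p^n}}$ in Lemma \ref{lemmapullback} and the canonical map $(\Phi^{C_p}X)^{C_{p^{n-1}}} \to (X^{tC_p})^{hC_{p^{n-1}}}$ are characterized by the same commutativity constraint together with the cofiber sequence, so they agree under the equivalence of Lemma \ref{lemtet}.

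The main technical point is the verification that the maps match the canonical ones in the statement; the pullback property itself is immediate from Lemma \ref{lemmapullback} together with Lemma \ref{lemtet}. This identification of maps is essentially a naturality check for the Hesselholt--Madsen square applied to the unit of the Borel-completion adjunction, which explains why nothing more than the bounded-below hypothesis (ensuring Lemma \ref{lemtet}) is needed.
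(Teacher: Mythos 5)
Your proof is correct and is exactly the paper's argument: the paper's entire proof reads ``Combine Lemma~\ref{lemtet} with Lemma~\ref{lemmapullback}.'' Your additional care in identifying the resulting maps with the canonical ones (via naturality of the Hesselholt--Madsen sequence under Borel completion) is a reasonable elaboration of a point the paper leaves implicit.
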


\begin{proof} Combine Lemma~\ref{lemtet} with Lemma~\ref{lemmapullback}.
\end{proof}

\begin{corollary}\label{cor:longpullback} Let $X$ be a genuine $C_{p^n}$-equivariant spectrum. Assume that the spectra 
$$
X, \Phi^{C_p}X, \Phi^{C_{p^2}} X, \ldots, \Phi^{C_{p^{n-1}}}X
$$
are all bounded below. Then we have a diagram
$$
\xymatrix@=1em{
 X^{C_{p^{n}}}\ar[rrrr] \ar[dddd] & & & & \Phi^{C_{p^n}} X \ar[d]  \\
 & & & \left(\Phi^{C_{p^{n-1}}}X\right)^{hC_p}  \ar[r] \ar[d] & \left(\Phi^{C_{p^{n-1}}}X\right)^{tC_p} \\
 & & \left(\Phi^{C_{p^2}} X\right)^{hC_{p^{n-2}}} \ar[d] \ar[r]& \ldots    \\
 & \left(\Phi^{C_p}X\right)^{hC_{p^{n-1}}} \ar[d] \ar[r] & {\left(\left(\Phi^{C_p}X\right)^{tC_p}\right)^{hC_{p^{n-2}}}}  \\
 X^{hC_{p^n}} \ar[r] & \left(X^{tC_p}\right)^{hC_{p^{n-1}}} 
}
$$
which exhibits $X^{C_{p^n}}$ as a limit of the right side (i.e.~an iterated pullback).
\end{corollary}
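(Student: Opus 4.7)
The plan is to prove this by iterated application of Proposition~\ref{square}. First I would invoke Proposition~\ref{square} directly on $X$, which is bounded below by hypothesis. This produces the bottom square of the diagram, exhibiting $X^{C_{p^n}}$ as the pullback of $(\Phi^{C_p}X)^{C_{p^{n-1}}}$ and $X^{hC_{p^n}}$ over $(X^{tC_p})^{hC_{p^{n-1}}}$. The upper right corner $(\Phi^{C_p}X)^{C_{p^{n-1}}}$ is again the genuine $C_{p^{n-1}}$-fixed points of a bounded below genuine $C_{p^{n-1}}$-equivariant spectrum (using Proposition~\ref{prop:compgeomfix} to view $\Phi^{C_p}X$ as a genuine $C_{p^n}/C_p \cong C_{p^{n-1}}$-spectrum, together with the assumption that $\Phi^{C_p}X$ is bounded below), so Proposition~\ref{square} applies again.

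Applying Proposition~\ref{square} inductively $k$ times, at the $k$-th step to the spectrum $\Phi^{C_{p^{k-1}}}X$ (which is a genuine $C_{p^{n-k+1}}$-spectrum by Proposition~\ref{prop:compgeomfix}, and bounded below by hypothesis), I obtain a pullback square
\[
\xymatrix@=1em{
(\Phi^{C_{p^{k-1}}}X)^{C_{p^{n-k+1}}} \ar[r]\ar[d] & (\Phi^{C_{p^k}}X)^{C_{p^{n-k}}}\ar[d] \\
(\Phi^{C_{p^{k-1}}}X)^{hC_{p^{n-k+1}}} \ar[r]& \bigl((\Phi^{C_{p^{k-1}}}X)^{tC_p}\bigr)^{hC_{p^{n-k}}}\ ,
}
\]
where the identification of the upper right corner uses the equivalence $\Phi^{C_p}\Phi^{C_{p^{k-1}}}X \simeq \Phi^{C_{p^k}}X$ of Proposition~\ref{prop:compgeomfix}. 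Continuing until $k=n$, the final step splits $(\Phi^{C_{p^{n-1}}}X)^{C_p}$ as the pullback of $\Phi^{C_{p^n}}X$ and $(\Phi^{C_{p^{n-1}}}X)^{hC_p}$ over $(\Phi^{C_{p^{n-1}}}X)^{tC_p}$, which matches the apex of the staircase.

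Finally, I would paste the $n$ resulting pullback squares along their shared corners $(\Phi^{C_{p^k}}X)^{C_{p^{n-k}}}$. Iterated pasting of pullback squares yields a pullback, so the total diagram exhibits $X^{C_{p^n}}$ as the limit of the staircase of the lower/right legs, which is exactly the diagram in the statement.

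The main obstacle is purely bookkeeping: keeping track of the various identifications $\Phi^{C_p}\circ \Phi^{C_{p^{k-1}}} \simeq \Phi^{C_{p^k}}$ and the identifications $C_{p^{n-k+1}}/C_p \cong C_{p^{n-k}}$ coherently across the $n$ iterations, and verifying that the boundedness hypothesis on each $\Phi^{C_{p^k}}X$ is what makes Proposition~\ref{square} (hence Lemma~\ref{lemtet}, which relies on the Tate orbit lemma) applicable at every stage. No new ideas beyond Proposition~\ref{square} are required; the hypothesis that \emph{all} iterated geometric fixed points be bounded below is exactly what is needed to iterate $n$ times.
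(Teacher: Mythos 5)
Your proposal is correct and matches the paper's proof, which is exactly an induction on $n$ using Proposition~\ref{square} at each stage together with the identification $\Phi^{C_p}\Phi^{C_{p^{k-1}}}X\simeq \Phi^{C_{p^k}}X$ and pasting of pullback squares. Your write-up simply spells out the bookkeeping the paper leaves implicit.
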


\begin{proof} Use induction on $n$ and Proposition~\ref{square}.
\end{proof}

\begin{remark} One can use the last corollary to give a description of the full subcategory of $C_{p^n}\Sp$ consisting of those genuine $C_{p^n}$-equivariant spectra all of whose geometric fixed points are bounded below. An object then explicitly consists of a sequence 
$$
X, \Phi^{C_p} X,  \ldots, \Phi^{C_{p^n}} X
$$
of bounded below spectra with $C_{p^n}/C_{p^i}$-action together with $C_{p^n}/C_{p^i}$-equivariant morphisms 
$$\Phi^{C_{p^i}} X \to \left(\Phi^{C_{p^{i-1}}} X\right)^{tC_p} $$
for all $1\leq i\leq n$. 
One can prove these equivalences for example similarly to our proof of Theorem \ref{thmpequivalence} in Sections \ref{sec:proofequivalence} and \ref{sec:coalgebras}. 
Vice versa one can also deduce our Theorem \ref{thmpequivalence} from these equivalences. 

Note that this description is quite different from the description as Mackey functors, which is in terms of the genuine fixed points; a general translation appears in work of Glasman, \cite{glasman2015stratified}. One can also go a step further and give a description of the category of all genuine $C_{p^n}$-spectra or $\T$-spectra (without connectivity hypothesis) along these lines, but the resulting description is more complicated and less tractable as it contains a lot of coherence data. This approach has recently appeared in work of Ayala--Mazel-Gee--Rozenblyum, \cite{AMGR}.
\end{remark}

Let us note the following corollary. This generalizes a result of Ravenel, \cite{Ravenelsegalconj}, that proves the Segal conjecture for $C_{p^n}$ by reduction to $C_p$. This had been further generalized to $\THH$ by Tsalidis, \cite{TsalidisTHHdescent}, and then by B\"okstedt--Bruner--Lun\o e-Nielsen--Rognes, \cite[Theorem 2.5]{BBLNR}, who proved the following theorem under a finite type hypothesis.

\begin{corollary}\label{cor:segalinduct} Let $X$ be a genuine $C_{p^n}$-equivariant spectrum, and assume that for any $Y\in \{X, \Phi^{C_p} X,  \ldots, \Phi^{C_{p^{n-1}}} X\}$, the spectrum $Y$ is bounded below, and the map
\[
(Y^{C_p})^\wedge_p\to (Y^{hC_p})^\wedge_p
\]
induces an isomorphism on $\pi_i$ for all $i\geq k$. Then the map $(X^{C_{p^n}})^\wedge_p\to (X^{hC_{p^n}})^\wedge_p$ induces an isomorphism on $\pi_i$ for all $i\geq k$.
\end{corollary}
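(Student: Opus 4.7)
The plan is to induct on $n$, the base case $n=1$ being immediate from the hypothesis applied to $Y=X$. For the inductive step, I will apply the inductive hypothesis to $Y := \Phi^{C_p} X$ regarded as a genuine $C_{p^{n-1}}$-spectrum: the relevant list of iterated geometric fixed points for $Y$ is $\{\Phi^{C_p} X, \ldots, \Phi^{C_{p^{n-1}}} X\}$, a sublist of the original one, so all hypotheses (boundedness below and the $C_p$-level isomorphism statement) are inherited. This yields that $((\Phi^{C_p} X)^{C_{p^{n-1}}})^\wedge_p \to ((\Phi^{C_p} X)^{hC_{p^{n-1}}})^\wedge_p$ is an isomorphism on $\pi_i$ for $i \geq k$.

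Next I will invoke Proposition~\ref{square}, which gives a pullback square
\[
\xymatrix{
X^{C_{p^n}} \ar[r] \ar[d] & (\Phi^{C_p} X)^{C_{p^{n-1}}} \ar[d] \\
X^{hC_{p^n}} \ar[r] & (X^{tC_p})^{hC_{p^{n-1}}},
}
\]
which is preserved by $p$-completion. Since the left vertical is the base change of the right vertical, naturality of the LES in a pullback square (the connecting map on the left factors through the connecting map on the right) reduces the claim to showing that the right vertical is an isomorphism on $\pi_i$ for $i \geq k$ after $p$-completion. I will then factor the right vertical as
\[
(\Phi^{C_p} X)^{C_{p^{n-1}}} \xrightarrow{c_1} (\Phi^{C_p} X)^{hC_{p^{n-1}}} \xrightarrow{c_2} (X^{tC_p})^{hC_{p^{n-1}}},
\]
where $c_1$ is handled by the inductive hypothesis of the previous paragraph.

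For $c_2$, obtained by applying $(-)^{hC_{p^{n-1}}}$ to $\Phi^{C_p} X \to X^{tC_p}$, I will identify its fiber via the Hesselholt--Madsen pullback square (Proposition~\ref{prop:hesselholtmadsen} and the commutative diagram of fiber sequences that follows it): the fiber of $\Phi^{C_p} X \to X^{tC_p}$ is canonically equivalent to the fiber $F_0$ of $X^{C_p} \to X^{hC_p}$, which by the $n=1$ case of the hypothesis satisfies $\pi_i F_0^\wedge_p = 0$ for $i \geq k$. Two ingredients then suffice: $p$-completion commutes with $(-)^{hC_{p^{n-1}}}$ (both being limits), and if a spectrum $A$ has $\pi_i A = 0$ for $i \geq k$ then so does $A^{hG}$ for any finite $G$, since the homotopy fixed point spectral sequence $H^s(G;\pi_t A) \Rightarrow \pi_{t-s} A^{hG}$ lives in $s \geq 0$, $t \leq k-1$ and therefore has only finitely many nonzero entries in each total degree $\geq k$ (strong convergence is automatic). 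Applying both to $A = F_0^\wedge_p$ shows the fiber of $c_2$ after $p$-completion vanishes in degrees $\geq k$, so $c_2 \circ c_1$ has the same property.

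The main obstacle will be tracking the boundary degree $i=k$: the fiber-vanishing just established yields only injectivity on $\pi_k$. The missing surjectivity on $\pi_k$ will be propagated from the hypothesis at $n=1$ by naturality of the connecting homomorphisms in the two parallel fiber sequences $F_0 \to X^{C_p} \to X^{hC_p}$ and $F_0 \to \Phi^{C_p} X \to X^{tC_p}$, which sit in the Hesselholt--Madsen pullback square and therefore have compatible connecting maps, and then by passing through the exact functor $(-)^{hC_{p^{n-1}}}$. Once $c_2$, and hence $c_2 \circ c_1$, is seen to be an isomorphism on $\pi_i$ for $i \geq k$ after $p$-completion, the base change property of the Proposition~\ref{square} pullback yields the same for $X^{C_{p^n}} \to X^{hC_{p^n}}$, completing the induction.
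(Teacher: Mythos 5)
Your argument is essentially the paper's proof unrolled as an induction: the iterated pullback of Corollary~\ref{cor:longpullback} is obtained precisely by applying Proposition~\ref{square} repeatedly, your $c_2$ at stage $n$ is the bottom ``short vertical map'' of that staircase, and the inductive hypothesis absorbs the remaining stages. The formal ingredients you use (base change and composition preserve ``iso on $\pi_i$ for $i\geq k$'' in the direction you invoke them, $p$-completion commutes with homotopy fixed points of finite groups, the homotopy fixed point spectral sequence preserves coconnectivity) are all correct.

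The step that does not work as written is your fix for surjectivity of $c_2$ on $\pi_k$. Write $F_0$ for the common vertical fiber of the Hesselholt--Madsen pullback square. The compatibility of connecting maps there reads $\partial_t\circ(\pi_kX^{hC_p}\to\pi_kX^{tC_p})=\partial_h$, where $\partial_h\colon\pi_kX^{hC_p}\to\pi_{k-1}F_0$ and $\partial_t\colon\pi_kX^{tC_p}\to\pi_{k-1}F_0$ are the two boundary maps. The hypothesis gives $\partial_h=0$, but this only forces $\partial_t$ to vanish on the image of $\pi_kX^{hC_p}$, not on all of $\pi_kX^{tC_p}$. Equivalently: injectivity of $\pi_{k-1}F_0\to\pi_{k-1}X^{C_p}$ does not imply injectivity of the composite $\pi_{k-1}F_0\to\pi_{k-1}X^{C_p}\to\pi_{k-1}\Phi^{C_p}X$, whose target map has kernel the image of $\pi_{k-1}X_{hC_p}$. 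So naturality of the long exact sequences transfers ``iso in degrees $\geq k$'' from the column $\Phi^{C_p}X\to X^{tC_p}$ to the column $X^{C_p}\to X^{hC_p}$, but not conversely — and the converse is the direction you need.

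To be fair, the paper's own proof has the identical soft spot: it asserts that the hypothesis for $Y=\Phi^{C_{p^{i-1}}}X$ ``is equivalent to'' the map $\Phi^{C_{p^i}}X\to(\Phi^{C_{p^{i-1}}}X)^{tC_p}$ being an isomorphism on $\pi_j$ for $j\geq k$ after $p$-completion, and that equivalence fails formally in the needed direction at $j=k$. The clean repair is to read both hypothesis and conclusion coconnectively, i.e.\ as ``the $p$-completed fiber vanishes in degrees $\geq k$'' (equivalently: iso on $\pi_i$ for $i>k$ and injective on $\pi_k$). That property passes back and forth across the Hesselholt--Madsen square for free (the two vertical fibers agree), and is preserved by $(-)^{hG}$, by base change, and by composition; with that reading your induction closes with no edge-case bookkeeping. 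This is also all that is used in the paper's application in Section~IV.3, where the hypothesis holds in all degrees and the edge is vacuous.
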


As in \cite[Theorem 2.5]{BBLNR}, this result also holds true if one instead measures connectivity after taking function spectra from some $W$ in the localizing ideal of spectra generated by $\bS[\tfrac 1p]/\bS$. Note that for cyclotomic spectra, one needs to check the hypothesis only for $Y=X$.

\begin{proof} Note that for $1\leq i\leq n$, one has $\Phi^{C_p} (\Phi^{C_{p^{i-1}}} X) = \Phi^{C_{p^i}} X$. Thus, using Lemma~\ref{lemmapullback} for the genuine $C_p$-equivariant spectrum $\Phi^{C_{p^{i-1}}} X$, we see that the assumption is equivalent to the condition that
\[
\Phi^{C_{p^i}} X\to (\Phi^{C_{p^{i-1}}} X)^{tC_p}
\]
induces an isomorphism on $\pi_i$ for all $i\geq k$ after $p$-completion. Now we use that in Corollary~\ref{cor:longpullback}, all the short vertical maps induce an isomorphism on $\pi_i$ for all $i\geq k$ after $p$-completion. It follows that the long left vertical map does the same, as desired.
\end{proof}

\begin{thm}\label{computeTCp}
Let $X$ be a genuine $p$-cyclotomic spectrum such that the underlying spectrum is bounded below. Then there is a canonical fiber sequence
$$
\TC^\gen(X,p) \to X^{h\Tp} \xto{\varphi_p^{h\Tp} - \can} (X^{tC_p})^{h\Tp} 
$$
In particular we get an equivalence $\TC^\gen(X,p) \simeq \TC(X,p)$.
\end{thm}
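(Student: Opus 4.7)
Since $X$ is bounded below, Proposition~\ref{square} applies at each level $n \geq 1$, and using the $p$-cyclotomic structure $\Phi_p : \Phi^{C_p} X \simeq X$ gives a natural pullback square
\[
\xymatrix{
X^{C_{p^n}} \ar^-{R}[r] \ar[d]_{a_n} & X^{C_{p^{n-1}}} \ar[d]^{\varphi_p^{hC_{p^{n-1}}} \circ a_{n-1}} \\
X^{hC_{p^n}} \ar^-{c_n}[r] & (X^{tC_p})^{hC_{p^{n-1}}}
}
\]
where $a_n$ is the genuine-to-homotopy fixed points comparison and $c_n$ is the canonical map. Iterating the recursion $X^{C_{p^n}} \simeq X^{hC_{p^n}} \times_{(X^{tC_p})^{hC_{p^{n-1}}}} X^{C_{p^{n-1}}}$ expresses $X^{C_{p^n}}$ as an $n$-fold iterated pullback with Tate-construction vertices (as in Corollary~\ref{cor:longpullback}), and taking the inverse limit along $R$ exhibits $\TR(X,p)$ as an infinite iterated pullback involving all the $X^{hC_{p^k}}$, connected by the canonical maps $c_k$ and the Frobenius maps $\varphi_p^{hC_{p^{k-1}}}$.

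Next, I identify the action of $F: \TR(X,p) \to \TR(X,p)$. By naturality of the comparison map $a_n$, the square
\[
\xymatrix{
X^{C_{p^n}} \ar^-{F}[r] \ar[d]_{a_n} & X^{C_{p^{n-1}}} \ar[d]^{a_{n-1}} \\
X^{hC_{p^n}} \ar^-{\mathrm{res}}[r] & X^{hC_{p^{n-1}}}
}
\]
commutes, so under the iterated-pullback description $F$ acts by ``restriction-and-shift'' on the sequence of homotopy-fixed-point factors. Taking the fiber of $\id - F : \TR(X,p) \to \TR(X,p)$ then combines two constraints: $F$-invariance (i.e., compatibility under the restriction maps $\mathrm{res}: X^{hC_{p^{n+1}}} \to X^{hC_{p^n}}$), which collapses the sequence $(y_n)_n$ to a single element $y \in \invlim_n X^{hC_{p^n}} = X^{h\Tp}$; and the iterated-pullback compatibility $c_n(y_n) = \varphi_p^{hC_{p^{n-1}}}(y_{n-1})$, which in the limit becomes $\mathrm{can}(y) = \varphi_p^{h\Tp}(y)$ in $(X^{tC_p})^{h\Tp}$. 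Hence
\[
\TC^\gen(X,p) \;=\; \mathrm{fib}(\id - F) \;\simeq\; \mathrm{fib}\bigl(\varphi_p^{h\Tp} - \mathrm{can} : X^{h\Tp} \to (X^{tC_p})^{h\Tp}\bigr),
\]
which by Proposition~\ref{prop:formula}(ii) is $\TC(X,p)$.

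The main obstacle is to make the iterated-pullback and ``$F$-shift'' arguments rigorous in the $\infty$-category of spectra, rather than at the level of elements. A clean way is to construct a compatible map of fiber sequences at each finite level $n$, for instance
\[
X^{C_{p^n}} \to X^{hC_{p^n}} \oplus X^{C_{p^{n-1}}} \xrightarrow{c_n - \varphi_p^{hC_{p^{n-1}}} \circ a_{n-1}} (X^{tC_p})^{hC_{p^{n-1}}},
\]
and then pass to the inverse limit along $R$, using that the bounded-below hypothesis guarantees that the Tate limits behave well (via Lemma~\ref{lemtet}, so $\invlim_n (X^{tC_p})^{hC_{p^{n-1}}} \simeq (X^{tC_p})^{h\Tp}$). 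The index shift between $X^{C_{p^n}}$ and $X^{C_{p^{n-1}}}$ in Proposition~\ref{square} requires some bookkeeping when assembling the different levels into a coherent limit diagram, but this is a formal manipulation once the diagrammatic setup is in place.
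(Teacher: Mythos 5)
Your overall strategy is the same as the paper's: use Proposition~\ref{square} together with the cyclotomic equivalence $X\simeq\Phi^{C_p}X$ to write $X^{C_{p^n}}$ as an iterated pullback of homotopy fixed points glued along Tate constructions, identify $R$ as ``forget the first factor'' and $F$ as ``forget the last factor and restrict factorwise'', and then recognize the equalizer of the two as the fiber of $\varphi_p^{h\Tp}-\can$. The outline, the use of Corollary~\ref{cor:longpullback}, and the identification of $R$ and $F$ are all correct and agree with the paper.

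The gap is in the last step. Your argument that $\fib(\id-F)$ on $\TR(X,p)$ equals $\fib(\varphi_p^{h\Tp}-\can)$ is carried out at the level of elements (``two constraints''), and the rigorous substitute you propose does not close it: the fiber sequence $X^{C_{p^n}}\to X^{hC_{p^n}}\oplus X^{C_{p^{n-1}}}\to (X^{tC_p})^{hC_{p^{n-1}}}$ is just a restatement of the pullback square of Proposition~\ref{square}. It makes no reference to $F$, so passing to inverse limits only re-expresses $\TR(X,p)$ as a pullback of $X^{h\Tp}$ and $\TR(X,p)$ over $(X^{tC_p})^{h\Tp}$ --- it does not compute the equalizer of $\id$ and $F$. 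What is actually needed is the finite-level identification $\fib\big(R-F\colon X^{C_{p^n}}\to X^{C_{p^{n-1}}}\big)\simeq \fib\big(\varphi_p-\can\colon X^{hC_{p^n}}\to (X^{tC_p})^{hC_{p^{n-1}}}\big)$, followed by $\invlim_n$ via the second formula for $\TC^\gen(X,p)$ in Definition~\ref{def:TCpgen}. The paper obtains this from a commuting square whose corners are the \emph{products} $X^{hC_{p^n}}\times\cdots\times X$ and $(X^{tC_p})^{hC_{p^{n-1}}}\times\cdots\times X^{tC_p}$, whose vertical maps are the difference maps $\varphi_p-\can$ (with vertical fibers $X^{C_{p^n}}$ and $X^{C_{p^{n-1}}}$ connected by $R-F$) and whose horizontal maps are ``forget first factor minus shift'' (with horizontal fibers the diagonal copies of $X^{hC_{p^n}}$ and $(X^{tC_p})^{hC_{p^{n-1}}}$ connected by $\varphi_p-\can$); exchanging the order of taking fibers gives the identification. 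You need this device, or an equivalent one, to make the ``$F$-invariance collapses the sequence'' step precise; as written the proof is incomplete at exactly this point.
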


\begin{proof} By Proposition~\ref{square} and the equivalence $X \simeq \Phi^{C_p} X$, we inductively get an equivalence
$$
X^{C_{p^n}} \simeq  X^{hC_{p^n}} \times_{ (X^{tC_p})^{hC_{p^{n-1}}} } X^{hC_{p^{n-1}}} \times \ldots \times_{X^{tC_p}} X\ ,
$$
cf.~also Corollary~\ref{cor:longpullback}. Here the projection to the right is always the tautological one and the projection to the left is $\varphi_p$ (or more precisely the induced map on homotopy fixed points). Under this equivalence the map $R: X^{C_{p^n}} \to X^{C_{p^{n-1}}}$ corresponds to forgetting the first factor and $F: X^{C_{p^n}} \to X^{C_{p^{n-1}}}$ corresponds to forgetting the last factor followed by the maps $X^{hC_{p^k}} \to X^{hC_{p^{k-1}}}$ applied factorwise.

Now we have to compute the fiber of  $R - F: X^{C_{p^n}} \to X^{C_{p^{n-1}}}$. To that end we consider the following square
$$
\xymatrix{
X^{hC_{p^n}} \times \ldots \times X \ar[r]^{R' - F'} \ar[d]^{\varphi_p - \can}&  X^{hC_{p^{n-1}}} \times \ldots \times X \ar[d]^{\varphi_p - \can}\\
(X^{tC_p})^{hC_{p^{n-1}}} \times \ldots \times X^{tC_p} \ar[r]^{R'' - F''}  &   (X^{tC_p})^{hC_{p^{n-2}}} \times \ldots \times X^{tC_p}\ .
}
$$
Here, $R'$ forgets the first factor, $F'$ forgets the last factor and projects $X^{hC_{p^k}}\to X^{hC_{p^{k-1}}}$, $R''$ forgets the first factor, and $F''$ forgets the last factor and projects $(X^{tC_p})^{hC_{p^k}}\to (X^{tC_p})^{hC_{p^{k-1}}}$. Also $\varphi_p$ denotes the various maps $X^{hC_{p^k}}\to (X^{tC_p})^{hC_{p^k}}$, and $\can$ the various maps $X^{hC_{p^k}}\to (X^{tC_p})^{hC_{p^{k-1}}}$. It is easy to see that the diagram commutes (since the lower right is a product this can be checked for every factor separately), and by design the vertical fibers are $X^{C_{p^n}}$ and $X^{C_{p^{n-1}}}$ with the induced map given by $R - F$. 

Now we compute the fibers horizontally. We get $X^{hC_{p^n}}$ via the diagonal embedding for the upper line and analogously $(X^{tC_p})^{hC_{p^{n-1}}}$ via the diagonal embedding for the lower line. Therefore the fiber of $R - F: X^{C_{p^n}} \to  X^{C_{p^{n-1}}}$ is equivalent to the fiber of $\varphi_p - \can: X^{hC_{p^n}} \to (X^{tC_p})^{hC_{p^{n-1}}}$. Finally we take the  limit over $n$ to get the desired result.
\end{proof}

Recall from \cite[Lemma 6.4.3.2]{Local} Goodwillie's definition of the integral topological cyclic homology for a genuine cyclotomic spectrum $X$ in the sense of Definition~\ref{def:genuinecyclo}. It is defined\footnote{This is not the definition initially given by Goodwillie, but it is equivalent to the corrected version of Goodwillie's definition as we learned from B.~Dundas and we take it as a definition here.} by the pullback square
\begin{equation}\label{tcgen}
\xymatrix{
\TC^\gen(X) \ar[r] \ar[d] & X^{h\T} \ar[d]\\
\prod_{p \in \bP} \TC^{\gen}(X,p)^\wedge_p\ar[r] & \prod_{p \in \bP} (X^\wedge_p)^{h\T}.
}
\end{equation}
Note that the homotopy fixed points $(X^\wedge_p)^{h \T}$ in the lower right corner are equivalent to $(X^\wedge_p)^{h\Tp}$ since the inclusion $\Tp \to \T$ is a $p$-adic equivalence. Taking homotopy fixed points commutes with $p$-completion since the mod-$p$ Moore spectrum $M(\Z/p) = \bS/p$ is a finite spectrum and fixed points of a $p$-complete spectrum stay $p$-complete. It follows that the right vertical map is a profinite completion, and therefore also the left vertical map is a profinite completion.

\begin{thm}\label{computeTC}
Let $X$ be a genuine cyclotomic spectrum such that the underlying spectrum is bounded below. Then there is a canonical fiber sequence
$$
\TC^\gen(X)\to X^{h\T} \xto{(\varphi_p^{h\T} - \can)_{p \in \bP}} \prod_{p \in \bP}(X^{tC_p})^{h\T}\ .
$$
In particular we get an equivalence $\TC^\gen(X) \simeq \TC(X)$.
\end{thm}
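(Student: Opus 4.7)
The plan is to combine the pullback definition \eqref{tcgen} of $\TC^\gen(X)$ with the fiber sequences produced prime-by-prime by Theorem \ref{computeTCp}, using a pasting-of-pullbacks argument. First I would start from
\[
\TC^\gen(X) \simeq X^{h\T}\times_{\prod_p (X^\wedge_p)^{h\T}} \prod_p \TC^\gen(X,p)^\wedge_p
\]
and aim to rewrite the left column of this pullback as the fiber of a map into a product of Tate spectra, so that stacking the two resulting pullback squares produces the desired fiber sequence for $\TC^\gen(X)$.

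For each prime $p$, Theorem \ref{computeTCp} gives the fiber sequence $\TC^\gen(X,p)\to X^{h\Tp}\to(X^{tC_p})^{h\Tp}$. Since $X$ is bounded below, Lemma \ref{lem:tatecomplete} shows $X^{tC_p}$ is already $p$-complete, hence so is $(X^{tC_p})^{h\Tp}$ (homotopy fixed points of $p$-complete spectra remain $p$-complete since $\bS/p$ is finite). Together with the observation that $p$-completion commutes with homotopy fixed points and $\Tp\subseteq\T$ is a $p$-adic equivalence, this yields $(X^{h\Tp})^\wedge_p\simeq(X^\wedge_p)^{h\T}$, so $p$-completing the fiber sequence above produces
\[
\TC^\gen(X,p)^\wedge_p\to (X^\wedge_p)^{h\T}\xto{\varphi_p^{h\T}-\can}(X^{tC_p})^{h\Tp}.
\]
Taking the product over all primes and pasting the resulting pullback square below \eqref{tcgen}, the pasting lemma exhibits $\TC^\gen(X)$ as the fiber of a single composite map $X^{h\T}\to\prod_{p\in\bP}(X^{tC_p})^{h\Tp}$.

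To identify this composite with $(\varphi_p^{h\T}-\can)_{p\in\bP}$, I would use the naturality of $\varphi_p^{h\T}$ and $\can$ in $X$ together with the fact that, since $(X^{tC_p})^{h\Tp}$ is $p$-complete, both maps factor canonically through $p$-completion. Finally, a further application of Lemma \ref{lem:tatecomplete} and the $p$-adic equivalence $\Tp\subseteq\T$ (cf.\ Lemma \ref{lemtate2}) identifies $(X^{tC_p})^{h\Tp}$ with $(X^{tC_p})^{h\T}$, yielding the stated fiber sequence. Comparing it with the analogous fiber sequence for $\TC(X)$ recorded in Proposition \ref{prop:formula}(i) then produces the canonical equivalence $\TC^\gen(X)\simeq\TC(X)$. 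The main difficulty will be essentially bookkeeping---verifying canonicity and compatibility of the various identifications inherited from Theorem \ref{computeTCp} and its $p$-completion---rather than any conceptual obstacle.
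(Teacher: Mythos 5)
Your proposal is correct and follows essentially the same route as the paper: start from the pullback square \eqref{tcgen}, paste it onto the product of the $p$-completed fiber sequences from Theorem \ref{computeTCp}, and use Lemma \ref{lem:tatecomplete} together with the $p$-adic equivalence $\Tp\subseteq\T$ to identify $((X^\wedge_p)^{tC_p})^{h\Tp}\simeq(X^{tC_p})^{h\T}$, so that the fiber of the composite is $\TC^\gen(X)$. The comparison with Proposition \ref{prop:formula}(i) to deduce $\TC^\gen(X)\simeq\TC(X)$ is exactly how the paper concludes as well.
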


\begin{proof} We consider the commutative diagram
$$
\xymatrix{
\TC^\gen(X) \ar[r] \ar[d] & X^{h\T} \ar[d] \\
\prod_{p \in \bP} \TC^{\gen}(X,p)^\wedge_p\ar[r] & \prod_{p \in \bP} (X^\wedge_p)^{h\T}\ar[r] &  \prod_{p \in \bP}((X^\wedge_p)^{tC_p})^{h\Tp}\ .
}
$$
Here, the lower line is the product over all $p$ of the $p$-completions of the fiber sequences from Theorem~\ref{computeTCp}, and the square is the  pullback defining $\TC^\gen(X)$. Now by Lemma~\ref{lem:tatecomplete} and using that $\Tp\to \T$ is a $p$-adic equivalence, the natural maps
\[
(X^{tC_p})^{h\T}\to ((X^\wedge_p)^{tC_p})^{h\Tp}
\]
are equivalences for all primes $p$. Thus, we can fill the diagram to a commutative diagram
\[\xymatrix{
\TC^\gen(X) \ar[r] \ar[d] & X^{h\T} \ar[d] \ar[r] & \prod_{p\in \bP} (X^{tC_p})^{h\T}\ar^\simeq[d] \\
\prod_{p \in \bP} \TC^{\gen}(X,p)^\wedge_p\ar[r] & \prod_{p \in \bP} (X^\wedge_p)^{h\T}\ar[r] &  \prod_{p \in \bP}((X^\wedge_p)^{tC_p})^{h\Tp}\ .
}\]
Now the result follows formally as the lower line is a fiber sequence, the left square is a  pullback and the right map is an equivalence.\end{proof}

\section{Coalgebras for endofunctors}\label{sec:coalgebras}

\newcommand{\Prl}{\mathcal{P}\mathrm{r}^\mathrm{L}}

In this section we investigate the relation between the categories of coalgebras and fixed points for endofunctors. This will be applied in the proof of the  equivalence between ``genuine'' and ``naive'' cyclotomic spectra that will be given in the next section (Theorem \ref{thmpequivalence} and Theorem \ref{theoremintegralequivalence}), where the key step is to upgrade the lax map $X\to X^{tC_p}$ to an equivalence $Y\simeq \Phi^{C_p} Y$ of a related object $Y$.

\begin{definition} Let $\calC$ be an $\infty$-category and $F: \calC \to \calC$ be an endofunctor. Then an $F$-coalgebra is given by an object $X \in \calC$ together with a morphism $X \to FX$. A fixpoint of $F$ is an object $X \in \calC$ together with an equivalence $X \xto{\sim} FX$.
\end{definition}

We define the $\infty$-category $\CoAlg_F(\calC)$ as the lax equalizer 
$$
\CoAlg_F(\calC) =  \mathrm{LEq}\big( \xymatrix{\calC \ar[r]<2pt>^\id \ar[r]<-2pt>_F & \calC} \big)
$$
of $\infty$-categories, cf.~Definition~\ref{def:laxeq} for the notion of lax equalizers. If the $\infty$-category $\calC$ is clear from the context we will only write $\CoAlg_F$. The $\infty$-category $\Fix_F = \Fix_F(\calC)$ is the full subcategory $\Fix_F \subseteq \CoAlg_F$ spanned by the fixed points. 
The example that is relevant for us is that $\calC$ is the $\infty$-category $\Tp\Sp$ of genuine $\Tp$-spectra  and $F$ is the endofunctor $\Phi^{C_p}$ as discussed in Definition \ref{defprueferequivariant}. The reader should have this example in mind but we prefer to give the discussion abstractly. 

If $\calC$ is presentable and $F$ is accessible then it follows from Proposition \ref{prop:laxeq} that $\CoAlg_F$ is presentable and that the forgetful functor $\CoAlg_F \to \calC$ preserves all colimits. 
Now assume that the functor $F$ preserves all colimits, i.e.~admits a right adjoint $G$. Then $\Fix_F$ is an equalizer in the $\infty$-category $\Prl$ of presentable $\infty$-categories and colimit preserving functors (see \cite[Proposition 5.5.3.13]{HTT} for a discussion of limits in $\Prl$). As such it is itself presentable and the forgetful functor $\Fix_F \to \calC$ preserves all colimits. As a consequence, the inclusion $\iota: \Fix_F \subseteq \CoAlg_F$ preserves all colimits, thus by the adjoint functor theorem \cite[Corollary 5.5.2.9]{HTT} the functor $\iota$ admits a right adjoint $R_\iota: \CoAlg_F \to \Fix_F$. We do not know of an explicit way of describing the colocalization $\iota R_\iota$ in general, but we will give a formula under some extra hypothesis now.

\begin{construction}\label{construction}
There is an endofunctor
$\bar F$ given on objects by
$$
\bar{F}: \CoAlg_F \to \CoAlg_F \qquad (X \xto{\varphi} FX) \mapsto (FX \xto{F\varphi} F^2X)
$$
which comes with a natural transformation $\mu: \id \to \bar{F}$. As a functor $\bar{F}$ is induced from the map of diagrams of $\infty$-categories 
$$
\big(\xymatrix{\calC \ar[r]<2pt>^\id \ar[r]<-2pt>_F & \calC}\big) \xto{(F, F)} 
\big(\xymatrix{\calC \ar[r]<2pt>^\id \ar[r]<-2pt>_F & \calC}\big)
$$
using the functoriality of the lax equalizer. Similarly the transformation $\mu$ can be described as a functor
$$
\mathrm{LEq}\big(\xymatrix{\calC \ar[r]<2pt>^\id \ar[r]<-2pt>_F & \calC}\big)  \to \mathrm{LEq}\big(\xymatrix{\calC \ar[r]<2pt>^\id \ar[r]<-2pt>_F & \calC}\big)^{\Delta^1}\cong \calC^{\Delta^1} \times_{\calC^{\Delta^1} \times \calC^{\Delta^1}} \calC^{\Delta^1 \times \Delta^1} 
$$
which is given in components as the projection $\mathrm{LEq}\big(\xymatrix{\calC \ar[r]<2pt>^\id \ar[r]<-2pt>_F & \calC}\big) \to \calC^{\Delta^1}$ and the map
$$
C \times_{C \times C} C^{\Delta^1} \to C^{\Delta^1 \times \Delta^1} \cong C^{\Delta^2} \times_{C^{\Delta^1}} C^{\Delta^2}
$$
which sends $X \to FX$ to the 2-simplices
$$
\xymatrix{
X \ar[r]\ar[d] & FX \\
FX \ar@{=}[ru] & 
} \qquad \text{and} 
\xymatrix{
  & FX \ar[d]\ar@{=}[ld] \\
FX \ar[r] & F^2X  
.}
$$
The last map, which we described on vertices refines to a map of simplicial sets since it is  a composition of degeneracy maps and the map $F: \calC \to \calC$.
\end{construction}

Recall the standing assumption that the underlying $\infty$-category $\calC$ is presentable and that $F$ preserves all colimits.
By the adjoint functor theorem the functor $\bar{F}$  from Construction \ref{construction} admits a right adjoint 
$R_{\bar F}: \CoAlg_F \to \CoAlg_F$. We will describe $R_{\bar F}$ more explicitly below (Lemma \ref{lem:rightad}). There is a transformation $\nu: R_{\bar F} \to \id$ obtained as the adjoint of the transformation
$\mu$ from Construction \ref{construction}. 

\begin{proposition}\label{proplimit}
The endofunctor $\iota R_\iota: \CoAlg_F \to \CoAlg_F$ is given by the limit of the following diagram of endofunctors
$$
\ldots \to R_{\bar F}^3 \xto{R^2_{\bar F} \nu} R^2_{\bar F} \xto{R_{\bar F} \nu} R_{\bar F} \xto{\nu} \id \ .
$$
\end{proposition}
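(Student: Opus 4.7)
The plan is to verify that $L := \lim\bigl(\cdots \to R_{\bar F}^2 \xrightarrow{R_{\bar F}\nu} R_{\bar F} \xrightarrow{\nu} \id\bigr)$, equipped with its canonical natural transformation $L \to \id$ given by projection onto the last term, realizes the coreflection $\iota R_\iota$. Equivalently, I will check: (a) $L(X,\varphi) \in \Fix_F$ for every $(X,\varphi) \in \CoAlg_F$; and (b) for every $(Y,\psi) \in \Fix_F$, the induced map $\Map_{\CoAlg_F}\bigl((Y,\psi), L(X,\varphi)\bigr) \to \Map_{\CoAlg_F}\bigl((Y,\psi), (X,\varphi)\bigr)$ is an equivalence. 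The key observation underlying both properties is that, by Proposition~\ref{prop:laxeq}(ii), a morphism in $\CoAlg_F$ is an equivalence iff its underlying morphism in $\calC$ is; since the underlying morphism of $\mu_{(X,\varphi)} : (X,\varphi) \to \bar F(X,\varphi)$ is $\varphi$ itself, the coalgebra $(X,\varphi)$ lies in $\Fix_F$ iff $\mu_{(X,\varphi)}$ is an equivalence in $\CoAlg_F$.

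Property (b) follows by a direct computation using that mapping spaces convert limits into limits together with the adjunction $\bar F \dashv R_{\bar F}$:
\[ \Map\bigl((Y,\psi), L(X,\varphi)\bigr) \;\simeq\; \lim_n \Map\bigl((Y,\psi), R_{\bar F}^n(X,\varphi)\bigr) \;\simeq\; \lim_n \Map\bigl(\bar F^n(Y,\psi), (X,\varphi)\bigr). \]
Under the adjunction, the transition map $R_{\bar F}^{n+1} \to R_{\bar F}^n$ (given by $R_{\bar F}^n\nu$) corresponds via the mate calculus to $\bar F^n\mu : \bar F^n \to \bar F^{n+1}$, so the induced transition in the displayed tower is precomposition with $\bar F^n\mu_{(Y,\psi)}$. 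Since $(Y,\psi) \in \Fix_F$, the key observation makes $\mu_{(Y,\psi)}$ an equivalence, hence so are all the $\bar F^n\mu_{(Y,\psi)}$; the tower is essentially constant and its limit is $\Map\bigl((Y,\psi), (X,\varphi)\bigr)$.

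The main technical obstacle is (a). The strategy is as follows. Since $R_{\bar F}$ preserves limits and the shift $n \mapsto n+1$ is a cofinal self-map of $\mathbb N^{\op}$, there is a canonical equivalence $\alpha : L(X,\varphi) \xrightarrow{\sim} R_{\bar F} L(X,\varphi)$, both sides expressing the limit of essentially the same tower. Its adjunct $\beta := \varepsilon_{L(X,\varphi)} \circ \bar F(\alpha) : \bar F L(X,\varphi) \to L(X,\varphi)$ under the adjunction $\bar F \dashv R_{\bar F}$ is the candidate inverse of $\mu_{L(X,\varphi)}$. Using the mate formula $\nu = \varepsilon \circ \mu_{R_{\bar F}(-)}$ defining $\nu$ and naturality of $\mu$, a diagram chase yields $\beta \circ \mu_{L(X,\varphi)} = \nu_{L(X,\varphi)} \circ \alpha$, and this composite is the identity of $L(X,\varphi)$ because $\nu_{L(X,\varphi)}$ realizes the inverse of the cofinality equivalence $\alpha$ once one unravels the limit reindexing. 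The opposite composite $\mu_{L(X,\varphi)} \circ \beta = \id_{\bar F L(X,\varphi)}$ follows by a symmetric verification, or equivalently from the idempotency $L \circ L \simeq L$, itself obtained by combining limit-preservation of $R_{\bar F}$ with a Fubini/cofinality reduction on the $(\mathbb N^{\op})^2$-indexed diagram $(R_{\bar F}^{m+n}(X,\varphi))_{m,n}$. Thus $\mu_{L(X,\varphi)}$ is an equivalence, placing $L(X,\varphi)$ in $\Fix_F$ by the characterization above, and the proof is complete.
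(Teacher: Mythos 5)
Your step (b) is correct: the computation $\Map(Y,L(X))\simeq\lim_n\Map(\bar F^nY,X)$ together with the fact that $\mu_Y$ is an equivalence for $Y\in\Fix_F$ does show that $\Map(Y,L(X))\to\Map(Y,X)$ is an equivalence for fixed points $Y$. But note that (b) alone carries essentially no content — the identity functor also satisfies it — so everything rests on (a), and that is where your argument has a genuine gap. Your diagram chase correctly gives $\beta\circ\mu_{L(X)}=\nu_{L(X)}\circ\alpha\simeq\id$, so $\mu_{L(X)}$ is a split monomorphism; this uses only that $R_{\bar F}$ preserves limits. The other composite is \emph{not} obtained by a ``symmetric verification.'' Chasing it through naturality of $\mu$ yields $\mu_{L(X)}\circ\beta=\bar F(\beta)\circ\mu_{\bar FL(X)}$, which is not visibly the identity, and the obstruction is structural: the colimit-side analogue of your argument works because $\bar F$ preserves colimits, whereas here you would need $\bar F$ to preserve the limit of the tower $\ldots\to R_{\bar F}^2X\to R_{\bar F}X\to X$ — and $\bar F$ is a left adjoint, so it does not preserve limits in general. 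This is exactly the subtlety the paper flags in the remark after the proposition. The fallback via idempotency $L\circ L\simeq L$ also does not close the gap: it shows $L$ is a colocalization onto its essential image $\calE$, and (b) shows $\Fix_F\subseteq\calE$, but the reverse inclusion $\calE\subseteq\Fix_F$ is precisely statement (a), so the reasoning is circular.

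The paper avoids the direct verification entirely by going through the \emph{left} adjoint: $\iota L_\iota$ is identified with $\bar F_\infty=\colim(\id\xto{\mu}\bar F\xto{\bar F\mu}\bar F^2\to\cdots)$, and there the fixed-point check succeeds because $\bar F$ \emph{does} preserve colimits, so $\mu_{\bar F_\infty X}$ is identified with the cofinality equivalence $\colim_n\bar F^nX\simeq\colim_n\bar F^{n+1}X$; one then checks $\bar F_\infty$ is a localization with local objects $\Fix_F$. The proposition follows formally, since $\iota L_\iota\dashv\iota R_\iota$ and the right adjoint of a pointwise colimit of functors is the limit of the respective right adjoints along the mate transformations. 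If you want to keep a direct limit-side argument, you must either take this detour or impose the extra hypotheses under which the tower is eventually constant on the relevant (fixed-point) data, as the paper does later in Theorem~\ref{thmunderlying}.
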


\begin{proof} As a first step we use that the inclusion $\iota: \Fix_F \subseteq \CoAlg_F$ not only admits a right adjoint, but also a left adjoint $L_\iota: \CoAlg_F \to \Fix_F$ which can be described concretely as follows: the composition of $L_\iota$ with the inclusion $\iota: \Fix_F \to \CoAlg_F$ is given by the colimit of functors
$$
\bar F_\infty := \dirlim\big( \id \xto{\mu} \bar F \xto{\bar F\mu} \bar F^2 \xto{\bar F^2\mu} \bar F^3 \to \ldots \quad \big) .
$$

To see this we have to show that $\bar F_\infty: \CoAlg_F \to \CoAlg_F$ is a localization (more precisely the transformation $\id \to\bar  F_\infty$ coming from the colimit structure maps) and that the local objects for this localization are precisely given by the fixed-point coalgebras. We first note that by definition of $\bar F$ the subcategory $\Fix_F \subseteq \CoAlg_F$ can be described as those coalgebras $X$ for which the transformation $\mu_X: X \to \bar F(X)$ is an equivalence. Since $\bar F$ preserves colimits it follows immediately that for a given coalgebra $X$ the coalgebra $\bar{F}_\infty X$ is a fixed point. We also make the observation that for a given fixed point $X \in \Fix_F$ the canonical map $X \to \bar F_\infty X$ is an equivalence. If follows now that $\bar F_\infty$ applied twice is equivalent to $\bar F_\infty$ which shows that $\bar F_\infty$ is a localization (using \cite[Proposition 5.2.7.4]{HTT}) and also that the local objects are precisely the fixed points. 

Now the functor $\iota L_\iota: \CoAlg_F \to \CoAlg_F$ is left adjoint to the functor $\iota R_\iota: \CoAlg_F \to \CoAlg_F$. Thus the formula for $\bar F_\infty$ as a colimit implies immediately the claim as the right adjoint to a colimit is the limit of the respective right adjoints. 
\end{proof}

Note that the limit in Proposition \ref{proplimit} is taken in the $\infty$-category of endofunctors of $F$-coalgebras which makes it potentially complicated to understand since limits of coalgebras can be hard to analyze. However if for a given coalgebra $X \to FX$ the endofunctor $F: \calC \to \calC$ preserves the limit of underlying objects of the diagram
$$
\ldots \to R_{\bar F}^3X \to R^2_{\bar F}X \to R_{\bar F}X \to X\ ,
$$
then by Proposition \ref{prop:laxeq}(v) the limit of coalgebras is given by the underlying limit. This will be the case in our application and the reader should keep this case in mind. 

Now we give a formula for the functor $R_{\bar F}: \CoAlg_F \to \CoAlg_F$, which was by definition the right adjoint of the functor $\bar F: \CoAlg_F \to \CoAlg_F$ from Construction \ref{construction}. To simplify the treatment we make some extra assumptions on $F$. Recall that by assumption $F: \calC \to \calC$ admits a right adjoint. Denote this right adjoint by $R_F: \calC \to \calC$. We assume that the counit of the adjunction $FR_{F} \to \id_\calC$ be a natural equivalence. This is  equivalent to $R_F$ being fully faithful. We will denote by $\eta: \id_\calC \to R_F F$ the unit of the adjunction. We moreover assume that $F$ preserves pullbacks.

\begin{lemma}\label{lem:rightad} The functor $R_{\bar F}: \CoAlg_F \to \CoAlg_F$ takes a coalgebra $X \xto{\varphi} FX$ to the coalgebra $R_{\bar{F}}(X\xto{\varphi} FX) = Y\xto{\varphi} FY$ whose underlying object $Y$ is the pullback
$$
\xymatrix{
Y \ar[r]\ar[d] & R_F X \ar[d]^-{R_F\varphi}\\
X \ar[r]^-{\eta_X} & R_F FX.
}
$$
in $\calC$, and where the coalgebra structure $\varphi: Y\to FY$ is given by the left vertical map under the identification $FY \xto{\sim} FR_F X \xto{\sim} X$ induced by applying $F$ to the upper horizontal map in the diagram and the counit of the adjunction between $F$ and $R_F$.

Moreover, the counit $\bar FR_{\bar F}\to \id$ is an equivalence.
\end{lemma}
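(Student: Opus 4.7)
The plan is to verify the universal property of $R_{\bar F}$ directly against $\bar F$, using the formula for mapping spaces in a lax equalizer from Proposition~\ref{prop:laxeq}(ii), and then to compute the counit $\bar F R_{\bar F} \to \id$ from the pullback description. Throughout, write $\eta$ and $\epsilon$ for the unit and counit of the adjunction $F \dashv R_F$; by hypothesis $\epsilon$ is an equivalence, and the triangle identity $\epsilon_{FX} \circ F\eta_X = \id_{FX}$ will be used repeatedly.

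The first step is to compute the mapping space $\Map_{\CoAlg_F}((Z, \psi), (Y, \varphi))$ for an arbitrary test coalgebra $(Z, \psi)$. Proposition~\ref{prop:laxeq}(ii) presents it as an equalizer of two maps $\Map_\calC(Z, Y) \rightrightarrows \Map_\calC(Z, FY)$. I would compute each term using the pullback defining $Y$, the fact that $F$ preserves pullbacks (to compute $FY$), and the adjunction $F \dashv R_F$ (to rewrite $\Map_\calC(Z, R_F(-))$ as $\Map_\calC(FZ, -)$). Two essential points are: (a) under the adjunction, post-composition with $\eta_X$ becomes post-application of $F$, by the triangle identity; and (b) the collapsed pullback describing $FY$ reduces to an equivalence $\alpha: FY \xto{\sim} X$ given by $\alpha = \epsilon_X \circ Fp_2$, where $p_2: Y \to R_F X$ is the upper horizontal map, and under $\alpha$ the coalgebra structure $\varphi: Y \to FY$ corresponds by definition to the left vertical map $p_1: Y \to X$.

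Assembling these computations, an element of the equalizer becomes a pair $(f: Z \to X,\ g: FZ \to X)$ satisfying $Ff = \varphi \circ g$ and $f = g \circ \psi$; eliminating $f$, it becomes the condition $Fg \circ F\psi = \varphi \circ g$ on a morphism $g: FZ \to X$. This is precisely the description of $\Map_{\CoAlg_F}(\bar F(Z, \psi), (X, \varphi))$ provided by Proposition~\ref{prop:laxeq}(ii) applied to $\bar F(Z, \psi) = (FZ, F\psi)$. The identification is natural in $(Z, \psi)$, so $(Y, \varphi)$ represents the right adjoint $R_{\bar F}$ at $(X, \varphi)$.

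For the counit being an equivalence, I would transport $F\varphi: FY \to F^2 Y$ along $\alpha$ and $F\alpha$: using the pullback relation $R_F\varphi \circ p_2 = \eta_X \circ p_1$, naturality of $\epsilon$, and the triangle identity, the transported map is $\varphi: X \to FX$, so $\bar F R_{\bar F}(X, \varphi)$ is canonically identified with $(X, \varphi)$. The main obstacle is the $\infty$-categorical bookkeeping: every identification above rests on $\eta$, $\epsilon$, or the triangle identity, and Proposition~\ref{prop:laxeq}(v), combined with $F$ preserving pullbacks, is what guarantees that the pullback defining $Y$ is also a pullback in $\CoAlg_F$, so these computations really take place in $\CoAlg_F$ rather than only after forgetting to $\calC$.
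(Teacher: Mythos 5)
Your proposal is correct and follows essentially the same route as the paper: both verify the universal property by computing $\Map_{\CoAlg_F}((Z,\psi),(Y,\varphi))$ via Proposition~\ref{prop:laxeq}(ii), unwinding the pullback defining $Y$ and the adjunction $F\dashv R_F$ to identify the result with $\Map_{\CoAlg_F}(\bar F(Z,\psi),(X,\varphi))$, and then read off the counit from the equivalence $FR_FX\simeq X$. (The appeal to Proposition~\ref{prop:laxeq}(v) at the end is harmless but not needed for this argument.)
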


 \begin{proof} We will check the universal mapping property using the explicit description of mapping spaces in $\infty$-categories of coalgebras given in Proposition \ref{prop:laxeq}(ii). We get that maps from an arbitrary coalgebra $Z \to FZ$ into the coalgebra $Y \to FY\simeq X$ are given by the equalizer
\begin{align*}
\xymatrix{
\mathrm{Eq} \big( \Map_\calC(Z,Y)  \ar[r]<2pt> \ar[r]<-2pt> & \Map_\calC(Z, X)\big)
} 
\end{align*}
We use the definition of $R_{\bar F}X$ as a pullback to identify the first term with
$$
\Map_\calC(Z,R_F X) \times_{\Map_\calC(Z, R_F FX)} \Map_\calC(Z,X). 
$$
Under this identification the two maps to $\Map_{\calC}(Z,X)$ are given as follows: the first map is given by the projection to the second factor. The second map
 is given by the projection to the first factor followed by the map
 $$
 \Map_{\calC}(Z,R_FX) \xto{F}\Map_{\calC}(FZ, FR_FX) \xto{\sim} \Map_{\calC}(FZ, X)\to \Map_{\calC}(Z,X)  
 $$
 where the middle equivalence is induced by the counit of the adjunction, and the final map is precomposition with $Z\to FZ$. As a result we find that
 \begin{align*}
 &\xymatrix{
\mathrm{Eq} \big( \Map_\calC(Z,Y)  \ar[r]<2pt> \ar[r]<-2pt> & \Map_\calC(Z, X)\big)} \\
&\simeq
\xymatrix{ 
\mathrm{Eq} \big( \Map_\calC(Z,R_FX)  \ar[r]<2pt> \ar[r]<-2pt> & \Map_\calC(Z, R_F FX)\big) } \\
& \simeq 
\xymatrix{
\mathrm{Eq} \big( \Map_\calC(FZ,X)  \ar[r]<2pt> \ar[r]<-2pt> & \Map_\calC(FZ, FX)\big) }  \\
& \simeq \ \Map_{\CoAlg_F}(\bar F(Z), X)\ .
\end{align*} 
This shows the universal property, finishing the identification of $R_{\bar F}(X)$. The explicit description implies that the counit $\bar FR_{\bar F}(X)\to X$ is an equivalence, as on underlying objects, it is given by the equivalence $FR_F X\simeq X$.
\end{proof}
 
\begin{corollary}\label{corrightad} Under the same assumptions as for Lemma \ref{lem:rightad}, the underlying object of the $k$-fold iteration $R_{\bar F}^k X$ is equivalent to
$$ 
R_F^k X \times_{R_F^k FX} R_F^{k-1} X \times_{R_F^{k-1} FX}  \ldots \times_{R_F FX} X
$$
where the maps to the right are induced by the coalgebra map $X \to FX$ and the maps to the left are induced by the unit $\eta$. In this description the map $R_{\bar F}^k X \to R^{k-1}_{\bar F} X$ can be described as forgetting the first factor.
\end{corollary}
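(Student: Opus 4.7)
The natural approach is induction on $k$, with base case $k=1$ provided by Lemma \ref{lem:rightad}. For the inductive step, set $W := R_{\bar F}^{k-1} X$ and apply Lemma \ref{lem:rightad} to the coalgebra $W \xto{\psi} FW$, giving the equivalence
\[
R_{\bar F}^k X \simeq R_F W \times_{R_F FW} W\ ,
\]
with the top map $R_F \psi$ and the left map $\eta_W$. Since $R_F$ is a right adjoint it preserves all limits, so applying it to the inductive formula for $W$ yields
\[
R_F W \simeq R_F^k X \times_{R_F^k FX} R_F^{k-1} X \times_{R_F^{k-1} FX} \cdots \times_{R_F^2 FX} R_F X\ ,
\]
which already produces the leading $k$ factors of the desired description. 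It then suffices to glue this to the $k$ factors of $W$ along the intermediate $R_F FW$.

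To carry out the gluing, I would compute $FW$ using the standing hypothesis that $F$ preserves pullbacks: applying $F$ to the inductive pullback formula for $W$ and replacing each $FR_F^j$-term by $R_F^{j-1}$ via the counit, together with the triangle identity $F\eta_X = \id_{FX}$, collapses the terminal pair $\cdots \leftarrow FX \xto{\id} FX$ of the resulting zig-zag and produces an equivalence $FW \simeq R_{\bar F}^{k-2} X$. Under this identification the coalgebra structure $\psi: W \to FW$ is realised as the projection that forgets the first factor of the pullback formula for $W$, while the unit $\eta_W: W \to R_F FW$ becomes the projection that keeps the leading $k-1$ factors; these two statements must be carried along as an auxiliary inductive hypothesis, but both hold in the base case by Lemma \ref{lem:rightad} (where $\psi$ is the projection to $X$, and a direct adjunction argument shows that $\eta_Y$ corresponds to the projection to $R_F X$). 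Applying $R_F$ once more identifies the intermediate $R_F FW$ with the overlapping $(k-1)$-factor pullback $R_F^{k-1} X \times_{R_F^{k-1} FX} \cdots \times_{R_F^2 FX} R_F X$; the two maps $R_F\psi$ and $\eta_W$ into this overlap are precisely the ``forget first factor'' and ``keep leading $k-1$ factors'' projections of the respective $k$-factor pullbacks. Gluing $R_F W$ and $W$ across this overlap welds together the leading factor $R_F^k X$ and the trailing factor $X$, producing the desired $(k+1)$-factor iterated pullback, with the structure maps coming from $R_F^j \varphi$ on the right and $R_F^{j-1}\eta_X$ on the left.

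For the final sentence, the map $R_{\bar F}^k X \to R_{\bar F}^{k-1} X = W$ is the transformation $\nu_W$ from Construction \ref{construction}, which by Lemma \ref{lem:rightad} is precisely the second projection $R_F W \times_{R_F FW} W \to W$; under the identifications above this is exactly the map that forgets the first factor $R_F^k X$. The main obstacle I anticipate is the auxiliary description of $\psi$ and $\eta_W$ in the inductive step: one has to propagate these through the induction while keeping careful track of the $R_F^j$-shifts and of which leg of each intermediate pullback carries the coalgebra structure. Once these descriptions are in hand, the remainder of the argument is routine manipulation of iterated pullbacks.
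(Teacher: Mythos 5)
Your argument is correct and is essentially the paper's proof, which is a one-line induction on $k$ citing exactly the two facts you exploit: $R_F$ preserves pullbacks as a right adjoint, and $F$ preserves pullbacks by hypothesis. Your careful tracking of $\psi$ and $\eta_W$ as the two projections onto the overlapping $(k-1)$-factor pullback is the content the paper leaves implicit (just note the triangle identity is $\epsilon_{FX}\circ F\eta_X = \id_{FX}$, not $F\eta_X = \id_{FX}$).
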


\begin{proof} This follows by induction on $n$ using that $R_F$ as a right adjoint preserves pullbacks and $F$ does so by assumption.
\end{proof}
 
Now let us specialize the general discussion given here to the case of interest. We let $\calC$ be the $\infty$-category $\Tp\Sp$ of genuine $\Tp$-spectra  and $F$ be the endofunctor $\Phi^{C_p}$ as discussed in Definition \ref{defprueferequivariant}. Then the $\infty$-category of genuine $p$-cyclotomic spectra is given by the fixed points of $\Phi^{C_p}$ (see Definition \ref{def:genuinepcyclo}). In this case all the assumptions made above are satisfied: $\Tp\Sp$ is presentable as a limit of presentable $\infty$-categories along adjoint functors, the functor $\Phi^{C_p}$ preserves all colimits, and its right adjoint is fully faithful, as follows from Proposition~\ref{prop:rightadjointgeometric} and the discussion after Definition~\ref{defprueferequivariant}.

\begin{thm}\label{thmunderlying}
The inclusion $\iota: \Cyc_p \subseteq \CoAlg_{\Phi^{C_p}}(\Tp\Sp)$ admits a right adjoint $R_\iota$ such that the counit $\iota R_\iota \to \id$  of the adjunction induces an equivalence of underlying non-equivariant spectra.
\end{thm}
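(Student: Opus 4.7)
Both $\Cyc_p = \Fix_{\Phi^{C_p}}(\Tp\Sp)$ and $\CoAlg_{\Phi^{C_p}}(\Tp\Sp)$ are presentable stable $\infty$-categories, and the inclusion $\iota$ preserves colimits because in both categories colimits are computed on underlying $\Tp$-spectra (the functor $\Phi^{C_p}$, being a left adjoint to $R_{C_p}$, preserves colimits, and one applies Proposition~\ref{prop:laxeq}(iv) together with the presentation of $\Fix_F$ as an equalizer in $\Pr^L$). By the adjoint functor theorem $R_\iota$ exists.

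\textbf{Formula for the counit.} Setting $F = \Phi^{C_p}$, Proposition~\ref{proplimit} identifies $\iota R_\iota X$ with the limit in $\CoAlg_F$ of the tower
\[
\ldots \to R_{\bar F}^3 X \xto{R_{\bar F}^2\nu} R_{\bar F}^2 X \xto{R_{\bar F}\nu} R_{\bar F} X \xto{\nu} X,
\]
with the counit being the projection to the terminal term $X$.

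\textbf{The tower is constant on underlying non-equivariant spectra.} By Corollary~\ref{corrightad} the underlying $\Tp$-spectrum of $R_{\bar F}^k X$ is an iterated pullback in which the map to $R_{\bar F}^{k-1} X$ forgets the first factor. Its fiber in $\Tp\Sp$ is therefore $R_{C_p}^k\,\fib(\varphi)$, where $\varphi\colon X\to \Phi^{C_p} X$ is the coalgebra structure. By Corollary~\ref{geometricformula}, $(R_{C_p} Z)^{\{e\}} \simeq 0$ for any $Z\in \Tp\Sp$ since $\{e\}\not\supseteq C_p$, so the iterate satisfies $(R_{C_p}^k \fib(\varphi))^{\{e\}} \simeq 0$ for every $k\geq 1$. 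Thus each connecting map in the tower is an equivalence of underlying non-equivariant spectra.

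\textbf{Limit interchange and conclusion.} The remaining, and main, obstacle is to justify that the limit in $\CoAlg_F$ is computed by the underlying limit in $\Tp\Sp$ (the forgetful $\Tp\Sp\to \Sp$ preserves all limits). By Proposition~\ref{prop:laxeq}(v) this amounts to checking that $F = \Phi^{C_p}$ preserves the tower's limit in $\Tp\Sp$. Here the key input is $\Phi^{C_p}\circ R_{C_p}\simeq \id$ (Proposition~\ref{prop:rightadjointgeometric}) together with the exactness of $\Phi^{C_p}$: a direct computation from the pullback formula in Corollary~\ref{corrightad} and Lemma~\ref{lem:rightad} gives $\Phi^{C_p} R_{\bar F}^k X\simeq R_{\bar F}^{k-1} X$, with the induced connecting maps matching those of the original tower. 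Hence $\{\Phi^{C_p} R_{\bar F}^k X\}$ is (up to one-step reindexing) the tower itself, so $\Phi^{C_p}$ preserves its sequential limit. Combining this interchange with the forgetful functor $\Tp\Sp\to \Sp$ preserving limits, and the previous step showing the tower is constant on underlying non-equivariant spectra, yields
\[
(\iota R_\iota X)^{\{e\}} \simeq \lim_k (R_{\bar F}^k X)^{\{e\}} \simeq X^{\{e\}},
\]
as required.
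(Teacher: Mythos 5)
Your overall strategy is the paper's: identify $\iota R_\iota X$ as the limit of the tower $\ldots \to R_{\bar F}^2 X \to R_{\bar F}X \to X$ via Proposition~\ref{proplimit}, use Corollary~\ref{corrightad} and Corollary~\ref{geometricformula} to control the tower, and invoke Proposition~\ref{prop:laxeq}(v) to compute the limit of coalgebras on underlying objects. Your fiber computation (the fiber of $R_{\bar F}^k X \to R_{\bar F}^{k-1}X$ is $R_{C_p}^k\fib(\varphi)$) is a clean variant of the paper's explicit fixed-point formula, and is correct.

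However, the justification of the crucial limit-interchange step is a non sequitur. From $\Phi^{C_p}R_{\bar F}^k X \simeq R_{\bar F}^{k-1}X$ you conclude that "$\Phi^{C_p}$ preserves its sequential limit." Identifying the image tower with the (shifted) original tower only identifies the \emph{target} $\lim_k \Phi^{C_p}R_{\bar F}^k X$ of the canonical comparison map $\Phi^{C_p}(\lim_k R_{\bar F}^k X) \to \lim_k \Phi^{C_p}R_{\bar F}^k X$; it says nothing about whether that map is an equivalence. Since $\Phi^{C_p}$ does not preserve infinite limits (this failure is exactly why the theorem has content), some further input is needed. The correct input, which is the paper's, is that the tower is \emph{eventually constant on every genuine fixed point} $(-)^{C_{p^m}}$: hence, after restriction to each $C_{p^n}\Sp$ (and $\Tp\Sp = \lim_n C_{p^n}\Sp$), the tower is eventually constant, its limit is attained at a finite stage, and therefore \emph{any} functor, in particular $\Phi^{C_p}$, commutes with it. Your own fiber computation actually delivers this stronger statement — $(R_{C_p}^k\fib(\varphi))^{C_{p^m}} \simeq \fib(\varphi)^{C_{p^{m-k}}}$ for $k\le m$ and $\simeq 0$ for $k>m$ by iterating Corollary~\ref{geometricformula} — but you only extract the $\{e\}$-case and then route the interchange through the invalid "shifted tower" implication. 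Replace that implication by the eventual-constancy argument and the proof closes.
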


\begin{proof}
Let us follow the notation of Proposition  \ref{prop:rightadjointgeometric} and denote the right adjoint of $\Phi^{C_p}$ by $R_{C_p}: \Tp\Sp \to \Tp\Sp$. Then the right adjoint of $\bar\Phi^{C_p}$ will be denoted by $R_{\bar C_p}: \CoAlg_{\Phi^{C_p}} \to \CoAlg_{\Phi^{C_p}}$. 

We have that $(R_{C_p}X)^{C_{p^m}} \simeq X^{C_{p^{m-1}}}$ for $m \geq 1$ and $(R_{C_p}X)^{\{e\}} \simeq 0$ as shown in Corollary \ref{geometricformula}. If we use the formula given in Corollary \ref{corrightad} then we obtain that for $(X,\varphi: X\to FX)\in \CoAlg_{\Phi^{C_p}}$, the underlying object of $R_{\bar C_p}^k X$ has genuine fixed points given by
\[
(R_{\bar C_p}^k X)^{C_{p^m}} \simeq  X^{C_{p^{m-k}}} \times_{(\Phi^{C_p} X)^{C_{p^{m-k}}}} X^{C_{p^{m-k+1}}}  \times \ldots
  \times_{{(\Phi^{C_p} X)^{C_{p^{m-1}}}}} X^{C_{p^m}}\ .
\]
We see from this formula that as soon as $k$ gets bigger than $m$ this becomes independent of $k$. Thus the limit of the fixed points 
\begin{equation} \label{limifixedpoints}
\ldots \to (R_{\bar C_p}^3X)^{C_{p^m}} \xto{R_{\bar C_p}^2 \nu} (R_{\bar C_p}^2 X)^{C_{p^m}} \xto{R_{\bar C_p} \nu} (R_{\bar C_p}X)^{C_{p^m}} \xto{\nu} X^{C_{p^m}} .
\end{equation}
is eventually constant. 

We see from Lemma \ref{lem:rightad} that $\bar{\Phi}^{C_p} R_{\bar C_p} \simeq \id$ as endofunctors of $\CoAlg_{\Phi^{C_p}}(\Tp\Sp)$. Thus we get that $\bar{\Phi}^{C_p}$ applied to the tower $\ldots\to R_{\bar C_{p}^2}X \to R_{\bar C_p}X \to X$ is given by the tower
\[
\ldots\to R_{\bar C_{p}^2}X \to R_{\bar C_p}X \to X \to \bar{\Phi}^{C_p} X
\]
which is also eventually constant on all fixed points. In particular $\bar{\Phi}^{C_p}$ commutes with the limit of the tower. Thus by Proposition \ref{prop:laxeq}(v) the limit of coalgebras is computed as the limit of underlying objects. Then by Proposition \ref{proplimit} we get that the limit of the sequence \eqref{limifixedpoints} are the $C_{p^m}$-fixed points of the spectrum $R_\iota X $. Specializing to $m=0$ we obtain that the canonical map $(\iota R_\iota X)^{\{e\}} \to X^{\{e\}}$ is an equivalence, as desired.
\end{proof}

Now we  study a slightly more complicated situation that will be relevant for global cyclotomic spectra. Therefore let $F_1$ and $F_2$ be two commuting endomorphisms of an $\infty$-category $\calC$.  By this we mean that they come with a chosen equivalence $F_1 \circ F_2 \simeq F_2 \circ F_1$ or equivalently they define an action of the monoid $(\N \times \N,+)$ on the $\infty$-category $\calC$ where $(1,0)$ acts by $F_1$ and $(0,1)$ acts by $F_2$. Since actions of monoids on $\infty$-categories can always be strictified (e.g.~using \cite[Proposition 4.2.4.4]{HTT}) we can assume without loss of generality that $F_1$ and $F_2$ commute strictly as endomorphisms of simplicial sets.  We will make this assumption to simplify the discussion.

Then $F_2$ induces an endofunctor $\CoAlg_{F_1}(\calC) \to \CoAlg_{F_1}(\calC)$ which sends the coalgebra $X \to F_1X$ to the coalgebra $F_2X \to F_2F_1X = F_1F_2 X$ (defined as  in  Construction  \ref{construction}). This endofunctor restricts to an endofunctor of $\Fix_{F_1}(\calC) \subseteq \CoAlg_{F_1}(\calC)$. Our goal is to study the $\infty$-category 
$$\CoAlg_{F_1,F_2}(\calC) := \CoAlg_{F_2}(\CoAlg_{F_1}(\calC))\ .$$
In fact we will directly consider the situation of countably many commuting endofunctors $(F_1,F_2,\ldots)$. This can again be described as an action of the monoid $(\mathbb{N}_{>0},\cdot) \cong (\oplus_{i = 1}^\infty \mathbb{N},+)$ which we assume to be strict for simplicity. Then we define inductively
\begin{align*}
\CoAlg_{F_1,\ldots,F_n}(\calC) &:= \CoAlg_{F_n}(\CoAlg_{F_1,\ldots,F_{n-1}})(\calC) \\
\CoAlg_{(F_i)_{i \in \mathbb{N}}}(\calC) &:= \invlim \CoAlg_{F_1,\ldots,F_n}(\calC) \ .
\end{align*}
\begin{remark}
Informally an object in the $\infty$-category $\CoAlg_{(F_i)_{i \in \mathbb{N}}}(\calC)$ consists of
\begin{itemize}
\item an object $X \in \calC$; 
\item morphisms $X \to F_i(X)$ for all $i \in \mathbb{N}$; 
\item a homotopy between the two resulting maps $X \to F_i F_j(X)$ for all pairs of distinct $i, j \in \mathbb{N}$;
\item a 2-simplex between the three 1-simplices in the mapping space
\[
X \to F_iF_jF_k(X)
\]
for all triples of distinct $i,j,k \in \mathbb{N}$;
\item $\ldots$
\end{itemize}
We will not need a description of this type and therefore will not give a precise formulation and proof of this fact.
\end{remark}

If we assume that $\calC$ is presentable and all $F_i$ are accessible then the $\infty$-category $\CoAlg_{(F_i)_{i \in \mathbb{N}}}(\calC)$ is presentable. To see this we first inductively show that all $\infty$-categories $\CoAlg_{F_1,\ldots,F_n}(\calC)$ are presentable by Proposition~\ref{prop:laxeq}(iv). Then we conclude that  $\CoAlg_{(F_i)_{i \in \mathbb{N}}}(\calC)$ is presentable as a limit of presentable $\infty$-categories along colimit preserving functors. Moreover this shows that forgetful functor to $\CoAlg_{(F_i)_{i \in \mathbb{N}}}(\calC) \to \calC$ preserves all colimits.

\begin{lemma}\label{fixpointszero}
Let $\calC$ be a presentable $\infty$-category with commuting endofunctors $F_i$. Moreover assume that all the functors $F_i$ are accessible and  preserve the terminal object of $\calC$. 
\begin{altenumerate}
\item Assume that for an object $X \in \calC$ the objects $F_i(F_j(X))$ are terminal for all distinct $i,j\in \mathbb N$. Then isomorphism classes of $(F_i)_{i \in \mathbb{N}}$-coalgebra structures on $X$ are in bijection with isomorphism classes of families of morphisms $\varphi_i: X \to F_i(X)$ for all $i$, with no further dependencies between different $\varphi_i$'s.
\item In the situation of (i), assume that $X$ has a coalgebra structure giving rise to morphisms $\varphi_i: X\to F_i(X)$ for all $i$. Then for any coalgebra $Y \in  \CoAlg_{(F_i)_{i \in \mathbb{N}}}(\calC)$ the mapping space $\Map_{\CoAlg_{(F_i)_{i \in \mathbb{N}}}(\calC)}(Y,X)$ is equivalent to the equalizer
\[
\mathrm{Eq}\big(\xymatrix{\Map_\calC(Y,X) \ar[r]<2pt> \ar[r]<-2pt> &  \prod_{i = 1}^\infty  \Map_{\calC}(Y,F_iX)}\big)\ .
\]
\item Assume that for all distinct $i,j\in \mathbb N$ and all $X\in \calC$, the object $F_i(F_j(X))$ is terminal. Then 
$\CoAlg_{(F_i)_{i \in \mathbb{N}}}(\calC)$ is equivalent to the lax equalizer
$$
\mathrm{LEq}\big( \xymatrix{\calC \ar[rr]<2pt>^-{(\id,\id,\ldots)} \ar[rr]<-2pt>_-{(F_1,F_2,\ldots)} && \prod_{i = 1}^\infty \calC} \big)\ .
$$
\end{altenumerate}
\end{lemma}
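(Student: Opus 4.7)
The plan is to prove all three parts simultaneously by induction on the number of endofunctors, using the explicit description of mapping spaces in a lax equalizer from Proposition~\ref{prop:laxeq}(ii). First I would establish the finite analogues of (i)--(iii) for $\CoAlg_{F_1,\ldots,F_n}(\calC)$ by induction on $n$; the infinite case will then follow by passing to the limit of $\infty$-categories, since $\CoAlg_{(F_i)_{i \in \N}}(\calC)$ is defined as $\invlim_n \CoAlg_{F_1,\ldots,F_n}(\calC)$ and the forgetful functors to $\calC$ preserve the relevant structure.

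The base case $n=1$ is the definition: $\CoAlg_{F_1}(\calC) = \mathrm{LEq}(\id,F_1)$, and Proposition~\ref{prop:laxeq}(ii) gives exactly the equalizer formula in (ii), while (i) and (iii) are tautological. For the inductive step, recall $\CoAlg_{F_1,\ldots,F_n}(\calC) = \mathrm{LEq}(\id, F_n)$ taken inside $\CoAlg_{F_1,\ldots,F_{n-1}}(\calC)$. Given an object $X$ with $F_i F_j X$ terminal for all $i\neq j$, and a $\CoAlg_{F_1,\ldots,F_{n-1}}$-structure $(\varphi_1,\ldots,\varphi_{n-1})$ on it (which exists and is classified by such tuples by induction hypothesis (i)), extending to $\CoAlg_{F_1,\ldots,F_n}$ amounts to giving a map $X \to F_n X$ in $\CoAlg_{F_1,\ldots,F_{n-1}}$. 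By the inductive hypothesis (ii) applied to the target $F_n X$ (noting that $F_i F_n X$ is terminal for $i<n$, so its inherited $\CoAlg_{F_1,\ldots,F_{n-1}}$-structure is canonical), the space of such maps is the equalizer
\[
\mathrm{Eq}\big(\Map_\calC(X,F_n X) \rightrightarrows \prod_{i=1}^{n-1}\Map_\calC(X,F_iF_nX)\big),
\]
which collapses to $\Map_\calC(X,F_n X)$ since each $F_i F_n X$ is terminal and thus has contractible mapping spaces into it. This proves (i) for step $n$.

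For (ii) at step $n$, Proposition~\ref{prop:laxeq}(ii) gives
\[
\Map_{\CoAlg_{F_1,\ldots,F_n}}(Y,X) \simeq \mathrm{Eq}\big(\Map_{\CoAlg_{F_1,\ldots,F_{n-1}}}(Y,X) \rightrightarrows \Map_{\CoAlg_{F_1,\ldots,F_{n-1}}}(Y,F_n X)\big).
\]
Applying the inductive hypothesis (ii) to both mapping spaces and using again that $F_iF_n X$ is terminal to collapse the target to $\Map_\calC(Y,F_n X)$, one identifies the outer equalizer with the equalizer of $\Map_\calC(Y,X)\rightrightarrows \prod_{i=1}^{n}\Map_\calC(Y,F_i X)$, as desired. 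Passing to $n\to\infty$ yields the infinite-product formula in (ii).

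Finally, (iii) follows by comparing the natural functor from $\CoAlg_{(F_i)_{i\in\N}}(\calC)$ to the lax equalizer $\mathrm{LEq}\big(\calC \rightrightarrows \prod_i \calC\big)$, whose mapping spaces are again given by the equalizer formula of Proposition~\ref{prop:laxeq}(ii). Under the hypothesis of (iii) that $F_i F_j X$ is terminal for \emph{all} $X$, part (i) shows essential surjectivity (every family $(\varphi_i)$ on every $X$ lifts to a coalgebra) and part (ii) shows the induced map on mapping spaces is an equivalence. The main technical obstacle is purely bookkeeping: keeping track of the nested equalizers in the inductive step and verifying that the coherence data collapses correctly in the limit $n\to\infty$; all of this is controlled cleanly by the observation that mapping spaces into terminal objects are contractible, so higher compatibility simplices are automatic.
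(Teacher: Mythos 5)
Your proposal is correct and follows essentially the same route as the paper: induction on the number of endofunctors, with the base case given by Proposition~\ref{prop:laxeq}(ii), the inductive step for (i) reducing a map $X\to F_{n}(X)$ in $\CoAlg_{F_1,\ldots,F_{n-1}}$ to a map in $\calC$ via the terminality of $F_iF_{n}(X)$, the step for (ii) collapsing the nested equalizers for the same reason, the countable case obtained by passing to the limit of $\infty$-categories, and (iii) deduced from (i), (ii) and Proposition~\ref{prop:laxeq}. No gaps.
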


\begin{proof} We prove the parts (i) and (ii) by induction on the number of commuting endomorphisms (or rather the obvious analogue of the first two statements for a finite number of endofunctors). Then part (iii) follows from parts (i) and (ii) and Proposition~\ref{prop:laxeq}.

Thus, fix $X\in \calC$ such that $F_i(F_j(X))$ is terminal for all distinct $i,j\in\mathbb N$. For a single endomorphism, the result follows from Proposition~\ref{prop:laxeq}~(ii). Thus assume that parts (i) and (ii) have been shown for endomorphisms $F_1,\ldots,F_n$ and we are given an additional endomorphism $F_{n+1}$. We know by induction that a refinement of $X$ to an object of $\CoAlg_{(F_1,\ldots,F_n)}(\calC)$ is given by choosing maps $X \to F_i(X)$ for $i \leq n$. Now a refinement to an object of $\CoAlg_{(F_1,\ldots,F_n,F_{n+1})}(\calC) \simeq \CoAlg_{F_{n+1}}(\CoAlg_{(F_1,\ldots,F_n)}(\calC))$ consists of a map $X \to F_{n+1}(X)$ of objects in $\CoAlg_{(F_1,\ldots,F_n)}(\calC)$. We observe that $F_{n+1}(X)$ also satisfies the hypothesis of (i). Invoking part (ii) of the inductive hypothesis and the fact that $F_i(F_{n+1}(X))$ is terminal for $i\leq n$, we deduce that a map $X\to F_{n+1}(X)$ in $\CoAlg_{(F_1,\ldots,F_n)}(\calC)$ is equivalent to a map $X \to F_{n+1}(X)$ in $\calC$. This shows part (i).

To prove part (ii), we use the formula for mapping spaces in a lax equalizer given in Proposition \ref{prop:laxeq}:
the space of maps for any coalgebra
\[
Y\in \CoAlg_{F_{n+1}}(\CoAlg_{(F_1,\ldots,F_n)}(\calC))
\]
to $X$ is given by the equalizer
\[
\mathrm{Eq}\Big(\xymatrix{\Map_{\CoAlg_{(F_1,\ldots,F_n)}(\calC)}(Y,X) \ar[r]<2pt> \ar[r]<-2pt> &   \Map_{\CoAlg_{(F_1,\ldots,F_n)}(\calC)}(Y,F_{n+1}(X))}\Big)\ .
\]
By part (ii) of the inductive hypothesis the first term is given by the equalizer
\[
\mathrm{Eq}\Big(\xymatrix{\Map_\calC(Y,X) \ar[r]<2pt> \ar[r]<-2pt> &  \prod_{i = 1}^n  \Map_{\calC}(Y,F_i(X))}\Big)
\]
and similarly the second term by
\[
\mathrm{Eq}\Big(\xymatrix{\Map_\calC(Y,F_{n+1}(X)) \ar[r]<2pt> \ar[r]<-2pt> &  \prod_{i = 1}^n  \Map_{\calC}(Y,F_i(F_{n+1}(X)))}\Big) \simeq \Map_\calC(Y,F_{n+1}(X))
\]
where we have used that $F_i(F_{n+1}(X))$ is terminal for all $i \leq n$. As a result we get that the mapping space in question is given by an iterated equalizer which is equivalent to 
\[
\mathrm{Eq}\Big(\xymatrix{\Map_\calC(Y,X) \ar[r]<2pt> \ar[r]<-2pt> &  \prod_{i = 1}^{n+1}  \Map_{\calC}(Y,F_i(X))}\Big)\ ,
\]
which finishes part (ii) of the induction.

This induction shows parts (i) and (ii) for a finite number of endomorphisms. Now the claim for a countable number follows by passing to the limit and noting that in a limit of $\infty$-categories the mapping spaces are given by limits as well.
\end{proof}

Now we define 
\[
\Fix_{(F_i)_{i \in \mathbb{N}}}(\calC)  \subseteq \CoAlg_{(F_i)_{i \in \mathbb{N}}}(\calC)
\]
to be the full subcategory consisting of the objects for which all the morphisms $X \to F_i(X)$ are equivalences.
 
\begin{lemma}\label{lemequiv}
Let $\calC$ be an $\infty$-category with commuting endofunctors  $(F_1,F_2,\ldots)$. Then we have an equivalence
$$\Fix_{(F_i)_{i \in \mathbb{N}}}(\calC) \simeq \calC^{h\mathbb{N}_{> 0}}$$
\end{lemma}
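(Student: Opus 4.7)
The plan is to reduce the statement to the case of a single endofunctor, and then identify $\Fix_F(\calC)$ with $\calC^{h\mathbb{N}}$ by computing an equalizer in $\mathrm{Cat}_\infty$. I will proceed in three steps.

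First, I would prove the special case of a single endofunctor $F$, namely $\Fix_F(\calC) \simeq \calC^{h\mathbb{N}}$, where $\mathbb{N}$ acts on $\calC$ via $F$. Since $\mathbb{N}$ is the free (associative) monoid on one generator, an $\mathbb{N}$-action on $\calC$ is the same as a functor $B\mathbb{N} \to \mathrm{Cat}_\infty$ sending the generator to $F$, and moreover the homotopy limit $\lim_{B\mathbb{N}} \calC$ in $\mathrm{Cat}_\infty$ can be computed as the equalizer of $\id, F : \calC \rightrightarrows \calC$. In $\mathrm{Cat}_\infty$, this equalizer is the pullback
\[
\calC \times_{\calC \times \calC} \Fun(\Delta^1, \calC)^{\simeq} \longrightarrow \calC
\]
where $\Fun(\Delta^1, \calC)^{\simeq} \subseteq \Fun(\Delta^1, \calC)$ denotes the full subcategory of equivalences, and the map $\calC \to \calC \times \calC$ is $(\id, F)$. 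Comparing this with Definition~\ref{def:laxeq}, which uses the full arrow $\infty$-category $\calC^{\Delta^1}$ rather than its subcategory of equivalences, we see that this pullback is exactly the full subcategory of $\CoAlg_F(\calC) = \mathrm{LEq}(\id, F)$ on those objects $(X, \varphi)$ for which $\varphi$ is an equivalence. That is precisely $\Fix_F(\calC)$.

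Second, I would extend to any finite number $n$ of commuting endofunctors by induction on $n$. Writing $\bigoplus_{i=1}^n \mathbb{N} \cong (\bigoplus_{i=1}^{n-1} \mathbb{N}) \oplus \mathbb{N}$ and using that taking homotopy fixed points is iterated (i.e.\ $\calC^{h(M \oplus M')} \simeq (\calC^{hM})^{hM'}$ whenever the actions commute), we obtain
\[
\calC^{h(\bigoplus_{i=1}^n \mathbb{N})} \simeq \bigl( \calC^{h(\bigoplus_{i=1}^{n-1}\mathbb{N})}\bigr)^{h\mathbb{N}},
\]
where the outer $\mathbb{N}$-action is induced by $F_n$. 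By the inductive hypothesis the inner term is $\Fix_{F_1, \ldots, F_{n-1}}(\calC)$, and the single-endofunctor case applied to the induced endofunctor of this $\infty$-category identifies the outer fixed points with $\Fix_{F_n}(\Fix_{F_1,\ldots,F_{n-1}}(\calC))$. By definition, the latter is the full subcategory of $\CoAlg_{F_n}(\CoAlg_{F_1,\ldots,F_{n-1}}(\calC)) = \CoAlg_{F_1,\ldots,F_n}(\calC)$ on objects for which all structure maps (inherited and new) are equivalences, i.e.\ it is $\Fix_{F_1,\ldots,F_n}(\calC)$.

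Finally, for the countable case, one passes to the limit: since $\bigoplus_{i=1}^\infty \mathbb{N} \cong \colim_n \bigoplus_{i=1}^n \mathbb{N}$ and $BM \mapsto \calC^{hM}$ sends colimits of monoids to limits in $\mathrm{Cat}_\infty$, we obtain $\calC^{h\mathbb{N}_{>0}} \simeq \lim_n \Fix_{F_1,\ldots,F_n}(\calC)$, which agrees with $\Fix_{(F_i)_{i\in \mathbb{N}}}(\calC)$ by the definition of the latter as a limit over $n$. The main obstacle is the single-endofunctor case: justifying that the limit of a diagram $B\mathbb{N} \to \mathrm{Cat}_\infty$ really is the equalizer of $\id$ and $F$, and matching its description with the lax equalizer restricted to equivalences. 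Everything else is essentially a bookkeeping exercise in iterating adjunctions and passing to limits.
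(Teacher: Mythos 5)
Your proposal is correct and follows essentially the same route as the paper: reduce to a single endofunctor via iterated homotopy fixed points and a limit over $n$, then identify $\calC^{h\mathbb{N}}$ with the equalizer of $\id$ and $F$ using that $\mathbb{N}$ is free on one generator. You spell out the key equalizer computation (as a pullback against $\Fun(\Delta^1,\calC)^{\simeq}$ and its comparison with the lax equalizer of Definition~\ref{def:laxeq}) in somewhat more detail than the paper does, but the argument is the same.
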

\begin{proof}
We have that 
$$\calC^{h\mathbb{N}_{> 0}} \simeq \calC^{h\mathbb{N}^\infty} \simeq \invlim\big( \calC^{h\N^k}\big)$$ 
and also that $\calC^{h\mathbb{N}^{k+1}} \simeq \big(\calC^{h\mathbb{N}^k}\big)^{h\mathbb{N}}$. Using this decomposition and induction we can reduce the statement to showing that for a single endomorphism $\Fix_{F}(\calC) \simeq \calC^{h\mathbb{N}}$. But this follows since the homotopy fixed points for $\N$ are the same as the equalizer of the morphism $\calC \to \calC$ given by acting with $1 \in \mathbb{N}$ and the identity. This can for example be seen using that $\N$ is the free $A_\infty$-space on one generator.
\end{proof}
 
Now we assume that all the commuting endofunctors $F_i: \calC \to \calC$ are colimit-preserving and that $\calC$ is presentable. Then the $\infty$-category 
$$
\Fix_{(F_i)_{i \in \mathbb{N}}}(\calC)  \subseteq \CoAlg_{(F_i)_{i \in \mathbb{N}}}(\calC)
$$
is presentable since it is a limit of presentable $\infty$-categories along left adjoint funtors by Proposition \ref{lemequiv}. Moreover the inclusion into $\CoAlg_{(F_i)_{i \in \mathbb{N}}}$ preserves all colimits. Thus it admits a right adjoint. Our goal is to study this right adjoint. The idea is to factor it into inclusions
$$
\iota_n: \Fix_{F_1,\ldots,F_n}(\CoAlg_{F_{n+1},F_{n+2},\ldots}) \subseteq
\Fix_{F_1,\ldots,F_{n-1}}(\CoAlg_{F_n,F_{n+1},\ldots}) 
$$
and understand the individual right adjoints $R_{\iota_n}$. To this end we note that we have an endofunctor 
$$
\bar F_n: \Fix_{F_1,\ldots,F_{n-1}}(\CoAlg_{F_{n},F_{n+1},\ldots}) \to \Fix_{F_1,\ldots,F_{n-1}}(\CoAlg_{F_{n},F_{n+1},\ldots}) 
$$
which is on underlying objects given by $X \mapsto F_n X$. This follows using Construction \ref{construction} and the isomorphism
of simplicial sets 
$$
\Fix_{F_1,\ldots,F_{n-1}}(\CoAlg_{F_{n},F_{n+1},\ldots}) \simeq \CoAlg_{F_n}(\Fix_{F_1,\ldots,F_{n-1}}(\CoAlg_{F_{n+1},F_{n+2},\ldots})).
$$
Invoking the results given at the begining of the section we also see that $\bar F_n$ admits a right adjoint $R_{\bar F_n}$ which comes with a natural transformation $\nu: R_{\bar F_n} \to \id$ and that the following is true:
\begin{lemma}\label{lemma_multilimit}
The endofunctor $\iota_n R_{\iota_n}$ is given by the limit of the following diagram of endofunctors
$$
\ldots \to R_{\bar F_n}^3 \xto{R^2_{\bar F_n} \nu} R^2_{\bar F_n} \xto{R_{\bar F_n} \nu} R_{\bar F_n} \xto{\nu} \id \ .
$$ \qed
\end{lemma}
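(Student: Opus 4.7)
The plan is to reduce Lemma~\ref{lemma_multilimit} directly to Proposition~\ref{proplimit} via an auxiliary $\infty$-category. Set
\[
\mathcal{D}_n := \Fix_{F_1,\ldots,F_{n-1}}\bigl(\CoAlg_{F_{n+1},F_{n+2},\ldots}(\calC)\bigr).
\]
Since the $F_i$'s commute (strictly, after strictification), $F_n$ restricts to a colimit-preserving endofunctor of the presentable $\infty$-category $\mathcal{D}_n$. Unwinding the iterated $\CoAlg$ and $\Fix$ constructions gives natural equivalences
\begin{align*}
\CoAlg_{F_n}(\mathcal{D}_n) &\simeq \Fix_{F_1,\ldots,F_{n-1}}\bigl(\CoAlg_{F_n, F_{n+1},\ldots}(\calC)\bigr), \\
\Fix_{F_n}(\mathcal{D}_n) &\simeq \Fix_{F_1,\ldots,F_n}\bigl(\CoAlg_{F_{n+1},F_{n+2},\ldots}(\calC)\bigr),
\end{align*}
since both sides encode the same data of an object of $\calC$ together with coalgebra (or fixed-point) structures for the $F_i$'s with the prescribed coherences. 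Under these equivalences, the inclusion $\iota_n$ appearing in the lemma becomes the canonical inclusion $\Fix_{F_n}(\mathcal{D}_n) \hookrightarrow \CoAlg_{F_n}(\mathcal{D}_n)$ of Proposition~\ref{proplimit}.

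Next, I would verify that the endofunctor $\bar F_n$ on the $\infty$-category $\Fix_{F_1,\ldots,F_{n-1}}(\CoAlg_{F_n,F_{n+1},\ldots})$, together with its right adjoint $R_{\bar F_n}$ and the counit $\nu : R_{\bar F_n} \to \id$ introduced in the paragraph before the lemma, correspond under the above equivalences to the endofunctor $\bar F$, its right adjoint, and the counit provided by Construction~\ref{construction} and Lemma~\ref{lem:rightad} applied to $(\mathcal{D}_n, F_n)$. This is a direct matter of unwinding universal properties: on underlying objects both operations send $(X\to F_n X)$ to $(F_n X \to F_n^2 X)$, and the right adjoint is given by the pullback formula of Lemma~\ref{lem:rightad}, whose hypotheses (colimit-preservation, fully faithful right adjoint, preservation of pullbacks) are inherited by $F_n$ on $\mathcal{D}_n$ from its action on $\calC$, since the forgetful functor $\mathcal{D}_n \to \calC$ preserves and reflects the relevant limits and colimits.

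Having made these identifications, Proposition~\ref{proplimit} applied to the pair $(\mathcal{D}_n, F_n)$ immediately yields
\[
\iota_n R_{\iota_n} \simeq \invlim\bigl( \ldots \to R_{\bar F_n}^3 \xto{R_{\bar F_n}^2 \nu} R_{\bar F_n}^2 \xto{R_{\bar F_n}\nu} R_{\bar F_n} \xto{\nu} \id\bigr),
\]
which is the statement of the lemma. The main obstacle is the bookkeeping of the two equivalences displayed in the first paragraph: because $\CoAlg_{(F_i)_{i\in\mathbb N}}$ is built as a sequential limit of iterated lax equalizers, checking that these identifications carry the endofunctor, its right adjoint, and the natural transformation $\nu$ to one another requires care, but once the monoid action is strictified it reduces to a routine unwinding of the universal properties characterizing the lax equalizer (Proposition~\ref{prop:laxeq}).
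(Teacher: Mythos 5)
Your proposal is correct and is essentially the paper's own argument: the paper justifies the lemma precisely by the isomorphism $\Fix_{F_1,\ldots,F_{n-1}}(\CoAlg_{F_{n},F_{n+1},\ldots}) \cong \CoAlg_{F_n}(\Fix_{F_1,\ldots,F_{n-1}}(\CoAlg_{F_{n+1},F_{n+2},\ldots}))$ and then invokes Proposition~\ref{proplimit} for the pair $(\mathcal D_n, F_n)$, exactly as you do. Your extra remarks about Lemma~\ref{lem:rightad} are not needed for this lemma (only presentability and colimit-preservation enter Proposition~\ref{proplimit}); they become relevant only for the explicit formula in Lemma~\ref{lemformularcommuting}.
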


Now as a last step it remains to give a formula for the right adjoint $R_{\bar F_n}$. To get such a formula we will make very strong assumptions (with the geometric fixed point functor $\Phi^{C_p}$ in mind). So we will assume that for all $n$, the functor $F_n: \calC  \to \calC$ admits a fully faithful right adjoint $R_{F_n}$
such that for all $i \neq n$ the canonical morphism $F_i R_{F_n} \to R_{F_n} F_i$ adjoint to $F_n F_i R_{F_n} = F_i F_n R_{F_n}\simeq F_i$ is an equivalence. Moreover, we assume that all the functors $F_i$ commute with pullbacks.

\begin{lemma}\label{lemformularcommuting} Under these assumptions the underlying object of $R_{\bar F_n}^k X$ for a given object $X \in \Fix_{F_1,\ldots,F_{n-1}}(\CoAlg_{F_{n},F_{n+1},\ldots}) $ is equivalent to 
$$ 
R_{F_n}^k X \times_{R_{F_n}^k FX} R_{F_n}^{k-1} X \times_{R_{F_{n}}^{k-1} FX}  \ldots \times_{R_{F_n} FX} X
$$
where the maps to the right are induced by the coalgebra map $X \to F_nX$ and the maps to the left are induced by the unit $\eta$ of the adjunction between  $F_n$ and $R_{F_n}$. In this description the map $R_{\bar F_n}^k X \to R^{k-1}_{\bar F_n} X$ can be described as forgetting the first factor.
\end{lemma}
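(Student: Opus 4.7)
The proof follows closely the pattern of Corollary~\ref{corrightad}, applied to the more elaborate ambient $\infty$-category $\Fix_{F_1,\ldots,F_{n-1}}(\CoAlg_{F_n,F_{n+1},\ldots})$. First I would handle the $k=1$ case, i.e.~show that the underlying object of $R_{\bar F_n}X$ is computed by the pullback $R_{F_n}X \times_{R_{F_n}F_n X} X$, which on the level of $\calC$ alone is exactly the content of Lemma~\ref{lem:rightad}. The new input is that the entire square
\[
\xymatrix{
R_{\bar F_n}X \ar[r] \ar[d] & R_{F_n}X \ar[d] \\
X \ar[r]^-{\eta_X} & R_{F_n}F_n X
}
\]
lives in $\Fix_{F_1,\ldots,F_{n-1}}(\CoAlg_{F_n,F_{n+1},\ldots})$. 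Here the hypothesis that $F_i R_{F_n} \xto{\sim} R_{F_n} F_i$ for $i \neq n$ transports the fixed-point data for $F_1,\ldots,F_{n-1}$ and the coalgebra data for $F_{n+1},F_{n+2},\ldots$ from $X$ to $R_{F_n}X$ and to $R_{F_n}F_n X$, and the hypothesis that each $F_i$ preserves pullbacks lets these structures descend canonically to the pullback.

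To verify the universal property, I would rerun the mapping-space calculation from the proof of Lemma~\ref{lem:rightad}, iteratively applying Proposition~\ref{prop:laxeq}(ii) to describe the mapping spaces of $\Fix_{F_1,\ldots,F_{n-1}}(\CoAlg_{F_n,F_{n+1},\ldots})$ as iterated equalizers. The equalizer defining the mapping space into the putative $R_{\bar F_n}X$ reduces to the mapping space into $\bar F_n^{-1}(\text{target})$ by the same adjunction calculation as in Lemma~\ref{lem:rightad}: the pullback absorbs the $F_n$-coalgebra datum, while the additional equalizer conditions for the other $F_i$'s on the two sides match term-by-term via the equivalences $F_i R_{F_n} \simeq R_{F_n} F_i$.

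For the general $k$ case, I would proceed by induction using $R_{\bar F_n}^k X = R_{\bar F_n}(R_{\bar F_n}^{k-1} X)$ together with the $k=1$ formula. Since $R_{F_n}$ is fully faithful and $F_n$ preserves pullbacks, applying $F_n$ to the pullback defining $R_{\bar F_n}^{k-1}X$ yields a canonical equivalence $F_n R_{\bar F_n}^{k-1}X \simeq R_{\bar F_n}^{k-2}X$, and iterating the $k=1$ pullback along this identification telescopes into the claimed long iterated-pullback formula. The identification of the structure map $R_{\bar F_n}^k X \to R_{\bar F_n}^{k-1} X$ with the projection forgetting the first factor is then automatic from construction, as it is induced by the counit $\nu$ which on the $k=1$ level is precisely the right-hand leg of the defining pullback.

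The main obstacle is purely the bookkeeping: one must check that the pullback defining $R_{\bar F_n}X$ really does inherit, in a coherent fashion, the commuting $F_1,\ldots,F_{n-1}$-fixed-point structure and $F_{n+1},F_{n+2},\ldots$-coalgebra structure simultaneously, and that the adjunction calculation respects these structures. Fortunately, the strong hypotheses $F_i R_{F_n} \xto{\sim} R_{F_n} F_i$ together with the preservation of pullbacks by each $F_i$ force every diagram in sight to commute up to canonical equivalence, so no genuine obstruction arises; once the $k=1$ case is set up with sufficient care, the induction is essentially formal.
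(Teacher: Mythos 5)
Your proposal is correct, and its mathematical core --- transport the auxiliary structure across $R_{F_n}$ via the equivalences $F_i R_{F_n}\simeq R_{F_n}F_i$, use that every $F_i$ preserves pullbacks to descend the pullback formula, and induct on $k$ --- is exactly what the paper relies on. The difference is in packaging: rather than re-running the mapping-space/equalizer computation of Lemma~\ref{lem:rightad} inside the larger ambient category, the paper sets $\calD:=\Fix_{F_1,\ldots,F_{n-1}}(\CoAlg_{F_{n+1},F_{n+2},\ldots})$, observes that the category in question is literally $\CoAlg_{F_n^{\calD}}(\calD)$ for the canonical extension $F_n^{\calD}$ of $F_n$, and then uses the commutation hypotheses to extend $R_{F_n}$ together with the unit and counit to a fully faithful right adjoint $R_n^{\calD}$ of $F_n^{\calD}$. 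At that point Corollary~\ref{corrightad} applies verbatim in $\calD$, and the formula for the \emph{underlying} object follows because the forgetful functor $\calD\to\calC$ preserves pullbacks (again since all $F_i$ do). This reduction saves you the iterated-equalizer bookkeeping you flag as the main obstacle; your direct verification would also go through, but it duplicates work that the abstract reduction makes unnecessary.
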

\begin{proof}
We deduce this from Corollary \ref{corrightad}. To this end consider the $\infty$-category 
$$
\calD :=  \Fix_{F_1,\ldots,F_{n-1}}(\CoAlg_{F_{n+1},F_{n+2},\ldots}).
$$
Then as discussed above the $\infty$-category in question is $\CoAlg_{F_n^\calD}(\calD)$ where $F_n^\calD: \calD \to \calD$ is the canonical extension of the functor $F_n$ to $\calD$. We claim that there is a similar canonical extension $R_n^\calD: \calD \to \calD$ of $R_n$; this follows from the assumption that $R_n$ commutes with all the $F_i$ for $i \neq n$. With a similar argument one can extend the unit and counit of the adjunction. This way we deduce that the functor $R_n^\calD$ is right adjoint to $F_n^\calD$ as endofunctors of $\calD$. Moreover $R_n^\calD$ is also fully faithful since this is equivalent to the fact that the counit of the adjunction is an equivalence.

Now we can apply Corollary \ref{corrightad} to compute the structure of the right adjoint $R_{\bar F_n}$ to 
$\bar F_n: \CoAlg_{F_n}(\calD) \to \CoAlg_{F_n}(\calD)$. This then gives the formula for the underlying functor as well since the forgetful functor $\calD \to \calC$  preserves all pullbacks as a consequence of the fact that all $F_i$ do.
\end{proof}

For the application, we let $\calC$ be the $\infty$-category $\T\Sp_\F$ and consider the commuting endofunctors $F_n := \Phi^{C_{p_n}}$ where 
$p_1, p_2,\ldots $ is the list of primes. These functors commute up to coherent equivalence by the discussion before Definition \ref{def:genuinecyclo}. Without loss of generality we can assume that they strictly commute to be able to apply the preceding discussion.  We first have to verify that the assumptions 
of Lemma \ref{lemformularcommuting}  are satisfied.
 
\begin{lemma}
The canonical morphism $\Phi^{C_p} R_{C_q} \to R_{C_q}\Phi^{C_p}$ of endofunctors $\T\Sp_\F \to \T\Sp_\F$ is an equivalence for all primes $p \neq q$. 
\end{lemma}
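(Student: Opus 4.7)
The plan is to use the characterization of $R_{C_q}$ as the fully faithful embedding onto the subcategory $(\T\Sp_\F)_{\geq C_q}\subseteq \T\Sp_\F$ consisting of those $Y$ with $\Phi^{C_n} Y\simeq 0$ for all $n$ with $q\nmid n$ (cf.~the analogue of Proposition~\ref{prop:rightadjointgeometric} for $\T\Sp_\F$). The key observation will be that $\Phi^{C_p}$ preserves this subcategory, which then forces the canonical map to be an equivalence.

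First I would unwind the canonical map: by construction, $\Phi^{C_p} R_{C_q}\to R_{C_q}\Phi^{C_p}$ is adjoint under $\Phi^{C_q}\dashv R_{C_q}$ to the composite
\[
\Phi^{C_q}\Phi^{C_p} R_{C_q}\;\simeq\;\Phi^{C_p}\Phi^{C_q} R_{C_q}\;\xrightarrow{\;\Phi^{C_p}(\epsilon)\;}\;\Phi^{C_p},
\]
where the first equivalence uses the commutativity of the geometric fixed point functors recorded before Definition~\ref{def:genuinecyclo}, and where $\epsilon:\Phi^{C_q} R_{C_q}\to\id$ is the counit, which is an equivalence because $R_{C_q}$ is fully faithful. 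Thus this adjoint map is already known to be an equivalence.

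Next I would show that $\Phi^{C_p}$ preserves $(\T\Sp_\F)_{\geq C_q}$. Using the compatibility of iterated geometric fixed points (Proposition~\ref{prop:compgeomfix}) together with the identifications $\T/C_p\cong\T$ and $\T/C_{np}\cong\T$ used to define the endofunctors, one obtains a natural equivalence
\[
\Phi^{C_n}\bigl(\Phi^{C_p} Y\bigr)\;\simeq\;\Phi^{C_{np}}(Y)
\]
for all $n\geq 1$ and all $Y\in\T\Sp_\F$. If $Y\in (\T\Sp_\F)_{\geq C_q}$ and $q\nmid n$, then since $p\neq q$ we also have $q\nmid np$, so $\Phi^{C_{np}}(Y)\simeq 0$; hence $\Phi^{C_p} Y\in (\T\Sp_\F)_{\geq C_q}$ as well. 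In particular $\Phi^{C_p} R_{C_q} X$ lies in the essential image of $R_{C_q}$, and $R_{C_q}\Phi^{C_p} X$ trivially does.

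To conclude, I would invoke the general fact that a map between objects in the essential image of a fully faithful right adjoint is an equivalence if and only if its image under the left adjoint is. Applying $\Phi^{C_q}$ to the canonical map turns it, by the first step, into the equivalence $\Phi^{C_p}\Phi^{C_q} R_{C_q} X\xrightarrow{\Phi^{C_p}(\epsilon)}\Phi^{C_p} X$, so the original map is an equivalence. The main bookkeeping obstacle is keeping track of which $\T$-action is meant at each stage (i.e.~the successive quotienting by $C_p$ and $C_q$ and the compatible $p$-th and $q$-th power identifications), but this is entirely formal once one sets up Proposition~\ref{prop:compgeomfix} in the $\T\Sp_\F$-setting.
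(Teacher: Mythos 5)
Your proof is correct and follows essentially the same route as the paper: reduce to showing that $\Phi^{C_p}R_{C_q}X$ lies in the essential image of $R_{C_q}$, then detect the equivalence after applying $\Phi^{C_q}$, where it becomes the counit computation $\Phi^{C_q}\Phi^{C_p}R_{C_q}\simeq\Phi^{C_p}\Phi^{C_q}R_{C_q}\simeq\Phi^{C_p}$. The only (harmless) difference is in verifying the essential-image condition: you use vanishing of geometric fixed points via the composition law $\Phi^{C_n}\Phi^{C_p}\simeq\Phi^{C_{np}}$, whereas the paper checks vanishing of genuine fixed points $(\Phi^{C_p}R_{C_q}X)^H$ via a module structure over $(R_{C_q}\bS)^{\tilde H}$; both are legitimate by the remark following Proposition~\ref{prop:rightadjointgeometric}.
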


\begin{proof}
 We  show that the morphism
 $$
 (\Phi^{C_p} R_{C_q}X)^H \to (R_{C_q}\Phi^{C_p}X)^H
 $$
 is an equivalence for all objects $X \in \T\Sp_\F$ and all  finite subgroups $H \subseteq \T$.
 
We distinguish two cases. Assume first that $C_q \not \subseteq H$. Then we immediately get that $(R_{C_q}\Phi^{C_p}X)^H \simeq 0$.
Next, we have to understand $(\Phi^{C_p} R_{C_q}X)^H$. There is a transformation of lax symmetric monoidal functors ${-}^{C_p} \to \Phi^{C_p}$.

Thus $(\Phi^{C_p} R_{C_q}X)^H$ is a module over $(R_{C_q}\mathbb S)^{\tilde H}$ where $\tilde H$ is the preimage of $H$ under the $p$-th power morphism $\T \to \T$. Since $p$ and $q$ are different and $H$ does not contain $C_q$ it follows that also $\tilde{H}$ does not contain $C_q$. Thus we get that $(R_{C_q}\mathbb S)^{\tilde H} \simeq 0$. Therefore also $(\Phi^{C_p} R_{C_q}X)^H \simeq 0$ which finishes the argument.

Note that in particular, it follows that $\Phi^{C_p} R_{C_q} X$ lies in the essential image of $R_{C_q}$. By Proposition \ref{prop:rightadjointgeometric}, it suffices to show that the natural map $\Phi^{C_p} R_{C_q} X \to R_{C_q}\Phi^{C_p} X$ is an equivalence after applying $\Phi^{C_q}$. But
\[
\Phi^{C_q}R_{C_q}\Phi^{C_p} \simeq \Phi^{C_p} \simeq \Phi^{C_p}\Phi^{C_q}R_{C_q} \simeq \Phi^{C_q}\Phi^{C_p}R_{C_q}\,
\]
as desired.
\end{proof}

\begin{thm}\label{theoremunderlyingglobal}
The inclusion 
$$
\iota: \Cyc \simeq  \Fix_{(\Phi^{C_p})_{p \in \mathbb{P}}}(\T\Sp_\F)   \subseteq  \CoAlg_{(\Phi^{C_p})_{p \in \mathbb{P}}}(\T\Sp_\F) 
$$
admits a right adjoint $R_\iota$ such that the counit $\iota R_\iota \to \id$  of the adjunction induces an equivalence on underlying spectra.
\end{thm}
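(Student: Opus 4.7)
The plan is to adapt the proof of Theorem~\ref{thmunderlying} to the multi-prime setting by exploiting the factorization of $\iota$ through the chain of inclusions
\[
\iota_n: C_n := \Fix_{F_1,\ldots,F_n}(\CoAlg_{F_{n+1},F_{n+2},\ldots}) \hookrightarrow \Fix_{F_1,\ldots,F_{n-1}}(\CoAlg_{F_n,F_{n+1},\ldots}) =: C_{n-1}\ ,
\]
with $C_0 = \CoAlg_{(\Phi^{C_p})_{p\in\bP}}(\T\Sp_\F)$ and $\bigcap_n C_n = \Cyc$. First, the existence of the right adjoint $R_\iota$ follows from the adjoint functor theorem \cite[Corollary 5.5.2.9]{HTT}: both source and target are presentable (as discussed before the statement), and $\iota$ preserves all colimits since colimits on both sides are computed on underlying objects in $\T\Sp_\F$.

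For the statement about the counit, I will first show, for each $n$, that the counit $\iota_n R_{\iota_n} \to \id$ is an equivalence on underlying non-equivariant spectra, i.e.~on the genuine $\{e\}$-fixed points. By Lemma~\ref{lemma_multilimit}, $\iota_n R_{\iota_n}$ is the limit of the tower $\ldots \to R_{\bar F_n}^3 \to R_{\bar F_n}^2 \to R_{\bar F_n} \to \id$. Lemma~\ref{lemformularcommuting} (whose hypotheses hold here thanks to Corollary~\ref{geometricformula} and the lemma immediately preceding the statement of the theorem that $\Phi^{C_p}$ commutes with $R_{C_q}$) gives the explicit formula
\[
R_{\bar F_n}^k X \simeq R_{F_n}^k X \times_{R_{F_n}^k F_n X} R_{F_n}^{k-1} X \times_{R_{F_n}^{k-1} F_n X} \ldots \times_{R_{F_n} F_n X} X
\]
on underlying objects. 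By Corollary~\ref{geometricformula}, the underlying spectrum of $R_{F_n}^j Y = R_{C_{p_n}}^j Y$ vanishes for every $j \geq 1$ and every $Y$. Therefore, after evaluating the pullback above at $\{e\}$, every factor except the rightmost is zero and the iterated pullback collapses to give $(R_{\bar F_n}^k X)^{\{e\}} \simeq X^{\{e\}}$, with all structure maps $\nu$ in the tower being equivalences on underlying spectra. Since $\bar F_n R_{\bar F_n} \simeq \id$ (the analog of Lemma~\ref{lem:rightad}), the functor $\bar F_n$ shifts the tower by one and therefore preserves its limit; by Proposition~\ref{prop:laxeq}(v) the limit of coalgebras is then computed on underlying objects. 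We conclude $(\iota_n R_{\iota_n} X)^{\{e\}} \simeq X^{\{e\}}$.

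Finally, one assembles the $R_{\iota_n}$ into $R_\iota$. Given $X \in C_0$, iteratively set $X_0 = X$ and $X_n = R_{\iota_n} X_{n-1} \in C_n$, producing a tower $\ldots \to X_n \to X_{n-1} \to \ldots \to X_0$ in $C_0$ whose maps are the counits and hence are equivalences on underlying spectra by the previous paragraph. Define $R_\iota X := \invlim_n X_n$, computed in $C_0$. For each fixed $n$, the subtower $(X_m)_{m \geq n}$ lies in the full subcategory $C_n \subseteq C_0$; since $C_n \hookrightarrow C_0$ is a full inclusion that reflects equivalences and (by the same argument as in paragraph two, applied levelwise) is compatible with the relevant limits, $R_\iota X$ lies in $C_n$. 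Intersecting over all $n$ places $R_\iota X$ in $\Cyc$, and by construction the counit $\iota R_\iota X \to X$ is the limit of a tower which is constant on the underlying spectrum, hence is itself an equivalence on the underlying spectrum.

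The main obstacle will be the last paragraph: verifying that the infinite ``composition'' $R_\iota = \invlim R_{\iota_n} \cdots R_{\iota_1}$ is well-defined and genuinely takes values in $\Cyc = \bigcap_n C_n$, i.e.~that the limits needed to see the structure maps $\varphi_{p_i}$ become equivalences in the final object exist and are compatible with passage to the underlying spectrum. This requires some care with the interaction between the limit in $C_0$ and the functors $\bar F_i$ for all $i$ simultaneously, but the key input is always the vanishing of $(R_{C_p})^{\{e\}}$ from Corollary~\ref{geometricformula}, which forces all the towers to stabilize on underlying spectra.
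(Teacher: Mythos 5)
Your overall architecture matches the paper's: factor $\iota$ through the chain of inclusions $\iota_n$, identify $\iota_n R_{\iota_n}$ as the limit of the tower of the $R_{\bar F_n}^k$ via Lemma~\ref{lemma_multilimit} and Lemma~\ref{lemformularcommuting}, and assemble $R_\iota$ as the limit of the composites $R_{\iota_n}\cdots R_{\iota_1}$. Your computation $(R_{\bar F_n}^k X)^{\{e\}}\simeq X^{\{e\}}$ is correct. But there is a genuine gap where you conclude that the limit of the tower is computed on underlying objects. The assertion ``$\bar F_n$ shifts the tower by one and therefore preserves its limit'' is a non sequitur: the shift identity shows $\lim_k \bar F_n(R_{\bar F_n}^k X)\simeq \lim_k R_{\bar F_n}^k X$, but says nothing about whether $\bar F_n(\lim_k R_{\bar F_n}^k X)\simeq \lim_k \bar F_n(R_{\bar F_n}^k X)$ --- and $\Phi^{C_p}$ is a \emph{left} adjoint, so it has no a priori compatibility with limits. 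That compatibility is precisely the hypothesis of Proposition~\ref{prop:laxeq}(v) you are invoking, so it cannot be assumed.

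To close the gap you must control \emph{all} genuine fixed points, not just the underlying spectrum. Using Corollary~\ref{geometricformula} together with Lemma~\ref{lemformularcommuting} one finds, for $p=p_n$ and $r$ prime to $p$,
\[
(R_{\bar F_n}^k X)^{C_{p^m}\times C_r} \simeq X^{C_{p^{m-k}}\times C_r}\times_{(\Phi^{C_p}X)^{C_{p^{m-k}}\times C_r}} \cdots \times_{(\Phi^{C_p}X)^{C_{p^{m-1}}\times C_r}} X^{C_{p^m}\times C_r}\ ;
\]
for $m\geq 1$ these are not constant in $k$ (the higher fixed points of $R_{C_p}$ do \emph{not} vanish), but they stabilize once $k>m$. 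Eventual constancy of every genuine fixed point, together with an isotropy separation argument to handle the geometric fixed points, is what shows that the limit of the tower in $\T\Sp_\F$ is preserved by all the $\Phi^{C_q}$, whence Proposition~\ref{prop:laxeq}(v) applies and the limit of coalgebras is the underlying limit. The same issue recurs in your final assembly: stabilization of the tower $(X_n)$ on underlying spectra alone does not show that $\invlim X_n$ lands in $\Cyc$ or that this limit is computed on underlying objects; what is needed is that for each finite subgroup $C_m\subseteq\T$ the genuine $C_m$-fixed points of the tower become constant (which happens after one has passed all primes dividing $m$). Your closing remark that ``the key input is always the vanishing of $(R_{C_p})^{\{e\}}$'' therefore identifies the wrong input: the key input is the explicit stabilization of all genuine fixed points.
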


\begin{proof} We let $\calC$ be the $\infty$-category $\T\Sp_\F$ and consider the commuting endofunctors $F_n := \Phi^{C_{p_n}}$ where $p_1, p_2,\ldots$ is the list of primes. We first  prove that the inclusion\[
\iota_n: \calC_{n+1} :=  \Fix_{F_1,\ldots,F_n}(\CoAlg_{F_{n+1},F_{n+2},\ldots}) \subseteq
\Fix_{F_1,\ldots,F_{n-1}}(\CoAlg_{F_{n},F_{n+1},\ldots}) := \calC_{n}
\]
admits a right adjoint $R_{\iota_n}$ such that the counit of the adjunction induces an equivalence on underlying spectra. We use the notation of Lemma \ref{lemformularcommuting} and invoke  Corollary \ref{geometricformula}  to deduce (similarly to the proof of Theorem \ref{thmunderlying}) that we have 
 \begin{align*}
(R_{\bar C_p}^k X)^{C_{p^m} \times C_r} \simeq 
 X^{C_{p^{m-k}}\times C_r} \times_{(\Phi^{C_p} X)^{C_{p^{m-k}}\times C_r}}  \times \ldots
  \times_{{(\Phi^{C_p} X)^{C_{p^{m-1}}\times C_r}}} X^{{C_{p^m}}\times C_r}
  \end{align*}
for $r$ coprime to $p$. For fixed value of $m$ and $r$ this term stabilizes in $k$. The same is true if we apply geometric fixed points to this sequence by an isotropy separation argument (or by a direct calulation that the resulting tower is equivalent to the initial tower as in the proof of Theorem \ref{thmunderlying}). Therefore we deduce that the limit of the diagram of objects in $\T\Sp_\F$
$$
\ldots \to R_{\bar F_n}^3 \xto{R^2_{\bar F_n} \nu} R^2_{\bar F_n} \xto{R_{\bar F_n} \nu} R_{\bar F_n} \xto{\nu} \id \ .
$$
commutes with all endofunctors $\Phi^{C_p}$. Thus it is the underlying object of the limit in the $\infty$-category 
$\Fix_{F_1,\ldots,F_{n-1}}(\CoAlg_{F_{n},F_{n+1},\ldots})$. Therefore Lemma \ref{lemma_multilimit} implies that this is the underlying object of the right adjoint $R_{\iota_n}$ which we try to understand. Then specializing the formula above to $r = 0$ and $m = 0$
we obtain that the counit of the adjunction between $\bar F_n$ and $R_{\bar F_n}$ induces an equivalence $(\iota_n R_{\iota_n}X)^{\{e\}} \simeq X^{\{e\}}$as desired.  

Now the right adjoint of the full inclusion
$$\iota : \invlim(\calC_n) = \Fix_{(\Phi^{C_p})_{p \in \mathbb{P}}}(\T\Sp_\F)   \subseteq  \CoAlg_{(\Phi^{C_p})_{p \in \mathbb{P}}}(\T\Sp_\F) = \calC_1 $$
is the colimit of the right adjoints 
$$
\calC_1 \xto{R_{\iota_1}} \calC_2 \xto{R_{\iota_2}} \calC_3 \xto{R_{\iota_3}} \ldots\ .
$$
For each finite subgroup $C_m\subseteq \T$, the genuine $C_m$-fixed points in this limit stabilize; namely, they are constant after passing all primes dividing $m$. This again implies that the limit is preserved by all $\Phi^{C_p}$, and so the limit can be calculated on the underlying objects in $\T\Sp_\F$, which gives the claim of the Theorem.
\end{proof}

\section{Equivalence of $\infty$-categories of cyclotomic spectra}\label{sec:proofequivalence}

We start this section by proving that  for a fixed prime $p$, our $\infty$-category of $p$-cyclotomic spectra is equivalent to the $\infty$-category of genuine $p$-cyclotomic spectra when restricted to bounded below spectra. 
The ``forgetful'' functor 
\begin{equation}\label{functor}
\Cyc_p\to \Cycn_p
\end{equation}
from Proposition~\ref{propforgetp} restricts to a functor
of bounded below objects on both sides. 
By the adjoint functor theorem (and the results about presentability and colimits of the last section) the functor \eqref{functor} admits a right adjoint functor. We will try to understand the right adjoint well enough to prove that this functor induces an equivalence of subcategories of bounded below cyclotomic spectra.

To this end we factor the forgetful functor \eqref{functor} as

$$
\xymatrix{
\Cyc_p = \Fix_{\Phi^{C_p}}(\Tp\Sp) \ar[r]^-{\iota} & \CoAlg_{\Phi^{C_p}}(\Tp\Sp) \ar[d]^{U} \\
& \CoAlg_{(-)^{tC_p}}(\Sp^{B\Tp}) = \Cycn_p.
}
$$
Here  
the first functor $\iota$ is the inclusion and 
the second functor $U$ takes the underlying naive spectrum. See also the construction \ref{propforgetp} of the forgetful functor. 
Both functors in this diagram admit right adjoints by the adjoint functor theorem, the right adjoint $R$ of the first functor was discussed in the last section and the right adjoint $B$ of the second functor $U$ will be discussed now.

Recall the notion of Borel complete spectra and the Borel completion from Theorem \ref{thm:borelcompletion}. The following lemma is the non-formal input in our proof and a consequence of the Tate orbit lemma (Lemma \ref{lem:tateorbit}).

\begin{lemma}\label{lemborel}
Let $X$ be a Borel complete object in $\Tp\Sp$ whose underlying spectrum is bounded below. Then the object $\Phi^{C_p}X \in \Tp\Sp$ is also Borel complete. In particular the canonical map
$$
\Phi^{C_p}B_{\Tp}Y \xto{\simeq} B_{\Tp} (Y^{tC_p})
$$
is an equivalence 
for every bounded below spectrum $Y$ with $\Tp$-action.
\end{lemma}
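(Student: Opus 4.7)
The plan is to reduce the Borel completeness of $\Phi^{C_p}X$ to the Tate orbit lemma (via Lemma~\ref{lemtet}) by exploiting the pullback square of Lemma~\ref{lemmapullback}. Concretely, to verify that $\Phi^{C_p}X \in \Tp\Sp$ is Borel complete, I need to check that the natural map
$$(\Phi^{C_p}X)^{C_{p^m}} \longrightarrow (\Phi^{C_p}X)^{hC_{p^m}}$$
is an equivalence for every $m \geq 0$ (where $\Phi^{C_p}X$ is viewed as a $\Tp$-spectrum via the $p$-th power identification $\Tp/C_p \simeq \Tp$).

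First I would apply Lemma~\ref{lemmapullback} to $X$, viewed as a $C_{p^{m+1}}$-equivariant spectrum, giving a pullback square
\[\xymatrix{
X^{C_{p^{m+1}}} \ar[r] \ar[d] & (\Phi^{C_p}X)^{C_{p^m}} \ar[d] \\
X^{hC_{p^{m+1}}} \ar[r] & X^{tC_{p^{m+1}}}.
}\]
Since $X$ is Borel complete, the left vertical map is an equivalence; comparing fibers of the vertical maps in the pullback square shows that the right vertical map is an equivalence as well. Thus $(\Phi^{C_p}X)^{C_{p^m}} \simeq X^{tC_{p^{m+1}}}$. Specializing this to $m=0$ identifies the underlying spectrum of $\Phi^{C_p}X$ with $X^{tC_p}$ (as a $\Tp/C_p \simeq \Tp$-spectrum), so in particular $(\Phi^{C_p}X)^{hC_{p^m}} \simeq (X^{tC_p})^{hC_{p^m}}$. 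The key input now is Lemma~\ref{lemtet} (which rests on the Tate orbit lemma and uses the hypothesis that $X$ is bounded below): it gives a natural equivalence
$$X^{tC_{p^{m+1}}} \xrightarrow{\sim} (X^{tC_p})^{hC_{p^m}}.$$
Chaining these identifications yields a natural equivalence $(\Phi^{C_p}X)^{C_{p^m}} \simeq (\Phi^{C_p}X)^{hC_{p^m}}$, and one must check that it agrees with the canonical comparison map. This last compatibility check is the main place one has to be careful: it amounts to verifying that the horizontal map $X^{tC_{p^{m+1}}} \to (X^{tC_p})^{hC_{p^m}}$ in the pullback analysis is indeed the canonical one appearing in Lemma~\ref{lemtet}, which follows by tracing through the constructions and using that the bottom row of the pullback factors through the canonical comparison.

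For the ``in particular'' statement, apply the first part to $X = B_{\Tp}Y$, which is Borel complete with bounded below underlying spectrum $Y$. By what we just proved, $\Phi^{C_p}B_{\Tp}Y$ is a Borel complete $\Tp$-spectrum, so by Theorem~\ref{thm:borelcompletion} it lies in the essential image of $B_{\Tp}$ and is determined by its underlying spectrum. From the $m=0$ case of the pullback, that underlying spectrum is $Y^{tC_p}$ with the residual $\Tp/C_p \simeq \Tp$-action, so $\Phi^{C_p}B_{\Tp}Y \simeq B_{\Tp}(Y^{tC_p})$ via the canonical comparison map, as required.
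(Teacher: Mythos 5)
Your proof is correct and follows essentially the same route as the paper: the paper simply invokes Proposition~\ref{square} (which packages Lemma~\ref{lemmapullback} and Lemma~\ref{lemtet} together), so that the right vertical map in the pullback square is already the canonical comparison map $(\Phi^{C_p}X)^{C_{p^m}}\to (X^{tC_p})^{hC_{p^m}}$ and the compatibility check you flag at the end is absorbed into that proposition. Apart from this repackaging, the argument — left vertical map an equivalence by Borel completeness, hence right vertical map an equivalence, plus the Tate orbit lemma input via Lemma~\ref{lemtet} — is identical.
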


\begin{proof}
As $X$ is bounded below we get from Proposition~\ref{square} a pullback square
\[\xymatrix{
 X^{C_{p^{n}}}\ar[r]\ar[d] & (\Phi^{C_p} X)^{C_{p^{n-1}}}\ar[d] \\
 X^{hC_{p^{n}}} \ar[r]&  (X^{tC_p})^{hC_{p^{n-1}}}
}\]
where we want to remind the reader of the discussion following Proposition \ref{prop:hesselholtmadsen} for the definition of the right hand map. 
To show that $\Phi^{C_p}X$ is Borel complete we have to verify that the right hand map is an equivalence. This now follows since the left hand side is an equivalence by assumption.
\end{proof}

\begin{lemma}\label{lemborel3}
Let $(X,\varphi_p) \in \Cycn_{p}$ be a bounded below cyclotomic spectum. Then the right adjoint $B: \Cycn_p \to \CoAlg_{\Phi^{C_p}}(\Tp\Sp)$  of the forgetful functor applied to $(X,\varphi_p)$ is given by the Borel complete spectrum $B_{\Tp}X \in \Tp\Sp$ with the coalgebra structure map 
$$
\B_{\Tp}\varphi_p\colon B_{\Tp}X \to B_{\Tp} (X^{tC_p}) \simeq \Phi^{C_p}(B_{\Tp}X).
$$
The counit $UB(X,\varphi) \to (X,\varphi)$ is an equivalence.
\end{lemma}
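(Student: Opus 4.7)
The plan is to verify the universal property of the right adjoint $B$ directly, using the formula for mapping spaces in a lax equalizer (Proposition~\ref{prop:laxeq}~(ii)) together with the ordinary Borel-completion adjunction and the identification provided by Lemma~\ref{lemborel}.

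Concretely, for any $(Y, \psi \colon Y \to \Phi^{C_p} Y) \in \CoAlg_{\Phi^{C_p}}(\Tp\Sp)$, Proposition~\ref{prop:laxeq}~(ii) gives
\[
\Map_{\CoAlg_{\Phi^{C_p}}(\Tp\Sp)}\!\big(Y,\, B_{\Tp} X\big) \simeq \mathrm{Eq}\Big( \Map_{\Tp\Sp}(Y, B_{\Tp} X) \rightrightarrows \Map_{\Tp\Sp}(Y, \Phi^{C_p} B_{\Tp} X) \Big),
\]
where one map postcomposes with the coalgebra structure $B_{\Tp} X \to \Phi^{C_p} B_{\Tp} X$ and the other precomposes with $\psi$ followed by $\Phi^{C_p}$. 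On the other side,
\[
\Map_{\Cycn_p}\!\big(U(Y),\, (X,\varphi_p)\big) \simeq \mathrm{Eq}\Big( \Map_{\Sp^{B\Tp}}(Y^\sharp, X) \rightrightarrows \Map_{\Sp^{B\Tp}}(Y^\sharp, X^{tC_p}) \Big),
\]
where $Y^\sharp$ denotes the underlying spectrum with $\Tp$-action.

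The first terms match by the adjunction of Theorem~\ref{thm:borelcompletion}, which gives $\Map_{\Sp^{B\Tp}}(Y^\sharp, X) \simeq \Map_{\Tp\Sp}(Y, B_{\Tp} X)$. The second terms match by the same adjunction combined with Lemma~\ref{lemborel}: since $X$ is bounded below, so is $X^{tC_p}$ (by Lemma~\ref{lem:tatecomplete} it is even already $p$-complete), and Lemma~\ref{lemborel} supplies a natural equivalence $B_{\Tp}(X^{tC_p}) \simeq \Phi^{C_p} B_{\Tp} X$, hence
\[
\Map_{\Sp^{B\Tp}}(Y^\sharp, X^{tC_p}) \simeq \Map_{\Tp\Sp}(Y, B_{\Tp} X^{tC_p}) \simeq \Map_{\Tp\Sp}(Y, \Phi^{C_p} B_{\Tp} X).
\]
The main point to check is that the two pairs of parallel maps match. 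This is a compatibility of natural transformations: the construction of $U$ in Proposition~\ref{propforgetp} uses precisely the transformation $\Phi^{C_p} \to \Phi^{C_p} B_{\Tp} \simeq (-)^{tC_p} \circ (-)^\sharp$ on underlying spectra, so the comparison of the two equalizer diagrams follows by naturality of the Borel-completion unit together with the definition of the coalgebra structure $B_{\Tp}\varphi_p$ on $B_{\Tp} X$.

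For the counit statement, observe that the underlying $\Tp$-spectrum of $B(X,\varphi_p)$ is $U(B_{\Tp} X) = X$, since the composite of the forgetful functor with $B_{\Tp}$ is equivalent to the identity on $\Sp^{B\Tp}$ (again by Theorem~\ref{thm:borelcompletion}). Under this identification the coalgebra structure map of $B(X,\varphi_p)$, which is by construction $B_{\Tp}\varphi_p$ composed with the equivalence $B_{\Tp}(X^{tC_p}) \simeq \Phi^{C_p} B_{\Tp} X$, has underlying map the original $\varphi_p\colon X \to X^{tC_p}$. Hence the counit $UB(X,\varphi_p) \to (X,\varphi_p)$ is an equivalence in $\Cycn_p$ by Proposition~\ref{prop:laxeq}~(ii). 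The only non-formal ingredient in the whole argument is Lemma~\ref{lemborel}, and hence ultimately the Tate orbit lemma.
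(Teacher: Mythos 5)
Your argument is correct and follows the paper's own proof essentially verbatim: verify the universal property via the lax-equalizer mapping-space formula of Proposition~\ref{prop:laxeq}~(ii), transport both equalizer diagrams across the Borel-completion adjunction of Theorem~\ref{thm:borelcompletion}, and use Lemma~\ref{lemborel} to identify $\Phi^{C_p}B_{\Tp}X$ with $B_{\Tp}(X^{tC_p})$. One small slip: $X^{tC_p}$ need not be bounded below when $X$ is (e.g.\ $H\Z^{tC_p}$ has homotopy in all negative even degrees), but that parenthetical claim is never used — Lemma~\ref{lemborel} only requires $X$ itself to be bounded below.
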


\begin{proof}
We have to check the universal mapping property. Thus let $(Y, \Phi_p)$ be a $\Phi^{C_p}$-coalgebra. Then the mapping space in the $\infty$-category $\CoAlg_{\Phi^{C_p}}(\Tp\Sp)$ from $(Y, \Phi_p)$ to $(B_{\Tp}X, B_\Tp \varphi_p)$ is given by the equalizer of the diagram
\[
\xymatrix{\Map_{\Tp\Sp}(Y,B_{\Tp}X) \ar^-{(\Phi_p)_*\Phi^{C_p}}[rr]<2pt> \ar_-{(B_{\Tp} \varphi_p)_*}[rr]<-2pt> && \Map_{\Tp\Sp}(Y,B_{\Tp}X^{tC_p})}
\]
as we can see from Proposition \ref{prop:laxeq}(ii). But since the Borel functor $B_{\Tp}$ is right adjoint to the forgetful functor $\Tp\Sp \to \Sp^{B\Tp}$, this equalizer can be rewritten as the equalizer of
\[
\xymatrix{\Map_{\Sp^{B\Tp}}(Y,X) \ar^-{(\varphi_{Y,p})_*(-^{tC_p})}[rr]<2pt> \ar_-{(\varphi_p)_*}[rr]<-2pt> && \Map_{\Sp^{B\Tp}}(Y,X^{tC_p})}\ ,
\]
where we have written $(Y, \varphi_{Y,p})$ for the corresponding naive $p$-cyclotomic spectrum. This last equalizer is again by Proposition \ref{prop:laxeq}(ii) the mapping space between the cyclotomic spectra $(Y, \varphi_{Y,p})$ and $(X,\varphi_p)$ in the $\infty$-category $\Cycn_p$. This shows the claim.
\end{proof}

\begin{thm}\label{thmpequivalence}
The forgetful functor 
$\Cyc_p \to \Cycn_p$  induces an equivalence between the subcategories of those objects whose underlying non-equivariant spectra are bounded below.
\end{thm}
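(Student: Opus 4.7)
The forgetful functor $U\iota\colon \Cyc_p \to \Cycn_p$ factors as $\iota\colon \Cyc_p = \Fix_{\Phi^{C_p}}(\Tp\Sp) \hookrightarrow \CoAlg_{\Phi^{C_p}}(\Tp\Sp)$ followed by the functor $U$ taking underlying naive coalgebras. Both factors admit right adjoints: $\iota$ admits $R_\iota$ by Theorem~\ref{thmunderlying}, and $U$ admits $B$ by Lemma~\ref{lemborel3}. Therefore $U\iota$ has the right adjoint $R_\iota B\colon \Cycn_p \to \Cyc_p$, and this right adjoint preserves the full subcategory of bounded below objects since neither $R_\iota$ nor $B$ alters the underlying non-equivariant spectrum (using Theorem~\ref{thmunderlying} for $R_\iota$ and the Borel-completion description of $B$). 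I will show that both the unit and the counit of the adjunction restrict to equivalences on bounded below objects.

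The counit $U\iota R_\iota B(X,\varphi_p) \to (X,\varphi_p)$ for bounded below $(X,\varphi_p)$ factors as $U\iota R_\iota B \to UB \xto{\sim} \id$, where the last map is an equivalence by Lemma~\ref{lemborel3}. The first map is obtained by applying $U$ to the counit $\iota R_\iota B \to B$, which Theorem~\ref{thmunderlying} asserts to be an equivalence on underlying non-equivariant spectra. Since equivalences in $\Sp^{B\Tp}$ are detected on underlying non-equivariant spectra, and equivalences in $\Cycn_p$ are in turn detected in $\Sp^{B\Tp}$ by Proposition~\ref{prop:laxeq}(ii), the counit is an equivalence in $\Cycn_p$.

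The crucial step is to show the unit $\eta_Y\colon Y \to R_\iota B U\iota Y$ is an equivalence for every bounded below $Y \in \Cyc_p$. This can be checked after applying the genuine fixed point functor $(-)^{C_{p^m}}$ for every $m \geq 0$. Since $Y$ is $p$-cyclotomic, $\Phi^{C_{p^i}}Y \simeq Y$ is bounded below for all $i$, so Corollary~\ref{cor:longpullback} presents $Y^{C_{p^m}}$ as an iterated pullback built from the homotopy fixed points $Y^{\{e\}, hC_{p^k}}$ and the Tate pieces $(Y^{\{e\},tC_p})^{hC_{p^{k-1}}}$. On the other side, the explicit formula for $R_\iota$ in Theorem~\ref{thmunderlying} combined with Corollary~\ref{corrightad} and Corollary~\ref{geometricformula} presents $(R_\iota B U\iota Y)^{C_{p^m}}$ as an iterated pullback built from $(B_\Tp Y^{\{e\}})^{C_{p^k}}$ and $(\Phi^{C_p} B_\Tp Y^{\{e\}})^{C_{p^{k-1}}}$. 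Borel completeness identifies the first with $Y^{\{e\}, hC_{p^k}}$, and Lemma~\ref{lemborel} (which requires $Y^{\{e\}}$ bounded below, and itself rests on the Tate orbit lemma via Proposition~\ref{square}) identifies $\Phi^{C_p} B_\Tp Y^{\{e\}} \simeq B_\Tp(Y^{\{e\},tC_p})$, whose $C_{p^{k-1}}$-fixed points are $(Y^{\{e\},tC_p})^{hC_{p^{k-1}}}$. Hence both iterated pullbacks are built from precisely the same data, and the unit $\eta_Y$ realizes the canonical comparison between them.

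The principal technical delicacy is ensuring that the identification of the two pullback presentations is natural with respect to the unit map $\eta_Y$, as opposed to merely an abstract equivalence of objects; this requires carefully matching the coalgebra-structure maps on the two sides. A clean way to do this is to induct on $m$, using Proposition~\ref{square} as the inductive step in the description of $Y^{C_{p^m}}$ and the recursion $R_\iota \simeq \lim_k R_{\bar C_p}^k$ of Proposition~\ref{proplimit} in the description of $(R_\iota B U\iota Y)^{C_{p^m}}$, so that the towers can be compared level by level. In both directions the essential non-formal ingredient is the Tate orbit lemma (Lemma~\ref{lem:tateorbit}), which is what makes the two pullback descriptions converge to the same answer under the bounded below hypothesis.
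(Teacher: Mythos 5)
Your factorization of the forgetful functor, the construction of the right adjoint $R_\iota\circ B$, and your counit argument all coincide with the paper's proof: the counit is handled exactly as you describe, via Lemma~\ref{lemborel3} together with Theorem~\ref{thmunderlying}, and the reduction of "equivalence in $\Cycn_p$" to "equivalence of underlying spectra" is correct.

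Where you diverge is the unit, and this is also where your argument is not actually complete. The paper never verifies the unit directly. Instead it observes that $U\circ\iota$ \emph{reflects equivalences}: a map of genuine $p$-cyclotomic spectra is an equivalence iff it is one on all geometric fixed points, and for an object of $\Cyc_p$ every $\Phi^{C_{p^n}}$ is identified with the underlying spectrum, so equivalences are detected by $U\circ\iota$. Once the left adjoint reflects equivalences, the triangle identity $\epsilon_{L Y}\circ L(\eta_Y)\simeq \id$ forces $L(\eta_Y)$, and hence $\eta_Y$, to be an equivalence as soon as the counit is one. This one-line observation replaces your entire second half. By contrast, your direct comparison of the two iterated-pullback presentations of $Y^{C_{p^m}}$ and $(R_\iota B U\iota Y)^{C_{p^m}}$ is plausible --- the building blocks do match, for exactly the reasons you give --- but the step you flag yourself, namely that the comparison is induced by $\eta_Y$ rather than being an abstract coincidence of limits, is genuinely the hard part and is left as a plan rather than a proof. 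Tracking the coalgebra structure maps through the recursion $R_\iota\simeq\lim_k R_{\bar C_p}^k$ and matching them against the $R$- and $\varphi_p$-maps in Corollary~\ref{cor:longpullback} is essentially a reproof of Theorem~\ref{computeTCp} with added functoriality, and carrying it out carefully would roughly double the length of the argument. I would therefore keep your first two paragraphs and replace the last two with the reflection-of-equivalences observation; as written, the unit step is the one place where the proof has a real (though repairable) hole.
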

\begin{proof}
We have to show that the composition 
$$
U \circ \iota: \Cyc_p \to \Cycn_p
$$
is an equivalence of $\infty$-categories when restricted to full subcategories of bounded below objects. 
As we have argued before both functors $U$ as well as $\iota$ have right adjoints $B$ and $R$, thus the composition has a right adjoint. We  show that the unit and the counit of the adjunction are equivalences. First we observe that the functor $U \circ \iota$ reflects equivalences. This follows since equivalences of genuine $p$-cyclotomic spectra can be detected on underlying spectra, since they can be detected on geometric fixed points.

Thus, it is sufficient to check that the counit of the adjunction is an equivalence, i.e.~the map
$$
U\iota RB(X) \to X  
$$
is an equivalence of spectra for every bounded below spectrum $X \in \Cycn_p$. This follows from Lemma \ref{lemborel3} and Theorem \ref{thmunderlying}.
\end{proof}

\begin{remark} With the same proof one also gets an equivalence between genuine and naive $\infty$-categories of $p$-cyclotomic spectra which support a $\T$-action (as opposed to a $\Tp$-action), see Remark \ref{remarktaction}. This variant of the genuine $p$-cyclotomic $\infty$-category is in fact equivalent to the underlying $\infty$-category of the model-$\ast$-category of $p$-cyclotomic spectra considered by Blumberg-Mandell, as shown by Barwick-Glasman \cite{BarwickGlasman}  (cf.~Theorem \ref{thm:orthocyclo} above). We have decided for the slightly different $\infty$-category of $p$-cyclotomic spectra since the $\Tp$-action is sufficient to get $\TC(-,p)$ and this avoids the completion issues that show up in the work of Blumberg-Mandell \cite{BlumbergMandell}.
\end{remark}

\begin{remark}
With the same methods (but much easier since Lemma \ref{lemborel} is almost a  tautology in this situation) we also get an unstable statement, namely that a genuine  $\Tp$-space $X$ with an equivalence of $\Tp$-spaces $X \xto{\sim} X^{C_p}$ is essentially the same as a naive $\Tp$-space together with a $\Tp$-equivariant map $X \to X^{hC_p}$.
\end{remark}

Now we  prove the global analogue of the statement above, namely that  our $\infty$-category of  cyclotomic spectra is equivalent to the $\infty$-category of genuine cyclotomic spectra when restricted to bounded below spectra. Similar to the $p$-primary case we consider the subcategories of $\Cyc$ and $\Cycn$ 
of objects whose underlying spectra are bounded below. The ``forgetful'' functor 
\begin{equation}\label{functor2}
\Cyc \to \Cycn
\end{equation}
from Proposition~\ref{propforget} takes bounded below objects to bounded below objects. 
By the adjoint functor theorem and the results of the last section the forgetful functor \eqref{functor2} admits a right adjoint functor. 
We factor the functor \eqref{functor2} as
$$
\xymatrix{
\Cyc = \Fix_{(\Phi^{C_p})_{p \in \mathbb{P}}}(\T\Sp_\F) \ar[r]^-{\iota} & \CoAlg_{(\Phi^{C_p})_{p \in \mathbb{P}}}(\T\Sp_\F) \ar[d]^{U} \\
& \CoAlg_{(-^{tC_p})_{p \in \mathbb{P}}}(\Sp^{B\T}) \simeq \Cycn.
}
$$
Here  
the first functor $\iota$ is the inclusion and 
the second functor $U$ takes the underlying naive spectrum. The equivalence $ \CoAlg_{(-^{tC_p})_{p \in \mathbb{P}}}(\Sp^{B\T}) \simeq \Cycn$ follows from Lemma \ref{fixpointszero}. We now want to understand the right adjoint functor of the functor $U$.

\begin{lemma}\label{Borel}
Let $X$ be a Borel complete object in $\T\Sp_\F$ whose underlying spectrum is bounded below. Then for every prime $p$ the spectrum $\Phi^{C_p}X \in \T\Sp_\F$ is also Borel complete. 
\end{lemma}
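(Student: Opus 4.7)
Proof plan.

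The strategy is to reduce to the following pullback square, for each finite subgroup $C_n\subseteq \T$:
\[
\xymatrix{
X^{C_{pn}} \ar[r]\ar[d] & (\Phi^{C_p} X)^{C_n}\ar[d] \\
X^{hC_{pn}}\ar[r] & (Z^{tC_p})^{hC_n}
}
\]
where $X=B_\T Z$ and $C_{pn}\subseteq \T$ is the preimage of $C_n\subseteq \T/C_p$ under the quotient $\T\to \T/C_p$. Once this square is a pullback, Borel-completeness of $X$ forces the left vertical map to be an equivalence, hence so is the right vertical; combined with the identification of the underlying spectrum of $\Phi^{C_p}X$ as $Z^{tC_p}$ (so that $(\Phi^{C_p}X)^{hC_n}=(Z^{tC_p})^{hC_n}$), this gives $(\Phi^{C_p}X)^{C_n}\simeq (\Phi^{C_p}X)^{hC_n}$ for every $C_n\subseteq \T$, which is precisely the Borel-completeness of $\Phi^{C_p}X$.

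To obtain the square I would use the isotropy-separation cofiber sequence
\[
X_{hC_p}\to X^{C_p}\to \Phi^{C_p}X
\]
in $(\T/C_p)\Sp_\F$, obtained by smashing $X$ with $EC_{p,+}\to S^0\to \widetilde{EC_p}$ and taking genuine $C_p$-fixed points (the generalization of Proposition~\ref{prop:hesselholtmadsen} to the normal subgroup $C_p\subseteq \T$). Taking genuine $C_n$-fixed points produces the top cofiber sequence, whose middle term is $X^{C_{pn}}$ and whose fiber is $(X_{hC_p})^{C_n}$. On the Borel side, applying $(-)^{hC_n}$ to the Tate cofiber sequence $Z_{hC_p}\to Z^{hC_p}\to Z^{tC_p}$ of underlying spectra yields the cofiber sequence $(Z_{hC_p})^{hC_n}\to Z^{hC_{pn}}\to (Z^{tC_p})^{hC_n}$, and there is a natural map between these two cofiber sequences coming from Borel completion. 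The square in question is the right-hand square, so it suffices to show the map of fibers $(X_{hC_p})^{C_n}\to (Z_{hC_p})^{hC_n}$ is an equivalence.

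The key input is that homotopy orbits along a normal subgroup send Borel objects to Borel objects: for any $Z\in \Sp^{B\T}$ one has
\[
(B_\T Z)_{hC_p}\simeq B_{\T/C_p}\bigl(Z_{hC_p}\bigr)
\]
in $(\T/C_p)\Sp_\F$, and similarly $(B_\T Z)^{C_p}\simeq B_{\T/C_p}(Z^{hC_p})$. Both follow from a purely formal adjunction computation combining the orbit/inflation/fixed-point adjunctions $(-)_{hC_p}\dashv \inf\dashv (-)^{C_p}$ (on the genuine and on the Borel side), the Borel adjunction $U\dashv B$, the manifest compatibility $U\circ \inf=\inf'\circ U'$, and the reflectivity of the Borel subcategory, which gives $\Map(Y,W)=\Map(UY,U'W)$ whenever $Y$ is Borel. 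Consequently $X_{hC_p}$ is Borel, so its genuine $C_n$-fixed points coincide with the Borel $C_n$-fixed points of its underlying spectrum, which is $(Z_{hC_p})^{hC_n}$. This matches the fibers of the two cofiber sequences, forces the right-hand terms to agree, and completes the proof. The main obstacle is exactly this adjunction identification of orbits of Borel objects; once it is in hand the rest is bookkeeping with the two cofiber sequences, with the bounded-below hypothesis entering only indirectly (to guarantee that the underlying spectrum of $\Phi^{C_p}X$ behaves as expected and for compatibility with the $p$-primary Lemma~\ref{lemborel}).
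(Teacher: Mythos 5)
The overall architecture of your argument is reasonable and parallels the paper's $p$-primary case: compare the genuine isotropy-separation sequence with the Borel/Tate sequence, identify the fibers, and deduce that the right-hand vertical map in the resulting square is an equivalence. But the step you dismiss as ``a purely formal adjunction computation'' --- the identification $(B_\T Z)_{hC_p}\simeq B_{\T/C_p}(Z_{hC_p})$ --- is false as a formal statement, and it is exactly where all the content of the lemma lives. The functor $(-)_{hC_p}$ is a \emph{left} adjoint (to inflation) while Borel completion is a \emph{right} adjoint (to the forgetful functor), so they do not commute for formal reasons; the adjunctions only let you compute maps \emph{out of} $(B_\T Z)_{hC_p}$, which is useless for verifying that it is a local (Borel-complete) object. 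Concretely, Borel-completeness of $(B_\T Z)_{hC_p}$ tested on the subgroup $C_p\subseteq \T/C_p$ asserts, via the Adams isomorphism, that $Z_{hC_{p^2}}\to (Z_{hC_p})^{hC_p}$ is an equivalence --- this is precisely the Tate orbit lemma (Lemma~\ref{lem:tateorbit}), which is the central non-formal input of the paper and fails without the bounded-below hypothesis (e.g.\ for $\KU$, Example~\ref{counterexamples}). Your closing remark that bounded-belowness enters ``only indirectly'' is therefore a red flag: it enters essentially, at this very step. (Your parallel claim for fixed points, $(B_\T Z)^{C_p}\simeq B_{\T/C_p}(Z^{hC_p})$, \emph{is} formal, since there both functors are right adjoints; and your reflectivity statement should require the \emph{target}, not the source, to be Borel.) There is also a secondary issue for $C_n$ with $n$ not a $p$-power: the fiber of the genuine sequence is $(E\mathcal F[C_p]_+\wedge X)^{C_{pn}}$ for the family of subgroups not containing $C_p$, which is not a free family, so even identifying it with $(Z_{hC_p})^{hC_n}$ requires a further (coprime-order) argument rather than the Adams isomorphism alone.

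For comparison, the paper avoids these issues by checking geometric rather than genuine fixed points: for cyclic $p$-subgroups it quotes the already-established $p$-primary statement (Lemma~\ref{lemborel}, which does invoke the Tate orbit lemma through Proposition~\ref{square}), and for subgroups $H$ with $q$-torsion, $q\neq p$, it shows that \emph{both} sides have vanishing $\Phi^H$ --- the source because $\Phi^H\Phi^{C_p}X\simeq \Phi^{\tilde H}X\simeq 0$ by Lemma~\ref{vanishingBorel}, the target by a module argument over the Borel-complete sphere. If you want to keep your route, you must replace the ``formal'' claim by an actual proof that genuine $C_p$-orbits of bounded-below Borel-complete objects are Borel-complete, which amounts to re-proving the Tate orbit lemma together with its prime-to-$p$ companion.
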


\begin{proof} We know that the underlying spectrum with $\T$-action of $\Phi^{C_p}X$ is given by $X^{tC_p} \in \Sp^{B\T}$. As in Theorem \ref{thm:borelcompletion} we consider the associated Borel complete spectrum  $B(X^{tC_p}) \in \T\Sp_\F$. There is a morphism $\Phi^{C_p}X \to B(X^{tC_p})$ as the unit of the adjunction. We need to show that this map is an equivalence, i.e.~that it is an equivalence on all geometric fixed points for finite subgroups $H \subseteq \T$. For $H$ a cyclic $p$-group this has already been done in Lemma \ref{lemborel}. Thus we can restrict attention to subgroups $H$ that have $q$-torsion for some prime $q \neq p$. In this case it follows from the next lemma that $\Phi^H\Phi^{C_p}X \simeq \Phi^{\tilde{H}} X \simeq 0$ where $\tilde H = \{ h \in \T \mid h^p \in H\}$.  

If $X$ is an algebra object, it follows from  Corollary \ref{cor:borelcompletionlaxsymm} that the map $\Phi^{C_p}X \to B(X^{tC_p})$ is a map of algebras. Since geometric fixed points are lax symmetric monoidal we get that also the map
$$
\Phi^H\Phi^{C_p}X \to \Phi^H B(X^{tC_p})
$$
is a map of algebras. Since the source is zero this also implies that the target is zero. Now since every Borel complete spectrum $X$ is a module over the Borel complete sphere it follows that for all Borel complete spectra $X$  the spectrum $\Phi^H B(X^{tC_p})$ is zero and thus the claim.
\end{proof}

\begin{lemma}\label{vanishingBorel}
Let $X$ be a Borel complete $G$-spectrum for some finite group $G$ which is not a $p$-group for some prime $p$. Then $\Phi^G(X) \simeq 0$. 
\end{lemma}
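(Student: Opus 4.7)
The plan is to reduce the statement to a computation for the Borel complete sphere $B_G(\mathbb{S})$ and then exploit the assumption that $|G|$ has at least two distinct prime divisors. Since $B_G \colon \Sp^{BG} \to G\Sp$ is lax symmetric monoidal by Corollary~\ref{cor:borelcompletionlaxsymm}, the object $B_G(\mathbb{S})$ is a commutative algebra in $G\Sp$, and every Borel complete $X$ is canonically a module over it via the equivalence $X\simeq B_G(X^{\{e\}})$. Applying the lax symmetric monoidal functor $\Phi^G$ will then make $\Phi^G X$ a module over $\Phi^G B_G(\mathbb{S})$, so it will suffice to prove that $\Phi^G B_G(\mathbb{S}) \simeq 0$; a module over a zero algebra is zero. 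This mirrors the module-theoretic reduction already used in the proof of Lemma~\ref{Borel}.

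For the vanishing of $\Phi^G B_G(\mathbb{S})$, I would proceed by induction on $|G|$. Pick two distinct primes $p\neq q$ dividing $|G|$ and a nontrivial proper normal subgroup $N\trianglelefteq G$ (such an $N$ exists whenever $G$ is solvable, e.g.~by taking a normal Sylow subgroup; in the general case one first reduces to this situation using the idempotent decomposition of the Burnside ring $A(G) = \pi_0^G \mathbb{S}$). By Proposition~\ref{prop:compgeomfix} one has $\Phi^G \simeq \Phi^{G/N}\circ \Phi^N$, so it is enough to analyze $\Phi^N B_G(\mathbb{S})$ as a $G/N$-spectrum. The key technical claim is that $\Phi^N B_G(\mathbb{S})$ is again Borel complete as a $G/N$-spectrum; granting this, the inductive hypothesis applied to $G/N$ finishes the argument whenever $G/N$ is still not a prime power group. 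In the remaining case, when $G/N$ is a $q$-group and $N$ is a $p$-group, the spectrum $\Phi^N B_G(\mathbb{S})$ will automatically be $p$-complete (Tate constructions for $p$-groups are $p$-complete, cf.~Lemma~\ref{lem:tatecomplete}), and a direct computation will then show that $\Phi^{G/N}$ of a $p$-complete Borel complete $G/N$-spectrum vanishes (since the $q$-group Tate obstructions that could keep $\Phi^{G/N}$ nontrivial become zero after $p$-completion).

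The hardest step will be establishing the Borel completeness of $\Phi^N B_G(\mathbb{S})$ as a $G/N$-spectrum; this amounts to comparing its genuine $H/N$-fixed points with the corresponding homotopy fixed points for all subgroups $N\subseteq H\subseteq G$. Because we do not assume $X$ is bounded below here, the Postnikov-tower arguments of Section~\ref{sec:tateorbitlemma} are not directly available, and instead I would work with the thick-subcategory techniques of Section~\ref{sec:tatemonoidal}, in particular the vanishing of the Tate construction on induced spectra, to establish the required Borel completeness one subgroup at a time.
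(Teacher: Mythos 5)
Your first step --- reducing to the Borel complete sphere via the module structure coming from the lax symmetric monoidal structure on $B_G$ --- is exactly the paper's first step and is fine. Everything after that diverges from the paper and contains genuine gaps.

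The most serious problem is the inductive skeleton itself. Writing $\Phi^G \simeq \Phi^{G/N}\circ\Phi^N$ requires a nontrivial proper \emph{normal} subgroup $N$, and a nonabelian simple group that is not a $p$-group (e.g.\ $A_5$) has none; the remark about the idempotent decomposition of the Burnside ring does not repair this, since $\Phi^G$ does not factor through restriction to a (non-normal) Sylow subgroup, and there is no reduction of the statement for a simple group to the solvable case. Even where a normal $N$ exists, your ``key technical claim'' that $\Phi^N B_G(\mathbb S)$ is Borel complete as a $G/N$-spectrum is left unproved, and it is essentially as hard as --- indeed circular with --- the lemma you are proving: establishing Borel completeness of $\Phi^N B_G(\mathbb S)$ at a subgroup $H/N$ requires controlling $\Phi^{H}B_G(\mathbb S)$ for the various $N\subseteq H\subseteq G$, and the paper's Lemma~\ref{Borel} (the $\T$-equivariant analogue of your claim) is deduced \emph{from} Lemma~\ref{vanishingBorel}, not the other way around. (Your stated reason for avoiding Postnikov arguments is also moot: once you have reduced to the Borel complete sphere, the underlying spectrum is $\mathbb S$, which is bounded below.)

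The paper's actual argument after the reduction is a short $\pi_0$ computation that you are missing. Since $\Phi^G B_G(\mathbb S)$ is an $\E_\infty$-algebra receiving a ring map from $\mathbb S^{hG}=B_G(\mathbb S)^G$, and the transfers $B_G(\mathbb S)^H\to B_G(\mathbb S)^G$ from all proper subgroups $H\subsetneq G$ die after composing with $-\to\Phi^G B_G(\mathbb S)$, it suffices to show the ideal $I\subseteq\pi_0\mathbb S^{hG}$ generated by the images of these transfers contains a unit. The augmentation $\pi_0\mathbb S^{hG}\to\pi_0\Z^{hG}=\Z$ has kernel contained in the Jacobson radical (a $\lim$-of-truncations argument), and modulo this augmentation the transfer from $H$ is multiplication by $[G:H]$; taking $H$ to range over Sylow subgroups for the distinct primes dividing $|G|$ produces coprime indices, so $I$ contains a unit and $1=0$ in $\pi_0\Phi^G B_G(\mathbb S)$. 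If you want to salvage your approach, you would need to supply an independent, non-circular proof of the Borel completeness of $\Phi^N B_G(\mathbb S)$ and handle simple groups separately; as written, the proposal does not constitute a proof.
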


\begin{proof} As $X$ is a module over the Borel complete sphere spectrum, we can assume that $X$ is the Borel complete sphere spectrum. In this case, $\Phi^G X$ is an $\E_\infty$-algebra, and it suffices to see that $1=0\in \pi_0 \Phi^G X$. There is a map of $\E_\infty$-algebras $\mathbb S^{hG} = X^G\to \Phi^G X$, and there are norm maps $X^H\to X^G$ for all proper subgroups $H\subsetneq G$ whose composite $X^H\to X^G\to \Phi^G X$ is homotopic to $0$. Let $I\subseteq \pi_0 \mathbb S^{hG}$ be the ideal generated by the images of the norm maps $\pi_0 X^H\to \pi_0 X^G$. It suffices to see that $I$ contains a unit. For this, note that there is a natural surjective map
\[
\pi_0 \mathbb S^{hG}\to \pi_0 \mathbb Z^{hG} = \mathbb Z
\]
whose kernel lies in the Jacobson radical. More precisely, we can write
\[
\mathbb S^{hG} = \lim_n (\tau_{\leq n} \mathbb S)^{hG}\ ,
\]
and the map
\[
\pi_0 (\tau_{\leq n} \mathbb S)^{hG}\to \mathbb Z
\]
is surjective with nilpotent and finite kernel. By finiteness of all $\pi_1 (\tau_{\leq n} \mathbb S)^{hG}$, we get
\[
\pi_0 \mathbb S^{hG} = \lim_n \pi_0 (\tau_{\leq n} \mathbb S)^{hG}\ .
\]
Now if an element $\alpha\in \pi_0\mathbb S^{hG}$ maps to a unit in $\mathbb Z$, then it maps to a unit in all $\pi_0 (\tau_{\leq n} \mathbb S)^{hG}$, and therefore is a unit in $\pi_0 \mathbb S^{hG}$.

Now note that the transfer maps $X^H=\mathbb S^{hH}\to X^G =\mathbb S^{hG}$ sit in commutative diagrams
\[\xymatrix{
\pi_0 \mathbb S^{hH}\ar[r]\ar[d] & \pi_0\mathbb S^{hG}\ar[d]\\
\mathbb Z\ar[r]^{[G:H]} & \mathbb Z\ .
}\]
If $G$ is not a $p$-group, then (by the existence of $p$-Sylow subgroups) the ideal of $\mathbb Z$ generated by $[G:H]$ for proper subgroups $H$ of $G$ is given by $\mathbb Z$. By the above, this implies that $I$ contains a unit, as desired.
\end{proof}

Recall that the functor 
$$
U:  \CoAlg_{(\Phi^{C_p})_{p \in \mathbb{P}}}(\T\Sp_\F) \to  \CoAlg_{(-^{tC_p})_{p \in \mathbb{P}}}(\Sp^{B\T}) \simeq \Cycn
$$
has a right adjoint. The next lemma is analogous to Lemma \ref{lemborel3}.

\begin{lemma}\label{lemborel2} Let $X \in \Cycn$ be a bounded below cyclotomic spectrum. Then the right adjoint $B: \Cycn\to \CoAlg_{(\Phi^{C_p})_{p \in \mathbb{P}}}(\T\Sp_\F)$ applied to $X$ has underlying object given by the Borel complete spectrum $B_{\T}X \in \T\Sp_\F$, and the counit $UB(X,\varphi) \to (X,\varphi)$ is an equivalence.
\end{lemma}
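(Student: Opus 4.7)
The plan is to mimic the proof of Lemma \ref{lemborel3}, but handle all primes simultaneously, exploiting Lemma \ref{fixpointszero} together with the vanishing result Lemma \ref{vanishingBorel}.

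First, I would construct the candidate $B(X) \in \CoAlg_{(\Phi^{C_p})_{p \in \mathbb{P}}}(\T\Sp_\F)$ with underlying object $B_\T X$. By Lemma \ref{Borel}, since $B_\T X$ is Borel complete with bounded below underlying spectrum, each $\Phi^{C_p} B_\T X$ is again Borel complete with underlying spectrum $X^{tC_p}$, so there is a canonical identification $\Phi^{C_p} B_\T X \simeq B_\T(X^{tC_p})$ in $\T\Sp_\F$. The right-adjoint property of $B_\T$ then promotes the naive cyclotomic structure maps $\varphi_p \colon X \to X^{tC_p}$ to $\T$-equivariant maps $B_\T \varphi_p \colon B_\T X \to B_\T(X^{tC_p}) \simeq \Phi^{C_p} B_\T X$.

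Next, I would verify that this collection of maps assembles into an object of $\CoAlg_{(\Phi^{C_p})_{p \in \mathbb{P}}}(\T\Sp_\F)$ with no further coherence data required. Here the key input is Lemma \ref{fixpointszero}(i): its hypothesis asks that $\Phi^{C_p} \Phi^{C_q}(B_\T X)$ be terminal for all distinct primes $p,q$. But $\Phi^{C_p} \Phi^{C_q} \simeq \Phi^{C_{pq}}$ by Proposition \ref{prop:compgeomfix}, and $C_{pq}$ is not a prime-power group, so Lemma \ref{vanishingBorel} applies to give $\Phi^{C_{pq}}(B_\T X) \simeq 0$. Thus the family $(B_\T \varphi_p)_p$ canonically assembles into a global coalgebra structure on $B_\T X$.

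For the universal property, let $Y$ be any object of $\CoAlg_{(\Phi^{C_p})_{p \in \mathbb{P}}}(\T\Sp_\F)$, with underlying naive cyclotomic spectrum $(U(Y), \varphi_{Y,p})$. Applying Lemma \ref{fixpointszero}(ii) (whose hypotheses are satisfied by $B(X)$ by the previous step) gives
\[
\Map_{\CoAlg}(Y, B(X)) \simeq \mathrm{Eq}\Bigl( \Map_{\T\Sp_\F}(Y, B_\T X) \rightrightarrows \prod_p \Map_{\T\Sp_\F}(Y, \Phi^{C_p} B_\T X) \Bigr),
\]
where one map is precomposition with the coalgebra structure of $Y$ and the other is postcomposition with $B_\T \varphi_p$. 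Using the adjunction between the forgetful functor $\T\Sp_\F \to \Sp^{B\T}$ and $B_\T$, together with the identification $\Phi^{C_p} B_\T X \simeq B_\T(X^{tC_p})$, this equalizer rewrites as
\[
\mathrm{Eq}\Bigl( \Map_{\Sp^{B\T}}(U(Y), X) \rightrightarrows \prod_p \Map_{\Sp^{B\T}}(U(Y), X^{tC_p}) \Bigr),
\]
which is precisely $\Map_{\Cycn}(U(Y), X)$ under the lax-equalizer description of $\Cycn$ (also arising from Lemma \ref{fixpointszero}). This exhibits $B(X)$ as the value of the right adjoint to $U$ at $X$. Finally, the counit $UB(X) \to X$ is by construction the identity on the underlying naive cyclotomic data, hence an equivalence.

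The main obstacle is the verification that the global coalgebra structure carries no higher coherences beyond the individual maps $\varphi_p$; this crucially relies on the vanishing $\Phi^{C_{pq}}(B_\T X) \simeq 0$ supplied by Lemma \ref{vanishingBorel}. Once this is in hand, the identification of mapping spaces is a formal consequence of Lemma \ref{fixpointszero} combined with the Borel adjunction.
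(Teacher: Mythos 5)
Your proposal is correct and follows essentially the same route as the paper: build the coalgebra structure on $B_\T X$ via Lemma \ref{Borel}, use Lemma \ref{vanishingBorel} to kill $\Phi^{C_p}\Phi^{C_q}B_\T X$ for distinct primes so that Lemma \ref{fixpointszero}(i) applies with no higher coherence data, and then compute mapping spaces via Lemma \ref{fixpointszero}(ii) together with the Borel adjunction. The only difference is that you spell out the intermediate identification $\Phi^{C_p}\Phi^{C_q}\simeq\Phi^{C_{pq}}$, which the paper leaves implicit.
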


\begin{proof} Consider the Borel complete spectrum $B_\T X$ with the maps\[
B_\T X \to B_\T(X^{tC_p}) \simeq \Phi^{C_p}B_\T X\ ,
\]
where we have used Lemma \ref{Borel}. We also note that we have $\Phi^{C_p} \Phi^{C_q}B_\T X \simeq 0$ for distinct primes $p,q$ by Lemma~\ref{vanishingBorel}. Thus by Lemma~\ref{fixpointszero}~(i) this equips $B_\T X$ with the structure of an object in $\CoAlg_{(\Phi^{C_p})_{p \in \mathbb{P}}}(\T\Sp_\F)$. Moreover for any other object $Y$ in $\CoAlg_{(\Phi^{C_p})_{p \in \mathbb{P}}}(\T\Sp_\F)$ we get by Lemma~\ref{fixpointszero}~(ii) the equivalence of mapping spaces
\begin{align*}
&\Map_{\CoAlg_{(\Phi^{C_p})_{p \in \mathbb{P}}}(\T\Sp_\F)}(Y,B_\T X) \\
& \simeq
\mathrm{Eq}\Big(\xymatrix{\Map_{\T\Sp_\F}(Y,B_\T X) \ar[r]<2pt> \ar[r]<-2pt> &  \prod_{p\in \bP}  \Map_{\T\Sp_\F}(Y,B_\T(X^{tC_p}))}\Big) \\
& \simeq \mathrm{Eq}\Big(\xymatrix{\Map_{\Sp^{B\T}}(UY,X) \ar[r]<2pt> \ar[r]<-2pt> &  \prod_{p\in \bP}  \Map_{\Sp^{B\T}}(UY,X^{tC_p})}\Big) \\
& \simeq \Map_{\Cycn}(UY,X). 
\end{align*}
\end{proof}

\begin{thm}\label{theoremintegralequivalence}
The forgetful functor $\Cyc \to \Cycn$  induces an equivalence between the subcategories of those objects whose underlying non-equivariant spectra are bounded below.
\end{thm}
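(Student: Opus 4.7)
The strategy follows that of Theorem~\ref{thmpequivalence}, adapted to the global setting via the machinery of Section~\ref{sec:coalgebras} for families of commuting endofunctors. The plan is to factor the forgetful functor as
$$
\Cyc = \Fix_{(\Phi^{C_p})_{p \in \bP}}(\T\Sp_\F) \xto{\iota} \CoAlg_{(\Phi^{C_p})_{p \in \bP}}(\T\Sp_\F) \xto{U} \Cycn,
$$
where the identification of the target with $\Cycn$ uses Lemma~\ref{fixpointszero} together with Lemma~\ref{vanishingBorel} (which ensures the hypothesis $\Phi^{C_p}\Phi^{C_q}X \simeq 0$ for $p \neq q$ needed to apply Lemma~\ref{fixpointszero}). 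The functor $\iota$ admits a right adjoint $R_\iota$ by Theorem~\ref{theoremunderlyingglobal} whose counit induces an equivalence on underlying spectra, and $U$ admits a right adjoint $B$ by Lemma~\ref{lemborel2} whose counit is an equivalence on bounded below objects. Hence $U\iota$ has right adjoint $R_\iota B$, and the task reduces to showing (a) that $U\iota$ reflects equivalences, and (b) that the counit of $U\iota \dashv R_\iota B$ is an equivalence on bounded below objects.

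For (a), equivalences in $\Cyc$ are detected by equivalences in $\T\Sp_\F$, which are in turn detected on all geometric fixed points $\Phi^H$ for finite subgroups $H \subseteq \T$; since every such $H$ equals $C_n$ for some $n \geq 1$, this amounts to testing on $\Phi^{C_n}$. For any map $f\colon X\to Y$ of genuine cyclotomic spectra, the coherent structure equivalences $X \simeq \Phi^{C_n} X$ and $Y \simeq \Phi^{C_n} Y$ in $\T\Sp_\F$ produce a commutative square relating $\Phi^{C_n}f$ to $f$ with vertical maps equivalences in $\T\Sp_\F$. Passing this square to underlying spectra shows that $f$ is an equivalence of underlying spectra if and only if each $\Phi^{C_n} f$ is an equivalence of underlying spectra, which together with the preceding reduction shows that $U\iota$ reflects equivalences.

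For (b), let $X \in \Cycn$ be bounded below. Lemma~\ref{lemborel2} produces an equivalence $UB(X) \xto{\simeq} X$ in $\Cycn$. Applying Theorem~\ref{theoremunderlyingglobal} with $Y = B(X)$ yields a counit map $\iota R_\iota B(X) \to B(X)$ in $\CoAlg_{(\Phi^{C_p})_p}(\T\Sp_\F)$ that induces an equivalence on underlying spectra. Since $U$ records precisely the underlying spectrum with its $\T$-action and coalgebra (Frobenius) structure, and since equivalences in $\Cycn$ are detected on underlying spectra (as $\Cycn$ is a lax equalizer over $\Sp^{B\T}$), the induced map $U\iota R_\iota B(X) \to UB(X)$ is an equivalence in $\Cycn$. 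Composing with $UB(X) \simeq X$ gives the desired equivalence, and together with (a) this completes the proof.

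The essential non-formal input has already been absorbed into the hypotheses: Lemma~\ref{Borel} (which uses the Tate orbit lemma to show that $\Phi^{C_p}$ preserves Borel completeness on bounded below objects of $\T\Sp_\F$), Lemma~\ref{vanishingBorel} (the vanishing statement needed to invoke Lemma~\ref{fixpointszero}), Lemma~\ref{lemborel2}, and Theorem~\ref{theoremunderlyingglobal}. Given these, what remains is a purely formal adjoint-functor manipulation parallel to the proof of Theorem~\ref{thmpequivalence}, and the only point requiring genuine care is the verification in step (a) that the cyclotomic structure lets us trade control over all geometric fixed points $\Phi^{C_n}$ for control over the underlying spectrum alone.
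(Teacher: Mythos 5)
Your proposal is correct and follows essentially the same route as the paper: the paper's proof of Theorem~\ref{theoremintegralequivalence} is precisely the reduction to Theorem~\ref{theoremunderlyingglobal} and Lemma~\ref{lemborel2} via the same factorization through $\CoAlg_{(\Phi^{C_p})_{p\in\bP}}(\T\Sp_\F)$, mirroring the proof of Theorem~\ref{thmpequivalence}. Your step (a), that $U\iota$ reflects equivalences because the cyclotomic structure identifies all $\Phi^{C_n}$ with the underlying object, is exactly the (more tersely stated) observation the paper relies on.
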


\begin{proof}
As in the the proof of Theorem \ref{thmpequivalence} this follows from Theorem \ref{theoremunderlyingglobal} and  Lemma \ref{lemborel2}.
\end{proof}

\begin{remark}\label{integralTC}
From Theorem \ref{theoremintegralequivalence} and the definition $\TC(X) = \map_{\Cycn}(\bS, X)$ we deduce that $\TC(X) \simeq \map_{\Cyc}(\bS, X)$ for a bounded below genuine cyclotomic spectrum $X$. We also know that this is equivalent to Goodwillie's integral TC by Theorem \ref{computeTC}. Since the bounded below part of the $\infty$-category $\Cyc$ is equivalent to the bounded below part of the $\infty$-category underlying the model-$\ast$-category of Blumberg and Mandell by Theorem \ref{thm:orthocyclo} we deduce that the mapping spectrum in their category is also equivalent to Goodwillie's integral TC. Blumberg and Mandell have only shown this equivalence after $p$-completion and not integrally. In this sense our result refines their result. It would be interesting to see a proof of this fact in the language of \cite{BlumbergMandell}. We have been informed that such a discussion will be given in forthcoming work of Calvin Woo. 
\end{remark}

\chapter{Topological Hochschild Homology}\label{ch:thh}

In this chapter, we discuss the construction of topological Hochschild homology as a cyclotomic spectrum.

We start by giving our new construction in Sections~\ref{sec:tatediag},~\ref{sec:thhnaive} and~\ref{sec:tatediagfunc}. More precisely, in Section~\ref{sec:tatediag} we introduce the Tate diagonal of Theorem~\ref{thm:introtatediag}, which we also prove in that section. In Section~\ref{sec:thhnaive}, it is explained how this gives the construction of the cyclotomic structure maps, once one has a finer functoriality of the Tate diagonal, in particular that it is lax symmetric monoidal. These finer functorialities are obtained in Section~\ref{sec:tatediagfunc} as an application of strong uniqueness results from \cite{Nik}.

Afterwards, we recall the classical definition of $\THH$ in terms of genuine equivariant homotopy theory. This uses critically the B\"okstedt construction, which is analyzed in detail in Section~\ref{sec:boekstedt}, where we collect various results from the literature, in particular that it is equivalent to the usual smash product, and that it interacts well with the geometric fixed points functor. In Section~\ref{sec:thhorth}, this is used to give the classical construction of $\THH(A)$ as an orthogonal cyclotomic spectrum. In the final Section~\ref{sec:comparison}, we prove that our new construction is equivalent to the classical construction.

\section{The Tate diagonal}\label{sec:tatediag}

Our new definition of the cyclotomic structure on topological Hochschild homology needs a certain construction that we call the Tate diagonal, and which we define in this section. Recall that there is no diagonal map in the $\infty$-category of spectra, i.e.~for a general spectrum $X$ there is no map $X \to X \otimes X$ which is symmetric, i.e.~factors through the homotopy fixed points $(X \otimes X)^{hC_2} \to X \otimes X$. Of course, if $X$ is a suspension spectrum $\Sigma^\infty_+Y$ for a space $Y$, then there is such a map induced by the diagonal map of spaces $\Delta: Y \to Y \times Y$.

We  give a substitute for this map in the stable situation, which we call the Tate diagonal. This has been considered at various places in the literature before, cf.~e.g.~\cite[Section 10]{Klein}, \cite{LNR}, \cite{Heuts}. As an input for the construction we need the following result, variants of which have been first observed (in a slightlty different form and language since Tate spectra were not invented) by Jones and Wegmann \cite{MR720798}, see also the treatment of May in \cite[Chapter II.3]{MR836132} specifically Theorem 3.4. A modern reference in the precise form that we need is {\cite[Proposition 2.2.3]{DAG13}}.

\begin{proposition}\label{prop:tatediagonaladditive} Let $p$ be a prime. The functor $T_p: \Sp\to \Sp$ taking a spectrum $X\in \Sp$ to
\[
(X\otimes\ldots\otimes X)^{tC_p}
\]
is exact, where $X\otimes\ldots\otimes X$ denotes the $p$-fold self tensor product with the $C_p$-action given by cyclic permutation of the factors.
\end{proposition}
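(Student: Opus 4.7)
The plan is to reduce exactness to the single statement that $T_p$ preserves cofiber sequences, and then to establish this by analyzing $Y^{\otimes p}$ equivariantly when $X \to Y \to Z$ is a cofiber sequence. A functor between stable $\infty$-categories is exact as soon as it preserves zero objects and cofiber sequences, and $T_p$ clearly sends $0$ to $0$; therefore the only real content is the preservation of cofiber sequences.

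The main idea is to exploit the fact that $C_p$, being of prime order, acts freely on every orbit of size greater than one on a finite $C_p$-set. Given a cofiber sequence $X \to Y \to Z$, I would build a $C_p$-equivariant filtration on $Y^{\otimes p}$ whose associated graded consists of $X^{\otimes p}$ at the bottom (with cyclic permutation action), $Z^{\otimes p}$ at the top (with cyclic permutation action), and for $0 < k < p$ intermediate pieces given by direct sums, over subsets $S \subseteq \{1,\ldots,p\}$ with $|S| = k$, of terms $V_{1} \otimes \cdots \otimes V_{p}$ in which $V_i = X$ for $i \notin S$ and $V_i = Z$ for $i \in S$. The concrete $\infty$-categorical model I would use is the $p$-cube obtained by tensoring together $p$ copies of the cofiber sequence $X \to Y \to Z$; its total filtration has precisely this form, and the cyclic action of $C_p$ on the factors permutes the vertices of the cube through the tautological $C_p$-action on $\{1,\ldots,p\}$.

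The crucial observation is then that for each $0 < k < p$, the $C_p$-action on the set of subsets of $\{1,\ldots,p\}$ of cardinality $k$ is free, since the only $C_p$-fixed subsets of $\{1,\ldots,p\}$ are $\emptyset$ and the whole set. Consequently each intermediate associated graded piece is of the form $\bigoplus_{g \in C_p} W$ for some spectrum $W$ with $C_p$ permuting summands, i.e.~it belongs to the full subcategory $\Sp^{BC_p}_{\ind}$ of Definition in Section~\ref{sec:tatemonoidal}. By Lemma~\ref{lem:propindbg}(i), applying $(-)^{tC_p}$ kills all these pieces. Since $(-)^{tC_p}$ is exact, applying it to the filtration of $Y^{\otimes p}$ yields the desired cofiber sequence $T_p(X) \to T_p(Y) \to T_p(Z)$.

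The main obstacle I anticipate is not the vanishing statement (which is immediate from Lemma~\ref{lem:propindbg}) but the clean construction of the filtration in the $\infty$-categorical setting: one must genuinely produce $Y^{\otimes p}$ as a filtered object in $\Sp^{BC_p}$ whose associated graded has the above form, equivariantly and functorially. The most efficient route is probably to exhibit $Y^{\otimes p}$ as the colimit over the poset $\{0 < 1\}^p$ (with $C_p$ acting by cyclic permutation of coordinates) of a cubical diagram whose value at $(\epsilon_1,\ldots,\epsilon_p)$ is $V_{\epsilon_1} \otimes \cdots \otimes V_{\epsilon_p}$ with $V_0 = X$, $V_1 = Y$, and then filtering this $C_p$-equivariant colimit by cardinality of the subset $\{i : \epsilon_i = 1\}$; the $C_p$-equivariance of the filtration is automatic, and the identification of the graded pieces is a direct consequence of stability.
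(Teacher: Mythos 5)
Your proof is correct and follows essentially the same route as the paper: both arguments filter the $p$-fold tensor product of an extension so that the intermediate graded pieces are indexed by the free $C_p$-orbits on $\{0,1\}^p$ minus the two fixed points, and then use that the Tate construction vanishes on induced $C_p$-spectra. The only cosmetic difference is that the paper first treats the split case $X_0\oplus X_1$ as a warm-up before handling general extensions, whereas you go directly to the cofiber-sequence case.
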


\begin{proof} Let us first check the weaker statement that $T_p$ preserves sums. Thus we compute
\begin{align*} 
T_p(X_0 \oplus X_1)  & \simeq   \left(\bigoplus_{ (i_1,\ldots,i_p) \in   \{0,1\}^p }  X_{i_1} \otimes \ldots \otimes X_{i_p} \right)^{tC_p} \\
&\simeq  T_p(X_0)\oplus T_p(X_1)\oplus \bigoplus_{[i_1,\ldots,i_p]} \left(\bigoplus_{(i_1,\ldots,i_p)\in [i_1,\ldots,i_p]} X_{i_1} \otimes \ldots \otimes X_{i_p}\right)^{tC_p}
\end{align*}
where in the second sum $[i_1,\ldots,i_p]$ runs through a set of representatives of orbits of the cyclic $C_p$-action on the set $S = \{0,1\}^p \setminus \{(0,\ldots,0),(1,\ldots,1)\}$. As $p$ is prime, these orbits are all isomorphic to $C_p$. Thus, each summand
\[
\bigoplus_{(i_1,\ldots,i_p)\in [i_1,\ldots,i_p]} X_{i_1} \otimes \ldots \otimes X_{i_p}
\]
is a $C_p$-spectrum which is induced up from the trivial subgroup $\{1\}\subseteq C_p$. But on induced $C_p$-spectra the Tate construction vanishes. Thus the projection to the first two summands is an equivalence from $T_p(X_0\oplus X_1)$ to $T_p(X_0) \oplus T_p(X_1)$.

In general, it suffices to check that $T_p$ commutes with extensions. Now, if $X_0\to \tilde{X}\to X_1$ is any fiber sequence (i.e., exact triangle) in $\Sp$, then one gets a corresponding filtration of $\tilde{X}\otimes\ldots\otimes \tilde{X}$ whose successive filtration steps are given by $X_0\otimes\ldots\otimes X_0$,
\[
\bigoplus_{(i_1,\ldots,i_p)\in [i_1,\ldots,i_p]} X_{i_1} \otimes \ldots \otimes X_{i_p}
\]
for varying $[i_1,\ldots,i_p]$ in $S/C_p$ (ordered by their sum $i_1+\ldots+i_p$), and $X_1\otimes\ldots\otimes X_1$. As $-^{tC_p}$ is an exact operation and kills all intermediate steps, we see that
\[
(X_0\otimes\ldots\otimes X_0)^{tC_p}\to (\tilde{X}\otimes\ldots\otimes\tilde{X})^{tC_p}\to (X_1\otimes\ldots\otimes X_1)^{tC_p}
\]
is again a fiber sequence, as desired.
\end{proof}

\begin{proposition}\label{prop:spaceoftrafos} Consider the full subcategory $\Fun^\Ex(\Sp,\Sp)\subseteq \Fun(\Sp,\Sp)$ of exact functors. Let $\mathrm{id}_{\Sp}\in \Fun^\Ex(\Sp,\Sp)$ denote the identity functor.

For any $F\in \Fun^\Ex(\Sp,\Sp)$, evaluation at the sphere spectrum $\bS\in \Sp$ induces an equivalence
\[
\Map_{\Fun^\Ex(\Sp,\Sp)}(\mathrm{id}_{\Sp},F)\to \Hom_{\Sp}(\bS,F(\bS))=\Omega^\infty F(\bS)\ .
\]
\end{proposition}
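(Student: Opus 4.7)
The strategy is to reduce the computation of natural transformations out of $\id_\Sp$ to a computation on the small stable subcategory $\Sp^\omega\subseteq \Sp$ of compact spectra, and then invoke the universal property of $\Sp^\omega$. Let $j:\Sp^\omega\hookrightarrow \Sp$ denote the inclusion.

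The first step uses that $\Sp\simeq \Ind(\Sp^\omega)$. Consequently, the identity functor is the (essentially unique) colimit-preserving extension of $j$ along itself; in other words $\id_\Sp\simeq \mathrm{Lan}_j\, j$. By the Kan extension adjunction $\mathrm{Lan}_j\dashv j^\ast$, for \emph{any} functor $F:\Sp\to\Sp$ we obtain
\[
\Map_{\Fun(\Sp,\Sp)}(\id_\Sp,F)\simeq \Map_{\Fun(\Sp,\Sp)}(\mathrm{Lan}_j\, j,F)\simeq \Map_{\Fun(\Sp^\omega,\Sp)}(j,F|_{\Sp^\omega}).
\]
When $F$ is exact, so is $F|_{\Sp^\omega}$, and since $\Fun^\Ex\subseteq \Fun$ is a full subcategory this mapping space agrees with the one in $\Fun^\Ex(\Sp^\omega,\Sp)$. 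Moreover $\Fun^\Ex(\Sp,\Sp)\subseteq\Fun(\Sp,\Sp)$ is a full subcategory, so the left-hand side is the same as $\Map_{\Fun^\Ex(\Sp,\Sp)}(\id_\Sp,F)$.

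The second step is to invoke the universal property of $\Sp^\omega$: as the free stable idempotent-complete $\infty$-category on one generator, the evaluation functor
\[
\Fun^\Ex(\Sp^\omega,\mathcal{D})\xrightarrow{\ev_\bS}\mathcal{D}
\]
is an equivalence for every stable idempotent-complete $\infty$-category $\mathcal{D}$. (One proves this by combining the fact that the smallest stable subcategory $\Sp^\omega_{\mathrm{fin}}\subseteq \Sp$ containing $\bS$ is the free stable $\infty$-category on one generator with the fact that $\Sp^\omega$ is its idempotent completion.) Since $\Sp$ is presentable and hence idempotent-complete, this applies with $\mathcal{D}=\Sp$, and in particular identifies
\[
\Map_{\Fun^\Ex(\Sp^\omega,\Sp)}(j,F|_{\Sp^\omega})\simeq \Map_\Sp(j(\bS),F|_{\Sp^\omega}(\bS))=\Map_\Sp(\bS,F(\bS))=\Omega^\infty F(\bS).
\]
Composing with the equivalence from the first step gives the claim, and checking the composite is induced by $\ev_\bS$ is immediate from the construction of $\mathrm{Lan}_j\, j\simeq \id_\Sp$.

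The main obstacle will be pinning down the precise citations for the two universal properties used, as they are somewhat scattered across Lurie's \emph{Higher Algebra}. If one prefers a self-contained proof, the same argument can be run more explicitly: a natural transformation $\id\to F$ is determined by, and freely determined by, its component at $\bS$, because every spectrum is canonically a filtered colimit of compact spectra (and compact spectra are generated by $\bS$ under finite colimits, desuspensions, and retracts), so naturality determines the component at $X$ from that at $\bS$ via the universal property of these (co)limits, which commute with $\id_\Sp$ automatically and are respected by the exactness of $F$ on the finite-colimit steps.
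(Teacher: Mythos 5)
Your proof is correct, but it takes a genuinely different route from the paper's. The paper argues in two lines: by \cite[Corollary 1.4.2.23]{HA}, composition with $\Omega^\infty$ identifies $\Fun^\Ex(\Sp,\Sp)$ with the full subcategory $\Fun^\Lex(\Sp,\calS)\subseteq\Fun(\Sp,\calS)$ of left exact functors to spaces, under which $\id_\Sp$ corresponds to the corepresentable functor $\Omega^\infty=\Map_\Sp(\bS,-)$; the Yoneda lemma in $\Fun(\Sp,\calS)$ then computes maps out of this corepresentable as evaluation at $\bS$. You instead restrict along $j\colon\Sp^\omega\hookrightarrow\Sp$, use $\id_\Sp\simeq\mathrm{Lan}_j\,j$ (valid since $\Sp\simeq\Ind(\Sp^\omega)$) together with the adjunction $\mathrm{Lan}_j\dashv j^\ast$, and then invoke the freeness of $\Sp^\omega$ as the idempotent-complete stable $\infty$-category on one generator. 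Both are sound, and both are at bottom Yoneda-type arguments exploiting that $\id_\Sp$ is ``corepresented by $\bS$'' in the relevant functor category; the paper's version is shorter, avoids any mention of compactness, and is exactly the argument of \cite[Proposition 6.3]{Nik}, which it cites. Your version isolates more clearly where exactness enters (only in the identification $\Fun^\Ex(\Sp^\omega,\Sp)\simeq\Sp$; the Kan extension step works for arbitrary $F$), at the cost of invoking two universal properties instead of one. Your closing ``self-contained'' sketch is a verbal paraphrase of the Kan-extension argument and would need the same machinery to be made precise, so it should not be presented as an independent proof.
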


Here, $\Omega^\infty: \Sp\to \calS$ denotes the usual functor from spectra to spaces.

\begin{proof} As in~\cite[Proposition 6.3]{Nik}, this is a consequence of~\cite[Corollary 1.4.2.23]{HA} and the Yoneda lemma. Namely, by~\cite[Corollary 1.4.2.23]{HA}, $\Fun^\Ex(\Sp,\Sp)$ is equivalent to $\Fun^\Lex(\Sp,\calS)$ via composition with $\Omega^\infty$. Here $\Fun^\Lex(\Sp,\calS) \subseteq \Fun(\Sp,\calS) $ is the full subcategory of left exact, i.e.~finite limit preserving, functors. But maps in the latter from a corepresentable functor are computed by the Yoneda lemma.
\end{proof}

\begin{corollary}\label{cor:spaceoftatediag} The space of natural transformations from $\mathrm{id}_{\Sp}$ to $T_p$ as functors $\Sp\to \Sp$ is equivalent to $\Omega^\infty T_p(\bS)=\Hom_{\Sp}(\bS,T_p(\bS))$.$\hfill \Box$
\end{corollary}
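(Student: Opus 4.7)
The plan is essentially to combine the two preceding results directly. First I would verify that $T_p$ lies in the full subcategory $\Fun^{\Ex}(\Sp,\Sp)$ to which Proposition~\ref{prop:spaceoftrafos} applies; but this is exactly the content of Proposition~\ref{prop:tatediagonaladditive}, so no additional work is needed here.

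Next I would invoke Proposition~\ref{prop:spaceoftrafos} with $F = T_p$. This immediately yields
\[
\Map_{\Fun^{\Ex}(\Sp,\Sp)}(\mathrm{id}_{\Sp}, T_p) \simeq \Omega^\infty T_p(\bS) = \Hom_{\Sp}(\bS, T_p(\bS)).
\]
The only subtlety to address is to note that the mapping space of natural transformations of exact functors computed in $\Fun^{\Ex}(\Sp,\Sp)$ agrees with the mapping space computed in the ambient functor $\infty$-category $\Fun(\Sp,\Sp)$, since $\Fun^{\Ex}(\Sp,\Sp) \subseteq \Fun(\Sp,\Sp)$ is a full subcategory. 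Thus the right-hand side indeed computes the space of natural transformations from the identity to $T_p$.

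There is no substantial obstacle here; the corollary is a formal consequence of the two preceding results. The real content lies upstream, in Proposition~\ref{prop:tatediagonaladditive} (which exploits the vanishing of the Tate construction on induced $C_p$-spectra together with the filtration of tensor powers of a fiber sequence) and in Proposition~\ref{prop:spaceoftrafos} (which rests on the characterization of exact functors out of $\Sp$ as left-exact functors to $\calS$ plus the Yoneda lemma for the corepresentable $\Omega^\infty = \Map_{\Sp}(\bS,-)$). So the proof of Corollary~\ref{cor:spaceoftatediag} is just a one-line assembly of these two inputs.
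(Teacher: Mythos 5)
Your proof is correct and is exactly the argument the paper intends (the corollary is stated with no proof precisely because it is the immediate combination of Proposition~\ref{prop:tatediagonaladditive} and Proposition~\ref{prop:spaceoftrafos}). Your remark that the mapping space in the full subcategory $\Fun^\Ex(\Sp,\Sp)$ agrees with the one in $\Fun(\Sp,\Sp)$ is the right (and only) point to check.
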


\begin{definition}\label{def:tatediag} The Tate diagonal is the natural transformation
\[
\Delta_p: \mathrm{id}_{\Sp}\to T_p\ : X\to (X\otimes \ldots \otimes X)^{tC_p}
\]
of endofunctors of $\Sp$ which under the equivalence of Proposition~\ref{prop:spaceoftrafos} corresponds to the map
\[
\bS\to T_p(\bS) = \bS^{tC_p}
\]
which is the composition $\bS\to \bS^{hC_p}\to \bS^{tC_p}$.
\end{definition}

\begin{remark} The spectrum $(X \otimes \ldots \otimes X)^{tC_p}$ and the map $X\to (X\otimes\ldots\otimes X)^{tC_p}$ play a crucial role in the paper \cite{LNR} by Lun\o e-Nielsen and Rognes and are called the \emph{topological Singer construction} there. Also, the construction of the map $X \to (X \otimes \ldots \otimes X)^{tC_p}$ is essentially equivalent to the construction of the Hill-Hopkins-Ravenel norm for $C_p$, see \cite[Appendix A.4]{HillHopkinsRavenel},  since the fixed points of the norm $N_e^{C_p}X$ and in fact the full genuine $C_p$-equivariant spectrum can be recovered from the isotropy separation square
$$
\xymatrix{
\big(N_{e}^{C_p}X\big)^{C_p} \ar[r]\ar[d] & X \ar[d]^{\Delta_p}\\
(X \otimes \ldots \otimes X)^{hC_p} \ar[r] &  (X\otimes \ldots \otimes X)^{tC_p}
}
$$
see e.g.~\cite[Theorem 6.24]{MR3570153} or \cite[Example 3.26]{glasman2015stratified}.
\end{remark}

\begin{remark}\label{remark:segal} The Segal conjecture, proved in this version by Lin for $p=2$ \cite{Lin} and Gunawardena for odd $p$ \cite{gunawardena}, implies that the map
\[
\Delta_p(\bS): \bS\to T_p(\bS)=\bS^{tC_p}
\]
realizes $\bS^{tC_p}$ as the $p$-adic completion of $\bS$. In particular, $\bS^{tC_p}$ is connective, which is in stark contrast with $H\Z^{tC_p}$. Contemplating the resulting spectral sequence
\[
\widehat{H}^i(C_p,\pi_{-j} \bS)\Rightarrow \pi_{-i-j} \bS^\wedge_p
\]
implies that there is an infinite number of differentials relating the stable homotopy groups of spheres in an intricate pattern. Note that in fact, this is one base case of the Segal conjecture, which for $p=2$ predates the general conjecture, cf.~\cite{MR0339178}, and to which the general case is reduced in \cite{Carlsson}.
\end{remark}

Following up on the previous remark, there is the following generalization of the Segal conjecture for $C_p$.

\begin{thm}\label{thm:generalsegal} Let $X\in\Sp$ be bounded below. Then the map
\[
\Delta_p: X\to (X\otimes \ldots \otimes X)^{tC_p}
\]
exhibits $(X\otimes \ldots \otimes X)^{tC_p}$ as the $p$-completion of $X$.
\end{thm}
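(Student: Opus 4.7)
My plan is to reduce to the classical Segal conjecture for $\bS$ (Remark~\ref{remark:segal}), which handles the case $X = \bS$, via a sequence of reductions. First, since $X$ is bounded below so is $X^{\otimes p}$, whence by Lemma~\ref{lem:tatecomplete} the target $T_p(X)$ is $p$-complete. The Tate diagonal therefore factors uniquely through $X^\wedge_p$, giving a natural transformation $\alpha_X \colon X^\wedge_p \to T_p(X)$, and I must prove $\alpha_X$ is an equivalence for bounded-below $X$.

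Both $(-)^\wedge_p$ and $T_p$ commute with Postnikov limits on bounded-below spectra: for $p$-completion this is standard, and for $T_p$ it follows from Lemma~\ref{lem:tateconvergence}(i) applied to the bounded-below $C_p$-spectrum $X^{\otimes p}$, combined with the observation that for bounded-below $X$ the natural map $\tau_{\leq m}(X^{\otimes p}) \to \tau_{\leq m}((\tau_{\leq n} X)^{\otimes p})$ is an equivalence once $n$ is sufficiently large (the cofibre involves $\tau_{\geq n+1}X \otimes X^{\otimes(p-1)}$, which is highly connective). Writing $X \simeq \lim_n \tau_{\leq n} X$ thus reduces to bounded $X$. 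A finite Postnikov filtration then expresses bounded $X$ as a finite iterated extension of shifts of Eilenberg--MacLane spectra, and since both $T_p$ (by Proposition~\ref{prop:tatediagonaladditive}) and $p$-completion (on bounded-below spectra) are exact, it suffices to treat $X = HM$ for an arbitrary abelian group $M$.

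For $HM$, the case where $p$ is invertible on $M$ is immediate: $HM^\wedge_p \simeq 0$ by construction, while $T_p(HM) \simeq 0$ by Lemma~\ref{lem:taterational} applied to the $p$-inverted spectrum $HM^{\otimes p}$. Short exact sequences of abelian groups and exactness of $\alpha$ then reduce to the case that $M$ is derived $p$-complete. For finitely generated such $M$, a finite cell decomposition of $HM^\wedge_p$ over $\bS^\wedge_p$ together with the Segal conjecture give the result; this is essentially the Lun\o e-Nielsen--Rognes theorem~\cite{LNR}. The hard part is the extension from finitely generated $M$ to arbitrary derived $p$-complete $M$, since $T_p$ does not preserve filtered colimits; I expect to resolve this by direct computation, using that the Tate spectral sequence $H^*(C_p; \pi_*HM^{\otimes p}) \Rightarrow \pi_{-*}T_p(HM)$ has $E_2$-page functorial in $M$ and that a d\'evissage from the finitely generated case makes both $(HM)^\wedge_p$ and $T_p(HM)$ agree as functors of $M$.
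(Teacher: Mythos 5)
Your reduction skeleton is essentially the paper's: factor $\Delta_p$ through $X^\wedge_p$ using Lemma~\ref{lem:tatecomplete}, pass to Postnikov limits via Lemma~\ref{lem:tateconvergence} (with the same connectivity estimate for $X^{\otimes p}\to(\tau_{\leq n}X)^{\otimes p}$), reduce to Eilenberg--MacLane spectra by exactness, dispose of the $p$-invertible case, and invoke \cite{LNR} for the finite-type base case. (A small quibble: since both sides are $p$-complete the cleaner reduction is to check the map after smashing with $\bS/p$, which lands you on $\Fp$-vector spaces directly; the paper reduces to $p$-torsion-free $M$ and then to $H(M/p)$ rather than to ``derived $p$-complete $M$''.)

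The genuine gap is exactly the step you defer: passing from $H\Fp$ to an arbitrary, possibly infinite-dimensional $\Fp$-vector space $V$. The functor $T_p$ does not commute with infinite direct sums (homotopy fixed points fail to do so for spectra that are not bounded above), and your proposed fix via functoriality of the Tate spectral sequence does not close it: for $T_p(H\Fp)$ that spectral sequence is only conditionally convergent, its $E_2$-page is enormous while the abutment is just $H\Fp{}^\wedge_p$, so there are infinitely many differentials, and one cannot interchange such a convergence statement with an infinite direct sum. The paper's actual device is to introduce the truncated functors $T_p^{(n)}(X)=(\tau_{\leq n}(X^{\otimes p}))^{tC_p}$, so that $T_p\simeq\lim_n T_p^{(n)}$ by Lemma~\ref{lem:tateconvergence} while each $T_p^{(n)}$ \emph{does} commute with infinite direct sums; one then checks that each $\pi_i T_p^{(n)}(H\Fp)$ is finite, deduces via Mittag--Leffler that $\pi_i H\Fp\to(\pi_i T_p^{(n)}(H\Fp))_n$ is a pro-isomorphism of towers of finite groups (using the elementary lemma that a tower of finite sets with finite limit is pro-constant), and observes that pro-isomorphisms of towers of finite abelian groups are preserved by arbitrary direct sums. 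Without this, or some equivalent finiteness/pro-constancy argument, the extension to infinite $V$ remains unproved.
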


If $X=\bS$ is the sphere spectrum, this is the Segal conjecture (for the group $C_p$). In \cite{LNR}, Theorem~\ref{thm:generalsegal} is proved if all homotopy groups of $X$ are finitely generated. Here, we simply observe that actually, the general version of the theorem follows essentially formally from the base case $X=H\Fp$. The base case $X=H\Fp$ is a difficult calculation with an Adams type spectral sequence, and we cannot offer new insight into this part. The case $X = H\mathbb{F}_2$ had also been treated by Jones and Wegmann \cite[Theorems 3.1 and 3.2]{MR720798}.

Note that the statement is false if we drop the bounded below condition. For example $(\KU \otimes \KU)^{tC_2}$ is easily seen to be rational, e.g.~using that it is a module over the $C_2$-Tate spectrum of the $\KU$-local sphere with trivial action. The latter is rational since in the $K(1)$-local setting all Tate spectra vanish, see Greenlees-Sadofsky \cite{MR1400199} or Hovey-Sadofsky \cite{MR1343699}.

It would be interesting to find a corresponding generalization of the Segal conjecture for any finite group $G$ in place of $C_p$. In the case of $G=C_{p^n}$, this should follow from Theorem~\ref{thm:generalsegal} and Corollary~\ref{cor:segalinduct}.

\begin{proof} By shifting, we may assume that $X$ is connective. We claim that both sides are the limit of the values at $\tau_{\leq n} X$. For the left side, this is clear, and for the right side, we have to prove that\[
(X\otimes \ldots\otimes X)_{hC_p}\to \lim\nolimits_n (\tau_{\leq n} X\otimes\ldots\otimes \tau_{\leq n} X)_{hC_p}
\]
and
\[
(X\otimes \ldots\otimes X)^{hC_p}\to \lim\nolimits_n (\tau_{\leq n} X\otimes\ldots\otimes \tau_{\leq n} X)^{hC_p}
\]
are equivalences. For the first, we note that the map
\[
X\otimes\ldots\otimes X\to \tau_{\leq n}X\otimes \ldots\otimes \tau_{\leq n} X
\]
is $n$-connected (as $X$ is connective), and thus so is the map on homotopy orbits; passing to the limit as $n\to \infty$ gives the result. For the second, it suffices to show that
\[
X\otimes \ldots\otimes X\to\lim\nolimits_n \tau_{\leq n}X\otimes\ldots\otimes \tau_{\leq n} X
\]
is an equivalence, as homotopy fixed points commute with limits. But as noted before, the map
\[
X\otimes \ldots\otimes X\to \tau_{\leq n}X\otimes\ldots\otimes \tau_{\leq n} X
\]
is $n$-connected, so in the limit as $n\to\infty$, one gets an equivalence.

Therefore, we may assume that $X$ is bounded, and then (as both sides are exact) that $X=HM$ is an Eilenberg-MacLane spectrum, where we can assume that $M$ is $p$-torsion free. By Lemma~\ref{lem:tatecomplete}, the right side is $p$-complete. Therefore, we need to show that $\Delta_p$ is a $p$-adic equivalence, i.e.~an equivalence after smashing with $\bS/p$. Equivalently, by exactness again, we need to prove that $\Delta_p$ is an equivalence for $X=H(M/p)$.

Now $V=M/p$ is a (possibly infinite) direct sum of copies of $\Fp$. Unfortunately, it is not clear that the functor $X\mapsto (X\otimes \ldots \otimes X)^{tC_p}$ commutes with infinite direct sums; the issue is that homotopy fixed points do not commute with infinite direct sums if the spectra are not bounded above. We argue as follows. For any $n$, consider the functor
\[
T_p^{(n)}: X\mapsto (\tau_{\leq n} (X\otimes \ldots \otimes X))^{tC_p}\ .\]
Then by Lemma~\ref{lem:tateconvergence}, the natural transformation $T_p\to \lim\nolimits_n T_p^{(n)}$ is an equivalence. Moreover, for all $n$, $T_p^{(n)}$ commutes with infinite direct sums.

Now we know (by~\cite{LNR}) that
\[
H\Fp\to T_p(H\Fp) = \lim\nolimits_n T_p^{(n)}(H\Fp)
\]
is an equivalence. We claim that for all $i\in \Z$ and $n$, $\pi_i T_p^{(n)}(H\Fp)$ is finite. Indeed,
\[
\tau_{\leq n} (H\Fp\otimes \ldots \otimes H\Fp)
\]
has only finitely many nonzero homotopy groups, each of which is finite. Thus, by looking at the Tate spectral sequence
\[
\widehat{H}^i(C_p,\pi_{-j} \tau_{\leq n}(H\Fp\otimes \ldots\otimes H\Fp))\Rightarrow \pi_{-i-j} (\tau_{\leq n}(H\Fp\otimes\ldots H\Fp))^{tC_p}\ ,
\]
we see that there are only finitely many possible contributions to each degree.

Therefore, by Mittag-Leffler, the maps
\[
\pi_i H\Fp\to \pi_i T_p(H\Fp)\to \lim\nolimits_n \pi_i T_p^{(n)}(H\Fp)
\]
are isomorphisms for all $i\in \Z$. Now we observe the following.

\begin{lemma} Let $S_n$, $n\geq 0$, be a sequence of finite sets, and assume that $S_\infty = \lim\nolimits_n S_n$ is finite. Then the map $(S_\infty)_{n\geq 0}\to (S_n)_{n\geq 0}$ is a pro-isomorphism.
\end{lemma}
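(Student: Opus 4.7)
The plan is to prove the lemma via a simple Mittag--Leffler-style stabilization argument, exploiting finiteness in two different ways: finiteness of each $S_n$ forces descending chains of subsets to stabilize, and finiteness of $S_\infty$ then bounds the resulting cardinalities.

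First I would set $T_n := \mathrm{Im}(S_\infty \to S_n) \subseteq S_n$, which forms a sub-pro-system. The claim is that both the inclusion $(T_n) \hookrightarrow (S_n)$ and the map from the constant pro-system $S_\infty \to (T_n)$ are pro-isomorphisms; composing then gives the lemma.

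For the first, I would argue that for each fixed $n$, the subsets $\mathrm{Im}(S_m \to S_n) \subseteq S_n$ for $m \geq n$ form a descending chain in the finite set $S_n$, hence stabilize at some value $U_n$. A standard compactness/Mittag--Leffler argument (threading compatible preimages through the pro-system of nonempty fibers, which themselves stabilize) identifies $U_n$ with $T_n$. Hence for each $n$ there exists $m \geq n$ such that $S_m \to S_n$ factors through $T_n \subseteq S_n$, which provides the inverse in the pro-category to the inclusion $(T_n) \hookrightarrow (S_n)$.

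For the second, note that $S_\infty \to T_n$ is surjective by definition, and the transition maps $T_{n+1} \to T_n$ are surjective (since they fit between two surjections out of $S_\infty$). Thus $|T_n|$ is a non-decreasing sequence bounded above by the finite cardinality $|S_\infty|$, so it is eventually constant. Once $|T_n|$ stabilizes, the surjections $T_{n+1} \to T_n$ are bijections, and consequently the surjection $S_\infty \to T_n$ (whose target has cardinality $|S_\infty| = \lim |T_n|$, since $S_\infty = \lim T_n$) is also a bijection for large $n$. This exhibits $(T_n)$ as isomorphic to the constant pro-system $S_\infty$ in the pro-category.

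The main obstacle is mostly notational: ensuring that the stabilized image $U_n$ really equals $T_n$ (as opposed to being a priori larger) requires the little compactness argument, but since all the intervening sets are finite there is no real difficulty. The rest is a clean counting argument.
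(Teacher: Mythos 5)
Your proof is correct and rests on the same key input as the paper's: the compactness fact that an $\mathbb N$-indexed projective system of nonempty finite sets has nonempty inverse limit, used to show that for large $m$ the map $S_m\to S_n$ lands in the image of $S_\infty$. The paper reaches the same conclusion slightly more directly (first making $S_\infty\to S_n$ injective for large $n$ and then applying compactness to $(S_m\times_{S_n}(S_n\setminus S_\infty))_m$), while your factorization through the image system $(T_n)$ is just extra bookkeeping around the same argument.
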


\begin{proof} As $S_\infty$ is finite, the map $S_\infty\to S_n$ is injective for all sufficiently large $n$. It remains to see that given any large $n$, the map $S_m\to S_n$ factors over $S_\infty\subseteq S_n$ if $m$ is large enough. For this consider the system $(S_m\times_{S_n} (S_n\setminus S_\infty))_m$. This is a projective system of finite sets with empty inverse limit. Thus, by compactness, one of these finite sets has to be empty.
\end{proof}

Thus, we know that for all $i\in \Z$, the map
\[
\pi_i H\Fp\to (\pi_i T_p^{(n)}(H\Fp))_n
\]
is a pro-isomorphism. This statement passes to infinite direct sums, so we see that for any $\Fp$-vector space $V$, the map
\[
\pi_i HV\to (\pi_i T_p^{(n)}(HV))_n
\]
(which is a direct sum of the map above)
is a pro-isomorphism. In particular, the right-hand side is pro-constant, so that
\[
\pi_i T_p(HV)\simeq \lim\nolimits_n \pi_i T_p^{(n)}(HV)\simeq \pi_i HV\ ,\]
as desired.
\end{proof}

\begin{remark} In fact, the existence of the Tate diagonal is a special feature of spectra, and does not exist in classical algebra. One could try to repeat the construction of the Tate diagonal in the $\infty$-derived category $\calD(\Z)$ of $\Z$-modules. The Tate construction makes sense in $\calD(\Z)$ as in every other stable $\infty$-category which admits orbits and fixed points. The construction of the Tate diagonal however makes use of a universal property of the stable $\infty$-category of spectra; the analogues of Proposition~\ref{prop:spaceoftrafos} and \cite[Corollary 6.9(1)]{Nik} fail in $\calD(\Z)$. To get a valid analogue, one would need to require that the endofunctors are $H\Z$-linear, which is extra structure. The point is that the analogue of the functor $T_p$ (from Proposition \ref{prop:tatediagonaladditive} above) in $\calD(\Z)$ is not an $H\Z$-linear functor.
\end{remark}

In fact, we can prove the following no-go theorem.

\begin{thm}\label{thmnogo} Every natural transformation $C \to (C \otimes_\Z \ldots \otimes_\Z C)^{tC_p}$ of functors $\calD(\Z) \to \calD(\Z)$ induces the zero map in homology $$H_*(C) \to H_*((C \otimes_\Z \ldots \otimes_\Z C)^{tC_p}) = \hat H^{-*}(C_p;  C \otimes_\Z \ldots \otimes_\Z C). $$ In particular there is no lax symmetric monoidal transformation $C \to (C \otimes_\Z \ldots \otimes_\Z C)^{tC_p}$.
\end{thm}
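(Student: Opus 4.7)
The plan is as follows. Let $\eta: \id \to T_p$ be a natural transformation of functors $\calD(\Z) \to \calD(\Z)$, and the goal is to show $\pi_*(\eta_C) = 0$ for every $C \in \calD(\Z)$.

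First, I reduce to the generators $\Z[n]$. By naturality with respect to any morphism $f: \Z[n] \to C$ realizing a class $x \in \pi_n(C)$, the commuting square $\eta_C \circ f = T_p(f) \circ \eta_{\Z[n]}$ yields
\[
\pi_n(\eta_C)(x) \;=\; \pi_n(T_p(f))\bigl(\pi_n(\eta_{\Z[n]})(1)\bigr),
\]
so it suffices to prove $\pi_n(\eta_{\Z[n]})(1) = 0$ for every $n \in \Z$. Since $T_p$ is exact (Proposition~\ref{prop:tatediagonaladditive}) it commutes with shifts, so $T_p(\Z[n]) \simeq \Z^{tC_p}[n]$ and $\pi_n T_p(\Z[n]) \cong \pi_0 \Z^{tC_p} = \Z/p$. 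Write $a_n \in \Z/p$ for the element to be shown to vanish.

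Second, I locate the conceptual obstruction by contrasting with the spectrum case. In $\Sp$, Proposition~\ref{prop:spaceoftrafos} (based on Lurie's [HA, 1.4.2.23] together with the corepresentability of $\Omega^\infty$ by $\bS$) classifies natural transformations from the identity to any exact endofunctor by evaluation on $\bS$, and this is what forces the Tate diagonal of Definition~\ref{def:tatediag} to be nonzero on $\pi_0$. In $\calD(\Z)$ the analogous argument collapses: since $\calD(\Z)$ is already stable, Lurie's result reduces to the tautological equivalence $\Fun^{\Ex}(\calD(\Z),\calD(\Z)) \simeq \Fun^{\Lex}(\calD(\Z),\calD(\Z))$, and the identity functor is not corepresentable in a way producing a classification of natural transformations from $\id$. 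Such a classification holds only under $H\Z$-linearity (equivalently, colimit-preservation), a property which $T_p$ crucially fails: it does not commute with arbitrary filtered colimits of perfect complexes.

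Third, I exploit this failure to rule out nonzero $a_n$. On perfect complexes, naturality with respect to basis inclusions and projections pins $\eta$ down to a ``diagonal'' form determined by $(a_n)_n$; concretely, $\eta$ restricted to perfects agrees with the restriction of the unique $H\Z$-linear natural transformation $\id \to (-)\otimes_{H\Z} \Z^{tC_p}$ classified by the $(a_n)_n$. The question becomes whether this transformation extends from perfects to a natural transformation of functors $\calD(\Z) \to \calD(\Z)$ landing in $T_p$ rather than in its $H\Z$-linear Kan extension $C \mapsto C\otimes_{H\Z} \Z^{tC_p}$. Because the comparison map $C \otimes_{H\Z} \Z^{tC_p} \to T_p(C)$ is an equivalence only on perfects, testing naturality against a well-chosen non-compact $C$ (natural candidates being $H\Q$, where $T_p(C) = 0$ but the $H\Z$-linear extension vanishes for different reasons, as well as $H(\Z/p^\infty)$ or large free modules) produces the obstruction that forces $a_n = 0$.

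The hard part will be making this final step quantitative: exhibiting a concrete diagram in $\calD(\Z)$ involving non-compact objects that cannot commute under $\eta$ unless $a_n = 0$, and verifying that the higher coherence data in a $\calD(\Z)$-valued natural transformation cannot absorb the obstruction created by the non-$H\Z$-linearity of $T_p$. Once $a_n = 0$ for all $n$, the reduction of the first paragraph gives $H_*(\eta_C) = 0$ for every $C$. The ``in particular'' statement follows immediately: a lax symmetric monoidal $\eta$ would send $1 \in \pi_0 \Z$ to the unit of the commutative $H\Z$-algebra $\Z^{tC_p}$, which is the nonzero class $1 \in \pi_0 \Z^{tC_p} = \Z/p$, contradicting the first part.
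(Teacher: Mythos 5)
Your reduction to the generators $\Z[n]$ and your diagnosis that the obstruction lives in the failure of $H\Z$-linearity of $T_p$ are both sound, but the proof has a genuine gap exactly where you flag it: you never actually show $a_n=0$, and the route you sketch for doing so (testing naturality against non-compact objects such as $H\Q$ or $H(\Z/p^\infty)$) is not the mechanism that makes the theorem true. The obstruction is already detected on the compact object $\Z[0]$ itself; no filtered-colimit or compactness argument is needed, and it is unclear how one would extract a contradiction from $H\Q$, where $T_p$ simply vanishes.

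The step you are missing is the following two-part argument from the paper. First, instead of trying to classify transformations $\id\to T_p^{\Z}$ of endofunctors of $\calD(\Z)$ (where, as you correctly observe, no corepresentability is available), compose with the forgetful functor $H\colon \calD(\Z)\to\Sp$. The functor $H$ \emph{is} corepresented by $\Z[0]$, and $C\mapsto H\bigl((C\otimes_\Z\cdots\otimes_\Z C)^{tC_p}\bigr)$ is exact, so Proposition~\ref{prop:spaceoftrafos} (in its $\calD(\Z)$-to-$\Sp$ form) shows that the underlying spectrum-level transformation of any $\Delta_p^{\Z}$ is classified by a single element of $\pi_0 H(\Z\otimes_\Z\cdots\otimes_\Z\Z)^{tC_p}\cong\Fp$; concretely it must equal $a\cdot(\can\circ\Delta_p)$ for some $a\in\Z$ well-defined mod $p$, where $\Delta_p$ is the Tate diagonal of spectra and $\can$ the lax monoidal comparison map. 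Second, evaluate at the $\E_\infty$-ring $\Z[0]$ and postcompose with the multiplication: the resulting map $H\Z\to H\Z^{tC_p}$ is on the one hand $a$ times the Tate-valued Frobenius $\varphi_{H\Z}$, and on the other hand underlies a map $\Z\to\Z^{tC_p}$ in $\calD(\Z)$, which necessarily factors through $\tau_{\leq 0}\Z^{tC_p}$. By Theorem~\ref{frobhz} the Frobenius $\varphi_{H\Z}$ is the total Steenrod operation $\prod_i Sq^{2i}r$ (resp.\ $\prod_i P^i r$), which has nonzero components in positive degrees and hence does not factor through $\tau_{\leq 0}H\Z^{tC_p}$ unless $a\equiv 0\bmod p$. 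So the detecting invariant is the nontriviality of Steenrod operations, not the behavior of $T_p$ on non-compact objects; without this (or an equivalent concrete computation) your argument does not close.
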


\begin{proof} Let $H:\calD(\Z)\to \Sp$ be the Eilenberg-MacLane spectrum functor. Under this functor, the tensor product of chain complexes corresponds to the tensor product of spectra over $H\Z$. We deduce that the forgetful functor $H$ preserves all limits and colimits, and admits a lax symmetric monoidal structure.

We first claim that for every transformation  $\Delta_p^\Z: C \to (C \otimes_\Z \ldots \otimes_\Z C)^{tC_p}$ of chain complexes, the underlying transformation of spectra factors as
\begin{equation}\label{comtriangle}
\xymatrix{
&&  (HC \otimes \ldots \otimes HC)^{tC_p} \ar[d]^\can \\
HC \ar[rr]^-{\qquad H(\Delta_p^\Z)}\ar@{-->}[rru]^-{a\cdot \Delta_p} && H(C \otimes_\Z\ldots \otimes_\Z C)^{tC_p}.
}
\end{equation}
for some $a\in \Z$. Here $\can$ is the canonical lax symmetric monoidal structure map. To see this claim we observe as before that the functor $\calD(\Z) \to \Sp$ given by $C \mapsto  H\left( (C \otimes_\Z \ldots \otimes_\Z C)^{tC_p} \right)$ is exact. The functor $H: \calD(\Z) \to \Sp$ is corepresented as a stable functor by the unit $\Z[0]\in \calD(\Z)$, so by (the version for $\calD(\Z)$ of) Proposition~\ref{prop:spaceoftrafos}, cf.~\cite[Proposition 6.3]{Nik}, we conclude that the set of natural transformations $HC\to  H(C \otimes_\Z \ldots \otimes_\Z C)^{tC_p}$ is in bijection with
$$
\pi_0 H(\Z\otimes_\Z \ldots \otimes_\Z \Z)^{tC_p} \cong \bF_p.
$$
Now the map $HC \xto{\Delta_p} (HC \otimes \ldots \otimes HC)^{tC_p} \xto{\can} H(C \otimes_\Z \ldots \otimes_\Z C)^{tC_p}$ induced from the Tate diagonal induces on $\pi_0$ for $C = H\Z$ the canonical projection $\Z \to \Fp$. Thus for a given $\Delta_p^\Z$ we can choose $a\in \Z$ (well-defined modulo $p$) so that the diagram \eqref{comtriangle} commutes (up to homotopy).

Now consider the $\E_\infty$-algebra $\Z[0]\in \calD(\Z)$. Then $H\Z$ is an $\E_\infty$-ring spectrum and we have a commutative diagram of spectra\[\xymatrix{
H\Z\ar^{\!\!\!\!\!\!\!\!\!\!\!\!a\Delta_p}[rr]\ar@{=}[d] && (H\Z \otimes \ldots \otimes H\Z)^{tC_p} \ar[d]^\can \ar[rr]^-{m_{H\Z}^{tC_p}} && H\Z^{tC_p} \ar@{=}[d] \\
H\Z\ar^{\!\!\!\!\!\!\!\!\!\!\!\!H(\Delta_p^\Z)}[rr] && H (\Z \otimes _\Z\ldots \otimes_\Z \Z)^{tC_p}\ar[rr]^-{H(m_\Z^{tC_p})} &&  H\Z^{tC_p}\ .
}\]
The commutativity of the diagram now implies that the upper horizontal map $H\Z\to H\Z^{tC_p}$ of spectra is $H\Z$-linear, as the lower line comes from a map in $\calD(\Z)$. Note that every map $\Z\to \Z^{tC_p}$ in $\calD(\Z)$ factors over $\tau_{\leq 0} \Z^{tC_p}$. However, the upper horizontal map is $a$ times the Frobenius $\varphi_{H\Z}$ that will be studied in Section~\ref{sec:tatefrob} below. In particular it follows from Theorem~\ref{frobhz} that $a$ times the map $\varphi_{H\Z}: H\Z\to H\Z^{tC_p}$ does not factor over $\tau_{\leq 0} H\Z^{tC_p}$ unless $a\equiv 0\mod p$, so we can take $a=0$. 
\end{proof}

Note that the proof shows a bit more than is stated, namley that every transformation $C \to (C \otimes_\Z \ldots \otimes_\Z C)^{tC_p}$ is trivial (as a natural transformation) on underlying spectra. Now every map  $C \to D$ in $\calD(\Z)$ such that the underlying map of spectra is nullhomotopic is already nullhomotopic itself. To see this we can write $C \simeq \oplus H_i C[i]$ and $D \simeq \oplus H_iD[i]$ and thereby reduce to the case of chain complexes concentrated in a single degree. Then there are only two cases: either $C$ and $D$ are concentrated in the same degree, in which case we can detect non-triviality already on homology (and thus for the spectrum map on homotopy). If $D$ is one degree higher than $C$ then the map $f: C \to D$ is zero, precisely if the fiber $\fib(f) \to C$ admits a section. Since $\fib(f)$ is by the long exact sequence concentrated in the same degree as $C$ this can also be tested on homology. The forgetful functor from chain complexes to spectra preserves fiber sequences so this can also be seen on homotopy groups of the underlying spectrum.

Thus every transformation $C \to (C \otimes_\Z \ldots \otimes_\Z C)^{tC_p}$ as in Theorem \ref{thmnogo} is pointwise the zero transformation. We do however not know if this implies also that every transformation itself is zero as a transformation of functors.

\section{The construction of THH}\label{sec:thhnaive}

In this section, we give the direct $\infty$-categorical construction of topological Hoch\-schild homology as a cyclotomic spectrum in our sense. We will do this for associative ring spectra. One can more generally define topological Hochschild homology for spectrally enriched $\infty$-categories and variants of the constructions here give a cyclotomic structure in this generality. Also we note that one can give a more geometric approach to these structures using the language of factorization homology, which has the advantage that it generalizes to higher dimensions. However we prefer to stick to the more concrete and combinatorial description here. \\

Let $A$ be an associative algebra in the symmetric monoidal $\infty$-category $\Sp$, in the sense of the following definition.

\begin{definition}[{\cite[Definition 4.1.1.6]{HA}}]\label{def:assalg} The $\infty$-category $\Alg_{\E_1}(\Sp)$ of associative algebras, or $\E_1$-algebras, in the symmetric monoidal $\infty$-category $\Sp$ is given by the $\infty$-category of operad maps $A^\otimes: N(\Ass^\otimes)\to \Sp^\otimes$. Equivalently, it is the $\infty$-category of functors
\[
N(\Ass^\otimes)\to \Sp^\otimes
\]
over $N(\Fin_\ast)$ that carry inert maps to inert maps.
\end{definition}

We refer to Appendix~\ref{app:symmmon} for a discussion of inert maps and to Appendix~\ref{app:cyclic} for a discussion of $N\Ass^\otimes$. The final condition essentially says that for all $n\geq 1$, the image of the unique object $\langle n\rangle_\Ass$ in $\Ass^\otimes$ mapping to $\langle n\rangle\in \Fin_\ast$ is given by $(A,\ldots,A)\in \Sp^n$, where $A\in \Sp$ is the image of $\langle 1\rangle_\Ass$; this condition is spelled out more precisely in the next proposition.

The operad $\Ass^\otimes$ is somewhat complicated. One can give an equivalent and simpler definition of associative algebras.

\begin{proposition}\label{prop:assalgcomp} Consider the natural functor $\Delta^\op\to \Ass^\otimes$ from Appendix~\ref{app:cyclic}. Then restriction along this functor defines an equivalence between $\Alg_{\E_1}(\Sp)$ and the $\infty$-category of functors $A^\otimes: N(\Delta^\op)\to \Sp^\otimes$ making the diagram
\[\xymatrix{
N(\Delta^\op)\ar[r]^{A^\otimes}\ar[dr] & \Sp^\otimes\ar[d]\\
& N(\Fin_\ast)
}\]
commute, and satisfying the `Segal'-condition.\end{proposition}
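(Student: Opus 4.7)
The plan is to verify that the restriction functor along $\phi: N(\Delta^\op) \to N(\Ass^\otimes)$ is an equivalence onto the full subcategory of $\Fun(N(\Delta^\op), \Sp^\otimes)_{/N(\Fin_\ast)}$ cut out by the Segal condition. This is essentially a result of Lurie (cf.~HA, Section 4.1.2), and the argument I would give follows his strategy.

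First, I would observe that restriction automatically lands in the Segal-type subcategory. If $A^\otimes : N(\Ass^\otimes) \to \Sp^\otimes$ is an operad map, then for each $[n] \in \Delta$, the image of the corresponding object $\langle n\rangle_\Ass$ is forced to be $(A,\ldots,A) \in \Sp^n$ by the inert-map preservation condition. The Segal maps arising from the inert projections $[n] \leftarrow [1]$ in $\Delta^\op$ (which factor through inert morphisms in $\Ass^\otimes$) are then automatically equivalences. This shows the restriction functor lands in the claimed subcategory and is well-defined.

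The substantive step is showing that this restriction is an equivalence. The key tool is Lurie's theory of approximations of $\infty$-operads (HA, Section 2.3.3). Concretely, I would show that $\phi: N(\Delta^\op) \to N(\Ass^\otimes)$ is an approximation, which reduces to a combinatorial verification: given any active morphism $\langle m\rangle \to \langle 1\rangle$ in $\Ass^\otimes$ (i.e., a linear ordering on $\langle m\rangle^\circ$) and lifts of the source along $\phi$, there is a canonical simplex of $\Delta^\op$ mediating between them. Once $\phi$ is recognized as an approximation, the restriction functor on categories of algebras is automatically an equivalence by HA, Theorem 2.3.3.23, with an explicit inverse given by operadic left Kan extension along $\phi$; the Segal condition on the domain is exactly what is needed for this left Kan extension to preserve inert morphisms.

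The main technical obstacle will be the verification of the approximation property, which requires carefully matching the description of morphisms in $\Ass^\otimes$ (as pairs of a map in $\Fin_\ast$ together with linear orderings on fibers) with morphisms in $\Delta$. Once this is handled, the rest of the argument is formal. As a sanity check, one can verify the statement on homotopy categories: for a symmetric monoidal $1$-category, the equivalence between monoid objects and simplicial objects satisfying the Segal condition and the unitality compatibility with $\Fin_\ast$ recovers the classical bar construction, which provides reassuring intuition for the full $\infty$-categorical statement.
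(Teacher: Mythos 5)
Your proposal is correct and is essentially the paper's argument: the paper simply cites \cite[Proposition 4.1.2.15]{HA}, whose proof is exactly the approximation-of-$\infty$-operads strategy you outline, and then notes the one verification you also identify — that the Segal condition forces the extended functor to carry inert morphisms to inert morphisms. You are re-deriving Lurie's proposition rather than citing it, but the mathematical content is the same.
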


By the `Segal' condition we mean the following. Let $A=A^\otimes([1])\in \Sp^\otimes_{\langle 1\rangle} = \Sp$. Consider the object $A^\otimes([n])\in \Sp^\otimes_{\langle n\rangle}\simeq \Sp^n$, which we may identify with a sequence $A_1,\ldots,A_n\in \calC$ of spectra. Then the $n$ maps $[1]\to [n]$ sending $[1]=\{0,1\}$ to $\{i-1,i\}\subseteq [n]$ for $i=1,\ldots,n$ induce maps $\rho^i: \langle n\rangle\to \langle 1\rangle$ which map all elements except for $i\in \langle n\rangle$ to the base point. The induced functor $\rho^i_!: \Sp^n=\Sp^\otimes_{\langle n\rangle}\to \Sp^\otimes_{\langle 1\rangle} = \Sp$ takes the tuple $(A_1,\ldots,A_n)$ to $A_i$ by definition. In particular, the map $A^\otimes([n])\to A^\otimes([1])=A$ induces a map $A_i = \rho^i_! A^\otimes([n])\to A$. The condition is that this map is always an equivalence.

\begin{proof} This is essentially \cite[Proposition 4.1.2.15]{HA}, cf.~\cite[Definition 2.1.2.7]{HA} for the definition of an algebra object over an operad. One needs to check that the condition imposed implies that the functor carries inert morphisms to inert morphisms in general, which is an easy verification (using that $\Sp$ is symmetric monoidal).
\end{proof}

Now, given $A\in \Alg_{\E_1}(\Sp)$, we can form the cyclic spectrum
\[
N(\Lambda^\op)\xto{V^\circ} N(\Ass^\otimes_\act)\xto{A^\otimes} \Sp^\otimes_\act\xto{\otimes} \Sp\ ,
\]
where $\otimes$ is the symmetric monoidal functor taking a sequence $(X_1,\ldots,X_n)$ of spectra to $X_1\otimes\ldots\otimes X_n$, as discussed after Proposition~\ref{prop:symenvelope} above and $V^\circ$ is the functor discussed in Proposition~\ref{prop:lambdaass} of Appendix~\ref{app:cyclic}. Note that this cyclic spectrum is just making precise the diagram
\[\xymatrix{
 \cdots \ar[r]<1.5pt>\ar[r]<-1.5pt>\ar[r]<4.5pt>\ar[r]<-4.5pt> & A\otimes A\otimes A\ar@(ul,ur)^{C_3} \ar[r]<3pt>\ar[r]\ar[r]<-3pt>  & A\otimes A\ar@(ul,ur)^{C_2} \ar[r]<1.5pt>\ar[r]<-1.5pt> & A\ .
}\]

\begin{definition}\label{def_bar_infty} For an $\E_1$-ring spectrum $A$ 
we let $\THH(A)\in \Sp^{B\T}$ be the geometric realization\footnote{For the notion of geometric realization of cyclic objects in this $\infty$-categorical setting see Proposition~\ref{prop:geomreal}.} of the cyclic spectrum
\[
N(\Lambda^\op) {\xto{V^\circ}} N(\Ass^\otimes_\act)\xto{A^\otimes} \Sp^\otimes_\act\xto{\otimes} \Sp\ .
\]
\end{definition}

Now we  give the construction of the cyclotomic structure on $\THH(A)$, i.e.~$\T/C_p\cong \T$-equivariant maps
\[
\varphi_p: \THH(A)\to \THH(A)^{tC_p}
\]
for all primes $p$. The idea is to extend the Tate diagonal to a map of cyclic spectra
\[\xymatrix{
\cdots \ar[r]<1.2ex> \ar[r]<-1.2ex>\ar[r]<0.4ex> \ar[r]<-0.4ex> & A^{\otimes 3} \ar[d]^{\Delta_p} \ar@(ul,ur)^{C_3}\ar[r]<0.8ex>\ar[r]<0ex>\ar[r]<-0.8ex> \ar[r]& A^{\otimes 2} \ar[d]^{\Delta_p}\ar@(ul,ur)^{C_2}
\ar[r]<0.4ex> \ar[r]<-0.4ex>& A \ar[d]^{\Delta_p} \\
\cdots \ar[r]<1.2ex> \ar[r]<-1.2ex>\ar[r]<0.4ex> \ar[r]<-0.4ex> & \big(A^{\otimes 3p}\big)^{tC_p} \ar@(dl,dr)_{C_3}\ar[r]<0.8ex>\ar[r]<0ex>\ar[r]<-0.8ex> \ar[r]& \big(A^{\otimes 2p}\big)^{tC_p}\ar@(dl,dr)_{C_2}
\ar[r]<0.4ex> \ar[r]<-0.4ex>& \big(A^{\otimes p}\big)^{tC_p}
}\]
where the lower cyclic object is a variant of the subdivision of the upper one.

Let us first describe this second cyclic object more precisely. Let $\Free_{C_p}$ be the category of finite free $C_p$-sets $S$, which has a natural map to the category of finite sets via $S\mapsto \overline{S}=S/C_p$. We start by looking at the simplicial subdivision, corresponding to pullback along the functor $\sd_p: \Lambda_p^\op\to \Lambda^\op: [n]_{\Lambda_p}\mapsto [np]_\Lambda$. This gives a functor
\[
N(\Lambda_p^\op)\to N(\Free_{C_p})\times_{N(\Fin)} N(\Ass^\otimes_\act)\xto{A^\otimes} N(\Free_{C_p})\times_{N(\Fin)} \Sp^\otimes_\act\ .
\]
Note that the target category consists of pairs $(S,(X_{\overline{s}})_{\overline{s}\in \overline{S}=S/C_p})$ of a finite free $C_p$-set and spectra $X_{\overline{s}}$ parametrized by $\overline{s}\in \overline{S}=S/C_p$. We need to compose this with the functor which takes
\[
(S,(X_{\overline{s}})_{\overline{s}\in \overline{S}=S/C_p})\in N(\Free_{C_p})\times_{N(\Fin)} \Sp^\otimes_\act
\]
to $\bigotimes_{s\in S} X_{\overline{s}}\in \Sp^{BC_p}$. This is the composite of the functor
\[
N(\Free_{C_p})\times_{N(\Fin)} \Sp^\otimes_\act\to (\Sp^\otimes_\act)^{BC_p}\ :\ (S,(X_{\overline{s}})_{\overline{s}\in \overline{S}})\mapsto (S,(X_{\overline{s}})_{s\in S})
\]
from Proposition~\ref{prop:funnyfunctor} below and $\otimes: (\Sp^\otimes_\act)^{BC_p}\to \Sp^{BC_p}$.

Now we have the functor
\[
N(\Lambda_p^\op)\to N(\Free_{C_p})\times_{N(\Fin)} \Sp^\otimes_\act\to (\Sp^\otimes_\act)^{BC_p}\xto{\otimes} \Sp^{BC_p}\xto{-^{tC_p}} \Sp\ ,
\]
which is $BC_p$-equivariant (using that $-^{tC_p}$ is $BC_p$-equivariant, by Theorem~\ref{thm:genfarrelltate}). Thus, it factors over a functor
\[
N(\Lambda^\op)=N(\Lambda_p^\op)/BC_p\to \Sp\ ,
\]
which gives the desired construction of the cyclic spectrum
\begin{equation}\label{subdiivision}
\xymatrix{
\cdots \ar[r]<1.2ex> \ar[r]<-1.2ex>\ar[r]<0.4ex> \ar[r]<-0.4ex> & \big(A^{\otimes 3p}\big)^{tC_p} \ar@(dl,dr)_{C_3}\ar[r]<0.8ex>\ar[r]<0ex>\ar[r]<-0.8ex> \ar[r]& \big(A^{\otimes 2p}\big)^{tC_p}\ar@(dl,dr)_{C_2}
\ar[r]<0.4ex> \ar[r]<-0.4ex>& \big(A^{\otimes p}\big)^{tC_p}\ .
}
\end{equation}

The geometric realization of this cyclic spectrum is by definition (see Proposition \ref{prop:geomreal} in Appendix \ref{app:cyclic}) the colimit of its restriction to the paracyclic category $N\Lambda_\infty$. Commuting $-^{tC_p}$ and the colimit, the geometric realization of the cyclic object \eqref{subdiivision} maps $\T$-equivariantly to $\THH(A)^{tC_p}$ where we have identified the geometric realization of the subdivision of a cylic object with the geometric realization of the initial cyclic object using the usual equivalence, see Proposition \ref{subdivisionrealization} and Proposition~\ref{prop:commutetaterealization} in Appendix \ref{app:cyclic}. It remains to construct the map
\[\xymatrix{
\cdots \ar[r]<1.2ex> \ar[r]<-1.2ex>\ar[r]<0.4ex> \ar[r]<-0.4ex> & A^{\otimes 3} \ar[d]^{\Delta_p} \ar@(ul,ur)^{C_3}\ar[r]<0.8ex>\ar[r]<0ex>\ar[r]<-0.8ex> \ar[r]& A^{\otimes 2} \ar[d]^{\Delta_p}\ar@(ul,ur)^{C_2}
\ar[r]<0.4ex> \ar[r]<-0.4ex>& A \ar[d]^{\Delta_p} \\
\cdots \ar[r]<1.2ex> \ar[r]<-1.2ex>\ar[r]<0.4ex> \ar[r]<-0.4ex> & \big(A^{\otimes 3p}\big)^{tC_p} \ar@(dl,dr)_{C_3}\ar[r]<0.8ex>\ar[r]<0ex>\ar[r]<-0.8ex> \ar[r]& \big(A^{\otimes 2p}\big)^{tC_p}\ar@(dl,dr)_{C_2}
\ar[r]<0.4ex> \ar[r]<-0.4ex>& \big(A^{\otimes p}\big)^{tC_p}
}
\]
of cyclic spectra. Equivalently, we have to construct a natural transformation of $BC_p$-equivariant functors from
\[
N(\Lambda_p^\op)\to N(\Free_{C_p})\times_{N(\Fin)} \Sp^\otimes_\act\to \Sp^\otimes_\act\xto{\otimes}\Sp
\]
to
\[
N(\Lambda_p^\op)\to N(\Free_{C_p})\times_{N(\Fin)} \Sp^\otimes_\act\to (\Sp^\otimes_\act)^{BC_p}\xto{\otimes} \Sp^{BC_p}\xto{-^{tC_p}} \Sp\ .
\]
This follows from Corollary~\ref{cor:tildetptrafo} in the next section, which gives a natural $BC_p$-equivariant natural transformation from the functor
\[
I: N(\Free_{C_p})\times_{N(\Fin)} \Sp^\otimes_\act\to \Sp^\otimes_\act\xto{\otimes}\Sp
\]
given by
\[
(S,(X_{\overline{s}\in \overline{S}=S/C_p}))\mapsto \bigotimes_{\overline{s}\in \overline{S}} X_{\overline{s}}
\]
to the functor
\[
\tilde{T}_p: N(\Free_{C_p})\times_{N(\Fin)} \Sp^\otimes_\act\to (\Sp^\otimes_\act)^{BC_p}\xto{\otimes} \Sp^{BC_p}\xto{-^{tC_p}} \Sp
\]
given by
\[
(S,(X_{\overline{s}\in \overline{S}=S/C_p}))\mapsto (\bigotimes_{s\in S} X_{\overline{s}})^{tC_p}\ .
\]
Composing this natural transformation with the functor
\[
N(\Lambda_p^\op)\to N(\Free_{C_p})\times_{N(\Fin)} \Sp^\otimes_\act
\]
induced by $A$, this finishes our construction of the cyclotomic structure on $\THH(A)$, modulo the required functoriality of the Tate diagonal that will be established in the next section.

\section{Functoriality of the Tate diagonal}\label{sec:tatediagfunc}

As explained in the last section, we need to equip the Tate diagonal $\Delta_p$ with stronger functoriality, and we also need a version in many variables. We start by noting that by Theorem~\ref{thm:tatelaxsymm}, the functor
\[
T_p:\Sp\to \Sp:X\mapsto (X\otimes\ldots\otimes X)^{tC_p}
\]
acquires a canonical lax symmetric monoidal structure. We can now also make the natural transformation $\Delta_p: \mathrm{id}_{\Sp}\to T_p$ into a lax symmetric monoidal transformation.

\begin{proposition}\label{prop:tatediaglaxsymm} There is a unique lax symmetric monoidal transformation
\[
\Delta_p: \mathrm{id}_{\Sp}\to T_p\ .
\]
The underlying transformation of functors is given by the transformation from Definition~\ref{def:tatediag}.
\end{proposition}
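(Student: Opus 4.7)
The plan is to upgrade Proposition~\ref{prop:spaceoftrafos} to the symmetric monoidal setting using a strong uniqueness result from \cite{Nik}, as announced in the introduction to this section. First, I would endow $T_p$ with a canonical lax symmetric monoidal structure: the $p$-fold tensor power functor $X \mapsto X^{\otimes p}$ with its cyclic $C_p$-action refines to a symmetric monoidal functor $\Sp \to \Sp^{BC_p}$, and composing with the lax symmetric monoidal functor $(-)^{tC_p}: \Sp^{BC_p}\to \Sp$ constructed in Theorem~\ref{thm:tatelaxsymm} yields the desired lax symmetric monoidal structure on $T_p$.

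Next, I would apply the lax symmetric monoidal analogue of Proposition~\ref{prop:spaceoftrafos} from \cite{Nik}: for any lax symmetric monoidal exact endofunctor $F$ of $\Sp$, evaluation at the sphere induces an equivalence between the space of lax symmetric monoidal natural transformations $\mathrm{id}_\Sp \to F$ and the mapping space $\Map_{\mathrm{CAlg}(\Sp)}(\bS, F(\bS))$, where $F(\bS)$ carries its canonical $\E_\infty$-algebra structure inherited from $\bS \in \mathrm{CAlg}(\Sp)$ via the lax structure on $F$. Applied to $F = T_p$, this mapping space is contractible since $\bS$ is initial in $\mathrm{CAlg}(\Sp)$, yielding the existence and uniqueness of $\Delta_p$.

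For the comparison with Definition~\ref{def:tatediag}, I would identify the value at $\bS$ of the unique lax symmetric monoidal transformation with the unit morphism $\bS \to T_p(\bS) = \bS^{tC_p}$ of $\bS^{tC_p}$ as an $\E_\infty$-algebra. By construction of the lax symmetric monoidal structure on $(-)^{tC_p}$ in Theorem~\ref{thm:tatelaxsymm}, the natural transformation $(-)^{hC_p}\to (-)^{tC_p}$ is lax symmetric monoidal, so this unit factors as $\bS \to \bS^{hC_p} \to \bS^{tC_p}$, which is precisely the map in Definition~\ref{def:tatediag}. The underlying transformation therefore agrees with the one of Definition~\ref{def:tatediag}, since both are determined by their value at $\bS$ via Proposition~\ref{prop:spaceoftrafos}.

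The hard part will be locating and formulating the lax symmetric monoidal universal property of $\mathrm{id}_\Sp$ from \cite{Nik} precisely enough to apply it; the underlying idea is that $\Sp$ is the free presentably symmetric monoidal stable $\infty$-category on a single generator, so that $\mathrm{id}_\Sp$ is corepresented in a lax-monoidal-enhanced sense by $\bS \in \mathrm{CAlg}(\Sp)$. Once this is in hand, the remainder of the argument reduces to the initiality of $\bS$ among $\E_\infty$-rings and the compatibility between the lax structures on $(-)^{hC_p}$ and $(-)^{tC_p}$ established in Theorem~\ref{thm:tatelaxsymm}.
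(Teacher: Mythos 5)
Your proposal is correct and takes essentially the same approach as the paper: the paper's proof is a one-line appeal to \cite[Corollary 6.9(1)]{Nik}, the statement that $\mathrm{id}_{\Sp}$ is initial among exact lax symmetric monoidal endofunctors of $\Sp$, which is exactly the corepresentability-by-$\bS\in\mathrm{CAlg}(\Sp)$ principle you invoke. Your identification of the underlying transformation via its value at $\bS$ and the factorization $\bS\to\bS^{hC_p}\to\bS^{tC_p}$ is also the intended argument.
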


\begin{proof} This follows from \cite[Corollary 6.9(1)]{Nik}, which states that the identity is initial among exact lax symmetric monoidal functors from $\Sp$ to $\Sp$.
\end{proof}

Now, for the construction of the cyclotomic structure maps, we need to construct a natural transformation between the two $BC_p$-equivariant functors
\[
N(\Free_{C_p})\times_{N(\Fin)} \Sp^\otimes_\act\to \Sp\ ,
\]
which are given by
\[
I: (S,(X_{\overline{s}})_{\overline{s}\in \overline{S}})\mapsto \bigotimes_{\overline{s}\in \overline{S}} X_{\overline{s}}
\]
and
\[
\tilde{T}_p: (S,(X_{\overline{s}})_{\overline{s}\in \overline{S}})\mapsto (\bigotimes_{s\in S} X_s)^{tC_p}\ ,
\]
respectively. If one trivializes a free $C_p$-set to $S=\overline{S}\times C_p$, the transformation is given by
\[
\bigotimes_{\overline{s}\in \overline{S}} X_{\overline{s}}\to \bigotimes_{\overline{s}\in \overline{S}} (X_{\overline{s}}\otimes\ldots\otimes X_{\overline{s}})^{tC_p}\to (\bigotimes_{s\in S} X_{\overline{s}})^{tC_p}
\]
where the first map is the tensor product of $\Delta_p$ for all $\overline{s}\in \overline{S}$, and the second map uses that $-^{tC_p}$ is lax symmetric monoidal. One needs to see that this map does not depend on the chosen trivialization of $S$.

In fact, to get this natural transformation $I\to \tilde{T}_p$, we will do something stronger. Namely, we will make both functors lax symmetric monoidal, and then show that there is a unique lax symmetric monoidal transformation. For this, we have to recall a few results about lax symmetric monoidal functors.

First, cf.~Appendix~\ref{app:symmmon}, we recall that for any symmetric monoidal $\infty$-category $\calC$ (more generally, for any $\infty$-operad $\calO$), one can form a new symmetric monoidal $\infty$-category $\calC^\otimes_\act$ whose underlying $\infty$-category is given by
\[
\calC^\otimes_\act = \calC^\otimes\times_{N(\Fin_\ast)} N(\Fin)\ ,
\]
where $\Fin$ is the category of finite (possibly empty) sets, and the functor $\Fin\to \Fin_\ast$ adds a disjoint base point. The fiber of $\calC^\otimes_\act$ over a finite set $I\in \Fin$ is given by $\calC^I$. Here, the symmetric monoidal structure on $\calC^\otimes_\act$ is given by a ``disjoint union'' type operation; it takes $(X_i)_{i\in I}\in \calC^I$ and $(X_j)_{j\in J}\in \calC^J$ to $(X_k)_{k\in I\sqcup J}\in \calC^{I\sqcup J}$. There is a natural lax symmetric monoidal functor $\calC\to \calC^\otimes_\act$ whose underlying functor is the inclusion into the fiber of $\calC^\otimes_\act$ over the one-element set.

\begin{proposition}\label{prop:symenvelope} Let $\calC$ and $\calD$ be symmetric monoidal $\infty$-categories. Restriction along $\calC\subseteq \calC^\otimes_\act$ is an equivalence of $\infty$-categories
\[
\Fun_\otimes(\calC^\otimes_\act,\calD)\simeq \Fun_\lax(\calC,\calD)\ .
\]
\end{proposition}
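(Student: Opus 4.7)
The proposition is Lurie's universal property of the symmetric monoidal envelope, so the plan is to reduce it to that result. First I would identify the construction $\calC^\otimes_\act = \calC^\otimes \times_{N(\Fin_*)} N(\Fin)$ with the symmetric monoidal envelope $\Env(\calC^\otimes)$ of \cite[Section 2.2.4]{HA}. Lurie defines $\Env(\calO^\otimes) := \calO^\otimes \times_{N(\Fin_*)} N(\Fin_*)^{\mathrm{act}}$, where $N(\Fin_*)^{\mathrm{act}}$ is the wide subcategory of active morphisms. The forgetful functor $\Fin \to \Fin_*$, $I \mapsto I_+$, is an equivalence onto $\Fin_*^{\mathrm{act}}$ (a morphism of pointed finite sets is active iff the preimage of the basepoint is the basepoint, so it is determined by its restriction to unpointed sets). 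Thus the two pullbacks are canonically equivalent, and the symmetric monoidal structure on $\calC^\otimes_\act$ described in the excerpt matches the one Lurie puts on $\Env(\calC^\otimes)$ (both are "disjoint union" of tuples).

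Having made this identification, the key input is \cite[Proposition 2.2.4.9]{HA}, which asserts that for any $\infty$-operad $\calO^\otimes$ and any symmetric monoidal $\infty$-category $\calD$, restriction along the canonical $\infty$-operad map $\calO^\otimes \to \Env(\calO^\otimes)^\otimes$ induces an equivalence
\[
\Fun_\otimes(\Env(\calO^\otimes), \calD) \xrightarrow{\simeq} \Alg_{\calO}(\calD)\ ,
\]
where $\Alg_\calO(\calD) = \Fun_{\mathrm{lax}/N(\Fin_*)}(\calO^\otimes, \calD^\otimes)$ denotes the $\infty$-category of $\infty$-operad maps. Applying this with $\calO^\otimes = \calC^\otimes$, and unwinding the definition $\Fun_\lax(\calC,\calD) = \Alg_{\calC}(\calD)$ (Definition \ref{def:assalg} read for the commutative operad — this is just the functor $\infty$-category of lax symmetric monoidal functors, as discussed in Appendix \ref{app:symmmon}), we obtain exactly the statement of the proposition.

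The only remaining point is to check that Lurie's canonical operad map $\calC^\otimes \to \Env(\calC^\otimes)^\otimes$ restricts, on underlying $\infty$-categories, to the inclusion $\calC \hookrightarrow \calC^\otimes_\act$ described in the excerpt. By construction, Lurie's map is the one induced by $\langle 1 \rangle \in N(\Fin_*)^{\mathrm{act}}$, i.e., the fiber over the one-element set, which is precisely $\calC \subseteq \calC \times_{N(\Fin)} \calC^\otimes_\act = \calC$. Hence restriction along the two maps agrees, completing the reduction. The main obstacle, such as it is, is purely bookkeeping: matching Lurie's conventions (active morphisms in pointed finite sets) with the slightly cleaner convention (unpointed finite sets) used here; once that translation is in place, the statement is immediate from \cite[Proposition 2.2.4.9]{HA}.
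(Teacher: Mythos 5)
Your proposal is correct and is essentially the paper's own argument: the paper's proof consists of the single line that this is a special case of \cite[Proposition 2.2.4.9]{HA}, which is exactly the reduction you carry out. The extra bookkeeping you supply (identifying $\calC^\otimes_\act$ with Lurie's envelope via the equivalence $\Fin\simeq \Fin_\ast^{\mathrm{act}}$ and matching the restriction functors) is the correct unwinding of that citation.
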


\begin{proof} This is a special case of \cite[Proposition 2.2.4.9]{HA}.
\end{proof}

Note in particular that the identity $\calC\to \calC$ corresponds to a functor $\otimes: \calC^\otimes_\act\to \calC$, which is informally given by sending a list $(X_1,\ldots,X_n)$ of objects in $\calC$ to $X_1\otimes\ldots\otimes X_n$. Another way to construct it is by noting that $\calC^\otimes_\act\to N(\Fin)$ is a coCartesian fibration, and $N(\Fin)$ has a final object. This implies that the colimit of the corresponding functor $N(\Fin)\to \Cat_\infty$ is given by the fiber $\calC=\calC^\otimes_{\langle 1\rangle}$ over the final object $\langle 1\rangle\in N(\Fin)$, and there is a natural functor from $\calC^\otimes_\act$ to the colimit $\calC$, cf.~\cite[Corollary 3.3.4.3]{HTT}. Moreover, the functor $\otimes: \calC^\otimes_\act \to \calC$ can also be characterised as the left adjoint to the inclusion $\calC \subseteq \calC^\otimes_\act$, see Remark~\ref{remarkadjoint} below.

In the situation of Proposition~\ref{prop:symenvelope}, it will be useful to understand lax symmetric monoidal functors from $\calC^\otimes_\act$ as well. For this, we use the following statement.

\begin{lemma}\label{lem:laxsymmrightadj} Let $\calC$ and $\calD$ be symmetric monoidal $\infty$-categories. Then the canonical inclusion
\[
\Fun_\otimes(\calC^\otimes_\act, \calD)\subseteq \Fun_\lax(\calC^\otimes_\act, \calD)
\]
admits a right adjoint $R$. The composition $\Fun_\lax(\calC^\otimes_\act, \calD) \xto{R} \Fun_\otimes(\calC^\otimes_\act, \calD)\xto{\sim} \Fun_\lax(\calC,\calD)$ is given by restriction along the lax symmetric monoidal functor $\calC\to \calC^\otimes_\act$.
\end{lemma}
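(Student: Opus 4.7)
My plan is to construct $R$ explicitly from Proposition~\ref{prop:symenvelope}, and then verify it is right adjoint to the inclusion $j$ by producing a unit and counit satisfying the triangle identities. The underlying observation is that a lax symmetric monoidal natural transformation whose source is strong symmetric monoidal is determined by its restriction along the inclusion $\iota: \calC \hookrightarrow \calC^\otimes_\act$.

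First I will define
\[
R := (\iota^*)^{-1} \circ \iota^*_\lax \colon \Fun_\lax(\calC^\otimes_\act, \calD) \xrightarrow{\iota^*_\lax} \Fun_\lax(\calC, \calD) \xrightarrow[\simeq]{(\iota^*)^{-1}} \Fun_\otimes(\calC^\otimes_\act, \calD),
\]
where $\iota^*_\lax$ is restriction along $\iota$ (which is a lax symmetric monoidal functor) and $\iota^*$ is the equivalence of Proposition~\ref{prop:symenvelope}. The factorization claim in the lemma is then built into this definition. Because $\iota^*_\lax \circ j$ and $\iota^*$ are both restriction along $\iota$, we have $R \circ j = (\iota^*)^{-1} \circ \iota^* = \id_{\Fun_\otimes(\calC^\otimes_\act, \calD)}$, providing the unit of the prospective adjunction as an equivalence $\eta = \id$.

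Next I will construct the counit $\epsilon_G \colon jR(G) \to G$ for every $G \in \Fun_\lax(\calC^\otimes_\act, \calD)$. Since $jR(G)$ is the unique symmetric monoidal extension of $G|_\calC$, on a tuple $(X_1, \ldots, X_n) \in \calC^n \subseteq \calC^\otimes_\act$ we have $jR(G)(X_1, \ldots, X_n) \simeq G(X_1) \otimes \ldots \otimes G(X_n)$. I define $\epsilon_G$ on this tuple to be the lax structure map $G(X_1) \otimes \ldots \otimes G(X_n) \to G(X_1, \ldots, X_n)$ of $G$, and the identity on $\calC$. With $\eta = \id$, the first triangle identity $R(\epsilon_G) \circ \eta_{R(G)} = \id$ holds because $\epsilon_G$ restricts to the identity on $\calC$, so $R(\epsilon_G) = (\iota^*)^{-1}(\id) = \id$. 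The second identity $\epsilon_{jF} \circ j(\eta_F) = \id_{jF}$ follows because $jF$ is strong symmetric monoidal, so the lax structure map which defines $\epsilon_{jF}$ is the canonical equivalence $jF(X_1) \otimes \ldots \otimes jF(X_n) \simeq jF(X_1, \ldots, X_n)$, i.e.~the identity after the standard identification.

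The main obstacle will be to make the construction of $\epsilon_G$ rigorous as a morphism in the functor $\infty$-category $\Fun_\lax(\calC^\otimes_\act, \calD)$, naturally in $G$; equivalently, one must assemble the pointwise formula into a map of $\infty$-operads over $N(\Fin_*)$. I expect to handle this by applying Proposition~\ref{prop:symenvelope} relative to the target $\Fun(\Delta^1, \calD)$ equipped with its pointwise symmetric monoidal structure, using that a lax symmetric monoidal natural transformation between lax symmetric monoidal functors into $\calD$ is the same datum as a single lax symmetric monoidal functor into $\Fun(\Delta^1, \calD)$ with prescribed source and target. This reduces the construction and coherence of $\epsilon_G$ to data on $\calC$, where it is the identity.
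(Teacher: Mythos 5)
Your construction of $R$ as $(\iota^*)^{-1}\circ\iota^*_\lax$ agrees with the paper's, and the pointwise description of the counit (the lax structure maps $G(X_1)\otimes\ldots\otimes G(X_n)\to G(X_1,\ldots,X_n)$) is the right one. The gap is in the step you yourself flag as the main obstacle: assembling $\epsilon_G$ into a coherent morphism of $\Fun_\lax(\calC^\otimes_\act,\calD)$. Your proposed reduction — applying Proposition~\ref{prop:symenvelope} with target $\Fun(\Delta^1,\calD)$ — does not work. A morphism $jR(G)\to G$ corresponds to a lax symmetric monoidal functor $H:\calC^\otimes_\act\to\Fun(\Delta^1,\calD)$ whose component $H_0=jR(G)$ is symmetric monoidal but whose component $H_1=G$ is only lax; with the pointwise symmetric monoidal structure on $\Fun(\Delta^1,\calD)$, $H$ is symmetric monoidal only if \emph{both} components are. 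So $H$ is not in the essential image of the equivalence $\Fun_\otimes(\calC^\otimes_\act,\Fun(\Delta^1,\calD))\simeq\Fun_\lax(\calC,\Fun(\Delta^1,\calD))$, and that equivalence can neither produce $\epsilon_G$ nor control its coherences; applied to the restriction $\id_{G|_\calC}$ it would only return the identity of $jR(G)$. Relatedly, even granting the counit, checking the triangle identities objectwise does not by itself yield an adjunction of $\infty$-categories.

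What is actually needed, and what the paper supplies, is the mapping-space statement that for $F$ symmetric monoidal and $G$ lax, restriction along $\iota$ induces an equivalence $\Map_{\Fun_\lax(\calC^\otimes_\act,\calD)}(F,G)\simeq\Map_{\Fun_\lax(\calC,\calD)}(F|_\calC,G|_\calC)$. The paper gets this from Lurie's proof of \cite[Proposition 2.2.4.9]{HA}: symmetric monoidal functors out of $\calC^\otimes_\act$ are exactly the relative left Kan extensions along $\calC^\otimes\subseteq(\calC^\otimes_\act)^\otimes$ of their restrictions, so the inclusion $\Fun_\otimes(\calC^\otimes_\act,\calD)\subseteq\Fun_\lax(\calC^\otimes_\act,\calD)$ is identified with the relative left Kan extension functor $i_!$, which is left adjoint to $i^*$ by \cite[Lemma 4.3.2.12]{HTT} and its consequences. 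Once you have that equivalence of mapping spaces, your counit exists automatically (as the image of $\id_{G|_\calC}$) and the adjunction follows by the representability criterion, with no need to verify triangle identities by hand. I recommend rerouting your argument through the Kan extension formalism rather than the arrow-category trick.
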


\begin{proof} We will use the concrete proof of \cite[Proposition 2.2.4.9]{HA} in which it is shown that the composition
$$
\Fun_\otimes(\calC^\otimes_\act, \calD) \subseteq \Fun_\lax(\calC^\otimes_\act, \calD) \xto{i^*} \Fun_\lax(\calC,\calD)
$$
is an equivalence of $\infty$-categories. To do this let us write $q: (\calC^\otimes_\act)^\otimes \to N\Fin_\ast$ for the symmetric monoidal $\infty$-category whose underlying $\infty$-category is $\calC^\otimes_\act$. Then Lurie proves the following two facts:
\begin{altenumerate}
\item
Every lax symmetric monoidal functor $\calC \to \calD$ which is considered as an object of $\Fun_{N(\Fin_\ast)}(\calC^\otimes, \calD^\otimes)$ admits a relative left Kan extension to a functor in $\Fun_{N(\Fin_\ast)}( (\calC^\otimes_\act)^\otimes,\calD^\otimes)$. 
\item
A functor in $\Fun_{N(\Fin_\ast)}( (\calC^\otimes_\act)^\otimes,\calD^\otimes)$ is symmetric monoidal precisely if it is the relative left Kan extension of its restriction to $\calC^\otimes \subseteq (\calC^\otimes_\act)^\otimes$.
\end{altenumerate}
This finishes Lurie's argument using \cite[Proposition 4.3.2.15]{HTT}. In particular we see that the inclusion $\Fun_\otimes(\calC^\otimes_\act,\calD) \subseteq \Fun_\lax(\calC^\otimes_\act,\calD)$ is equivalent to the relative left Kan extension functor
$$
i_!: \Fun_\lax(\calC, \calD) \to  \Fun_\lax(\calC^\otimes_\act,\calD).
$$
This functor is left adjoint to the restriction functor
\[
i^\ast: \Fun_\lax(\calC^\otimes_\act,\calD) \to \Fun_\lax(\calC, \calD)
\]
as an argument similar to \cite[Proposition 4.3.2.17]{HTT} shows: the composition $i^*i_!$ is equivalent to the identity giving a candidate for the unit of the adjunction. Then one uses \cite[Lemma 4.3.2.12]{HTT} to verify that the induced map on mapping spaces is an equivalence to get the adjunction property. 
\end{proof}

\begin{remark}\label{remarkadjoint} For a symmetric monoidal $\infty$-category $\calC$ the full, lax symmetric monoidal inclusion $\calC \subseteq \calC^\otimes_\act$ admits a symmetric monoidal left adjoint $L$ as we will show now. From this one can deduce that in the situation of Lemma \ref{lem:laxsymmrightadj} the right adjoint
$$
 R: \Fun_\lax(\calC^\otimes_\act, \calD) \to \Fun_\otimes(\calC^\otimes_\act, \calD) \simeq \Fun_\lax(\calC,\calD)
$$
admits a further right adjoint given by precomposition with $L$.

Let us start by proving that the underlying functor admits a left adjoint. To this end assume we are given an object $\bar c \in \calC^\otimes_\act \subseteq \calC^\otimes$. The object lies over some finite pointed set (equivalent to) $\langle n \rangle$. There is a unique active morphism $\langle n \rangle \to \langle 1 \rangle$. Now choose a coCartesian lift $f: \bar c \to c$ in $\calC^\otimes$ covering this morphism in $N\Fin_\ast$. Then the morphism $f$ lies in $\calC^\otimes_\act$ and the object $c$ lies in $\calC \subseteq \calC^\otimes_\act$.  Moreover $f$ is initial among morphisms in $\calC^\otimes_\act$ from $\bar c$ to an object in $\calC \subseteq \calC^\otimes_\act$. This follows since every such morphism has to cover the active morphism $\langle n \rangle \to \langle 1 \rangle$ and from the defining property of coCartesian lifts. As a result we find that $c$ is the reflection of $\bar c$ into $\calC \subseteq \calC^\otimes_\act$. Since this reflection exists for every $\bar c \in \calC^\otimes$ the inclusion $\calC \subseteq \calC^\otimes_\act$ admits a left adjoint $L$. On a more informal level the object $\bar c$ is given by a list $c_1,\ldots,c_n$ of objects of $\calC$. Then $c = L(\bar c)$ is given by the tensor product $c_1 \otimes \ldots \otimes c_n$. 

Now we show that the left adjoint $L$ is a symmetric monoidal localization. To this end we have to verify the assumptions of \cite[Proposition 2.2.1.9]{HA} given in \cite[Definition 2.2.1.6 and Example 2.2.1.7]{HA}. Thus we have to show that for every morphism $f: \bar c \to \bar d$ in $\calC^\otimes$ such that $Lf$ is an equivalence in $\calC$ and every object $\bar e$ the morphism $L(f \oplus \bar e)$ is an equivalence in $\calC$ where $\oplus$ is the tensor product in $\calC^\otimes_\act$. Unwinding the definitions this amounts to the following: we have that $\bar c$ is given by a list $c_1,\ldots,c_n$, $\bar d$ by a list $d_1,\ldots,d_m$ and $\bar e$ by a list $e_1,\ldots,e_r$. Then the induced morphism $L(f)$ is by assumption an equivalence $c_1 \otimes \ldots \otimes c_n \to d_1 \otimes \ldots \otimes d_m$. But then clearly also the morphism $L(f \otimes \bar e): c_1 \otimes\ldots \otimes c_n \otimes e_1 \otimes \ldots \otimes e_r \to d_1 \otimes \ldots \otimes d_m \otimes e_1 \otimes \ldots \otimes e_r$ is an equivalence in $\calC$.

We remark that the left adjoint $L \in \Fun_\otimes(\calC^\otimes_\act, \calC)$ corresponds under the equivalence $\Fun_\otimes(\calC^\otimes_\act, \calC) \xto{\sim} \Fun_\lax(\calC,\calC)$ to the identity functor.
\end{remark}

\begin{remark}\label{rem:operadversion} In fact, Proposition~\ref{prop:symenvelope} and Lemma~\ref{lem:laxsymmrightadj} hold true if $\calC$ (or rather $\calC^\otimes$) is replaced by any $\infty$-operad $\calO^\otimes$. Let us briefly record the statements, as we will need them later. As before, $\calD$ is a symmetric monoidal $\infty$-category. Let $\calO^\otimes$ be an $\infty$-operad with symmetric monoidal envelope $\calO^\otimes_\act = \calO^\otimes\times_{N(\Fin_\ast)} N(\Fin)$. There is a natural map of $\infty$-operads $\calO^\otimes\to (\calO^\otimes_\act)^\otimes$. Restriction along this functor defines an equivalence
\[
\Fun_\otimes(\calO^\otimes_\act,\calD)\simeq \Alg_\calO(\calD)\ ,
\]
where the right-hand side denotes the $\infty$-category of $\calO$-algebras in $\calD$; equivalently, of $\infty$-operad maps $\calO^\otimes\to \calD^\otimes$. Moreover, the full inclusion
\[
\Fun_\otimes(\calO^\otimes_\act,\calD)\to \Fun_\lax(\calO^\otimes_\act,\calD)
\]
admits a right adjoint given by the composition $\Fun_\lax(\calO^\otimes_\act,\calD)\to \Alg_\calO(\calD)\simeq \Fun_\otimes(\calO^\otimes_\act,\calD)$, where the first functor is restriction along $\calO^\otimes\to (\calO^\otimes_\act)^\otimes$.
\end{remark}

Recall that we want to construct a natural transformation between two $BC_p$-equivariant functors
\[
N(\Free_{C_p})\times_{N(\Fin)} \Sp^\otimes_\act\to \Sp\ ,
\]
which are given by
\[
I: (S,(X_{\overline{s}})_{\overline{s}\in \overline{S}})\mapsto \bigotimes_{\overline{s}\in \overline{S}} X_{\overline{s}}
\]
and
\[
\tilde{T}_p: (S,(X_{\overline{s}})_{\overline{s}\in \overline{S}})\mapsto (\bigotimes_{s\in S} X_s)^{tC_p}\ ,
\]
respectively. More precisely, the first functor
\[
I: N(\Free_{C_p})\times_{N(\Fin)} \Sp^\otimes_\act\to \Sp
\]
is the symmetric monoidal functor given by the composition of the projection to $\Sp^\otimes_\act$ and $\otimes: \Sp^\otimes_\act\to \Sp$.

For the second functor, we first have to construct a symmetric monoidal functor
\[
N(\Free_{C_p})\times_{N(\Fin)} \Sp^\otimes_\act\to (\Sp^\otimes_\act)^{BC_p}\ :\ (S,(X_{\overline{s}})_{\overline{s}\in \overline{S}})\mapsto(S,(X_{\overline{s}})_{s\in S})\ .
\]

\begin{proposition}\label{prop:funnyfunctor} For any symmetric monoidal $\infty$-category $\calC$ and any integer $p\geq 1$, there is a natural symmetric monoidal functor
\[
N(\Free_{C_p})\times_{N(\Fin)} \calC^\otimes_\act\to (\calC^\otimes_\act)^{BC_p}\ :\ (S,(X_{\overline{s}})_{\overline{s}\in \overline{S}})\mapsto (S,(X_{\overline{s}})_{s\in S})\ .
\]
\end{proposition}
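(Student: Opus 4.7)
The informal description of the desired functor is to send $(S,(X_{\bar s})_{\bar s\in\bar S})$ to $(S,(X_{\pi(s)})_{s\in S})$, where $\pi\colon S\to \bar S$ is the quotient map and $C_p$ acts on the indexing set $S$. My plan is to realize this as a pullback-of-tuples construction and then promote it to a symmetric monoidal functor via the universal property of the symmetric monoidal envelope.

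First, I would set up the ambient diagram. There are two symmetric monoidal functors $u,\bar u\colon N(\Free_{C_p})\to N(\Fin^{BC_p})$, given by $u(S)=S$ with its free $C_p$-action and $\bar u(S)=\bar S$ with the trivial action; the quotient maps assemble into a symmetric monoidal natural transformation $\pi\colon u\to \bar u$. The target $(\calC^\otimes_\act)^{BC_p}$ projects naturally to $N(\Fin^{BC_p})$, with fiber over a $C_p$-set $I$ equal to the $\infty$-category of $C_p$-equivariant tuples in $\calC^I$. Pulled back along $\bar u$, its fiber over $S$ becomes $\calC^{\bar S}$ (trivial action), which matches the fiber of the source $N(\Free_{C_p})\times_{N(\Fin)}\calC^\otimes_\act$ over $S$. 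Pulled back along $u$, the fiber is $(\calC^S)^{hC_p}$, which is where we want to land.

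Next, I would produce the required functor by ``pullback of tuples'' along $\pi$. Given a morphism $\alpha\colon S\to T$ of free $C_p$-sets, the induced map of fibers $\alpha^{-1}(t)\to \bar\alpha^{-1}(\bar t)$ is a bijection for each $t\in T$, because $S$ and $T$ are free; consequently every component $\bigotimes_{\bar s\in\bar\alpha^{-1}(\bar t)}X_{\bar s}\to Y_{\bar t}$ of a morphism in $\calC^\otimes_\act$ above $\bar\alpha$ is tautologically a component $\bigotimes_{s\in\alpha^{-1}(t)}X_{\pi(s)}\to Y_{\pi(t)}$ of a $C_p$-equivariant morphism in $(\calC^\otimes_\act)^{BC_p}$ above $\alpha$. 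Formally, I would unstraighten this: the natural transformation $\pi$ gives a functor $N(\Free_{C_p})\times\Delta^1\to N(\Fin^{BC_p})$; pulling back $(\calC^\otimes_\act)^{BC_p}$ along this yields a fibration over $N(\Free_{C_p})\times\Delta^1$ whose fibers over $0$ and $1$ are the two sides, and the Cartesian lifts in the $\Delta^1$-direction encode the pullback-of-tuples assignment.

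Finally, to obtain the symmetric monoidal structure, I would invoke the universal property of the symmetric monoidal envelope from Proposition~\ref{prop:symenvelope} and its operadic generalization (Remark~\ref{rem:operadversion}): the source $N(\Free_{C_p})\times_{N(\Fin)}\calC^\otimes_\act$ is the symmetric monoidal envelope $\calO^\otimes_\act$ of the $\infty$-operad $\calO^\otimes:=N(\Free_{C_p})\times_{N(\Fin_\ast)}\calC^\otimes$, so symmetric monoidal functors out of it correspond to $\calO$-algebras in the target. An $\calO$-algebra in $(\calC^\otimes_\act)^{BC_p}$ is determined by a map of operads sending the unique color of $\calO$ over $\langle 1\rangle$ (an object $X\in \calC$ equipped with the free orbit $C_p$) to the $C_p$-equivariant tuple $(X,\ldots,X)\in(\calC^p)^{hC_p}$ with $C_p$ acting by cyclic permutation of the factors, compatibly with operations; this is exactly the pullback-of-tuples assignment, and constructing it reduces to specifying it on the generating $C_p$-orbit and verifying operadic compatibilities, which is straightforward.

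The main obstacle is keeping track of the higher coherence: one must verify that the pullback-of-tuples is genuinely functorial as an $\infty$-functor, not only on objects and isolated morphisms, and that the resulting operad map is well-defined at all higher simplicial levels. I would sidestep the worst of this by using uniqueness: once the operad map is specified on colors and basic operations, the freeness of $C_p$ acting on $S$ (which makes the bijections $\alpha^{-1}(t)\cong\bar\alpha^{-1}(\bar t)$ canonical) together with the universal property of $\calO^\otimes_\act$ pins down the functor up to contractible ambiguity.
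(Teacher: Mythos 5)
Your reduction in the final step --- identifying the source with the symmetric monoidal envelope of an operad whose colors are pairs of a $C_p$-torsor and an object of $\calC$ (more precisely $N(\Tor_{C_p})^\sqcup\times_{N(\Fin_\ast)}\calC^\otimes$, not $N(\Free_{C_p})\times_{N(\Fin_\ast)}\calC^\otimes$, though its envelope is indeed $N(\Free_{C_p})\times_{N(\Fin)}\calC^\otimes_\act$), so that a symmetric monoidal functor out of it amounts to a functor $N(\Tor_{C_p})\to\Fun_\lax(\calC,(\calC^\otimes_\act)^{BC_p})$ --- is exactly the paper's strategy. But the two mechanisms you offer for actually producing that datum are where the proof lives, and both have problems. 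First, the ``Cartesian lifts in the $\Delta^1$-direction'' step fails: the fibration $(\calC^\otimes_\act)^{BC_p}\to N(\Fin)^{BC_p}$ is coCartesian, with pushforward along $f$ given by tensoring over the fibers of $f$; a Cartesian lift of the quotient map $\pi_S\colon S\to\bar S$ ending at $(X_{\bar s})$ would have to corepresent morphisms $\bigl(\bigotimes_{s\in\pi_S^{-1}(\bar s)}Y_s\to X_{\bar s}\bigr)_{\bar s\in\bar S}$, and the candidate $Y_s=X_{\pi(s)}$ carries no universal such morphism (it would require maps $X_{\bar s}^{\otimes p}\to X_{\bar s}$). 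The restriction-of-tuples functoriality of $I\mapsto\calC^I$ is simply not encoded by this fibration in either variance, so the unstraightening argument does not produce the underlying functor.

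Second, and more seriously: a functor $N(\Tor_{C_p})\simeq BC_p\to\Fun_\lax(\calC,(\calC^\otimes_\act)^{BC_p})$ is not determined by its value on a single torsor plus a uniqueness principle --- it is genuine data, namely a homotopy coherent $C_p$-equivariant structure on the constant-tuple functor $X\mapsto(X,\dots,X)$. Your appeal to ``canonical bijections'' and ``contractible ambiguity'' conflates a structure with a property; the universal property of the envelope pins down the symmetric monoidal extension only \emph{after} this $BC_p$-family of lax symmetric monoidal functors has been constructed, and constructing it coherently is the entire content of the proposition. The paper supplies it by observing that $\Fun_\otimes(\calE,\calE)$ for $\calE=\calC^\otimes_\act$ is itself symmetric monoidal (Day convolution), so the identity functor extends uniquely to a symmetric monoidal functor out of $N(\Fin)^\simeq$, the free symmetric monoidal $\infty$-category on one object; this single universal object packages all tensor-power endofunctors together with their coherent symmetric-group actions, and restricting to the subgroupoid of $C_p$-torsors yields the required functor $BC_p\to\Fun_\otimes(\calE,\calE)$, i.e.\ the equivariant structure. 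You need an argument of this kind, or some equally rigid source of coherence, to close the gap.
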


\begin{proof} We observe that as in the proof of Lemma~\ref{lem:initialtech} below,
\[
\Fun_\otimes(N(\Free_{C_p})\times_{N(\Fin)} \calC^\otimes_\act,\calD) = \Fun(N(\Tor_{C_p}),\Fun_\lax(\calC,\calD))
\]
for any symmetric monoidal $\infty$-category $\calD$ (here applied to $\calD=(\calC^\otimes_\act)^{BC_p}$), where $\Tor_{C_p}$ is the category of $C_p$-torsors. This can be constructed as the composite of the lax symmetric monoidal embedding $\calC\to \calC^\otimes_\act$ and a functor
\[
N(\Tor_{C_p})\to \Fun_\otimes(\calE,\calE^{BC_p})=\Fun(BC_p,\Fun_\otimes(\calE,\calE))
\]
which exists for any symmetric monoidal $\infty$-category $\calE$ (here, applied to $\calE=\Sp^\otimes_\act$). For this, note that $\Fun_\otimes(\calE,\calE)$ is itself symmetric monoidal (using the Day convolution), and so the identity lifts to a unique symmetric monoidal functor from the symmetric monoidal envelope $N(\Fin)^\simeq$ of the trivial category. Restricting the functor $N(\Fin)^\simeq\to \Fun_\otimes(\calE,\calE)$ to $C_p$-torsors gives the desired symmetric monoidal functor
\[
N(\Tor_{C_p})\to \Fun(BC_p,\Fun_\otimes(\calE,\calE))\ .
\]
\end{proof}

Composing the resulting symmetric monoidal functor
\[
N(\Free_{C_p})\times_{N(\Fin)} \Sp^\otimes_\act\to (\Sp^\otimes_\act)^{BC_p}
\]
with the lax symmetric monoidal functors $\otimes: (\Sp^\otimes_\act)^{BC_p}\to \Sp^{BC_p}$ and $-^{tC_p}: \Sp^{BC_p}\to \Sp$, we get the desired lax symmetric monoidal functor
\[
\tilde{T}_p: N(\Free_{C_p})\times_{N(\Fin)} \Sp^\otimes_\act\to \Sp\ .
\]
In fact, the lax symmetric monoidal functors $I$ and $\tilde{T}_p$ are $BC_p$-equivariant for the natural action on the source (acting on the set $S$), and the trivial action on the target. This is clear for $I$, and for $\tilde{T}_p$, the only critical step is the functor $-^{tC_p}: \Sp^{BC_p}\to \Sp$, where it follows from the uniqueness results in Theorem~\ref{thm:genfarrelltate}.

Now consider the $\infty$-category of all lax symmetric monoidal functors
\[
\Fun_\lax(N(\Free_{C_p}) \times_{N(\Fin)}  \Sp^\otimes_\act,\Sp)\ .
\]
We call a functor $F: N(\Free_{C_p}) \times_{N(\Fin)} \Sp^\otimes_\act \to \Sp$ partially exact if for every $C_p$-torsor $S$ the induced functor $F(S,-): \Sp \to \Sp$, obtained by restriction to
\[
\{S\}\times_{\{\ast\}} \Sp\subseteq N(\Free_{C_p})\times_{N(\Fin)} \Sp^\otimes_\act\ ,
\]
is exact. This leads to the following lemma, which will also be critical to comparing our new construction with the old construction of $\THH$.

\begin{lemma}\label{lem:initialtech} The functor $I$ is initial among all lax symmetric monoidal functors
\[
N(\Free_{C_p})\times_{N(\Fin)} \Sp^\otimes_\act\to \Sp
\]
which are partially exact.
\end{lemma}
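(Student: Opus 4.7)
The plan is to establish contractibility of the mapping space $\Map_{\Fun_\lax(\calE, \Sp)}(I, F)$ for every partially exact lax symmetric monoidal $F: \calE \to \Sp$, where $\calE := N(\Free_{C_p}) \times_{N(\Fin)} \Sp^\otimes_\act$.

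First, I would observe that $I$ is itself symmetric monoidal (not merely lax), since it factors as the composition of the symmetric monoidal projection $\calE \to \Sp^\otimes_\act$ followed by the tensor functor $\otimes: \Sp^\otimes_\act \to \Sp$. The structural heart of the argument will be to identify $\calE$ with the symmetric monoidal envelope $\calO^\otimes_\act$ of an $\infty$-operad $\calO^\otimes$ whose colors form $N(\Tor_{C_p}) \times \Sp$ and whose multimorphisms combine disjoint unions in $\Free_{C_p}$ with the operadic structure of $\Sp^\otimes$. This identification should be read off from the observation that the fiber of $\calE$ over $\langle n\rangle \in N(\Fin)$ is equivalent to $(N(\Tor_{C_p}) \times \Sp)^n$, with the induced symmetric monoidal structure matching disjoint union of colors.

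Granting this, Remark \ref{rem:operadversion} will supply the two main tools. On the one hand, there is an equivalence $\Fun_\otimes(\calE, \calD) \simeq \Alg_\calO(\calD)$, which by inspection of $\calO^\otimes$ unpacks to $\Fun(N(\Tor_{C_p}), \Fun_\lax(\Sp, \calD))$ for any symmetric monoidal $\calD$. On the other hand, the full inclusion $\Fun_\otimes(\calE, \Sp) \hookrightarrow \Fun_\lax(\calE, \Sp)$ will admit a right adjoint $R$. Since $I$ is symmetric monoidal, the counit yields $R(I) \simeq I$, so by adjunction
\[
\Map_{\Fun_\lax(\calE, \Sp)}(I, F) \simeq \Map_{\Fun_\otimes(\calE, \Sp)}(I, R(F)).
\]
Under the above identification, $I$ corresponds to the constant functor with value $\id_{\Sp}$ (since tensoring a single spectrum over a single torsor returns the spectrum), while $R(F)$ corresponds to a functor $N(\Tor_{C_p}) \to \Fun_\lax(\Sp, \Sp)$. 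The partial exactness hypothesis on $F$, together with the explicit description of $R$ as restriction along $\calO^\otimes \to (\calO^\otimes_\act)^\otimes$, will force this functor to land in the full subcategory $\Fun_\lax^{\Ex}(\Sp, \Sp)$.

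Finally, I would invoke \cite[Corollary 6.9(1)]{Nik}, according to which $\id_{\Sp}$ is initial in $\Fun_\lax^{\Ex}(\Sp, \Sp)$. This initiality is inherited by the constant functor at $\id_{\Sp}$ in the $\infty$-category $\Fun(N(\Tor_{C_p}), \Fun_\lax^{\Ex}(\Sp, \Sp))$, yielding the desired contractibility. The hard part will be rigorously pinning down the identification $\calE \simeq \calO^\otimes_\act$ at the level of $\infty$-operads (rather than merely underlying $\infty$-categories), so that the adjunction and explicit formula for $R$ provided by Remark \ref{rem:operadversion} are genuinely applicable.
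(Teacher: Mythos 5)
Your proposal is correct and follows essentially the same route as the paper: reduce to the symmetric monoidal case via the right adjoint of $\Fun_\otimes \hookrightarrow \Fun_\lax$ (the paper invokes the operad version of Remark~\ref{rem:operadversion} for this), identify $N(\Free_{C_p})\times_{N(\Fin)}\Sp^\otimes_\act$ as the symmetric monoidal envelope of the $\infty$-operad $N(\Tor_{C_p})^\sqcup\times_{N(\Fin_\ast)}\Sp^\otimes$ so that $\Fun_\otimes(\calE,\Sp)\simeq \Fun(N(\Tor_{C_p}),\Fun_\lax(\Sp,\Sp))$, match partial exactness with landing in $\Fun^\Ex_\lax(\Sp,\Sp)$, and conclude by \cite[Corollary 6.9(1)]{Nik}. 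The identification you flag as the hard part is handled in the paper exactly as you anticipate, via \cite[Theorem 2.4.3.18]{HA} and the computation $(N(\Tor_{C_p})^\sqcup\times_{N(\Fin_\ast)}\Sp^\otimes)\times_{N(\Fin_\ast)}N(\Fin)\simeq N(\Free_{C_p})\times_{N(\Fin)}\Sp^\otimes_\act$.
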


\begin{proof} By Lemma~\ref{lem:laxsymmrightadj}, or rather the $\infty$-operad version of Remark~\ref{rem:operadversion}, it is enough to prove the similar assertion in the $\infty$-category of symmetric monoidal and partially exact functors. Now we claim that restriction along
\[
N(\Tor_{C_p})\times \Sp\subseteq N(\Free_{C_p}) \times_{N(\Fin)} \Sp^\otimes_\act
\]
induces an equivalence of $\infty$-categories
\[
\Fun_\otimes(N(\Free_{C_p}) \times_{N(\Fin)} \Sp^\otimes_\act,\Sp)\simeq \Fun(N(\Tor_{C_p}),\Fun_\lax(\Sp,\Sp))\ ,
\]
where $\Tor_{C_p}$ is the category of $C_p$-torsors. As partially exact functors correspond to $\Fun(N(\Tor_{C_p}),\Fun^\Ex_\lax(\Sp,\Sp))$ on the right, the result will then follow from \cite[Corollary 6.9(1)]{Nik}.

By \cite[Theorem 2.4.3.18]{HA} the $\infty$-category $\Fun(N(\Tor_{C_p}),\Fun_{\lax}(\Sp,\Sp))$ is equivalent to the $\infty$-category of $\infty$-operad maps
from $N(\Tor_{C_p})^\sqcup \times_{N(\Fin_*)} \Sp^\otimes$ to $\Sp^\otimes$. Here $N(\Tor_{C_p})^\sqcup$ is the coCartesian $\infty$-operad associated to $N(\Tor_{C_p})$, cf.~\cite[Section 2.4.3]{HA}. Now, as in Proposition~\ref{prop:symenvelope}, the $\infty$-category of operad maps from any $\infty$-operad $\calO^\otimes$ to $\Sp^\otimes$ is equivalent to the $\infty$-category of symmetric monoidal functors from the symmetric monoidal envelope $\calO^\otimes_\act$ to $\Sp$, cf.~\cite[Proposition 2.2.4.9]{HA}. But the envelope of $N(\Tor_{C_p})^\sqcup \times_{N(\Fin_*)} \Sp^\otimes$ is
\[
(N(\Tor_{C_p})^\sqcup \times_{N(\Fin_*)} \Sp^\otimes)\times_{N(\Fin_\ast)} N(\Fin)\simeq N(\Free_{C_p})\times_{N(\Fin)} \Sp^\otimes_\act\ ,
\]
using the equivalence $N(\Tor_{C_p})^\sqcup\times_{N(\Fin_\ast)} N(\Fin) \simeq N(\Free_{C_p})$. Unraveling, we have proved the desired assertion.
\end{proof}

The following corollary is immediate, and finishes our construction of $\THH$ as a cyclotomic spectrum.

\begin{corollary}\label{cor:tildetptrafo}
The $BC_p$-equivariant lax symmetric monoidal functor 
\[
\tilde{T}_p: N(\Free_{C_p}) \times_{N(\Fin)}  \Sp^\otimes_{\act} \to \Sp\ :\ (S, (X_{\overline{s}})_{\overline{s}\in \overline{S}}) \mapsto \Big(\bigotimes_{s\in S} X_{\overline{s}}\Big)^{tC_p}
\]
receives an essentially unique $BC_p$-equivariant lax symmetric monoidal transformation from
\[
I: N(\Free_{C_p}) \times_{N(\Fin)}  \Sp^\otimes_{\act} \to \Sp\ :\ (S, (X_{\overline{s}})_{\overline{s}\in \overline{S}}) \mapsto \bigotimes_{\overline{s}\in \overline{S}} X_{\overline{s}}\ .
\]$\hfill \Box$
\end{corollary}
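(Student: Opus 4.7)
The plan is to invoke Lemma \ref{lem:initialtech}, which asserts that $I$ is initial in the $\infty$-category of lax symmetric monoidal partially exact functors from $N(\Free_{C_p}) \times_{N(\Fin)} \Sp^\otimes_\act$ to $\Sp$. Once I have verified that $\tilde T_p$ is partially exact, this initiality will produce a contractible space of lax symmetric monoidal transformations $I \to \tilde T_p$, and the $BC_p$-equivariance will follow by passing to homotopy fixed points.

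First I would check partial exactness. By definition I need to show that for every $C_p$-torsor $S$ the endofunctor $X \mapsto \tilde T_p(S, X) = (\bigotimes_{s \in S} X)^{tC_p}$ of $\Sp$ is exact. Upon trivializing $S \cong C_p$ as a $C_p$-set, this endofunctor is identified with $T_p(X) = (X^{\otimes p})^{tC_p}$ from Section \ref{sec:tatediag}, whose exactness is precisely Proposition \ref{prop:tatediagonaladditive}. Thus Lemma \ref{lem:initialtech} applies and the space $\mathcal{N}$ of lax symmetric monoidal transformations $I \to \tilde T_p$ is contractible.

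The final step is to promote this essentially unique transformation to a $BC_p$-equivariant one. The source $N(\Free_{C_p}) \times_{N(\Fin)} \Sp^\otimes_\act$ carries a natural $BC_p$-action through the $C_p$-action on each free $C_p$-set $S$, and, as noted in the construction of $\tilde T_p$ preceding the corollary, both $I$ and $\tilde T_p$ are already equipped with $BC_p$-equivariant lax symmetric monoidal refinements (with trivial action on the target $\Sp$): for $I$ this is because $I$ factors through the quotient $\overline S = S/C_p$, while for $\tilde T_p$ it rests on the $BC_p$-equivariance of $-^{tC_p}: \Sp^{BC_p} \to \Sp$ established via the uniqueness result in Theorem \ref{thm:genfarrelltate}. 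These equivariant structures induce a $BC_p$-action on $\mathcal{N}$ whose homotopy fixed points $\mathcal{N}^{hBC_p}$ compute the space of $BC_p$-equivariant lax symmetric monoidal transformations $I \to \tilde T_p$; since $\mathcal{N}$ is contractible, so is $\mathcal{N}^{hBC_p}$. I do not anticipate any serious obstacle: the substantive input is packaged in Lemma \ref{lem:initialtech} (which rests on the universal property of $\mathrm{id}_\Sp$ among exact lax symmetric monoidal endofunctors of $\Sp$) together with Proposition \ref{prop:tatediagonaladditive}, and the remaining point is the formal identification of the mapping space in the $\infty$-category of $BC_p$-equivariant functors with the homotopy fixed points of the underlying mapping space.
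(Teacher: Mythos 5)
Your proposal is correct and follows exactly the route the paper intends: the paper declares the corollary "immediate" from Lemma~\ref{lem:initialtech}, with partial exactness of $\tilde{T}_p$ supplied by Proposition~\ref{prop:tatediagonaladditive} and the $BC_p$-equivariant refinement obtained, as in the proof of Corollary~\ref{cor:tatelaxsymmgroup}, by observing that the homotopy fixed points of a contractible mapping space are contractible.
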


\section{The B\"okstedt construction}\label{sec:boekstedt}

The classical definition of topological Hochschild homology as an orthogonal cyclotomic spectrum relies on a specific construction of the smash product of orthogonal spectra, known as the B\"okstedt construction. In fact, the B\"okstedt construction can also be applied to any finite number of orthogonal spectra, and already the case of a single orthogonal spectrum is interesting, where it leads to a fibrant replacement functor.

B\"okstedt's construction is based on the following category.

\begin{definition} Let $\mathcal I$ be the category of finite (possibly empty) sets and injective maps.
\end{definition}

If $A$ is any orthogonal spectrum, one has a natural functor from $\mathcal I$ to pointed spaces, taking a finite set $I\cong \{1,\ldots,i\}$, $i\geq 0$, to $A_i$; here, we make use of the action of the symmetric group $\Sigma_i\subseteq O(i)$ on $A_i$ to show that this is as a functor independent of the choice of isomorphism $I\cong \{1,\ldots,i\}$. More precisely we send $I$ to the value $A(\R^I)$ where $\R^I$ is equipped with the standard inner product and we use the fact that we can canonically evaluate $A$ on every inner product space as in Definition \ref{def:genuineequiv}.


Note that $\mathcal I$ is not filtered. However, one has B\"okstedt's important ``approximation lemma''. Note that as $\mathcal I$ has an initial object, the statement of the following lemma is equivalent to the similar statement in pointed spaces.

\begin{lemma}[{\cite[Lemma 2.2.3]{Local}}]\label{lem:approximation} Let $F: \mathcal I^n\to \calS$ be a map to the $\infty$-category of spaces, $x\in \mathcal I^n$ an object, and $\mathcal I^n_x\subseteq \mathcal I^n$ the full subcategory of objects supporting a map from $x$. Assume that $F$ sends all maps in $\mathcal I^n_x$ to $k$-connected maps of spaces. Then the map
\[
F(x)\to \colim_{\mathcal I^n} F
\]
is $k$-connected.$\hfill \Box$
\end{lemma}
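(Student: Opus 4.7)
The plan is to follow B\"okstedt's classical strategy in two main steps: reduce to $n=1$ by induction on $n$, and then handle the $n=1$ case via cofinality and a skeletal filtration. For the induction, I rewrite $\colim_{\mathcal I^n} F \simeq \colim_{I \in \mathcal I} G(I)$, where $G(I) := \colim_{\mathcal I^{n-1}} F(-, I)$. Applying the inductive hypothesis to $F(-, x_n)\colon \mathcal I^{n-1} \to \calS$, whose connectivity hypothesis is inherited from that of $F$ by restriction to tuples with last coordinate $x_n$, yields that $F(x) \to G(x_n)$ is $k$-connected. A short two-variable diagram chase then shows $G$ sends all morphisms in $\mathcal I_{x_n}$ to $k$-connected maps (using both the hypothesis on $F$ and the inductive hypothesis at both endpoints), so the $n=1$ case of the lemma applied to $G$ finishes the induction.

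For the case $n=1$, I first show that the full subcategory inclusion $\iota\colon \mathcal I_x = \mathcal I_{\geq |x|} \hookrightarrow \mathcal I$ is cofinal, which allows replacement of the colimit by $\colim_{\mathcal I_x} F|_{\mathcal I_x}$. Cofinality reduces to weak contractibility of each slice $y \backslash \iota$, which here is the category of injections $y \hookrightarrow z$ with $z \in \mathcal I_x$; equivalently, the category of finite sets $w$ of cardinality at least $\max(0, |x|-|y|)$ together with injections between them. This category is filtered via disjoint union and therefore has contractible nerve, as required.

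To complete the proof, I would filter $\mathcal I_x$ by its full subcategories $\mathcal I_x^{\leq N}$ of objects of cardinality at most $N$, and express $\colim_{\mathcal I_x} F|_{\mathcal I_x}$ as the sequential colimit over $N \geq |x|$. Since $k$-connectivity is preserved by sequential colimits, it suffices to show inductively that $F(x) \to \colim_{\mathcal I_x^{\leq N}} F$ is $k$-connected. The passage from $\mathcal I_x^{\leq N}$ to $\mathcal I_x^{\leq N+1}$ is controlled by a homotopy pushout attaching the new cardinality-$(N+1)$ objects along spaces of injections from smaller sets, carrying their natural $\Sigma_{N+1}$-action. The main obstacle, and the combinatorial heart of the argument, will be the connectivity estimate at each inductive step: one must combine the $k$-connectivity of all structure maps $F(y) \to F(y \sqcup \{*\})$ with the homotopy type of the attaching configuration spaces to verify that appending these cells does not reduce the connectivity of the map from $F(x)$. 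The hypothesis that \emph{every} morphism in the full subcategory $\mathcal I_x$, not merely those between nearby cardinalities, is sent to a $k$-connected map is essential at this point, as it ensures that the homotopy fibers appearing in the pushouts remain sufficiently connected.
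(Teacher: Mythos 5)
The paper offers no proof of this lemma---it is quoted from Dundas--Goodwillie--McCarthy---so your proposal has to stand on its own. The reduction to $n=1$ is fine: the Fubini decomposition, the verification that the connectivity hypothesis restricts correctly to each slice $F(-,I)$, and the two-out-of-three argument in the square comparing $F(x',I)\to F(x',J)$ with $G(I)\to G(J)$ all work. One justification in the $n=1$ step is wrong as stated, though the conclusion survives: the slice category of injections $y\hookrightarrow z$ with $z\in \mathcal I_x$ is \emph{not} filtered (two parallel injections $z_1\rightrightarrows z_2$ under $y$ cannot in general be coequalized by a further injection), so "filtered, hence contractible" is not a proof. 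Contractibility does hold, e.g.\ via the zigzag of natural transformations $\mathrm{id}\Rightarrow(-\sqcup y_0)\Leftarrow \mathrm{const}_{y_0\sqcup y}$ for a fixed $y_0$ with $|y_0|=|x|$, so the cofinality of $\mathcal I_x\subseteq \mathcal I$ can be salvaged.

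The genuine gap is the base case itself. After the cofinality reduction you must show: if $F$ sends \emph{every} map of $\mathcal I_x$ to a $k$-connected map, then $F(x)\to\colim_{\mathcal I_x}F$ is $k$-connected. This is the entire content of B\"okstedt's lemma, and your proposal explicitly defers it ("the main obstacle, and the combinatorial heart of the argument, will be the connectivity estimate at each inductive step"). The skeletal filtration by cardinality does not obviously deliver it: the attaching map at stage $N+1$ goes from a latching object of the form $\hocolim_{(\mathcal I_x^{\leq N})_{/z}}F$ (homotopy quotiented by $\Sigma_{N+1}$) into $F(z)$, and this is not one of the maps controlled by the hypothesis; bounding its connectivity is essentially an instance of the lemma again, so the induction as set up is circular. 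Note also that objectwise connectivity alone cannot give the surjectivity on $\pi_k$: truncating $F$ below degree $k$ produces a locally constant functor on the contractible category $\mathcal I_x$ and yields the isomorphisms on $\pi_{<k}$, but the $\pi_k$-surjectivity requires the specific combinatorics of $\mathcal I$ --- every class in $\pi_k$ of the homotopy colimit is supported on a finite subdiagram $\calC$, and the concatenation functor $y\mapsto y\sqcup w$ together with the natural transformation $\mathrm{id}\Rightarrow(-\sqcup w)$ cones $\calC$ off to a single large object, from which the class is pulled back to $F(x)$ along a $k$-connected map. Some version of this argument (as in DGM, Lemma 2.2.3) is what your proof is missing.
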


Intuitively, this says that $\colim_{\mathcal I^n} F$ behaves like a filtered colimit if the maps become sufficiently connective.

The advantage of $\mathcal I$ over the category corresponding to the ordered set of positive integers is that $\mathcal I$ has a natural symmetric monoidal structure, given by the disjoint union. This will be critical in the following.

In the following, as in Section~\ref{sec:genuine} above, we use specific strictly functorial models of homotopy colimits taken in (pointed) compactly generated weak Hausdorff topological spaces, as defined  in Appendix \ref{app:colim}  and write $\hocolim$ to denote these functors.

\begin{definition}\label{def:boekstedt} Consider the symmetric monoidal category $\Sp^O$ of orthogonal spectra. B\"okstedt's construction is the lax symmetric monoidal functor
\[
B: (\Sp^O)^\otimes_\act\to \Sp^O
\]
that takes a finite family of orthogonal spectra $(X_i)_{i\in I}$ to the orthogonal spectrum whose $n$-th term is
\[
\hocolim_{(I_i)_{i\in I}\in \mathcal{I}^I} \Map_\ast(\bigwedge_{i\in I} S^{I_i},S^n\wedge \bigwedge_{i\in I} (X_i)_{I_i})
\]
with $O(n)$-action on $S^n$, and the natural structure maps. 
\end{definition}

Here $\Map_\ast$ denotes the space of based maps. Note that this can indeed be promoted to a functor out of $(\Sp^O)^\otimes_\act$, i.e.~if one has a map $f: I\to J$ of finite sets, $X_i,Y_j\in \Sp^O$ for $i\in I$, $j\in J$, and maps $\bigwedge_{i\in f^{-1}(j)} X_i\to Y_j$ of orthogonal spectra, one gets induced maps
\[\begin{aligned}
&\hocolim_{(I_i)_{i\in I}\in \mathcal{I}^I} \Map_\ast(\bigwedge_{i\in I} S^{I_i},S^n\wedge \bigwedge_{i\in I} (X_i)_{I_i})\\
&\to \hocolim_{(I_j)_{j\in J}\in \mathcal{I}^J} \Map_\ast(\bigwedge_{j\in J} S^{I_j},S^n\wedge \bigwedge_{j\in J} (Y_j)_{I_j})
\end{aligned}\]
which on the index categories is given by sending $(I_i)_{i\in I}$ to $(I_j=\bigsqcup_{i\in f^{-1}(j)} I_i)_{j\in J}$. Indeed, under this map of index categories, the spheres
\[
\bigwedge_{i\in I} S^{I_i} = \bigwedge_{j\in J} S^{I_j}
\]
agree (up to canonical isomorphism), and there is a natural map
\[
\bigwedge_{i\in I} (X_i)_{I_i}\to \bigwedge_{j\in J} (Y_j)_{I_j}
\]
by definition of the smash product of orthogonal spectra, cf.~\cite[Section 1]{SchwedeGenuine}. Moreover, $B$ is indeed lax symmetric monoidal: Given orthogonal spectra $(X_i)_{i\in I}$ and $(Y_j)_{j\in J}$, as well as integers $m$, $n$, one needs to produce $O(m)\times O(n)$-equivariant maps from the smash product of
\[
\hocolim_{(I_i)_{i\in I}\in \mathcal{I}^I} \Map_\ast(\bigwedge_{i\in I} S^{I_i},S^m\wedge \bigwedge_{i\in I} (X_i)_{I_i})
\]
and
\[
\hocolim_{(I_j)_{j\in J}\in \mathcal{I}^J} \Map_\ast(\bigwedge_{j\in J} S^{I_j},S^n\wedge \bigwedge_{j\in J} (Y_j)_{I_j})
\]
to
\[
\hocolim_{(I_k)_{k\in I\sqcup J}\in \mathcal{I}^{I\sqcup J}} \Map_\ast(\bigwedge_{k\in I\sqcup J} S^{I_k},S^m\wedge S^n\wedge\bigwedge_{k\in I} (X_k)_{I_k}\wedge\bigwedge_{k\in J} (Y_k)_{I_k})\ ,
\]
compatible with the structure maps. The desired transformation is given by smashing the maps.

Now we need the following result of Shipley, which follows directly from \cite[Corollary 4.2.4]{Shipley}.\footnote{Shipley works with simplicial symmetric spectra, but every orthogonal spectrum gives rise to a symmetric spectrum by taking the singular complex and restriction of symmetry groups. Note that in orthogonal spectra, it is not necessary to apply the level-fibrant replacement functor $L^\prime$.} Here we say that a map $(X_i)_{i \in I} \to (Y_j)_{j \in J}$ in $(\Sp^O)^\otimes_\act$ is a stable equivalence if it lies over an equivalence $f: I \to J$ in $\Fin$ and the maps $X_i \to Y_{f(i)}$ are stable equivalences for all $i \in I$.

\begin{thm} The functor $B$ takes stable equivalences to stable equivalences.
\end{thm}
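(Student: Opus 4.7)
The plan is essentially to reduce the claim to the cited result of Shipley, but with careful attention to how the approximation lemma gives the reduction. First I would note that a stable equivalence in $(\Sp^O)^\otimes_\act$ lies over an isomorphism $f\colon I \xrightarrow{\sim} J$ in $\Fin$; reindexing along $f$, it suffices to fix a finite set $I$ and show that if $g_i\colon X_i \to Y_i$ is a stable equivalence of orthogonal spectra for each $i\in I$, then the induced map $B((X_i)_{i\in I}) \to B((Y_i)_{i\in I})$ is a stable equivalence of orthogonal spectra.

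Next I would analyze $\pi_* B((X_i)_{i\in I})$ directly using B\"okstedt's approximation lemma (Lemma~\ref{lem:approximation}). By the very definition of stable homotopy groups, one has
\[
\pi_q B((X_i)_{i\in I}) \;=\; \colim_n\, \pi_{q+n} \hocolim_{(I_i)\in \mathcal{I}^I} \Map_\ast\!\Big(\bigwedge_{i\in I} S^{I_i},\; S^n \wedge \bigwedge_{i\in I} (X_i)_{I_i}\Big).
\]
Because homotopy colimits commute with $\pi_*$ up to connectivity issues, and because increasing each $I_i$ makes the structure maps of $X_i$ better and better approximations to equivalences $(X_i)_{I_i} \to \Omega^{J_i\setminus I_i}(X_i)_{J_i}$, the approximation lemma implies that the displayed colimit is naturally isomorphic to the filtered colimit over $\mathcal{I}^I$ of the stable homotopy groups $\pi_{q+|I_\bullet|}\big(\Map_\ast(\bigwedge S^{I_i}, \bigwedge (X_i)_{I_i})\big)$. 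These in turn assemble into the stable homotopy groups of the derived smash product of the $X_i$'s, so a levelwise stable equivalence $X_i \to Y_i$ induces an isomorphism.

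The cleanest way to execute the previous paragraph is via Shipley's result: after passing to underlying symmetric spectra (forget the $O(n)$-action down to $\Sigma_n$, and then apply the singular complex functor $\Sing$ to land in simplicial symmetric spectra), B\"okstedt's construction as defined in Definition~\ref{def:boekstedt} matches the one studied in \cite[Section 4]{Shipley}, up to the standard comparison between topological and simplicial models. One then quotes \cite[Corollary 4.2.4]{Shipley} verbatim, which asserts that this version of the B\"okstedt construction takes levelwise stable equivalences to stable equivalences. Since both the restriction of symmetry functor $\Sp^O \to \Sp^\Sigma$ and the singular complex functor detect stable equivalences, the statement descends to orthogonal spectra.

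The main obstacle I expect is bookkeeping rather than substance: one must verify that the strictly functorial Bousfield--Kan model of $\hocolim$ in compactly generated weak Hausdorff spaces used here agrees (for the purpose of computing stable homotopy groups via Lemma~\ref{lem:approximation}) with Shipley's simplicial model, and that the symmetric monoidal envelope construction $(-)^\otimes_\act$ does not introduce extra data beyond what Shipley considers. Once this translation is set up, the theorem is an immediate consequence of Shipley's result; no new homotopical input is needed beyond the approximation lemma already recorded in the text.
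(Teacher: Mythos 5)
Your overall strategy---reduce to Shipley's \cite[Corollary 4.2.4]{Shipley} after sorting out the passage from orthogonal to symmetric spectra and from topological spaces to simplicial sets---is exactly the route the paper takes. But the step you dismiss as ``bookkeeping rather than substance'' is precisely where the only real content of the proof lies, and your proposal does not resolve it. The issue is this: the Bousfield--Kan homotopy colimit of a diagram of \emph{pointed topological spaces} is homotopy invariant only under a well-pointedness hypothesis (the relevant latching maps must be h-cofibrations), and the orthogonal spectra $X_i$ in the statement carry no such hypothesis whatsoever---their levels $(X_i)_{I_i}$ may fail to be well-pointed, so the levelwise homotopy colimits appearing in Definition~\ref{def:boekstedt} could a priori have the wrong homotopy type, and then Shipley's simplicial result would not transfer. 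In Shipley's simplicial setting this problem is invisible, which is why she can quote her corollary directly; in the topological setting it must be addressed.

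The paper's fix is short but genuinely needed: assemble the mapping spaces for all levels $n$ at once into a diagram $\mathcal I^I \to \Sp^O$ of \emph{orthogonal spectra}, observe that the Bousfield--Kan homotopy colimit of this diagram is computed levelwise and hence recovers the orthogonal spectrum of Definition~\ref{def:boekstedt}, and then invoke Proposition~\ref{orthogonalBK}, which says that the Bousfield--Kan homotopy colimit of orthogonal spectra is invariant under levelwise stable equivalence \emph{with no cofibrancy or well-pointedness hypotheses}---a consequence of stability (the gluing lemma for h-cofibrations of orthogonal spectra) that fails for pointed spaces. Your middle paragraph, which appeals to the approximation lemma (Lemma~\ref{lem:approximation}) to identify the homotopy groups of $B$ with those of the derived smash product, is not the right tool for this statement: that identification is the content of Theorem~\ref{thm:identboekstedt} (again via Shipley), whereas the present theorem only requires preservation of stable equivalences, and the approximation lemma plays no role in its proof. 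So the correct summary is not ``no new homotopical input is needed beyond the approximation lemma'' but rather ``the needed input is the homotopy invariance of spectrum-level Bousfield--Kan homotopy colimits, Proposition~\ref{orthogonalBK}.''
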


\begin{proof} Apart from the comparison between orthogonal and symmetric groups we note that Shipley works with simplicial sets instead of topological spaces. As a result the homotopy colimits in Definition \ref{def:boekstedt} has in her setting always automatically the correct homotopy type. We claim that this is also true in our setting. To see this we take all the spaces 
\[
\Map_\ast(\bigwedge_{i\in I} S^{I_i},S^n\wedge \bigwedge_{i\in I} (X_i)_{I_i})
\]
for varying $n$ together, so that we obtain a diagram $\mathcal{I}^I \to \Sp^O$ over which we take the homotopy colimit. The result is the same orthogonal spectrum as in Definition \ref{def:boekstedt} since  Bousfield-Kan type homotopy colimits are computed levelwise. The advantage is that the combined  homotopy colimit has the correct stable homotopy type by Proposition \ref{orthogonalBK} of Appendix \ref{app:colim}. This then lets us apply \cite[Corollary 4.2.4]{Shipley} to finish the proof. 
\end{proof}

Now we use Theorem \ref{DKlocalizationsymmon} from Appendix \ref{app:symmmon}  to see that the symmetric monoidal $\infty$-category $\Sp^\otimes_\act$ is the Dwyer-Kan localization of $(\Sp^O)^\otimes_\act$ at the stable equivalences as described above. In particular, if we compose $B$ with the functor $N\Sp^O\to \Sp$, then the results of Appendix~\ref{app:symmmon} and the preceding theorem imply that it factors over a lax symmetric monoidal functor
\[
\overline{B}: \Sp^\otimes_\act\to \Sp\ .
\]

\begin{thm}\label{thm:identboekstedt} There is a unique (up to contractible choice) transformation of lax symmetric monoidal functors from $\otimes: \Sp^\otimes_\act\to \Sp$ to $\overline{B}: \Sp^\otimes_\act\to \Sp$. This natural transformation is an equivalence.
\end{thm}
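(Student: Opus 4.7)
The approach is to reduce the theorem to a one-variable statement about lax symmetric monoidal endofunctors of $\Sp$ and then invoke the universal property of $\id_\Sp$ among exact such functors, namely \cite[Corollary~6.9(1)]{Nik}, which was already exploited in Proposition~\ref{prop:tatediaglaxsymm}. First I apply Lemma~\ref{lem:laxsymmrightadj} with $\calC=\calD=\Sp$. Since $\otimes:\Sp^\otimes_\act\to\Sp$ is the symmetric monoidal functor extending $\id_\Sp$ under Proposition~\ref{prop:symenvelope}, the lemma provides a natural equivalence
\[
\Map_{\Fun_\lax(\Sp^\otimes_\act,\Sp)}(\otimes,\overline{B}) \;\simeq\; \Map_{\Fun_\lax(\Sp,\Sp)}(\id_\Sp,\overline{B}|_\Sp),
\]
where $\overline{B}|_\Sp\colon\Sp\to\Sp$ denotes the restriction of $\overline{B}$ along the lax symmetric monoidal inclusion $\Sp\subseteq\Sp^\otimes_\act$. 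It therefore suffices to show that this mapping space is contractible and that the unique transformation it parametrises is an equivalence.

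Next, I will show that $\overline{B}|_\Sp$ is naturally equivalent to $\id_\Sp$. At the level of orthogonal spectra, the one-variable B\"okstedt construction
\[
B(X)_n \;=\; \hocolim_{I\in\mathcal{I}} \Map_\ast(S^I,\,S^n\wedge X_I)
\]
is a classical model for fibrant replacement: the adjoints of the structure maps of $X$ assemble into a natural map $X\to B(X)$ of orthogonal spectra, and the B\"okstedt approximation lemma (Lemma~\ref{lem:approximation}) shows that this map is a stable equivalence whenever $X$ is an $\Omega$-spectrum (so that the diagram $I\mapsto \Map_\ast(S^I,S^n\wedge X_I)$ consists of sufficiently highly connected maps). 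Combining this with Shipley's theorem that $B$ preserves stable equivalences, and with the fact that every orthogonal spectrum is stably equivalent to an $\Omega$-spectrum, one obtains a natural equivalence $\id_\Sp\xrightarrow{\;\sim\;}\overline{B}|_\Sp$ in $\Fun(\Sp,\Sp)$.

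In particular $\overline{B}|_\Sp$ is exact, so by \cite[Corollary~6.9(1)]{Nik} the mapping space $\Map_{\Fun_\lax(\Sp,\Sp)}(\id_\Sp,\overline{B}|_\Sp)$ is contractible, with unique point the equivalence just produced. Translating back via the adjunction of the first step yields the unique lax symmetric monoidal transformation $\eta\colon\otimes\to\overline{B}$, which by construction restricts to an equivalence on $\Sp\subseteq\Sp^\otimes_\act$. To extend the equivalence to an arbitrary object $(X_1,\ldots,X_k)\in\Sp^\otimes_\act$, I use the lax symmetric monoidal compatibility of $\eta$: the map $\eta_{(X_1,\ldots,X_k)}$ factors as the composite
\[
X_1\otimes\ldots\otimes X_k \xrightarrow{\eta_{X_1}\otimes\ldots\otimes\eta_{X_k}} \overline{B}(X_1)\otimes\ldots\otimes\overline{B}(X_k) \to \overline{B}(X_1,\ldots,X_k),
\]
where the first map is an equivalence by the one-variable case and the second is the lax structure map of $\overline{B}$. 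The latter is a stable equivalence of orthogonal spectra by the same approximation-lemma argument applied in the multi-variable setting (so $\overline{B}$ is in fact strong symmetric monoidal after passing to $\infty$-categories), completing the proof.

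The hard part will be the identification of $\overline{B}|_\Sp$ with $\id_\Sp$, equivalently the verification that the natural map $X\to B(X)$ is a stable equivalence; this requires the B\"okstedt approximation lemma combined with Shipley's preservation theorem, together with the analogous multi-variable assertion for the lax structure maps. Once this one-variable identification is in hand, both the uniqueness and the equivalence assertion of the theorem follow essentially formally from the universal property of $\id_\Sp$ recorded in \cite[Corollary~6.9(1)]{Nik}.
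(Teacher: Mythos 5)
Your proposal is correct and follows essentially the same route as the paper: both arguments rest on Shipley's results that $\overline{B}$ restricted to $\Sp$ is the identity and that $\overline{B}$ is strong (not just lax) symmetric monoidal on the $\infty$-categorical level, combined with the initiality of $\id_\Sp$ among exact lax symmetric monoidal endofunctors from \cite[Corollary 6.9(1)]{Nik}; your use of Lemma~\ref{lem:laxsymmrightadj} to reduce the mapping space and the factorization through the lax structure maps is just a more explicit unpacking of the paper's appeal to Proposition~\ref{prop:symenvelope}. The only imprecision is the claim of a direct point-set map $X\to B(X)$ adjoint to the structure maps — the classical comparison goes through a zig-zag via $\hocolim_{I}\Omega^{|I|}X_{n+|I|}$ — but this does not affect the structure of the argument, which in any case can simply cite Shipley as the paper does.
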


\begin{proof} Shipley shows in \cite[Proposition 4.2.3, Corollary 4.2.4]{Shipley} that the functor $\overline{B}: \Sp^\otimes_\act\to \Sp$ is not only lax symmetric monoidal, but actually symmetric monoidal. Note that this is only true on the level of $\infty$-categories and not on the model. Moreover the composite functor
$$
\Sp \subseteq \Sp^\otimes_\act \xto{\overline{B}} \Sp
$$
is equivalent to the identity by \cite[Theorem 3.1.6]{Shipley}. Since this functor is exact and lax symmetric monoidal it follows from \cite[Corollary 6.9(1)]{Nik} that there is no choice involved in the equivalence. By Proposition~\ref{prop:symenvelope} every such functor is equivalent to $\otimes$.
\end{proof}

In other words, we have verified that the B\"okstedt construction is a model for the $\infty$-categorical tensor product of spectra.

As mentioned in the introduction, the main advantage of the B\"okstedt construction over the usual smash product of orthogonal spectra is its behaviour with respect to geometric fixed points. More precisely, for any orthogonal spectrum $X$, and integer $p\geq 1$,\footnote{Although we write $p$ here, it is not yet required to be a prime.} there is a natural map
\[
\Phi^{C_p} B(X,\ldots,X)\to B(X)
\]
of orthogonal spectra, which is moreover a stable equivalence.

In fact, there is a many variables version of this statement. Namely, for any finite free $C_p$-set $S$ with $\overline{S} = S/C_p$, consider a family of orthogonal spectra $(X_{\overline{s}})_{\overline{s}\in \overline{S}}$. Then there is a natural $C_p$-action on the orthogonal spectrum $B((X_{\overline{s}})_{s\in S})$ by acting on the index set $S$, and a natural map
\[
\Phi^{C_p} B((X_{\overline{s}})_{s\in S})\to B((X_{\overline{s}})_{\overline{s}\in \overline{S}})\ ,
\]
which is a stable equivalence.

Let us formalize this statement, including all functorialities. As above, let $\Free_{C_p}$ denote the category of finite free $C_p$-sets. Then our source category is
\[
\Free_{C_p}\times_{\Fin} (\Sp^O)^\otimes_\act\ .
\]
Here, objects are pairs of a finite free $C_p$-set $S$ with quotient $\overline{S}=S/C_p$, together with orthogonal spectra $X_{\overline{s}}$ indexed by $\overline{s}\in \overline{S}$.
We consider two functors
\[
\Free_{C_p}\times_{\Fin} (\Sp^O)^\otimes_\act\to \Sp^O\ .
\]
The first is given by projection to $(\Sp^O)^\otimes_\act$ and composition with $B: (\Sp^O)^\otimes_\act\to \Sp^O$. We denote this simply by
\[
B: \Free_{C_p}\times_{\Fin} (\Sp^O)^\otimes_\act\to \Sp^O : (S,(X_{\overline{s}})_{\overline{s}\in \overline{S}})\mapsto B((X_{\overline{s}})_{\overline{s}\in \overline{S}})\ .
\]
The other functor first applies the functor
\[
\Free_{C_p}\times_{\Fin} (\Sp^O)^\otimes_\act\to ((\Sp^O)^\otimes_\act)^{BC_p} : (S,(X_{\overline{s}})_{\overline{s}\in \overline{S}})\mapsto (S,(X_{\overline{s}})_{s\in S})\ ,
\]
composes with the B\"okstedt construction
\[
B: ((\Sp^O)^\otimes_\act)^{BC_p}\to (\Sp^O)^{BC_p} = C_p\Sp^O\ ,
\]
and then applies
\[
\Phi^{C_p}_{\mathcal U}: C_p\Sp^O\to \Sp^O
\]
for some fixed choice of complete $C_p$-universe $\mathcal U$. We denote this second functor by
\[
B_p: \Free_{C_p}\times_{\Fin} (\Sp^O)^\otimes_\act\to \Sp^O : (S,(X_{\overline{s}})_{\overline{s}\in \overline{S}})\mapsto \Phi^{C_p}_{\mathcal U} B((X_{\overline{s}})_{s\in S})\ .
\]

\begin{construction}\label{cons:BptoB} There is a natural transformation
\[
B_p\to B : \Phi^{C_p}_{\mathcal U} B((X_{\overline{s}})_{s\in S})\to B((X_{\overline{s}})_{\overline{s}\in \overline{S}})
\]
of functors
\[
\Free_{C_p}\times_{\Fin} (\Sp^O)^\otimes_\act\to \Sp^O\ .
\]
Indeed, note that by definition the $n$-th space $(\Phi^{C_p}_{\mathcal U} B((X_{\overline{s}})_{s\in S}))_n$ of the left-hand side is given by
\[
\hocolim_{V\in \mathcal U\mid V^{C_p}=0}\left(\hocolim_{(I_s)\in \mathcal I^S} \Map_\ast(\bigwedge_{s\in S} S^{I_s},S^n\wedge S^V\wedge \bigwedge_{s\in S} X_{\overline{s},I_s})\right)^{C_p}\ .
\]
For any $V\in \mathcal U$ with $V^{C_p}=0$, we have to construct a map
\[\begin{aligned}
&\left(\hocolim_{(I_s)\in \mathcal I^S} \Map_\ast(\bigwedge_{s\in S} S^{I_s},S^n\wedge S^V\wedge \bigwedge_{s\in S} X_{\overline{s},I_s})\right)^{C_p}\\
&\to \hocolim_{(I_{\overline{s}})\in \mathcal I^{\overline{S}}} \Map_\ast(\bigwedge_{\overline{s}\in \overline{S}} S^{I_{\overline{s}}},S^n\wedge \bigwedge_{\overline{s}\in \overline{S}} X_{\overline{s},I_{\overline{s}}})\ .
\end{aligned}\]
By construction of $\hocolim$, the $C_p$-fixed points are given by the $C_p$-fixed points over the $\hocolim$ over $C_p$-fixed points in the index category. Note that the $C_p$-fixed points of $\mathcal I^S$ are given by $\mathcal I^{\overline{S}}$. Thus, we have to construct a map
\[
\hocolim_{(I_{\overline{s}})\in \mathcal I^{\overline{S}}} \Map_\ast(\bigwedge_{s\in S} S^{I_{\overline{s}}},S^n\wedge S^V\wedge \bigwedge_{s\in S} X_{\overline{s},I_{\overline{s}}})^{C_p}\to \hocolim_{(I_{\overline{s}})\in \mathcal I^{\overline{S}}} \Map_\ast(\bigwedge_{\overline{s}\in \overline{S}} S^{I_{\overline{s}}},S^n\wedge \bigwedge_{\overline{s}\in \overline{S}} X_{\overline{s},I_{\overline{s}}})\ .
\]
But any $C_p$-equivariant map induces a map between $C_p$-fixed points. Moreover,
\[
(\bigwedge_{s\in S} S^{I_{\overline{s}}})^{C_p} = \bigwedge_{\overline{s}\in \overline{S}} S^{\overline{I}_s}
\]
and
\[
(S^n\wedge S^V\wedge \bigwedge_{s\in S} X_{\overline{s},I_{\overline{s}}})^{C_p} = S^n\wedge \bigwedge_{\overline{s}\in \overline{S}} X_{\overline{s},I_{\overline{s}}}\ ,
\]
using that in general $(A\wedge B)^{C_p} = A^{C_p}\wedge B^{C_p}$, $(A\wedge\ldots\wedge A)^{C_p} = A$ for $p$ factors of $A$, and $(S^V)^{C_p}$ is a $0$-sphere, as $V^{C_p}=0$. Thus, restricting a map to the $C_p$-fixed points gives the desired map. It is easy to see that it is functorial.\end{construction}

Finally, we can state the main theorem about the B\"okstedt construction.

\begin{thm}\label{thm:boekstedtgeomfixpoints} For any
\[
(S,(X_{\overline{s}})_{\overline{s}\in \overline{S}})\in \Free_{C_p}\times_{\Fin} (\Sp^O)^\otimes_\act\ ,
\]
the natural map
\[
B_p(S,(X_{\overline{s}})_{\overline{s}\in \overline{S}})\to B(S,(X_{\overline{s}})_{\overline{s}\in \overline{S}}): \Phi^{C_p}_{\mathcal U} B((X_{\overline{s}})_{s\in S})\to B((X_{\overline{s}})_{\overline{s}\in \overline{S}})
\]
is a stable equivalence.
\end{thm}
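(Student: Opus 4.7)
The plan is to verify the statement on stable homotopy groups by pulling the set-theoretic $C_p$-fixed points inside the Bousfield--Kan homotopy colimits defining the B\"okstedt construction, identifying the fixed points of the indexing category $\mathcal{I}^S$ and of the relevant mapping spaces, and combining with B\"okstedt's approximation lemma. This follows the strategy pioneered by B\"okstedt and elaborated by Hesselholt--Madsen and Shipley.

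By Lemma \ref{fixedcommutes}, set-theoretic $C_p$-fixed points commute with Bousfield--Kan style homotopy colimits, so $(-)^{C_p}$ may be pulled past both $\hocolim_{V}$ and $\hocolim_{(I_s)}$ in the definition of $\Phi^{C_p}_{\mathcal{U}} B$. The $C_p$-fixed subcategory of $\mathcal{I}^S$ (with $C_p$ permuting factors via $S$) is exactly the diagonal copy of $\mathcal{I}^{\overline{S}}$. Writing $A = \bigwedge_{\overline{s}} S^{I_{\overline{s}}}$ and $Y = \bigwedge_{\overline{s}} X_{\overline{s},I_{\overline{s}}}$, the analysis reduces to studying $\Map_\ast(A^{\wedge p},\, S^n \wedge S^V \wedge Y^{\wedge p})^{C_p}$ with cyclic permutation action, for each $(I_{\overline{s}}) \in \mathcal{I}^{\overline{S}}$ and each $V \in \mathcal{U}$ with $V^{C_p} = 0$. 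Restriction along the diagonal $\delta\colon A \to A^{\wedge p}$, together with the identifications $(S^V)^{C_p} = S^0$ and $(Y^{\wedge p})^{C_p} = Y$, produces a natural map to $\Map_\ast(A,\, S^n \wedge Y)$; after passing to the colimit over $\mathcal{I}^{\overline{S}}$ this is exactly the $n$-th space of $B((X_{\overline{s}})_{\overline{s}\in\overline{S}})$, giving the claimed comparison.

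The key analytic input is that the fiber of this restriction map is modelled by $\Map_\ast(A^{\wedge p}/A,\, S^n \wedge S^V \wedge Y^{\wedge p})^{C_p}$, and that since $A^{\wedge p}/A$ is built from freely induced $C_p$-cells, mapping out of it into a pointed $C_p$-space of the form $S^V \wedge (\cdots)^{\wedge p}$ produces (after taking $C_p$-fixed points) a space whose connectivity grows linearly with $\dim V - p\cdot \dim A$. Thus for each fixed index $(I_{\overline{s}})$, passing to the colimit over $V \in \mathcal{U}$ with $V^{C_p}=0$ turns the restriction map into an equivalence; this is exactly the classical fact that the geometric $C_p$-fixed points of the $p$-fold self smash of a space recover the space itself, proved by equivariant cellular obstruction theory.

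The main obstacle is combining this connectivity estimate with B\"okstedt's approximation lemma (Lemma \ref{lem:approximation}) to control the resulting double homotopy colimit \emph{uniformly}, so that in each stable degree $k$ the colimit taken over the diagonal $\mathcal{I}^{\overline{S}} \times \{V \mid V^{C_p}=0\}$ really computes $\pi_k$ of the full fixed-point spectrum and matches $\pi_k B((X_{\overline{s}})_{\overline{s}\in\overline{S}})$. This requires maintaining uniform connectivity bounds as $V$, $(I_{\overline{s}})$ and $n$ vary in a compatible way, i.e.~the cofinality of the diagonal subcategory inside $\mathcal{I}^S$ in a homotopically controlled sense, and it is precisely the convergence-type argument carried out in the classical treatments. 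I would either appeal directly to those results or reprove the estimate by combining the cellular fiber analysis above with Lemma \ref{lem:approximation} to conclude that $\pi_k B_p \to \pi_k B$ is an isomorphism for every $k$, giving the desired stable equivalence.
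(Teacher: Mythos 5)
Your overall route is the paper's route (which is Hesselholt--Madsen's): commute set-theoretic $C_p$-fixed points past the Bousfield--Kan homotopy colimits, identify $(\mathcal I^S)^{C_p}$ with the diagonal $\mathcal I^{\overline{S}}$, realize the comparison at each index as restriction along the diagonal $A\to A^{\wedge p}$ with fiber $\Map_\ast\bigl(A^{\wedge p}/A,\,S^n\wedge S^V\wedge\bigwedge_{s\in S}X_{\overline{s},I_{\overline{s}}}\bigr)^{C_p}$, and win by an equivariant connectivity estimate. Where you genuinely diverge is in how the estimate is applied. You use that $(A^{\wedge p}/A)^{C_p}$ is just the basepoint (true, since $p$ is prime, so $A^{\wedge p}/A$ is free away from the basepoint), so only free cells obstruct and the fiber's connectivity is at least $n+\dim V-1-p\sum i_{\overline{s}}$; for \emph{fixed} $(I_{\overline{s}})$ this already tends to infinity with $V$ alone. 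The paper instead invokes the cruder bound $\min_{H\le C_p}\bigl(\mathrm{conn}(B^H)-\dim(A^H)\bigr)$ including the $H=C_p$ term, whose value $\mathrm{conn}\bigl(S^n\wedge\bigwedge_{\overline{s}}X_{\overline{s},I_{\overline{s}}}\bigr)$ must be forced to grow with $(I_{\overline{s}})$; this is why the paper first reduces to orthogonal spectra that are levelwise bounded below ($\pi_iX_{\overline{s},n}=0$ for $i<n-m$) and then couples the two colimits via B\"okstedt's approximation lemma, choosing $(I_{\overline{s}})$ as a function of $V$. Your sharper bookkeeping, if carried through, dispenses with both the bounded-below reduction and the approximation lemma: one simply interchanges $\hocolim_V$ and $\hocolim_{\mathcal I^{\overline{S}}}$ and notes that a homotopy colimit of levelwise stable equivalences is a stable equivalence (Proposition~\ref{orthogonalBK}).

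The problem is that you do not carry it through. Your last paragraph declares the uniform control of the double colimit to be the main obstacle and then defers it ("appeal directly to those results or reprove the estimate"), which is precisely the step where the paper does its quantitative work (the $n_V$ bookkeeping, enabled by the bounded-below reduction you omitted). You cannot have it both ways: either your fixed-index claim is correct, in which case no uniformity over $(I_{\overline{s}})$ is needed and you should say so and finish the interchange of homotopy colimits explicitly, or you follow the paper's coupling of $V$ and $(I_{\overline{s}})$, in which case you must first reduce to bounded-below levels or the $H=C_p$ connectivity does not grow. Also, "cofinality of the diagonal subcategory inside $\mathcal I^S$" is the wrong concept: the diagonal is not cofinal in $\mathcal I^S$; what is used is the point-set fact (Lemma~\ref{fixedcommutes} plus the wedge decomposition of the Bousfield--Kan formula) that fixed points of the homotopy colimit are the homotopy colimit over the fixed subcategory of the fixed points, which you already invoked correctly earlier.
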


\begin{proof} This is essentially due to Hesselholt and Madsen, \cite[Proposition 2.5]{HesselholtMadsen}. Let us sketch the argument. First, one reduces to the case that all $X_{\overline{s}}$ are bounded below in the sense that there is some integer $m$ such that $\pi_i X_{\overline{s},n}=0$ for $i<n-m$. Indeed, any orthogonal spectrum $X$ can be written as a filtered colimit of orthogonal spectra $X^{(m)}$ which are bounded below such that the maps $X^{(m)}_i\to X_i$ are homeomorphisms for $i\leq m$. Indeed, one can consider a truncated version of orthogonal spectra keeping track only of the spaces $X_i$ for $i\leq m$; then the restriction functor has a left adjoint, whose composite with the restriction functor is the identity, and which takes values in bounded below orthogonal spectra.\footnote{In the language of orthogonal spectra as pointed continuous functors $\mathbf O\to \Top_\ast$ from the category of finite dimensional inner product spaces with Thom spaces as mapping spaces, these functors correspond to restriction to the full subcategory $\mathbf O_{\leq m}$ of spaces of dimension $\leq m$, and the left Kan extension from $\mathbf O_{\leq m}$ to $\mathbf O$.} Now the statement for general $X$ follows by passage to the colimit of the statement for the $X^{(m)}$'s which are bounded below.

Now we have to estimate the connectivity of the map
\[
\Map_\ast\Big(\bigwedge_{s\in S} S^{I_{\overline{s}}},S^n\wedge S^V\wedge \bigwedge_{s\in S} X_{\overline{s},I_{\overline{s}}}\Big)^{C_p}\to \Map_\ast\Big(\bigwedge_{\overline{s}\in \overline{S}} S^{I_{\overline{s}}},S^n\wedge \bigwedge_{\overline{s}\in \overline{S}} X_{\overline{s},I_{\overline{s}}}\Big)
\]
as $V\in \mathcal U$ with $V^{C_p}=0$ gets larger and $(I_{\overline{s}})\in \mathcal I^{\overline{S}}$ gets larger. Note that the right-hand side can also be written as
\[
\Map_\ast\Big(\bigwedge_{\overline{s}\in \overline{S}} S^{I_{\overline{s}}},S^n\wedge S^V\wedge \bigwedge_{s\in S} X_{\overline{s},I_{\overline{s}}}\Big)^{C_p}\ .
\]
This shows that the homotopy fiber of the first map can be identified with
\[
\Map_\ast\Big(\bigwedge_{s\in S} S^{I_{\overline{s}}}/\bigwedge_{\overline{s}\in \overline{S}} S^{I_{\overline{s}}},S^n\wedge S^V\wedge \bigwedge_{s\in S} X_{\overline{s},I_{\overline{s}}}\Big)^{C_p}\ .
\]
Now, as in the proof of \cite[Proposition 2.5]{HesselholtMadsen}, we use that the connectivity of $\Map_\ast(A,B)^{C_p}$ is at least the minimum of $\mathrm{conn}(B^H)-\dim(A^H)$, where $H$ runs over all subgroups of $C_p$, and $\mathrm{conn}$ denotes the connectivity.

If $H=C_p$, then
\[
\Big(\bigwedge_{s\in S} S^{I_{\overline{s}}}/\bigwedge_{\overline{s}\in \overline{S}} S^{I_{\overline{s}}}\Big)^H
\]
is a point, while the connectivity of
\[
\Big(S^n\wedge S^V\wedge \bigwedge_{s\in S} X_{\overline{s},I_{\overline{s}}}\Big)^H = S^n\wedge\bigwedge_{\overline{s}\in \overline{S}} X_{\overline{s},I_{\overline{s}}}
\]
is at least $n + \sum_{\overline{s}\in \overline{S}} (i_{\overline{s}}-m)$, where $i_{\overline{s}} = |I_{\overline{s}}|$. Here, we have used our assumption that all $X_{\overline{s}}$ are bounded below. This term goes to $\infty$ with $(I_{\overline{s}})\in \mathcal I^{\overline{S}}$. 

If $H\subsetneq C_p$, then for fixed $(I_{\overline{s}})\in \mathcal I^{\overline{S}}$, the term
\[
\dim \Big(\bigwedge_{s\in S} S^{I_{\overline{s}}}/\bigwedge_{\overline{s}\in \overline{S}} S^{I_{\overline{s}}}\Big)^H
\]
is bounded, while the connectivity of
\[
\Big(S^n\wedge S^V\wedge \bigwedge_{s\in S} X_{\overline{s},I_{\overline{s}}}\Big)^H
\]
goes to $\infty$ with $V\in \mathcal U$, as it is at least the connectivity of $(S^V)^H = S^{V^H}$, which gets large. More precisely, let $\rho_{C_p}$ denote the regular representation of $C_p$. Then
\[
\dim\Big(\bigwedge_{s\in S} S^{I_{\overline{s}}}/\bigwedge_{\overline{s}\in \overline{S}} S^{I_{\overline{s}}}\Big)^H = \dim \rho_{C_p}^H \sum_{\overline{s}\in \overline{S}} i_{\overline{s}}\ ,
\]
and the connectivity of 
\[
\Big(S^n\wedge S^V\wedge \bigwedge_{s\in S} X_{\overline{s},I_{\overline{s}}}\Big)^H
\]
is at least
\[
n+\dim V^H + \dim \rho_{C_p}^H \sum_{\overline{s}\in \overline{S}} (i_{\overline{s}} - m)\ .
\]
In this case, the difference is given by
\[
n + \dim V^H - \dim \rho_{C_p}^H |\overline{S}| m\ .
\]
Let
\[
n_V=\min_{H\subsetneq V} (n + \dim V^H - \dim \rho_{C_p}^H |\overline{S}| m)
\]
denote the minimum of these numbers for varying $H\subsetneq V$. Note that $n_V\to\infty$ as $V$ gets large.

Choosing $(I_{\overline{s}})\in \mathcal I^{\overline{S}}$ in the approximation lemma~\ref{lem:approximation} such that
\[
n + \sum_{\overline{s}\in \overline{S}} (i_{\overline{s}}-m)\geq n_V\ ,
\]
we see that the connectivity of the map
\[
\hocolim_{(I_{\overline{s}})\in \mathcal I^{\overline{S}}} \Map_\ast(\bigwedge_{s\in S} S^{I_{\overline{s}}},S^n\wedge S^V\wedge \bigwedge_{s\in S} X_{\overline{s},I_{\overline{s}}})^{C_p}\to \hocolim_{(I_{\overline{s}})\in \mathcal I^{\overline{S}}} \Map_\ast(\bigwedge_{\overline{s}\in \overline{S}} S^{I_{\overline{s}}},S^n\wedge \bigwedge_{\overline{s}\in \overline{S}} X_{\overline{s},I_{\overline{s}}})
\]
is at least $n_V$. Now as $V\in \mathcal U$ with $V^{C_p}=0$ gets large, one has $n_V\to\infty$, so we get an equivalence in the homotopy colimit over $V$.
\end{proof}

\section{The classical construction of THH}\label{sec:thhorth}

In this section, we recall the construction of Hesselholt--Madsen of topological Hochschild homology as an orthogonal cyclotomic spectrum, \cite[Section 2.4]{HesselholtMadsen}.

In order to be (essentially) in the setup of Hesselholt--Madsen, we take as input an orthogonal ring spectrum $R$. Recall, cf.~\cite[Section 1]{SchwedeGenuine}, that an orthogonal ring spectrum is defined as an assocative algebra object in the symmetric monoidal $1$-category $\Sp^O$ of orthogonal spectra.

Note that in orthogonal spectra, one can write down the cyclic bar construction
\[\xymatrix{
 \cdots \ar[r]<4.5pt>\ar[r]<1.5pt>\ar[r]<-4.5pt>\ar[r]<-1.5pt> & R \wedge R \wedge R  \ar[r]<3pt>\ar[r]\ar[r]<-3pt>  & R\wedge R \ar[r]<1.5pt>\ar[r]<-1.5pt> & R\ ,
}\]
and a construction of $\THH$ as a cyclotomic spectrum based on this has recently been given in \cite{sixauthors}. The classical construction proceeds with a slightly different cyclic object, which is essentially a fibrant replacement of this diagram. This has the advantage that one can read off the genuine fixed points, relevant to the definition of $\TC$, directly.

\begin{definition}\label{def:thhorth} Let $R$ be an orthogonal ring spectrum.
\begin{altenumerate}
\item For any pointed space $X$, define a pointed space with $\T$-action $\THH(R;X)$ as the geometric realization of the cyclic space\footnote{We refer to Appendix~\ref{app:cyclic} for our conventions concerning Connes' category $\Lambda$ and the geometric realization of cyclic spaces.} sending $[k]_\Lambda\in \Lambda^\op$, $k\geq 1$, to
\[
B(R,\ldots,R) = \hocolim_{(I_1,\ldots,I_k)\in \mathcal{I}^k} \Map_\ast(S^{I_1}\wedge \ldots\wedge S^{I_k},X\wedge R_{I_1}\wedge\ldots\wedge R_{I_k})\ ,
\]
where $\Map_\ast$ is the based mapping space.
\item The orthogonal $\T$-spectrum $\THH(R)$ has the $m$-th term given by $\THH(R;S^m)$ with the $O(m)$-action induced by the action on $S^m$. The structure maps $\sigma_m$ are the natural maps
\[
\THH(R;S^m)\wedge S^1\to \THH(R;S^m\wedge S^1)=\THH(R;S^{m+1})\ .
\]
\end{altenumerate}
\end{definition}

Note that in the language of Appendix~\ref{app:cyclic}, the cyclic spectrum $\THH_\bullet(R)$ which realizes to $\THH(R)$ is given by the composition of the natural map
\[
\Lambda^\op\to \Ass^\otimes_\act
\]
with the map
\[
\Ass^\otimes_\act\to (\Sp^O)^\otimes_\act
\]
defining the orthogonal ring spectrum $R$, and the B\"okstedt construction
\[
B: (\Sp^O)^\otimes_\act\to \Sp^O\ .
\]
We now  verify that the `naive' geometric realization of this cyclic spectrum has the `correct' homotopy type. 
According to Corollary \ref{prop:geomrealcompspectra} we have to verify that $\THH_\bullet(R;X)$ is proper, i.e. that all the latching maps are levelwise h-cofibrations. 
\begin{lemma}\label{lemproper}\cite[Lemma 6.17]{MR3558224}
Assume that the orthogonal ring spectrum $R$ is levelwise well pointed and that the unit in the zeroth space $S^0 \to R_0$ is an h-cofibration. Then the cyclic spaces 
$$
\THH_\bullet(R;X) : \Lambda^\op \to \Top
$$
are proper for all $X$. 
\end{lemma}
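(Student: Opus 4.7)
The plan is to verify that each latching map $L_k\THH_\bullet(R;X)\to \THH_k(R;X)$ is a levelwise h-cofibration of pointed spaces. Since the cyclic operators $\tau_k$ acting on $\THH_k$ are isomorphisms (induced by cyclic permutation of the tensor factors of $R$), properness as a cyclic space reduces to properness as a simplicial space, i.e.~to the Reedy condition on the simplicial degeneracies.

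First I would write out the $k$-simplices of $\THH_\bullet(R;X)$ explicitly as the B\"okstedt homotopy colimit over $\mathcal I^{k+1}$ of the mapping spaces $\Map_\ast(S^{I_0}\wedge\cdots\wedge S^{I_k},X\wedge R_{I_0}\wedge\cdots\wedge R_{I_k})$, and identify the degeneracy $s_i$ as the map induced by the inclusion $\mathcal I^{k}\to \mathcal I^{k+1}$ that inserts $\emptyset$ in position $i$, combined with the unit $S^0\to R_0$ inserted in the corresponding $R$-slot. Under our hypotheses, this unit $S^0\to R_0$ is an h-cofibration, and since every $R_n$ is well-pointed the iterated smash products $S^{I_0}\wedge\cdots\wedge R_{I_0}\wedge\cdots$ are well-pointed, so inserting $S^0\to R_0$ remains an h-cofibration in each entry; moreover $\Map_\ast(-,-)$ of a cofibration in the target (with well-pointed source) is an h-cofibration.

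Next I would transport this fiberwise statement through the Bousfield–Kan formula for $\hocolim_{\mathcal I^{k+1}}$ recalled in Appendix \ref{app:colim}. Since Bousfield–Kan is built from wedges and pushouts along free cells of the form $N(\mathcal I^{k+1}_{/x})_+\wedge(-)$, it preserves h-cofibrations between levelwise well-pointed diagrams. Hence each degeneracy $s_i:\THH_{k-1}(R;X)\to \THH_k(R;X)$ is itself a levelwise h-cofibration between well-pointed spaces. The latching object $L_k\THH_\bullet(R;X)$ is then obtained as an iterated pushout of such h-cofibrations along h-cofibrations, and since the class of h-cofibrations is closed under pushout and transfinite composition, the latching map is an h-cofibration, as required.

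The main obstacle is the bookkeeping: one must verify that the Bousfield–Kan simplicial replacement of the $\mathcal I^{k+1}$-diagram is objectwise well-pointed and that the various cosimplicial-compatibility and latching conditions hold at the level of the simplicial replacement before taking realization. These checks are standard but somewhat tedious, and are exactly what is carried out in \cite[Lemma 6.17]{MR3558224}, to which we appeal for the precise combinatorial verification; the key input from our hypotheses—namely that the unit $S^0\to R_0$ is an h-cofibration and that each $R_n$ is well-pointed—is precisely what makes the argument go through.
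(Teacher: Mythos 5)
Your argument is correct and follows essentially the same route as the paper: the paper's proof simply cites \cite[Lemma 6.17]{MR3558224} for the fact that the degeneracies (built from the unit h-cofibration $S^0\to R_0$ and the well-pointedness of the levels, pushed through the B\"okstedt homotopy colimit) are h-cofibrations, and then invokes \cite[Corollary 2.4]{Lewis} for the implication ``good $\Rightarrow$ proper'', which is the closure argument for the latching maps that you sketch at the end.
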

\begin{proof}
In the cited reference it is shown that the degeneracies of the associated simplicial objects are h-cofibrations. But this implies by  \cite[Corollary 2.4]{Lewis} that it is proper.
\end{proof}

\begin{remark}
In fact, if we are just interested in the correct homotopy type of the orthogonal spectrum $\THH(R)$ one can get away with slightly less assumptions on the orthogonal ring spectrum $R$ since we really only need that the maps from the latching objects to the layers of the underlying simplicial diagram of $\THH_\bullet(R) \in \Fun(\Lambda^\op, \Sp^O)$ are h-cofibrations of orthogonal spectra (see the discussion in Appendix \ref{app:colim} around Proposition \ref{orthogonalBK}). For this we do not need that $R$ is levelwise well-pointed but only that the unit $\bS \to R$ is an h-cofibration of orthogonal spectra. 
Since we do not need this generality and the conditions on $R$ given above are in practice almost always satisfied we will not prove this slightly more general result here. 
\end{remark}
We have to define the cyclotomic structure maps. For this, we fix the complete $\T$-universe
\[
\mathcal U = \bigoplus_{k\in \mathbb Z, i\geq 1} \mathbb C_{k,i}
\]
as in the definition of orthogonal cyclotomic spectra, Definition~\ref{def:orthogonalcyclo}. Here $\T$ acts on $\mathbb C_{k,i}$ via the $k$-th power of the embedding $\T\hookrightarrow \mathbb C^\times$.
We note that with this choice of $\mathcal U$, the functor
\[
\Phi^{C_p}_{\mathcal U}: C_p\Sp^O\to \Sp^O\ ,
\]
for any $p\geq 1$, actually takes values in $\T\Sp^O$.\footnote{The reader may be surprised by this extra $\T$-action. It is a feature of this point-set model, and the action is homotopically trivial. However, for the point-set construction of the cyclotomic structure, it is critical.} Indeed, in the formula
\[
\Phi^{C_p}_{\mathcal U}(X)_n = \hocolim_{V\in \mathcal U\mid V^{C_p}=0} X(\mathbb R^n\oplus V)^{C_p}\ ,
\]
there is a remaining $\T$-action on $V$. The resulting functor
\[
\Phi^{C_p}_{\mathcal U}: C_p\Sp^O\to \T\Sp^O
\]
is $BC_p$-equivariant, where $BC_p$ acts on the category $C_p\Sp^O$ by noting that any object has a $C_p$-action (which commutes with itself), and similarly in $\T\Sp^O$, any object has a $C_p\subseteq \T$-action. The $BC_p$-equivariance means that the two $C_p$-actions on
\[
\hocolim_{V\in \mathcal U\mid V^{C_p}=0} X(\mathbb R^n\oplus V)^{C_p}\ ,
\]
one on $X$, and one on $V$ (via restriction along $C_p\subseteq \T$), agree. But this is clear, as we are looking at the $C_p$-fixed points under the diagonal action.
Now let us recall the definition of
\[
\Phi^{C_p}_{\mathcal U} \THH(R)\in \T\Sp^O\ .
\]
Its $n$-th space is given by
\[
(\Phi^{C_p}_{\mathcal U} \THH(R))_n = \hocolim_{V\in \mathcal U\mid V^{C_p}=0} \THH\big(R;S^n\wedge S^V\big)^{C_p}
\]
where the $C_p$-action on $\THH\big(R;S^n\wedge S^V\big)$ is given by the diagonal action combining the action on the representation $V$ and the action of $C_p \subseteq \T$ on $\THH$.
We give a reinterpretation of this formula as follows, using the functor $B_p$ from Construction~\ref{cons:BptoB}. Note that by simplicial subdivision, $\THH(R)$ is also the geometric realization of the $\Lambda_p^\op$-orthogonal spectrum given by restriction along the functor
\[
\Lambda_p^\op\to \Lambda^\op: [n]_{\Lambda_p}\mapsto [pn]_{\Lambda}
\]
of the cyclic orthogonal spectrum defining $\THH(R)$ (see Proposition \ref{subdivisionrealization} in Appendix \ref{app:cyclic}). This $\Lambda_p^\op$-orthogonal spectrum is equivalently given by the composition of
\[
\Lambda_p^\op\to \Free_{C_p}\times_{\Fin} \Ass^\otimes_\act
\]
with
\[
\Free_{C_p}\times_{\Fin} \Ass^\otimes_\act\to \Free_{C_p}\times_{\Fin} (\Sp^O)^\otimes_\act\ ,
\]
the functor
\[
\Free_{C_p}\times_{\Fin} (\Sp^O)^\otimes_\act\to ((\Sp^O)^\otimes_\act)^{BC_p}\ ,
\]
and the B\"okstedt construction
\[
B: ((\Sp^O)^\otimes_\act)^{BC_p}\to (\Sp^O)^{BC_p} = C_p\Sp^O\ .
\]
This discussion leads to the following proposition.

\begin{proposition}\label{prop:compgeomfixpointsthh} There is a natural map of orthogonal spectra from  $\Phi^{C_p}_{\mathcal U} \THH(R)$ to the geometric realization of the $\Lambda_p^\op$-orthogonal spectrum given as the composition of
\[
\Lambda_p^\op\to \Free_{C_p}\times_{\Fin} \Ass^\otimes_\act\to \Free_{C_p}\times_{\Fin} (\Sp^O)^\otimes_\act
\]
with the functor
\[
B_p: \Free_{C_p}\times_{\Fin} (\Sp^O)^\otimes_\act\to \Sp^O : (S,(X_{\overline{s}})_{\overline{s}\in \overline{S}})\mapsto \Phi^{C_p}_{\mathcal U} B((X_{\overline{s}})_{s\in S})\ .
\]
This map is a homeomorphism of orthogonal spectra.
\end{proposition}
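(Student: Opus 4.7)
My plan is to prove the proposition by a direct pointset unraveling, using the subdivision isomorphism for cyclic objects together with the fact that $\Phi^{C_p}_{\mathcal U}$ commutes at the pointset level with geometric realization of $\Lambda_p^\op$-diagrams. As a first step, I would use Proposition~\ref{subdivisionrealization} to identify $\THH(R)$ with the geometric realization of the subdivided $\Lambda_p^\op$-orthogonal spectrum $\sd_p \THH_\bullet(R)$, whose value at $[n]_{\Lambda_p}$ is the B\"okstedt construction $B(R,\ldots,R)$ on $pn$ copies of $R$. The extra $C_p$-action coming from the twist $\tau$ on subdivision indices acts precisely by cyclic permutation of the $pn$ tensor factors, so this subdivided diagram is exactly the composite $\Lambda_p^\op\to \Free_{C_p}\times_{\Fin} \Ass^\otimes_\act\to \Free_{C_p}\times_{\Fin} (\Sp^O)^\otimes_\act\to ((\Sp^O)^\otimes_\act)^{BC_p}\xto{B} C_p\Sp^O$ that appears in the statement.

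The second step is to establish that $\Phi^{C_p}_{\mathcal U}: C_p\Sp^O \to \Sp^O$ commutes pointset-wise with the geometric realization of $\Lambda_p^\op$-diagrams in $C_p\Sp^O$. The functor is constructed level by level as
\[
(\Phi^{C_p}_{\mathcal U} Z)_n = \hocolim_{V\in \mathcal U,\, V^{C_p}=0} Z(\mathbb R^n\oplus V)^{C_p},
\]
and the claim is that each of the three operations involved---evaluation on an inner product space, $C_p$-fixed points, and the Bousfield--Kan hocolim over $V$---commutes with realization of $\Lambda_p^\op$-diagrams of (pointed) compactly generated weak Hausdorff spaces. Evaluation commutes because realization of orthogonal spectra is levelwise. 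The Bousfield--Kan hocolim commutes with any colimit of spaces by construction, as reviewed in Appendix~\ref{app:colim}. Finally, $(-)^{C_p}$ commutes with the realization of a $\Lambda_p^\op$-diagram in which the $C_p$-action is already resolved into an action on values (as opposed to the cyclic action on indices), since the realization is presented as a coend and the $C_p$-fixed points of a coequalizer over a trivial $C_p$-diagram are computed on the building blocks.

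Combining these two steps, I would obtain the identification
\[
(\Phi^{C_p}_{\mathcal U}\THH(R))_n = |\Phi^{C_p}_{\mathcal U} \sd_p \THH_\bullet(R)|_n,
\]
and unwinding definitions, the diagram $\Phi^{C_p}_{\mathcal U}\sd_p \THH_\bullet(R)$ in $\Sp^O$ is precisely the composite of the $\Lambda_p^\op$-diagram in $\Free_{C_p}\times_{\Fin}(\Sp^O)^\otimes_\act$ from the proposition with the functor $B_p$ from Construction~\ref{cons:BptoB}. The comparison map produced this way agrees with the canonical one from Construction~\ref{cons:BptoB} applied after realization, and is a homeomorphism (not just a weak equivalence) by construction.

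The main obstacle will be the verification that $\hocolim_{V}$ and $(-)^{C_p}$ strictly commute with the realization of $\Lambda_p^\op$-diagrams in the explicit topological models we use; this requires care because we need a strict pointset identification rather than just a weak equivalence. Properness of the cyclic space $\THH_\bullet(R;X)$, established in Lemma~\ref{lemproper}, together with the compatibility of the Bousfield--Kan construction with fixed points (Lemma~\ref{fixedcommutes}) and the explicit coend presentation of $\Lambda_p^\op$-realization from Appendix~\ref{app:cyclic}, should provide enough control to make this identification rigorous.
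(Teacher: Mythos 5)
Your proposal is correct and follows essentially the same route as the paper: identify $\THH(R)$ with the realization of the $p$-fold subdivision, then commute $\Phi^{C_p}_{\mathcal U}$ past the realization levelwise by observing that evaluation, the Bousfield--Kan $\hocolim$ over $V$, and $C_p$-fixed points (Lemma~\ref{fixedcommutes}) all strictly commute with geometric realization. The paper's proof is just a condensed version of exactly these steps.
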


\begin{proof} This follows from the preceding by passing to the homotopy colimit over all $V$. More precisely we consider the canonical map
$$
| \Phi^{C_p}_{\mathcal U}(\sd_p^*\THH_\bullet(R))| \to  \Phi^{C_p}_{\mathcal U} |\sd_p^* \THH_\bullet(R)|  \cong \Phi^{C_p}_{\mathcal U} |\THH_\bullet(R)| \
$$
which exists by definition of the functor $\Phi^{C_p}_{\mathcal U}$. Geometric realization commutes with taking fixed points (Lemma \ref{fixedcommutes}). Since it also commutes with taking geometric realization and shifts we conclude that this map is a homeomorphism. \end{proof}

The problem with this formula is that a priori, we lose track of the $\T$-action on $\Phi^{C_p}_{\mathcal U}\THH(R)$. Namely, the $\T$-action above is not related to the circle action coming from the $\Lambda_p^\op$-structure. This is because $\T$ also implicitly acts on the variable $V\in \mathcal U$. However, the $\T$-action can be recovered by using the refinement of $\Phi^{C_p}_{\mathcal U}$ to a functor
\[
\Phi^{C_p}_{\mathcal U}: C_p\Sp^O\to \T\Sp^O\ .
\]
More precisely, using this refinement, we get a similar refinement
\[
B_p^\T: \Free_{C_p}\times_{\Fin} (\Sp^O)^\otimes_\act\to \T\Sp^O\ .
\]
This functor is again $BC_p$-equivariant, as $\Phi^{C_p}_{\mathcal U}$ is. In particular, the $BC_p$-equivariant map
\[
\Lambda_p^\op\to \Free_{C_p}\times_{\Fin} \Ass^\otimes_\act\to \Free_{C_p}\times_{\Fin} (\Sp^O)^\otimes_\act\xto{B_p^\T} \T\Sp^O
\]
gives rise to an orthogonal spectrum with a continuous $(\T\times \T)/C_p$-action. Restricting to the diagonal $\T/C_p\cong\T$-action, we get an orthogonal spectrum with $\T$-action, cf.~also Proposition~\ref{prop:geomrealtactiontop}. The following proposition follows by unraveling the definitions.

\begin{proposition}\label{prop:compgeomfixpointsthhgen} 
The $\T$-equivariant orthogonal spectrum $\Phi^{C_p}_{\mathcal U} \THH(R)\in \T\Sp^O$ has a natural map to the object of $\T\Sp^O$ corresponding to the $BC_p$-equivariant functor
\[
\Lambda_p^\op \to \Free_{C_p}\times_{\Fin} \Ass^\otimes_\act\to \Free_{C_p}\times_{\Fin} (\Sp^O)^\otimes_\act\xto{B_p^\T} \T\Sp^O
\]
under the functor of Proposition~\ref{prop:geomrealtactiontop}. This map is a homeomorphism of $\T$-orthogonal spectra.
\end{proposition}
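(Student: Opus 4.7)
The plan is to upgrade the homeomorphism of orthogonal spectra in Proposition~\ref{prop:compgeomfixpointsthh} to an equivariant statement by carefully tracking two \emph{a priori} independent $\T$-actions and verifying that they assemble into the diagonal one. First, recall that with our chosen complete $\T$-universe $\mathcal U$ the isomorphism $\mathcal U^{C_p} \cong \mathcal U$ via the $p$-th power map $\T/C_p \cong \T$ refines $\Phi^{C_p}_{\mathcal U}$ to a functor $C_p\Sp^O \to \T\Sp^O$; moreover this refinement is $BC_p$-equivariant in the sense explained before the proposition, because for any $C_p\subseteq \T$ acting on $V\in\mathcal U$ with $V^{C_p}=0$, the two $C_p$-actions on $X(\mathbb R^n\oplus V)^{C_p}$ coming from $X$ and from $V$ agree on the diagonal $C_p$-fixed points.

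Next, I would pass to the $\Lambda_p^\op$-diagram. By simplicial subdivision $\THH(R)$ is the geometric realization of the $\Lambda_p^\op$-orthogonal spectrum obtained from $R$ through
\[
\Lambda_p^\op \to \Free_{C_p}\times_{\Fin} \Ass^\otimes_\act \to \Free_{C_p}\times_{\Fin}(\Sp^O)^\otimes_\act \xto{B} C_p\Sp^O,
\]
by Proposition~\ref{subdivisionrealization}. Since $\Phi^{C_p}_{\mathcal U}$ commutes with geometric realization of $\Lambda_p^\op$-diagrams (it commutes with colimits of $C_p$-spaces indexed by the $\Delta^\op$-portion, and with the shear isomorphisms needed to identify the two realizations, cf.~Lemma~\ref{fixedcommutes} and Proposition~\ref{subdivisionrealization}), the refinement $\Phi^{C_p}_{\mathcal U}\colon C_p\Sp^O\to \T\Sp^O$ applied levelwise yields precisely the $\Lambda_p^\op$-orthogonal $\T$-spectrum described by $B_p^\T$, and the natural comparison map is a levelwise homeomorphism by Proposition~\ref{prop:compgeomfixpointsthh}.

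It remains to identify the $\T$-action. The source $\Phi^{C_p}_{\mathcal U}\THH(R)$ carries a single $\T$-action: the one it inherits as an object of $\T\Sp^O$ via the $p$-th power map. On the target, by construction of Proposition~\ref{prop:geomrealtactiontop} a $BC_p$-equivariant $\Lambda_p^\op$-diagram valued in $\T\Sp^O$ produces an orthogonal spectrum with a $(\T\times\T)/C_p$-action, where the first $\T$ factor comes from the geometric realization of the $\Lambda_p^\op$-structure and the second $\T$ factor from the values being in $\T\Sp^O$ (i.e., from the universe $\mathcal U$); the diagonal $\T/C_p\cong\T$ is then extracted. But these two $\T$-actions are exactly the two $\T$-actions already present on the source: the geometric realization of $\sd_p^*\THH_\bullet(R)$ as a $C_p$-spectrum carries an external $\T$-action (from cyclic structure of $\Lambda_p^\op$ covering $\Lambda^\op$) which combines with the $\T$-action internal to $\Phi^{C_p}_{\mathcal U}$-values via the $p$-th power; the $BC_p$-equivariance of the refined $\Phi^{C_p}_{\mathcal U}$ ensures the diagonal identification is compatible. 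A direct unwinding of these constructions shows that the homeomorphism of orthogonal spectra from step two intertwines these two actions.

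The main obstacle is purely bookkeeping: one must carefully track, for each $n$ and each $V\in\mathcal U$ with $V^{C_p}=0$, how the $\T$-action on $\THH(R;S^n\wedge S^V)^{C_p}$ (through the action on $V$) descends to the $\T/C_p\cong\T$-action, and verify that this coincides with the action obtained on the $\Lambda_p^\op$-realization side through the functor of Proposition~\ref{prop:geomrealtactiontop}. Given the definitions and the universe choice, this is essentially forced, and once it is verified in the simplicial direction, compatibility with the orthogonal structure maps is automatic from the fact that $\Phi^{C_p}_{\mathcal U}$ is constructed levelwise.
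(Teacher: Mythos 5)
Your argument is correct and matches the paper's: the paper simply declares that the statement ``follows with the same proof as Proposition~\ref{prop:compgeomfixpointsthh}'' (i.e.\ the underlying homeomorphism comes from Lemma~\ref{fixedcommutes} together with the commutation of $\hocolim$, shifts, and geometric realization) and that the $\T$-equivariance is a matter of ``unraveling the definitions.'' Your extra paragraphs identifying the two $\T$-actions (realization direction versus universe direction) and their diagonal are exactly the definitional unwinding the paper leaves implicit, so nothing is missing.
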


\begin{proof} This follows with the same proof as Proposition \ref{prop:compgeomfixpointsthh}.
\end{proof}

In this picture, one can also recover $\THH(R)\in \T\Sp^O$ itself.

\begin{proposition} The $\T$-equivariant orthogonal spectrum $\THH(R)\in \T\Sp^O$ is given by the object of $\T\Sp^O$ corresponding to the $BC_p$-equivariant functor
\[
\Lambda_p^\op \to \Free_{C_p}\times_{\Fin} \Ass^\otimes_\act\to \Free_{C_p}\times_{\Fin} (\Sp^O)^\otimes_\act\xto{B} \Sp^O\subseteq \T\Sp^O\ ,
\]
where $B$ is as in Construction~\ref{cons:BptoB}, and in the final inclusion, we equip any orthogonal spectrum with the trivial $\T$-action.$\hfill \Box$
\end{proposition}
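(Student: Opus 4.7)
The plan is to unwind the various definitions and apply the paracyclic subdivision machinery from Appendix~\ref{app:cyclic}. By Definition~\ref{def:thhorth}, $\THH(R)\in\T\Sp^O$ is the geometric realization of the cyclic orthogonal spectrum $\THH_\bullet(R)$ which arises as the composition
\[
\Lambda^\op \to \Ass^\otimes_\act \to (\Sp^O)^\otimes_\act \xto{B} \Sp^O,
\]
where the first functor is the canonical one of Proposition~\ref{prop:lambdaass} and the second is the functor classifying the orthogonal ring spectrum $R$. The $\T$-action is the one intrinsic to geometric realization of cyclic objects, and by Lemma~\ref{lemproper} together with Corollary~\ref{prop:geomrealcompspectra}, this realization has the correct homotopy type provided $R$ is well-pointed.

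First I would apply Proposition~\ref{subdivisionrealization} to rewrite the realization as the realization of the $p$-fold subdivision $\sd_p^*\THH_\bullet(R)\colon \Lambda_p^\op\to \Sp^O$ obtained by pulling back along $\sd_p\colon \Lambda_p^\op\to \Lambda^\op$. The natural factorization $\Lambda_p^\op\to \Free_{C_p}\times_\Fin \Ass^\otimes_\act$ constructed in Appendix~\ref{app:cyclic} (recording the free $C_p$-set structure on the subdivided vertices) shows that $\sd_p^*\THH_\bullet(R)$ is canonically equivalent to the composite
\[
\Lambda_p^\op\to \Free_{C_p}\times_\Fin \Ass^\otimes_\act\to \Free_{C_p}\times_\Fin (\Sp^O)^\otimes_\act\xto{B}\Sp^O.
\]
This identification is manifestly $BC_p$-equivariant for the deck-transformation action of $BC_p$ on $\Lambda_p^\op$ arising from $\Lambda_p^\op\to \Lambda^\op$.

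Second, I would invoke Proposition~\ref{prop:geomrealtactiontop}, which attaches to any $BC_p$-equivariant functor $\Lambda_p^\op\to \T\Sp^O$ an object of $\T\Sp^O$ whose underlying $\T$-action is the diagonal of the $(\T\times\T)/C_p$-action combining the realization action with the intrinsic $\T$-action on values. Since our functor here factors through the inclusion $\Sp^O\subseteq \T\Sp^O$ with \emph{trivial} $\T$-action on the values, the second $\T$-factor acts trivially, so the diagonal $\T$-action reduces to the one coming from the paracyclic realization of $\sd_p^*\THH_\bullet(R)$. This is exactly the $\T$-action on $|\THH_\bullet(R)|=\THH(R)$ transported across the subdivision equivalence.

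The argument is therefore essentially tautological once the subdivision identification is in place; the substantive content is already contained in Appendix~\ref{app:cyclic}, in particular in the functoriality of subdivision and in Proposition~\ref{prop:geomrealtactiontop}. The main point to verify carefully is the compatibility of the two $\T$-actions, which amounts to checking that the trivial $\T$-action on the target $\Sp^O$ makes the diagonal action collapse to the realization action — this is immediate from the construction in Proposition~\ref{prop:geomrealtactiontop}, and it is precisely the difference between this statement and the more subtle Proposition~\ref{prop:compgeomfixpointsthhgen}, where $B$ was replaced by $B_p^\T$ and the extra $\T$-action on the universe $\mathcal U$ made the bookkeeping nontrivial.
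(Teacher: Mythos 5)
There is a genuine misidentification at the first step. You claim that the composite
\[
\Lambda_p^\op\to \Free_{C_p}\times_{\Fin}\Ass^\otimes_\act\to \Free_{C_p}\times_{\Fin}(\Sp^O)^\otimes_\act\xto{B}\Sp^O
\]
is canonically equivalent to the subdivision $\sd_p^\ast\THH_\bullet(R)$. It is not. The functor $B$ of Construction~\ref{cons:BptoB} is defined as the projection to the \emph{second} factor $(\Sp^O)^\otimes_\act$ (i.e.\ it retains only the family $(X_{\overline{s}})_{\overline{s}\in\overline{S}}$ indexed by the quotient $\overline{S}=S/C_p$) followed by the B\"okstedt construction; so at $[n]_{\Lambda_p}$ the composite above produces $B(R,\ldots,R)$ with $n$ smash factors, whereas $\sd_p^\ast\THH_\bullet(R)$ has $pn$ smash factors there. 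By the commutative diagram at the end of Appendix~\ref{app:cyclic}, the second-factor projection of $V_p$ is $V$ composed with the \emph{covering projection} $\Lambda_p^\op\to\Lambda_p^\op/BC_p=\Lambda^\op$, not with $\sd_p$. Hence the functor in the statement is the pullback of $\THH_\bullet(R)$ along the covering projection; the subdivision (equivalently, the first-factor, $S$-indexed B\"okstedt construction) is what underlies the companion Propositions~\ref{prop:compgeomfixpointsthh} and~\ref{prop:compgeomfixpointsthhgen} about $\Phi^{C_p}_{\mathcal U}\THH(R)$, which is exactly the distinction this section is built on. Your subsequent appeal to the collapse of the diagonal $\T$-action also silently uses that the functor factors through $\Lambda^\op$, which $\sd_p^\ast\THH_\bullet(R)$ does not (it is only $BC_p$-equivariant via the nontrivial rotation by $\tfrac1p$).

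The correct argument is shorter and needs no subdivision: by the observation above the composite factors as $\Lambda_p^\op\to\Lambda^\op\xto{\THH_\bullet(R)}\Sp^O\subseteq\T\Sp^O$ with $\THH_\bullet(R)$ the cyclic orthogonal spectrum of Definition~\ref{def:thhorth}, and the final clause of Proposition~\ref{prop:geomrealtactiontop} states that on $BC_p$-equivariant functors of precisely this form the construction returns the usual geometric realization of the underlying cyclic object, i.e.\ $\THH(R)\in\T\Sp^O$. Your overall strategy (unwind definitions, then invoke Proposition~\ref{prop:geomrealtactiontop}) is the intended one, but the identification of which $\Lambda_p^\op$-diagram the statement is about must be corrected as above.
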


Finally, Construction~\ref{cons:BptoB} produces a natural transformation of $BC_p$-equivariant functors from
\[
\Lambda_p^\op \to \Free_{C_p}\times_{\Fin} \Ass^\otimes_\act\to \Free_{C_p}\times_{\Fin} (\Sp^O)^\otimes_\act\xto{B_p^\T} \T\Sp^O
\]
to
\[
\Lambda_p^\op \to \Free_{C_p}\times_{\Fin} \Ass^\otimes_\act\to \Free_{C_p}\times_{\Fin} (\Sp^O)^\otimes_\act\xto{B} \Sp^O\subseteq \T\Sp^O\ .
\]
Applying Proposition~\ref{prop:geomrealtactiontop} and the maps constructed above, we get the desired map
\[
\Phi_p: \Phi^{C_p}_{\mathcal U} \THH(R)\to \THH(R)
\]
of objects of $\T\Sp^O$. By Proposition~\ref{prop:compgeomfixpointsthhgen} and Theorem~\ref{thm:boekstedtgeomfixpoints}, the maps $\Phi_p$ are equivalences of the underlying orthogonal spectra. Moreover, it is easy to see that the maps $\Phi_p$ sit in commutative diagrams as in Definition~\ref{def:orthogonalcyclo}. Using for all integers $p,q\geq 1$ the resulting commutative diagram
\[\xymatrix{
\Phi^{C_q}_{\mathcal U} \Phi^{C_p}_{\mathcal U} \THH(R)\ar[r]^\simeq\ar[d]^{\Phi^{C_q}_{\mathcal U}(\Phi_p)} & \Phi^{C_{pq}}_{\mathcal U} \THH(R)\ar[d]^{\Phi_{pq}}\\
\Phi^{C_q}_{\mathcal U} \THH(R)\ar[r]^{\Phi_q} & \THH(R)\ ,
}\]
one sees that also $\Phi^{C_q}_{\mathcal U}(\Phi_p)$ is an equivalence of underlying orthogonal spectra. This implies that $\Phi_p$ is an $\F$-equivalence, as desired.

\section{Comparison}\label{sec:comparison}

In this section, we compare the constructions of Section~\ref{sec:thhnaive} and Section~\ref{sec:thhorth}. A similar comparison between the B\"okstedt model for $\THH$ and the cyclic bar construction model for $\THH$ (in the model category of \cite{sixauthors}) will also appear in \cite{TCcomp}.

We start with an associative orthogonal ring spectrum $R\in \Alg(\Sp^O)$, regarded as a functor
\[
R^\otimes: \Ass^\otimes\to (\Sp^O)^\otimes
\]
of operads. Let
\[
A^\otimes: N(\Ass^\otimes)\to \Sp^\otimes
\]
denote the associated $\E_1$-algebra $A\in \Alg_{\E_1}(\Sp)$.

First, we have to compare the cyclic spectra computing $\THH$. Recall that in Section~\ref{sec:thhnaive}, it is given by the composite
\[
N(\Lambda^\op)\to N(\Ass^\otimes_\act)\xto{A^\otimes} \Sp^\otimes_\act\xto{\otimes}\Sp\ ,
\]
and in Section~\ref{sec:thhorth}, it is given by the composite
\[
\Lambda^\op\to \Ass^\otimes_\act\xto{R^\otimes} (\Sp^O)^\otimes_\act\xto{B} \Sp^O\ ,
\]
where $B$ denotes the B\"okstedt construction. Here, the comparison of these cyclic objects is an immediate consequence of Theorem~\ref{thm:identboekstedt}. To compare the geometric realizations we combine Lemma \ref{lemproper} and Corollary \ref{prop:geomrealcompspectra} to obtain the following result.

\begin{thm}\label{thmcomparison}
Let $R$ be an orthogonal ring spectrum that is levelwise well-pointed and such that the zeroth component of the unit $S^0 \to R_0$ is an h-cofibration. Then there is a natural (naive) $\T$-equivariant equivalence between the classical construction of $\THH(R)$ using B\"okstedt's construction as in Definition \ref{def:thhorth} and the $\infty$-categorical version $\THH(A)$ using the cyclic bar construction as in Definition \ref{def_bar_infty} of the associated $\E_1$-ring spectrum $A$. 
More precisely there is a commutative square of lax symmetric monoidal functors
\[
\xymatrix{
N\Alg(\Sp^O)_{\mathrm{well}} \ar[rr]^-{\THH}\ar[d] && N\T\Sp^O \ar[d] \\
\Alg_{\E_1}(\Sp) \ar[rr]^-{\THH} && \Sp^{B\T}
}
\]
in which the vertical functors are Dwyer-Kan localizations and
\[
\Alg(\Sp^O)_{\mathrm{well}} \subseteq \Alg(\Sp^O)
\]
denotes the subcategory of all orthogonal ring spectra that satisfy the above well-pointedness condition.$\hfill \Box$
\end{thm}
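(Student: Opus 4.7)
The proof plan is to identify both constructions as geometric realizations of cyclic objects built from the same data, and then to apply two previously established results: Theorem~\ref{thm:identboekstedt}, which identifies the B\"okstedt construction with the derived smash product at the level of symmetric monoidal $\infty$-categories, and Corollary~\ref{prop:geomrealcompspectra}, which says that proper cyclic objects in $\Sp^O$ have a naive geometric realization computing the correct $\infty$-categorical colimit.

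First I would make precise that both constructions are realizations of cyclic objects obtained by composing a universal map $\Lambda^\op\to \Ass^\otimes_\act$ with the algebra data and then with the appropriate ``tensor'' functor. For the classical model this is $B\circ R^\otimes$ landing in $\Sp^O$, and for the $\infty$-categorical model this is $\otimes\circ A^\otimes$ landing in $\Sp$. By Theorem~\ref{thm:identboekstedt}, the functors $\otimes$ and $\overline{B}$ (the $\infty$-categorical functor induced by $B$ after inverting stable equivalences) are canonically equivalent as lax symmetric monoidal functors $\Sp^\otimes_\act\to\Sp$, with no choice involved. Composing with the common algebra data $A^\otimes$ (which is obtained from $R^\otimes$ via the localization $N\Alg(\Sp^O)\to \Alg_{\E_1}(\Sp)$, see Theorem~\ref{DKlocalizationsymmon}), this identifies the underlying cyclic spectra in $\Sp$, naturally and lax symmetric monoidally.

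Next I would compare the geometric realizations. The classical $\T$-equivariant orthogonal spectrum $\THH(R)$ is defined as the naive (levelwise) realization of $\THH_\bullet(R)\colon \Lambda^\op\to \Sp^O$; after passing through the localization $N\T\Sp^O\to \Sp^{B\T}$, one must know that this levelwise realization computes the $\infty$-categorical realization of the localized cyclic spectrum $N\Lambda^\op\to \Sp$. This is where the hypothesis on $R$ enters: by Lemma~\ref{lemproper} the cyclic orthogonal spectrum $\THH_\bullet(R)$ is proper (all latching maps are h-cofibrations), and therefore by Corollary~\ref{prop:geomrealcompspectra} its naive realization represents the $\infty$-categorical geometric realization in $\Sp^{B\T}$. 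Combined with the previous paragraph, this yields the desired $\T$-equivariant equivalence $\THH(R)\simeq \THH(A)$ in $\Sp^{B\T}$.

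To obtain the commutative square of lax symmetric monoidal functors, I would assemble all the above steps functorially in $R$. Each ingredient (the functor $R\mapsto R^\otimes$, the functorial comparison between $B$ and $\otimes$, and the passage from naive to derived realization on proper cyclic objects) is lax symmetric monoidal and natural; concatenating them produces a lax symmetric monoidal natural equivalence between the two composites in the square. Finally, the claim that the vertical functors are Dwyer--Kan localizations is Theorem~\ref{DKlocalizationsymmon} on the left (applied to the model structure on orthogonal ring spectra, restricted to the subcategory of well-pointed objects, which carries enough cofibrant replacements to compute derived smash products) and the construction of $\Sp^{B\T}$ as a Dwyer--Kan localization of $N\T\Sp^O$ on the right (as in the proof of Theorem~\ref{thm:borelcompletion}).

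The main obstacle is the proper handling of the $\T$-action at the point-set level. Both the cyclic structure (hence the naive $\T$-action on the realization) and the comparison map must be shown to agree; this ultimately comes down to observing that the equivalence $\otimes\simeq\overline{B}$ from Theorem~\ref{thm:identboekstedt} is canonical and hence is automatically compatible with the cyclic automorphisms of $[n]_\Lambda$. Once this naturality is in place, the properness hypothesis takes care of turning a levelwise equivalence of cyclic diagrams into an equivalence of realizations in $\Sp^{B\T}$, and the rest is formal.
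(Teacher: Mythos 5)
Your proposal follows the paper's argument exactly: the two cyclic spectra are identified via Theorem~\ref{thm:identboekstedt} (the unique lax symmetric monoidal identification of the B\"okstedt construction with $\otimes$), and the geometric realizations are then compared by combining the properness statement of Lemma~\ref{lemproper} with Corollary~\ref{prop:geomrealcompspectra}, with Theorem~\ref{DKlocalizationsymmon} supplying the localization statements. This is precisely the paper's (essentially two-line) proof, so nothing further is needed.
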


It remains to identify the Frobenius maps, so fix a prime $p$. In Section~\ref{sec:thhorth}, the Frobenius map $\THH(R)\to \THH(R)^{tC_p}$ is the composite of the inverse of the equivalence $\Phi^{C_p}_{\mathcal U}\THH(R)\to \THH(R)$ and the natural map $\Phi^{C_p}_{\mathcal U} \THH(R)\to \THH(R)^{tC_p}$. Recall that for the construction of the map $\Phi^{C_p}_{\mathcal U} \THH(R)\to \THH(R)$, the source $\Phi^{C_p}_{\mathcal U} \THH(R)$ was modelled by the $BC_p$-equivariant functor
\[
\Lambda_p^\op\to \Free_{C_p}\times_{\Fin} \Ass^\otimes_\act\xto{R^\otimes} \Free_{C_p}\times_{\Fin} (\Sp^O)^\otimes_\act\xto{B_p^\T} \T\Sp^O\ ,
\]
pictorially
\begin{equation}\label{eq:model1}\xymatrix{
\cdots \ar[r]<1.2ex> \ar[r]<-1.2ex>\ar[r]<0.4ex> \ar[r]<-0.4ex> & \Phi^{C_p}_{\mathcal U} B(\underbrace{R,\ldots,R}_{3p}) \ar@(ul,ur)^{(C_{3p}\times \T)/C_p}\ar[r]<0.8ex>\ar[r]<0ex>\ar[r]<-0.8ex> \ar[r]& \Phi^{C_p}_{\mathcal U} B(\underbrace{R,\ldots,R}_{2p})\ar@(ul,ur)^{(C_{2p}\times \T)/C_p}
\ar[r]<0.4ex> \ar[r]<-0.4ex>& \Phi^{C_p}_{\mathcal U} B(\underbrace{R,\ldots,R}_p)\ar@(ul,ur)^{(C_p\times \T)/C_p}\ ;
}\end{equation}
here and in the following, we lie a little bit, as there are actually $p$ times as many arrows as are drawn (as we are dealing with a $\Lambda_p^\op$-object), which are however permuted by the $BC_p$-equivariance.

Moreover, the target $\THH(R)$ is modelled by the $BC_p$-equivariant functor
\[
\Lambda_p^\op\to \Free_{C_p}\times_{\Fin} \Ass^\otimes_\act\xto{R^\otimes} \Free_{C_p}\times_{\Fin} (\Sp^O)^\otimes_\act\xto{B} \Sp^O\ ,
\]
pictorially
\begin{equation}\label{eq:model2}\xymatrix{
\cdots \ar[r]<1.2ex> \ar[r]<-1.2ex>\ar[r]<0.4ex> \ar[r]<-0.4ex> & B(R,R,R) \ar@(ul,ur)^{C_{3p}/C_p}\ar[r]<0.8ex>\ar[r]<0ex>\ar[r]<-0.8ex> \ar[r]& B(R,R)\ar@(ul,ur)^{C_{2p}/C_p}
\ar[r]<0.4ex> \ar[r]<-0.4ex>& B(R)\ar@(ul,ur)^{C_p/C_p}\ .
}\end{equation}
The natural transformation then comes from the natural transformation $B_p^\T\to B$ of $BC_p$-equivariant functors
\[
\Free_{C_p}\times_{\Fin} (\Sp^O)^\otimes_\act\to \T\Sp^O
\]
in Construction~\ref{cons:BptoB} (which is $\T$-equivariant).

Similarly, in Section~\ref{sec:thhnaive}, the Frobenius map $\THH(A)\to \THH(A)^{tC_p}$ was constructed by modelling $\THH(A)$ by the $BC_p$-equivariant functor
\[
N(\Lambda_p^\op)\to N(\Free_{C_p})\times_{N(\Fin)} N(\Ass^\otimes_\act)\xto{A^\otimes} N(\Free_{C_p})\times_{N(\Fin)} \Sp^\otimes_\act\xto{I} \Sp\ ,
\]
pictorially
\begin{equation}\label{eq:model3}\xymatrix{
\cdots \ar[r]<1.2ex> \ar[r]<-1.2ex>\ar[r]<0.4ex> \ar[r]<-0.4ex> & A\otimes A\otimes A \ar@(ul,ur)^{C_{3p}/C_p}\ar[r]<0.8ex>\ar[r]<0ex>\ar[r]<-0.8ex> \ar[r]& A\otimes A\ar@(ul,ur)^{C_{2p}/C_p}
\ar[r]<0.4ex> \ar[r]<-0.4ex>& A\ar@(ul,ur)^{C_p/C_p}\ .
}\end{equation}
Moreover, $\THH(A)^{tC_p}$ admits a natural map from the realization (in the sense of Proposition~\ref{prop:geomrealtaction}) of the $BC_p$-equivariant functor
\[
N(\Lambda_p^\op)\to N(\Free_{C_p})\times_{N(\Fin)} N(\Ass^\otimes_\act)\xto{A^\otimes} N(\Free_{C_p})\times_{N(\Fin)} \Sp^\otimes_\act\xto{\tilde{T}_p}\Sp\ ,
\]
pictorially
\begin{equation}\label{eq:model4}\xymatrix{
\cdots \ar[r]<1.2ex> \ar[r]<-1.2ex>\ar[r]<0.4ex> \ar[r]<-0.4ex> & \big(A^{\otimes 3p}\big)^{tC_p} \ar@(ul,ur)^{C_{3p}/C_p}\ar[r]<0.8ex>\ar[r]<0ex>\ar[r]<-0.8ex> \ar[r]& \big(A^{\otimes 2p}\big)^{tC_p}\ar@(ul,ur)^{C_{2p}/C_p}
\ar[r]<0.4ex> \ar[r]<-0.4ex>& \big(A^{\otimes p}\big)^{tC_p}\ar@(ul,ur)^{C_p/C_p}\ .
}\end{equation}
Now the natural transformation comes from the natural transformation $I\to \tilde{T}_p$ of $BC_p$-equivariant functors
\[
N(\Free_{C_p})\times_{N(\Fin)} \Sp^\otimes_\act\to \Sp
\]
given by Corollary~\ref{cor:tildetptrafo}.

We  compare all different constructions by invoking Lemma~\ref{lem:initialtech}. For this, we need to rewrite everything in terms of lax symmetric monoidal functors on $N(\Free_{C_p})\times_{N(\Fin)} \Sp^\otimes_\act$. Unfortunately, one of the functors, namely $B_p^\T$, is not lax symmetric monoidal, because our version $\Phi^{C_p}_{\mathcal U}: C_p\Sp^O\to \T\Sp^O$ of the geometric fixed points functor is not lax symmetric monoidal.

For this reason, we need to recall a few things about different models of the geometric fixed points functor $\Phi^{C_p}: C_p\Sp\to \Sp$. First, as a consequence of Proposition~\ref{prop:rightadjointgeometric}, we can upgrade the functoriality of the geometric fixed points functor $\Phi^{C_p}: C_p\Sp\to \Sp$.

\begin{construction}\label{cons:geomfixpointsbcpequivlaxsymm} There is a natural $BC_p$-equivariant symmetric monoidal functor
\[
\Phi^{C_p}_\infty: C_p\Sp\to \Sp\ .
\]
\end{construction}

\begin{proof} We define $\Phi^{C_p}_\infty$ as the composite of the $BC_p$-equivariant smashing localization $C_p\Sp\to C_p\Sp_{\geq C_p}$ and the $BC_p$-equivariant lax symmetric monoidal functor $-^{C_p}: C_p\Sp_{\geq C_p}\subseteq C_p\Sp\to \Sp$.
\end{proof}

We need to compare this with the models $\Phi^{C_p}$ and $\Phi^{C_p}_{\mathcal U}$ used above. Note that $\Phi^{C_p}$ is lax symmetric monoidal but not $BC_p$-equivariant, while $\Phi^{C_p}_{\mathcal U}: C_p\Sp^O\to \T\Sp^O$ is $BC_p$-equivariant, but not lax symmetric monoidal. Here, $\mathcal U$ denotes our fixed $\T$-universe, as usual. First, we define a related functor that is both $BC_p$-equivariant and lax symmetric monoidal.

\begin{construction}\label{cons:geomfixpointsbcpequivlaxsymmmodel} 
We construct a $BC_p$-equivariant lax symmetric monoidal functor
\[
\Phi^{C_p}_{\mathcal U,\lax}: C_p\Sp^O\to \T\Sp^O
\]
as follows. It takes a $C_p$-equivariant orthogonal spectrum $X$ to the orthogonal spectrum $\Phi^{C_p}_{\mathcal U,\lax}(X)$ whose $n$-th space is given by
\[
\Phi^{C_p}_{\mathcal U,\lax}(X)_n = \hocolim_{I\in \mathcal I,(V_i)\in \mathcal U^I|V_i^{C_p}=0} X(\mathbb R^n\oplus \bigoplus_{i\in I} V_i)^{C_p}\ ,
\]
where $\mathcal I$ denotes B\"okstedt's category of finite sets with injective maps. The structure maps are the evident maps, and there is a $\T$-action given by acting on the $V_i$'s diagonally. The functor is $BC_p$-equivariant, as the $C_p$-action on $X$ agrees with the $C_p\subseteq \T$-action through the $V_i$'s on the fixed points for the diagonal action. The lax symmetric monoidal structure is induced by the map
\[
(I,(V_i)\in \mathcal U^I),(J,(W_j)\in \mathcal U^J)\mapsto (I\sqcup J,(V_i,W_j)\in \mathcal U^{I\sqcup J})
\]
on index categories, and the natural maps
\[
X(\mathbb R^n\oplus\bigoplus_{i\in I} V_i)^{C_p}\wedge Y(\mathbb R^m\oplus\bigoplus_{j\in J} W_j)^{C_p}\to (X\wedge Y)(\mathbb R^{n+m}\oplus \bigoplus_{i\in I}\oplus \bigoplus_{j\in J} W_j)^{C_p}
\]
on spaces.
\end{construction}

\begin{construction} There is a natural $BC_p$-equivariant transformation from
\[
\Phi^{C_p}_{\mathcal U}: C_p\Sp^O\to \T\Sp^O
\]
to
\[
\Phi^{C_p}_{\mathcal U,\lax}: C_p\Sp^O\to \T\Sp^O\ .
\]
Indeed, this comes from the inclusion of the terms given by a fixed one-element set $I\in \mathcal I$.

Moreover, this natural transformation induces stable equivalences $\Phi^{C_p}_{\mathcal U}(X)\to \Phi^{C_p}_{\mathcal U,\lax}(X)$ of the underlying orthogonal spectra for all $X\in C_p\Sp^O$. Indeed, the homotopy colimit in the definition of $\Phi^{C_p}_{\mathcal U,\lax}(X)$ is over homotopy equivalences, so this follows from Lemma~\ref{lem:approximation}.
\end{construction}

\begin{construction} There is a natural lax symmetric monoidal transformation from
\[
\Phi^{C_p}: C_p\Sp^O\to \Sp^O\subseteq \T\Sp^O : (X_n)_n\mapsto (X(\rho_{C_p}\otimes \mathbb R^n)^{C_p})_n
\]
to
\[
\Phi^{C_p}_{\mathcal U,\lax}: C_p\Sp^O\to \T\Sp^O\ .
\]
Indeed, this comes from the inclusion of the term given by $I=\{1,\ldots,n\}\in \mathcal I$ and $V_1=\ldots=V_n=\rho_{C_p}/\mathbb R$ into the homotopy colimit.

Moreover, this natural transformation induces stable equivalences $\Phi^{C_p}(X)\to \Phi^{C_p}_{\mathcal U,\lax}(X)$ of the underlying orthogonal spectra for all $X\in C_p\Sp^O$.
\end{construction}

Finally, we can state the desired compatibility between the point-set constructions and the abstract definition.

\begin{proposition}\label{prop:geomfixpointscompatible} The diagram
\[\xymatrix{
C_p\Sp^O\ar[r]^{\Phi^{C_p}_{\mathcal U,\lax}}\ar[d] & \T\Sp^O\ar[dr] \\
C_p\Sp\ar[r]^{\Phi^{C_p}_\infty} & \Sp\ar[r] & \Sp^{B\T}
}\]
of $BC_p$-equivariant lax symmetric monoidal functors commutes.
\end{proposition}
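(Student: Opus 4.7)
The plan is to establish the commutativity by first checking agreement of the underlying functors (i.e.\ after forgetting the $BC_p$-equivariance and the lax symmetric monoidal structure), then invoking a uniqueness argument in the spirit of Theorem~\ref{thm:tatelaxsymm} and Corollary~\ref{cor:tatelaxsymmgroup} to promote this to an equivalence of $BC_p$-equivariant lax symmetric monoidal functors.

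\textbf{Step 1 (underlying functors agree).} First I would note that the constructions preceding the proposition supply a zig-zag of lax symmetric monoidal natural transformations
\[
\Phi^{C_p}\longrightarrow \Phi^{C_p}_{\mathcal U,\lax}\longleftarrow \Phi^{C_p}_{\mathcal U}
\]
of functors $C_p\Sp^O\to \T\Sp^O$ (where $\Phi^{C_p}$ is equipped with the trivial $\T$-action), both of whose components are stable equivalences of underlying orthogonal spectra; the left map comes from the inclusion of the singleton diagram and the right map is the stabilization along $\mathcal I$ justified by the approximation lemma (Lemma~\ref{lem:approximation}). Since the forgetful functor $\T\Sp^O\to \Sp^{B\T}$ detects equivalences on underlying spectra, these maps become equivalences in $\Sp^{B\T}$. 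In particular, the $\T$-action on $\Phi^{C_p}_{\mathcal U,\lax}(X)$ is homotopically trivial as an object of $\Sp^{B\T}$, and the underlying functor of the top composite agrees naturally with $X\mapsto \Phi^{C_p}(X)$ endowed with the trivial $\T$-action.

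\textbf{Step 2 (comparison with $\Phi^{C_p}_\infty$).} By the definition in Construction~\ref{cons:geomfixpointsbcpequivlaxsymm}, $\Phi^{C_p}_\infty$ is the composition of the smashing localization $C_p\Sp\to C_p\Sp_{\geq C_p}$ with the fixed-point functor $-^{C_p}\colon C_p\Sp_{\geq C_p}\to \Sp$, and Proposition~\ref{prop:rightadjointgeometric} identifies this with $\Phi^{C_p}$ on underlying objects. Combined with Step 1, this shows that both composites in the square induce the same functor $C_p\Sp\to \Sp^{B\T}$ after forgetting to underlying functors of $\infty$-categories.

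\textbf{Step 3 (coherence).} Finally I would upgrade this underlying equivalence to an equivalence of $BC_p$-equivariant lax symmetric monoidal functors. The strategy parallels the proof of Corollary~\ref{cor:tatelaxsymmgroup}: one considers the map from $BC_p$ into the $\infty$-category $\mathcal A$ whose objects are symmetric monoidal $\infty$-categories with two lax symmetric monoidal functors and a (not necessarily lax symmetric monoidal) natural transformation between them, and lifts it along the categorical fibration $\mathcal B\to \mathcal A$ where $\mathcal B$ requires the transformation to be lax symmetric monoidal. The fiber of $BC_p\times_{\mathcal A}\mathcal B\to BC_p$ is a categorical fibration over a Kan complex, hence a right fibration by \cite[Proposition~2.4.2.4]{HTT}, and contractibility of its fibers reduces (by the same manipulation as in Corollary~\ref{cor:tatelaxsymmgroup}) to uniqueness of lax symmetric monoidal transformations between lax symmetric monoidal exact functors on $\Sp$, which is \cite[Corollary~6.9]{Nik}. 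Passing to the homotopy limit over $BC_p$ in $\mathcal B$ then produces the desired $BC_p$-equivariant lax symmetric monoidal equivalence.

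\textbf{The main obstacle} is Step 3: the point-set construction $\Phi^{C_p}_{\mathcal U,\lax}$ carries a residual $\T$-action that is non-trivial on the nose (it rotates the $V_i$'s) and only becomes trivial after passage to $\Sp^{B\T}$, so one cannot simply write down a strict natural transformation. The substance of the argument is therefore to organize the uniqueness statement so that the higher coherences (among $BC_p$-equivariance, lax symmetric monoidality, and the trivialization of the $\T$-action) are produced automatically rather than constructed by hand; the abstract $\infty$-categorical machinery of Appendix~\ref{app:symmmon} together with the strong rigidity of lax symmetric monoidal exact endofunctors of $\Sp$ is what makes this possible.
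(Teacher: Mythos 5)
Your Step 3 has a genuine gap. The rigidity result you invoke, \cite[Corollary~6.9]{Nik}, says that the \emph{identity functor} of $\Sp$ (respectively $\Sigma^\infty_+$, $\Omega^\infty$) is initial among exact lax symmetric monoidal functors; it gives contractibility of the space of lax symmetric monoidal transformations \emph{out of that universal object}. It does not give uniqueness of lax symmetric monoidal transformations between two arbitrary exact lax symmetric monoidal functors, and here neither composite is the identity of $\Sp$ (the source is $C_p\Sp$, the target is $\Sp^{B\T}$). Likewise the contractibility in Theorem~\ref{thm:tatelaxsymm} comes from a Verdier-quotient universal property enjoyed by $-^{tG}$, and no analogous universal property has been established for $\Phi^{C_p}_\infty$ or for the factorization $F$ of $\Phi^{C_p}_{\mathcal U,\lax}$. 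So the claim that the fibers of $BC_p\times_{\mathcal A}\mathcal B\to BC_p$ are contractible is unsupported. Worse, to even write down the map $BC_p\to\mathcal A$ you need a coherently $BC_p$-equivariant natural transformation between the two underlying functors, and agreement "objectwise up to equivalence" (your Steps 1--2) does not produce one: a functor can admit many inequivalent lax symmetric monoidal refinements, and an underlying equivalence of functors need not be promotable to the structured setting.

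The missing idea is an explicit comparison map constructed at the point-set level, where all the coherences are strict. Restricting to $C_p$-$\Omega$-spectra, there is a $BC_p$-equivariant lax symmetric monoidal transformation $-^{C_p}\to\Phi^{C_p}_{\mathcal U,\lax}$ given by including $X(\mathbb R^n)^{C_p}=(X(\mathbb R^n)\wedge S^{\bigoplus_{i\in I}V_i})^{C_p}$ into $X(\mathbb R^n\oplus\bigoplus_{i\in I}V_i)^{C_p}$; this is visibly compatible with the $C_p$-action, the residual $\T$-action, and the lax monoidal structure maps because it is a strict construction. One then passes to the Dwyer--Kan localization using Appendix~\ref{app:symmmon} (which is also how the factorization $F\colon C_p\Sp\to\Sp^{B\T}$ of the top composite is produced in the first place), notes that $F$ factors over $C_p\Sp_{\geq C_p}$ since its underlying functor is the usual $\Phi^{C_p}$, and restricts the transformation to $C_p\Sp_{\geq C_p}$, where $-^{C_p}=\Phi^{C_p}_\infty$ by Proposition~\ref{prop:rightadjointgeometric} and where the transformation is an equivalence because it is one on underlying functors. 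Your Steps 1 and 2 supply that last underlying check, but without the point-set transformation the structured comparison cannot be carried out. A further small inaccuracy: in your zig-zag, the map $\Phi^{C_p}_{\mathcal U}\to\Phi^{C_p}_{\mathcal U,\lax}$ cannot be a lax symmetric monoidal transformation, since $\Phi^{C_p}_{\mathcal U}$ carries no lax symmetric monoidal structure (it is $BC_p$-equivariant but not lax monoidal, while $\Phi^{C_p}$ is lax monoidal but not $BC_p$-equivariant); this asymmetry is precisely why the intermediate functor $\Phi^{C_p}_{\mathcal U,\lax}$ is introduced.
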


We note that the proof is strictly speaking a construction.

\begin{proof} First, we note that by the results of Appendix~\ref{app:symmmon}, the composite
\[
C_p\Sp^O\xto{\Phi^{C_p}_{\mathcal U,\lax}} \T\Sp^O\to \Sp^{B\T}
\]
factors uniquely over a $BC_p$-equivariant lax symmetric monoidal functor\[
F: C_p\Sp\to \Sp^{B\T}\ .
\]
This functor has the property that it factors over the localization $C_p\Sp_{\geq C_p}$ (as the underlying functor is equivalent to the usual $\Phi^{C_p}$). Thus, it remains to compare the two $BC_p$-equivariant lax symmetric monoidal functors
\[
\Phi^{C_p}_\infty = -^{C_p}, F:C_p\Sp_{\geq C_p}\to \Sp^{B\T}\ .
\]
But there is a natural $BC_p$-equivariant lax symmetric monoidal transformation
\[
-^{C_p}\to \Phi^{C_p}_{\mathcal U,\lax}: C_p\Sp^O_\Omega\to \T\Sp^O
\]
by inclusion of $X(\mathbb R^n)^{C_p}=(X(\mathbb R^n)\wedge S^{\bigoplus_{i\in I} V_i})^{C_p}$ into $X(\mathbb R^n\oplus\bigoplus_{i\in I} V_i)^{C_p}$; here $C_p\Sp^O_\Omega\subseteq C_p\Sp^O$ denotes the $C_p$-$\Omega$-spectra. Passing to the associated $BC_p$-equivariant lax symmetric monoidal transformation $C_p\Sp\to \Sp^{B\T}$ and restricting to $C_p\Sp_{\geq C_p}$, we get the desired result.
\end{proof}

Now we get a number of related $BC_p$-equivariant lax symmetric monoidal functors.
\begin{altenumerate}
\item The functor
\[
B_{p,\lax}^\T: \Free_{C_p}\times_{\Fin} (\Sp^O)^\otimes_\act\to \T\Sp^O\ :\ (S,(X_{\overline{s}})_{\overline{s}\in \overline{S}})\mapsto \Phi^{C_p}_{\mathcal U,\lax} B((X_{\overline{s}})_{s\in S})\ .
\]
\item The functor
\[
B_{p,\infty}: \Free_{C_p}\times_{\Fin} (\Sp^O)^\otimes_\act\to \Sp\ :\ (S,(X_{\overline{s}})_{\overline{s}\in \overline{S}})\mapsto \Phi^{C_p}_\infty B((X_{\overline{s}})_{s\in S})\ .
\]
\item The functor
\[
B: \Free_{C_p}\times_{\Fin} (\Sp^O)^\otimes_\act\to \Sp^O\ : (S,(X_{\overline{s}})_{\overline{s}\in \overline{S}})\mapsto B((X_{\overline{s}})_{\overline{s}\in \overline{S}})\ .
\]
\item The functor
\[
I: N(\Free_{C_p})\times_{N(\Fin)} \Sp^\otimes_\act\to \Sp\ :\ (S,(X_{\overline{s}})_{\overline{s}\in \overline{S}})\mapsto \bigotimes_{\overline{s}\in \overline{S}} X_{\overline{s}}\ .
\]
\item The functor
\[
\tilde{T}_p: N(\Free_{C_p})\times_{N(\Fin)} \Sp^\otimes_\act\to \Sp\ :\ (S,(X_{\overline{s}})_{\overline{s}\in \overline{S}})\mapsto \Big(\bigotimes_{s\in S} X_{\overline{s}}\Big)^{tC_p}\ .
\]
\end{altenumerate}

Moreover, they are related by a number of $BC_p$-equivariant lax symmetric monoidal natural transformations.
\begin{altenumerate}
\item[{\rm (T1)}] A natural transformation $B_{p,\lax}^\T\to B$ (after composing $B$ with $\Sp^O\to \T\Sp^O$). This comes from the obvious adaptation of Construction~\ref{cons:BptoB}.
\item[{\rm (T2)}] A natural transformation $B_{p,\lax}^\T\to B_{p,\infty}$ (after composing $B_{p,\lax}^\T$ with $\T\Sp^O\to \Sp^{B\T}$ and $B_{p,\infty}$ with $\Sp\to \Sp^{B\T}$). This comes from Proposition~\ref{prop:geomfixpointscompatible}.
\item[{\rm (T3)}] A natural transformation $I\to B$ (after composing $B$ with $\Sp^O\to \Sp$). This comes from Theorem~\ref{thm:identboekstedt}.
\item[{\rm (T4)}] A natural transformation $I\to \tilde{T}_p$ from Corollary~\ref{cor:tildetptrafo}.
\item[{\rm (T5)}] A natural transformation $B_{p,\infty}\to \tilde{T}_p$. This comes from Theorem~\ref{thm:identboekstedt} and the natural $BC_p$-equivariant lax symmetric monoidal transformation
\[
\Phi^{C_p}_\infty\to -^{tC_p}: C_p\Sp\to \Sp
\]
which arises by writing $-^{tC_p}$ as the composite of the ($BC_p$-equivariant, lax symmetric monoidal) Borel completion functor $C_p\Sp\to C_p\Sp_B$ and $\Phi^{C_p}_\infty$. Here, we use that the Borel completion functor is a localization, cf.~Theorem~\ref{thm:borelcompletion}, and thus automatically $BC_p$-equivariant and lax symmetric monoidal.
\end{altenumerate}

The main theorem relating all these functors and natural transformations is now given by the following.

\begin{thm}\label{thm:maincomp} The compositions of $B_{p,\lax}^\T$, $B_{p,\infty}$, $B$, $I$ and $\tilde{T}_p$ with the respective ``forgetful'' $BC_p$-equivariant lax symmetric monoidal functor to $\Sp^{B\T}$ factor uniquely over $BC_p$-equivariant lax symmetric monoidal and partially exact functors
\[
N(\Free_{C_p})\times_{N(\Fin)} \Sp^\otimes_\act\to \Sp^{B\T}\ .
\]
Moreover, (the image of) $I$ is initial in the $\infty$-category of $BC_p$-equivariant lax symmetric monoidal and partially exact functors $N(\Free_{C_p})\times_{N(\Fin)} \Sp^\otimes_\act\to \Sp^{B\T}$.
\end{thm}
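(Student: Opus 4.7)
The strategy is to decouple the two assertions: first verify that each of the five functors factors (essentially uniquely) through a $BC_p$-equivariant lax symmetric monoidal and partially exact functor with target $\Sp^{B\T}$, and then prove that $I$ is initial among all such. The second step is a $BC_p$-equivariant refinement of Lemma \ref{lem:initialtech} with target upgraded from $\Sp$ to $\Sp^{B\T}$.

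For the factorization part, the $\infty$-categorical functors $I$ and $\tilde T_p$ are already defined on $N(\Free_{C_p})\times_{N(\Fin)} \Sp^\otimes_\act$, so the content is only partial exactness; fixing a $C_p$-torsor $S$ (so $|\bar S|=1$), the restrictions are $\id_\Sp$ and $T_p$ respectively, both exact (the latter by Proposition \ref{prop:tatediagonaladditive}), and postcomposition with the trivial-action inclusion $\Sp\to\Sp^{B\T}$ preserves exactness. For the point-set functors $B$, $B_{p,\lax}^\T$, $B_{p,\infty}$, by the Dwyer--Kan localization comparison of Theorem \ref{DKlocalizationsymmon} one needs only to check that they send stable equivalences in $\Free_{C_p}\times_{\Fin}(\Sp^O)^\otimes_\act$ to equivalences in $\Sp^{B\T}$, compatibly with the $BC_p$-equivariance and lax symmetric monoidal structure. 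For $B$ this is Shipley's theorem, and for $B_{p,\lax}^\T$, $B_{p,\infty}$ it follows by combining this with Theorem \ref{thm:boekstedtgeomfixpoints} and Proposition \ref{prop:geomfixpointscompatible}; partial exactness of the resulting factorizations is then transparent since the underlying functor $\Sp\to\Sp$ is in each case either the identity (for $I$, $B$, $B_{p,\lax}^\T$, $B_{p,\infty}$, via Theorem \ref{thm:boekstedtgeomfixpoints}) or the exact functor $T_p$ (for $\tilde T_p$).

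For the initiality assertion, I will follow the proof of Lemma \ref{lem:initialtech} verbatim, with $\Sp$ replaced by $\Sp^{B\T}$ in the target and with $BC_p$-equivariance added throughout. The key step is the chain of equivalences obtained from \cite[Theorem 2.4.3.18]{HA} together with the universal property of the symmetric monoidal envelope, expressing the $\infty$-category of $BC_p$-equivariant symmetric monoidal partially exact functors $N(\Free_{C_p})\times_{N(\Fin)} \Sp^\otimes_\act\to \Sp^{B\T}$ as $BC_p$-equivariant functors from $N(\Tor_{C_p})$ into $\Fun^\Ex_\lax(\Sp,\Sp^{B\T})$. Upgrading to lax symmetric monoidal functors on the source via Lemma \ref{lem:laxsymmrightadj} adds no further data. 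The problem thus reduces to proving that the trivial-action inclusion $i:\Sp\to\Sp^{B\T}$ is initial in $\Fun^\Ex_\lax(\Sp,\Sp^{B\T})$; this is a straightforward generalization of \cite[Corollary 6.9(1)]{Nik}, using the analog of Proposition \ref{prop:spaceoftrafos} which identifies $\Map(i,F)$ with $\Omega^\infty F(\mathbb S)^{h\T}$, and the $\E_\infty$-unit $\mathbb S\to F(\mathbb S)$ in $\Sp^{B\T}$ providing the canonical lax symmetric monoidal transformation.

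The main obstacle is the last step: extending the universal property of $\id_\Sp$ from \cite{Nik} to functors with target $\Sp^{B\T}$ in a way that is genuinely compatible with the lax symmetric monoidal structure and the $BC_p$-equivariance of the indexing. The underlying argument there relies on presentability, stability, and Yoneda, all of which hold for $\Sp^{B\T}$, so the extension of the underlying statement should be largely formal; the delicate point is to verify that the lax symmetric monoidal refinement of the component picked out by the $\E_\infty$-unit is essentially unique, which I expect to follow by applying \cite[Corollary 6.9]{Nik} internally to the enriched endomorphism $\infty$-category of $i$. Granting this, the trivial-action $i$ assembles into the initial $BC_p$-equivariant object of $\Fun(N(\Tor_{C_p}), \Fun^\Ex_\lax(\Sp,\Sp^{B\T}))$, which under the equivalence above is precisely $I$, and thus all five functors receive an essentially unique lax symmetric monoidal $BC_p$-equivariant transformation from $I$, completing the proof.
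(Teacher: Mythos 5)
Your proof is correct and follows essentially the same (very terse) argument as the paper: the factorization is obtained from the Dwyer--Kan localization results of Appendix~\ref{app:symmmon} combined with Shipley's theorem, Theorem~\ref{thm:boekstedtgeomfixpoints} and Proposition~\ref{prop:geomfixpointscompatible}, and the initiality is obtained by running the argument of Lemma~\ref{lem:initialtech} and reducing to \cite[Corollary 6.9(1)]{Nik}, whose general form (unit functor $\Sp\to\calC$ initial in $\Fun^\Ex_\lax(\Sp,\calC)$ for $\calC$ presentably symmetric monoidal and stable) covers the target $\Sp^{B\T}$, so the step you flag as delicate is already available. The only cosmetic difference is that you re-derive the $\Sp^{B\T}$-valued, $BC_p$-equivariant version of the lemma, whereas the paper simply cites the $\Sp$-valued Lemma~\ref{lem:initialtech} and leaves implicit that initial objects are preserved under passage to $\Fun(B\T,-)$ and to homotopy $BC_p$-fixed points.
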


\begin{proof} The factorization follows from the results of Appendix~\ref{app:symmmon}. The initiality of $I$ follows from Lemma~\ref{lem:initialtech}.
\end{proof}

Let us quickly explain how this leads to the desired comparison. We want to show that the natural diagram
\[\xymatrix{
\THH(R) & \Phi^{C_p}_{\mathcal U} \THH(R)\ar[d]\ar[l]_{\Phi_p}\\
\THH(A)\ar[u]_\simeq\ar[r]^{\varphi_p} & \THH(A)^{tC_p}
}\]
commutes. Here, as explained above, all objects arise from certain $BC_p$-equivariant functors $N(\Lambda_p^\op)\to \Sp^{B\T}$ by the mechanism of Proposition~\ref{prop:geomrealtaction} (except $\THH(A)^{tC_p}$, which however admits a natural map from such an object, over which everything else factors). These $BC_p$-equivariant functors $N(\Lambda_p^\op)\to \Sp^{B\T}$ are obtained by composing the $BC_p$-equivariant functor
\[
N(\Lambda_p^\op)\to N(\Free_{C_p})\times_{N(\Fin)} N(\Ass^\otimes_\act)\xto{A^\otimes} N(\Free_{C_p})\times_{N(\Fin)} \Sp^\otimes_\act
\]
with $B$ for $\THH(R)$, with $B_p^\T$ for $\Phi^{C_p}_{\mathcal U} \THH(R)$, with $I$ for $\THH(A)$, and with $\tilde{T}_p$ for (the variant of) $\THH(A)^{tC_p}$. In this translation, the upper horizontal map comes from the map $B_p^\T\to B$ which is the composite of $B_p^\T\to B_{p,\lax}^\T$ and $B_{p,\lax}^\T\to B$ from (T1) above. The left vertical map comes from the map $I\to B$ in (T3) above. The right vertical map comes from the composition of $B_p^\T\to B_{p,\lax}^\T$, $B_{p,\lax}^\T\to B_{p,\infty}$ in (T2) and $B_{p,\infty}\to \tilde{T}_p$ in (T5) above. Finally, the lower horizontal map comes from the map $I\to \tilde{T}_p$ in (T4) above. By Theorem \ref{thm:maincomp}, the resulting diagram
\[\xymatrix{
 & B_{p,\lax}^\T\ar[dl]_{(T1)}\ar[dr]^{(T2)}\\
B & B_p^\T\ar[l]\ar[u] & B_{p,\infty}\ar[d]^{(T5)}\\
I\ar[u]^{(T3)}_\simeq\ar[rr]^{(T4)} & & \tilde{T}_p
}\]
of $BC_p$-equivariant functors commutes. In fact, the outer part of the diagram commutes as $BC_p$-equivariant lax symmetric monoidal functors as $I$ is initial by Theorem~\ref{thm:maincomp}, and the small upper triangle commutes $BC_p$-equivariantly by construction (on the point-set model). Thus we have finally proven the following comparison.

\begin{corollary}
Let $R$ be an orthogonal ring spectrum which is levelwise well-pointed and such that the unit $S^0 \to R_0$ is an h-cofibration. Then under the equivalence of Theorem \ref{thmcomparison} the two constructions of cyclotomic structure maps $\THH(A) \to \THH(A)^{tC_p}$, where $A$ is the object of $\Alg_{\E_1}(\Sp)$ associated with the 1-categorical object $R \in \Alg(\Sp^O)$, are equivalent, functorially in $R$.$\hfill \Box$
\end{corollary}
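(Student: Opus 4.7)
My plan is to show the square
\[
\xymatrix{
\THH(R) & \Phi^{C_p}_{\mathcal U} \THH(R)\ar[d]\ar[l]_{\Phi_p}\\
\THH(A)\ar[u]_\simeq\ar[r]^{\varphi_p} & \THH(A)^{tC_p}
}
\]
commutes in $\Sp^{B\T}$ by realizing it as the image, under the geometric realization machinery of Proposition \ref{prop:geomrealtaction}, of a commutative diagram of $BC_p$-equivariant natural transformations between lax symmetric monoidal, partially exact functors on $N(\Free_{C_p})\times_{N(\Fin)}\Sp^\otimes_\act$. Concretely, both composites in the square arise by precomposing one of the five functors $B, B_p^\T, B_{p,\lax}^\T, B_{p,\infty}, I, \tilde T_p$ with the fixed $BC_p$-equivariant functor $N(\Lambda_p^\op)\to N(\Free_{C_p})\times_{N(\Fin)}\Sp^\otimes_\act$ coming from $A^\otimes$, and the maps in the square arise from the natural transformations (T1)--(T5) catalogued just before the corollary.

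The key step is then to apply Theorem \ref{thm:maincomp}: after postcomposition with the forgetful functors to $\Sp^{B\T}$, each of the five functors listed above is $BC_p$-equivariant, lax symmetric monoidal, and partially exact, and moreover $I$ is initial in this $\infty$-category. It follows that the space of $BC_p$-equivariant lax symmetric monoidal natural transformations $I\to \tilde T_p$ is contractible, so any two such transformations agree. Hence it suffices to see that the diagram
\[
\xymatrix{
 & B_{p,\lax}^\T\ar[dl]_{(T1)}\ar[dr]^{(T2)}\\
B & B_p^\T\ar[l]\ar[u] & B_{p,\infty}\ar[d]^{(T5)}\\
I\ar[u]^{(T3)}_\simeq\ar[rr]^{(T4)} & & \tilde{T}_p
}
\]
commutes as a diagram of $BC_p$-equivariant lax symmetric monoidal functors (after the forgetful functor to $\Sp^{B\T}$). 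The outer frame commutes by the initiality of $I$, so the only genuine input is the commutativity of the small upper triangle $B\leftarrow B_{p,\lax}^\T\to B_{p,\infty}$ combined with $B_p^\T\to B_{p,\lax}^\T$; this holds tautologically at the point-set level because all three maps involving $B_{p,\lax}^\T$ are defined by inclusions of subdiagrams in the Bökstedt-type homotopy colimit $\hocolim_{I,(V_i)} X(\mathbb R^n\oplus\bigoplus V_i)^{C_p}$.

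Finally, the $BC_p$-equivariant geometric realization of Proposition \ref{prop:geomrealtaction} (combined with the properness of the relevant cyclic objects via Lemma \ref{lemproper} and Corollary \ref{prop:geomrealcompspectra}, and Proposition \ref{prop:commutetaterealization} to commute $-^{tC_p}$ past the realization) translates this commutative diagram of functors into the required commutative square in $\Sp^{B\T}$, and naturality in $R$ is automatic since every construction involved is functorial in the algebra input. The main obstacle, and where the real content sits, is the initiality of $I$ established in Theorem \ref{thm:maincomp} via Lemma \ref{lem:initialtech}; once that is in hand, the comparison reduces to book-keeping around the point-set construction of the natural transformations (T1)--(T5) and the observation that on the point-set level the map $B_p^\T\to B$ was \emph{defined} as the composite $B_p^\T\to B_{p,\lax}^\T\to B$.
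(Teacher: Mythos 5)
Your proposal is correct and follows essentially the same route as the paper: translating the square into the diagram of functors $B$, $B_p^\T$, $B_{p,\lax}^\T$, $B_{p,\infty}$, $I$, $\tilde T_p$, invoking the initiality of $I$ from Theorem \ref{thm:maincomp} for the outer frame, checking the upper triangle by hand on the point-set model, and then applying the realization machinery of Proposition \ref{prop:geomrealtaction}. No gaps to report.
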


\chapter{Examples}
In this final chapter, we discuss some examples from the point of view of this paper, as well as some complements.

We start in Section~\ref{sec:tatefrob} by defining a certain Frobenius-type map $R\to R^{tC_p}$ defined on any $\E_\infty$-ring spectrum $R$, which in the case of classical rings recovers the usual Frobenius on $\pi_0$, and the Steenrod operations on higher homotopy groups. This relies on the symmetric monoidal properties of the Tate diagonal. In Section~\ref{sec:commutative} we give a more direct construction and characterization of the cyclotomic structure on $\THH(A)$ for an $\E_\infty$-ring $A$, in terms of this $\E_\infty$-Frobenius map. In Section~\ref{sec:loopspaces}, we discuss the case of loop spaces, and finally in Section~\ref{sec:charp}, we discuss rings of characteristic $p$. In particular, we give a simple and complete formula for the $\E_\infty$-algebra in cyclotomic spectra $\THH(H\bF_p)$.

\section{The Tate valued Frobenius}\label{sec:tatefrob}

In this section we define a variant of the Frobenius homomorphism for an $\E_\infty$-ring spectrum $R$. In the next section, we explain the relation to the Frobenius on $\THH$.

\begin{definition}\label{deffrobenius} Let $R$ be an $\E_\infty$-ring spectrum and $p$ a prime. The Tate valued Frobenius of $R$ is the $\E_\infty$-ring map defined as the composition
\[
\varphi_R: R \xto{\Delta_p} (R \otimes \ldots \otimes R)^{tC_p} \xto{m^{tC_p}} R^{tC_p}
\]
where $m: R \otimes\ldots \otimes R \to R$ is the multiplication map, which is a $C_p$-equivariant map of $\E_\infty$-ring spectra when $R$ is equipped with the trivial action.
\end{definition}

\begin{example}
\begin{altenumerate}
\item Assume that $R=HA$ for an ordinary commutative ring $A$. In that case, the Frobenius homomorphism is a map $HA\to HA^{tC_p}$. In degree $0$, this recovers the usual Frobenius homomorphism
\[
A\to A/p=\widehat{H}^0(C_p,A)=\pi_0 HA^{tC_p}: a \mapsto a^p\ .
\]
We will see below that on higher homotopy groups, the Frobenius recovers the Steenrod operations.
\item If $R$ is a $p$-complete spectrum that is compact in the category of $p$-complete spectra, then as a consequence of the Segal conjecture the canonical map $R \to R^{hC_p} \to  R^{tC_p}$ induced by pullback along $BC_p \to \pt$ and the canonical map is an equivalence. Thus, in this case, the Tate valued Frobenius can  be considered as an $\E_\infty$-endomorphism of $R$. 

For example if $R=\bS_p^\wedge$ is the $p$-complete sphere spectrum, the Frobenius endomorphism is given by the identity map.
To make this more interesting, consider for every $n \geq 1$ the unique $\E_\infty$-ring spectrum $\bS_{W(\bF_{p^n})}$ with the following properties: it is $p$-complete, the map $\bS^\wedge_p \to \bS_{W(\bF_{p^n})}$ is \'etale (in the sense of \cite[Definition 7.5.0.4]{HA}) and it induces on $\pi_0$ the unique ring homomorphism $\Z_p=W(\bF_p) \to W(\bF_{p^n})$. We get a Frobenius endomorphism
\[
\bS_{W(\bF_{p^n})} \to \bS_{W(\bF_{p^n})} \ .
\]
We claim that it is  the unique $\E_\infty$-map that induces on $\pi_0$ the Frobenius of the Witt vectors $W(\bF_{p^n})$. To see this we compare it by naturality of the Tate valued Frobenius to its $0$-th Postnikov section. This is the Eilenberg MacLane spectrum $HW(\bF_{p^n})$ whose Frobenius is described in part (1).
\end{altenumerate}
\end{example}

Our goal is to describe the effect of the Tate valued Frobenius in terms of power operations. For this, we analyze some further properties of the Tate diagonal.

\begin{lemma}\label{lemfact} There is a unique lax symmetric monoidal factorization of the Tate diagonal
\[\xymatrix{
&& \left((X \otimes\ldots \otimes X)^{tC_p}\right)^{h \Fp^\times} \ar[d] \\
X \ar[rru] \ar[rr]^{\Delta_p} && (X \otimes\ldots \otimes X)^{tC_p} 
}\]
where $\bF_p^\times \cong \Aut(C_p) \cong C_{p-1}$ acts on $C_p$ (and is equal to the Weyl group of $C_p$ inside of $\Sigma_p$), and the vertical map is the inclusion of (homotopy) fixed points. For a suspension spectrum $X = \Sigma^\infty_+ Y$ there is a unique lax symmetric monoidal factorization
\[\xymatrix{
&& (\Sigma^\infty_+ Y \otimes\ldots \otimes \Sigma^\infty_+ Y)^{h\Sigma_p} \ar[d] \\
\Sigma^\infty_+ Y \ar[rru]^-{\Sigma^\infty_+ \Delta} \ar[rr]^{\Delta_p} && (\Sigma^\infty_+ Y \otimes\ldots \otimes \Sigma^\infty_+ Y)^{tC_p} 
}\]
\end{lemma}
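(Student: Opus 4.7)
The plan is to obtain both factorizations by uniqueness arguments in the spirit of \cite[Cor.~6.9(1)]{Nik}, combined with lifting the target functor to an equivariant setting.

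For the first factorization, I would first observe that the Weyl group $N_{\Sigma_p}(C_p)/C_p \cong \bF_p^\times$ acts naturally on $(X^{\otimes p})^{tC_p}$ via the permutation action of $\Sigma_p$ on $X^{\otimes p}$, refining $T_p$ to a lax symmetric monoidal functor $\tilde T_p \colon \Sp \to \Sp^{B\bF_p^\times}$. Composing with the lax symmetric monoidal right adjoint $(-)^{h\bF_p^\times} \colon \Sp^{B\bF_p^\times} \to \Sp$ produces an exact lax symmetric monoidal endofunctor $T_p^{h\bF_p^\times}$ of $\Sp$; here exactness uses Proposition~\ref{prop:tatediagonaladditive} together with the fact that homotopy fixed points under a finite group, being a right adjoint between stable $\infty$-categories, preserve fiber sequences. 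Then \cite[Cor.~6.9(1)]{Nik} gives that the space of lax symmetric monoidal natural transformations $\mathrm{id}_\Sp \to T_p^{h\bF_p^\times}$ is contractible, furnishing the unique factorization. Composing with the canonical map $T_p^{h\bF_p^\times} \to T_p$ yields a lax symmetric monoidal transformation $\mathrm{id}_\Sp \to T_p$ which, by Proposition~\ref{prop:tatediaglaxsymm}, must coincide with $\Delta_p$.

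For the second factorization, existence will be constructive: the space-level diagonal $\Delta \colon Y \to Y \times \cdots \times Y$ is $\Sigma_p$-equivariant with trivial action on the source, so applying the symmetric monoidal functor $\Sigma^\infty_+$ gives a $\Sigma_p$-equivariant map $\Sigma^\infty_+ Y \to (\Sigma^\infty_+ Y)^{\otimes p}$, and adjunction to $(-)^{h\Sigma_p}$ produces the desired lax symmetric monoidal factorization. Composing it with the canonical map $((-)^{\otimes p})^{h\Sigma_p} \to ((-)^{\otimes p})^{tC_p}$ (restriction along $C_p \subseteq \Sigma_p$ followed by projection to the Tate) recovers $\Delta_p$ on suspension spectra, as required.

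The hard part will be uniqueness in Part 2: the endofunctor $X \mapsto (X^{\otimes p})^{h\Sigma_p}$ of $\Sp$ is \emph{not} exact (the intermediate filtration steps in $(X_0 \oplus X_1)^{\otimes p}$ are $\Sigma_p$-induced only from \emph{proper} subgroups and do not vanish upon taking $h\Sigma_p$), so \cite[Cor.~6.9(1)]{Nik} cannot be applied directly. I would instead exploit that $\Sigma^\infty_+ \colon \calS \to \Sp$ is symmetric monoidal (not merely lax) and that $(\calS,\times)$ is the free presentable symmetric monoidal $\infty$-category on a point under colimits; combined with the unit-preservation constraint for lax symmetric monoidal transformations at the terminal object $\ast \in \calS$ and compatibility with products, this should rigidify the factorization. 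An alternative route is to leverage Part 1 by showing that, on suspension spectra, the canonical lax symmetric monoidal map $((\Sigma^\infty_+ Y)^{\otimes p})^{h\Sigma_p} \to \bigl(((\Sigma^\infty_+ Y)^{\otimes p})^{tC_p}\bigr)^{h\bF_p^\times}$ is injective enough (e.g.\ after a suitable completion) to pull back the uniqueness statement from the target.
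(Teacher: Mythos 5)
Your Part 1 is correct and is essentially the paper's argument: refine $T_p$ to a lax symmetric monoidal functor landing in $\Sp^{B\Fp^\times}$, compose with the (exact, lax symmetric monoidal) homotopy fixed points, observe the composite is still exact, and apply the corepresentability/initiality result behind Proposition~\ref{prop:tatediaglaxsymm} to get a unique lax symmetric monoidal transformation from the identity; composing with $T_p^{h\Fp^\times}\to T_p$ then recovers $\Delta_p$ by the uniqueness in that proposition. Your existence construction for Part 2 via the space-level diagonal is also the intended one, and your observation that $X\mapsto (X^{\otimes p})^{h\Sigma_p}$ is \emph{not} exact on $\Sp$ (the cross terms are induced only from proper subgroups of $\Sigma_p$) correctly rules out a direct appeal to \cite[Corollary 6.9(1)]{Nik}.

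The gap is that you never actually close the uniqueness argument in Part 2: both of your proposed routes end in ``should.'' The resolution is exactly the idea you are circling in route (a), but stated as a theorem rather than a heuristic: change the source category. Both the claimed factorization and $\Delta_p|_{\Sigma^\infty_+}$ are lax symmetric monoidal natural transformations between functors $\calS\to\Sp$ whose \emph{source} is $\Sigma^\infty_+$, with targets $Y\mapsto ((\Sigma^\infty_+Y)^{\otimes p})^{h\Sigma_p}$ and $Y\mapsto ((\Sigma^\infty_+Y)^{\otimes p})^{tC_p}$ respectively (both lax symmetric monoidal, no exactness required). By \cite[Corollary 6.9(4)]{Nik} — the companion to the statement you already cite — $\Sigma^\infty_+$ is \emph{initial} among lax symmetric monoidal functors $\calS\to\Sp$, so the space of lax symmetric monoidal transformations out of it to any such target is contractible. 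This gives uniqueness of the factorization and simultaneously identifies the composite with $\Delta_p$, with no injectivity or completion argument needed; your route (b) is unnecessary and would be considerably harder to carry out.
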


\begin{proof} For the first assertion we construct the map 
$$
X \to \left((X \otimes\ldots \otimes X)^{tC_p}\right)^{h \Fp^\times}
$$ 
as in Proposition~\ref{prop:tatediaglaxsymm}. To this end we only need to observe that the functor 
$$
\Sp \to \Sp : \qquad X \mapsto \left((X \otimes\ldots\otimes X)^{tC_p}\right)^{h\Fp^\times}
$$
is exact. This follows from the fact that the functor $X \mapsto (X \otimes\ldots \otimes X)^{tC_p}$ is exact and that taking homotopy fixed points is exact as well. The commutativity of the diagram follows from the uniqueness part of Proposition~\ref{prop:tatediaglaxsymm}.

For the second assertion we first note that it is clear that there is a map $\Sigma^\infty_+ Y \to (\Sigma^\infty_+ Y \otimes\ldots \otimes \Sigma^\infty_+ Y)^{h\Sigma_p} $ induced from the space level diagonal. Thus we have to compare two lax symmetric monoidal transformations between functors $\calS \to \Sp$, and the result follows from \cite[Corollary 6.9 (4)]{Nik}, which says that the suspension spectrum functor $\Sigma^\infty_+$ is initial among all lax symmetric monoidal functors $\calS \to \Sp$.
\end{proof}

\begin{corollary}\label{refinement} The Tate valued Frobenius factors as an $\E_\infty$-map as a composition $R\to (R^{tC_p})^{h\Fp^\times} \to R^{tC_p}$.$\hfill \Box$
\end{corollary}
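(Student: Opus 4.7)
The plan is to combine the refined Tate diagonal from Lemma \ref{lemfact} with the multiplication on $R$, observing that all constructions involved are lax symmetric monoidal (hence pass to $\E_\infty$-algebras) and that the multiplication map is $\Sigma_p$-equivariant by virtue of the $\E_\infty$-structure.

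First I would note that since $R$ is an $\E_\infty$-ring spectrum, the iterated multiplication $m\colon R\otimes \ldots \otimes R \to R$ (with $p$ factors) is not merely $C_p$-equivariant but in fact $\Sigma_p$-equivariant, where $R$ is equipped with the trivial $\Sigma_p$-action. In particular, $m$ is equivariant for the subgroup $\bF_p^\times \subseteq \Sigma_p$ realized as the Weyl group of $C_p \subseteq \Sigma_p$, with the standard conjugation action on $C_p$. Applying the lax symmetric monoidal functor $-^{tC_p}$ (using Corollary \ref{cor:tatelaxsymmgroup}, which produces the $\bF_p^\times$-action on the target of the Tate construction coming from the Weyl action on $C_p$), the map $m$ induces an $\bF_p^\times$-equivariant $\E_\infty$-map
\[
m^{tC_p}\colon (R\otimes \ldots \otimes R)^{tC_p} \longrightarrow R^{tC_p},
\]
where the target carries the trivial $\bF_p^\times$-action. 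Taking $h\bF_p^\times$-fixed points (a lax symmetric monoidal operation) produces an $\E_\infty$-map
\[
\bigl((R\otimes \ldots \otimes R)^{tC_p}\bigr)^{h\bF_p^\times} \longrightarrow (R^{tC_p})^{h\bF_p^\times}.
\]

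Next I would precompose with the refinement of the Tate diagonal provided by Lemma \ref{lemfact}. That lemma supplies a lax symmetric monoidal transformation
\[
R \longrightarrow \bigl((R\otimes \ldots \otimes R)^{tC_p}\bigr)^{h\bF_p^\times},
\]
which is in particular an $\E_\infty$-map, and whose composition with the inclusion into $(R\otimes\ldots\otimes R)^{tC_p}$ is the Tate diagonal $\Delta_p$. Composing these two $\E_\infty$-maps yields an $\E_\infty$-map $R \to (R^{tC_p})^{h\bF_p^\times}$, and composing with the tautological map $(R^{tC_p})^{h\bF_p^\times} \to R^{tC_p}$ gives the desired factorization of $\varphi_R$ through $(R^{tC_p})^{h\bF_p^\times}$.

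Finally, to check that this really factors the Tate valued Frobenius, one uses the naturality (in the target) of the refinement in Lemma \ref{lemfact}: applying $m^{tC_p}$ to the commutative triangle
\[\xymatrix{
&& \bigl((R \otimes\ldots \otimes R)^{tC_p}\bigr)^{h \bF_p^\times} \ar[d] \\
R \ar[rru] \ar[rr]^-{\Delta_p} && (R \otimes\ldots \otimes R)^{tC_p}
}\]
produces the commutative diagram
\[\xymatrix{
R\ar[r]\ar[rd]_-{\varphi_R} & (R^{tC_p})^{h\bF_p^\times}\ar[d]\\
& R^{tC_p},
}\]
which is the claim. There is no substantial obstacle here: the work has essentially been done in Lemma \ref{lemfact} and in the multiplicative properties of the Tate construction (Theorem \ref{thm:tatelaxsymm} and Corollary \ref{cor:tatelaxsymmgroup}); the only point requiring care is ensuring $\Sigma_p$-equivariance of $m$ and correctly identifying $\bF_p^\times$ with the Weyl group of $C_p$ in $\Sigma_p$.
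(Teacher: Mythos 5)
Your argument is correct and is exactly what the paper intends: the corollary is stated with a $\Box$ because it is regarded as immediate from Lemma \ref{lemfact} together with the $\bF_p^\times$-equivariance of $m^{tC_p}$, which is precisely the chain you spell out. One small slip: the target $R^{tC_p}$ does \emph{not} carry the trivial $\bF_p^\times$-action — it carries the residual action of $\bF_p^\times=\Aut(C_p)$ on the Tate construction (the paper computes this explicitly for $H\bF_p$, where $\bF_p^\times$ acts on $t^i$ by the $i$-th power character), and it is this nontrivial action for which $(R^{tC_p})^{h\bF_p^\times}$ is taken; the map $m^{tC_p}$ is equivariant for the residual actions on both sides, so the rest of your argument goes through unchanged.
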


Note that the order $p-1$ of $\Fp^\times$ is invertible in $R^{tC_p}$. Therefore the homotopy fixed points are purely algebraic, i.e.~the homotopy groups of the fixed points are the fixed points of the homotopy groups. We will also refer to this refinement as the Tate valued Frobenius and we hope that it will be clear which one we mean.

\begin{corollary}\label{groupring} Let $M$ be an $\E_\infty$-monoid in $\calS$, so that the suspension spectrum $R =\bS[M]$ is an $\E_\infty$-ring spectrum. The Frobenius of $R$ admits a canonical factorization
\[
R \to (R^{hC_p})^{h\Fp^\times} \to R^{tC_p}\ .
\]
\end{corollary}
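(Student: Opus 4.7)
The plan is to use both factorizations in Lemma \ref{lemfact} to obtain the desired refinement, exploiting that for a suspension spectrum we have an honest space-level diagonal that is $\Sigma_p$-equivariant.

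First I would construct the first map $R \to (R^{hC_p})^{h\Fp^\times}$ as follows. Since $R = \bS[M] = \Sigma^\infty_+ M$ is a suspension spectrum, by the second part of Lemma \ref{lemfact} we have a lax symmetric monoidal map $\Sigma^\infty_+ \Delta: R \to (R^{\otimes p})^{h\Sigma_p}$ coming from the space-level diagonal of $M$. Since $M$ is an $\E_\infty$-monoid, the multiplication $m: R^{\otimes p} \to R$ is $\Sigma_p$-equivariant, with trivial action on the target, and thus induces $m^{h\Sigma_p}: (R^{\otimes p})^{h\Sigma_p} \to R^{h\Sigma_p}$. Now let $N = N_{\Sigma_p}(C_p)$ denote the normalizer, which is an extension of $\Fp^\times$ by $C_p$; the restriction along $N \subseteq \Sigma_p$ yields a map $R^{h\Sigma_p} \to R^{hN} \simeq (R^{hC_p})^{h\Fp^\times}$. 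The composite defines $R \to (R^{hC_p})^{h\Fp^\times}$.

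Next I would define the second map $(R^{hC_p})^{h\Fp^\times} \to R^{tC_p}$ as the composite of $(R^{hC_p})^{h\Fp^\times} \to (R^{tC_p})^{h\Fp^\times}$ induced by the canonical map $R^{hC_p} \to R^{tC_p}$, followed by the forgetful map $(R^{tC_p})^{h\Fp^\times} \to R^{tC_p}$.

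The main thing to verify is that this composite agrees with the Tate-valued Frobenius of Definition \ref{deffrobenius}, i.e.~with the factored version from Corollary \ref{refinement}. For this I would invoke the compatibility between the two factorizations in Lemma \ref{lemfact}: the canonical map $(R^{\otimes p})^{h\Sigma_p} \to (R^{\otimes p})^{tC_p}$ factors through the restriction $(R^{\otimes p})^{hN} \simeq ((R^{\otimes p})^{hC_p})^{h\Fp^\times} \to ((R^{\otimes p})^{tC_p})^{h\Fp^\times}$, and both refinements are characterized uniquely as lax symmetric monoidal natural transformations by the uniqueness clauses of Lemma \ref{lemfact} (via \cite[Corollary 6.9]{Nik}). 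Together with the $\Sigma_p$-equivariance of $m$, this identifies the composite above with the $\Fp^\times$-refined Tate-valued Frobenius of Corollary \ref{refinement}.

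The hard part will be ensuring that the various $\Sigma_p$-equivariant structures assemble compatibly as lax symmetric monoidal transformations of functors in $M$, so that the uniqueness clause in Lemma \ref{lemfact} actually applies; once this is set up, the identification of the composite with the Tate-valued Frobenius is formal.
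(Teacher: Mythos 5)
Your proposal is correct and follows essentially the same route as the paper, whose proof is simply an invocation of Lemma \ref{lemfact}: the space-level diagonal gives the factorization through $(R^{\otimes p})^{h\Sigma_p}$, one composes with the $\Sigma_p$-equivariant multiplication and restricts along $C_p\rtimes\Fp^\times\subseteq\Sigma_p$, and the uniqueness clauses of Lemma \ref{lemfact} identify the composite with the Tate-valued Frobenius. The paper's subsequent remark records exactly your intermediate observation that the map in fact factors through $R^{h\Sigma_p}$.
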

\begin{proof}
Lemma \ref{lemfact}. 
\end{proof}
\begin{remark} In fact Lemma \ref{lemfact} shows that the map factors not only through 
$$(R^{hC_p})^{h\Fp^\times} \simeq R^{h(C_p \rtimes \Fp^\times)}$$
but through $R^{h\Sigma_p}$. This is not much more general, since the canonical map $R^{h\Sigma_p} \to R^{h(C_p \rtimes \Fp^\times)}$ is a $p$-local equivalence. This relies on a transfer argument using the fact $C_p \subseteq \Sigma_p$ is a $p$-Sylow subgroup, $\Fp^\times$ is its Weyl group (equivalently $C_p \rtimes \Fp^\times$ is the normaliser of $C_p$ in $\Sigma_p$) and that $C_p$ intersects any nontrivial conjugate trivially.
\end{remark}

\begin{lemma}\label{lem:liftingTate} The lax symmetric monoidal transformation
\[
\Omega^\infty(\Delta_p): \Omega^\infty \to \Omega^\infty T_p^{h\Fp^\times}\]
of underlying spaces factors naturally as
\[
\Omega^\infty X \xto{\Delta_{\Omega^\infty X}} ((\Omega^\infty X)^{\times p})^{h\Sigma_p}\to \Omega^\infty(X^{\otimes p})^{h\Sigma_p} \to \left(\Omega^\infty(X^{\otimes p})^{hC_p}\right)^{h\Fp^\times} \xto{\can}  \Omega^\infty T_p(X)^{h\Fp^\times}\ .
\]
\end{lemma}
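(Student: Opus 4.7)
The plan is to apply the second part of Lemma~\ref{lemfact} to the suspension spectrum $\Sigma^\infty_+ \Omega^\infty X$, then use naturality of the refined Tate diagonal with respect to the counit $\epsilon_X : \Sigma^\infty_+ \Omega^\infty X \to X$, and finally translate the resulting factorization of spectrum maps back to a factorization of space maps via the adjunction $\Sigma^\infty_+ \dashv \Omega^\infty$.

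First I would observe that under the adjunction $\Sigma^\infty_+ \dashv \Omega^\infty$, a natural transformation $\Omega^\infty \to \Omega^\infty Z$ into a representable functor is the same datum as a natural transformation $\Sigma^\infty_+ \Omega^\infty \to Z$. Applied to $Z = T_p^{h\Fp^\times}$, the map $\Omega^\infty(\Delta_p)$ corresponds to the natural transformation $\Sigma^\infty_+ \Omega^\infty \xto{\epsilon} \mathrm{id}_{\Sp} \xto{\Delta_p} T_p^{h\Fp^\times}$, where $\Delta_p$ is taken in its refined form of Lemma~\ref{lemfact}. By naturality of $\Delta_p$ applied to the counit $\epsilon_X$ one obtains the identity
\[
\Delta_p \circ \epsilon_X \;=\; T_p(\epsilon_X)^{h\Fp^\times} \circ \Delta_p^{(\Sigma^\infty_+ \Omega^\infty X)}\ .
\]
Now the source $\Sigma^\infty_+\Omega^\infty X$ is a suspension spectrum, so the second part of Lemma~\ref{lemfact} provides a canonical factorization
\[
\Delta_p^{(\Sigma^\infty_+ \Omega^\infty X)} \;\colon\; \Sigma^\infty_+ \Omega^\infty X \xto{\Sigma^\infty_+\Delta} \bigl(\Sigma^\infty_+ \Omega^\infty X\bigr)^{\otimes p, h\Sigma_p} \longrightarrow \bigl(\Sigma^\infty_+ \Omega^\infty X\bigr)^{\otimes p, tC_p},
\]
where $\Delta: \Omega^\infty X \to (\Omega^\infty X)^{\times p}$ is the space-level diagonal and one uses that $\Sigma^\infty_+$ is strong symmetric monoidal to identify $\Sigma^\infty_+\bigl((\Omega^\infty X)^{\times p}\bigr) \simeq \bigl(\Sigma^\infty_+ \Omega^\infty X\bigr)^{\otimes p}$. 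Moreover, since $\Delta_p$ is being taken in its $h\Fp^\times$-refined form, this factorization lifts through the restriction of subgroup $h\Sigma_p \twoheadrightarrow h(C_p \rtimes \Fp^\times) = (hC_p)^{h\Fp^\times}$ followed by the canonical map $(hC_p)^{h\Fp^\times} \to (tC_p)^{h\Fp^\times}$.

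Combining these ingredients, $\Delta_p \circ \epsilon_X$ can be written as the composite
\[
\Sigma^\infty_+ \Omega^\infty X \xto{\Sigma^\infty_+ \Delta} \Sigma^\infty_+\bigl((\Omega^\infty X)^{\times p}\bigr) \to \bigl(\Sigma^\infty_+ \Omega^\infty X\bigr)^{\otimes p, h\Sigma_p} \to \bigl(\Sigma^\infty_+\Omega^\infty X\bigr)^{\otimes p, hC_p, h\Fp^\times} \to \bigl(\Sigma^\infty_+\Omega^\infty X\bigr)^{\otimes p, tC_p, h\Fp^\times}
\]
followed by $T_p(\epsilon_X)^{h\Fp^\times}$. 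Applying the adjunction $\Sigma^\infty_+ \dashv \Omega^\infty$ in the other direction now turns this spectrum-level composite into the claimed space-level composite: the first step $\Sigma^\infty_+\Delta$ becomes the space diagonal $\Omega^\infty X \to ((\Omega^\infty X)^{\times p})^{h\Sigma_p}$, while collapsing the counit $\epsilon_X^{\otimes p}$ with the lax symmetric monoidal structure map of $\Omega^\infty$ produces the arrow $((\Omega^\infty X)^{\times p})^{h\Sigma_p} \to \Omega^\infty(X^{\otimes p})^{h\Sigma_p}$. The remaining two arrows (subgroup restriction and $\can$) translate unchanged since $\Omega^\infty$ commutes with limits.

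The main subtlety, and the only nontrivial bookkeeping step, is the matching of the adjoint of the space-level map $((\Omega^\infty X)^{\times p})^{h\Sigma_p} \to \Omega^\infty(X^{\otimes p})^{h\Sigma_p}$ with the spectrum composite $(\Sigma^\infty_+ \Omega^\infty X)^{\otimes p, h\Sigma_p} \xto{\epsilon^{\otimes p}} (X^{\otimes p})^{h\Sigma_p}$; this is where one needs to track naturality of both the lax symmetric monoidal structure on $\Omega^\infty$ and the counit of the adjunction, using that $\Sigma^\infty_+$ is strong symmetric monoidal. With that identification in place the two spectrum-level descriptions of the adjoint agree, and hence by adjointness so do the two space-level natural transformations, giving the desired factorization of $\Omega^\infty(\Delta_p)$.
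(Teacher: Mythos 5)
Your argument is correct, but it is a genuinely different route from the paper's. The paper disposes of the lemma in one line: both $\Omega^\infty(\Delta_p)$ and the displayed composite are lax symmetric monoidal transformations from $\Omega^\infty$ to $\Omega^\infty T_p^{h\Fp^\times}$, and since $\Omega^\infty\colon \Sp\to\calS$ is \emph{initial} among lax symmetric monoidal functors (\cite[Corollary 6.9 (2)]{Nik}), the space of such transformations is contractible, so the two are canonically equivalent; the only work is to observe that each constituent arrow of the composite is lax symmetric monoidal. You instead pass through the adjunction $\Sigma^\infty_+\dashv\Omega^\infty$, identify the adjoint of $\Omega^\infty(\Delta_p)$ with $\Delta_p\circ\epsilon_X$, and then invoke the suspension-spectrum case (the second half of Lemma~\ref{lemfact}) applied to $\Sigma^\infty_+\Omega^\infty X$. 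This is a valid and more explicit argument: it actually exhibits the homotopy by tracing the maps, rather than deducing its existence from a contractibility statement. The price is the bookkeeping you flag yourself (matching $\epsilon_X^{\otimes p}$ against the lax structure map of $\Omega^\infty$ under the strong monoidality of $\Sigma^\infty_+$), plus one point you should make explicit: the compatibility of the $h\Sigma_p$-factorization of part two of Lemma~\ref{lemfact} with the $h\Fp^\times$-refinement of part one is not automatic but follows from the uniqueness clause there, i.e.\ ultimately from the initiality of $\Sigma^\infty_+$ among lax symmetric monoidal functors $\calS\to\Sp$ (\cite[Corollary 6.9 (4)]{Nik}). So both proofs rest on the same corepresentability results from \cite{Nik}; yours uses them through Lemma~\ref{lemfact}, the paper's uses them directly for $\Omega^\infty$, which is why it is shorter.
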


\begin{proof} In the statement of the Lemma we have two potentially different lax symmetric monoidal transformations from $\Omega^\infty:  \Sp \to \calS$ to the functor $ \Omega^\infty T_p^{h\Fp^\times}$. 
 But the functor $\Omega^\infty$ is initial among lax symmetric monoidal functors $\Sp \to \calS$ which is shown in \cite[Corollary 6.9 (2)]{Nik}. Thus the two transformations are canonically equivalent. 
\end{proof}

\begin{corollary} Let $X$ be a bounded below $p$-complete spectrum. For $i\geq 1$, there are canonical isomorphisms
$$
\pi_i\big((X \otimes \ldots \otimes X)^{hC_p}\big) \cong \pi_i(X) \oplus \pi_i\big((X \otimes \ldots \otimes X)_{hC_p}\big)
$$
of abelian groups, induced by the norm
\[
\Nm: (X \otimes \ldots \otimes X)_{hC_p}\to (X \otimes \ldots \otimes X)^{hC_p}
\]
and the space level diagonal
\[
\Omega^\infty X\to \Omega^\infty (X\otimes \ldots\otimes X)^{hC_p}\ .
\]
Moreover, there is a short exact sequence
\[
0\to \pi_0\big((X\otimes \ldots \otimes X)_{hC_p}\big)\to \pi_0\big((X \otimes \ldots \otimes X)^{hC_p}\big)\to \pi_0(X) \to 0\ .
\]
\end{corollary}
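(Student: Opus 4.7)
Plan. The strategy is to combine the generalized Segal conjecture (Theorem~\ref{thm:generalsegal}) with the factorization of the Tate diagonal given in Lemma~\ref{lem:liftingTate} in order to split the long exact sequence attached to the norm fiber sequence.

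First I would observe that since $X$ is bounded below and $p$-complete, Theorem~\ref{thm:generalsegal} asserts that $\Delta_p: X \to T_p(X) = (X\otimes\ldots\otimes X)^{tC_p}$ is an equivalence. Substituting this identification into the canonical cofiber sequence
\[
(X\otimes\ldots\otimes X)_{hC_p} \xto{\mathrm{Nm}} (X\otimes\ldots\otimes X)^{hC_p} \xto{q} (X\otimes\ldots\otimes X)^{tC_p} \simeq X,
\]
the associated long exact sequence reads
\[
\pi_{i+1}X \xto{\partial} \pi_i(X\otimes\ldots\otimes X)_{hC_p} \xto{\mathrm{Nm}_*} \pi_i(X\otimes\ldots\otimes X)^{hC_p} \xto{q_*} \pi_iX \xto{\partial'} \pi_{i-1}(X\otimes\ldots\otimes X)_{hC_p}.
\]
The remaining task is to produce a suitable section of $q_*$.

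For this I would invoke Lemma~\ref{lem:liftingTate}: its factorization of $\Omega^\infty(\Delta_p)$, post-composed with the inclusion of fixed points $(-)^{h\Fp^\times}\to(-)$, yields a natural map of pointed spaces
\[
\sigma\colon \Omega^\infty X \longrightarrow \Omega^\infty(X\otimes\ldots\otimes X)^{hC_p}
\]
whose composition $\Omega^\infty(q)\circ\sigma$ equals $\Omega^\infty(\Delta_p)$, hence is an equivalence by the first step. Consequently, on every $\pi_i$ with $i\geq 0$ the induced map $\sigma_*$ is a set-theoretic section of $q_*$; for $i\geq 1$ it is moreover a homomorphism of abelian groups, simply because $\sigma$ is a map of pointed spaces. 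Applied in degree $i\geq 1$, this splits the short exact sequence $0\to \pi_i(X\otimes\ldots\otimes X)_{hC_p}\to \pi_i(X\otimes\ldots\otimes X)^{hC_p}\to \pi_iX\to 0$ coming from the long exact sequence (both boundary maps on either side of $\pi_iX$ vanish because of the sections in the adjacent degrees), giving the direct sum decomposition claimed, with summands embedded by $\mathrm{Nm}_*$ and $\sigma_*$.

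For the short exact sequence at $i=0$, the injectivity of $\mathrm{Nm}_*$ is immediate from the vanishing of $\partial\colon \pi_1X\to \pi_0(X\otimes\ldots\otimes X)_{hC_p}$ already established via the splitting in degree one. The surjectivity of $q_*$, equivalently the vanishing of $\partial'\colon \pi_0X\to \pi_{-1}(X\otimes\ldots\otimes X)_{hC_p}$, is forced by $\sigma_*$ being a set-theoretic section on $\pi_0$. The only delicate point of the whole argument is precisely this last one: the composition $\sigma$ arising from Lemma~\ref{lem:liftingTate} is a map of pointed spaces but \emph{not} of infinite loop spaces, because it factors through the diagonal $\Omega^\infty X\to(\Omega^\infty X)^{\times p}$ followed by the lax monoidal comparison $(\Omega^\infty X)^{\times p}\to \Omega^\infty(X^{\otimes p})$, and this second map is multiplicative rather than additive. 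Hence $\sigma_*$ is a priori only a map of pointed sets on $\pi_0$; however, a set-theoretic right inverse is exactly what is needed to conclude surjectivity of $q_*$, which closes the argument.
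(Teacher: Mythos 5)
Your proof is correct and follows essentially the same route as the paper's: identify the cofiber of the norm with $X$ via the generalized Segal conjecture (Theorem~\ref{thm:generalsegal}), use the space-level factorization of the Tate diagonal from Lemma~\ref{lem:liftingTate} to produce a section of the projection to the Tate construction, and note that this section is a group homomorphism on $\pi_i$ for $i\geq 1$ but only a map of pointed sets on $\pi_0$. Your extra remarks on the vanishing of the boundary maps and on why degree $0$ only yields a short exact sequence are exactly the points the paper leaves implicit.
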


\begin{proof} Consider the fiber sequence
$$
(X \otimes \ldots \otimes X)_{hC_p} \to (X \otimes \ldots \otimes X)^{hC_p} \to (X \otimes \ldots \otimes X)^{tC_p}
$$
which gives rise to a long exact sequence in homotopy groups. The final term $ (X \otimes \ldots \otimes X)^{tC_p}$ receives the Tate diagonal $\Delta_p$ from $X$ which is an equivalence by Theorem \ref{thm:generalsegal}. By Lemma \ref{lem:liftingTate} the map $\Delta_p$ lifts after applying $\Omega^\infty$ through the middle term of the fiber sequence. In particular we get that on the level of homotopy groups, we have a canonical splitting of the map
$$\pi_i\big(( X \otimes \ldots \otimes X)^{hC_p}\big) \to  \pi_i\big((X \otimes \ldots \otimes X)^{tC_p}\big) = \pi_i(X)$$
for $i\geq 0$. If $i\geq 1$ then the splitting is a group homomorphism as the higher homotopy groups of a space are groups. 
\end{proof}

\begin{corollary} For an $\E_\infty$-ring spectrum $R$, the Frobenius morphism factors on underlying multiplicative $\E_\infty$-monoids as 
\[
\Omega^\infty R \xto{P_p} (\Omega^\infty R)^{h\Sigma_p} \to (\Omega^\infty R^{tC_p})^{h\Fp^\times}.
\]
where $P_p$ is the composition of the diagonal and the multiplication map.$\hfill \Box$
\end{corollary}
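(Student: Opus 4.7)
The plan is to deduce the factorization by combining the refinement from Corollary~\ref{refinement} with the space-level factorization of the Tate diagonal from Lemma~\ref{lem:liftingTate}, using crucially that the multiplication of an $\E_\infty$-ring spectrum is $\Sigma_p$-equivariant.

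First I would recall that by Definition~\ref{deffrobenius}, the Tate valued Frobenius equals $\varphi_R = m^{tC_p}\circ \Delta_p$, and by Corollary~\ref{refinement} the underlying map of spectra canonically factors as a composite $R\to (R^{tC_p})^{h\Fp^\times}\to R^{tC_p}$. Applying $\Omega^\infty$ it therefore suffices to factor the map $\Omega^\infty R\to \Omega^\infty(R^{tC_p})^{h\Fp^\times}$ as $P_p$ followed by a canonical map. Applying Lemma~\ref{lem:liftingTate} to $X = R$ produces the factorization
\[
\Omega^\infty R \xto{\Delta} ((\Omega^\infty R)^{\times p})^{h\Sigma_p}\to \Omega^\infty(R^{\otimes p})^{h\Sigma_p} \to \left(\Omega^\infty(R^{\otimes p})^{hC_p}\right)^{h\Fp^\times} \to \Omega^\infty T_p(R)^{h\Fp^\times}
\]
of $\Omega^\infty(\Delta_p)$, where $\Delta$ denotes the evident space-level diagonal, whose image lies in the $\Sigma_p$-fixed points of $(\Omega^\infty R)^{\times p}$.

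Next I would post-compose the whole chain with the map induced by $\Omega^\infty(m^{tC_p}): \Omega^\infty(R^{\otimes p})^{tC_p}\to \Omega^\infty R^{tC_p}$ after taking $h\Fp^\times$-fixed points. Since $R$ is $\E_\infty$, the multiplication $m:R^{\otimes p}\to R$ is $\Sigma_p$-equivariant where $R$ carries the trivial $\Sigma_p$-action, in particular $C_p\rtimes \Fp^\times$-equivariant. The naturality of the comparison maps $(\Omega^\infty-)^{\times p}\to \Omega^\infty((-)^{\otimes p})$, of homotopy fixed points, and of the canonical map $(-)^{hC_p}\to (-)^{tC_p}$ then produces a commutative diagram
\[
\xymatrix@C=1.2em{
((\Omega^\infty R)^{\times p})^{h\Sigma_p} \ar[r]\ar[d] & \Omega^\infty(R^{\otimes p})^{h\Sigma_p}\ar[r]\ar[d] & \left(\Omega^\infty(R^{\otimes p})^{hC_p}\right)^{h\Fp^\times}\ar[d]\ar[r] & \Omega^\infty T_p(R)^{h\Fp^\times}\ar[d]\\
(\Omega^\infty R)^{h\Sigma_p} \ar[r] & \Omega^\infty R^{h\Sigma_p}\ar[r] & \left(\Omega^\infty R^{hC_p}\right)^{h\Fp^\times}\ar[r] & \Omega^\infty(R^{tC_p})^{h\Fp^\times}
}
\]
in which every vertical arrow is induced by $m$; the top row composed with the first vertical map on the left yields (by Lemma~\ref{lem:liftingTate} combined with $m^{tC_p}$) the underlying spaces of the Frobenius refinement.

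Finally I would identify the two pieces. The left vertical composite precomposed with the space-level diagonal $\Delta:\Omega^\infty R\to ((\Omega^\infty R)^{\times p})^{h\Sigma_p}$ is by definition the composition of the diagonal and the multiplication, i.e.~the operation $P_p: \Omega^\infty R \to (\Omega^\infty R)^{h\Sigma_p}$. The bottom row of the diagram, which is a composite of a forgetful map for homotopy fixed points along $C_p\rtimes \Fp^\times\subseteq \Sigma_p$ together with the canonical comparison $R^{hC_p}\to R^{tC_p}$, gives the desired map $(\Omega^\infty R)^{h\Sigma_p}\to \Omega^\infty(R^{tC_p})^{h\Fp^\times}$. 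No step of the argument is a serious obstacle — all naturalities in question are built into the constructions of the previous sections — so the entire proof reduces to assembling the diagram above and citing the preceding lemmas.
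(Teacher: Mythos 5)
Your argument is correct and is exactly the route the paper intends by leaving this corollary with only a $\Box$: apply Lemma~\ref{lem:liftingTate} to $X=R$ to factor $\Omega^\infty(\Delta_p)$ through the space-level diagonal into $((\Omega^\infty R)^{\times p})^{h\Sigma_p}$, then postcompose with the $\Sigma_p$-equivariant multiplication and use naturality of the lax monoidal comparison maps and of $(-)^{hC_p}\to(-)^{tC_p}$ to assemble the ladder. The identification of the left-hand composite with $P_p$ and of the bottom row with the canonical map is as you say, so nothing is missing.
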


The last statement shows that the Frobenius is expressible in terms of the power operations. We refer to \cite{barthel2015character} for a discussion of power operations and the references there. The power operation is the map 
$$
P_p: R^0(X) \to R^0(X\times B\Sigma_p)
$$
for a space $X$ which is induced by postcomposition with the map denoted $P_p$ above. For a precise comparison to a more standard formulation of power operations in terms of extended powers see Lemma \ref{poweridentification} below.

\begin{corollary}\label{cor:tatepoweroperation} For an $\E_\infty$-ring spectrum $R$ and a space $X$, postcomposition with the Tate valued  Frobenius  is given on homotopy classes of maps by the composition
\[
\big[\Sigma^\infty_+X,R\big]=R^0(X) \xto{P_p} R^0(X\times B\Sigma_p) = \big[\Sigma^\infty_+X,R^{h\Sigma_p}\big] \xto{\can} \big[\Sigma^\infty_+X,(R^{tC_p})^{h\Fp^\times}\big]
\]
where $P_p$ is the usual power operation for $R$ and and the last map is induced by postcomposition with the forgetful map $R^{h\Sigma_p} \to (R^{hC_p})^{h\Fp^\times}$ and the canonical map $(R^{hC_p})^{h\Fp^\times} \to (R^{tC_p})^{h\Fp^\times}$.$\hfill \Box$
\end{corollary}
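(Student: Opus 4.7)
The plan is to derive this corollary essentially by unwinding the previous one and identifying the intermediate map with the classical power operation. First I would take a homotopy class $\alpha \in [\Sigma^\infty_+ X, R]$, represented by a map $f: \Sigma^\infty_+ X \to R$, equivalently by its adjoint $\tilde f: X \to \Omega^\infty R$. By the preceding corollary, postcomposition of $f$ with $\varphi_R$ corresponds on underlying multiplicative $\E_\infty$-monoids to the composition
\[
X \xto{\tilde f} \Omega^\infty R \xto{P_p} (\Omega^\infty R)^{h\Sigma_p} \to (\Omega^\infty R^{tC_p})^{h\Fp^\times},
\]
where $P_p$ is the composition of the space-level diagonal $\Omega^\infty R \to (\Omega^\infty R)^{\times p}$ (landing in $\Sigma_p$-fixed points) followed by the $\Sigma_p$-equivariant multiplication $((\Omega^\infty R)^{\times p})^{h\Sigma_p} \simeq \Omega^\infty((R^{\otimes p})^{h\Sigma_p}) \to \Omega^\infty(R^{h\Sigma_p})$. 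By the adjunction $\Sigma^\infty_+ \dashv \Omega^\infty$ and the identification $[\Sigma^\infty_+(X \times B\Sigma_p), R] = R^0(X \times B\Sigma_p) = [\Sigma^\infty_+ X, R^{h\Sigma_p}]$, this composite exactly expresses the Tate valued Frobenius on $\alpha$ in the form stated, provided we know that the operation $P_p$ defined here agrees with the usual power operation.

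The second step, therefore, is to identify $P_p$ (as defined via space-level diagonal and multiplication) with the classical power operation built from the extended power construction $(R^{\otimes p})_{h\Sigma_p}$. This identification is precisely the content of the forthcoming Lemma \ref{poweridentification} cited in the text: unwinding, one checks that the map
\[
R^0(X) \to R^0(X \times B\Sigma_p)
\]
which sends $[f]$ to the class of $\tilde f^p \rtimes \mathrm{id}: X \times B\Sigma_p \simeq (X^p \times E\Sigma_p)_{h\Sigma_p}/(\text{diagonal}) \to \Omega^\infty R$, followed by multiplication, coincides with postcomposition by $P_p$. Both descriptions factor over the $\Sigma_p$-fold diagonal $X \to X^{\times p}$ and the universal $\Sigma_p$-equivariant multiplication $R^{\otimes p} \to R$, so they agree by the universal property of the extended powers and the initiality of $\Sigma^\infty_+$ among lax symmetric monoidal functors $\calS \to \Sp$ used already in Lemma \ref{lem:liftingTate}.

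The third step is the identification of the final map. Postcomposing with the forgetful map $R^{h\Sigma_p} \to (R^{hC_p})^{h\Fp^\times}$ (restriction along $C_p \rtimes \Fp^\times \to \Sigma_p$ using that $\Fp^\times$ is the Weyl group of $C_p$) and then with the canonical map $(R^{hC_p})^{h\Fp^\times} \to (R^{tC_p})^{h\Fp^\times}$ matches, by Corollary \ref{refinement}, the factorization of the Tate valued Frobenius constructed in Lemma \ref{lemfact}. Chaining these identifications together gives the displayed formula.

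The main obstacle is really only the bookkeeping needed in the second step to match the two a priori different constructions of the power operation; once the universal property of $\Sigma^\infty_+$ as the initial lax symmetric monoidal functor $\calS \to \Sp$ is invoked, as in Lemma \ref{lemfact} and Lemma \ref{lem:liftingTate}, the comparison becomes formal, and everything else is a direct application of the previous corollary together with the adjunction defining $R^0(X \times B\Sigma_p)$.
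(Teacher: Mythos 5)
Your proposal is correct and follows essentially the same route as the paper: the corollary is an immediate consequence of the preceding corollary (the factorization of $\varphi_R$ through $P_p$ on underlying spaces) together with the adjunction identifying $R^0(X\times B\Sigma_p)$ with $[\Sigma^\infty_+X, R^{h\Sigma_p}]$, which is why the paper states it with no further proof. Your second step, comparing the space-level construction of $P_p$ with the extended-power formulation, is not actually needed here because the paper \emph{defines} the power operation in this corollary as postcomposition with the space-level map $P_p$, deferring the comparison with extended powers to Lemma \ref{poweridentification}.
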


\begin{remark}
Note that the power operation $P_p$ is multiplicative but not additive. The way this is usually corrected is by taking the quotient of the ring $R^0(B\Sigma_p)$ by the transfer ideal $I \subseteq R^0(X \times B\Sigma_p)$. The transfer ideal is the image of the transfer $R^0(X) \to R^0(X \times B\Sigma_p)$.
The Tate construction can be considered as homotopically corrected version of this quotient. In fact there is a canonical map of rings $R^0(X \times B\Sigma_p)/I\to [\Sigma^\infty_+ X, R^{tC_p}]$ which however in practice is far from being an isomorphism. By Corollary \ref{cor:tatepoweroperation} the obvious diagram of maps of rings 
\[
\xymatrix{
R^0(X) \ar[r]\ar[rd] & R^0(X \times B\Sigma_p)/I \ar[d] \\
&  [\Sigma^\infty_+ X, R^{tC_p}]
}
\]
then commutes.  
\end{remark}

We  discuss two examples of the Tate valued Frobenius, namely $\KU$ and $H\bF_p$. First let $\KU$ be the periodic complex $K$-theory spectrum with homotopy groups $\pi_*\KU \cong \Z[\beta^{\pm 1}]$ with $|\beta| = 2$. As in Example~\ref{counterexamples}~(iii), we find that
\[
\pi_*\KU^{tC_p} \cong \pi_*\KU((t))/ ((t+1)^p-1) \cong \pi_* \KU \otimes \Q_p(\zeta_p)
\]
where $\zeta_p$ is a $p$-th root of unity. The action of $\mathbb{F}_p^\times = \mathrm{Gal}(\Q_p(\zeta_p)/ \Q_p)$ is given by the Galois action. Therefore we get 
\[
\pi_*(\KU^{tC_p})^{h\Fp^\times} \cong \pi_* \KU \otimes \Q_p(\zeta_p)^{\Fp^\times}=\pi_*\KU \otimes \Q_p
\]
Altogether we find that the natural constant map $\KU\to (\KU^{hC_p})^{h\Fp^\times}\xto{\can} (\KU^{tC_p})^{h\Fp^\times}$ (where the first map is pullback along $B(C_p \rtimes \Fp^\times) \to \pt$) extends to an equivalence
\begin{equation}\label{identification}
\KU^\wedge_p[1/p]\xto{\simeq} (\KU^{tC_p})^{h\Fp^\times}\ .
\end{equation}
More generally, for every space $X$ we look at the mapping spectrum $\map(X,\KU) = \KU^X$ and see with the same argument that the constant map induces an equivalence
\begin{equation*}
 \left(\KU^X\right)^\wedge_p[1/p] \simeq \left(\left(\KU^X\right)^{tC_p}\right)^{h\Fp^\times}\ .
 \end{equation*}
 In particular for a compact space $X$\footnote{By compact we mean a compact object in the $\infty$-category $\calS$ of spaces, i.e. weakly equivalent to a retract of a finite CW complex.} we find that 
 $\pi_0$ is given by $\KU^0(X) \otimes \Q_p$.
\begin{proposition}\label{propAdamsOp}
\begin{altenumerate} $ $
\item
For every  compact space $X$ the Tate valued Frobenius (or rather its refinement as in Corollary \ref{refinement}) of $\KU^X$ is given on $\pi_{0}$ by 
\begin{align*}
\KU^0(X) &\to \KU^0(X) \otimes \Q_p \\
V &\mapsto \psi^p(V)
\end{align*}
where 
$\psi^p: \KU^0(X) \to \KU^0(X)$ is the $p$-th Adams operation and $\psi^p(V)$ is considered as an element in $\KU^0(X) \otimes \Q_p$ under the canonical inclusion from $\KU^0(X)$.
\item
Under the identification \eqref{identification}, the Frobenius
\[
\varphi_p: \KU \to \KU^\wedge_p[1/p]
\]
agrees with the $p$-th stable Adams operation\footnote{Note that the endomorphism $\psi^p: \KU^0(X) \to \KU^0(X)$ does not refine to a stable cohomology operation, hence not to a map of spectra. But after inverting $p$ it does. One direct way to construct $\psi^p: \KU \to \KU[1/p]$ as a map of $\E_\infty$-ring spectra is to use Snaith's theorem $\KU = \bS[\C P^\infty][\beta^{-1}]$ and then induce it from the $p$-th power map $\C P^\infty \to \C P^\infty$.} 
\[
\psi_p: \KU \to \KU[1/p]
\]
composed with the natural ``completion'' map $\KU[1/p]\to \KU^\wedge_p[1/p]$. This composition sends the Bott element $\beta$ to $p \beta$ (recall that $\beta  = L - 1$ for the Hopf bundle $L = \calO_{\mathbb{P}^1}(-1)$ and that $\psi_p(L)  = L^p$ for every line bundle).
\end{altenumerate}
\end{proposition}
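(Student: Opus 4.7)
The plan is to apply Corollary~\ref{cor:tatepoweroperation}, in its $\Fp^\times$-refinement from Corollary~\ref{refinement}, to express the Tate-valued Frobenius on $\KU$ as a composition involving the power operation $P_p$; the result will then follow by explicitly computing this composition on line bundles and invoking the splitting principle together with Snaith's theorem.

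For part~(i), fix a compact space $X$ and a class $[V]\in \KU^0(X)=[\Sigma^\infty_+ X,\KU]$. By Corollary~\ref{cor:tatepoweroperation} and Corollary~\ref{refinement}, the element $\varphi_p([V])\in [\Sigma^\infty_+ X,(\KU^{tC_p})^{h\Fp^\times}] = \KU^0(X)\otimes\Q_p$ is represented by the composition
\[
\Sigma^\infty_+ X \xrightarrow{V} \KU \xrightarrow{P_p} \KU^{h\Sigma_p} \to (\KU^{hC_p})^{h\Fp^\times} \xrightarrow{\can} (\KU^{tC_p})^{h\Fp^\times}.
\]
Because $\varphi_p$ is a map of $\E_\infty$-ring spectra, the induced map $\KU^0(X)\to\KU^0(X)\otimes\Q_p$ is a ring homomorphism. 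I would then compute its effect on a line bundle $L$: the power operation $P_p$ sends $[L]$ to the class of the $\Sigma_p$-equivariant bundle $L^{\otimes p}$ on $X$, with $\Sigma_p$ permuting the tensor factors. Since $L$ is one-dimensional, this permutation action on the one-dimensional fibres of $L^{\otimes p}$ is trivial---swapping two vectors in a one-dimensional space is the identity---so $P_p([L])=[L^p]\otimes 1$, the external product of $[L^p]\in\KU^0(X)$ with the trivial representation $1\in\KU^0(B\Sigma_p)$. Under the subsequent maps the trivial representation restricts to $1\in\KU^0(BC_p)$ and then maps to $1\in\Q_p$, whence $\varphi_p([L])=[L^p]=\psi^p([L])$. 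Since $\psi^p$ is also a ring homomorphism with the same behaviour on line bundles, the splitting principle---pulling $V$ back to a flag space on which it splits as a sum of line bundles, where the induced map on $\KU^0$ (and hence on $\KU^0\otimes\Q_p$) is injective---implies $\varphi_p=\psi^p$ on all of $\KU^0(X)$.

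For part~(ii), I would invoke Snaith's theorem $\KU=\bS[\C P^\infty][\beta^{-1}]$, which presents $\KU$ as the universal $\E_\infty$-ring equipped with an $\E_\infty$-monoid map $\C P^\infty\to\Omega^\infty\KU$ under which the Bott class $\beta=[L]-1$ becomes invertible. Hence any $\E_\infty$-ring map $\KU\to R$ is determined by the image of $[L]\in\KU^0(\C P^\infty)$. Both $\varphi_p$ and the completion of the stable Adams operation $\psi_p$ send $[L]$ to $[L^p]$---the former by the computation of part~(i) (applied to $X=\C P^\infty$ by passing to a colimit over finite skeleta), and the latter by the construction of $\psi_p$ via the $p$-th power map on $\C P^\infty$. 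Thus these two $\E_\infty$-maps $\KU\to\KU^\wedge_p[1/p]$ coincide. Restricting to $\C P^1\cong S^2$, where $(L-1)^2=0$ in $\KU^0(\C P^1)$, yields $[L^p]-1=(1+(L-1))^p-1=p(L-1)=p\beta$, confirming the effect on $\beta$.

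The main obstacle is to argue that the composition in Corollary~\ref{cor:tatepoweroperation} defines a \emph{ring} homomorphism---not merely an additive one---so that the splitting principle applies. This is essentially automatic once one knows $\varphi_p$ is an $\E_\infty$-map, but geometrically it reflects the phenomenon that the cross-term contributions in $(L_1\oplus\cdots\oplus L_r)^{\otimes p}$ are $C_p$-representations induced from the trivial subgroup and therefore vanish after applying the Tate construction, thereby absorbing the non-additivity of the power operation $P_p$.
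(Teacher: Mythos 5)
Your part (i) is correct, but it is not the route the paper takes in its proof: there the cyclic power operation is computed on an \emph{arbitrary} class $V$ using Atiyah's formula $V\mapsto (V^p,\theta^p(V))$ together with the identity $\psi^p(V)=V^p-p\,\theta^p(V)$ and the explicit identification $\KU^0(BC_p)^{\Fp^\times}\cong \Z\oplus\Z_p$ (with the regular representation $N-p$ dying rationally because it is a transfer). What you do instead --- observe that the $\Sigma_p$-action on $L^{\otimes p}$ is trivial for a line bundle, so $\varphi_p([L])=[L^p]$, and then propagate by additivity of the composite (the cross-terms being induced, hence killed by the Tate construction) and the splitting principle --- is precisely the ``simpler but less explicit'' alternative that the paper records in the remark following its proof. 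Both arguments are valid; yours trades Atiyah's computation of $\theta^p$ for the splitting principle.

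Part (ii) has a genuine gap at the sentence ``Hence any $\E_\infty$-ring map $\KU\to R$ is determined by the image of $[L]\in\KU^0(\C P^\infty)$.'' Snaith's theorem identifies $\E_\infty$-ring maps $\KU\to R$ with $\E_\infty$-\emph{monoid} maps $\C P^\infty\to\Omega^\infty R$ inverting $\beta$; such a map is strictly more data than its underlying homotopy class in $R^0(\C P^\infty)$, and part (i) only gives you agreement of the latter (on compact skeleta, moreover, so a $\lim^1$ argument is also being elided). In general the forgetful map from $\pi_0$ of the space of $\E_\infty$-monoid maps to $R^0(\C P^\infty)$ need not be injective, so you cannot conclude that the two $\E_\infty$-maps, or even the two underlying spectrum maps (phantoms!), coincide. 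The paper circumvents this by using that the target $\KU^\wedge_p[1/p]$ is rational: both maps factor through $\KU_\Q$, which \emph{is} the free rational $\E_\infty$-ring on an invertible degree-$2$ class, so an $\E_\infty$-map out of $\KU_\Q$ really is determined by the image of $\beta$, and that image is computed to be $p\beta$ on $X=S^2$ using part (i). You should replace the Snaith step by this rationalization argument (or otherwise justify why the relevant mapping space out of $\KU$ into this particular rational target is detected on $[L]$).
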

\begin{proof}

For the first part we follow closely Atiyah's paper \cite{Atiyah}. 
Since $X$ is compact we have 
$$
\pi_0\left(\left(\KU^X\right)^{hC_p}\right)^{h\Fp^\times} = \KU^0\big(X \times B(C_p \rtimes \Fp^\times)\big) = \KU^0(X) \otimes \KU^0(BC_p)^{\Fp^\times} 
$$
and 
$$
\pi_0\left(\left(\KU^X\right)^{tC_p}\right)^{h\Fp^\times} = \KU^0(X) \otimes \Q_p.
$$
According to Corollary \ref{cor:tatepoweroperation} we have to understand the map
$$
\KU^0(X) \xto{P_p}  \KU^0(X) \otimes \KU^0(BC_p)^{\Fp^\times} \xto{\can} \KU^0(X) \otimes \Q_p .
$$
Now $\KU^0(BC_p)^{\Fp^\times} = \Z \oplus \Z_p$ where $\Z$ is the summand corresponding to the trivial representation and  $\Z_p$ is the augmentation ideal generated by $[N]-p$ where $N$ is the regular representation of $C_p$ and $p$ is the trivial representation of rank $p$. In the computation above, the power series generator $t$ is chosen to corresponds to the element $N - p$. As a result we see that under the canonical map $\KU^0(BC_p)^{\Fp^\times} \to \Q_p$ the summand $\Z$ goes to $\Z \subseteq \Q_p$ and $N - p$ goes to $-p$ (since $N$ is an induced representation and thus lies in the image of the transfer map).  

The map
$$
\KU^0(X) \to  \KU^0(X) \otimes \KU^0(BC_p)^{\Fp^\times} 
$$
is induced by taking a vector bundle $V$ over $X$ to the tensor power $V^{\otimes p}$ considered as a $C_p$-equivariant vector bundle 
over $X$ which induces a vector bundle over $X \times BC_p$. Atiyah calls this operation the cyclic power operation and computes its effect on a general $K$-theory class $V \in \KU^0(X)$ in \cite[Equation (2.6), page 180]{Atiyah}:
$$
V \mapsto (V^p, \theta^p(V))
$$
where $V^p$ is the $p$-fold product in the ring $KU^0(X)$ and  $\theta^p: \KU^0(X) \to \KU^0(X)$ is a certain (unstable) cohomology operation of $K$-theory. The most important property of $\theta^p$ for us is that, in Atiyah's conventions, the $p$-th Adams operation $\psi^p$ is given by $\psi^p(V) = V^p - p \theta^p(V)$. Thus if we send the element $(V^p, \theta^p(V)) \in \KU^0(X) \otimes \KU^0(BC_p)^{\Fp^\times}$ to $\KU^0(X) \otimes \Q_p$ then we get $\psi^p(V)$. This shows part (i) of the Proposition.

For part (ii) we first note that for a compact space $X$ the Frobenius $\KU^X \to (\KU^X)^{tC_p}$ is given by mapping $X$ to the Frobenius $\KU \to \KU^{tC_p}$. Thus if we want to understand the map $\KU \to \KU^{tC_p}$ as a transformation of cohomology theories, we can use part (i). In particular for $X = S^2$ and the Bott element $\beta = L -1 \in KU^0(S^2)$ with $L \to S^2$ the Hopf bundle we find that 
$$\varphi_p(\beta) = \psi^p(\beta) = L^p - 1 = p(L-1) = p \beta \in KU^0(S^2).$$
Thus using part (i) we conclude that the map $\varphi_p: \KU \to \KU^\wedge_p[1/p]$ sends $\beta$ to $p \beta$ and that it agrees on $\beta$ with $\psi^p$. But since the target is rational we can rationalize the source, and $\KU_\Q$ is as a rational $\E_\infty$-spectrum freely generated by the Bott class. Thus an $\E_\infty$-map out of $\KU_\Q$ is uniquely determined by its effect on $\beta$. This shows part (ii).
\end{proof}

\begin{remark}
If one works, following Atiyah, with genuine equivariant $K$-theory then the statement of Proposition \ref{propAdamsOp} becomes a bit cleaner since $\Q_p$ is replaced by $\Z[1/p]$. More precisely for a genuine $C_p$-equivariant $\E_\infty$-ring spectrum $R$ (a term which we have not and will not define in this paper) the Tate valued Frobenius refines to a map $R \to \Phi^{C_p} R \xto{\can} R^{tC_p}$. This first map is usually an uncompleted version of the Tate valued Frobenius.  
\end{remark}

\begin{remark}
There is another argument to prove Proposition \ref{propAdamsOp} which is somewhat simpler but less explicit. Let us give the argument for $p=2$, the odd case being similar. One looks at the space level diagonal of the space $K(\Z,2)$ and composes it with the addition to get a map
$$
K(\Z,2) \to (K(\Z,2) \times K(\Z,2))^{hC_2} \to K(\Z,2)^{hC_2} = K(\Z,2) \sqcup K(\Z,2).
$$
This map factors as 
$$
K(\Z,2) \xto{\cdot 2} K(\Z,2) \xto{\mathrm{triv}} K(\Z,2)^{hC_2}
$$
where triv is trivial inclusion (i.e. pullback along $BC_2 \to \pt$). 
But there is a map of $\E_\infty$-spaces 
$
K(\Z,2) \to \Omega^\infty \KU
$
where the multiplicative $\E_\infty$-structure of $\KU$ is taken. As a result we get that we have a commutative diagram
$$
\xymatrix{
K(\Z,2) \ar[rr]^i\ar[d]^{\cdot 2} && \Omega^\infty \KU\ar[d]^{P_2} \\
K(\Z,2) \ar[rr]^-{\mathrm{triv}\circ i}  && (\Omega^\infty \KU)^{hC_2}
}
$$
where triv now denotes the trivial inclusion for $\KU$. Classes in $\KU^0(X)$ factor through $K(\Z,2)$ precisely if they are represented by line bundles. Thus the commutative diagram tells us that the power operation and the Tate-valued Frobenius are given on classes of line bundles by taking the line bundle to the second power. But then the splitting principle implies that it has to be the Adams operation.
\end{remark}

Now we consider the Eilenberg--Mac-Lane spectrum $H\Fp$. The Tate valued Frobenius is a map
$$\varphi_p: H\Fp\to  (H\Fp^{tC_p})^{h\Fp^\times}.$$
Recall that if $p$ is odd, then
\[
\pi_{-\ast} H\Fp^{tC_p} = \widehat{H}^\ast(C_p,\Fp) = \Fp((t))\otimes \Lambda(e)
\]
with $|t|=-2$ and $|e|=-1$ in cohomological grading; if $p=2$, then
\[
\pi_{-\ast} H\Fp^{tC_p} = \widehat{H}^\ast(C_p,\Fp) = \Fp((s))
\]
with $|s|=-1$. Here we write $\Fp((t))$ for the graded ring $\Fp[t^\pm]$ since both are isomorphic and this convention makes things work out more cleanly later. 
We note that $H\Fp^{tC_p}$ is an $H\Fp$-module spectrum and as such there is a splitting
\[
H\Fp^{tC_p} = \prod_{n\in \Z} H\Fp[n] = \bigoplus_{n\in \Z} H\Fp[n]\ .
\]
The action of $\Fp^\times$ on $\widehat{H}^1(C_p,\Fp)=H^1(C_p,\Fp)=\Hom(C_p,\Fp)=\Fp\cdot t^{-1} e$ is given by the inverse of the natural multiplication action, and the action of $\Fp^\times$ on $\widehat{H}^{-1}=\ker(\Nm: \Fp\to \Fp)=\Fp\cdot e$ is trivial. As it is a ring action, it follows that in general the action on $t^i$ is given by the $i$-th power of $\Fp^\times$. Therefore, we see that
\begin{align*}
(H\mathbb F_2)^{tC_2} &= \prod_{n \in \Z}H\mathbb{F}_2[n]\ ,\\
(H\Fp^{tC_p})^{h\Fp^\times} &= \prod_{n\in \Z} \big(H\Fp[n(p-1)] \oplus H\Fp[n(p-1)+1]\big)\ . 
\end{align*}
where the second holds for all odd $p$. Similarly we have
\begin{align*}
(H\mathbb \Z)^{tC_2} & = \prod_{n \in \Z}H\mathbb{F}_2[2n]\ , \\
(H\Z^{tC_p})^{\Fp^\times} &= \prod_{n\in \Z} H\Fp[n(p-1)] 
\end{align*}
where again the second holds for odd $p$. The following theorem says that the effect of the Frobenius morphism on higher homotopy groups is given by the Steenrod operations. We thank Jacob Lurie for pointing out this result to us. For $p=2$ this has in effect already been shown by Jones and Wegmann, see Lemma 5.4 and the formulas in \cite{MR720798} and for odd $p$ in Chapter 1 of \cite{MR866482}. We thank John Rognes for making us aware of this fact. 

\begin{thm}\label{frobhz}
The Frobenius endomorphism $H\Fp \to (H\Fp^{tC_p})^{h\Fp^\times}$ is given as a map of spectra by the product over all Steenrod squares
\[
Sq^i: H\mathbb F_2\to H\mathbb F_2[i]\ ,\ i\geq 0\ ,
\]
for $p=2$ and by the product over all Steenrod powers
\[
P^i: H\Fp\to H\Fp[i(p-1)]\ ,\ i\geq 0
\]
and
\[
\beta P^i: H\Fp\to H\Fp[i(p-1)+1]
\]
for $p$ odd, where $\beta$ denotes the Bockstein.

The Frobenius endomorphism $H\Z\to (H\Z^{tC_p})^{h\Fp^\times}$ is given by the product over all $Sq^{2i} r$ for $p=2$ and by the product over all $P^i r$ for odd $p$, where $r: H\Z \to H\Fp$ denotes the reduction map. 
\end{thm}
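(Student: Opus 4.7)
The plan is to reduce both statements, by applying Corollary~\ref{cor:tatepoweroperation} and naturality, to the classical construction of Steenrod operations via extended powers. The key point is that the Tate-valued Frobenius is universally characterized by its effect on cohomology classes of spaces, and this effect is precisely given by the total power operation followed by the comparison $R^{h\Sigma_p}\to (R^{tC_p})^{h\mathbb{F}_p^\times}$.

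First I would treat $R = H\mathbb{F}_p$. By the Yoneda lemma applied to the representable functor $X\mapsto [\Sigma^\infty_+ X, H\mathbb{F}_p]=H^0(X;\mathbb{F}_p)$, the map of spectra $\varphi_p:H\mathbb{F}_p\to (H\mathbb{F}_p^{tC_p})^{h\mathbb{F}_p^\times}$ is determined by the induced transformation of cohomology theories $H^0(-;\mathbb{F}_p)\to H^{-*}\bigl((-)^{tC_p}\bigr)^{h\mathbb{F}_p^\times}$, and after decomposing the target using the $H\mathbb{F}_p$-module splittings
\[
(H\mathbb{F}_2^{tC_2})^{h\mathbb{F}_2^\times}=\prod_{n\in\mathbb{Z}}H\mathbb{F}_2[n]\ ,\qquad (H\mathbb{F}_p^{tC_p})^{h\mathbb{F}_p^\times}=\prod_{n\in\mathbb{Z}}\bigl(H\mathbb{F}_p[n(p-1)]\oplus H\mathbb{F}_p[n(p-1)+1]\bigr)
\]
recalled in the excerpt, it suffices to compute the scalar coefficient of each basis element $t^n$ (and $t^n e$ for odd $p$) of $\widehat{H}^{-*}(C_p;\mathbb{F}_p)^{\mathbb{F}_p^\times}$ when the Frobenius is applied to a class $x\in H^0(X;\mathbb{F}_p)$. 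By Corollary~\ref{cor:tatepoweroperation}, this coefficient is read off from the composite
\[
x\ \longmapsto\ P_p(x)\in H^0(X\times B\Sigma_p;\mathbb{F}_p)\ \longrightarrow\ H^0\bigl(\Sigma^\infty_+ X,(H\mathbb{F}_p^{tC_p})^{h\mathbb{F}_p^\times}\bigr),
\]
i.e.~from the restriction of the $p$-th power operation to the Tate construction.

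Second, I would recognize this composite as the classical total Steenrod operation. This is essentially the original definition (going back to Steenrod--Epstein, cf. also \cite{MR866482}): the Steenrod operations are \emph{defined} by expanding the class $x^{\otimes p}\in H^0\bigl(X^p_{h\Sigma_p};\mathbb{F}_p\bigr)$ restricted along $B C_p\to B\Sigma_p$ in the monomial basis of $H^*(BC_p;\mathbb{F}_p)^{\mathbb{F}_p^\times}$, and the Tate construction is simply the homotopy-theoretic incarnation of the completion $\mathbb{F}_p[[t]][t^{-1}]$ of this cohomology ring that naturally accommodates operations of arbitrary degree. Concretely, for odd $p$ the classical formula expresses the image of $x^{\otimes p}$ as $\sum_i(-1)^i t^{-i(p-1)}P^i(x)+\sum_i(-1)^i t^{-i(p-1)-1}e\,\beta P^i(x)$ (up to the sign conventions), matched with the $\mathbb{F}_p^\times$-invariant basis $\{t^{n(p-1)},t^{n(p-1)}e\}$; the $p=2$ case is analogous. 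I expect the \emph{main obstacle} is to verify that our $P_p$, obtained from the Tate diagonal via Corollary~\ref{cor:tatepoweroperation} and the identification of $P_p$ via $\Omega^\infty$ in Lemma~\ref{poweridentification}, literally agrees with the power operation used in the classical definition. This should be a direct unravelling of the construction of the Tate diagonal as a lax symmetric monoidal refinement of the diagonal on suspension spectra (Lemma~\ref{lemfact}), combined with the fact that the classical extended-power construction factors through $(\Sigma^\infty_+ Y)^{\otimes p}_{h\Sigma_p}$.

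Finally, for $H\mathbb{Z}$ I would use naturality of the Tate-valued Frobenius applied to the ring map $r:H\mathbb{Z}\to H\mathbb{F}_p$, giving a commutative square
\[
\begin{array}{ccc}
H\mathbb{Z} & \xrightarrow{\varphi_p} & (H\mathbb{Z}^{tC_p})^{h\mathbb{F}_p^\times} \\
\downarrow r & & \downarrow \\
H\mathbb{F}_p & \xrightarrow{\varphi_p} & (H\mathbb{F}_p^{tC_p})^{h\mathbb{F}_p^\times}.
\end{array}
\]
Since $H\mathbb{Z}^{tC_p}$ is annihilated by $p$ (as the Tate cohomology of a finite group is torsion of order dividing $|C_p|$), the right vertical map is a split inclusion onto the sub-product of summands in degrees $n(p-1)$ (no $\beta P^i$-summands for $p$ odd, only even-shift summands for $p=2$) obtained from the description of $\widehat{H}^*(C_p;\mathbb{Z})$. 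Combining the case of $H\mathbb{F}_p$ already established with this splitting identifies the top horizontal map with $\prod_{i}P^i\circ r$ for $p$ odd and $\prod_i Sq^{2i}\circ r$ for $p=2$, as claimed.
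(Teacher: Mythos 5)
There is a genuine gap in your reduction for $H\Fp$: you only evaluate the Frobenius on classes $x\in H^0(X;\Fp)$, via Corollary~\ref{cor:tatepoweroperation}, and claim that this determines the map of spectra "by the Yoneda lemma applied to the representable functor $X\mapsto[\Sigma^\infty_+X,H\Fp]$". This cannot work. A map of spectra $H\Fp\to\prod_n H\Fp[n]$ is a collection of Steenrod-algebra elements, and an element of $A^n$ for $n>0$ is invisible on degree-zero cohomology of spaces: by instability, $Sq^i(x)=0$ and $P^i(x)=\beta P^i(x)=0$ for $i>0$ whenever $|x|=0$ (e.g.\ $0$ and $Sq^1$ induce the same transformation $H^0(-)\to H^1(-)$ on all spaces). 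Concretely, your own formula collapses on $H^0$ to $x\mapsto P^0(x)=x$, so the argument only identifies the degree-zero component of $\varphi_p$ and says nothing about the higher ones. To see $Sq^i$ (resp.\ $P^i$, $\beta P^i$) for all $i$ you must evaluate $\varphi_p$ on classes of arbitrarily large degree $n$, i.e.\ you need the degree-$n$ power operation $P^n_p\colon R^n(X)\to R^{n\cdot\nat}(X\times B\Sigma_p)$ and its compatibility with the $n$-fold suspension of the Frobenius. This is exactly the content of Theorem~\ref{thm:tatepoweroperationgeneral} in the paper, and it is not a formality: since $\Sigma_p$ permutes the suspension coordinates of $(\Sigma^nR)^{\otimes p}\simeq\Sigma^{n\cdot\nat}R^{\otimes p}$, one has to work with the representation sphere $S^{n\cdot\nat}$, identify $((\Sigma^{n\cdot\nat}R)^{tC_p})^{h\Fp^\times}\simeq\Sigma^n(R^{tC_p})^{h\Fp^\times}$ by exactness of $\widetilde T_p$ (Lemma~\ref{leminvertible}), and then untwist via the mod-$p$ (resp.\ oriented) Thom isomorphism for $n\cdot\nat$ before reading off the Steenrod operations. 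This suspension/Thom-isomorphism step is the core of the paper's proof of Proposition~\ref{Steenrod} and is entirely absent from your argument.

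Two smaller points. First, even once the transformation of cohomology theories is known in all degrees on all spaces, passing to a statement about the map of spectra requires ruling out phantom maps; the paper does this by remarking that there are no phantoms between $H\Fp$-modules, and your Yoneda framing elides this. Second, your treatment of $H\Z$ by naturality along $r\colon H\Z\to H\Fp$, together with the observation that $(H\Z^{tC_p})^{h\Fp^\times}\to(H\Fp^{tC_p})^{h\Fp^\times}$ is the split inclusion of the summands in degrees $n(p-1)$ (resp.\ the even summands for $p=2$), is correct and is the same argument as in the paper.
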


\begin{proof} In Proposition \ref{Steenrod} below, we will describe the Frobenius map for $H\bF_p$ as a map of cohomology theories. Since there are no phantoms between $H\Fp$-modules this shows the claim for $\Fp$. The claim for the integers follows from the naturality of the Frobenius map under the reduction map $H\Z \to H\Fp$.
\end{proof}

We want to understand the effect of
\[
\varphi_p: H\Fp\to (H\Fp^{tC_p})^{h\Fp^\times}
\]
as a transformation of multiplicative cohomology theories. To this end note that by the above, for every space $X$ 
the cohomology theory $(H\Fp^{tC_p})^{h\Fp^\times}$ evaluated on $X$ gives the graded ring
$$
H^\ast(X; \Fp)((t^{p-1}))\otimes \Lambda (e) \qquad \qquad |t^{p-1}| = -2(p-1), |e| = -1.
$$ 

\begin{proposition}\label{Steenrod}  As a map of cohomology theories evaluated on a space $X$, 
the Tate-valued Frobenius $\varphi_p: H\Fp\to  (H\Fp^{tC_p})^{h\Fp^\times}$ is given for $p=2$ by 
\begin{align*}
H^{*}(X; \mathbb{F}_2) &\to H^{*}(X; \mathbb{F}_2)((s)) \\
x &\mapsto \sum_{i = 0}^{\infty} Sq^i(x) s^i
\end{align*}
and for $p$ odd by
\begin{align*}
H^*(X;\Fp) &\to H^*(X; \Fp)((t^{p-1}))\otimes \Lambda (e) \\
 x &\mapsto \sum_{i = 0}^{\infty} P^i(x) t^{(p-1)i} +  \sum_{i = 0}^{\infty}\beta P^i(x)  t^{(p-1)i} e\ .
 \end{align*}
\end{proposition}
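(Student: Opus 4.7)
The plan is to combine Corollary \ref{cor:tatepoweroperation} with the classical identification of the $H\Fp$-power operation as the total Steenrod operation. First, I would extend Corollary \ref{cor:tatepoweroperation} to cohomology classes of arbitrary degree. The same argument, using Lemma \ref{lem:liftingTate} and naturality under suspension, shows that for a class $x \colon \Sigma^\infty_+ X \to \Sigma^n H\Fp$, postcomposition with $\Sigma^n \varphi_p$ factors through the $\Sigma_p$-equivariant $p$-fold power of $x$, yielding the power operation $P_p(x) \in H^{np}(X \times B\Sigma_p; \Fp)$, followed by the canonical map to $(H\Fp^{tC_p})^{h\Fp^\times}$-cohomology. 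The apparent discrepancy between the degree-$np$ power-operation output and the degree-$n$ Frobenius output is absorbed by the natural inhomogeneity $T_p(\Sigma^n R) \simeq \Sigma^{np} T_p(R)$, which manifests in the Tate-cohomology presentation as an invertible shift by a power of the Euler class.

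For $p = 2$, one then invokes the classical formula (essentially the defining property of the Steenrod squares via extended powers): for $x \in H^n(X; \bF_2)$,
\[
P_2(x) \;=\; \sum_{i \geq 0} Sq^i(x) \cdot u^{n-i} \;\in\; H^{2n}(X \times BC_2; \bF_2) \;=\; H^*(X; \bF_2)[u],
\]
where $u \in H^1(BC_2; \bF_2)$ is the fundamental class. The canonical map to Tate cohomology is the inclusion $\bF_2[u] \hookrightarrow \bF_2[u, u^{-1}]$, and incorporating the shift described above (multiplication by $u^{-n} = s^n$ under the change of variable $s = u^{-1}$) converts $\sum_i Sq^i(x) u^{n-i}$ into $\sum_i Sq^i(x) s^i$, which is the formula of the proposition.

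For odd $p$ the argument is formally analogous: the classical expansion of $P_p(x)$ in the basis of $H^*(BC_p; \Fp) = \Fp[t] \otimes \Lambda(e)$, combined with the observation that taking $\Fp^\times$-invariants eliminates all coefficients except those of $t^{(p-1)i}$ and $t^{(p-1)i} e$ — which the classical theory identifies with $P^i(x)$ and $\beta P^i(x)$ up to normalization units — yields the analogous closed-form expression after the same Tate-multiplicative shift. The main obstacle throughout is the careful bookkeeping of degree conventions, especially the reconciliation of the $np$-versus-$n$ degree discrepancy via the invertible Euler class in the Tate cohomology; once a consistent sign/shift convention is adopted the formulas fall out transparently from the known expansion of $P_p$.
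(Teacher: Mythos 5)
Your proposal is correct and is essentially the paper's own argument: reduce to the (suspended) power operation via the Frobenius-equals-power-operation comparison, invoke the classical identification of $P_p$ on $H\Fp$ with the total Steenrod operation, and absorb the degree-$np$-versus-$n$ discrepancy by an invertible Euler-class unit in Tate cohomology. The one caveat is that ``the same argument plus naturality under suspension'' compresses the real content of the positive-degree extension --- in the paper this is Theorem~\ref{thm:tatepoweroperationgeneral}, whose proof needs the representation sphere $\Sigma^{n\cdot\nat}$, twisted cohomology of Thom spectra, and the equivalence $\Sigma^n(R^{tC_p})^{h\Fp^\times}\simeq((\Sigma^{n\cdot\nat}R)^{tC_p})^{h\Fp^\times}$ of Lemma~\ref{leminvertible}; relatedly, your displayed ``inhomogeneity'' $T_p(\Sigma^nR)\simeq\Sigma^{np}T_p(R)$ holds for $R=H\Fp$ only via the mod~$p$ Thom isomorphism (exactness of $T_p$ gives a shift by $n$, not $np$, and the two identifications differ by precisely the Euler-class unit you then divide out), so your bookkeeping is right but the equivalence as stated would fail for a general ring spectrum.
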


In order to prove this result we have to employ power operations in positive degrees and prove an analogue of  Corollary \ref{cor:tatepoweroperation} for those. Let us start by explaining a conceptual framework for power operations in positive degree that we have learned from Charles Rezk. Let $R$ be an $\E_\infty$-ring spectrum. Then we form the underlying space of the $n$-fold suspension $\Omega^\infty (\Sigma^n R)$ and take its space level diagonal 
$$
\Omega^{\infty} \Sigma^nR \to \left( \left(\Omega^{\infty} \Sigma^nR\right)^{\times p} \right)^{h\Sigma_p}.
$$
Since $\Omega^\infty$ is lax symmetric monoidal and limit preserving there is a canonical map
$$
 \left( \left(\Omega^{\infty} \Sigma^nR\right)^{\times p} \right)^{h\Sigma_p} \to   \Omega^{\infty}\left((\Sigma^nR)^{\otimes p}  \right)^{h\Sigma_p}
$$
Now $(\Sigma^n R)^{\otimes p}$ is equivalent to $\Sigma^{np}R^{\otimes p}$ as a spectrum, but $\Sigma_p$ acts on the suspension coordinates. In fact as a spectrum with $\Sigma_p$-action this is equivalent to the tensor product of $R^{\otimes p}$ with the representation sphere\footnote{The representation sphere $S^V$ for a real representation  $V$ is the one point compactification of $V$. It has a canonical basepoint, namely the added point $\infty \in S^V$.} $S^{n \cdot \nat}$ where $n \cdot \nat$ denotes the $n$-fold sum of the natural representation $\nat$ of $\Sigma_p$ on $\mathbb{R}^p$. We will denote the tensor product with the suspension spectrum $\Sigma^\infty S^{n \cdot \nat}$ of this representation sphere by $\Sigma^{n \cdot \nat}$ so that we find
$$
 \Omega^{\infty}\left((\Sigma^nR)^{\otimes p}  \right)^{h\Sigma_p} \simeq \Omega^\infty ( \Sigma^{n \cdot \nat} R^{\otimes p})^{h \Sigma_p}.
$$

Now we can postcompose with the multiplication of $R$, which is a $\Sigma_p$-equivariant map $R^{\otimes p} \to R$ to obtain a map
\begin{equation}\label{mappower}
P_p^n\colon \Omega^{\infty} \Sigma^nR \to \Omega^\infty ( \Sigma^{n \cdot \nat} R)^{h \Sigma_p}
\end{equation}
where $\Sigma_p$ acts trivially on $R$.
We compare this map with the $n$-fold suspension of the Tate valued Frobenius 
$$
\Sigma^n \varphi_R\colon \Sigma^n R \to \Sigma^n (R^{tC_p})^{h\Fp^\times}.
$$
We first compare the targets, which is done in the next lemma.

\begin{lemma}\label{leminvertible}
For every $\E_\infty$-ring spectrum $R$ and every $n \in \Z$ there is an equivalence  $\Sigma^n (R^{tC_p})^{h\Fp^\times} \xto{\simeq}  ((\Sigma^{n \cdot \nat} R)^{t C_p})^{h\Fp^\times}$.
\end{lemma}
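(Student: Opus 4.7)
The plan is to decompose the natural $\Sigma_p$-representation as $\nat = 1 \oplus \bar\nat$, where the first summand is spanned by $(1,\ldots,1)\in \R^p$ and $\bar\nat$ is its orthogonal complement (the $(p-1)$-dimensional reduced standard representation). Both summands restrict to representations of $C_p \rtimes \Fp^\times \subseteq \Sigma_p$, the first being trivial. Consequently $S^{n\nat} \simeq S^n \wedge S^{n\bar\nat}$ and
\[
(\Sigma^{n\nat} R)^{tC_p} = (R \otimes S^n \otimes S^{n\bar\nat})^{tC_p} \simeq \Sigma^n (R \otimes S^{n\bar\nat})^{tC_p}
\]
as $\Fp^\times$-equivariant spectra, so it suffices to produce a natural $\Fp^\times$-equivariant equivalence $(R \otimes S^{n\bar\nat})^{tC_p} \simeq R^{tC_p}$.

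For $n \geq 1$, the plan is to invoke the cofiber sequence of pointed $(C_p \rtimes \Fp^\times)$-spaces
\[
S(n\bar\nat)_+ \to S^0 \to S^{n\bar\nat},
\]
where $S(V)\subset V$ is the unit sphere. Tensoring with $R$ and applying the $\Fp^\times$-equivariant lax symmetric monoidal Tate functor $-^{tC_p}\colon \Sp^{B(C_p \rtimes \Fp^\times)} \to \Sp^{B\Fp^\times}$ from Corollary \ref{cor:tatelaxsymmgroup} yields a fiber sequence of $\Fp^\times$-equivariant spectra
\[
(R\otimes \Sigma^\infty_+ S(n\bar\nat))^{tC_p} \to R^{tC_p} \to (R\otimes S^{n\bar\nat})^{tC_p}.
\]
The crux is the vanishing of the left-hand term: since $\bar\nat^{C_p}=0$, the sphere $S(n\bar\nat)$ is a free $C_p$-CW-complex for $n\geq 1$, hence $\Sigma^\infty_+ S(n\bar\nat)\otimes R$ lies in the thick subcategory $\Sp^{BC_p}_\mathrm{ind}$ of Section \ref{sec:tatemonoidal} (each free $C_p$-cell contributes an induced summand, and $R$ is built from these by taking colimits with the trivial action on $R$). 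By Lemma \ref{lem:propindbg}(i), the Tate construction vanishes on $\Sp^{BC_p}_\mathrm{ind}$, so the middle map is the desired equivalence.

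The case $n=0$ is trivial. For $n<0$, one applies the case just established to the spectrum $S^{n\bar\nat}\otimes R$ (using the shift $-n>0$): one obtains $(S^{-n\bar\nat}\otimes S^{n\bar\nat}\otimes R)^{tC_p}\simeq (S^{n\bar\nat}\otimes R)^{tC_p}$, whose left-hand side reduces to $R^{tC_p}$ because $S^{n\bar\nat}$ is invertible in $\Sp^{B(C_p\rtimes \Fp^\times)}$. Finally, combining with the $\Sigma^n$-shift and taking $\Fp^\times$-homotopy fixed points yields the lemma. The only point requiring care — and the main ``hard part'' of the plan — is tracking $\Fp^\times$-equivariance throughout, but this is automatic since every ingredient (the decomposition $\nat=1\oplus\bar\nat$, the cofiber sequence on $n\bar\nat$, tensoring with $R$, and the Tate functor itself) is built from constructions that respect the full $(C_p\rtimes \Fp^\times)$-action.
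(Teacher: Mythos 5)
Your proof is correct, but it takes a genuinely different route from the paper. The paper observes that $((\Sigma^{n\cdot\nat}R)^{tC_p})^{h\Fp^\times}\simeq \widetilde{T}_p(\Sigma^n R)$ for the functor $\widetilde{T}_p\colon \Mod_R\to\Mod_R$, $M\mapsto ((M\otimes_R\cdots\otimes_R M)^{tC_p})^{h\Fp^\times}$ (since the $p$-fold relative tensor power of $\Sigma^n R$ is $\Sigma^{n\cdot\nat}R$ with its permutation action on the suspension coordinates), and then shows $\widetilde{T}_p$ is exact by the same ``cross-terms are induced'' argument as in Proposition~\ref{prop:tatediagonaladditive}; exactness immediately gives $\Sigma^n\widetilde{T}_p(R)\simeq\widetilde{T}_p(\Sigma^n R)$ uniformly for all $n\in\Z$. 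You instead split $\nat=1\oplus\bar\nat$ and run the classical isotropy-separation argument: the cofiber sequence $S(n\bar\nat)_+\to S^0\to S^{n\bar\nat}$ together with the vanishing of $-^{tC_p}$ on $R\otimes\Sigma^\infty_+S(n\bar\nat)$ (which lies in $\Sp^{BC_p}_{\ind}$ by Lemma~\ref{lem:propindbg}, as $S(n\bar\nat)$ is a finite free $C_p$-CW complex) shows $R^{tC_p}\simeq(R\otimes S^{n\bar\nat})^{tC_p}$, with the case $n<0$ handled by invertibility of the representation sphere. Both arguments are complete; the paper's is shorter given the machinery already in place and avoids the case split on the sign of $n$, while yours is more geometric and makes visible exactly why only the $C_p$-fixed line of $\nat$ survives the Tate construction. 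Your one-sentence dismissal of the $\Fp^\times$-equivariance issue is acceptable because every step is induced from a $(C_p\rtimes\Fp^\times)$-equivariant construction and the relevant Tate functor $\Sp^{B(C_p\rtimes\Fp^\times)}\to\Sp^{B\Fp^\times}$ of Corollary~\ref{cor:tatelaxsymmgroup} is exact, with the vanishing checked on underlying spectra.
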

\begin{proof}
Consider the functor 
$$
\widetilde{T_p}: \Mod_R \to \Mod_R \qquad M \mapsto  \big((M \otimes_R \ldots \otimes_R M)^{tC_p}\big)^{h\Fp^\times} \ .
$$ 
Then we have that 
$$((\Sigma^{n \cdot \nat} R)^{t C_p})^{h\Fp^\times} \simeq \widetilde{T_p}(\Sigma^n R)\ .$$
The functor $\widetilde{T}_p$ is exact, by the same argument as in Proposition \ref{prop:tatediagonaladditive}.
Thus there is an equivalence
$$
\Sigma^n \widetilde{T}_p(R) \to \widetilde{T}_p(\Sigma^n R).
$$
for every $n \in \Z$.
\end{proof}

\begin{lemma}\label{lempoweretc}
 For an $\E_\infty$-ring spectrum $R$, there is a commutative diagram 
\[
\xymatrix{
\Omega^\infty\Sigma^n R \ar[rr]^-{\Omega^\infty\Sigma^n \varphi_R} \ar[d]_-{P_p^n}&&  \Omega^\infty \Sigma^n (R^{tC_p})^{h\Fp^\times} \ar[d]^{\simeq} \\
\Omega^\infty ( \Sigma^{n \cdot \nat} R)^{h \Sigma_p} \ar[rr]^{\can} && \Omega^\infty ((\Sigma^{n \cdot \nat} R)^{t C_p})^{h\Fp^\times}
}
\]
of spaces.
\end{lemma}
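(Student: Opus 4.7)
The plan is to derive the commutativity of the square by applying Lemma~\ref{lem:liftingTate} to the underlying spectrum $\Sigma^n R$ (which does not require any multiplicative structure), and then post-composing with the $\Sigma_p$-equivariant multiplication $m\colon R^{\otimes p}\to R$, where $R$ is equipped with the trivial $\Sigma_p$-action. Concretely, Lemma~\ref{lem:liftingTate} applied to $X=\Sigma^n R$ provides a factorization of $\Omega^\infty(\Delta_p)\colon\Omega^\infty\Sigma^n R\to \Omega^\infty T_p(\Sigma^n R)^{h\Fp^\times}$ through
\[
\Omega^\infty\Sigma^n R \xto{\Delta^{\mathrm{sp}}} \bigl((\Omega^\infty\Sigma^n R)^{\times p}\bigr)^{h\Sigma_p} \to \Omega^\infty\bigl((\Sigma^n R)^{\otimes p}\bigr)^{h\Sigma_p} \to \Omega^\infty\bigl(((\Sigma^n R)^{\otimes p})^{hC_p}\bigr)^{h\Fp^\times} \xto{\can} \Omega^\infty T_p(\Sigma^n R)^{h\Fp^\times},
\]
where $\Delta^{\mathrm{sp}}$ is the space-level diagonal. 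All maps here are natural in the spectrum $\Sigma^n R$, so post-composing with the $\Sigma_p$-equivariant multiplication $m\colon R^{\otimes p}\to R$ (applied inside tensor powers after the identification below) makes sense and preserves commutativity with the entire factorization.

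Second, I would identify $(\Sigma^n R)^{\otimes p}\simeq \Sigma^{n\cdot\nat}R^{\otimes p}$ as $\Sigma_p$-equivariant spectra, which is just the bookkeeping of permuting the $p$ suspension coordinates. Under this identification, the composite of the space-level diagonal together with the passage to the $\Sigma_p$-homotopy fixed points of the $p$-fold tensor power, followed by $m$, is by construction the map $P_p^n$ of \eqref{mappower}. The canonical map $(-)^{h\Sigma_p}\to ((-)^{tC_p})^{h\Fp^\times}$ (which coincides with the composite of the restriction to $C_p\rtimes\Fp^\times\subseteq\Sigma_p$ and the usual Tate projection) then yields the bottom-right corner of the square. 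This gives the lower route.

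Third, one must check that the upper route in the square agrees with the same map. By definition $\Sigma^n\varphi_R$ is $\Sigma^n(m^{tC_p})\circ \Sigma^n\Delta_p$, refined through $\Fp^\times$-fixed points as in Corollary~\ref{refinement}. Via the equivalence of Lemma~\ref{leminvertible}, the term $\Sigma^n(R^{tC_p})^{h\Fp^\times}$ is identified with $\widetilde T_p(\Sigma^n R)=((\Sigma^{n\cdot\nat}R)^{tC_p})^{h\Fp^\times}$, and the naturality of $\widetilde T_p$ in its $R$-module argument intertwines $\Sigma^n m^{tC_p}$ with the map on Tate constructions induced by the $\Sigma_p$-equivariant multiplication $\Sigma^{n\cdot\nat}R^{\otimes p}\to \Sigma^{n\cdot\nat}R$. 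Combining with step one, the upper route equals the lower route.

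The main obstacle is the coherence of the equivalence in Lemma~\ref{leminvertible} with post-composition by the multiplication $m$. The cleanest way to handle it is to work inside the $\infty$-category $\Mod_R$ and argue that the functor $\widetilde T_p\colon \Mod_R\to \Mod_R$ is exact (as in the proof of Lemma~\ref{leminvertible}), so the identification $\Sigma^n\widetilde T_p(M)\simeq \widetilde T_p(\Sigma^n M)$ is natural in the $R$-module $M$; applying this naturality to the multiplication map $m\colon R^{\otimes p}\to R$, viewed as a $\Sigma_p$-equivariant $R$-module map after the identification of the second step, produces exactly the compatibility we need. The remainder of the argument is unwinding the definitions in diagram form.
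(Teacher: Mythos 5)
Your proposal is correct and follows essentially the same route as the paper's proof: both identify $\Sigma^n\varphi_R$ with $m\circ\Delta_{\Sigma^nR}$ using the exactness argument underlying Lemma~\ref{leminvertible}, and both then invoke Lemma~\ref{lem:liftingTate} to factor the Tate diagonal of $\Sigma^nR$ through the space-level diagonal, recognizing the resulting composite as $\can\circ P_p^n$. The only cosmetic difference is that you organize the compatibility of the suspension identification with the multiplication map via exactness of $\widetilde T_p$ on $\Mod_R$, whereas the paper records it as a commutative square holding "by construction"; these amount to the same naturality statement.
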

 \begin{proof}
 The $n$-fold supension of the Tate valued Frobenius is by definition obtained from the $n$-fold supension of the Tate valued diagonal 
 $$\Sigma^n \Delta_R: \Sigma^n R \to \Sigma^n ((R^{\otimes p})^{tC_p})^{h\Fp^\times}$$ 
 by postcomposition with the $p$-fold multiplication of $R$ (here our notation slightly differs from the notation for the Tate valued diagonal used before). By the exactness of the source and target this is the same as the diagonal of the $n$-fold suspension, which is a morphism
  $$\Delta_{\Sigma^n R}: \Sigma^n R \to (( \Sigma^{n \cdot \nat}  R^{\otimes p})^{tC_p})^{h\Fp^\times}$$
  and the targets of the two maps are identified by the canonical equivalence 
$$ \Sigma^n ((R^{\otimes p})^{tC_p})^{h\Fp^\times} \xto{\simeq} (( \Sigma^{n \cdot \nat}  R^{\otimes p})^{tC_p})^{h\Fp^\times}.$$
This equivalence fits by construction into a commutative square
$$
\xymatrix{
\Sigma^n ((R^{\otimes p})^{tC_p})^{h\Fp^\times} \ar[d]^m \ar[r]^\simeq & (( \Sigma^{n \cdot \nat}  R^{\otimes p})^{tC_p})^{h\Fp^\times}\ar[d]^m \\
 \Sigma^n (R^{tC_p})^{h\Fp^\times} \ar[r]^{\simeq} & ((\Sigma^{n \cdot \nat} R)^{t C_p})^{h\Fp^\times}
}
$$
where  the vertical maps are the multiplications of $R$ and the lower horizontal map is the map of Lemma \ref{leminvertible}.
Now we can use Lemma \ref{lem:liftingTate} to see that the space level diagonal of $\Omega^\infty(\Sigma^n R)$ factors the Tate diagonal of $\Sigma^n R$.
Thus we get that 
$$\can \circ P_n^k\colon \Omega^\infty\Sigma^n R  \to  \Omega^\infty ((\Sigma^{n \cdot \nat} R)^{t C_p})^{h\Fp^\times}$$ 
is equivalent to $\Omega^\infty$ of the  composition 
$$
m \circ \Delta_{\Sigma^n R}\colon \Sigma^n R \to ((\Sigma^{n \cdot \nat} R)^{t C_p})^{h\Fp^\times} \ .
$$
By what we have said above the last map is canonically equivalent to the clockwise composition in the diagram of the lemma.
 \end{proof}
 
 Now we claim that evaluation of the map 
 $
 P_p^n\colon \Omega^{\infty} \Sigma^nR \to \Omega^\infty ( \Sigma^{n \cdot \nat} R)^{h \Sigma_p}
 $ 
 on a space $X$ is the classical power operation in positive degrees. To see this we give a slightly different description of homotopy classes of maps $[\Sigma^\infty_+ X, (\Sigma^{n \cdot \nat} R)^{h \Sigma_p}]$ in terms of twisted $R$-cohomology of $X \times B\Sigma_p$. 

First if $V \to X$ is a real vector bundle over a topological space\footnote{Here, as usual, we implicitly consider the topological space $X$ as an object in the $\infty$-category of spaces. Then a rank $n$ vector bundle over $X$ really is a map $X \to BO(n)$. We use the total space terminology $V \to X$ just as a placeholder and to make contact to classical terminology.} we will write $X^{-V}$ for the Thom spectrum of the negative of this bundle. For the definition of this spectrum see the discussion in the proof of Lemma \ref{lemThom} below. Then for a spectrum $R$ we define the $V$-twisted $R$-cohomology by $R^V(X) :=[X^{-V}, R]$. For $V$ the trivial rank $n$-bundle this recovers degree $n$-cohomology of $R$. 

\begin{lemma}\label{lemThom}
There is an isomorphism
$$
R^{n \cdot \nat}(X \times B\Sigma_p) \cong \big[\Sigma^\infty_+ X, (\Sigma^{n \cdot \nat} R)^{h \Sigma_p}\big]
$$
where $n\cdot  \nat$ is considered as a vector bundle over $B \Sigma_p \times X$ by pulling back the bundle associated to the representation over $B\Sigma_p$ along the projection. 
\end{lemma}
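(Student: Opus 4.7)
The plan is to reduce the statement to a standard adjunction computation in Borel-equivariant homotopy theory, after identifying the relevant Thom spectrum as a homotopy orbit spectrum. First I would observe that since the bundle $n\cdot\nat$ over $X\times B\Sigma_p$ is pulled back from $B\Sigma_p$ along the projection $\pi\colon X\times B\Sigma_p\to B\Sigma_p$, a standard property of Thom spectra gives a natural equivalence
\[
(X\times B\Sigma_p)^{-n\cdot\nat}\simeq \Sigma^\infty_+ X\otimes (B\Sigma_p)^{-n\cdot\nat}.
\]
Using the tensor/hom adjunction in $\Sp$, this reduces the problem to producing a natural equivalence
\[
\mathrm{map}\bigl((B\Sigma_p)^{-n\cdot\nat},R\bigr)\simeq (\Sigma^{n\cdot\nat}R)^{h\Sigma_p}.
\]

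The main content is to identify the Thom spectrum $(B\Sigma_p)^{-n\cdot\nat}$ intrinsically. The natural representation $n\cdot\nat$ of $\Sigma_p$ determines a Borel-equivariant sphere $\Sigma^\infty S^{n\cdot\nat}\in\Sp^{B\Sigma_p}$ whose homotopy orbits are by definition the (positive) Thom spectrum $(B\Sigma_p)^{n\cdot\nat}$. Since $\Sigma^\infty S^{n\cdot\nat}$ is an invertible object of $\Sp^{B\Sigma_p}$ with inverse $\mathbb{S}^{-n\cdot\nat}$, the same construction for the virtual bundle $-n\cdot\nat$ gives
\[
(B\Sigma_p)^{-n\cdot\nat}\simeq (\mathbb{S}^{-n\cdot\nat})_{h\Sigma_p}.
\]
The most conceptual way to see this identification is through the $\infty$-categorical Thom spectrum formalism: the virtual bundle $-n\cdot\nat$ is classified by a map $B\Sigma_p\to \mathrm{Pic}(\Sp)$, and the associated Thom spectrum is the colimit of the corresponding functor $B\Sigma_p\to \Sp$, which by unstraightening and the definition of homotopy orbits is exactly $(\mathbb{S}^{-n\cdot\nat})_{h\Sigma_p}$.

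Once this identification is in place, the proof concludes by the adjunction between the forgetful functor $\Sp\to \Sp^{B\Sigma_p}$ (giving $R$ the trivial action) and the homotopy orbit functor, combined with Spanier--Whitehead duality in $\Sp^{B\Sigma_p}$:
\[
\mathrm{map}\bigl((\mathbb{S}^{-n\cdot\nat})_{h\Sigma_p},R\bigr)\simeq \mathrm{map}(\mathbb{S}^{-n\cdot\nat},R)^{h\Sigma_p}\simeq (\Sigma^{n\cdot\nat}R)^{h\Sigma_p},
\]
where the last equivalence uses $\mathrm{map}(\mathbb{S}^{-n\cdot\nat},R)\simeq R\otimes \Sigma^\infty S^{n\cdot\nat}=\Sigma^{n\cdot\nat}R$ in $\Sp^{B\Sigma_p}$ (with $R$ carrying the trivial $\Sigma_p$-action).

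The only step requiring real care is the identification of $(B\Sigma_p)^{-n\cdot\nat}$ with the Borel homotopy orbit $(\mathbb{S}^{-n\cdot\nat})_{h\Sigma_p}$; all other steps are formal manipulations with adjoints. This step is straightforward in any $\infty$-categorical Thom spectrum formalism (for instance, Ando--Blumberg--Gepner--Hopkins--Rezk), but needs to be extracted cleanly since the lemma is phrased in classical language of twisted cohomology. I would expect that in a complete write-up the author either takes the formula $R^V(X)=[\Sigma^\infty_+ X,(\Sigma^V R)^{hG}]$ as the definition of twisted cohomology for $G$-equivariantly-classified virtual bundles (and notes this agrees with the classical definition via the standard representability argument), or references the general parametrized/Thom spectrum yoga to get the equivalence directly.
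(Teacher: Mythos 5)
Your proof is correct and follows essentially the same route as the paper's: decompose $(X\times B\Sigma_p)^{-n\cdot\nat}$ as $\Sigma^\infty_+X\otimes(B\Sigma_p)^{-n\cdot\nat}$, apply the tensor/hom adjunction, identify $(B\Sigma_p)^{-n\cdot\nat}$ as $p_!(\bS^{-n\cdot\nat})$ under the dictionary between parametrized spectra over $B\Sigma_p$ and spectra with $\Sigma_p$-action, and conclude via $p_!\dashv p^\ast\dashv p_\ast$ with $p_\ast=(-)^{h\Sigma_p}$. The paper devotes most of its proof to the comparison you flag as the delicate point (that the $\infty$-categorical Thom spectrum $p_!\bS^{-V}$ agrees with the classical one), checking it by hand via the skeletal/stabilization argument rather than citing the ABGHR formalism.
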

\begin{proof}
To see this we recall a possible definition of $X^{-V}$ following \cite{MR3286898,MR3252967}. First one considers the spherical fibration $S^V \to X$ associated to $V$ by taking the fiberwise one-point compactifications. Taking fiberwise suspension spectra (where the basepoints at infinity are taken into account) yields a parametrized spectrum $\bS^V$ over $X$\footnote{We think of parametrized spectra over a space $X$ as functors $ X \to \Sp$. As an $\infty$-category $\Sp^X := \Fun(X, \Sp)$.} and we can take the tensor inverse $\bS^{-V}$ in parametrized spectra over $X$. Now for the terminal map $p: X \to \pt$ there is a pullback morphism $p^*: \Sp  \to \Sp^X$ which has a left adjoint denoted $p_!: \Sp^{X} \to \Sp$. Then $X^{-V} := p_!(\bS^{-V})$. To see that this agrees with the usual definition of Thom spectra we note the following facts:
\begin{enumerate}
\item
the usual Thom space $X^V$ of a vector bundle $V$ over $X$ is just $p_!(S^V)$ where $p_!$ now denotes the unstable version, i.e. the left adjoint to the functor $p^*: \calS_\ast \to (\calS_{/X})_\ast$. This left adjoint just identifies the section at infinity of $S^V$, which is the way the Thom space is built.
\item
The usual definition of the associated Thom spectrum (i.e the Thom spectrum of the positive bundle $V$) is  the suspension spectrum $\Sigma^\infty X^V$ where the basepoint is taken into account (which we will abusively also denote by $X^V$ in general but we do not drop the $\Sigma^\infty$ for the moment). This is also the same as $p_! \bS^V$ since the stable and unstable version of $p_!$ commute with the suspension functors, more precisely the diagram 
$$
\xymatrix{
(\calS_{/X})_\ast \ar[r]^{p_!} \ar[d]_{\Sigma^\infty_X}& \calS_\ast \ar[d]^{\Sigma^\infty}  \\
\Sp^X\ar[r]^{p_!} & \Sp
} 
$$
in $\Cat_\infty$ commutes. This can be seen by passing to the diagram of right adjoints.
\item The classically defined Thom spectrum $\Sigma^\infty X^{V \oplus \R^n}$ is equivalent to the $n$-fold suspension of the Thom spectrum $\Sigma^\infty X^V$. The same is true for $p_{!} \bS^{V \oplus \R^n}$ since $S^{V \oplus \R^n} \simeq \Sigma^n S^V$ in $\Sp^X$ and since $p_!$ is left adjoint. 
\item
Finally the spectrum $X^{-V}$ would be classically defined by finding a vector bundle $W$ over $X$ such that $V \oplus W\cong \R^n$ (i.e. that $-V = W -\R^n$) and then setting $X^{-V} := \Sigma^{-n} \Sigma^{\infty}X^W$. Such a $W$ exists for $X$ a finite CW complex but not in general, thus one has to take the colimit over finite approximations of $X$. But we first assume that $W$ exists. Then the spectrum $X^{-V}$ is characterized by the property that suspending $n$ times we get $\Sigma^\infty X^W$. The same is true for $p_! \bS^{-V}$ since we have
$$
\Sigma^n p_! \bS^{-V} \simeq p_!( \Sigma^n \bS^{-V}) \simeq p_!(\bS^{-V + \R^n}) \simeq p_!\bS^W.
$$
As a last step we note that for a colimit $X \simeq \colim X_i$ in $\calS$ we find that  both descriptions of Thom spectra are compatible with colimits.
\end{enumerate}

Now we try to compute $R^V(X) = [X^{-V}, R]$ for some space $X$. Since $X^{-V} = p_! \bS^{-V}$ we find that by adjunction such homotopy classes are the same as maps in the homotopy category $\Sp^X$ from $\bS^{-V}$ to $p^*R$ which is the constant parametrized spectrum over $X$ with value $R$. But then we find
$$
\map (\bS^{-V}, p^*R)= \map(\bS^0, \bS^V \otimes_X p^*R) = p_*(\bS^V \otimes_X p^*R)
$$
where $p_*: \Sp^X \to \Sp$ is the right adjoint to $p^*$ and $\otimes_X$ is the pointwise tensor product of parametrized spectra over $X$. Now we specialize to the case of interest:
\begin{align*}
R^{n \cdot \nat}(X \times B\Sigma_p) &= [(X \times B\Sigma_p)^{-n \cdot \nat}, R] \\
& \cong [\Sigma^\infty_+ X \otimes B\Sigma^{-n \cdot \nat}, R] \\
&\cong [\Sigma^\infty_+ X, \map ( B\Sigma^{-n \cdot \nat} p^*R)] \\
& \cong [\Sigma^\infty_+X, p_*(\bS^{n \cdot \nat} \otimes_{B\Sigma_p} p^*R)] 
\end{align*}
where $p: B\Sigma_p \to \pt$. But by definition parametrized spectra over $B\Sigma_p$ are the same as spectra with $\Sigma_p$-action and under this identification $p_*$ corresponds to taking homotopy fixed points, so that $p_*(\bS^{n \cdot \nat} \otimes_{B\Sigma_p} p^*R)$ is equivalent to $(\Sigma^{n \cdot \nat} R)^{h \Sigma_p}$. 
\end{proof}

Now we can describe the $p$-th power operation in degree $n$ following the formulation that we learned from Rezk as the map 
$$
R^{n}(X) \to R^{n \cdot \nat}(X)
$$
described as follows: represent a degree $n$-cohomology class on $X$ by a map $X^{-n} \to R$ where $n$ is short for the trivial rank $n$-bundle over $X$ (i.e. $X^{-n}$ is just the $n$-fold desuspension of $\Sigma^\infty_+ X$). Take the $p$-th extended power
\begin{equation}\label{powermap}
\big((X^{-n})^{\otimes p}\big)_{h\Sigma_p} \to (R^{\otimes p})_{h\Sigma_p} 
\end{equation}
The source is equivalent to the Thom spectrum $(D_pX)^{-W}$ where $D_pX$ is the space $(X \times \ldots \times X)_{h\Sigma_p}$ and $W$ is the vector bundle obtained from the $p$-fold external sum of the trivial rank $n$-bundle with itself (with its permutation action). As a bundle $W$ is pulled back from the bundle $n \cdot \nat$ over the space $B\Sigma_p$ along the canonical map $D_p X \to B\Sigma_p$. Thus if we pull this bundle further back along the diagonal $X \times B\Sigma_p \to D_p X$ we get the bundle also denoted $n \cdot \nat$, but now considered as a bundle over $X \times B\Sigma_p$ (following the notation from before). Therefore we obtain a canonical map
$$
(X \times B\Sigma_p)^{-n \cdot \nat} \to (D_pX)^{-W}.
$$
Now we take the map in \eqref{powermap} and precompose with the map just described and postcompose with the multiplication map $(R^{\otimes p})_{h\Sigma_p} \to R$ to obtain an element in $R^{n \cdot \nat}(X)$. This describes the power operation $P^n_p: R^{n}(X) \to R^{n \cdot \nat}(X)$.\footnote{Note that we could have more generally started with a class in $R^V(X)$ for some non-trivial bundle $V \to X$ and would have obtained a class in $R^{V \cdot \nat}(X)$ but we will not need this extra generality here.}
The clash of notation between this power operation with the space level map $P_p^n$ in \eqref{mappower} is justified by the next result.
\begin{lemma}\label{poweridentification}
The power operation $R^n(X) \to R^{n \cdot \nat}(X)$ (as just described) is given under the identifications $R^n(X) \cong [X, \Sigma^n R]$ and $R^{n \cdot \nat}(X) \cong [X, (\Sigma^{n \cdot \nat} R)^{h \Sigma_p}]$ by postcomposition with the map $P_p^n$ as described before Lemma \ref{lempoweretc}.
\end{lemma}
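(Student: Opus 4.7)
The plan is to unwind both constructions using the $(\Sigma^\infty_+, \Omega^\infty)$-adjunction and the Thom spectrum identification from Lemma \ref{lemThom}, and show that they produce the same map out of $\Sigma^\infty_+X$. A class in $R^n(X)$ is represented equivalently by $\alpha : X^{-n} \to R$, $\tilde\alpha : \Sigma^\infty_+ X \to \Sigma^n R$, or $f : X \to \Omega^\infty\Sigma^n R$, and I want to show that applying the classical extended-power construction to $\alpha$ yields, under Lemma \ref{lemThom}, the same map $\Sigma^\infty_+X \to (\Sigma^{n\cdot\nat} R)^{h\Sigma_p}$ as the one induced by postcomposing $f$ with $P_p^n$.

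First, I would trace through the classical construction on the spectrum side. Given $\alpha$, the $p$-fold tensor power $\alpha^{\otimes p}: (X^{-n})^{\otimes p} \to R^{\otimes p}$ is naturally $\Sigma_p$-equivariant. Passing to homotopy orbits and composing with multiplication, and then precomposing with the diagonal-induced map $(X \times B\Sigma_p)^{-n\cdot \nat} \to ((X^{-n})^{\otimes p})_{h\Sigma_p} \simeq (D_pX)^{-W}$, gives a map $(X \times B\Sigma_p)^{-n\cdot \nat} \to R$. Under Lemma \ref{lemThom}, this corresponds (by the adjunction between $p_!$ and $p^*$ for $p : B\Sigma_p \to \mathrm{pt}$, which on the $\Sigma_p$-side is the $(-_{h\Sigma_p}, -^{h\Sigma_p})$-adjunction) to a map $\Sigma^\infty_+X \to (\Sigma^{n\cdot \nat} R)^{h\Sigma_p}$, whose adjoint is a map $X \to \Omega^\infty(\Sigma^{n\cdot \nat} R)^{h\Sigma_p}$.

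Next, I would identify this adjoint explicitly. By naturality of the tensor-hom adjunction and the fact that the diagonal $X \times B\Sigma_p \to D_pX$ is the counit applied to $X$ of the adjunction between the constant functor $X \mapsto X \times B\Sigma_p$ and homotopy orbits, the resulting map $X \to \Omega^\infty(\Sigma^{n\cdot\nat} R)^{h\Sigma_p}$ is forced to be the composite
\[
X \xrightarrow{\Delta} (X^{\times p})^{h\Sigma_p} \xrightarrow{(f^{\times p})^{h\Sigma_p}} ((\Omega^\infty\Sigma^n R)^{\times p})^{h\Sigma_p} \to \Omega^\infty((\Sigma^n R)^{\otimes p})^{h\Sigma_p} \xrightarrow{m} \Omega^\infty(\Sigma^{n\cdot\nat} R)^{h\Sigma_p},
\]
where the unlabelled arrow is the lax symmetric monoidal structure map of $\Omega^\infty$, and the identification $(\Sigma^n R)^{\otimes p} \simeq \Sigma^{n\cdot\nat} R^{\otimes p}$ is used to recognise the target. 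But this is exactly the definition of postcomposition with $P_p^n$.

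The main obstacle is purely bookkeeping: namely, checking that the $\Sigma_p$-equivariant suspension isomorphism $(\Sigma^n R)^{\otimes p} \simeq \Sigma^{n\cdot\nat} R^{\otimes p}$ (which underlies the target of $P_p^n$) is exactly the identification used to pass between the Thom spectrum $((X^{-n})^{\otimes p})_{h\Sigma_p} \simeq (D_pX)^{-W}$ and its dual description via Lemma \ref{lemThom}. Once this is verified, agreement of the two constructions reduces to the compatibility of the space-level diagonal with the lax symmetric monoidal structure of $\Sigma^\infty_+$, which is built into the construction of the latter. The fact that both maps factor through the multiplication $R^{\otimes p} \to R$ via homotopy $\Sigma_p$-orbits makes their agreement essentially tautological after these identifications.
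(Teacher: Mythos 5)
Your proposal is correct and follows essentially the same route as the paper: both arguments unwind the extended-power construction through the adjunctions relating maps out of $X$, $\Sigma^\infty_+X$, and $X^{-n}$, use Lemma \ref{lemThom} together with the orbits/fixed-points adjunction to land in $[\Sigma^\infty_+X,(\Sigma^{n\cdot\nat}R)^{h\Sigma_p}]$, and reduce the comparison to the compatibility of the space-level diagonal with the lax symmetric monoidal structures and the equivariant identification $(\Sigma^n R)^{\otimes p}\simeq \Sigma^{n\cdot\nat}R^{\otimes p}$. The paper merely packages these identifications into a single large commutative diagram of homotopy-class sets whose left and right columns are the two power operations.
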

\begin{proof}
Consider the following commutative diagram
$$
\xymatrix{
[X, \Omega^\infty \Sigma^n R] \ar[r]^\simeq \ar[d]^{-^{\times p}} & [\Sigma^\infty_+ X, \Sigma^n R] \ar[d] \ar[d]^{-^{\otimes p}}\ar[r]^\simeq &  [X^{-n}, R] \ar[d]^{-^{\otimes p}}\\
\big[X^{\times p}, (\Omega^\infty \Sigma^n R)^{\times p}\big]^{\Sigma_p} \ar[r]\ar[d]^{\Delta^*} &  \big[(\Sigma^\infty_+ X)^{\otimes p} , (\Sigma^n R)^{\otimes p}\big]^{\Sigma_p} \ar[r]^-\simeq \ar[d]^{\Delta^*} & 
\big[(X^{-n})^{\otimes p}, R^{\otimes p}\big]^{\Sigma_p} \ar[d]^{\Delta^*}\\
\big[X, (\Omega^\infty \Sigma^n R)^{\times p}\big]^{\Sigma_p} \ar[r]\ar[d] &  \big[\Sigma^\infty_+ X , (\Sigma^nR)^{\otimes p}\big]^{\Sigma_p} \ar[r]^\simeq\ar[d]^{m_*} & \big[X^{-n \cdot \nat},R^{\otimes p}\big]^{\Sigma_p} \ar[d]^{m_*}  \\ 
[X, \Omega^\infty( \Sigma^{n \cdot \nat}) R]^{\Sigma_p} \ar[r]^\simeq &  [\Sigma^\infty_+ X , \Sigma^{n \cdot \nat}R]^{\Sigma_p} \ar[r]^\simeq & [X^{-n \cdot \nat}, R]^{\Sigma_p}
}
$$
where we denote by $[-,-]^{\Sigma_p}$ homotopy classes of $\Sigma_p$-equivariant maps between spaces resp. spectra with $\Sigma_p$-action (not to be confused with $\Sigma_p$-fixed points of the action on the set of non-equivariant homotopy classes). If there is no obvious $\Sigma_p$ action on an object, for example on $X$ or $R$ then it is considered as equipped with the trivial action. 

Now we note that the composition of the first two maps in the  left vertical column is equivalent to postcomposition with the space valued diagonal
$$
[X, \Omega^\infty \Sigma^n R]  \to \big[X, (\Omega^\infty \Sigma^n R)^{\times p}\big]^{\Sigma_p}.
$$
Since $\Sigma_p$ acts trivially on the source the target is also isomorphic to the group $\big[X, ((\Omega^\infty \Sigma^n R)^{\times p})^{h\Sigma_p} \big]$. Under this isomorphism the map is given by postcomposition with the refined diagonal $ \Omega^\infty \Sigma^n R \to  \big((\Omega^\infty \Sigma^n)^{\times p}\big)^{\Sigma_p}$. Thus by definition of $P_p^n$ we see that the left vertical composition in the big diagram is equivalent to postcomposition with $P_p^n$.  

For the right column in the big diagram we note that the composition of the last two maps
 is equivalent to the composition
$$
\big[(X^{-n})^{\otimes p}, R^{\otimes p}\big]^{\Sigma_p}  \to \big[(X^{-n})^{\otimes p}_{h\Sigma_p}, R^{\otimes p}_{h\Sigma_p}\big] \xto{(\Delta^*, m_*)} [(X^{-n\cdot \nat})_{h\Sigma_p}, R]
$$
which is straigtforward to check. Since $(X^{-n\cdot \nat})_{h\Sigma_p} \simeq (X \times B\Sigma_p)^{-n\cdot \nat}$ we see that the composition in the right column of the big diagram is given by the power operation $P_p^k$. Together this shows the claim.  
\end{proof}

For the next statement we will abusively denote the composition
$$
(\Sigma^{n \cdot \nat} R)^{h\Sigma_p} \to {((\Sigma^{n \cdot \nat} R)^{hC_p})^{h\Fp^\times}}  \xto{\can} ((\Sigma^{n \cdot \nat} R)^{tC_p})^{h\Fp^\times} \xto{\simeq} 
\Sigma^n (R^{tC_p})^{h\Fp^\times} 
$$
by `can' as well.  Here the first map forgets fixed point information and the last map is the inverse of the equivalence of Lemma \ref{leminvertible}.

\begin{thm}\label{thm:tatepoweroperationgeneral} For an $\E_\infty$-ring spectrum $R$ and every space $X$, postcomposition with the $n$-th suspension of the Tate valued  Frobenius $\Sigma^n R \to \Sigma^n (R^{tC_p})^{h\Fp^\times}$ is given by the composition
\[
R^n(X) \xto{P^n_p} R^{n \cdot \nat}(B\Sigma_p \times X) \cong [\Sigma^\infty_+X,(\Sigma^{n \cdot \nat} R)^{h\Sigma_p}] \xto{\can} [\Sigma^\infty_+X,\Sigma^n (R^{tC_p})^{h\Fp^\times}]
\]
where $P^n_p$ is the degree $n$ power operation for $R$ as discussed above.
\end{thm}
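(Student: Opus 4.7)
The proof will essentially be a combination of Lemma~\ref{lempoweretc} and Lemma~\ref{poweridentification}, both of which have already established the two halves of the required identification. The plan is as follows.

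First, I would apply the functor $[\Sigma^\infty_+ X, -]$ (equivalently $[X, \Omega^\infty(-)]$) to the commutative diagram of spaces provided by Lemma~\ref{lempoweretc}, namely
\[
\xymatrix{
\Omega^\infty \Sigma^n R \ar[rr]^-{\Omega^\infty \Sigma^n \varphi_R} \ar[d]_-{P^n_p} && \Omega^\infty \Sigma^n (R^{tC_p})^{h\Fp^\times} \ar[d]^{\simeq} \\
\Omega^\infty(\Sigma^{n\cdot \nat} R)^{h\Sigma_p} \ar[rr]^-{\can} && \Omega^\infty((\Sigma^{n\cdot \nat} R)^{tC_p})^{h\Fp^\times}.
}
\]
This yields a commutative diagram of abelian groups whose top row is precisely postcomposition with $\Omega^\infty \Sigma^n \varphi_R$, i.e.\ the map we want to understand.

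Second, I would identify the left vertical arrow with the classical power operation. Under the natural identification $[\Sigma^\infty_+ X, \Sigma^n R] \cong R^n(X)$ and the identification $[\Sigma^\infty_+ X, (\Sigma^{n\cdot \nat}R)^{h\Sigma_p}] \cong R^{n\cdot \nat}(B\Sigma_p \times X)$ established in Lemma~\ref{lemThom}, postcomposition with the space-level map $P^n_p$ agrees with the cohomological power operation $P^n_p\colon R^n(X) \to R^{n\cdot \nat}(B\Sigma_p \times X)$ by Lemma~\ref{poweridentification}.

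Third, I would identify the composite of the right vertical equivalence and the inverse of the equivalence from Lemma~\ref{leminvertible} with the canonical map in the statement of the theorem. More precisely, the definition of `can' in the statement of the theorem is exactly the composite
\[
(\Sigma^{n\cdot \nat} R)^{h\Sigma_p} \to ((\Sigma^{n\cdot \nat}R)^{hC_p})^{h\Fp^\times} \xto{\can} ((\Sigma^{n\cdot \nat}R)^{tC_p})^{h\Fp^\times} \xto{\simeq} \Sigma^n(R^{tC_p})^{h\Fp^\times},
\]
and the bottom row of the diagram from Lemma~\ref{lempoweretc} together with the inverse of the right vertical equivalence realizes precisely this composite; this follows by unwinding the construction of the bottom horizontal arrow in the proof of Lemma~\ref{lempoweretc}, where the map called `can' was built as the canonical map from homotopy fixed points to the Tate construction (after forgetting from $\Sigma_p$ down to $C_p$ and remembering the residual $\Fp^\times$-action).

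Putting these three identifications together, chasing $[x]\in R^n(X)$ around the diagram gives that $\Sigma^n\varphi_R \circ x$ corresponds to $\can\circ P^n_p(x)$, which is the claim. The only nontrivial step is the third one; however, it is essentially bookkeeping, amounting to checking that the equivalence of Lemma~\ref{leminvertible} is natural with respect to the lax symmetric monoidal transformation $-^{h\Sigma_p}\to (-^{tC_p})^{h\Fp^\times}$. The genuine mathematical content of the theorem is already contained in Lemma~\ref{lempoweretc}, so the main obstacle is purely notational; no further computation is required.
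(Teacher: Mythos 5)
Your argument is correct and is exactly the paper's proof: the paper simply states that the theorem is an immediate consequence of Lemma~\ref{lempoweretc} and Lemma~\ref{poweridentification}, which is precisely the combination you carry out (with Lemma~\ref{lemThom} supplying the identification of the twisted cohomology group). Your more detailed unwinding of the `can' map in the third step is a faithful expansion of what the paper leaves implicit.
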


\begin{proof}
This is an immediate consequence of Lemma \ref{lempoweretc} and Lemma \ref{poweridentification}.
\end{proof}

It is easy to see that for varying degrees the power operation $P_p^*: R^*(X) \to R^{* \cdot \nat}(B\Sigma_p \times X) $ is a map of graded rings using the usual monoidal properties of Thom spectra. Thus the last corollary is most effectively used by considering it as a factorization of the Frobenius as a map of graded rings. Unfortunately, it is sometimes not so easy to identify the graded ring $R^{* \cdot \nat}(B\Sigma_p \times X)$. However in the case of ordinary cohomology with $\Fp$ coefficients this is possible as we will see now. 

\begin{proof}[Proof of Proposition \ref{Steenrod}]
The result follows from Theorem \ref{thm:tatepoweroperationgeneral} and the fact that the Power operations of $H\Fp$ are given by the Steenrod operations, see e.g.~\cite{MR0145525} or 
\cite[Chapter VIII.2]{MR836132}. But let us be a bit more careful and identify the groups and the maps that show up in Theorem~\ref{thm:tatepoweroperationgeneral} for $H\Fp$.

The definition of the Steenrod operations in \cite[Chapter VIII.2]{MR836132} is the following. One considers the Power operation
\[
[\Sigma^\infty_+ X,\Sigma^n H\Fp]\to [\Sigma^\infty_+ X,(\Sigma^{n\cdot \nat} H\Fp)^{h\Sigma_p}]
\]
and composes with the projection
\[
[\Sigma^\infty_+ X,(\Sigma^{n\cdot \nat} H\Fp)^{h\Sigma_p}]\to [\Sigma^\infty_+ X,(\Sigma^{n\cdot \nat} H\Fp)^{hC_p}]\ .
\]
If $p$ is odd, then the natural representation $\nat$ of $C_p$ on $\R^p$ is oriented (with the usual orientation of $\R^p$ declaring $e_1, \ldots, e_p$ to be a positive basis) . Thus we have a Thom isomorphism $\Sigma^{n \cdot \nat} H\Fp \simeq \Sigma^{np} H\Fp$, which is an equivalence in $\Sp^{BC_p}$. Similarly, if $p=2$, then any vector bundle has a Thom isomorphism with $H\mathbb F_2$-coefficients, so again we find a Thom isomorphism $\Sigma^{n\cdot \nat} H\Fp\simeq \Sigma^{np} H\Fp$. Consequently, we get a map
\[
[\Sigma^\infty_+ X,\Sigma^n H\Fp]\to [\Sigma^\infty_+ X,(\Sigma^{n\cdot \nat} H\Fp)^{hC_p}]\cong [\Sigma^\infty_+ X,(\Sigma^{np} H\Fp)^{hC_p}]\ .
\]
Identifying $H\Fp^{hC_p}=\prod_{m\leq 0} H\Fp[m]$, this gives a map
\[
H^n(X,\Fp)\to \prod_{i\geq 0} H^{pn-i}(X,\Fp)
\]
whose components are by definition the Steenrod operations. In particular, composing further with the map $(\Sigma^{n\cdot \nat} H\Fp)^{hC_p}\to (\Sigma^{n\cdot \nat} H\Fp)^{tC_p}\cong \Sigma^n H\Fp^{tC_p}$ which is an isomorphism in cohomological degrees $\geq -np$, we get the desired description of $\varphi_p$ by Theorem~\ref{thm:tatepoweroperationgeneral}.
 \end{proof}

\section{Commutative algebras}\label{sec:commutative}

In this section we  given an easier description of the cyclotomic structure on $\THH(A)$ for an $\E_\infty$-ring spectrum $A$. First we show that if $A$ is an $\E_\infty$-ring spectrum, then $\THH(A)$ is again an $\E_\infty$-ring spectrum. This follows from the following construction.
\begin{construction} The $\infty$-categories $\Cycn$, $\Cycn_p$ of ($p$-)cyclotomic spectra have a natural symmetric monoidal structure. This follows from the following two observations:
\begin{altenumerate}
\item If $\calC$ is a symmetric monoidal $\infty$-category, then $\calC^S = \Fun(S,\calC)$ is naturally a symmetric monoidal $\infty$-category for any simplicial set $S$. Indeed, one can define the total space as $\Fun(S,\calC)^\otimes = \Fun(S,\calC^\otimes)\times_{\Fun(S,N(\Fin_\ast))} N(\Fin_\ast)$, and it is easy to verify that this defines a symmetric monoidal $\infty$-category, since exponentials of coCartesian fibrations are again coCartesian \cite[Corollary 3.2.2.12]{HTT}.
\item If $\calC$ and $\calD$ are symmetric monoidal $\infty$-categories, $F: \calC\to \calD$ is a symmetric monoidal functor and $G: \calC\to \calD$ is a lax symmetric monoidal functor, then $\mathrm{LEq}(F,G)$ has a natural structure as a symmetric monoidal $\infty$-category. Indeed, one can define the total space as the pullback
$$
\xymatrix{
\mathrm{LEq}(F,G)^\otimes \ar[r] \ar[d]& (\calD^\otimes)_{\id}^{\Delta^1}\ar[d]   \\
\calC^\otimes \ar[r]^{(F^\otimes, G^\otimes)} & \calD^\otimes \times \calD^\otimes
}
$$
where $(\calD^\otimes)_{\id}^{\Delta^1}\subseteq (\calD^\otimes)^{\Delta^1}$ is the full subcategory consisting of those morphisms which lie over identities in $N\Fin_*^{\Delta^1}$ and where $F^\otimes,G^\otimes: \calC^\otimes\to \calD^\otimes$
are the functors on total spaces. In particular 
$\mathrm{LEq}(F,G)^\otimes$ is a full subcategory of $\mathrm{LEq}
(F^\otimes,G^\otimes)$, and for any symmetric monoidal $\infty$-category $\calE$, giving a symmetric monoidal (resp.~lax symmetric monoidal) functor $\calE\to \mathrm{LEq}(F,G)$ is equivalent to giving a symmetric monoidal (resp.~lax symmetric monoidal) functor $H: \calE\to \calC$ together with a lax symmetric monoidal transformation $F\circ H\to G\circ H$.
\end{altenumerate} 

Moreover, $\Alg_{\E_1}(\Sp)$ is naturally a symmetric monoidal $\infty$-category. It follows from the discussion of lax symmetric monoidal structures on all intervening objects in Sections~\ref{sec:tatediag}, \ref{sec:tatediagfunc} and \ref{sec:thhnaive} that the functor
\[
\THH: \Alg_{\E_1}(\Sp)\to \Cycn
\]
is naturally a lax symmetric monoidal functor. In fact, it is symmetric monoidal as this can be checked on the underlying spectrum, where one uses that geometric realizations (or more generally sifted colimits) commute with tensor products in a presentably symmetric monoidal $\infty$-category. In particular, $\THH$ maps $\E_\infty$-algebras to $\E_\infty$-algebras. Moreover, we recall that
\[
\Alg_{\E_\infty}(\Sp) = \Alg_{\E_\infty}(\Alg_{\E_1}(\Sp))\ ,
\]
so indeed $\THH(A)$ is an $\E_\infty$-algebra in cyclotomic spectra if $A$ is an $\E_\infty$-algebra. Concretely, this means that $\THH(A)$ is a $\T$-equivariant $\E_\infty$-algebra in spectra together with a $\T/C_p\cong \T$-equivariant map $\varphi_p: \THH(A)\to \THH(A)^{tC_p}$ of $\E_\infty$-algebras.

Moreover, we note that the inclusion of the bottom cells in the cyclic objects define a commutative diagram
\[\xymatrix{
A\ar[r]\ar[d]^{\Delta_p} & \THH(A)\ar[d]^{\varphi_p}\\
(A\otimes\ldots\otimes A)^{tC_p}\ar[r] & \THH(A)^{tC_p}\ .
}\]
In fact, one has a lax symmetric monoidal functor from $\Alg_{\E_1}(\Sp)$ to the symmetric monoidal $\infty$-category of such diagrams. In particular, if $A$ is an $\E_\infty$-algebra, then all maps are $\E_\infty$-ring maps.
\end{construction}

Recall the following fact:

\begin{proposition}[McClure-Schw\"anzl-Vogt \cite{MR1473888}]\label{thhcommutative}
For an $\E_\infty$-ring spectrum $A$, the map $A \to \THH(A)$ is initial among all maps from $A$ to an $\E_\infty$-ring spectrum equipped with a $\T$-action (through $\E_\infty$-maps).
\end{proposition}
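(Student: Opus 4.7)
The plan is to identify $\THH(A)$, as a $\T$-equivariant $\E_\infty$-ring, with the tensor $A \otimes S^1$ computed in $\mathrm{CAlg}(\Sp) = \Alg_{\E_\infty}(\Sp)$, where $S^1 \simeq \Omega B\T$ carries its rotation $\T$-action. Once this identification is in place, the universal property asserted in the proposition follows formally, since $A \otimes S^1$ is the value at $A$ of the left adjoint to the forgetful functor $U: \mathrm{CAlg}(\Sp)^{B\T} \to \mathrm{CAlg}(\Sp)$, and the unit $A \to U(A \otimes S^1)$ corresponds to the canonical map $A \to \THH(A)$.

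First I would establish the identification $\THH(A) \simeq A \otimes S^1$ in $\mathrm{CAlg}(\Sp)^{B\T}$. In $\mathrm{CAlg}(\Sp)$ tensor product coincides with coproduct, so the cyclic bar construction $n \mapsto A^{\otimes (n+1)}$ is canonically the simplicial $\E_\infty$-ring obtained by tensoring $A$ levelwise with the standard simplicial circle $S^1_\bullet : [n] \mapsto \{0,\dots,n\}$, with cyclic face and degeneracy operations. Passing to geometric realizations, and using that the forgetful functor $\mathrm{CAlg}(\Sp) \to \Sp$ preserves sifted colimits (\cite[Proposition 3.2.3.1]{HA}), yields $A \otimes S^1 \simeq \THH(A)$ as underlying $\E_\infty$-rings. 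The cyclic structure on the bar construction refines the simplicial structure of $S^1_\bullet$ to a cyclic structure, which by Proposition~\ref{prop:geomrealtaction} induces the rotation $\T$-action on $|S^1_\bullet| \simeq S^1$; this promotes the equivalence to an equivalence in $\mathrm{CAlg}(\Sp)^{B\T}$. Next I would invoke the adjunction: the forgetful functor $U$ is restriction along $i: \pt \to B\T$, and since $\mathrm{CAlg}(\Sp)$ is cocomplete it admits a left adjoint $L$ given by left Kan extension along $i$. For $A \in \mathrm{CAlg}(\Sp)$ one computes $L(A)$ as the colimit of the constant diagram $A$ over the comma category $\pt \times_{B\T} B\T \simeq \Omega B\T \simeq S^1$, which in $\mathrm{CAlg}(\Sp)$ is exactly $A \otimes S^1$, with the residual $\T$-action given by rotation on the indexing $S^1$. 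Combined with the first step this identifies $L(A) \simeq \THH(A)$ together with its unit $A \to \THH(A)$, yielding
\[
\Map_{\mathrm{CAlg}(\Sp)^{B\T}}(\THH(A), B) \simeq \Map_{\mathrm{CAlg}(\Sp)}(A, U(B))
\]
for every $B \in \mathrm{CAlg}(\Sp)^{B\T}$, which is precisely the universal property claimed.

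The main obstacle is ensuring $\T$-equivariance of the identification $\THH(A) \simeq A \otimes S^1$; the underlying equivalence of $\E_\infty$-rings is clean, but matching the two $\T$-actions requires recognizing that both arise from the cyclic (equivalently, paracyclic) structure on the bar construction, which under realization produces the rotation action on the circle. This is handled by the cyclic machinery of Appendix~\ref{app:cyclic}, specifically the functoriality of geometric realization of (para)cyclic objects with respect to the $\T$-action, together with the observation that $S^1_\bullet$ is itself the value of the cyclic nerve on a point, so that its realization is $S^1$ with the tautological rotation $\T$-action.
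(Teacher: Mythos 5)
Your proposal is correct and takes essentially the same route as the paper: both identify $\THH(A)$ with the tensor $A\otimes S^1$ in $\E_\infty$-rings by writing the simplicial circle as $\colim_{\Delta^{\op}} S_n$ and using that the $n$-fold tensor product is the coproduct in $\mathrm{CAlg}(\Sp)$. Your write-up merely makes explicit two points the paper leaves implicit — that the tensor with $S^1$ (with rotation action) is the left adjoint to the forgetful functor from $\T$-equivariant $\E_\infty$-rings, and that the two $\T$-actions match via the cyclic structure — which is a reasonable amplification rather than a different argument.
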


\begin{proof} We have to prove that $\THH(A)$ is the tensor of $A$ with $S^1$ in the $\infty$-category of $\E_\infty$-ring spectra. We use the simplicial model for the circle $S^1$ given as $S_\bullet=\Delta^1 / \partial \Delta^1$ which has $(n+1)$ different (possible degenerate) $n$-vertices $S_n$. Thus we have in the $\infty$-category $\calS$ of spaces the colimit $S^1 \simeq \colim_{\Delta^{op}} S_n$. Therefore we get that the tensor of $R$ with $S^1$ is given by
$$
\colim_{\Delta^{op}} R^{\otimes S_n} \simeq \THH(R)
$$
where we have used that $R^{\otimes S_n}=R^{\otimes (n+1)}$ is the $(n+1)$-fold coproduct in the $\infty$-category of $\E_\infty$-ring spectra, cf.~\cite[Proposition 3.2.4.7]{HA}.
\end{proof}

From the $\E_\infty$-map $A\to \THH(A)$ we get a $C_p$-equivariant $\E_\infty$-map $A\otimes\ldots \otimes A\to \THH(A)$ by taking the coproduct in the category of $\E_\infty$-algebras of all the translates by elements of $C_p \subseteq \T$ of the map (note that $A\otimes\ldots \otimes A$ is the induced $C_p$-object in $\E_\infty$-rings). This is the map that can also be described through the $p$-fold subdivision. Thus, in the commutative square
\begin{equation}\label{diagramFrobenius}
\xymatrix{
A \ar[r]\ar[d]^{\Delta_p} & \THH(A) \ar[d]^{\varphi_p} \\
(A\otimes\ldots \otimes A)^{tC_p} \ar[r] & \THH(A)^{tC_p}
}
\end{equation}
of $\E_\infty$-rings, the lower map is explicit, giving rise to an explicit map $A\to \THH(A)^{tC_p}$ of $\E_\infty$-rings, where $\T=\T/C_p$ acts on $\THH(A)^{tC_p}$. In view of the above universal property of $\THH(A)$ we conclude:
\begin{corollary}\label{uniqueness}
For an $\E_\infty$-ring $A$ the Frobenius $\varphi_p$ is the unique $\T$-equivariant $\E_\infty$-map $\THH(A) \to \THH(A)^{tC_p}$ that makes the diagram \eqref{diagramFrobenius}
commutative. $\hfill \Box$
\end{corollary}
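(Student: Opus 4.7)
The plan is to deduce this directly from the universal property stated in Proposition \ref{thhcommutative}. First I would observe that the target $\THH(A)^{tC_p}$ is a $\T$-equivariant $\E_\infty$-ring spectrum in a natural way: the $\E_\infty$-structure comes from the lax symmetric monoidal structure on $-^{tC_p}$ established in Corollary \ref{cor:tatelaxsymmgroup}, and the residual $\T$-action arises from the identification $\T/C_p \cong \T$ via the $p$-th power map. Then Proposition \ref{thhcommutative} yields an equivalence
\[
\Map_{\Alg_{\E_\infty}(\Sp^{B\T})}\bigl(\THH(A),\THH(A)^{tC_p}\bigr) \xrightarrow{\simeq} \Map_{\Alg_{\E_\infty}(\Sp)}\bigl(A,\THH(A)^{tC_p}\bigr),
\]
given by restriction along the unit $A \to \THH(A)$.

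Under this equivalence, giving a $\T$-equivariant $\E_\infty$-map $\varphi_p\colon \THH(A) \to \THH(A)^{tC_p}$ is the same datum as giving an $\E_\infty$-map $A \to \THH(A)^{tC_p}$ (of non-equivariant $\E_\infty$-rings, with $\T$ acting trivially on the source). Commutativity of diagram \eqref{diagramFrobenius} forces the restriction $\varphi_p \circ (A \to \THH(A))$ to equal the lower-left composition $A \xto{\Delta_p} (A \otimes \ldots \otimes A)^{tC_p} \to \THH(A)^{tC_p}$, which is a prescribed $\E_\infty$-map (the target map being the one described just before the corollary, arising from the subdivided cyclic object). Thus by the universal property there is at most one such $\varphi_p$ up to contractible choice, proving the uniqueness part.

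For existence, one must verify that the Frobenius $\varphi_p$ produced in Section \ref{sec:thhnaive} indeed fits into diagram \eqref{diagramFrobenius}. This is not an additional input but follows from the construction: the map $A \to \THH(A)$ is induced by the inclusion of the $0$-simplex into the cyclic bar construction, and the $\varphi_p$ was built as a map of cyclic spectra whose component at the $0$-simplex is, by construction, the Tate diagonal $\Delta_p$ followed by the structure map $(A^{\otimes p})^{tC_p} \to \THH(A)^{tC_p}$. In other words, the lax symmetric monoidal naturality already used in the text to construct $\varphi_p$ gives the commutative square functorially in $A$.

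The only real subtlety — and the step one has to articulate carefully — is verifying that the mapping space on the right side of the displayed equivalence above is really the space of $\E_\infty$-maps into $\THH(A)^{tC_p}$ as a $\T$-equivariant $\E_\infty$-ring, rather than something with extra $C_p$-structure floating around. This amounts to confirming that the $\T/C_p \cong \T$ identification is the one built into the cyclotomic structure, which is set up consistently throughout Chapter \ref{ch:tate}. Once this is stated precisely the corollary is immediate, so there is no serious obstacle.
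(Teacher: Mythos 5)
Your proof is correct and is essentially the paper's own argument: the corollary is stated there as an immediate consequence of Proposition \ref{thhcommutative}, which identifies $\T$-equivariant $\E_\infty$-maps out of $\THH(A)$ with $\E_\infty$-maps out of $A$, so that commutativity of diagram \eqref{diagramFrobenius} pins down $\varphi_p$ uniquely. Your additional remarks on existence and on the $\T/C_p\cong\T$ identification match what the paper establishes in the paragraphs immediately preceding the corollary.
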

Note that this can also be used to define $\varphi_p$ in this situation. Moreover, this observation can be used to prove that the various definitions of $\E_\infty$-ring structures on $\THH(A)$ in the literature are equivalent to ours.

A consequence Proposition \ref{thhcommutative} is that for $\E_\infty$-ring spectra $A$ there is always a map $\pi: \THH(A) \to A$ which is a retract of the map $A \to \THH(A)$. The map $\pi$ is by construction $\T$-equivariant when $A$ is equipped with the trivial $\T$-action, in contrast to the map in the other direction. 
\begin{corollary}
For an $\E_\infty$-ring $A$ the  composition 
$$A \to \THH(A) \xto{\varphi_p} \THH(A)^{tC_p} \xto{\pi^{tC_p}} A^{tC_p} $$
is equivalent to the Tate-valued Frobenius (see Definition \ref{deffrobenius}). 
\end{corollary}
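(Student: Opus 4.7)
The plan is to paste the composition in question into the bottom of the commutative square \eqref{diagramFrobenius} and to recognize the lower horizontal composite as the multiplication $m: A^{\otimes p} \to A$. More precisely, I would consider the diagram
\[
\xymatrix{
A \ar[r]\ar[d]^{\Delta_p} & \THH(A) \ar[d]^{\varphi_p} \ar[r]^{\pi} & A \ar[d] \\
(A\otimes\ldots\otimes A)^{tC_p} \ar[r] & \THH(A)^{tC_p} \ar[r]^{\pi^{tC_p}} & A^{tC_p}
}
\]
of $\E_\infty$-rings with $\T$-action (where $A$ carries the trivial action). The left-hand square commutes by \eqref{diagramFrobenius}, and the right-hand square commutes by naturality of the Tate construction applied to the $C_p$-equivariant map $\pi$. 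The outer composite along the top is the identity, since $\pi$ is a retraction of the unit map $A\to \THH(A)$; hence chasing the diagram, the claim reduces to identifying the bottom horizontal composite $A^{\otimes p}\to \THH(A)\to A$ with the multiplication $m$.

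For this identification, I would invoke Proposition~\ref{thhcommutative}: in $\Alg_{\E_\infty}(\Sp)$, the spectrum $\THH(A)$ is the tensor $A\otimes S^1$, so in particular $A^{\otimes p}$ is the coproduct of the $p$ translates of $A$ viewed as an $\E_\infty$-subalgebra of $\THH(A)$ along the $C_p$-orbit of the unit map, as was used in deducing Corollary~\ref{uniqueness}. The map $\pi: \THH(A)\to A$ is an $\E_\infty$-retraction of the unit, hence each of these $p$ subalgebra inclusions is sent by $\pi$ to the identity of $A$; by the universal property of the coproduct in $\Alg_{\E_\infty}(\Sp)$, the composite $A^{\otimes p}\to \THH(A)\to A$ is therefore forced to be the multiplication $m$.

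Combining these two steps, the bottom composite $(A^{\otimes p})^{tC_p}\to A^{tC_p}$ is $m^{tC_p}$, and commutativity of the outer rectangle gives
\[
\pi^{tC_p}\circ \varphi_p\circ (A\to \THH(A)) \;\simeq\; m^{tC_p}\circ \Delta_p \;=\; \varphi_A,
\]
which is the desired statement. The only substantive point is the identification in the middle paragraph, which I expect to be routine given Proposition~\ref{thhcommutative}; the rest is diagram chasing, using naturality of the lax symmetric monoidal structure on $-^{tC_p}$ established in Section~\ref{sec:tatemonoidal}.
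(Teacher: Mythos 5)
Your proposal is correct and follows the paper's own proof essentially verbatim: both reduce, via the square \eqref{diagramFrobenius}, to identifying the $C_p$-equivariant composite $A^{\otimes p}\to \THH(A)\xto{\pi} A$ with the multiplication of $A$, and both do so by observing that $A^{\otimes p}$ is the induced $C_p$-object (the $p$-fold coproduct) in $\E_\infty$-rings, so the identification follows from $\pi\circ(A\to\THH(A))\simeq \id_A$. Your auxiliary right-hand square and the remark that the top composite is the identity are not actually used in your final chain of equivalences (and the assertion that that square commutes ``by naturality of the Tate construction'' is not quite right as stated, since $\varphi_p$ is not obtained by applying $-^{tC_p}$ to a map); the argument you really run uses only the left square and the bottom identification, exactly as in the paper.
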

\begin{proof}
We have a commutative diagram
\begin{equation}
\xymatrix{
A \ar[r]\ar[d]^{\Delta_p} & \THH(A) \ar[d]^{\varphi_p} \\
(A\otimes\ldots \otimes A)^{tC_p} \ar[r] & \THH(A)^{tC_p} \ar[r]^-{\pi^{tC_p}} & A^{tC_p} .
}
\end{equation}
Therefore it suffices to show that the composition
$$
A\otimes\ldots \otimes A \to \THH(A) \to A
$$
is as a $C_p$-equivariant map equivalent to the multiplication of $A$. The source is, as a $C_p$-object in $\E_\infty$-algebras induced from $A$. Therefore this amounts to checking that the map $A \to \THH(A) \to A$ is equivalent to the identity which is true by definition.
\end{proof}

A slightly different perspective on the construction of $\varphi_p$ in the commutative case is due to Jacob Lurie. To this end let $R = \THH(A)$ or more generally any $\E_\infty$-ring spectrum with $C_p$-action. Then we have a map of $\E_\infty$-ring spectra which can informally be described as 
$$
\tilde m: R \otimes\ldots \otimes R\to R : r_1\otimes\ldots\otimes r_p\mapsto \prod_{i=1}^p \sigma^i(r_i)\ ,
$$
where $\sigma \in C_p$ is a generator. The precise way of defining this map is by the observation that the left hand side is the induced $\E_\infty$-ring spectrum with an action by $C_p$. Then we get the desired map $\tilde m$ by the universal property. This also shows that the map $\tilde m$ is $C_p$-equivariant where the left hand side is equipped with the cyclic action and the right hand map with the given action. Now consider the composite map
$$
\tilde{\varphi}_p: R \xto{\Delta_p} (R \otimes\ldots \otimes R)^{tC_p} \xto{\tilde m^{tC_p}} R^{tC_p}\ . 
$$
This is an equivariant version of the Frobenius map discussed above. Now in the case $R=\THH(A)$ we not only have a $C_p$-action but a $\T$-action and the map $\tilde{\varphi}_p$ is by functoriality $\T$-equivariant where the target is equipped with the $\T$-action obtained from the $\T$-action on $R$ that extends the $C_p$-action. This action of $\T$ on $R^{tC_p}$ has the property that $C_p \subseteq \T$ acts trivially so that the $\T$-action factors over the residual $\T/C_p$-action on $R^{tC_p}$ that we have used several times before. As a result we get a factorization of the map $\tilde{\varphi}_p$ through the homotopy orbits of the $C_p$-action on $R$ in the $\infty$-category of $\E_\infty$-ring spectra. Writing $R=\THH(A)$ as the tensor of $A$ with $\T$ in the $\infty$-category of $\E_\infty$-rings, these homotopy orbits are the tensor of $A$ with $\T/C_p\cong \T$ in the $\infty$-category of $\E_\infty$-rings. Thus, they are equivalent to $\THH(A)$ itself, in a $\T\cong\T/C_p$-equivariant way. In total, we get a $\T$-equivariant map of $\E_\infty$-rings
\[
\varphi: \THH(A) \to \THH(A)^{tC_p}\ .
\]
It follows from the construction that it sits in a commutative diagram of $\E_\infty$-rings
$$\xymatrix{
A \ar[r]\ar[d]^{\Delta_p} & \THH(A) \ar[d]^{\varphi} \\
(A\otimes\ldots \otimes A)^{tC_p} \ar[r] & \THH(A)^{tC_p}\ ,
}
$$
so it must by Corollary \ref{uniqueness} agree with $\varphi_p$.

\section{Loop Spaces}\label{sec:loopspaces}

Finally, we will recover some specific computations using our formalism. We start with the historically first computation, namely B\"okstedt--Hsiang--Madsen's computation of $\TC$ for suspension spectra of based loop spaces. Recall that for any connected based space $(Y,\ast)\in \calS_\ast$, the (based) loop space $\Omega Y$ is an $\E_1$-group in the $\infty$-category of spaces.\footnote{Conversely, any $\E_1$-group is of this form: the category of $\E_1$-groups in spaces is equivalent to connected based spaces, cf.~\cite[Lemma 7.2.2.11]{HTT}.} As $\Sigma_+^\infty: \calS\to\Sp$ is symmetric monoidal with respect to $\times$ on $\calS$ and $\otimes$ on $\Sp$, this implies that its suspension spectrum
\[
\mathbb S[\Omega Y] := \Sigma_+^\infty \Omega Y
\]
is an $\E_1$-algebra in spectra.\footnote{We prefer the notation $\mathbb S[\Omega Y]$ to emphasize the role as group algebra.} Thus, we can form the cyclotomic spectrum $\THH(\mathbb S[\Omega Y])$. The first goal is to identify this spectrum with its cyclotomic structure explicitly. This is Corollary \ref{cor:thhloop} below.

By definition, $\THH(\mathbb S[\Omega Y])$ is the geometric realization of the cyclic spectrum
\[
N(\Lambda^\op)\xto{V^\circ} N(\Ass^\otimes_\act)\xto{\Omega Y^\otimes} \calS^\otimes_\act\xto{(\Sigma_+^\infty)^\otimes} \Sp_\act^\otimes\xto{\otimes} \Sp\ .
\]
As $\Sigma_+^\infty$ is symmetric monoidal and preserves colimits, we can write this as $\Sigma_+^\infty$ of the geometric realization of the cyclic space
\[
B^{\mathrm{cyc}}_\bullet \Omega Y: N(\Lambda^\op)\xto{V^\circ} N(\Ass^\otimes_\act)\xto{\Omega Y^\otimes} \calS_\act^\otimes\xto{\times} \calS\ .
\]
In order the identify the geometric realization of this space we first need an auxiliary result. We formulate this slightly more generally.

For an $\mathbb{E}_1$-monoid $M$ in $\calS$ we denote the analogue of $\THH$ by $B^\cyc M$. This is, as in the case $\Omega Y$ above,  given by the geometric realization of the cyclic bar construction $\B^\cyc_\bullet M$ and admits a canonical $\T$-action, see Proposition \ref{prop:geomrealtaction} in Appendix~\ref{app:cyclic}.
\begin{lemma}\label{unstablecyclotomic}Let $M$ be an $\mathbb{E}_1$-monoid in spaces.
\begin{altenumerate}
\item The cyclic bar construction $\B^\mathrm{cyc}M$ admits a canonical $\T$-action and a canonical $\T$-equivariant map
$$
\psi_p: \B^\mathrm{cyc}M \to (\B^\mathrm{cyc}M)^{hC_p}
$$
for every prime $p$. 
\item There is a commutative diagram
$$
\xymatrix{
M \ar[d]^{\Delta}\ar[r]^{i} &   \B^\mathrm{cyc}M\ar[d]^{\psi_p}  \\
(M \times \ldots \times M)^{hC_p} \ar[r] &  \B^\mathrm{cyc}M^{hC_p}
}
$$
where the upper horizontal map is the inclusion of $M$ into the colimit and the lower horizontal map is induced from the inclusion of the $[p]$-th object into the colimit. 
\item
Upon taking the suspension spectrum, the map $\psi_p$ refines the cyclotomic structure of $\THH(\bS[M]) = \Sigma^\infty_+ B^\cyc M$ in the sense that there is a commutative diagram
$$
\xymatrix{
\Sigma^\infty_+ B^\cyc M \ar[r]^-{\psi_p} \ar[d]^{\varphi_p}&\Sigma^\infty_+\left( B^\cyc M^{hC_p} \right) \ar[d] \\
\left(\Sigma^\infty_+ B^\cyc M\right)^{tC_p} &  \left(\Sigma^\infty_+ B^\cyc M\right)^{hC_p} \ar[l]_-{\can}.
}
$$
\end{altenumerate}
\end{lemma}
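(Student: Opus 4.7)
The plan is to imitate the construction of the cyclotomic Frobenius from Section~\ref{sec:thhnaive}, with two important simplifications: we work unstably, so we can use the honest space-level diagonal $X \to X^{\times p}$ in place of the Tate diagonal, and the Tate construction is replaced by homotopy fixed points since $(-)^{tC_p}$ is not a meaningful operation on $\calS$.

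For part (i), the $\T$-action on $B^{\mathrm{cyc}} M$ is automatic from the realization of a cyclic object via Proposition~\ref{prop:geomrealtaction}. To construct $\psi_p$, I would first construct an unstable analogue of Corollary~\ref{cor:tildetptrafo}: a $BC_p$-equivariant, lax symmetric monoidal natural transformation
\[
I^{\mathrm{sp}} \longrightarrow \tilde T_p^{\mathrm{sp}} : N(\Free_{C_p}) \times_{N(\Fin)} \calS^\otimes_\act \longrightarrow \calS
\]
where $I^{\mathrm{sp}}(S,(X_{\bar s})) = \prod_{\bar s \in \overline S} X_{\bar s}$ and $\tilde T_p^{\mathrm{sp}}(S,(X_{\bar s})) = (\prod_{s \in S} X_{\bar s})^{hC_p}$. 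Unlike the spectral case, this does not need the Tate diagonal: the honest $C_p$-equivariant diagonal $\prod_{\bar s} X_{\bar s} \to \prod_{s \in S} X_{\bar s}$ (the target carrying the cyclic permutation action) composed with the unit $Z \to Z^{hC_p}$ for a space $Z$ with $C_p$-action supplies the transformation, and the lax symmetric monoidal refinement is furnished by the lax symmetric monoidal structure on $(-)^{hC_p}$ from \cite[Corollary 7.3.2.7]{HA} together with the manifest symmetric monoidality of the diagonal. Exactly as in Section~\ref{sec:thhnaive}, precomposing with the $BC_p$-equivariant functor $N(\Lambda_p^\op) \to N(\Free_{C_p}) \times_{N(\Fin)} \Ass^\otimes_\act \to N(\Free_{C_p}) \times_{N(\Fin)} \calS^\otimes_\act$ induced by $M$, then passing to realizations (using the subdivision identification of Proposition~\ref{subdivisionrealization} and the natural map from the realization of a cyclic object built with $(-)^{hC_p}$ termwise into $(B^{\mathrm{cyc}} M)^{hC_p}$, since $(-)^{hC_p}$ is a limit and hence only receives a lax-compatibility map from the colimit) produces the $\T$-equivariant map $\psi_p : B^{\mathrm{cyc}} M \to (B^{\mathrm{cyc}} M)^{hC_p}$.

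For part (ii), commutativity is built into the construction: the inclusion $i : M \hookrightarrow B^{\mathrm{cyc}} M$ is the bottom cell of the cyclic object, the $p$-fold subdivision at the zeroth level is $M^{\times p}$ with its cyclic $C_p$-action, and evaluating the natural transformation of part (i) at this level recovers the diagonal $M \to (M^{\times p})^{hC_p}$. Tracking the residual $\T\cong \T/C_p$-action through the realization gives the square.

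For part (iii), the claim is a compatibility between the stable Frobenius $\varphi_p$ and its unstable shadow $\psi_p$. The plan is to invoke the uniqueness in Lemma~\ref{lemfact}: for a suspension spectrum $\Sigma^\infty_+ Y$, the Tate diagonal $\Delta_p$ admits a unique lax symmetric monoidal factorization through the space-level diagonal followed by $\can : (-)^{hC_p} \to (-)^{tC_p}$. Transporting this factorization through the $BC_p$-equivariant machinery of Section~\ref{sec:thhnaive}, and using that both the source $I$ of the spectral transformation and the source $\Sigma^\infty_+ I^{\mathrm{sp}}$ of the stabilized unstable transformation agree and are initial among partially exact lax symmetric monoidal functors by Lemma~\ref{lem:initialtech}, identifies the composite $\can \circ \Sigma^\infty_+ \psi_p$ with $\varphi_p$ after realization. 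The main obstacle is checking $BC_p$-equivariance of this factorization at the level of the natural transformations of functors on $N(\Free_{C_p}) \times_{N(\Fin)} \Sp^\otimes_\act$ (prior to realization), rather than merely objectwise; this is exactly the issue that the uniqueness statements in Sections~\ref{sec:tatediag} and~\ref{sec:tatediagfunc} (combined with Lemma~\ref{lemfact}) are designed to handle, so the compatibility follows by the same template as Theorem~\ref{thm:maincomp}.
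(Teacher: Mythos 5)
Your proposal is correct and follows essentially the same route as the paper: replace the Tate diagonal by the space-level diagonal and $(-)^{tC_p}$ by $(-)^{hC_p}$, rerun the machinery of Section~\ref{sec:thhnaive} to get a map of cyclic objects, and compare with $\varphi_p$ after stabilization. The one place where your bookkeeping differs is how the required $BC_p$-equivariant lax symmetric monoidal transformations are produced: you build the unstable one ``by hand'' from the diagonal and route part (iii) through Lemma~\ref{lemfact}, whereas the paper obtains both directly from initiality statements — the unstable analogue of Lemma~\ref{lem:initialtech} (resting on the fact that $\id_{\calS}$ is initial among lax symmetric monoidal endofunctors of $\calS$, \cite[Corollary 6.8(1)]{Nik}) for parts (i)--(ii), and the initiality of the composite $N(\Free_{C_p})\times_{N(\Fin)}\calS^\otimes_\act\to\calS\xto{\Sigma^\infty_+}\Sp$ among \emph{all} lax symmetric monoidal functors (via \cite[Corollary 6.9(2)]{Nik}) for part (iii). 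In particular, your appeal in (iii) to Lemma~\ref{lem:initialtech} verbatim is slightly off: that lemma concerns the $\Sp^\otimes_\act$-domain with a partial exactness hypothesis, while what is needed (and what makes the two maps into $\tilde T_p$ agree) is the $\calS$-domain analogue with no exactness condition; with that substitution your argument matches the paper's.
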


\begin{proof}
The existence of the $\T$-action on $B^\cyc M$ follows immediately from the results in Appendix~\ref{app:cyclic}. Thus we have to construct the ``cyclotomic structure'' on $B^\cyc M$ and show the compatibility. To this end we just repeat the steps in the construction of the cyclotomic structure on $\THH$, see Section \ref{sec:thhnaive}. We replace every occurrence of the Tate diagonal by the space level diagonal $M \to (M \times \ldots \times M)^{hC_p}$. More precisely we want to construct a map of cyclic objects
\begin{equation}\label{mapcyclicspaces}
\xymatrix{
\cdots \ar[r]<1.2ex> \ar[r]<-1.2ex>\ar[r]<0.4ex> \ar[r]<-0.4ex> & M^{\times 3} \ar[d]^{\Delta_p} \ar@(ul,ur)^{C_3}\ar[r]<0.8ex>\ar[r]<0ex>\ar[r]<-0.8ex> \ar[r]& M^{\times 2} \ar[d]^{\Delta_p}\ar@(ul,ur)^{C_2}
\ar[r]<0.4ex> \ar[r]<-0.4ex>& M \ar[d]^{\Delta_p} \\
\cdots \ar[r]<1.2ex> \ar[r]<-1.2ex>\ar[r]<0.4ex> \ar[r]<-0.4ex> & \big(M^{\times 3p}\big)^{hC_p} \ar@(dl,dr)_{C_3}\ar[r]<0.8ex>\ar[r]<0ex>\ar[r]<-0.8ex> \ar[r]& \big(M^{\times 2p}\big)^{hC_p}\ar@(dl,dr)_{C_2}
\ar[r]<0.4ex> \ar[r]<-0.4ex>& \big(M^{\times p}\big)^{hC_p}
}
\end{equation}
where the bottom cyclic objects is constructed as the one involving the Tate construction in Section~\ref{sec:thhnaive}. Thus analogous to the construction there this follows from the fact that there is a natural $BC_p$-equivariant transformation from the functor 
\[
I': N(\Free_{C_p})\times_{N(\Fin)} \calS^\otimes_\act\to \calS^\otimes_\act\xto{\times}\calS
\]
to the functor
\[
\tilde{H}_p: N(\Free_{C_p})\times_{N(\Fin)} \calS^\otimes_\act\to (\calS^\otimes_\act)^{BC_p}\xto{\otimes} \calS^{BC_p}\xto{-^{hC_p}} \calS\ .
\]
This transformation exists, because the analogue of Lemma \ref{lem:initialtech} is true for the functor $I'$: it is initial among all lax symmetric monoidal functors $N(\Free_{C_p})\times_{N(\Fin)} \calS \to \calS$. This follows from the combination of the argument in the proof of Lemma \ref{lem:initialtech} and the fact that the identity $\calS \to \calS$ is initial among all lax symmetric monoidal endofunctors, see \cite[Corollary 6.8 (1)]{Nik}. This proves part (i) and part (ii). For the third part we claim that there is a comparison of maps of cyclic spectra, between the suspension 
$$\Sigma^\infty_+ \B^\cyc_\bullet M \to \left((\Sigma^\infty_+ \B^\cyc_\bullet)^{\otimes p})\right)^{hC_p}$$
of the map \eqref{mapcyclicspaces} and the map 
$$\Sigma^\infty_+ \B^\cyc_\bullet M \to \left((\Sigma^\infty_+ \B^\cyc_\bullet)^{\otimes p})\right)^{tC_p}$$
that leads to the cyclotomic structure on $\THH(\bS[M])$ as a map of cyclic spectra. Clearly there is a natural comparison map between the targets given by the canonical map from homotopy fixed points to the Tate construction. Thus we have to compare two different maps from $\Sigma^\infty_+ \B^\cyc_\bullet M$ to $\left((\Sigma^\infty_+ \B^\cyc_\bullet)^{\otimes p})\right)^{tC_p}$. Both arise as lax symmetric monoidal transformations from the functor 
\[
I'': N(\Free_{C_p})\times_{N(\Fin)} \calS^\otimes_\act\to \calS^\otimes_\act\xto{\times}\calS \to \Sp
\]
to the functor 
\[
\tilde{T}_p: N(\Free_{C_p})\times_{N(\Fin)} \calS^\otimes_\act\to (\calS^\otimes_\act)^{BC_p}\xto{\otimes} \calS^{BC_p} \to \Sp^{BC_p} \xto{-^{tC_p}} \calS\ .
\]
Then the claim follows since the functor $I''$ is initial among lax symmetric monoidal functors $N(\Free_{C_p})\times_{N(\Fin)}\calS \to \Sp$. This follows again with the argument of Lemma \ref{lem:initialtech} and the fact that the suspension spectrum functor $\calS \to \Sp$ is initial as shown in \cite[Corollary 6.9 (2)]{Nik}.
\end{proof}

In the case that $M=\Omega Y$ is the loop space of a connected based space $Y$, one can identify $\B^\cyc M$ with the free loop space $LY = \Map(S^1,Y)$ of $Y$.

\begin{proposition}\label{prop:bcycloop} Assume that $M=\Omega Y$ is the loop space of a connected base space $Y$.
\begin{altenumerate}
\item There is a natural $\T$-equivariant equivalence
\[
\B^\cyc M\simeq LY=\Map(S^1,Y)\ ,
\]
where $\T$ acts on $LY$ through its action on $S^1$.
\item Under the equivalence $\B^\cyc M\simeq LY$, the $\T\cong \T/C_p$-equivariant map $\psi_p: \B^\cyc M\to (\B^\cyc M)^{hC_p}$ is identified with the $\T\cong \T/C_p$-equivariant map $LY\to LY^{hC_p}$ induced by the $p$-fold covering $S^1\to S^1$.
\end{altenumerate}
\end{proposition}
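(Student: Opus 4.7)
The plan is to establish (i) via a classical identification and then deduce (ii) by tracing through the construction of $\psi_p$ from Lemma~\ref{unstablecyclotomic}.

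For (i), we invoke the fact that for any grouplike $\E_1$-monoid $M$ in $\calS$, the cyclic bar construction $B^\cyc M$ is naturally $\T$-equivariantly equivalent to the free loop space $L(BM) = \Map(S^1, BM)$ with the loop-rotation action. One sees this by viewing $M$ as a one-object $\infty$-groupoid with space of morphisms $M$: the cyclic nerve of this groupoid is precisely the cyclic space $[n]\mapsto M^{n+1}$ defining $B^\cyc M$, and its $\T$-equivariant geometric realization computes $L(BM)$ (this identification is classical, going back to work of Goodwillie and Jones, and can alternatively be obtained from the homotopy pullback presentation $L(BM)\simeq BM\times_{BM\times BM} BM$). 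Specializing to $M = \Omega Y$, for which $B\Omega Y \simeq Y$ since $Y$ is connected, yields the desired equivalence $B^\cyc\Omega Y\simeq LY$.

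For (ii), recall from Lemma~\ref{unstablecyclotomic} that $\psi_p$ is the realization of a map of cyclic spaces obtained by $p$-fold simplicial subdivision of $B^\cyc_\bullet M$ together with termwise application of the space-level diagonals $M\to (M^{\times p})^{hC_p}$. By Proposition~\ref{subdivisionrealization} and Proposition~\ref{prop:commutetaterealization}, the $\T$-equivariant realization $|\sd_p^*B^\cyc_\bullet M|$ is canonically equivalent to $|B^\cyc_\bullet M|$, with the residual $\T$-action coming via the identification $\T/C_p\cong\T$ given by the $p$-th power map. Under the identification of part (i), this corresponds geometrically to pullback along the $p$-fold covering $\pi_p\colon S^1\to S^1$, $z\mapsto z^p$, since $\pi_p$ is the geometric incarnation of the $p$-fold cyclic subdivision. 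For any loop $\gamma\colon S^1\to Y$, the composite $\gamma\circ\pi_p$ is tautologically $C_p$-fixed, so $\pi_p^*\colon LY\to LY$ factors through $LY^{C_p}\hookrightarrow LY^{hC_p}$; this factoring encodes precisely the termwise diagonals $M\to (M^{\times p})^{hC_p}$ built into $\psi_p$.

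The main obstacle is verifying in (ii) that the abstract lax symmetric monoidal construction of $\psi_p$ agrees with the concrete geometric map induced by $\pi_p$, together with all $\T$-equivariance. The cleanest route is a uniqueness argument: the identity is initial among lax symmetric monoidal endofunctors of $\calS$ by \cite[Corollary 6.8(1)]{Nik}, and both the lax transformation constructed in Lemma~\ref{unstablecyclotomic} and the transformation produced by $\pi_p^*$ refine the space-level diagonal $M\to (M^{\times p})^{hC_p}$, so they necessarily agree, finishing the proof.
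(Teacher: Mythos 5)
For part (i) you essentially cite the classical identification of the cyclic nerve with the free loop space rather than proving it; the paper also acknowledges this is classical (citing Loday) but then gives a direct argument, constructing the comparison map $\B^\cyc\Omega Y\to LY$ by adjunction and showing it is an equivalence by comparing the fiber sequence $(\Omega Y)_\bullet\to \B^\cyc_\bullet\Omega Y\to \B_\bullet\Omega Y$ with $\Omega Y\to LY\to Y$, using effectivity of groupoid objects in the $\infty$-topos $\calS$. That is a difference of level of detail rather than of substance, and I would let it pass.

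Part (ii), however, has a genuine gap in the final uniqueness step. The initiality of $\id_\calS$ among lax symmetric monoidal endofunctors (\cite[Corollary 6.8(1)]{Nik}) only pins down lax symmetric monoidal transformations \emph{out of the identity functor of $\calS$}, i.e.\ it identifies the space-level diagonal $M\to (M^{\times p})^{hC_p}$. It does not give uniqueness of $\T\cong\T/C_p$-equivariant maps $\B^\cyc M\to(\B^\cyc M)^{hC_p}$ (equivalently $LY\to LY^{hC_p}$) that ``refine the diagonal'': neither $\psi_p$ nor $\pi_p^\ast$ is a transformation $\id_\calS\to F$, and the fact that two maps agree after restriction along $\Omega Y\to LY$ does not by itself force them to agree on $LY$. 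Moreover your assertion that the $p$-fold cyclic subdivision ``corresponds geometrically to pullback along $\pi_p$'' is essentially the statement to be proved, so invoking it inside the argument is circular. What is needed is a uniqueness principle adapted to the functors actually at hand: the paper observes that a $\T$-equivariant map $LY\to LY^{hC_p}\simeq \Map(S^1/C_p,Y)\simeq LY$, natural in $Y$, is the same datum as a non-equivariant natural map $LY=\Map_\ast(S^1_+,Y)\to \Map_\ast(\pt_+,Y)=Y$, and by the Yoneda lemma these correspond to pointed maps $\pt_+\to S^1_+$, of which there are exactly two up to homotopy. One of these yields $\pi_p^\ast$ and the other yields the constant-loop map; the latter is excluded because the commutative square of Lemma~\ref{unstablecyclotomic}~(ii) forces the composite $\Omega Y\to LY\xto{\psi_p} LY^{hC_p}\simeq LY$ to be the standard inclusion rather than the constant map. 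You should replace your appeal to \cite[Corollary 6.8(1)]{Nik} with an argument of this corepresentability type (or otherwise exhibit $\pi_p^\ast$ as arising from the same lax symmetric monoidal transformation on the cyclic level that defines $\psi_p$).
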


\begin{proof} This is a classical fact, see e.g.~\cite[Theorem 7.3.11]{MR1600246}. We give a direct proof in our language.

First we note that the functor $ \times\colon \calS^\otimes_\act\to \calS$ (which sends a lists of objects $(X_1,\ldots,X_n)$  to the product  $\prod X_i$ in $\calS$) extends to a functor $\times\colon \calS^\otimes \to \calS$. This extension is called a Cartesian structure by Lurie, see \cite[Definition 2.4.1.1]{HA} and exists by construction of Cartesian symmetric monoidal $\infty$-categories \cite[Proposition 2.4.1.5]{HA}. Note that this extension does not exist for general symmetric monoidal $\infty$-categories.

Now we construct a canonical, $\T$-equivariant map $\varphi: \B^\cyc  \Omega Y \to LY$. By adjunction such a map is the same as a map $\varphi': \B^\cyc \Omega Y \times S^1 \to Y$. Since the map $\varphi$ is supposed to be $\T$-equivariant, the map $\varphi'$ has to be $\T$-equivariant for the diagonal action on the source and the trivial action on the target. But such a map is the same as a non-equivariant map $\varphi'': \B^\cyc \Omega Y\to Y$. Now using the fact that $\calS$ is an $\infty$-topos we can realize $Y$ as the geometric realization $B \Omega Y$ of the bar construction of $\Omega Y$, i.e. the simplicial object given by the composition
$$
\B_\bullet \Omega Y: N(\Delta^\op) \xto{\mathrm{Cut}} N(\Ass^\otimes) \xto{ \Omega Y^\otimes} \calS^\otimes\xto{\times} \calS. 
$$
Here the first map $\mathrm{Cut}$ is given by the functor sending $[n] \in \Delta$ to the set of cuts of $[n]$ as described in Appendix~\ref{app:cyclic}. Now we claim that there is a map $j^*\B^{\mathrm{cyc}}_\bullet \Omega Y \to \B_\bullet \Omega Y$, where $j: \Delta^\op \to \Lambda^\op$ is the natural functor, see  Corollary \ref{cor:lambdavsdelta} in Appendix~\ref{app:cyclic}. This natural map is induced from a natural transformation in the (non-commutative) diagram
$$
\xymatrix{
N(\Delta^\op) \ar[d]^j \ar[r]^{\mathrm{Cut}} & N(\Ass^\otimes)\\
 N(\Lambda^{\op}) \ar[r]^{V^\circ} & N(\Ass^\otimes_\act) \ar[u]
 }
$$
To understand the construction of the transformation we first note that the counter-clockwise composition in the diagram is given by the functor that sends a finite ordered set $S \in \Delta^{\op}$ to the set $\Cut(S)_+$, which is obtained from $\Cut(S)$ by forgetting the existing basepoint and adding a new disjoint basepoint. This fact follows directly from unfolding the definitions given in Appendix~\ref{app:cyclic}: the functor $j$ sends a linearly ordered set $S$ the the linearly ordered set $\Z  \times S$ (with lexicographic order), the functor $-^\circ$ sends this to the set of non-empty cuts and then the last functor $V$ takes the quotient by the $\Z$ action which means that two cuts are identified if they differ by an integer. Therefore we can shift any cut into $S = S\times 0 \subseteq j(S)$ and thus obtain the set $\Cut(S)$ of cuts of $S$ in which the cuts $\emptyset\sqcup S$ and $S \sqcup \emptyset$ are identified by definition. The last functor then only adds a disjoint basepoint. 

Now the transformation in question is given by the canonical map $\Cut(S)_+ \to \Cut(S)$. Informally the map $j^*\B^{\mathrm{cyc}}_\bullet \Omega Y \to \B_\bullet \Omega Y$ is given by the morphism of simplicial objects\begin{equation}\label{diagramcyclictobar}
\xymatrix{
\cdots \ar[r]<1.2ex> \ar[r]<-1.2ex>\ar[r]<0.4ex> \ar[r]<-0.4ex> & \Omega Y \times \Omega Y \times \Omega Y \ar[d]^{\pi_{23}}\ar[r]<0.8ex>\ar[r]<0ex>\ar[r]<-0.8ex> \ar[r]& \Omega Y \times \Omega Y \ar[d]^{\pi_2}
\ar[r]<0.4ex> \ar[r]<-0.4ex>& \Omega Y \ar[d]^{!} \\
\cdots \ar[r]<1.2ex> \ar[r]<-1.2ex>\ar[r]<0.4ex> \ar[r]<-0.4ex> & \Omega Y \times \Omega Y \ar[r]<0.8ex>\ar[r]<0ex>\ar[r]<-0.8ex> \ar[r]& \Omega Y\ar[r]<0.4ex> \ar[r]<-0.4ex>& \pt
}
\end{equation}
in which the first factor in the upper lines corresponds to the old basepoint $[S \cup \emptyset]$ in $\Cut(S)_+$. 
After realization the map $j^*\B^{\mathrm{cyc}}_\bullet \Omega Y \to \B_\bullet \Omega Y$ gives us the desired map $\varphi''$ and thus also the $\T$-equivariant map $\varphi: \B^\cyc  \Omega Y \to LY$. 

In order to show that the map $\varphi$ is an equivalence in the $\infty$-category of spaces we try to understand the fiber of the map
$\varphi'': | \B^{\mathrm{cyc}}_\bullet \Omega Y | \to | \B_\bullet \Omega Y | \simeq Y$. As a first step we consider the sequence $\pt_+ \to \Cut(S)_+ \to \Cut(S)$ of functors from $\Delta^\op \to N(\Ass^\otimes)$ where the first term is the constant functor and the first map is given by sending $\pt$ to the old basepoint $[S \cup \emptyset] \in \Cut(S)$. The composition is given by the trivial map. This sequence gives rise to a diagram
$$(\Omega Y)_\bullet \to \B^{\mathrm{cyc}}_\bullet \Omega Y \to \B_\bullet \Omega Y$$
of simplicial spaces, where the first term is the constant simplicial space on $\Omega Y$ and the first map is given by the inclusion into the first factor (in the ordering chosen in the displayed diagram  \eqref{diagramcyclictobar}). The compositon of the two maps comes with a chosen nullhomotopy induced from the fact that the composition $\pt_+ \to \Cut(S)$ is trivial. With this structure the sequence $(\Omega Y)_\bullet \to \B^{\mathrm{cyc}}_\bullet \Omega Y \to \B_\bullet  \Omega Y$ is a fiber sequence in the $\infty$-category $\Fun(N\Delta^\op, \calS)$ of simplicial spaces. 
By construction, the map $\varphi$ induces a comparison from the realization of this fiber sequence to the fiber sequence  
$$
\Omega Y \to LY \to Y
$$
in $\calS$. 
Thus in order to show that $\varphi$ is an equivalence it suffices to show that the realization of the simplicial fiber sequence $(\Omega Y)_\bullet \to B^{\mathrm{cyc}}_\bullet  \Omega Y\to B_\bullet \Omega Y$ is a fiber sequence in the $\infty$-category $\calS$. There are criteria in terms of model categories to check this, which can be used to prove this fact, see e.g.  \cite{Anderson}, but we want to give a direct $\infty$-categorical argument.

We start with the following general observation:
consider a fiber sequence of (pointed) simplicial spaces $X_\bullet \to Y_\bullet \to Z_\bullet$ such that 
$Y_\bullet$ and $Z_\bullet$ are groupoid objects in $\calS$. That means for each partition $[n] = S_0 \cup S_1$ such that $S_0 \cap S_1$ consists of a single element $s$, the canonical map $Y_n \to Y_{S_0}\times_{Y_{\{s\}}} Y_{S_1}$ is an equivalence, and similarly for $Z_\bullet$, see \cite[Definition 6.1.2.7]{HTT}. We claim that in this situation the induced map from $|X_\bullet |$ into to fiber $F$ of the map  $|Y_\bullet | \to |Z_\bullet |$ is $(-1)$-connected, i.e.  an inclusion of connected components. To see this we use that, since $\calS$ is an $\infty$-topos, the groupoid objects $Y_\bullet$ and $Z_\bullet$ are effective. This means that the associated augmented simplicial objects $Y^+_\bullet$ and $Z_\bullet^+$, augmented over the geometric realizations, are \v{C}ech nerves. 
Concretely that means that the canonical map $Y_n \to Y_1 \times_{|Y_\bullet|} \ldots \times_{|Y_\bullet|} Y_1$ is an equivalence for every $n$ and similarly for $Z_\bullet$. Recall that we denote by $F$ the fiber of the map $|Y_\bullet | \to |Z_\bullet |$. Since pullbacks commute with taking fibers we deduce that for each $n$ the induced map
$$
X_n \to X_1 \times_F \ldots \times_F X_1  
$$
is an equivalence. We conclude that $X_\bullet$ is the \v{C}ech nerve of the map $X_0 \to F$. Thus the map $|X_\bullet| \to F$ is $(-1)$-connected by \cite[Proposition 6.2.3.4]{HTT}.

In our situation it is clear that $\B G_\bullet$ is a groupoid object. It is straightforward to check that  $\B^{\mathrm{cyc}}_\bullet$ is a groupoid objects as well (in fact it is equivalent to the inertia groupoid of $\B G_\bullet$). Thus we know that the map from $\Omega Y = |\Omega Y_\bullet|$ to the fiber $F$ of the map $|\B^{\mathrm{cyc}}_\bullet \Omega Y | \to |\B_\bullet  \Omega Y|$ is $(-1)$-connected. Therefore it suffices to check that it is surjective on $\pi_0$. 
We already know that the composition $\Omega Y \to F \to |\B^\cyc_\bullet  \Omega Y|$ is surjective on $\pi_0$, since it is the inclusion of the bottom cell into the geometric realization. Thus it is enough to show that $\pi_0(F) \to \pi_0|\B_\bullet^\cyc  \Omega Y|$ is injective. Using the  long exact sequence 
$$
\xymatrix{
\pi_1 |\B^{\mathrm{cyc}}_\bullet  \Omega Y| \ar[r]^f & \pi_1 |\B_\bullet \Omega Y| \ar[r] & \pi_0|F| \ar[r] & \pi_0|\B^{\mathrm{cyc}}_\bullet \Omega Y| \
}
$$
this follows from the surjectivity of $f$ which is a consequence of the fact that  the simplicial map $\B^\cyc_\bullet  \Omega Y\to B_\bullet \Omega Y$ admits a section. This section can be constructed explicitly. We can without loss of generality replace $\Omega Y$ by a strict topological group and then write down the section in the simplicial diagram \eqref{diagramcyclictobar}. We leave the details to the reader.

For part (ii), we need to identify two natural $\T\cong \T/C_p$-equivariant transformations
\[
LY\to LY^{hC_p} = \Map(S^1, Y)^{hC_p} = \Map(S^1/C_p, Y) \simeq LY\ ,\]
i.e.~equivalently two natural $\T$-equivariant maps $LY \to LY$. Such a map is the same as a non-equivariant map $LY \to Y$. Since the map is natural in $Y$ (in pointed maps) we can use the fact that $LY = \Map_*(S^1_+,Y)$ and $Y = \Map_*(\pt_+,Y)$ together with the Yoneda lemma to see that such maps are in bijective correspondence with pointed maps $\pt_+ \to S^1_+$. There are up to equivalence two such maps: the map which hits both connected components and the map which sends both points to the basepoint in $S^1_+$. The first map leads to the map $LY \to LY^{hC_p}$ as stated and the second map leads to the map $LY \to LY^{hC_p}$ which sends every loop in $Y$ to the constant loop at the basepoint of $Y$. We have to rule out the second possibility. By Lemma~\ref{unstablecyclotomic} we have a commutative square
$$
\xymatrix{
\Omega Y \ar[r]\ar[d]^{\Delta} & LY \ar[d]^{\psi_p} \\
((\Omega Y)^{\times p})^{hC_p} \ar[r] & LY^{hC_p} 
}
$$
The upper map is given by the canonical inclusion, since in the equivalence $\B^\mathrm{cyc} \Omega Y \simeq LY$ this corresponds to the inclusion of the bottom cell of the simplicial diagram $\B^\mathrm{cyc} \Omega_\bullet Y$. The lower line is also equivalent to the inclusion $\Omega Y \to LY$ under the obvious identifications, as one sees similarly. Under these identifications the left hand map corresponds to the identity map $\Omega Y \to \Omega Y$. As a result the map $LY \to LY^{hC_p} = LY$ cannot be the trivial map which sends every map to the constant map. Thus it has to be equivalent to the identity which finishes to proof.  
\end{proof}

Now we can prove the first result about the structure of $\THH(\bS[\Omega Y])$ for $Y$ a pointed connected space. 

\begin{corollary}\label{cor:thhloop} There is a natural $\T$-equivariant equivalence
\[
\THH(\mathbb S[\Omega Y])\simeq \Sigma_+^\infty LY\ ,
\]
where $LY = \Map(S^1,Y)$ denotes the free loop space of $Y$ with its natural $\T$-action. Under this equivalence, the $\T$-equivariant map
\[
\varphi_p: \THH(\mathbb S[\Omega Y])\to \THH(\mathbb S[\Omega Y])^{tC_p}
\]
is given by the composite
\[
\Sigma_+^\infty LY\to (\Sigma_+^\infty LY)^{hC_p}\to (\Sigma_+^\infty LY)^{tC_p}\ ,
\]
where the first map is induced by the map $LY\to LY$ coming from the $p$-fold covering $S^1\to S^1$, and the second map is induced by the projection $-^{hC_p}\to -^{tC_p}$.
\end{corollary}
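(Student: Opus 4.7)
The plan is to assemble the corollary almost mechanically from the three preceding results. I will carry it out in four short steps, and I expect essentially no obstacle beyond book-keeping of the equivariance data.

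First, I would observe that the suspension spectrum functor $\Sigma_+^\infty \colon \calS \to \Sp$ is symmetric monoidal (with $\times$ on the source, $\otimes$ on the target) and preserves small colimits. Hence, writing $B^\cyc_\bullet \Omega Y$ for the cyclic space whose realization is $B^\cyc \Omega Y$, the cyclic spectrum defining $\THH(\bS[\Omega Y])$ is pointwise $\Sigma_+^\infty$ of $B^\cyc_\bullet \Omega Y$. Since $\T$-equivariant geometric realizations (in the sense of Proposition~\ref{prop:geomrealtaction}) commute with $\Sigma_+^\infty$, we obtain a $\T$-equivariant equivalence
\[
\THH(\bS[\Omega Y]) \;\simeq\; \Sigma_+^\infty B^\cyc \Omega Y.
\]

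Second, I would invoke Proposition~\ref{prop:bcycloop}(i) to get a $\T$-equivariant equivalence $B^\cyc \Omega Y \simeq LY$, where $\T$ acts on $LY = \Map(S^1,Y)$ through its action on $S^1$. Applying $\Sigma_+^\infty$ yields the desired $\T$-equivariant equivalence $\THH(\bS[\Omega Y]) \simeq \Sigma_+^\infty LY$.

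Third, to identify $\varphi_p$ I would appeal to Lemma~\ref{unstablecyclotomic}(iii), which supplies the commutative square
\[
\xymatrix{
\Sigma_+^\infty B^\cyc \Omega Y \ar[r]^-{\psi_p}\ar[d]_{\varphi_p} & \Sigma_+^\infty\bigl((B^\cyc \Omega Y)^{hC_p}\bigr) \ar[d] \\
\bigl(\Sigma_+^\infty B^\cyc \Omega Y\bigr)^{tC_p} & \bigl(\Sigma_+^\infty B^\cyc \Omega Y\bigr)^{hC_p} \ar[l]_-{\can}
}
\]
(with the right vertical map being the natural one from the suspension of fixed points to the fixed points of the suspension, together with $\can$). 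Thus $\varphi_p$ is determined, up to this canonical zig-zag, by the space-level map $\psi_p$.

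Finally, I would use Proposition~\ref{prop:bcycloop}(ii) to identify $\psi_p$, under $B^\cyc \Omega Y \simeq LY$, with the $\T \cong \T/C_p$-equivariant map $LY \to LY^{hC_p}$ induced by the $p$-fold covering $S^1 \to S^1$. Plugging this into the square above and composing with $\can$ yields exactly the factorization claimed in the corollary. The only thing to check along the way is that each of these identifications is natural in $Y$ and compatible with the residual $\T/C_p \cong \T$-action, but the $\T$-equivariance is built into each of the cited statements, so no further work is required.
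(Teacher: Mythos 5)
Your proposal is correct and follows exactly the paper's argument: the paper's proof of this corollary is literally ``immediate from Lemma~\ref{unstablecyclotomic} and Proposition~\ref{prop:bcycloop}'', with the identification $\THH(\bS[\Omega Y])\simeq \Sigma_+^\infty B^\cyc\Omega Y$ via symmetric monoidality and colimit-preservation of $\Sigma_+^\infty$ already set up in the text preceding those results. Your four steps just spell out that assembly in more detail.
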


\begin{proof} This is immediate from Lemma~\ref{unstablecyclotomic} and Proposition~\ref{prop:bcycloop}.
\end{proof}

In particular, in this case, the Frobenius map $X\to X^{tC_p}$ factors over $X^{hC_p}$. More generally, we say that a Frobenius lift on a $p$-cyclotomic spectrum $(X,\varphi_p)$ is a $C_{p^\infty}$-equivariant factorization of the morphism $\varphi_p: X \to X^{tC_p}$ as the composite of a map $\tilde{\varphi}_p: X\to X^{hC_p}$ and the projection $X^{hC_p}\to X^{tC_p}$.

Recall that if $X$ is a $p$-cyclotomic spectrum which is bounded below, then
\[
\TC(X)_p^\wedge = \TC(X^\wedge_p) = \TC(X^\wedge_p,p)\ .
\]
In order words, the $p$-completion of $\TC(X)$ can be computed in terms of $X^\wedge_p$ as either $\TC(X^\wedge_p)$ or $\TC(X^\wedge_p,p)$; this follows from the discussion in Section~\ref{sec:tccomp}. Note also that in this case the $\Tp$-action on $X^\wedge_p$ extends automatically to a $\T$-action.

\begin{proposition}
For a bounded below $p$-complete $p$-cyclotomic spectrum $X$ with a Frobenius lift $\tilde{\varphi}_p: X\to X^{hC_p}$ we obtain a pullback square of the form
\[\xymatrix{
\TC(X)\ar[r]\ar[d] & \Sigma X_{h\T}\ar[d]^{\tr} \\
X\ar[r]^{\id - \tilde{\varphi}_p} & X
}\]
\end{proposition}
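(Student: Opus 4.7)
The plan is to start from the defining formula $\TC(X,p) = \fib(\varphi_p^{h\Tp} - \can : X^{h\Tp} \to (X^{tC_p})^{h\Tp})$ from Proposition~\ref{prop:formula}(ii), use the Frobenius lift to factor this map through a simpler endomorphism, identify the resulting fiber using the Tate orbit lemma, and finally rewrite the square in its stated form. The whole argument pivots on the $p$-complete bounded below hypothesis via the Tate orbit lemma.

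First I would exploit the Frobenius lift to factor both parallel maps in the defining equalizer. Since $\varphi_p = \pi \circ \tilde\varphi_p$ with $\pi\colon X^{hC_p} \to X^{tC_p}$ the canonical projection, and since $\can$ is by construction the composite $X^{h\Tp} \simeq (X^{hC_p})^{h(\Tp/C_p)} \xrightarrow{\pi^{h(\Tp/C_p)}} (X^{tC_p})^{h\Tp}$ using $\Tp/C_p \cong \Tp$, both $\varphi_p^{h\Tp}$ and $\can$ equal $\pi^{h\Tp}$ precomposed with, respectively, $\tilde\varphi_p^{h\Tp}$ and the tautological identification $\id$, regarded as self-maps of $X^{h\Tp}$. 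Thus $\varphi_p^{h\Tp} - \can = \pi^{h\Tp}\circ(\tilde\varphi_p^{h\Tp} - \id)$, and the fiber is computed by the pullback
\[
\xymatrix{
\TC(X,p) \ar[r]\ar[d] & \fib(\pi^{h\Tp}) \ar[d] \\
X^{h\Tp} \ar[r]^{\tilde\varphi_p^{h\Tp}-\id} & X^{h\Tp}.
}
\]

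Next I would identify $\fib(\pi^{h\Tp})$ with $\Sigma X_{h\T}$. Applying $(-)^{h(\Tp/C_p)}$ to the norm cofiber sequence $X_{hC_p} \to X^{hC_p} \xrightarrow{\pi} X^{tC_p}$ gives $\fib(\pi^{h\Tp}) \simeq (X_{hC_p})^{h\Tp}$. By Corollary~\ref{cor:tates1} applied to the $\Tp$-spectrum $Y = X_{hC_p}$, there is a natural transfer map $\Sigma Y_{h\Tp} \to Y^{h\Tp}$ whose cofiber is $Y^{t\Tp}$, and this is an equivalence exactly when $Y^{t\Tp} \simeq 0$. To verify this vanishing I would argue that $(X_{hC_p})^{t(\Tp/C_p)}$ can be computed as a limit of $(X_{hC_p})^{t(C_{p^n}/C_p)}$, reduce via the isotropy-separation identification analogous to Lemma~\ref{lemtet} to the Tate $(X_{hC_p})^{tC_p}$ for the inner $C_p \subseteq \Tp/C_p$, and then invoke the Tate orbit lemma (Lemma~\ref{lem:tateorbit}) applied to $X$ with its $C_{p^2}$-action inside $\Tp$ to conclude $(X_{hC_p})^{tC_p} \simeq 0$. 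This uses the bounded below hypothesis on $X$ crucially. The resulting equivalence $\Sigma X_{h\Tp} \simeq (X_{hC_p})^{h\Tp}$ is the transfer $\tr$, and $\Sigma X_{h\Tp} \simeq \Sigma X_{h\T}$ because $X$ is $p$-complete and $\Tp \hookrightarrow \T$ is a $p$-adic equivalence.

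Finally, to match the stated form of the square, I would observe that the entire pullback diagram lifts naturally to $\Sp^{B\Tp}$: the objects $X$ in the bottom row carry their original $\Tp$-action, and the map $\id - \tilde\varphi_p\colon X \to X$ is the $\Tp$-equivariant endomorphism obtained from the Frobenius lift $\tilde\varphi_p\colon X \to X^{hC_p}$ via the canonical identification $X^{hC_p} \simeq X$ in $\Sp^{B\Tp}$ coming from $\Tp/C_p \cong \Tp$; similarly $\tr\colon \Sigma X_{h\T} \to X$ is equivariant with trivial $\Tp$-action on the source. Applying $(-)^{h\Tp}$ (which preserves pullbacks) recovers the pullback established in the previous paragraphs. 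The main obstacle is the vanishing $(X_{hC_p})^{t(\Tp/C_p)} \simeq 0$, which requires extending the Tate orbit lemma from a single quotient $C_{p^2}/C_p$ to the Prüfer quotient $\Tp/C_p$ and is the only nonformal step; everything else is a formal manipulation of fiber sequences and adjunctions.
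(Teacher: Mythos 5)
Your first two steps track the paper's proof: the factorization of $\varphi_p^{h\T}-\can$ through $\id-\tilde\varphi_p^{h\T}$ and the identification of $\fib(\can)$ with $\Sigma X_{h\T}$ are exactly how the paper begins (it identifies the fiber more directly via Lemma~\ref{lemtate2}, since $\can$ factors as $X^{h\T}\to X^{t\T}\to (X^{t\T})^\wedge_p\simeq (X^{tC_p})^{h\T}$ and $X^{t\T}$ is already $p$-complete). The genuine gap is your final step, which is where the actual content of the proposition sits. You want to replace the bottom row $X^{h\Tp}\to X^{h\Tp}$ by the underlying spectra $X\xrightarrow{\id-\tilde\varphi_p}X$, and you propose to do this by lifting the square to $\Sp^{B\Tp}$ and applying $(-)^{h\Tp}$. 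This fails twice. First, there is no ``canonical identification $X^{hC_p}\simeq X$ in $\Sp^{B\Tp}$''; the self-map $\tilde\varphi_p$ of $X$ is the composite $X\to X^{hC_p}\to X$, and it is not a $\Tp$-equivariant endomorphism: $\tilde\varphi_p\colon X\to X^{hC_p}$ is equivariant through the isomorphism $\Tp/C_p\cong\Tp$ while the restriction $X^{hC_p}\to X$ is equivariant through the projection, so the composite only intertwines the action via the $p$-th power isogeny. This is precisely why, in the paper, the induced self-map of $X^{h\T}$ is $\psi_p^{h\T}\circ f_p^\ast$ for the degree-$p$ map $f_p\colon B\T\to B\T$, and not the fixed points of an equivariant endomorphism. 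Second, and more fundamentally, your implication runs the wrong way: knowing that a square becomes a pullback after applying the limit-preserving functor $(-)^{h\Tp}$ does not imply that the underlying square is a pullback (nor would the corners match: fixed points of a putative equivariant pullback give $P^{h\Tp}$, not the underlying spectrum of $P$). What is needed, and what the paper proves as a separate lemma, is that the square comparing $\id-\tilde\varphi_p^{h\T}$ on $X^{h\T}$ with $\id-\tilde\varphi_p$ on $X$ (via the restrictions $X^{h\T}\to X$) is a pullback. That is a genuine $p$-adic convergence argument: filter $B\T=\C\mathrm{P}^\infty$ by skeleta, observe that $f_p^\ast$ acts by $p^n$ on the layer $\Omega^{2n}X$, hence $\tilde\varphi_p^{h\T}$ is divisible by $p^n$ there, so $\id-\tilde\varphi_p^{h\T}$ is an equivalence on every layer with $n\geq 1$ because $X$ is $p$-complete. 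This idea is absent from your proposal.

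A secondary inaccuracy: in your vanishing argument you assert that $(X_{hC_p})^{t(\Tp/C_p)}$ ``can be computed as a limit of $(X_{hC_p})^{t(C_{p^n}/C_p)}$''. That is not true as stated: by Lemma~\ref{lemtate2} this limit computes only the $p$-completion of the circle Tate construction, so the Tate orbit lemma (with Lemma~\ref{lemtet}) gives $\bigl((X_{hC_p})^{t(\T/C_p)}\bigr)^\wedge_p\simeq 0$, and you must separately argue that $(X_{hC_p})^{t(\T/C_p)}$ is already $p$-complete — this uses the $p$-completeness of $X$, not just bounded-belowness (for $X=H\Z[\tfrac1p]$ with trivial action the term is nonzero). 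Also, $-^{t\T}$ and Corollary~\ref{cor:tates1} are only set up for $\T$-actions, so you should first invoke the fact, noted in the paper right before the proposition, that the $\Tp$-action on a bounded below $p$-complete $X$ extends to a $\T$-action. These points are fixable, but the missing final lemma described above is not.
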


\begin{proof} We have a pullback diagram
$$\xymatrix{
\TC(X,p)\ar[rr]\ar[d] &  & 0 \ar[d]\\
X^{h\T} \ar[r]^{\id - \tilde{\varphi}_p^{h\T}} & X^{h\T}\ar[r]^{\can^{h\T}} & (X^{tC_p})^{h\T}
}
$$
 by Proposition \ref{prop:formula} (which states that the outer square can be extended to a pullback) and the factorization of the lower maps by assumption. Now we take the pullback of the right hand square and claim that we obtain a diagram of the form
$$
\xymatrix{
\TC(X,p)\ar[r]\ar[d] &  \Sigma X_{h\T}    \ar[d]^{\tr} \ar[r] & 0 \ar[d]\\
X^{h\T} \ar[r]^{\id - \tilde{\varphi}_p^{h\T}} & X^{h\T}\ar[r]^{\can^{h\T}} & (X^{tC_p})^{h\T}
}
$$
in which all squares are pullbacks. As $\TC(X)=\TC(X,p)$, it remains to see that the fiber of the map $X^{h\T}\to (X^{tC_p})^{h\T}$ is given by $\Sigma X_{h\T}$, which follows from Lemma~\ref{lemtate2}. Thus we get a pullback square of the form
$$
\xymatrix{
\TC(X)\ar[r]\ar[d] &  \Sigma X_{h\T}    \ar[d]^{\tr} \\
X^{h\T} \ar[r]^{\id - \tilde{\varphi}_p^{h\T}} & X^{h\T}\ .
}
$$
Then the proposition is implied by the next lemma.
\end{proof}

\begin{lemma} For every $p$-complete $p$-cyclotomic spectrum $X$ with a Frobenius lift $\tilde{\varphi}_p: X\to X^{hC_p}$, the commutative square
\[\xymatrix{
X^{h\T} \ar[rr]^{\id - \tilde{\varphi}_p^{h\T}} \ar[d] && X^{h\T}\ar[d] \\
X  \ar[rr]^{\id - \tilde{\varphi}_p} && X
}\]
is a pullback of spectra.
\end{lemma}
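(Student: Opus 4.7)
The plan is to verify the pullback condition by testing on the vertical fibers. Both vertical maps are the canonical ``forgetful'' map $\pi \colon X^{h\T} \to X$ (evaluation at the basepoint of $B\T$), so they share a common fiber $K := \fib(X^{h\T}\to X)$. By the commutativity of the square, the horizontal maps restrict to endomorphisms of $K$: the identity restricts to $\id_K$, and $\tilde\varphi_p^{h\T}$ restricts to some endomorphism $\beta \colon K \to K$. Thus the top map restricts to $\id_K - \beta$, and the square is a pullback in the stable $\infty$-category of spectra if and only if $\id_K - \beta$ is an equivalence.

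To analyze $K$ and $\beta$, I would exploit $p$-completeness: $X^{h\T} \simeq X^{h\Tp}$, and since $\Tp = \colim_n C_{p^n}$, we have $X^{h\Tp} \simeq \lim_n X^{hC_{p^n}}$. The canonical projection $\pi$ factors as $X^{h\Tp} \to X^{hC_p} \to X$, giving a two-step decomposition of $K$: a fiber sequence $\fib(X^{h\Tp}\to X^{hC_p}) \to K \to \fib(X^{hC_p}\to X)$, which itself refines to the tower with $n$th term $\fib(X^{h\Tp}\to X^{hC_{p^n}})$.

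The crucial step is identifying $\beta$ using the definition: $\tilde\varphi_p^{h\T}$ is the composite $X^{h\Tp} \xrightarrow{\tilde\varphi_p^{h\Tp}} (X^{hC_p})^{h\Tp} \xrightarrow{\sim} X^{h\Tp}$, where the final identification uses $(X^{hC_p})^{h(\Tp/C_p)} = X^{h\Tp}$ together with the $p$-th power isomorphism $\Tp/C_p \cong \Tp$. The $p$-th power identification re-indexes the filtration by $C_{p^n}$-fixed points by shifting $n \mapsto n+1$: the composite $X^{h\Tp} \xrightarrow{\beta} X^{h\Tp} \to X^{hC_{p^n}}$ factors through $X^{hC_{p^{n+1}}} \to X^{hC_{p^n}}$. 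Iterating, $\beta^k$ lands in the deeper piece $\fib(X^{h\Tp} \to X^{hC_{p^k}})$ of the filtration of $K$, which tends to $0$ as $k \to \infty$ since $X^{h\Tp} = \lim_n X^{hC_{p^n}}$.

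This topological nilpotence of $\beta$ with respect to the filtration on $K$ allows the geometric series $\sum_{k\geq 0}\beta^k$ to converge in the inverse limit, providing an inverse to $\id_K - \beta$. The main obstacle is making the ``shifting'' property and the associated Mittag-Leffler convergence precise; in particular, one must carefully track the effect of the $p$-th power identification $\Tp/C_p \cong \Tp$ on the canonical filtration of $X^{h\Tp}$ and verify compatibility with the Frobenius lift $\tilde\varphi_p$.
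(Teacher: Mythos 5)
Your proposal is correct in substance, but it takes a genuinely different route from the paper. The paper filters $X^{h\T}$ by the skeleta of $B\T=\C P^\infty$, writes $\tilde\varphi_p^{h\T}$ as $(\lim_{B\T}\psi_p)\circ f_p^\ast$ for the degree-$p$ self-map $f_p$ of $B\T$, observes that $f_p^\ast$ acts by multiplication by $p^n$ on the associated graded pieces $\Omega^{2n}X$, and concludes that $\id-\tilde\varphi_p^{h\T}$ is a $p$-adic (hence, by $p$-completeness, an actual) equivalence on each graded piece with $n\geq 1$. You instead filter by the $p$-power subgroups, writing $X^{h\T}\simeq X^{h\Tp}\simeq\lim_n X^{hC_{p^n}}$ and exploiting that the $p$-th power identification $\Tp/C_p\cong\Tp$ shifts the tower. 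Your approach uses $p$-completeness only to pass from $\T$ to $\Tp$, and the ``topological nilpotence'' comes for free from the defining structure of a Frobenius lift rather than from a cell-by-cell divisibility computation; the paper's approach avoids any discussion of convergence of infinite sums by reducing to a statement about each (co)cell separately. Both are legitimate.

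Two points in your write-up need tightening. First, the operative form of the shift is not that $\mathrm{pr}_n\circ\beta$ factors through the restriction $X^{hC_{p^{n+1}}}\to X^{hC_{p^n}}$ (which is true but does not by itself put $\beta$ into a deeper fiber); it is that $\mathrm{pr}_{C_{p^n}}\circ\tilde\varphi_p^{h\Tp}=\tilde\varphi_p^{hC_{p^{n-1}}}\circ\mathrm{pr}_{C_{p^{n-1}}}$, i.e.\ the projection to level $n$ of $\tilde\varphi_p^{h\Tp}$ factors through the projection to level $n-1$; this is what gives $\beta\big(\fib(X^{h\Tp}\to X^{hC_{p^{n-1}}})\big)\subseteq\fib(X^{h\Tp}\to X^{hC_{p^n}})$ and hence $\beta^k|_K$ landing in the $k$-th filtration step. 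Second, rather than summing a geometric series in the limit (which raises coherence/$\lim^1$ issues when assembling a map into $\lim_n$), it is cleaner to note that $\id-\beta$ is the limit of the endomorphisms $\id-\beta_n$ of the finite stages $K_n=\fib(X^{hC_{p^n}}\to X)$, where $\beta_n=\tilde\varphi_p^{hC_{p^{n-1}}}\circ r_{n,n-1}$ factors through $K_{n-1}$; iterating, $\beta_n^{\,n}$ factors through $K_0=\fib(X\xto{\id}X)=0$, so each $\beta_n$ is nilpotent, each $\id-\beta_n$ is invertible with inverse the finite sum $\sum_{k<n}\beta_n^k$, and the limit $\id-\beta$ is an equivalence.
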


\begin{proof} Note that the $\T\cong \T/C_p$-equivariant map $\tilde{\varphi}_p: X\to X^{hC_p}$ is equivalently given by a natural transformation $\psi_p$ of functors $B\T\to \Sp$ from $X\circ f_p: B\T\to B\T\to \Sp$ to $X: B\T\to \Sp$, where $f_p: B\T\to B\T$ denotes the map induced by the degree $p$ selfmap of $\T$. Under this identification, the map $\tilde{\varphi}_p^{h\T}: X^{h\T}\to X^{h\T}$ is given by the composite
\[
X^{h\T}\xto{f_p^\ast} (X\circ f_p)^{h\T}\xto{\lim_{B\T} \psi_p} X^{h\T}
\]
where $\lim_{B\T} \psi_p = \psi_p^{h\T}$. 
Now we use that for any functor $Y: B\T\to \Sp$, one can write
\[
Y^{h\T}=\lim_{B\T} Y\ ,\ B\T=\mathbb C P^\infty = \bigcup_n \mathbb C P^n
\]
as the inverse limit
\[
Y^{h\T} = \underleftarrow{\lim}_n \lim_{\mathbb C P^n} Y\ ,
\]
where one has natural fiber sequences for all $n\geq 0$,
\[
\Omega^{2n} Y\to \lim_{\mathbb C P^n} Y\to \lim_{\mathbb C P^{n-1}} Y\ ,
\]
where we also write $Y$ for the underlying spectrum. The map $f_p: B\T\to B\T$ can be modelled by the map raising all coordinates to the $p$-th power on $\mathbb C P^\infty$, and preserves the skeleta $\mathbb C P^n$. Thus,
\[
f_p^\ast: X^{h\T}\to X^{h\T}
\]
is an inverse limit of compatible maps
\[
f_p^\ast: \lim_{\mathbb C P^n} X\to \lim_{\mathbb C P^n} X\ ,
\]
which are homotopic to multiplication by $p^n$ on the homotopy fibers $\Omega^{2n} X$ (as $f_p$ is a map of degree $p^n$ on $\mathbb C P^n$). It follows that the composite map
\[
\tilde{\varphi}_p^{h\T}: X^{h\T}\xto{f_p^\ast} (X\circ f_p)^{h\T}\xto{\psi_p} X^{h\T}
\]
is an inverse limit of maps on $\lim_{\mathbb C P^n} X$, and it is enough to show that $\id - \tilde{\varphi}_p^{h\T}$ induces homotopy equivalences on all homotopy fibers $\Omega^{2n} X$ for $n\geq 1$. But by the above considerations, $\tilde{\varphi}_p^{h\T}$ is the composite of $f_p^\ast$ and $\lim_{B\T} \psi$, and therefore is divisible by $f_p^\ast = p^n$ on $\Omega^{2n} X$, which implies that $\id - \tilde{\varphi}_p^{h\T}$ is a $p$-adic homotopy equivalence on $\Omega^{2n} X$ for $n\geq 1$ (as can be checked after smashing with $\mathbb S/p$, where the map becomes homotopic to the identity), and thus a homotopy equivalence (as $X$ is $p$-complete).
\end{proof}

In particular, we recover the computation of $\TC(\mathbb S[\Omega Y])$ as given by B\"okstedt--Hsiang--Madsen.

\begin{thm}\label{thm:BHMformula}
For a connected based space $Y$, we have a pullback square
$$
\xymatrix{
\TC(\mathbb S[\Omega Y])\ar[r] \ar[d]& \Sigma (\Sigma_+^\infty LY)_{h\T} \ar[d]^{\tr}   \\
\Sigma^\infty_+ LY \ar[r]^{1-\tilde{\varphi}_p} & \Sigma^\infty_+ LY\ .
}
$$
after $p$-completion.$\hfill \Box$
\end{thm}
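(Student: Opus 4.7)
The plan is to combine Corollary \ref{cor:thhloop} with the preceding proposition on cyclotomic spectra admitting a Frobenius lift. By Corollary \ref{cor:thhloop} we have a $\T$-equivariant equivalence $\THH(\mathbb S[\Omega Y])\simeq \Sigma^\infty_+ LY$, and the cyclotomic structure map $\varphi_p$ factors as
\[
\Sigma^\infty_+ LY \xto{\tilde{\varphi}_p} (\Sigma^\infty_+ LY)^{hC_p} \xto{\can} (\Sigma^\infty_+ LY)^{tC_p},
\]
where $\tilde{\varphi}_p$ is induced from the $p$-fold covering $S^1\to S^1$. In particular, as a $p$-cyclotomic spectrum $\Sigma^\infty_+ LY$ carries a canonical Frobenius lift in the sense introduced above.

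Next I would $p$-complete. Since $\TC(X)^\wedge_p = \TC(X^\wedge_p) = \TC(X^\wedge_p,p)$ for any bounded below cyclotomic spectrum, and since $\Sigma^\infty_+ LY$ is connective (hence bounded below), the hypotheses of the preceding proposition apply to $X = (\Sigma^\infty_+ LY)^\wedge_p$ with its induced Frobenius lift $\tilde{\varphi}_p^\wedge$. Applying that proposition yields a pullback square
\[
\xymatrix{
\TC(\mathbb S[\Omega Y])^\wedge_p \ar[r]\ar[d] & \Sigma (X)_{h\T}\ar[d]^{\tr} \\
X \ar[r]^-{\id - \tilde{\varphi}_p^\wedge} & X
}
\]
with $X = (\Sigma^\infty_+ LY)^\wedge_p$.

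Finally, to get the desired square with $\Sigma^\infty_+ LY$ itself rather than its $p$-completion in the corners, observe that the square in the statement after $p$-completion agrees with the above pullback: the functor $(-)^\wedge_p$ commutes with both $(-)_{h\T}$ (which preserves $p$-completions since the mod $p$ Moore spectrum is finite and a colimit commutes with colimits) and the formation of cofibers, so the $p$-completion of the square
\[
\xymatrix{
\Sigma^\infty_+ LY \ar[r]^-{\id - \tilde{\varphi}_p} & \Sigma^\infty_+ LY
}
\]
with its vertical transfer from $\Sigma (\Sigma^\infty_+ LY)_{h\T}$ agrees with the pullback square above. The one subtlety, and the only step that requires care, is verifying that the Frobenius lift $\tilde{\varphi}_p$ built from Corollary \ref{cor:thhloop} genuinely matches the one assumed by the preceding proposition under $p$-completion, which amounts to checking the compatibility between the covering-induced map $LY\to LY^{hC_p}$ and the cyclotomic structure on $\THH(\bS[\Omega Y])$; but this compatibility is precisely what part (iii) of Lemma \ref{unstablecyclotomic} (together with Proposition \ref{prop:bcycloop}(ii)) provides.
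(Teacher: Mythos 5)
Your proposal is correct and is essentially the paper's argument: the theorem is stated there as an immediate consequence of Corollary \ref{cor:thhloop} (which exhibits the Frobenius lift on $\Sigma^\infty_+ LY$) together with the preceding proposition on $p$-complete bounded below $p$-cyclotomic spectra with Frobenius lifts, using $\TC(X)^\wedge_p = \TC(X^\wedge_p,p)$. Your final compatibility check via Lemma \ref{unstablecyclotomic}(iii) and Proposition \ref{prop:bcycloop}(ii) is exactly the content already packaged into Corollary \ref{cor:thhloop}, so nothing further is needed.
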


Note that there is a further simplification of this pullback square that has been made in \cite{MR1408537}.\footnote{We thank John Rognes for making us aware of this reference.} In particular, Proposition 3.9 of this article states that for a simply connected space $Y$ the further square
$$
\xymatrix{
\Sigma^\infty_+ LY \ar[rr]^{1-\tilde{\varphi}_p} \ar[d]^{\ev}&& \Sigma^\infty_+ LY \ar[d]^{\ev} \\
\Sigma^\infty_+ Y \ar[rr]^{(1-\tilde{\varphi}_p) = 0} &&   \Sigma^\infty_+ Y
}
$$ 
is also a pullback square after $p$-completion. Combining this with Theorem \ref{thm:BHMformula} they obtain that
$$
\TC(\bS[\Omega Y]) \simeq \Sigma^\infty_+ Y \oplus \fib(\ev\circ \tr). 
$$
Our results actually imply a related result concerning the Segal conjecture for loop spaces. This generalizes a theorem proven by Carlsson in \cite{MR1115825} for finite, simply connected CW complexes.

\begin{thm} Let $Y$ be a simply connected based space, and regard $\Sigma_+^\infty LY = \THH(\mathbb S[\Omega Y])$ as a cyclotomic spectrum. Then the Frobenius
\[
\varphi_p: \Sigma^\infty_+ LY \to (\Sigma^\infty_+ LY)^{tC_p}
\]
is a $p$-completion. Thus for every cyclic $p$-group $C_{p^n} \subseteq \T$ the induced map
$(\Sigma^\infty_+ LY)^{C_{p^n}} \to (\Sigma^\infty_+ LY)^{hC_{p^n}} $ is a $p$-adic equivalence.
\end{thm}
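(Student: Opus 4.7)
Since $Y$ is simply connected, $\pi_0\Omega Y=\pi_1 Y=0$, so $\Omega Y$ is connected; the fibration $\Omega Y\to LY\to Y$ then forces $LY$ to be connected, and hence $\Sigma^\infty_+LY$ is connective. By Lemma~\ref{lem:tatecomplete} the target $(\Sigma^\infty_+LY)^{tC_p}$ is therefore $p$-complete, and it suffices for the first assertion to show that $\varphi_p$ is a $p$-adic equivalence. Granting this, the second assertion follows from Corollary~\ref{cor:segalinduct} applied to the genuine equivariant suspension spectrum $\Sigma^\infty_{+,C_{p^n}}LY$: its iterated geometric fixed points are all of the form $\Sigma^\infty_+ LY^{C_{p^k}}\simeq \Sigma^\infty_+ LY$, which are bounded below, and the hypothesis there on the $C_p$-level comparison map is precisely the $p$-adic equivalence property of $\varphi_p$ just established.

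Write $A=\bS[\Omega Y]=\Sigma^\infty_+\Omega Y$, a connective $\E_1$-ring spectrum. The construction of the cyclotomic Frobenius in Section~\ref{sec:thhnaive} exhibits $\varphi_p$ as the composite
\[
\THH(A)\ \xrightarrow{\;|\Delta_p|\;}\ \bigl|(A^{\otimes p(\bullet+1)})^{tC_p}\bigr|\ \xrightarrow{\;\gamma\;}\ \THH(A)^{tC_p},
\]
where $|\Delta_p|$ is the geometric realization of the map of cyclic spectra whose level-$n$ component is the Tate diagonal $A^{\otimes(n+1)}\to (A^{\otimes p(n+1)})^{tC_p}$, and $\gamma$ is the canonical comparison from the realization of Tates to the Tate of the realization.

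By Theorem~\ref{thm:generalsegal} — the generalized Segal conjecture applied to the connective spectrum $A^{\otimes(n+1)}$ — each level of $\Delta_p$ is a $p$-completion, and in particular a $p$-adic equivalence. Since smashing with the finite spectrum $\bS/p$ commutes with geometric realization, the realization $|\Delta_p|$ is itself a $p$-adic equivalence. It remains to show that $\gamma$ is a $p$-adic equivalence. Decomposing via the fiber sequence $(-)_{hC_p}\to (-)^{hC_p}\to (-)^{tC_p}$, the homotopy-orbits analogue of $\gamma$ is an equivalence tautologically because homotopy orbits commute with colimits; the problem thus reduces to controlling $(-)^{hC_p}$ against geometric realization for the cyclic spectrum $A^{\otimes p(\bullet+1)}$ of uniformly connective spectra. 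For this one filters by the Postnikov tower: at each finite stage the problem reduces to an Eilenberg--MacLane calculation where $(-)^{hC_p}$ visibly commutes with realization, and the passage to the inverse limit over the tower is controlled by a Milnor-type short exact sequence using the uniform connectivity.

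The main obstacle is this final step — the commutation of $(-)^{hC_p}$ with geometric realization of a uniformly connective cyclic spectrum after $p$-completion. The bounded-below hypothesis, which here is exactly the input of simple connectedness of $Y$, is essential: Example~\ref{counterexamples} shows that the analogous commutation systematically fails in the unbounded setting, consistent with the pervasive role of boundedness-below throughout this paper.
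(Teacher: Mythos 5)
Your reduction of the second assertion to Corollary~\ref{cor:segalinduct}, and your treatment of the levelwise Tate diagonals via Theorem~\ref{thm:generalsegal}, are fine. The gap is the final step: the claim that the canonical map $\gamma$ from the realization of the levelwise Tate constructions to the Tate construction of the realization is a $p$-adic equivalence. This is false, and your proposed argument for it does not work. The problem is the homotopy fixed point part: for a uniformly connective simplicial spectrum $X_\bullet$ with $C_p$-action, the map $|X_\bullet^{hC_p}|\to |X_\bullet|^{hC_p}$ need not be an equivalence, even $p$-adically. Filtering $|X_\bullet|$ by skeleta, the fiber of $\mathrm{sk}_n|X_\bullet|\to|X_\bullet|$ is $n$-connective, but the homotopy fixed point spectral sequence shows that $Z^{hC_p}$ for an $n$-connective $Z$ can have nonzero homotopy in every degree (contributions $H^{t-m}(C_p,\pi_tZ)$ for all $t\geq n$), so the fibers of $(\mathrm{sk}_n)^{hC_p}\to|X_\bullet|^{hC_p}$ do not become highly connective and the colimit comparison fails; the Postnikov-tower reduction does not repair this, since $(-)^{hC_p}$ does not commute with realization for Eilenberg--MacLane levels either. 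A decisive sanity check: nothing in your argument uses anything about $Y$ beyond the connectivity of $A=\bS[\Omega Y]$, so if it were correct it would prove that $\varphi_p:\THH(A)\to\THH(A)^{tC_p}$ is a $p$-adic equivalence for \emph{every} connective ring $A$. This contradicts Corollary~\ref{cor:computationthhfp}: for $A=H\bF_p$ the map $\varphi_p$ identifies the connective spectrum $\THH(H\bF_p)$ with the connective cover of $\THH(H\bF_p)^{tC_p}$, whose homotopy $\bF_p[v^{\pm1}]$ is nonzero in all negative even degrees.

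The hypothesis that $Y$ is simply connected must therefore enter in an essential way, and the paper's proof shows where: instead of the simplicial (colimit) presentation of $\THH(\bS[\Omega Y])$, one uses the \emph{cosimplicial} presentation $LY\simeq\lim_\Delta Y^{\times p(n+1)}$ coming from $LY=\Map(S^1,Y)$ and the ($p$-fold subdivided) simplicial circle. Simple connectedness of $Y$ guarantees, by Kuhn's connectivity estimates, that the fibers in the Tot tower of $[n]\mapsto(\Sigma^\infty_+Y)^{\otimes p(n+1)}$ become highly connective, so that $\Sigma^\infty_+$ preserves this limit and, crucially, $-^{tC_p}$ commutes with it (the $-^{hC_p}$ part because it is a limit, the $-_{hC_p}$ part by the connectivity). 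Then $\varphi_p$ is exhibited as a limit of Tate diagonals, each a $p$-completion by Theorem~\ref{thm:generalsegal}, and limits commute with $p$-completion. You should reorganize your argument along these lines; the colimit-based factorization through $\gamma$ cannot be salvaged.
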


\begin{proof} The second assertion follows from the first by Corollary \ref{cor:segalinduct}. Thus we have to prove the first. To this end we write (as in the proof of Proposition \ref{thhcommutative}) the circle $\T$ as the homotopy colimit of discrete sets $\T \simeq \colim_{\Delta^{\op}} S_n$. Then we obtain that the free loop space $LY$ is given by the limit of a cosimplicial space $[n] \mapsto \Map(S_n, Y) \simeq Y^{n+1}$. If we replace the simplicial circle by its $p$-fold edgewise subdivision, then we similarly  get an equivalence $LY \simeq \lim_\Delta Y^{p(n+1)}$. Now we claim that the suspension spectrum functor preserves these limits, i.e.~that we have an equivalence
\[
\Sigma^\infty_+ LY \simeq \lim_\Delta (\Sigma^\infty_+ Y)^{\otimes p(n+1)}\ .
\]
To see this we claim more precisely that the fibers in the associated tower of this cosimplicial spectrum become highly connective. This is proven by Kuhn in \cite{MR2039768}, and uses that $Y$ is simply connected. See also \cite[Section 2.5]{malkiewich2015cyclotomic} for a short discussion.

It follows that this limit also commutes with the Tate construction $-^{tC_p}$; indeed, for the $-^{hC_p}$ part, this is clear, and for the $-_{hC_p}$-part it follows from the connectivity claim. Therefore, in addition to the equivalence
\[
\Sigma^\infty_+ LY \simeq \lim_{\Delta} (\Sigma^\infty_+ Y)^{\otimes (n+1)}
\]
from above (taking $p=1$), we get the identification
\[
(\Sigma^\infty_+ LY)^{tC_p} \simeq \lim_{\Delta} ((\Sigma^\infty_+ Y)^{\otimes p(n+1)})^{tC_p}\ .
\]
Now under these identifications the map $\varphi_p$ is given by the limit of the Tate diagonals $\Delta_p$. Thus the claim follows from Theorem \ref{thm:generalsegal} and the fact that limits commute with $p$-completion.
\end{proof}

\section{Rings of characteristic $p$}\label{sec:charp}

Now we revisit a few results about $\TC(A)$ if $A$ is a ring of characteristic $p$ (in the sense that $p=0$ in $\pi_0 A$). In particular, we give a complete description of the $\E_\infty$-algebra in cyclotomic spectra $\THH(H\bF_p)$.

We start with the case $A=H\bF_p$. Our goal is to give the computation of $\TC(H\bF_p)$ using as input only B\"okstedt's description of $\pi_\ast \THH(H\bF_p)$ as a polynomial algebra. Let us recall this result.

First, we compute Hochschild homology. Recall that for a classical associative and unital ring $A$, the Hochschild homology $\HH(A)$ is the geometric realization of the cyclic object
\[\xymatrix{
 \cdots \ar[r]<4.5pt>\ar[r]<1.5pt>\ar[r]<-4.5pt>\ar[r]<-1.5pt> & A \buildrel{\mathbb L}\over\otimes_{\mathbb Z} A \buildrel{\mathbb L}\over\otimes_{\mathbb Z} A  \ar[r]<3pt>\ar[r]\ar[r]<-3pt>  & A\buildrel{\mathbb L}\over\otimes_{\mathbb Z} A \ar[r]<1.5pt>\ar[r]<-1.5pt> & A\ .
}\]
If $A$ is a usual ring which is flat over $\Z$, this is the usual cyclic ring; if $A$ is not flat, the tensor products need to be derived, and one can define $\HH(A)$ more concretely as the realization of the simplicial object $\HH(A_\bullet)$ of a simplicial resolution $A_\bullet$ of $A$ by flat $\Z$-algebras.

\begin{proposition}\label{prop:hkr} Let $A$ be a commutative and unital ring. There is a descending separated filtration of $\HH(A)$ with graded pieces given by $(\bigwedge^i_A \mathbb L_{A/\Z})[i]$, where $\mathbb L_{A/\Z}$ denotes the cotangent complex, and the exterior power is derived.
\end{proposition}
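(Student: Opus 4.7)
The plan is to prove this by reduction to polynomial algebras via left Kan extension, which is the standard technique for HKR-type results that must apply to non-smooth commutative rings. The key input is that $\HH$, viewed as a functor from commutative $\Z$-algebras (more precisely, from simplicial commutative $\Z$-algebras) to $\calD(\Z)$, preserves sifted colimits. This follows because $\HH(A)$ is the geometric realization of the cyclic bar construction, or equivalently the tensor $A \otimes S^1$ formed in simplicial commutative $\Z$-algebras (cf.\ the argument of Proposition \ref{thhcommutative}), and sifted colimits commute with such tensors and with geometric realizations.

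First, I would treat the case of a polynomial algebra $A = \Z[x_1,\ldots,x_n]$, or more generally a free commutative $\Z$-algebra on a set. Here one computes directly from the cyclic bar construction (using the flatness of $A$ over $\Z$) that $\HH(A) \simeq A \otimes_\Z \Lambda_\Z[dx_1,\ldots,dx_n]$ with $|dx_i|=1$; this identification uses that $A \otimes_\Z A \to A$ is a surjection whose kernel is generated by the regular sequence $x_i \otimes 1 - 1 \otimes x_i$, so the Hochschild complex is a Koszul complex. This Hochschild complex carries an evident weight grading (equivalently a descending filtration) with $i$-th weight space $A \otimes_\Z \Lambda_\Z^i$, which is naturally identified with $\Omega^i_{A/\Z}[i]$. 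Since $A$ is smooth, $\mathbb L_{A/\Z} \simeq \Omega^1_{A/\Z}$ and $\bigwedge^i_A \mathbb L_{A/\Z} \simeq \Omega^i_{A/\Z}$, so the graded pieces already have the desired form.

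Next, I would promote this to a functorial filtration on $\HH(R)$ for every free commutative $\Z$-algebra $R$, by doing the construction in the category of filtered objects of $\calD(\Z)$: the cyclic bar construction $A \otimes_\Z \cdots \otimes_\Z A$ carries a natural decreasing filtration by the ``tensor degree'' perspective on the Koszul complex, and this is functorial in maps of polynomial rings. Then, because both $\HH(-)$ and the target functors $\bigwedge^i_{(-)} \mathbb L_{(-)/\Z}[i]$ are left Kan extended from the category of finitely generated polynomial $\Z$-algebras (the latter by the very definition of the cotangent complex and its derived exterior powers, which are the non-abelian derived functors of $\Omega^1$ and $\Omega^i$ respectively), the filtration extends uniquely to all simplicial commutative $\Z$-algebras, and hence to all commutative rings $A$ by passing to a simplicial polynomial resolution $A_\bullet \to A$. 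Concretely, $\HH(A) = |\HH(A_\bullet)|$ carries the geometric realization of the levelwise filtrations, and its $i$-th graded piece is $|\Omega^i_{A_\bullet/\Z}[i]| = \bigwedge^i_A \mathbb L_{A/\Z}[i]$.

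The main obstacle will be executing the filtered construction on the polynomial rings in a way that is strictly functorial and compatible with simplicial resolutions, so that passage to $\bigwedge^i_A \mathbb L_{A/\Z}$ on graded pieces is automatic. This can be handled by working in the $\infty$-category $\mathrm{Fil}\calD(\Z)$ of filtered objects and checking that the relevant functors preserve sifted colimits at the filtered level. The separatedness of the filtration on $\HH(A)$ is then inherited from the termwise boundedness-below of the graded pieces $\bigwedge^i_A \mathbb L_{A/\Z}[i]$ (they are concentrated in degrees $\geq i$), which ensures that the inverse limit of the filtration quotients vanishes after truncation in each fixed degree.
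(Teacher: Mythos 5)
Your proposal is correct and follows the same global strategy as the paper: establish the result for polynomial $\Z$-algebras via the classical Hochschild--Kostant--Rosenberg theorem, then extend to all commutative rings by a simplicial resolution by free algebras (equivalently, left Kan extension), using that $\bigwedge^i_A\mathbb L_{A/\Z}$ is by definition the non-abelian derived functor of $\Omega^i$. The one genuine difference is where the filtration comes from. You build it as the weight/tensor-degree filtration on a Koszul model of the Hochschild complex of a polynomial ring and then worry — rightly — about making this strictly functorial, since the Koszul identification depends on a choice of generators; you flag this as the main obstacle. The paper sidesteps this entirely: it filters $\HH(A_\bullet)$ levelwise by the Postnikov truncations $\tau_{\geq i}\HH(A_n)$, which is manifestly functorial and choice-free, and for a free algebra $A_n$ the HKR isomorphism $\pi_\ast\HH(A_n)\cong\Omega^\ast_{A_n/\Z}$ identifies the associated graded of this filtration with $\Omega^i_{A_n/\Z}[i]$; realizing then gives $(\bigwedge^i_A\mathbb L_{A/\Z})[i]$. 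In the smooth case the two filtrations agree (the weight decomposition is exactly the decomposition by homotopical degree), so your argument goes through once you replace the hand-built weight filtration by the Postnikov one — which is the cleaner route and dissolves your stated obstacle. Your remark on separatedness (graded pieces concentrated in degrees $\geq i$) is the correct justification and is implicit in the paper.
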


\begin{proof} 
For every commutative ring $A$ the chain complex $\HH(A)$ is naturally associated with a simplicial commutative ring by definition of the Bar construction. Thus on homotopy we get a graded commutative $A$ algebra which has the property that elements in odd degree square to zero. Moreover the $\T$-action equips $\HH_*(A)$ with a differential that is compatible with the ring structure. As a result we get a map of CDGAs
\[
\Omega^*_{A/\Z} \to \HH_*(A) = \pi_* \HH(A) \ .
\]
This map is always an isomorphism in degree $* \leq 1$ and 
if $A=\Z[X_i,i\in I]$ is a free (or smooth) $\Z$-algebra, it is an isomorphism by the Hochschild--Kostant--Rosenberg theorem. In general, we can choose a simplicial resolution $A_\bullet\to A$ such that all $A_n$ are free $\Z$-algebras, in which case we filter $\HH(A_\bullet)$ by the simplicial objects $\tau_{\geq i} \HH(A_\bullet)$. One checks that this is independent of all choices, and the graded pieces are by definition $(\bigwedge^i_A \mathbb L_{A/\Z})[i]$.
\end{proof}

Moreover, there is a natural map $\THH(HA)\to H\HH(A)$.

\begin{proposition}\label{prop:thhtohhsmall} For any associative and unital ring $A$, the map
\[
\pi_i\THH(HA)\to \pi_i H\HH(A) = H_i \HH(A)
\]
is an isomorphism for $i\leq 2$.
\end{proposition}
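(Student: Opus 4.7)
The map $\pi_i \THH(HA) \to \pi_i H\HH(A)$ is induced by a comparison map of cyclic spectra whose $n$-th term is
\[
HA^{\otimes_{\bS} (n+1)} \to HA^{\otimes_{H\Z} (n+1)},
\]
coming from the unit map $\bS \to H\Z$. The target cyclic spectrum realizes to $H\HH(A)$ because the Eilenberg--MacLane functor commutes with geometric realizations of simplicial abelian objects. The plan is to show that the fiber $|F_\bullet|$ of this comparison map is $2$-connected (meaning $\pi_i = 0$ for $i \leq 2$), which by the long exact sequence of the fiber sequence $|F_\bullet| \to \THH(HA) \to H\HH(A)$ is equivalent to the claim.

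The first step is to estimate the connectivity of the levelwise fibers $F_n$. One has $F_0 = 0$. For $n = 1$, the base change identity $HA \otimes_{\bS} HA \simeq HA \otimes_{H\Z} (H\Z \otimes_{\bS} HA)$ shows that the comparison map is $\id_{HA} \otimes_{H\Z} m$, where $m\colon H\Z \otimes_{\bS} HA \to HA$ is the multiplication. The map $m$ is split by the unit $HA \to H\Z \otimes_{\bS} HA$, so its fiber $K$ sits as a direct summand in $H\Z \otimes_{\bS} HA$. Using the cofiber sequence $G \otimes HA \to HA \to H\Z \otimes_{\bS} HA$ with $G = \fib(\bS \to H\Z)$, and the fact that $G$ is $0$-connected (since $\pi_0 \bS \to \pi_0 H\Z$ is an isomorphism), a direct calculation gives $\pi_0 K = \pi_1 K = 0$ and $\pi_2 K \cong A/2$. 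A Tor spectral sequence argument (using that $\Z$ has global dimension $1$) then yields that $F_1 \simeq HA \otimes_{H\Z} K$ is also $1$-connected. For $n \geq 2$, one proceeds by induction, factoring the comparison map as a sequence of replacements of one $\otimes_{\bS}$ by $\otimes_{H\Z}$ at a time; each step has $1$-connected fiber, and hence so does $F_n$.

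With levelwise connectivity in hand, the second step is to transfer it to the realization via the skeletal (bar) spectral sequence
\[
E^1_{p,q} = \pi_q(F_p/L_p F) \Rightarrow \pi_{p+q} |F_\bullet|,
\]
where $L_p F$ denotes the $p$-th latching object. Since $F_0 = 0$ and both $F_p$ and $L_p F$ are $1$-connected for $p \geq 1$, the cofiber $F_p / L_p F$ is $1$-connected. Thus $E^1_{p,q} = 0$ whenever $p = 0$ or $q \leq 1$, which forces $E^1_{p,q} = 0$ for all $p + q \leq 2$. Therefore $\pi_i |F_\bullet| = 0$ for $i \leq 2$, as required.

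The principal technical obstacle is the inductive connectivity estimate for $F_n$ at levels $n \geq 2$ in the absence of any flatness hypothesis on $A$, where nontrivial Tor contributions may in principle appear; however, because $\Z$ has global dimension $1$, all such contributions lie in degrees high enough to preserve the $1$-connectedness of the fibers, which is all that is needed for the spectral sequence argument.
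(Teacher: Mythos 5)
Your argument is correct and is essentially the paper's proof written out in full: the paper's one-line argument that $\pi_{\leq 2}$ of the realization only depends on $\tau_{\leq 2-n}$ of the $n$-th simplicial level, together with the identifications $\tau_{\leq 2-n}(HA^{\otimes_{\bS}(n+1)})\simeq \tau_{\leq 2-n}H(A\otimes^{\mathbb L}_{\Z}\cdots\otimes^{\mathbb L}_{\Z}A)$, is exactly your combination of the $1$-connectedness of the levelwise fibers with the skeletal spectral sequence. Your explicit computation of $\pi_2 K\cong A/2$ and the base-change factorization are correct and make precise what the paper leaves implicit.
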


\begin{proof} The truncation in degrees $\leq 2$ of $\THH(HA)$ depends only on
\[\begin{aligned}
\tau_{\leq 0}(HA\otimes HA\otimes HA)&\simeq \tau_{\leq 0}H(A\buildrel{\mathbb L}\over\otimes_\Z A\buildrel{\mathbb L}\over\otimes_\Z A)\ ,\\
\tau_{\leq 1}(HA\otimes HA) &\simeq \tau_{\leq 1}H(A\buildrel{\mathbb L}\over\otimes_\Z A)\ ,\\
\tau_{\leq 2}HA &\simeq  \tau_{\leq 2} HA\ ,
\end{aligned}\]
which gives the result.
\end{proof}

Now note that $\mathbb L_{\bF_p/\Z_p} = \bF_p[1]$ (with canonical generator given by $p\in I/I^2=H_1 \mathbb L_{\bF_p/\Z_p}$, where $I=\ker(\Z\to \bF_p)$), and $\bigwedge^i_k \mathbb L_{\bF_p/\Z_p} = \Gamma^i(\bF_p)[i]\cong \bF_p[i]$ for all $i\geq 0$, where $\Gamma^i$ denotes the divided powers. Thus, Proposition~\ref{prop:hkr} implies the following computation of Hochschild homology.

\begin{proposition}\label{prop:hhfp} The homology groups of $\HH(\bF_p)$ are given by
\[
H_i \HH(\bF_p) = \left\{\begin{array}{cl} \bF_p & i\geq 0\ \mathrm{even}\\ 0 & \mathrm{else}\ .\end{array}\right .
\]
Let $u\in H_2 \HH(\bF_p)$ denote the canonical generator. Then $H_\ast \HH(\bF_p)$ is isomorphic to the divided power algebra in $u$ over $\bF_p$.$\hfill \Box$\end{proposition}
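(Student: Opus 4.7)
The plan is to apply the Hochschild--Kostant--Rosenberg--type filtration of Proposition~\ref{prop:hkr} to $A=\bF_p$ and read off the homotopy groups, then promote the computation to an identification of algebras. The key input is the cotangent complex $\mathbb L_{\bF_p/\Z}$. Since $\Z\to \bF_p$ is a surjection with kernel $(p)$, standard computations give a canonical equivalence $\mathbb L_{\bF_p/\Z}\simeq \bF_p[1]$, with the canonical generator in $H_1$ corresponding to $p\in (p)/(p)^2$.

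Next I would compute the derived exterior powers of this cotangent complex. The derived functor $\bigwedge^i_{\bF_p}$ applied to a free module in homological degree $1$ is given by divided powers in homological degree $i$: concretely, $\bigwedge^i_{\bF_p}(\bF_p[1]) \simeq \Gamma^i_{\bF_p}(\bF_p)[i]\simeq \bF_p[i]$. Hence the $i$-th graded piece of the filtration on $\HH(\bF_p)$ in Proposition~\ref{prop:hkr} is $\bF_p[2i]$. Since the filtration is separated and exhaustive, and since the graded pieces are concentrated in \emph{even} degrees, there is no room for nonzero differentials in the associated spectral sequence (any differential would have source and target of opposite parity), and no room for nontrivial extensions in each fixed total degree (only one graded piece contributes). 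This immediately yields the additive statement: $H_{2i}\HH(\bF_p)\cong \bF_p$ and $H_{2i+1}\HH(\bF_p)=0$.

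For the multiplicative structure I would use that the filtration of Proposition~\ref{prop:hkr} is compatible with the multiplication on $\HH(A)$ coming from the fact that $A$ is commutative (so $\HH(A)$ is an $\E_\infty$-algebra). On associated gradeds, the induced product is the wedge multiplication $\bigwedge^i\mathbb L\otimes \bigwedge^j\mathbb L\to \bigwedge^{i+j}\mathbb L$. Writing $u\in H_2\HH(\bF_p)$ for the canonical generator corresponding to $1\in \Gamma^1(\bF_p)\cong \bF_p$ under the identification above, I would track through the identification $\bigwedge^i_{\bF_p}(\bF_p[1])\simeq \Gamma^i(\bF_p)[i]$ to see that the product of the generator of the $i$-th graded piece and the generator of the $j$-th graded piece picks up the binomial coefficient $\binom{i+j}{i}$, which is precisely the defining relation $u^{[i]}\cdot u^{[j]}=\binom{i+j}{i}u^{[i+j]}$ in the divided power algebra. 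Consequently $H_\ast\HH(\bF_p)\cong \Gamma^\ast_{\bF_p}(u)$ as graded commutative $\bF_p$-algebras.

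The main technical obstacle is the identification of the derived exterior powers $\bigwedge^i_{\bF_p}(\bF_p[1])$ with $\Gamma^i(\bF_p)[i]$ together with the matching of the multiplicative structure: this is a form of Dold--Puppe/D\'ecalage, where the shift by one in homological degree interchanges symmetric and divided power functors in characteristic $p$. Alternatively, one can avoid this subtlety by a direct bar-construction argument: write $\HH(\bF_p)\simeq \bF_p\otimes^{\mathbb L}_{\bF_p\otimes^{\mathbb L}_\Z \bF_p}\bF_p$, identify $\bF_p\otimes^{\mathbb L}_\Z \bF_p$ with the free graded-commutative $\bF_p$-algebra on a class $\epsilon$ in degree $1$, and then compute the two-sided bar construction, which is well-known to give the divided power algebra on $u=[\epsilon|\epsilon]$ in degree $2$. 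Either route suffices, and both confirm the stated description of $H_*\HH(\bF_p)$.
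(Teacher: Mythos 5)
Your proof is correct and follows essentially the same route as the paper, which deduces the proposition directly from Proposition~\ref{prop:hkr} together with the identifications $\mathbb L_{\bF_p/\Z}\simeq\bF_p[1]$ and $\bigwedge^i_{\bF_p}(\bF_p[1])\simeq\Gamma^i(\bF_p)[i]$. You supply more detail than the paper does (the parity/collapse argument and the binomial-coefficient check for the divided power multiplication), but the underlying argument is the same.
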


By Proposition~\ref{prop:thhtohhsmall}, we have $\pi_1\THH(H\bF_p)=0$ and $\pi_2\THH(H\bF_p)=\bF_p\cdot u$. Now we can state the following theorem of B\"okstedt.

\begin{thm}[B\"okstedt] The homotopy groups of $\THH(H\bF_p)$ are given by
\[
\pi_i \THH(H\bF_p) = \left\{\begin{array}{cl} \bF_p & i\geq 0\ \mathrm{even}\\ 0 & \mathrm{else}\ .\end{array}\right .
\]
Moreover, $\pi_\ast \THH(H\bF_p)$ is isomorphic to the polynomial algebra in $u$ over $\bF_p$. $\hfill \Box$
\end{thm}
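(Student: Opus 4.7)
The plan is to apply the B\"okstedt spectral sequence, computing $\pi_\ast\THH(H\bF_p)$ from Milnor's dual Steenrod algebra $\mathcal{A}_\ast=\pi_\ast(H\bF_p\otimes H\bF_p)$. Since $H\bF_p$ is commutative, the cyclic bar construction $[n]\mapsto H\bF_p^{\otimes(n+1)}$ realizes to $\THH(H\bF_p)$, and its skeletal filtration yields a spectral sequence
\[
E^2_{s,t}=\HH_{s,t}^{\bF_p}(\mathcal{A}_\ast)\Rightarrow \pi_{s+t}\THH(H\bF_p),
\]
where the $E^2$-page is classical Hochschild homology of $\mathcal{A}_\ast$ over $\bF_p$.

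For $p=2$ one has $\mathcal{A}_\ast=\bF_2[\xi_i:i\geq 1]$ with $|\xi_i|=2^i-1$, and $\HH^{\bF_2}(\bF_2[x])=\Lambda(\sigma x)$ with $|\sigma x|=|x|+1$. Thus $E^2=\Lambda(\sigma\xi_1,\sigma\xi_2,\ldots)$ with $|\sigma\xi_i|=2^i$. A Poincar\'e series computation
\[
\prod_{i\geq 1}(1+t^{2^i})=\sum_{n\geq 0}t^{2n}=\frac{1}{1-t^2}
\]
(which is the uniqueness of binary expansions) matches the Poincar\'e series of the conjectured answer $\bF_2[u]$ with $|u|=2$. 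Since Proposition~\ref{prop:thhtohhsmall} gives $\pi_2\THH(H\bF_2)=\bF_2\cdot u\neq 0$, there is no room for nontrivial differentials: the spectral sequence collapses at $E^2$.

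For odd $p$, $\mathcal{A}_\ast=\bF_p[\xi_i]\otimes\Lambda(\tau_j)$ with $|\xi_i|=2(p^i-1)$ and $|\tau_j|=2p^j-1$, and using $\HH^{\bF_p}(\Lambda(y))=\Gamma(\sigma y)$ one computes $E^2=\bigotimes_{i\geq 1}\Lambda(\sigma\xi_i)\otimes\bigotimes_{j\geq 0}\Gamma(\sigma\tau_j)$ with $|\sigma\xi_i|=2p^i-1$ and $|\sigma\tau_j|=2p^j$. Here the spectral sequence does not collapse; one must establish Kudo transgression-type differentials killing all generators and their divided powers except the divided powers of $\sigma\tau_0$, so that the $E^\infty$-page is additively $\Gamma(\sigma\tau_0)$ with Poincar\'e series $1/(1-t^2)$.

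Finally, one must resolve the multiplicative extensions from the associated graded (exterior or divided-power) to the polynomial algebra $\bF_p[u]$. Since the $E^\infty$-count shows $\pi_{2k}\THH(H\bF_p)$ is at most one-dimensional, it suffices to exhibit $u^k\neq 0$ for all $k\geq 1$. This follows from power operations (Dyer--Lashof operations) arising from the $\E_\infty$-algebra structure on $\THH(H\bF_p)$, which in the language of this paper are encoded by the Tate-valued Frobenius of Section~\ref{sec:tatefrob}; the non-triviality of the Steenrod operations on $H\bF_p$ detected by the Tate-valued Frobenius (Theorem~\ref{frobhz}) forces each power $u^k$ to survive. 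The hard part will be the odd-prime case: establishing the Kudo-transgression differentials and the multiplicative extensions rigorously from the abstract $\E_\infty$-structure is a delicate Dyer--Lashof calculation, which in our framework ultimately descends from the cyclotomic Frobenius and the Tate diagonal.
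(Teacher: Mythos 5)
First, a point of reference: the paper gives no proof of this theorem. It is stated as a citation (hence the box), and Remark~\ref{rem:boekstedtequiv} says explicitly that the authors cannot improve on B\"okstedt's argument, which proceeds exactly along the lines you sketch (B\"okstedt spectral sequence plus an essential Dyer--Lashof computation), and records a short alternative that deduces the theorem from the Hopkins--Mahowald identification of $H\bF_p$ as the free $\E_2$-ring with a nullhomotopy of $p$. So your proposal reproduces the classical strategy; the question is whether it closes the argument, and it does not. The two steps you defer --- the odd-primary differentials and the multiplicative extensions --- \emph{are} the content of B\"okstedt's theorem. In particular, the appeal to the Tate-valued Frobenius cannot substitute for the Dyer--Lashof input: Theorem~\ref{frobhz} computes the Frobenius $H\bF_p\to (H\bF_p^{tC_p})^{h\Fp^\times}$ of the ring $H\bF_p$ itself (i.e.\ the total Steenrod operation), not the Dyer--Lashof action on $H_\ast(\THH(H\bF_p);\bF_p)$ needed to identify $(\sigma\xi_i)^2$ with $\sigma\xi_{i+1}$ (resp.\ to handle the $p$-th powers of the $\sigma\tau_j$). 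And the statements in the paper that do tie the cyclotomic Frobenius to the multiplicative structure of $\THH(H\bF_p)$ (Corollary~\ref{cor:computationthhfp} and everything leading up to it in Section~IV.4) take B\"okstedt's theorem as input, so invoking them here would be circular.

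There are also slips in the setup. The B\"okstedt spectral sequence with $E^2=\HH^{\bF_p}_{\ast,\ast}(\mathcal{A}_\ast)$ converges to $H_\ast(\THH(H\bF_p);\bF_p)$, not to $\pi_\ast\THH(H\bF_p)$; the skeletal spectral sequence converging to homotopy has $E^2=\Tor^{\mathcal{A}_\ast}_{\ast,\ast}(\bF_p,\bF_p)$. Relatedly, $\HH^{\bF_2}(\bF_2[x])=\bF_2[x]\otimes\Lambda(\sigma x)$, not $\Lambda(\sigma x)$ --- the formula you wrote is the one for $\Tor$, so you have conflated the two spectral sequences (the counting still works if done consistently on either side, but as written the $E^2$-page and the abutment do not match). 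The collapse argument at $p=2$ by comparing Poincar\'e series with ``the conjectured answer'' is circular; the correct argument is that $E^2$ is multiplicatively generated in filtrations $0$ and $1$, where all higher differentials vanish for degree reasons. Even granting collapse, the extension problem $u^k\neq 0$ remains and is exactly where the Dyer--Lashof operations (or the Thom-spectrum argument of Remark~\ref{rem:boekstedtequiv}) must enter. Your use of Proposition~\ref{prop:thhtohhsmall} to pin down $\pi_2$ is fine, but it only controls degrees $\leq 2$.
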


In other words, $\pi_{2i} \THH(H\bF_p) = \bF_p\cdot u^i$. This theorem is rather striking in the simple answer it gives (a priori it might involve the stable homotopy groups of spheres).

\begin{remark}\label{rem:boekstedtequiv} We cannot give new insights into the proof of B\"okstedt's theorem, which is proved by a computation with the B\"okstedt spectral sequence using Dyer-Lashoff operations in an essential way. However, there is a short argument, given in \cite{MR2684512, MR2651551}, deducing it from the following theorem of Mahowald--Hopkins.
\begin{altenumerate}
\item The free $\mathbb E_2$-algebra in $\Sp$ with a chosen nullhomotopy $p=0$ is given by $H\bF_p$.
\item Equivalently: One can write $H\bF_p$ as the Thom spectrum of the map $\Omega^2 S^3\to \mathrm{Pic}(\mathbb S^\wedge_p)$ of $\mathbb E_2$-groups\footnote{Recall that if $R$ is an $\E_\infty$-ring spectrum (often $\mathbb S$), $G$ is an $\mathbb E_n$-group in spaces and $\mathcal I: G\to \mathrm{Pic}(R)$ is a map of $\mathbb E_n$-groups, then the Thom spectrum is the $\mathbb E_n$-ring $R[G,\mathcal I]$ which is the twisted form $\colim_G \mathcal I$ of the group ring $R[G] = R\otimes_{\mathbb S} \mathbb S[G]$, where $\mathbb S[G]=\Sigma_+^\infty G$. For example, if $G$ is discrete, this is given by $R[G,\mathcal I]=\bigoplus_{g\in G} \mathcal I_g$, where $\mathcal I_g$ are invertible $R$-modules with $\mathcal I_g\otimes_R \mathcal I_h\cong \mathcal I_{gh}$, so that one can make $R[G,\mathcal I]$ in the obvious way into a ring.} induced by the map of pointed spaces
\[
S^1\to B\GL_1(\mathbb S^\wedge_p)\to \mathrm{Pic}(\mathbb S^\wedge_p)
\]
given by the element $1-p\in \pi_1(B\GL_1(\mathbb S^\wedge_p)) = \mathbb Z_p^\times$, noting that $\Omega^2 S^3$ is the free $\mathbb E_2$-group generated by $S^1$.
\end{altenumerate}
\end{remark}

In order to compute $\TC(H\bF_p)$, the most important ingredient is $\THH(H\bF_p)^{h\T}$. Note that there is a convergent spectral sequence
\[
E_2^{ij} = H^i(B\T,\pi_{-j}\THH(H\bF_p))\Rightarrow \pi_{-i-j} \THH(H\bF_p)^{h\T}
\]
(as usual, in cohomological Serre grading). Here, by B\"okstedt's theorem (and because $B\T=\mathbb C P^\infty$ is even), all contributions are in even total degree, so the spectral sequence will necessarily degenerate. In particular, it follows that $\pi_\ast \THH(H\bF_p)^{h\T}\to \pi_\ast \THH(H\bF_p)$ is surjective, and there are elements
\[
\tilde{u}\in \pi_2 \THH(H\bF_p)^{h\T}\ ,v \in \pi_{-2}\THH(H\bF_p)^{h\T}
\]
projecting to $u\in \pi_2 \THH(H\bF_p)$ and the natural generator of $H^2(B\T,\pi_0 \THH(H\bF_p)) = \bF_p$ that pulls back to the orientation class of $\C P^1$, respectively.

To compute $\pi_\ast \THH(H\bF_p)^{h\T}$, it remains to understand the extensions. This can actually be done by hand.

\begin{proposition} The homotopy groups of $\THH(H\bF_p)^{h\T}$ are given by
\[
\pi_i \THH(H\bF_p)^{h\T} = \left\{\begin{array}{cl} \Z_p & i\ \mathrm{even}\\ 0 & \mathrm{else}\ .\end{array}\right .
\]
More precisely, for $i\geq 0$,
\[
\pi_{2i} \THH(H\bF_p)^{h\T} = \Z_p\cdot \tilde{u}^i\ ,\ \pi_{-2i} \THH(H\bF_p)^{h\T} = \Z_p\cdot v^i\ ,
\]
and one can choose $\tilde{u}$ and $V$ so that $\tilde{u}v = p\in \pi_0 \THH(H\bF_p)^{h\T}$. Therefore,
\[
\pi_\ast \THH(H\bF_p)^{h\T} = \Z_p[\tilde{u},v] / (\tilde{u}v - p)\ .
\]
\end{proposition}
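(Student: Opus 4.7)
The plan is to analyze the homotopy fixed points spectral sequence
\[
E_2^{s,t} = H^s(B\T;\pi_{-t}\THH(H\bF_p)) \Longrightarrow \pi_{-s-t}R, \qquad R := \THH(H\bF_p)^{h\T}.
\]
Using B\"okstedt's theorem, $\pi_*\THH(H\bF_p) = \bF_p[u]$ with $|u|=2$, and $H^*(B\T;\bF_p)=\bF_p[v]$ with $|v|=2$. Hence $E_2 \cong \bF_p[u,v]$ is concentrated in even total degree, so there is no room for differentials and the spectral sequence collapses: $E_\infty = \bF_p[u,v]$ as bigraded rings, where multiplicativity comes from the fact that $R$ is an $\E_\infty$-ring spectrum (since $(-)^{h\T}$ is lax symmetric monoidal and $\THH(H\bF_p)$ is $\E_\infty$). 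Combined with strong convergence, this already shows $\pi_{2k+1}R = 0$ for all $k$ and exhibits each $\pi_{2k}R$ as a complete filtered abelian group with each associated graded piece isomorphic to $\bF_p$.

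Since $\THH(H\bF_p)$ is $p$-complete (its homotopy is $p$-torsion) and $p$-completeness is preserved by the limit $(-)^{h\T}$ (smashing with the finite spectrum $\bS/p$ commutes with limits), $R$ is $p$-complete, so $\pi_0R$ is a complete local commutative $\Z_p$-algebra with residue field $\bF_p$ and maximal ideal $F^2\pi_0R$. I fix $\tilde u\in\pi_2R$ a lift of $u$ and take $v\in\pi_{-2}R$ to be the canonical lift of the generator of $H^2(B\T;\bF_p)$. By the multiplicative structure, $\tilde uv\in F^2\pi_0R$ has image in $F^2/F^4 \cong \bF_p\cdot uv$ equal to the generator. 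Since $\mathrm{gr}_*\pi_0R = \bF_p[\tilde uv]$ is a polynomial ring in a single generator of filtration $2$, the Cohen structure theorem forces $\pi_0R$ to be either $\Z_p$ (with $\tilde uv$ a unit multiple of $p$) or $\bF_p[[t]]$ (with $\tilde uv=t$).

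The main step is to rule out the equicharacteristic possibility $\pi_0R \cong \bF_p[[t]]$, and simultaneously identify $\tilde uv$ with $p$ up to a unit. For this I would use the map of $\E_\infty$-rings $\THH(H\Z)^{h\T,\wedge}_p \to R$ induced by the reduction $H\Z\to H\bF_p$: B\"okstedt's integral computation gives $\pi_{2k}\THH(H\Z)=0$ for $k\geq 1$ and $\pi_0\THH(H\Z)=\Z$, so the corresponding spectral sequence contributes to $\pi_0$ only at filtration zero, yielding $\pi_0\THH(H\Z)^{h\T,\wedge}_p = \Z_p$ cleanly (with no extension ambiguities). The induced $\Z_p$-algebra map $\Z_p \to \pi_0R$ is nonzero on the unit, and naturality of the spectral sequence compatible with the reductions $H\Z \to H\bF_p$ and $H^*(B\T;\Z)\to H^*(B\T;\bF_p)$ identifies the image of $p$ with a generator of $F^2/F^4$ in $\pi_0 R$ (concretely, the Bockstein $\beta: H\bF_p \to H\bF_p[1]$ that witnesses $H\bF_p = H\Z/p$ is exactly what couples the two layers). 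Thus $p\notin F^4\pi_0R$, ruling out the equicharacteristic case, and after rescaling $\tilde u$ by a unit we obtain $\tilde uv = p$. Multiplicativity, completeness of the filtration, and the surjective map of bigraded rings $\bF_p[\tilde u,v]\twoheadrightarrow\mathrm{gr}_*\pi_*R$ then upgrade to the graded ring isomorphism $\pi_*R\cong\Z_p[\tilde u,v]/(\tilde uv-p)$, with the homotopy groups as claimed. The subtle part is verifying that the Bockstein-type identification yields a nonzero class in $F^2/F^4$ rather than landing in deeper filtration; this is essentially a naturality statement for the spectral sequences under mod-$p$ reduction and is the main obstacle.
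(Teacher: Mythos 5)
Your overall architecture matches the paper's: degenerate the multiplicative homotopy fixed point spectral sequence using B\"okstedt's theorem, observe that everything reduces to locating $p\in\pi_0\THH(H\bF_p)^{h\T}$ in the abutment filtration, and then propagate by multiplicativity. The formal parts (evenness, collapse, $p$-completeness of $\THH(H\bF_p)^{h\T}$, and the dichotomy $\pi_0\cong\Z_p$ versus $\bF_p[[t]]$ via the structure of complete discrete valuation rings) are all fine. But the entire non-formal content of the proposition is the single claim that $p$ lands in $F^2\setminus F^4$, i.e.\ that its image in $E_\infty^{2,-2}=\bF_p\cdot uv$ is the generator --- and this is exactly the step you defer and flag as ``the main obstacle.'' As written, the proof is therefore incomplete at its one essential point. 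Moreover, the route you propose for closing it does not obviously work: naturality of the spectral sequence along $\THH(H\Z)^{h\T}\to\THH(H\bF_p)^{h\T}$ only controls the image of $p$ through the filtration quotient $F^0/F^2$ (where it dies, since $p=0$ in $\pi_0\THH(H\bF_p)$); it gives no handle on which deeper filtration stratum the class falls into, which is precisely the question. Some genuinely new input is needed to produce the ``Bockstein coupling'' you allude to.

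The paper supplies that input by a direct low-degree computation (its Lemma on the image of $p$): since $\tau_{\leq 2}\THH(H\bF_p)\simeq\tau_{\leq 2}H\HH(\bF_p)$, one only needs $\lim_{\C P^1}$ of the two-stage Postnikov piece $\overline{\HH(A)}$ sitting in $\mathbb L_{A/\Z}[1]\to\overline{\HH(A)}\to A$, and the restriction of the circle action to $\C P^1$ is identified (by left Kan extension from smooth algebras) with the universal derivation $A\to\mathbb L_{A/\Z}$. For $A=\bF_p$ this map $\bF_p\to\mathbb L_{\bF_p/\Z}\simeq\bF_p[1]$ is classified by the extension $\Z/p^2$, sending $p$ to the generator $u$; this is the concrete commutative-algebra fact that forces $p\mapsto uv\neq 0$ in $E_\infty^{2,-2}$. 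If you want to keep your $H\Z$-comparison strategy instead, you would still have to prove an analogue of this statement (for instance by analyzing $\THH(H\Z)/p\to\THH(H\bF_p)$ in low degrees), so I would recommend making the low-degree Hochschild computation explicit rather than leaving it as an appeal to naturality.
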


\begin{proof} We use that the spectral sequence
\[
E_2^{ij} = H^i(B\T,\pi_{-j}\THH(H\bF_p))\Rightarrow \pi_{-i-j} \THH(H\bF_p)^{h\T}
\]
is multiplicative. (As it degenerates at $E_2$, no subtleties regarding the multiplicative structure on the spectral sequence arise.) By Lemma~\ref{lem:unup} below, the image of $p\in \pi_0 \THH(H\bF_p)^{h\T}$, which necessarily lies in the first step of the abutment filtration, maps to the class
\[
uv\in E_2^{2,-2} = H^2(B\T,\pi_2 \THH(H\bF_p)) = \bF_p\ .
\]
By multiplicativity, this implies that for all $i\geq 1$, the image of $p^i\in \pi_0 \THH(H\bF_p)^{h\T}$, which lies in the $i$-th step of the abutment filtration, maps to the class
\[
u^iv^i\in E_2^{2i,-2i} = H^{2i}(B\T,\pi_{2i} \THH(H\bF_p)) = \bF_p\ .
\]
In particular, the powers of $p$ hit all contributions to total degree $0$, so
\[
\pi_0 \THH(H\bF_p)^{h\T} = \Z_p\ .
\]
Multiplying by powers of $\tilde{u}$, resp.~$v$, we get the given description of all homotopy groups. Changing $\tilde{u}$ by a unit, we can then arrange that $\tilde{u}v = p$.
\end{proof}

\begin{lemma}\label{lem:unup} The image of $p\in\pi_0 \THH(H\bF_p)^{h\T}$ in $H^2(B\T,\pi_2\THH(H\bF_p))$ is given by $uv$.
\end{lemma}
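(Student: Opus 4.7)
The claim amounts to showing that $p$ lies in $F^2 \setminus F^4$ in the abutment filtration on $\pi_0 \THH(H\bF_p)^{h\T}$ coming from the skeletal filtration of $B\T$, so that its image in $F^2/F^4 = E_\infty^{2,-2} = \bF_p$ must be the unique nonzero class $uv$ (after rescaling $\tilde u$ if necessary).

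That $p \in F^2$ is immediate from the edge map: $p$ maps to $p = 0 \in \pi_0 \THH(H\bF_p) = \bF_p$.

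For the key step $p \notin F^4$, my plan is to use the ring map $\THH(H\bF_p)^{h\T} \to \THH(H\bF_p)^{hC_p}$ induced by restricting the $\T$-action to $C_p \subset \T$, together with the analogous but more tractable computation for $BC_p$. Under the restriction $H^*(B\T;\bF_p) \to H^*(BC_p;\bF_p)$, the Bott class $v$ pulls back to a generator of $H^2(BC_p;\bF_p)$, and $p$ maps to $p$. Thus it suffices to show that $p$ is nonzero in the corresponding $F^2/F^4$ for $\pi_0 \THH(H\bF_p)^{hC_p}$.

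To pin down the extension in the $BC_p$-spectral sequence, I would use the Frobenius structure: by Corollary~\ref{uniqueness}, the square involving $\varphi_p$ and the Tate-valued Frobenius $\varphi_{H\bF_p}: H\bF_p \to H\bF_p^{tC_p}$ commutes, and by the Segal conjecture (Theorem~\ref{thm:generalsegal}) the latter exhibits $H\bF_p^{tC_p}$ as the $p$-completion of $H\bF_p$; as $H\bF_p$ is $p$-complete, $\varphi_{H\bF_p}$ is an equivalence on $\pi_0$. Taking $hC_p$ of the square $H\bF_p \to \THH(H\bF_p) \xrightarrow{\varphi_p} \THH(H\bF_p)^{tC_p}$, and comparing the filtrations induced from the Atiyah--Hirzebruch spectral sequences, one tracks the image of $p$ and sees that it survives to filtration exactly $2$ via the corresponding element $uv$ in the mod-$p$ Tate cohomology picture.

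The main obstacle is making the comparison of filtrations across the two spectral sequences (for $\THH(H\bF_p)^{hC_p}$ and $(\THH(H\bF_p)^{tC_p})^{hC_p}$) rigorous, so that survival on the Tate side forces nonvanishing on the homotopy fixed point side; this rests on Lemma~\ref{lemtet} which identifies the Tate spectrum of the full group with the homotopy fixed points of the Tate spectrum on $C_p$ for bounded below spectra.
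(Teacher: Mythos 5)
Your setup — reducing the claim to the assertion that $p$ lies in filtration exactly $2$, noting $p\in F^2$ since $p=0$ in $\pi_0\THH(H\bF_p)=\bF_p$, and restricting along $C_p\subseteq\T$ — is fine. But the essential step, namely that $p\notin F^4$, i.e.\ that the multiplicative extension is nontrivial and $p$ is detected by $uv$, is asserted rather than proved, and the mechanism you propose cannot establish it. Since $H\bF_p$ is an $\E_\infty$-ring, $\THH(H\bF_p)$ is an $H\bF_p$-algebra, so $\THH(H\bF_p)^{tC_p}$ is a module over $H\bF_p^{tC_p}$ and multiplication by $p$ is nullhomotopic on it; hence $p$ maps to $0$ in $\pi_0$ of $\THH(H\bF_p)^{tC_p}$ and of any homotopy fixed points thereof. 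Any comparison along $\varphi_p$ (or the canonical map) therefore sends $p$ to zero, which is consistent with $p$ lying in arbitrarily high filtration in $\pi_0\THH(H\bF_p)^{hC_p}$, or with $p=0$ there: it yields no lower bound on the filtration, which is exactly what is needed. There is also a circularity concern: the quantitative identification of $\varphi_p$ (Corollary~\ref{cor:computationthhfp}) is obtained in the paper only downstream of the computation of $\pi_\ast\THH(H\bF_p)^{h\T}$, which rests on this very lemma. Finally, Theorem~\ref{thm:generalsegal} concerns the Tate diagonal $X\to(X\otimes\ldots\otimes X)^{tC_p}$ (the topological Singer construction), not the map $X\to X^{tC_p}$; the spectrum $H\bF_p^{tC_p}$ is not the $p$-completion of $H\bF_p$ — its homotopy is $\widehat{H}^{-\ast}(C_p,\bF_p)$, a Laurent series algebra.

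The paper's proof takes a completely different and purely algebraic route: by Proposition~\ref{prop:thhtohhsmall} the statement depends only on $\tau_{\leq 2}\THH(H\bF_p)\simeq \tau_{\leq 2}H\HH(\bF_p)$, so it becomes a computation in Hochschild homology over $\Z$. Restricting to $\C P^1\subseteq B\T$, the $\C P^1$-structure on the two-stage quotient $\overline{\HH(A)}$ is a self-map $\overline{\HH(A)}\to\overline{\HH(A)}[-1]$ which factors through the derivative $A\to\mathbb L_{A/\Z}$, and for $A=\bF_p$ the resulting two-term complex $\bF_p\to\mathbb L_{\bF_p/\Z}\simeq(\bF_p\cdot u)[1]$ is classified by the extension $\Z/p^2\Z$, with $p$ mapping to the generator $u$; this is precisely the nonvanishing of the extension you need. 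To repair your argument you would have to supply an independent computation of this extension (for instance via the cotangent complex as above, or via a comparison with $\THH(H\Z)$); the Frobenius cannot do it.
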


\begin{proof} The statement depends only on $\tau_{\leq 2} \THH(H\bF_p)\simeq \tau_{\leq 2} H\HH(\bF_p)$. Thus, it is enough to show that the image of
\[
p\in H_0 (\tau_{\leq 2} \HH(\bF_p))^{h\T}
\]
in $H^2(B\T,H_2\HH(\bF_p))\simeq \bF_p \cdot uv$ is $uv$. This is a standard computation in Hochschild homology, so we only sketch the argument.

It is enough to prove the same result for the image of $p$ in $H_0 \lim_{\C P^1} (\tau_{\leq 2} \HH(\bF_p))$, noting that $H^2(B\T, H_2\HH(\bF_p)) = H^2(\C P^1,H_2\HH(\bF_p))$. But we can understand the $\C P^1$-spectrum $\tau_{\leq 2} \HH(\bF_p)$. In fact, the truncation in degrees $\leq 2$ is equivalent to the truncation by the second filtration step in the filtration of Proposition~\ref{prop:hkr}. For a general commutative ring $A$, the quotient $\overline{\HH(A)}$ of $\HH(A)$ by this second filtration step is an extension
\[
\mathbb L_{A/\Z}[1]\to \overline{\HH(A)}\to A\ ,
\]
and its $\C P^1$-structure is given by a selfmap
\[
\overline{\HH(A)}\to \overline{\HH(A)}[-1]\ .
\]
If $A$ is smooth over $\Z$, this selfmap factors for degree reasons as
\[
\overline{\HH(A)}\to A\to \Omega^1_{A/\Z}\to \overline{\HH(A)}[-1]\ ;
\]
here, the middle map can be checked to be the derivative. By left Kan extension, this shows that in general, the selfmap is given by
\[
\overline{\HH(A)}\to A\to \mathbb L_{A/\Z}\to \overline{\HH(A)}[-1]\ .
\]
Also note that
\[
\lim_{\C P^1} \overline{\HH(A)} = \fib(\overline{\HH(A)}\to \overline{\HH(A)}[-1])\ .
\]
Applying this to $A=\bF_p$ (where $\overline{\HH(\bF_p)} = \tau_{\leq 2} \HH(\bF_p)$) shows that
\[
H_0 \lim_{\C P^1} \overline{\HH(\bF_p)} = (\bF_p\to \mathbb L_{\bF_p/\Z})\ .
\]
It is a standard computation in commutative algebra that the map $\bF_p\to \mathbb L_{\bF_p/\Z}\simeq (\bF_p\cdot u)[1]$ is given by the extension $\Z/p^2\Z$, with $p$ mapping to the generator $u$. This finishes the proof.
\end{proof}

Now it is easy to compute $\pi_\ast\THH(H\bF_p)^{t\T}$, which forms another ingredient in $\TC(H\bF_p)$.

\begin{corollary} The homotopy groups of $\THH(H\bF_p)^{t\T}$ are given by
\[
\pi_\ast \THH(H\bF_p)^{t\T} = \Z_p[v^{\pm 1}]\ .
\]
In particular, for all even $i\in \mathbb Z$, the map
\[
\pi_i\THH(H\bF_p)^{h\T}\cong \Z_p\to \pi_i \THH(H\bF_p)^{t\T}\cong \Z_p
\]
is injective. If $i\leq 0$, it is an isomorphism, while if $i=2j\geq 0$, it has image $p^j\Z_p$.
\end{corollary}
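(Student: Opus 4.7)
The strategy is to combine the cofiber sequence $\Sigma X_{h\T} \to X^{h\T} \to X^{t\T}$ of Corollary~\ref{cor:tates1} (for $X = \THH(H\bF_p)$) with the multiplicative structure on $\pi_\ast X^{t\T}$ coming from the $\E_\infty$-algebra structure of $\THH(H\bF_p)$ and the lax symmetric monoidal structure on $-^{t\T}$.

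First I would observe that, by B\"okstedt's theorem, $X$ is connective with homotopy concentrated in non-negative even degrees. Since $H_\ast(B\T;\bF_p) = \bF_p[t]$ with $|t|=2$ is also concentrated in non-negative even degrees, the homotopy orbits spectral sequence forces $\pi_i X_{h\T}$ to vanish for $i<0$ and for $i$ odd, and to be a finite $p$-group of order $p^{i/2+1}$ for $i = 2m\geq 0$. Feeding this into the long exact sequence
\[
\pi_{i-1}X_{h\T}\to \pi_iX^{h\T}\to \pi_iX^{t\T}\to \pi_{i-2}X_{h\T}\to \pi_{i-1}X^{h\T}
\]
immediately yields: (a) $\pi_iX^{t\T}=0$ for $i$ odd; (b) $\pi_iX^{h\T}\xto{\sim}\pi_iX^{t\T}$ for $i\leq 0$, proving the assertion in non-positive degrees and showing $\bar v\in\pi_{-2}X^{t\T}$ (the image of $v$) is a generator; and (c) for $i=2j>0$, a short exact sequence
\[
0\to \pi_{2j}X^{h\T}=\Z_p\cdot \tilde u^j\to \pi_{2j}X^{t\T}\to \pi_{2j-2}X_{h\T}\to 0,
\]
which gives the injectivity assertion and identifies the image of the canonical map as the cyclic subgroup generated by $\overline{\tilde u}^j$.

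Next I would use multiplicativity: since $H\bF_p$ is an $\E_\infty$-ring, $\THH(H\bF_p)$ is an $\E_\infty$-algebra with $\T$-action, so $X^{h\T}\to X^{t\T}$ is a map of $\E_\infty$-rings by Corollary~\ref{cor:tates1}, and $\pi_\ast X^{t\T}$ is a graded $\pi_\ast X^{h\T}$-algebra. The relation $v\tilde u=p$ in $\pi_0X^{h\T}$ descends to $\bar v\cdot\overline{\tilde u}^j \cdots $ giving $\bar v^j\cdot\overline{\tilde u}^j=p^j$ in $\pi_0X^{t\T}=\Z_p$. This already shows that the image of $\overline{\tilde u}^j$ in $\pi_{2j}X^{t\T}$ has the form $p^j$ times some element $w_j$ with $\bar v^j w_j=1$, provided we can promote $\bar v$ to a unit. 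Granting this, the ring map $\Z_p[\tilde u,v]/(\tilde uv-p)\to \pi_\ast X^{t\T}$ factors through the $v$-localization $\Z_p[v^{\pm 1}]$, and the image statement $\tilde u^j\mapsto p^jv^{-j}$ follows.

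The main obstacle is therefore to show that $\bar v\in\pi_{-2}X^{t\T}$ is invertible in the graded ring $\pi_\ast X^{t\T}$, and equivalently to rule out split extensions in the short exact sequence above (e.g.\ for $j=1$ one must exclude $\pi_2X^{t\T}\cong \Z_p\oplus\bF_p$). To handle this I would invoke the Tate spectral sequence $E_2^{s,t}=\widehat H^s(\T;\pi_tX)\Rightarrow \pi_{t-s}X^{t\T}$, using that the Farrell--Tate cohomology of $\T$ with $\bF_p$-coefficients is $\bF_p[v^{\pm 1}]$ with $|v|=2$ (obtained from $H^\ast(B\T;\bF_p)=\bF_p[v]$ by inverting $v$, as in Lemma~\ref{lem:tates1cn} and Section~\ref{sec:farrelltate}). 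This SS is multiplicative and degenerates at $E_2$ for parity reasons, exhibiting $\pi_{2j}X^{t\T}$ as a complete filtered $p$-complete abelian group (Lemma~\ref{lemtate2}) whose associated graded is a copy of $\bF_p$ in each filtration degree on which $\bar v$ acts as a shift. A standard inductive argument (starting from $\pi_{2j}X^{t\T}=\Z_p$ for $j\leq 0$ and propagating via multiplication by $\bar v$ through the degenerate SS) then shows $\pi_{2j}X^{t\T}\cong\Z_p$ with $\bar v^{-j}$ a generator, completing the proof.
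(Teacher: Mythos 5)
Your proof is correct and follows essentially the same route as the paper: the decisive step in both is the multiplicative Tate spectral sequence $E_2=\bF_p[v^{\pm 1}]\otimes \pi_\ast\THH(H\bF_p)$, which degenerates for parity reasons, together with the invertibility of $v$ on the $E_2$-page to propagate the answer from non-positive to positive degrees. Your preliminary use of the cofiber sequence $\Sigma X_{h\T}\to X^{h\T}\to X^{t\T}$ and the homotopy orbit spectral sequence is just a cosmetic variant of the paper's direct comparison of the Tate and homotopy fixed point spectral sequences in negative total degrees (and it yields the index-$p^j$ statement in the same way).
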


\begin{proof} There is the Tate spectral sequence
\[
E_2^{ij} = \pi_{-i} (\pi_{-j} \THH(H\bF_p))^{t\T}\Rightarrow \pi_{-i-j} \THH(H\bF_p)^{t\T}\ ,
\]
which is again multiplicative, and concentrated in even total degree. Comparing with the spectral sequence for $\THH(H\bF_p)^{h\T}$, we get the result in negative homotopical degree. Using multiplicativity, the result follows in positive homotopical degrees.
\end{proof}

Interestingly, we can also identify the Frobenius map
\[
\varphi_p^{h\T}: \THH(H\bF_p)^{h\T}\to \THH(H\bF_p)^{t\T}
\]
up to scalars.

\begin{proposition} For all even $i\in \mathbb Z$, the map
\[
\pi_i \varphi_p^{h\T}: \pi_i\THH(H\bF_p)^{h\T}\cong \Z_p\to \pi_i \THH(H\bF_p)^{t\T}\cong \Z_p
\]
is injective. If $i\geq 0$, it is an isomorphism, while if $i=-2j\leq0$, the image is given by $p^j\Z_p$.
\end{proposition}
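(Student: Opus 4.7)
The map $\varphi_p^{h\T}: \THH(H\bF_p)^{h\T} \to (\THH(H\bF_p)^{tC_p})^{h\T}$ is a map of $\E_\infty$-rings by Section~\ref{sec:commutative}. Using the identification of Lemma~\ref{lemtate2} of the target with $\THH(H\bF_p)^{t\T}$ (valid because $\THH(H\bF_p)$ is $p$-torsion and hence everything in sight is $p$-complete), it becomes a ring map $\Z_p[\tilde u, v]/(\tilde u v - p) \to \Z_p[v^{\pm 1}]$. Write $\varphi_p^{h\T}(\tilde u) = bv^{-1}$ and $\varphi_p^{h\T}(v) = av$; the relation $\tilde u v = p$ forces $ab = p$ in $\Z_p$. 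So exactly one of $a, b$ lies in $p\Z_p$ and the other is a unit.

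The goal is to show $b \in \Z_p^\times$ (so $a \in p\Z_p$), which gives the claimed pattern: in degree $2j \geq 0$ the map sends $\tilde u^j \mapsto b^jv^{-j}$ and is an isomorphism, and in degree $-2j \leq 0$ the map sends $v^j \mapsto a^jv^j$ with image $p^j\Z_p \cdot v^j$. Injectivity in all even degrees is immediate once we know $a, b \neq 0$.

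To pin down which of $a, b$ is a unit, I would use the $\T$-equivariant retraction $\pi: \THH(H\bF_p) \to H\bF_p$ provided by Proposition~\ref{thhcommutative}. By the Corollary following Corollary~\ref{uniqueness} combined with the universal property of $\THH$, the composite $\THH \xrightarrow{\varphi_p} \THH^{tC_p} \xrightarrow{\pi^{tC_p}} H\bF_p^{tC_p}$ coincides with $\varphi_{H\bF_p}\circ\pi$, where $\varphi_{H\bF_p}$ is the Tate-valued Frobenius of $H\bF_p$. Taking $h\T$-fixed points and evaluating on $\pi_{-2}$, naturality of the equivalence in Lemma~\ref{lemtate2} identifies the vertical maps (induced by $\pi$) with the reduction $\Z_p \twoheadrightarrow \bF_p$. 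Thus $\bar a \in \bF_p$ equals the coefficient of $v$ in $\varphi_{H\bF_p}^{h\T}(t) \in \pi_{-2}(H\bF_p^{tC_p})^{h\T} \simeq \pi_{-2}H\bF_p^{t\T} = \bF_p$, where $t$ is the standard generator of $\pi_{-2}H\bF_p^{h\T}$.

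The remaining step -- and the main obstacle -- is the explicit computation $\varphi_{H\bF_p}^{h\T}(t) = 0$. Here one uses the explicit description of $\varphi_{H\bF_p}$ via Steenrod operations (Theorem~\ref{frobhz} and Proposition~\ref{Steenrod}): the image of $t$ under $\varphi_{H\bF_p}$ is the formal sum $\sum P^i(t)\tau^{(p-1)i}$ (and the analogue for $p=2$), which decomposes as the canonical contribution plus higher Steenrod corrections. One must then track how these corrections contribute under the collapsing identification $(H\bF_p^{tC_p})^{h\T} \simeq H\bF_p^{t\T}$ of Lemma~\ref{lemtate2}: different AHSS/Tate-filtration positions collapse onto the one-dimensional abutment $\bF_p$, and the higher Steenrod terms cancel the canonical term. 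The delicate point is keeping track of the extensions in these spectral sequences, and I expect this to be the hardest part. An alternative route, if one wishes to bypass the spectral sequence bookkeeping, is to invoke the Bökstedt--Madsen computation $\TC(H\bF_p) \simeq H\Z_p \oplus \Sigma^{-1}H\Z_p$ and read off the structure of $\varphi_p^{h\T}$ from the fiber sequence of Proposition~\ref{prop:formula}: the vanishing $\pi_1\TC(H\bF_p) = 0$ forces $b - p$ to be a unit, hence $b \in \Z_p^\times$, which is exactly the claim.
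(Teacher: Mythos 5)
Your first paragraph is exactly the paper's opening move: multiplicativity plus $\tilde u v=p$ gives $ab=p$, hence injectivity everywhere and an isomorphism in exactly one of the two half-lines. The problem is the step that decides \emph{which} half-line, and there your argument has a genuine gap. Your main route reduces the question to the computation $\varphi_{H\bF_p}^{h\T}(t)=0$ in $\pi_{-2}(H\bF_p^{tC_p})^{h\T}\cong\bF_p$, and you explicitly leave that computation open ("I expect this to be the hardest part"). It really is nontrivial: by Proposition~\ref{Steenrod} the class $\varphi_{H\bF_p}(t)$ is a sum of a "canonical" term and higher Steenrod terms sitting in different Tate filtrations, and after the collapse of Lemma~\ref{lemtate2} onto the one-dimensional abutment these terms must be shown to cancel --- which is essentially equivalent to the statement being proved, not easier than it. Your fallback route is circular within the paper: the vanishing of $\pi_1\TC(H\bF_p)$ and $\pi_2\TC(H\bF_p)$ that you invoke is precisely what the corollary following this proposition \emph{deduces from} it, and the stated goal of Section IV.4 is to compute $\TC(H\bF_p)$ using only B\"okstedt's computation of $\pi_*\THH(H\bF_p)$ as input.

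The paper's resolution of the dichotomy is much softer and avoids both the Frobenius of $H\bF_p$ on homotopy fixed points and any spectral-sequence bookkeeping: instead of pushing forward along $\pi:\THH(H\bF_p)\to H\bF_p$ and staying at the level of $h\T$-fixed points, compare $\varphi_p^{h\T}$ with the underlying map $\varphi_p$ via the vertical "forget the $\T$-fixed points" maps. Concretely, the composite
\[
\pi_{-2}\THH(H\bF_p)^{h\T}\longrightarrow \pi_{-2}\THH(H\bF_p)\xrightarrow{\ \pi_{-2}\varphi_p\ }\pi_{-2}\THH(H\bF_p)^{tC_p}\longrightarrow \pi_{-2}H\bF_p^{tC_p}
\]
is zero for the trivial reason that $\pi_{-2}\THH(H\bF_p)=0$ ($\THH(H\bF_p)$ is connective). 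If $\pi_{-2}\varphi_p^{h\T}$ were surjective, commutativity would force the canonical map $\Z_p\cdot v=\pi_{-2}\THH(H\bF_p)^{t\T}\to\pi_{-2}H\bF_p^{tC_p}\cong\bF_p$ to vanish; but $v$ was chosen to hit the generator of $H^2(B\T,\pi_0)$, so its image there is nonzero. This contradiction shows $a\in p\Z_p$ and hence $b\in\Z_p^\times$, which is exactly the conclusion you were after. So the missing idea is to exploit the connectivity of $\THH(H\bF_p)$ through the forgetful map to the underlying spectrum, rather than to compute the Tate-valued Frobenius of $H\bF_p$ after taking $\T$-homotopy fixed points.
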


\begin{proof} As $\tilde{u}v=p$ and the map is multiplicative, it follows that the maps must be injective, and that they are isomorphisms either in positive or in negative degrees. Assume that they are isomorphisms in negative degrees. Then we look at the commutative diagram
\[\xymatrix{
\pi_{-2} \THH(H\bF_p)^{h\T}\ar^{\pi_{-2} \varphi_p^{h\T}}[r]\ar[d] & \pi_{-2} \THH(H\bF_p)^{t\T}\ar[d]\ar[r] & \pi_{-2} H\bF_p^{t\T}\ar[d]\\
\pi_{-2} \THH(H\bF_p)\ar^{\pi_{-2} \varphi_p}[r] & \pi_{-2} \THH(H\bF_p)^{tC_p}\ar[r] & \pi_{-2} H\bF_p^{tC_p}\ .
}\]
Note that the lower left corner is equal to $0$, so if the upper left arrow is surjective, the map
\[
\Z_p\cdot v = \pi_{-2} \THH(H\bF_p)^{t\T}\to \pi_{-2} H\bF_p^{tC_p}\cong \bF_p
\]
must be zero. But $v$ maps to a nonzero class in $\pi_{-2} H\bF_p^{t\T}$ by construction, and this class maps to a nonzero element of $\pi_{-2} H\bF_p^{tC_p}$.
\end{proof}

\begin{corollary} The homotopy groups of $\TC(H\bF_p)$ are given by
\[
\pi_i \TC(H\bF_p) = \left\{\begin{array}{cl} \Z_p & i=0,-1\\ 0 & \mathrm{else}\ .\end{array}\right .
\]
\end{corollary}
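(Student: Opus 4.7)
The plan is to feed the two preceding computations into the fiber sequence of Corollary~\ref{cor:formulatc}. Concretely, since $\THH(H\bF_p)$ is an $H\bF_p$-module, every prime $\ell\neq p$ acts invertibly on it, so Lemma~\ref{lem:taterational} gives $\THH(H\bF_p)^{tC_\ell}\simeq 0$ for $\ell\neq p$. The product in Corollary~\ref{cor:formulatc} therefore collapses to the single factor at $p$, and by Lemma~\ref{lemtate2} combined with the fact (already shown) that $\THH(H\bF_p)^{t\T}\simeq \Z_p[v^{\pm 1}]$ is $p$-complete, we may rewrite this factor as $\THH(H\bF_p)^{t\T}$. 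Thus we obtain a fiber sequence
\[
\TC(H\bF_p)\;\longrightarrow\;\THH(H\bF_p)^{h\T}\;\xrightarrow{\varphi_p^{h\T}-\can}\;\THH(H\bF_p)^{t\T}.
\]

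Both source and target of the right map have homotopy groups concentrated in even degrees, where each is isomorphic to $\Z_p$, with explicit generators coming from $\tilde u$, $v$ (on the source) and $v$ (on the target). The key step is to read off the map $\varphi_p^{h\T}-\can$ in each even degree $i=2j$ using the two propositions just proved: in degree $2j$ the map $\can$ is the identity for $j\leq 0$ and multiplication by $p^j$ for $j\geq 0$, while $\varphi_p^{h\T}$ is the identity for $j\geq 0$ and multiplication by $p^{-j}$ for $j\leq 0$. Consequently, as a map $\Z_p\to \Z_p$, the difference $\varphi_p^{h\T}-\can$ is multiplication by $1-p^j$ if $j>0$, by $p^{-j}-1$ if $j<0$, and by $0$ if $j=0$. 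In all cases $j\neq 0$ the relevant integer is congruent to a unit modulo $p$, hence a unit in $\Z_p$, so $\varphi_p^{h\T}-\can$ is an equivalence on $\pi_i$ for every even $i\neq 0$.

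The long exact sequence associated to the fiber sequence then forces $\pi_i\TC(H\bF_p)=0$ for all $i\neq 0,-1$ (using that odd homotopy groups of source and target already vanish), while in the range $i=0,-1$ the sequence reduces to
\[
0\to \pi_0\TC(H\bF_p)\to \Z_p\xrightarrow{0}\Z_p\to \pi_{-1}\TC(H\bF_p)\to 0,
\]
yielding $\pi_0\TC(H\bF_p)=\pi_{-1}\TC(H\bF_p)=\Z_p$. The only genuinely substantive input is the identification of $\varphi_p^{h\T}$ on homotopy groups carried out in the previous proposition; once that is in hand the computation is purely a matter of linear algebra in the long exact sequence.
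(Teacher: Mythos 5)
Your overall strategy coincides with the paper's: reduce the fiber sequence to the single prime $p$, observe that source and target of $\varphi_p^{h\T}-\can$ are even with $\pi_{2j}\cong\Z_p$, and check that the map is an equivalence in every even degree except $0$. Your route to the reduction is cosmetically different (you kill the factors at $\ell\neq p$ via the $H\bF_p$-module structure and Lemma~\ref{lem:taterational}, whereas the paper uses that $H\bF_p$ is $p$-complete to pass to $\TC(-,p)$), and the "unit minus multiple of $p$" computation in degrees $2j\neq 0$ is exactly the paper's argument; both work.

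There is one small but real gap, and it sits in the only degree where the answer is nonzero. In degree $0$ you claim the difference $\varphi_p^{h\T}-\can$ is zero "using the two propositions just proved", but those propositions only identify each of $\can$ and $\varphi_p^{h\T}$ on $\pi_0$ as isomorphisms $\Z_p\to\Z_p$, i.e.\ they pin the maps down only up to units. Two isomorphisms need not agree, and if they differed by a nontrivial unit the difference would again be an equivalence and you would wrongly conclude $\pi_0\TC(H\bF_p)=\pi_{-1}\TC(H\bF_p)=0$. What forces them to coincide is multiplicativity: both $\can$ and $\varphi_p^{h\T}$ are maps of $\E_\infty$-rings, hence ring homomorphisms $\Z_p\to\Z_p$ on $\pi_0$, and the only such homomorphism is the identity. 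The paper makes exactly this remark ("for $i=0$ we must have $\varphi_p^{h\T}=\id$, as everything is a $\Z_p$-algebra"); you should add it. With that sentence inserted, your proof is complete and is essentially the paper's.
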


In particular, note that this implies that $\THH(H\bF_p)$ is a $\T$-equivariant $\tau_{\geq 0} \TC(H\bF_p) = H\Z_p$-algebra, and the Frobenius map
\[
\varphi_p: \THH(H\bF_p)\to \THH(H\bF_p)^{tC_p}
\]
is a $\T\cong \T/C_p$-equivariant map of $\E_\infty$-$H\Z_p$-algebras.

\begin{proof} Note that $H\bF_p$ is $p$-complete, so $\TC(H\bF_p)=\TC(H\bF_p,p)$, which is $p$-complete, so there is a fiber sequence
\[
\TC(H\bF_p)\to \THH(H\bF_p)^{h\T}\xto{\can-\varphi_p^{h\T}} \THH(H\bF_p)^{t\T}\ .
\]
The second and third term have only even homotopy groups, so we get exact sequences
\[
0\to \pi_{2i} \TC(H\bF_p)\to \pi_{2i} \THH(H\bF_p)^{h\T}\xto{\can-\varphi_p^{h\T}} \pi_{2i} \THH(H\bF_p)^{t\T}\to \pi_{2i-1} \TC(H\bF_p)\to 0\ .
\]
But if $i\neq 0$, then the middle map is the difference of an isomorphism and a map divisible by $p$ between two copies of $\Z_p$; thus, an isomorphism. For $i=0$, we must have $\varphi_p^{h\T} = \id$, as everything is a $\Z_p$-algebra. The result follows.
\end{proof}

\begin{remark} Note that the map $\varphi_p: \THH(H\bF_p)^{h\T} \to \THH(H\bF_p)^{t \T}$ can be extended uniquely to a map of $\E_\infty$-rings
$$
(\THH(\bF_p)^{t\T})[p^{-1}] \simeq \THH(\bF_p)^{h\T}[p^{-1}] \xto{\varphi_p^{h\T}} \THH(\bF_p)^{t\T}[p^{-1}]
$$
by inverting $p$ in the source, which also inverts $v$. The homotopy groups of these spectra are given by the ring $\Q_p[v^{\pm 1}]$. This is the ``meromorphic extension'' used on the level of homotopy groups by Hesselholt in \cite[Proposition 4.2]{2016arXiv160201980H}.
\end{remark}

In fact, we can now completely identify $\THH(H\bF_p)$ as an $\E_\infty$-algebra in cyclotomic spectra. For this, we use the following formulas for $\T$-equivariant chain complexes.

\begin{lemma}\label{lem:chaincomplext} The following natural transformations of functors $\calD(\Z)^{B\T}\to \calD(\Z)$, induced by the respective lax symmetric monoidal structures, are equivalences:
\[\begin{aligned}
X^{h\T}\otimes_{\Z^{h\T}} \Z&\to X\ ;\\
X^{h\T}\otimes_{\Z^{h\T}} \Z^{t\T}&\to X^{t\T}\ ;\\
X^{t\T}\otimes_{\Z^{t\T}} \Z^{tC_n}&\to X^{tC_n}\ , n\geq 1\ .
\end{aligned}\]
\end{lemma}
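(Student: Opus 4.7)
My plan is to reduce each of the three claimed equivalences to a direct verification on objects $X \in \calD(\Z)^{B\T}$ that are concentrated in a single homotopy degree with trivial $\T$-action, where the statements become immediate via the projection formula. The starting observation is that all six functors involved---namely $(-)^{h\T}$, $(-)^{t\T}$, $(-)^{tC_n}$ and the three derived tensor products $(-) \otimes_{\Z^{h\T}} \Z$, $(-) \otimes_{\Z^{h\T}} \Z^{t\T}$, $(-) \otimes_{\Z^{t\T}} \Z^{tC_n}$---are exact functors of $X$, so the three natural transformations to be proved equivalences are maps between exact functors $\calD(\Z)^{B\T} \to \calD(\Z)$. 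Moreover, each of them is $\Z^{h\T}$-linear (respectively $\Z^{t\T}$-linear for (iii)) and, being induced by the lax symmetric monoidal structure, restricts to the identity at the unit $\Z \in \calD(\Z)^{B\T}$ with trivial action.

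For the reduction to single-degree $X$ I would first note that any object of $\calD(\Z)^{B\T}$ lying in the heart of the natural $t$-structure automatically has trivial $\T$-action: since $B\T \simeq K(\Z,2)$ is simply connected and $\mathrm{Aut}(HM)$ is essentially a discrete groupoid, the space of $\T$-actions $\mathrm{Map}_{*}(B\T, B\mathrm{Aut}(HM))$ is contractible. For such $X = \mathrm{const}(HM)$ the projection formula, which holds because $\mathrm{const}: \calD(\Z) \to \calD(\Z)^{B\T}$ is symmetric monoidal and the K{\"u}nneth formula is exact in $\calD(\Z)$, yields identifications $X^{h\T} \simeq HM \otimes_\Z \Z^{h\T}$, $X^{t\T} \simeq HM \otimes_\Z \Z^{t\T}$, and $X^{tC_n} \simeq HM \otimes_\Z \Z^{tC_n}$. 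Substituting these into the three maps reduces each equivalence to an immediate identity of the form $(HM \otimes_\Z \Z^{h\T}) \otimes_{\Z^{h\T}} \Z \simeq HM$.

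From the single-degree base case, one filters a bounded $X$ by its Postnikov tower and uses exactness of both sides to conclude by induction on the length of the filtration. Handling unbounded $X$ requires showing that both sides commute with Postnikov limits and colimits: for $(-)^{h\T}$ this is automatic as it is a right adjoint, while for $(-)^{t\T}$ and $(-)^{tC_n}$ one invokes the $\T$-equivariant analogue of Lemma~\ref{lem:tateconvergence}, and for the derived tensor products one uses that $\Z$ admits a two-term resolution $\Z^{h\T}[2] \xrightarrow{t} \Z^{h\T} \to \Z$, making $(-) \otimes_{\Z^{h\T}} \Z$ a two-step exact functor that commutes with uniformly converging Postnikov towers. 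The main obstacle is precisely this convergence step: unlike the case of a finite group, the space $B\T$ is not finite dimensional, so the interaction between $(-)^{t\T}$ and the Postnikov filtration must be controlled carefully, and this is where the analogue of Lemma~\ref{lem:tateconvergence} does the essential work.
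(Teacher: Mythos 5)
Your overall strategy for (i) --- reduce to Eilenberg--MacLane objects, note that these carry a unique (trivial) $\T$-action because $B\T$ is simply connected, verify the base case by a universal-coefficient computation, and bootstrap --- is essentially the paper's argument for the first and third maps, and your base-case verification is fine (it works because $H_*(B\T;\Z)$ is finitely generated free in each degree, so the projection formula holds for objects of the heart). For (iii) you should additionally invoke Lemma~\ref{lem:tates1cn}: it gives $\Z^{tC_n}\simeq \Z^{t\T}/n$, so $\Z^{tC_n}$ is a \emph{perfect} $\Z^{t\T}$-module and $-\otimes_{\Z^{t\T}}\Z^{tC_n}$ commutes with all limits and colimits; without this the Postnikov reduction for (iii) is not justified.

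The genuine gap is in (ii). The functor $-\otimes_{\Z^{h\T}}\Z^{t\T}$ is localization at the Euler class: since $\Z^{t\T}\simeq\colim_k\Sigma^{2k}\Z^{h\T}$ along multiplication by $t$, it is a filtered colimit of shift functors, and $\Z^{t\T}$ is \emph{not} perfect over $\Z^{h\T}$. Your two-term-resolution device only covers $-\otimes_{\Z^{h\T}}\Z$, so you have given no reason why $X^{h\T}\otimes_{\Z^{h\T}}\Z^{t\T}$ should commute with the Postnikov limit $\lim_n(\tau_{\leq n}X)^{h\T}$ --- this is an interchange of a sequential colimit with an inverse limit, and it is exactly the assertion you are trying to prove in disguise. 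Since the lemma carries no boundedness hypothesis (and is applied in Section~IV.4 to $\THH(A)$ for possibly non-connective $A$), you cannot restrict to bounded below $X$ where a conditionally convergent Tate spectral sequence would save you. The paper avoids the tower entirely for (ii) by using the universal characterization of $-^{t\T}$ from Theorem~\ref{thm:genfarrelltate}: one checks that $X\mapsto X^{h\T}\otimes_{\Z^{h\T}}\Z^{t\T}$ vanishes on compact objects and that the fiber of $X^{h\T}\to X^{h\T}\otimes_{\Z^{h\T}}\Z^{t\T}$, namely $X^{h\T}\otimes_{\Z^{h\T}}\Sigma\Z_{h\T}$, preserves all colimits; the latter follows from part (i) because $\Z_{h\T}\simeq\bigoplus_{i\geq 0}\Z[2i]$ is a filtered colimit of perfect $\Z^{h\T}$-modules built from shifts of $\Z$ alone. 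You should either adopt this argument for (ii) or supply an independent proof of the missing interchange.
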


\begin{proof} For the first, note that $\Z$ is compact as $\Z^{h\T}$-module, so the left-hand side commutes with all limits, so using Postnikov towers we can assume that $X$ is bounded above, say coconnective. Then the left-hand side commutes with filtered colimits and is exact, so it suffices to consider the statement for $X=\Z$, where it is clear.

Now, for the second, note that by Theorem~\ref{thm:genfarrelltate}, it is enough to check that the homotopy fiber of $X^{h\T}\otimes_{H\Z^{h\Z}} H\Z^{t\T}\to X^{h\T}$ commutes with all colimits. But this follows from the first, noting that the homotopy fiber $\Sigma \Z_{h\T}$ of $\Z^{h\T}\to \Z^{t\T}$ is a filtered colimit of perfect complexes.

For the third, note that by Lemma~\ref{lem:tates1cn}, one has an equivalence of chain complexes $\Z^{tC_n}\simeq \Z^{t\T}/n$. In particular, $\Z^{tC_n}$ is a perfect $\Z^{t\T}$-module. Thus, $- \otimes_{\Z^{t\T}} \Z^{tC_n}$ commutes with all limits and colimits. By Lemma~\ref{lem:tateconvergence}, we can then assume that $X$ is bounded, or even that $X$ is a flat $\Z$-module concentrated in degree $0$. Then it follows from a direct computation.
\end{proof}

We can now prove the following corollary.

\begin{corollary}\label{cor:computationthhfp} The $\T$-equivariant map of $\E_\infty$-algebras $H\Z_p\to \THH(H\bF_p)$ induces a $\T/C_p$-equivariant equivalence of $\E_\infty$-algebras
\[
H\Z_p^{tC_p}\simeq \THH(H\bF_p)^{tC_p}\ .
\]
In particular, $\pi_\ast \THH(H\bF_p)^{tC_p}\cong \bF_p[v^{\pm 1}]$.

Moreover, the $\T\cong \T/C_p$-equivariant map of $\E_\infty$-algebras
\[
\varphi_p: \THH(H\bF_p)\to \THH(H\bF_p)^{tC_p}
\]
identifies $\THH(H\bF_p)$ with the connective cover $\tau_{\geq 0} \THH(H\bF_p)^{tC_p}\simeq \tau_{\geq 0} H\Z_p^{tC_p}$.
\end{corollary}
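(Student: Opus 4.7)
The plan is to reduce everything to a comparison at the level of $\T$-Tate constructions and then base-change down to $C_p$. First, I would establish the stronger statement that the $\T$-equivariant $\E_\infty$-ring map $H\Z_p\to\THH(H\bF_p)$ induces an equivalence $H\Z_p^{t\T}\simeq\THH(H\bF_p)^{t\T}$. Applying Lemma~\ref{lem:chaincomplext}(iii) in $\calD(\Z)^{B\T}$ then gives the base-change formula
\[
\THH(H\bF_p)^{tC_p}\simeq\THH(H\bF_p)^{t\T}\otimes_{H\Z^{t\T}} H\Z^{tC_p},
\]
and combining this with the trivial identification $H\Z_p^{tC_p}\simeq H\Z_p^{t\T}\otimes_{H\Z_p^{t\T}} H\Z_p^{tC_p}$ and the fact that $H\Z^{t\T}\to H\Z_p^{t\T}$ becomes an equivalence after smashing with the $p$-torsion spectrum $H\Z^{tC_p}\simeq H\Z^{t\T}/p$ (provided by Lemma~\ref{lem:tates1cn}) upgrades the $\T$-Tate equivalence to the desired $\T/C_p$-equivariant $\E_\infty$-equivalence $H\Z_p^{tC_p}\simeq\THH(H\bF_p)^{tC_p}$.

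For the crucial $\T$-Tate equivalence, both spectra have $\pi_\ast=\Z_p[v^{\pm 1}]$ with $|v|=-2$: for $\THH(H\bF_p)^{t\T}$ by the proposition immediately preceding the corollary, and for $H\Z_p^{t\T}$ by the computation of Farrell--Tate $\T$-cohomology of the trivial $\T$-module $\Z_p$, as appears in the proof of Lemma~\ref{lemtate2}. Since both spectra are $2$-periodic once $v$ is inverted, the map is an equivalence as soon as it is an isomorphism on $\pi_0$ (clear, since it is the unit map $\Z_p\to\Z_p$) and sends $v$ to a $\Z_p$-unit multiple of $v$. This last point will be the main obstacle, and I would settle it by naturality of the Tate-$\T$ spectral sequence: the image of $v\in\pi_{-2}H\Z_p^{t\T}$ has associated graded at $E_\infty^{2,0}$ equal to the mod-$p$ reduction of the generator of $H^2(B\T;\Z_p)$, and that class represents the generator of $\pi_{-2}\THH(H\bF_p)^{t\T}$, so the image of $v$ lies in $\Z_p^\times\cdot v$. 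The computation $\pi_\ast\THH(H\bF_p)^{tC_p}=\bF_p[v^{\pm 1}]$ then drops out of the classical identification $\pi_\ast H\Z_p^{tC_p}=\widehat H^{-\ast}(C_p;\Z_p)=\bF_p[v^{\pm 1}]$.

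Finally, to identify $\THH(H\bF_p)$ with $\tau_{\geq 0}\THH(H\bF_p)^{tC_p}$ via $\varphi_p$: the source is connective with $\pi_\ast=\bF_p[u]$, $|u|=2$, and $\tau_{\geq 0}$ of the target has homotopy $\bF_p[v^{-1}]$, so by $\E_\infty$-multiplicativity and connectivity it suffices to check that $\varphi_p$ is the identity on $\pi_0$ (clear from unit maps) and sends $u$ to $v^{-1}$ on $\pi_2$. The latter can be read off the commutative square
\[
\xymatrix{
\pi_2\THH(H\bF_p)^{h\T}\ar[r]^-{\varphi_p^{h\T}}\ar@{->>}[d] & \pi_2\THH(H\bF_p)^{t\T}\ar@{->>}[d]\\
\pi_2\THH(H\bF_p)\ar[r]^-{\varphi_p} & \pi_2\THH(H\bF_p)^{tC_p}\ ,
}
\]
in which the top arrow $\Z_p\to\Z_p$ is the isomorphism from the earlier proposition on $\varphi_p^{h\T}$, the left vertical surjection is the Tate-$\T$ spectral-sequence edge map sending $\tilde u\mapsto u$, and the right vertical surjection is the reduction $\Z_p\cdot v^{-1}\twoheadrightarrow\bF_p\cdot v^{-1}$ supplied by the base-change identification of the previous paragraph. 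Chasing $\tilde u$ around the square forces $\varphi_p(u)=v^{-1}$, completing the proof.
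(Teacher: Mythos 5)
Your proof is correct and follows essentially the same route as the paper: both halves rest on Lemma~\ref{lem:chaincomplext}, the computations of $\pi_\ast\THH(H\bF_p)^{h\T}$, $\pi_\ast\THH(H\bF_p)^{t\T}$ and $\varphi_p^{h\T}$ from the preceding propositions, and for the second claim you chase exactly the same commutative square. The only organizational difference is in the first claim: you prove the stronger intermediate statement $H\Z_p^{t\T}\simeq\THH(H\bF_p)^{t\T}$, which forces you to track the image of $v$ via a spectral-sequence comparison, whereas the paper base-changes only the target, obtaining $\THH(H\bF_p)^{tC_p}\simeq\THH(H\bF_p)^{t\T}/p$ with homotopy $\bF_p[v^{\pm1}]$, and then closes the argument with the observation that any graded ring map between rings isomorphic to $\bF_p[v^{\pm1}]$ is automatically an isomorphism (the invertible degree $-2$ generator must go to an invertible degree $-2$ element), so no tracking of $v$ is needed.
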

\begin{proof} Using Lemma~\ref{lem:chaincomplext}, we see that
\[
\THH(H\bF_p)^{tC_p}\simeq \THH(H\bF_p)^{t\T}/p\ ,
\]
which implies that $\pi_\ast \THH(H\bF_p)^{tC_p}\simeq \bF_p[v^{\pm 1}]$. This is also the homotopy of $H\Z_p^{tC_p}$, and any graded ring endomorphism of $\bF_p[v^{\pm 1}]$ is an isomorphism.

For the second claim, it is enough to show that
\[
\pi_\ast \varphi_p: \pi_\ast \THH(H\bF_p)\to \pi_\ast \THH(H\bF_p)^{tC_p}\]
is an isomorphism in nonnegative degrees. But this follows from the commutative diagram
\[\xymatrix{
\THH(H\bF_p)^{h\T}\ar[r]^{\varphi_p^{h\T}}\ar[d] & \THH(H\bF_p)^{t\T}\ar[d]\\
\THH(H\bF_p)\ar[r]^{\varphi_p} & \THH(H\bF_p)^{tC_p}
}\]
and the explicit descriptions of the other three maps.
\end{proof}

\newcommand{\triv}{\mathrm{triv}}
\newcommand{\sh}{\mathrm{sh}}

This last corollary can be used to identify $\THH(\bF_p)$ as an $\E_\infty$-cyclotomic spectrum as follows. First note that for every spectrum $X$ there is a cyclotomic spectrum $X^\triv$ whose underlying spectrum is $X$ equipped with the trivial $\T$-action and the Frobenius given by the $\T$-equivariant composition $X \to X^{hC_p} \xto{\can} X^{tC_p}$ where the first map is pullback along $BC_p \to \pt$. For example $\bS^\triv$ is the cyclotomic sphere, c.f. Example \ref{examplesTHH}(ii). Another way of writing $X^\triv$ is as the $X$-indexed colimit of the constant diagram in cyclotomic spectra with value the cyclotomic sphere. This shows the following statement:

\begin{proposition} There is an adjunction
$$
\xymatrix{
-^\triv: \Sp \ar[r]<2pt> & \Cycn: \TC \ar[l]<2pt> \ .
}
$$ $\hfill \Box$
\end{proposition}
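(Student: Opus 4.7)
The plan is to deduce the adjunction as a formal consequence of the $\Sp$-linearity of the presentable stable $\infty$-category $\Cycn$, combined with the universal property of $\Sp$ as the free presentable stable $\infty$-category on one generator.

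First I would recall that $\Cycn$ is presentable and stable (by the corollary immediately following Definition~\ref{def:cyclotomicinftycat}), and that $\Sp$ is the free such $\infty$-category on the generator $\bS$, see \cite[Corollary 1.4.4.6]{HA}. Consequently, evaluation at $\bS$ induces an equivalence $\Fun^{\mathrm{L}}(\Sp, \Cycn) \xrightarrow{\simeq} \Cycn$ between colimit-preserving functors and objects. Under this equivalence, the cyclotomic sphere $\bS^\triv$ of Example~\ref{examplesTHH}(ii) corresponds to a unique colimit-preserving functor $\Sp \to \Cycn$, which is precisely $-^\triv$: by colimit-preservation it sends an arbitrary $X \in \Sp$ to the $X$-indexed colimit of the constant diagram on $\bS^\triv$, in accordance with the description given just before the proposition statement. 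To identify this with the ``intrinsic'' description---$X$ with trivial $\T$-action and Frobenius $X \to X^{hC_p} \to X^{tC_p}$---one checks that this intrinsic assignment itself preserves colimits: the underlying trivially-$\T$-equivariant spectrum is colimit-preserving in $X$, and the Frobenius of a colimit $\colim_i X_i^\triv$ constructed via Proposition~\ref{prop:laxeq}(iv) agrees with the intrinsic Frobenius on the colimit, since each square
\[
\xymatrix{
X_i \ar[r]\ar[d] & X_i^{hC_p} \ar[r]\ar[d] & X_i^{tC_p} \ar[d] \\
\colim_j X_j \ar[r] & (\colim_j X_j)^{hC_p} \ar[r] & (\colim_j X_j)^{tC_p}
}
\]
commutes by naturality, so both candidate Frobenius maps on $\colim_j X_j$ satisfy the same universal property.

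Second, the adjoint functor theorem \cite[Corollary 5.5.2.9]{HTT} yields a right adjoint $R: \Cycn \to \Sp$ to the colimit-preserving functor $-^\triv$. To identify $R$ with $\TC$, I would evaluate at the sphere: by adjunction,
\[
\Omega^\infty R(Y) \simeq \Map_{\Sp}(\bS, R(Y)) \simeq \Map_{\Cycn}(\bS^\triv, Y) = \Omega^\infty \TC(Y),
\]
where the last equality is Definition~\ref{def:naivetc} together with the fact that $\bS^\triv$ is the cyclotomic sphere used there. Extending this to the full mapping spectrum (via the $\Sp$-enrichment of both sides, or equivalently by repeating the argument after shifts) gives $R(Y) \simeq \TC(Y)$ naturally in $Y$, completing the proof.

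No serious obstacle arises; the only point requiring any verification is the identification of the intrinsic and colimit-theoretic descriptions of $-^\triv$, which is the naturality argument indicated above.
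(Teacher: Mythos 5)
Your proposal is correct and is essentially the paper's own argument: the paper proves the proposition by observing that $X^\triv$ is the $X$-indexed colimit of the constant diagram on the cyclotomic sphere $\bS^\triv$, so that $-^\triv$ is the unique colimit-preserving functor $\Sp\to\Cycn$ classified by $\bS^\triv$ and its right adjoint is $\map_{\Cycn}(\bS^\triv,-)=\TC$ by Definition~\ref{def:naivetc}. You merely spell out the two implicit verifications (that the intrinsic description of $-^\triv$ preserves colimits, and the upgrade from mapping spaces to mapping spectra), both of which are handled correctly.
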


The computation of $\TC(H\bF_p)$ shows that there is a map $H\Z_p \to \TC(H\bF_p)$ which then by adjunction induces a map of cyclotomic spectra
$$
H\Z_p^\triv \to \THH(H\bF_p).
$$
\begin{construction}
Let $X$ be a connective cyclotomic spectrum. We construct a new connective cyclotomic spectrum $\sh_p X$ as follows: 

The underlying spectrum of $\sh_pX$ with $\T$-action is $\tau_{\geq 0} (X^{tC_p})$ (where as usual this carries the residual action). This spectrum is $p$-complete, thus the Frobenius maps $\varphi_l$ for $l \neq p$ are zero as the target is zero. The Frobenius $\varphi_p$ is induced by the initial Frobenius, which we interpret as a map $X \to \tau_{\geq 0} (X^{tC_p})$ (since $X$ is connective) by applying the functor $\tau_{\geq 0}(-^{tC_p})$ which commutes with itself.

The cyclotomic spectrum $\sh_p X$ comes with a natural map $X \to \sh_pX$ of cyclotomic spectra induced by $\varphi_p$. 
\end{construction}

\begin{corollary}\label{corfp}
In the diagram induced from the map $H\Z_p^\triv \to \THH(\bF_p)$ of cyclotomic spectra
$$
\xymatrix{
H\Z_p^\triv \ar[r]\ar[d] & \THH(\bF_p)\ar[d]^\simeq \\
\mathrm{sh}_p H\Z_p^\triv \ar[r]^\simeq & \mathrm{sh}_p \THH(\bF_p)
}
$$
the right and the lower map 
are equivalences of $\E_\infty$-cyclotomic spectra, i.e. $\THH(\bF_p) \simeq \mathrm{sh}_p (H\Z^\triv)$.$\hfill \Box$
\end{corollary}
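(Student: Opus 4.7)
The plan is to deduce the corollary essentially formally from Corollary~\ref{cor:computationthhfp}, which already contains all the nontrivial content (B\"okstedt's computation of $\pi_*\THH(H\bF_p)$ and the identification of the Frobenius). First I would note that both $H\Z_p^\triv$ and $\THH(H\bF_p)$ are connective cyclotomic spectra: the former by construction, the latter by B\"okstedt's theorem. So applying $\mathrm{sh}_p$ to the diagram makes sense.

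Next, for the right vertical arrow $\THH(H\bF_p)\to \mathrm{sh}_p\THH(H\bF_p)$: by the construction of $\mathrm{sh}_p$, the underlying map of this map of cyclotomic spectra is precisely the factorization of the Frobenius $\varphi_p\colon \THH(H\bF_p)\to \THH(H\bF_p)^{tC_p}$ through the connective cover $\tau_{\geq 0}(\THH(H\bF_p)^{tC_p})$. But Corollary~\ref{cor:computationthhfp} states exactly that $\varphi_p$ identifies $\THH(H\bF_p)$ with $\tau_{\geq 0}(\THH(H\bF_p)^{tC_p})$ as $\T$-equivariant $\E_\infty$-algebras. Thus the underlying map of spectra with $\T$-action is an equivalence, and since the forgetful functor $\Cycn\to \Sp^{B\T}$ reflects equivalences, the map of cyclotomic spectra is an equivalence.

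For the lower horizontal map $\mathrm{sh}_p H\Z_p^\triv\to \mathrm{sh}_p\THH(H\bF_p)$, the underlying map of spectra with $\T$-action is the connective cover of the map $H\Z_p^{tC_p}\to \THH(H\bF_p)^{tC_p}$ induced by the unit $H\Z_p\to \THH(H\bF_p)$. Again by Corollary~\ref{cor:computationthhfp}, this map (before taking connective covers) is already a $\T/C_p$-equivariant equivalence of $\E_\infty$-algebras, so its connective cover is also an equivalence. Applying the same reasoning about the forgetful functor, we conclude the map of cyclotomic spectra is an equivalence. The diagram commutes by functoriality of $\mathrm{sh}_p$ applied to $H\Z_p^\triv\to \THH(H\bF_p)$.

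The only thing that really needs to be checked independently is the well-definedness of $\mathrm{sh}_p$ as a functor on connective cyclotomic spectra, in particular that the definition given in Construction above really produces a cyclotomic spectrum. This amounts to checking that $\tau_{\geq 0}$ interacts well with $(-)^{tC_p}$ on connective inputs; the main subtle point is that the Frobenius on $\mathrm{sh}_p X$ is obtained by iterating $\tau_{\geq 0}((-)^{tC_p})$ on the original Frobenius, and one needs a natural equivalence between $\tau_{\geq 0}(\tau_{\geq 0}(X^{tC_p}))^{tC_p}$ and $\tau_{\geq 0}((X^{tC_p})^{tC_p})$, or more carefully, a compatible Frobenius structure. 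This is the only genuine obstacle, but it is not hard once one notes that the Frobenius $\varphi_p\colon X \to X^{tC_p}$ is already a map to a $p$-complete spectrum with trivial residual Frobenius for $\ell \neq p$, so there is essentially no freedom in the construction.
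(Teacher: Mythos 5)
Your argument is correct and is exactly the intended one: the corollary is an immediate consequence of Corollary~\ref{cor:computationthhfp} (the Frobenius exhibits $\THH(H\bF_p)$ as $\tau_{\geq 0}\THH(H\bF_p)^{tC_p}$, and the unit induces $H\Z_p^{tC_p}\simeq\THH(H\bF_p)^{tC_p}$), combined with the fact that the forgetful functor from cyclotomic spectra reflects equivalences; your final worry about the well-definedness of $\mathrm{sh}_p$ is legitimate but is already handled in the Construction preceding the corollary rather than being part of its proof.
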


In other words, the cyclotomic $\E_\infty$-ring $\THH(H\bF_p)$ is given by $\tau_{\geq 0} H\Z_p^{tC_p}$ with its remaining $\T=\T/C_p$-action, and the Frobenius $\varphi_p$ is given by realizing $\tau_{\geq 0} H\Z_p^{tC_p}$ as the connective cover of
\[
(\tau_{\geq 0} H\Z_p^{tC_p})^{tC_p} = H\Z_p^{tC_p}\ ,
\]
where the equality follows from the Tate orbit lemma (or the first part of the corollary). From this, one can deduce that one can choose $\tilde{u}\in \pi_2 \THH(H\bF_p)^{h\T}$ and $v\in \pi_{-2} \THH(H\bF_p)^{h\T}$ such that $\tilde{u}v=p$ and $\varphi_p^{h\T}(v)=pv$. In particular, this shows that the constants $\lambda_n$ in \cite[Proposition 5.4]{HesselholtMadsen} can be taken as $1$. Indeed, by Corollary~\ref{cor:segalinduct}, we see that for all $n\geq 1$,
\[
\THH(H\bF_p)^{C_{p^n}}\simeq \tau_{\geq 0} \THH(H\bF_p)^{hC_{p^n}}\simeq \tau_{\geq 0} H\Z_p^{tC_{p^{n+1}}}\ ,
\]
and all maps become explicit. 

\begin{remark} A consequence of Corollary~\ref{cor:computationthhfp} is that $H\Z_p^{tC_p}=H\Z^{tC_p}$ admits an $\E_\infty$-$H\bF_p$-algebra structure. Is there a direct way to see this, and is this also true if $p$ is not a prime?

As a word of warning, we note that this $H\bF_p$-algebra structure on $H\Z_p^{tC_p}$ is incompatible with its natural $Hb\Z_p$-algebra structure. In fact, there are at least three different maps of $\E_\infty$-algebras $H\Z_p\to H\Z_p^{tC_p}$: The natural one via $H\Z_p\to H\Z_p^{hC_p}\to H\Z_p^{tC_p}$, the Tate valued Frobenius, and the composition $H\Z_p\to H\bF_p\to H\Z_p^{tC_p}$.
\end{remark}

Now, finally, let $A$ be an $\E_2$-algebra of characteristic $p$, i.e.~$p=0$ in $\pi_0 A$. Fixing a nullhomotopy from $p$ to $0$, we get that $A$ is an $\E_2$-$H\bF_p$-algebra by Remark~\ref{rem:boekstedtequiv}. Then $\THH(A)$ is a module spectrum over $\THH(H\bF_p)$ compatibly with the cyclotomic structure. In particular, $\THH(A)$ is a module over $\tau_{\geq 0} \TC(H\bF_p) = H\Z_p$, compatibly with the cyclotomic structure. This implies that there is a fiber sequence
\[
\TC(A)\to \THH(A)^{h\T}\xto{\can-\varphi_p^{h\T}} \THH(A)^{t\T}
\]
even if $A$ is not bounded below, as for $H\Z$-module spectra, the Tate orbit lemma is always valid. Moreover,
\[
\THH(A)^{t\T}\simeq \THH(A)^{h\T}\otimes_{H\Z_p^{h\T}} H\Z_p^{t\T} = \THH(A)^{h\T}[v^{-1}]
\]
by Lemma~\ref{lem:chaincomplext}.

In particular, note that as $v$ divides $p$, we have automatically
\[
\THH(A)^{t\T}[p^{-1}] = \THH(A)^{h\T}[p^{-1}]
\]
via the canonical map, and so one can regard
\[
\varphi_p^{h\T}: \THH(A)^{h\T} \to \THH(A)^{t\T}
\]
as a self-map $\THH(A)^{t\T}[p^{-1}]\to \THH(A)^{t\T}[p^{-1}]$ after inverting $p$.

\setcounter{thm}{0}
\appendix
\setcounter{appendix}{0}
\setcounter{chapter}{18}
\chapter{Symmetric monoidal $\infty$-categories}\label{app:symmmon}

\renewcommand{\thesection}{A}

In this Appendix we recall the notion of a symmetric monoidal $\infty$-category from \cite{HA} which is used in an essential way throughout the whole paper. We also discuss Dwyer-Kan localizations of symmetric monoidal $\infty$-categories following Hinich \cite{Hinich}. 

In order to prepare for the definition of a symmetric monoidal $\infty$-category, recall that $\Fin_\ast$ is the category of finite pointed sets. We denote by $\langle n\rangle\in \Fin_\ast$ the set $\{0,1,\ldots,n\}$ pointed at $0$. For $i=1,\ldots,n$, we denote by $\rho^i: \langle n\rangle\to \langle 1\rangle$ the projection sending all elements to $0$, except for $i\in \langle n\rangle$.

\begin{definition}[{\cite[Definition 2.0.0.7]{HA}}]\label{defsymmon} A symmetric monoidal $\infty$-category is a coCartesian fibration
\[
\calC^\otimes\to N(\Fin_\ast)\ ,
\]
of simplicial sets, such that the functor 
\[
(\rho^i_!)_{i=1}^n: \calC^\otimes_{\langle n\rangle}\to \prod_{i=1}^n \calC^\otimes_{\langle 1\rangle}\ .
\]
is an equivalence for all $n\geq 0$.
\end{definition}


Given a symmetric monoidal $\infty$-category $\calC^\otimes$, we denote by $\calC=\calC^\otimes_{\langle 1\rangle}$ the ``underlying'' $\infty$-category, and will sometimes by abuse of notation simply say that $\calC$ is a symmetric monoidal $\infty$-category. Note that by the condition imposed in the definition, one has $\calC^\otimes_{\langle n\rangle}\simeq \calC^n$. For a general map $f: \langle n\rangle\to \langle m\rangle$, the corresponding functor $f_!: \calC^\otimes_{\langle n\rangle}\simeq \calC^n\to \calC^\otimes_{\langle m\rangle}\simeq \calC^m$ is given informally by $(X_1,\ldots,X_n)\in \calC^n\mapsto (\bigotimes_{j\in f^{-1}(i)} X_j)_i\in \calC^m$.

Another piece of notation that we need is the ``active part'' of $\calC^\otimes$, defined as
\[
\calC^\otimes_\act = \calC^\otimes\times_{N(\Fin_\ast)} N(\Fin)\ ,
\]
where $\Fin$ is the category of finite (possibly empty) sets, and the functor $\Fin\to \Fin_\ast$ is given by adding an additional base point. By \cite[Definition 2.1.2.1, Definition 2.1.2.3, Remark 2.2.4.3]{HA}, this agrees with the definition in \cite[Remark 2.2.4.3]{HA}, noting that $\Fin\subseteq \Fin_\ast$ is the subcategory with all objects and active morphisms.

A morphism $f: \langle n \rangle \to \langle m \rangle$ in $\Fin_\ast$ is called \emph{inert} if for every $0\neq i \in \langle m \rangle$ the preimage $f^{-1}(i)$ contains exactly one element 
\cite[Definition 2.1.1.8.]{HA}.
\begin{definition}
Let $p: \calC^\otimes \to N(\Fin_\ast)$ and $q: \calD^\otimes \to N(\Fin_\ast)$ be symmetric monoidal $\infty$-categories. A symmetric monoidal functor is a functor $
F^\otimes: \calC^\otimes \to \calD^\otimes$ such that $p = q \circ F^\otimes $ and such that $F^\otimes$ carries $p$-coCartesian lifts to $q$-coCartesian lifts. 

A lax symmetric monoidal functor is a functor $F^\otimes: \calC^\otimes \to \calD^\otimes$ such that $p = q \circ F^\otimes $ and such that $F^\otimes$ carries $p$-coCartesian lifts of inert morphisms in $N(\Fin_\ast)$ to $q$-coCartesian lifts.
\end{definition}

For a given (lax) symmetric monoidal functor $F^\otimes: \calC^\otimes \to \calD^\otimes$ we will write $F := (F^\otimes)_{\langle 1 \rangle} : \calC \to \calD$ and refer to it as the underlying functor. Abusively we will very often only say that $F$ is a (lax) symmetric monoidal functor. The $\infty$-category of lax symmetric monoidal functors is denoted by $\Fun_\lax(\calC,\calD)$ and defined as a full subcategory of $\Fun_{N(\Fin_\ast)}(\calC^\otimes, \calD^\otimes)$. Similarly we denote the full subcategory of symmetric monoidal functors by $\Fun_\otimes(\calC,\calD) \subseteq \Fun_\lax(\calC,\calD) $.

\begin{remark}
Lax symmetric monoidal functors are the same as maps between the underlying $\infty$-operads of symmetric monoidal $\infty$-categories, see \cite[Section 2.1.2]{HA}. These $\infty$-operad maps $\calC^\otimes \to \calD^\otimes$ are also called  $\calC$-algebras in $\calD$. This is reasonable terminology in the context of operads, but in the context of symmetric monoidal categories we prefer the term lax symmetric monoidal functors. But note that a lot of constructions done with lax symmetric monoidal functors are more naturally done in the context of $\infty$-operads.
\end{remark}

For the rest of the section we discuss Dwyer-Kan localizations of symmetric monoidal $\infty$-categories. Therefore assume that we are given a symmetric monoidal $\infty$-category $\calC^\otimes$ and a class of edges $W \subseteq \calC_1$ in the underlying $\infty$-category called weak equivalences. We define a new class $W^\otimes$ of edges in $\calC^\otimes$ consisting of all morphisms in $\calC^\otimes_{\langle n \rangle}$ lying over an identity morphism $\id_{\langle n\rangle}$ in $N(\Fin_\ast)$ and which correspond under the equivalence $\calC^\otimes_{\langle n \rangle} \simeq \calC^n$ to products of edges in $W$. By definition the functor $\calC^\otimes \to N(\Fin_\ast)$ sends edges in $W^\otimes$ to identities in $N(\Fin_\ast)$.

\begin{definition}\label{deflocsym}
We define an $\infty$-category $\calC[W^{-1}]^\otimes \to N(\Fin_\ast)$ together with a functor $i: \calC^\otimes \to \calC[W^{-1}]^\otimes$ over $N(\Fin_\ast)$ such that $i$ exhibits $\calC[W^{-1}]^\otimes$ as the Dwyer-Kan localization of $\calC^\otimes$ at the class $W^\otimes$ and such that $\calC[W^{-1}]^\otimes \to N(\Fin_\ast)$ is a categorical fibration.
\end{definition}

The definition determines $\calC[W^{-1}]^\otimes$ up to contractible choices, therefore we assume that a choice is made once and for all. We warn the reader that despite the notation we do not claim that in general for the $\infty$-category $\calC[W^{-1}]^\otimes$ the fiber over $\langle 1 \rangle \in \Fin_\ast$ is equivalent to the Dwyer-Kan localization $\calC[W^{-1}]$ nor that $\calC[W^{-1}]^\otimes \to N(\Fin_\ast)$ is a symmetric monoidal $\infty$-category. However, this will be the case in favourable situations.

\begin{proposition}[Hinich]\label{propHinich}
Assume that the tensor product $\otimes: \calC \times \calC \to \calC$ preserves weak equivalences separately in both variables. Then the following holds:
\begin{altenumerate}
\item
$\calC[W^{-1}]^\otimes \to N(\Fin_\ast)$ is a symmetric monoidal $\infty$-category.
\item
The functor $i\colon \calC^\otimes \to \calC[W^{-1}]^\otimes$ is symmetric monoidal.
\item
The underlying functor $i_{\langle 1 \rangle}$ exhibits $\calC[W^{-1}]^\otimes_{\langle 1 \rangle}$ as the Dwyer-Kan localization of $\calC$ at $W$.  
\item 
More generally: for every $\infty$-category $K$ equipped with a map $K \to N(\Fin_\ast)$ the pullback $\calC[W^{-1}] \times_{N(\Fin_\ast)} K \to K$ is the Dwyer-Kan localization of $\calC \times_{N(\Fin_\ast)} K \to K$ at the class of weak equivalences obtained by pullback from $W^\otimes$.
\item
For every other symmetric monoidal $\infty$-category $\calD$ the functor $i$ induces equivalences
$$
\Fun_\lax(\calC[W^{-1}], \calD) \to \Fun_\lax^W(\calC, \calD) \qquad \Fun_\otimes(\calC[W^{-1}], \calD) \to \Fun_\otimes^W(\calC, \calD) 
$$
where the superscript $W$ denotes the full subcategories of functors which send $W$ to equivalences.
\end{altenumerate}
\end{proposition}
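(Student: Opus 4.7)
The strategy is to establish (iv) as the main technical input, since all other parts follow from it by formal arguments. For (iv), the plan is to use the universal property defining $\calC[W^{-1}]^\otimes$ combined with a general compatibility between Dwyer--Kan localizations and pullbacks along categorical fibrations. Specifically, since by Definition~\ref{deflocsym} the map $\calC[W^{-1}]^\otimes \to N(\Fin_\ast)$ is a categorical fibration, and the class $W^\otimes$ is the pullback of the class of degenerate edges on $N(\Fin_\ast)$ through the localization, for every map $K \to N(\Fin_\ast)$ the pulled-back functor $\calC^\otimes\times_{N(\Fin_\ast)} K \to \calC[W^{-1}]^\otimes\times_{N(\Fin_\ast)} K$ inverts the preimage of $W^\otimes$ and satisfies the universal property of Dwyer--Kan localization. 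The verification of this universal property uses that mapping into a target $\calE$ out of such a pullback is equivalent to giving a compatible pair of maps, and the relevant condition on inverting edges can be checked fiberwise; this is essentially Hinich's theorem on relative localizations of coCartesian fibrations.

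Granting (iv), parts (i)--(iii) follow by a case analysis over small subsimplicial sets of $N(\Fin_\ast)$. Applying (iv) to the inclusion $\{\langle n\rangle\}\hookrightarrow N(\Fin_\ast)$ identifies the fiber $\calC[W^{-1}]^\otimes_{\langle n\rangle}$ with the Dwyer--Kan localization of $\calC^n$ at the product class $W^n$; since Dwyer--Kan localization commutes with finite products of $\infty$-categories, this fiber is naturally equivalent to $(\calC[W^{-1}])^n$, which both gives (iii) (for $n=1$) and, via the inert maps $\rho^i$, verifies the Segal condition needed for (i). To upgrade $\calC[W^{-1}]^\otimes\to N(\Fin_\ast)$ from a categorical fibration to a coCartesian fibration, the plan is to apply (iv) to inclusions $\Delta^1\hookrightarrow N(\Fin_\ast)$ corresponding to individual morphisms $f\colon\langle n\rangle\to\langle m\rangle$: the hypothesis that $\otimes$ preserves weak equivalences in each variable guarantees that the pushforward $f_!\colon\calC^n\to\calC^m$ sends $W^n$ to $W^m$, hence descends to a functor between the localizations; a coCartesian lift of $f$ at an object of $\calC[W^{-1}]^\otimes_{\langle n\rangle}$ can then be obtained as the image of a coCartesian lift in $\calC^\otimes$, and coCartesianness is verified by testing mapping spaces and invoking (iv) again. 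Part (ii) is then immediate from the construction, since the coCartesian edges of $\calC[W^{-1}]^\otimes$ are images under $i$ of coCartesian edges of $\calC^\otimes$.

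For part (v), the plan is to apply the universal property of $\calC[W^{-1}]^\otimes$ as a Dwyer--Kan localization at the class $W^\otimes$ to the $\infty$-category of all functors over $N(\Fin_\ast)$. A lax symmetric monoidal functor $F\colon\calC^\otimes\to\calD^\otimes$ that sends the underlying class $W$ to equivalences in $\calD$ automatically sends all of $W^\otimes$ to equivalences, since these are products of edges in $W$ and $\calD^\otimes_{\langle n\rangle}\simeq\calD^n$. Thus $F$ factors essentially uniquely through a functor $\overline F\colon\calC[W^{-1}]^\otimes\to\calD^\otimes$ over $N(\Fin_\ast)$. To show that $\overline F$ is again lax symmetric monoidal (respectively symmetric monoidal), one needs to verify that it preserves coCartesian lifts of inert morphisms (respectively of all morphisms). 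This can be checked after precomposition with $i$, where it reduces to the corresponding statement for $F$; the key point is that coCartesian lifts in $\calC[W^{-1}]^\otimes$ were constructed as images under $i$ of coCartesian lifts in $\calC^\otimes$, so preservation by $\overline F$ follows from preservation by $F=\overline F\circ i$.

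The main obstacle is the fiberwise/pullback stability asserted in (iv), since this is not a purely formal consequence of the definition of Dwyer--Kan localization and genuinely uses the hypothesis that $\otimes$ preserves weak equivalences (to ensure that the coCartesian fibration structure descends coherently to the localized level). Once this is in hand, the remaining statements are essentially bookkeeping with the universal property. The cleanest reference for the required compatibility result is Hinich's paper \cite{Hinich}, which treats exactly this situation in the broader context of $\infty$-operads.
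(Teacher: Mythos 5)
Your plan is essentially the paper's proof: both routes outsource the non-formal content to Hinich. The paper cites \cite{Hinich} (Proposition 3.2.2 there) directly for (i)--(iii) and Proposition 2.1.4 there for (iv), whereas you take (iv) as the primary input (still ultimately citing Hinich for it) and derive (i)--(iii) by restricting along $\{\langle n\rangle\}\hookrightarrow N(\Fin_\ast)$ and along edges $\Delta^1\to N(\Fin_\ast)$; your argument for (v) is the same as the paper's. This reorganization is sound, and in fact mirrors the self-contained machinery of ``left derivable coCartesian fibrations'' that the paper develops afterwards to prove Theorem~\ref{DKlocalizationsymmon}. One step you should make explicit: knowing that each pullback $\calC[W^{-1}]^\otimes\times_{N(\Fin_\ast)}\Delta^1\to\Delta^1$ is a coCartesian fibration only shows that $\calC[W^{-1}]^\otimes\to N(\Fin_\ast)$ is \emph{locally} coCartesian; to conclude it is coCartesian you must check that locally coCartesian edges are closed under composition (\cite[Proposition 2.4.2.8]{HTT}). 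This holds here because the descended functors on the localized fibers compose up to equivalence, using once more that every $f_!$ preserves $W$ -- this is exactly condition (3) in the paper's Definition~\ref{defderivable} -- but it is a genuine verification, not automatic from (iv) alone.
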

\begin{proof}
The first three assertions are Proposition 3.2.2 in \cite{Hinich}. The claim (4) follows immediately from Proposition 2.1.4 in \cite{Hinich} since coCartesian fibrations are stable under pullback and equivalence can be tested fiberwise. The last assertion follows as follows: by construction we have that $i$ induces an equivalence 
$$
\Fun_{N(\Fin_\ast)}(\calC[W^{-1}]^\otimes,\calD^\otimes) \to  \Fun_{N(\Fin_\ast)}^W(\calC^\otimes,\calD^\otimes) 
$$
thus we only need to check that this equivalence respects functors which preserve coCartesian lifts of (inert) morphisms in $N(\Fin_\ast)$. But this is clear by the fact that $i$ is symmetric monoidal and the uniqueness of coCartesian lifts. 
\end{proof}

Note that claim (iv) of Proposition \ref{propHinich} in particular implies that $\calC[W^{-1}]^\otimes_\act$ is the Dwyer-Kan localization of $\calC^\otimes_\act$. We note that this Proposition also follows from the methods that we develop below, which are inspired by but logically independent of Hinich's results.

\begin{remark}
Under the same assumption as for Proposition \ref{propHinich} a symmetric monoidal Dwyer-Kan localization of $\calC^\otimes$ is constructed in \cite[Proposition 4.1.7.4]{HA} by different methods. It also satisfies the universal property $\Fun_\otimes(\calC[W^{-1}], \calD) \xto{\simeq} \Fun_\otimes^W(\calC, \calD)$. Thus the two constructions are equivalent. But we also need the lax symmetric monoidal statement (iv) of Proposition \ref{propHinich} which does not seem to follow directly from the construction given by Lurie. 
\end{remark}

Now finally we consider the case of a symmetric monoidal model category $\calM$. Recall that this means that $\calM$ is a closed symmetric monoidal category, a model category and the two structures are compatible in the following sense: the tensor functor $\otimes: \calC \times \calC \to \calC$ is a left Quillen bifunctor and for every cofibrant replacement $Q\mathbbm{1} \to \mathbbm{1}$ of the tensor unit $\mathbbm{1} \in \calC$ and every cofibrant object $X \in \calC$ the morphism
$ Q\mathbbm{1} \otimes X \to \mathbbm{1} \otimes X \cong X$ is a weak equivalence. Note that the latter condition is automatically satisfied if the tensor unit is cofibrant, which is also sometimes assumed for a symmetric monoidal model category. See for example \cite[Section 4]{MR1650134} for a discussion of symmetric monoidal model categories.

In the case of a symmetric monoidal model category $\calM$ the assumption of Proposition \ref{propHinich} is not satisfied since the tensor product $\otimes: \calM \times \calM \to \calM$ does not necessarily preserve weak equivalences in both variables separately.  It does however if we restrict attention to the full subcategory $\calM_c \subseteq \calM$ of cofibrant objects. As a result we get that $N(\calM_c)[W^{-1}]^\otimes \to N\Fin_\ast$ is a symmetric monoidal $\infty$-category. The underlying $\infty$-category $N(\calM_c)[W^{-1}]$ is equivalent to the Dwyer-Kan localization $N(\calM)[W^{-1}]$ by a result of Dwyer and Kan, see \cite[Proposition 5.2]{MR584566}. We now  prove a similar statement for the symmetric monoidal version. 

\begin{thm}\label{DKlocalizationsymmon}
Let $\calM$ be a symmetric monoidal model category.
\begin{enumerate}
\item\label{teins}
The functor $N(\calM)[W^{-1}]^\otimes \to N(\Fin_\ast)$ defines a symmetric monoidal $\infty$-category.
\item\label{tzwei}
The functor $i: N(\calM)^\otimes \to N(\calM)[W^{-1}]^\otimes$ is lax symmetric monoidal.
\item\label{tdrei}
The underlying functor $i_{\langle 1 \rangle}$ exhibits $N(\calM)[W^{-1}]^\otimes_{\langle 1 \rangle}$ as the Dwyer-Kan localization of $N(\calM)$ at $W$.  
\item\label{tvier}
More generally: for every $\infty$-category $K$ equipped with a map $K \to N(\Fin_\ast)$ the pullback $N(\calM)[W^{-1}]^\otimes \times_{N(\Fin_\ast)} K \to K$ is the Dwyer-Kan localization of $N(\calM)^\otimes \times_{N(\Fin_\ast)} K \to K$.
\item\label{tfuenf}
For every symmetric monoidal $\infty$-category $\calD$ the functor $i$ induces an equivalence
$$
\Fun_\lax(N(\calM)[W^{-1}], \calD) \to \Fun_\lax^W(N(\calM), \calD) 
$$
where the superscript $W$ denotes the full subcategory of functors which send $W$ to equivalences in $\calD$.
\item\label{tsechs}
The inclusion of the cofibrant objects $\calM_c \to \calM$ induces an equivalence
$
N(\calM_c)[W^{-1}]^\otimes \to N(\calM)[W^{-1}]^\otimes 
$ of symmetric monoidal $\infty$-categories.
\end{enumerate}
\end{thm}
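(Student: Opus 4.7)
The strategy is to first establish part (\ref{tsechs}), and then deduce parts (\ref{teins})--(\ref{tfuenf}) from the corresponding statements applied to $\calM_c$ in Proposition~\ref{propHinich}. The point is that although the tensor product on $\calM$ does not preserve weak equivalences separately in each variable, its restriction to $\calM_c$ does, by the pushout-product axiom together with the assumption that one can always replace the tensor unit by a cofibrant model without changing the homotopy type of $\mathbbm{1} \otimes X$ for cofibrant $X$. Thus Proposition~\ref{propHinich} applies to $\calM_c$, yielding a symmetric monoidal $\infty$-category $N(\calM_c)[W^{-1}]^\otimes$ with all the analogues of (\ref{teins})--(\ref{tfuenf}) holding for $\calM_c$ in place of $\calM$. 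The content of the theorem is then to transport these properties across a putative equivalence $N(\calM_c)[W^{-1}]^\otimes \xrightarrow{\sim} N(\calM)[W^{-1}]^\otimes$.

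More precisely, the inclusion $c: N(\calM_c)^\otimes \hookrightarrow N(\calM)^\otimes$ preserves weak equivalences, and so composition with the universal localization map to $N(\calM)[W^{-1}]^\otimes$ factors through $N(\calM_c)[W^{-1}]^\otimes$, producing a canonical comparison functor $\bar c: N(\calM_c)[W^{-1}]^\otimes \to N(\calM)[W^{-1}]^\otimes$ over $N(\Fin_\ast)$. The crux of the argument is to show that $\bar c$ is a categorical equivalence. Granted this, parts (\ref{teins})--(\ref{tfuenf}) follow formally from Proposition~\ref{propHinich} applied to $\calM_c$, since the properties of being a symmetric monoidal $\infty$-category, having the correct fibers, being a Dwyer--Kan localization, and satisfying the stated universal property are all invariant under equivalences of $\infty$-categories.

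To show $\bar c$ is an equivalence I would proceed as follows. Essential surjectivity is immediate from functorial cofibrant replacement: any object $(X_1, \ldots, X_n) \in \calM^n \simeq N(\calM)^\otimes_{\langle n\rangle}$ admits a cofibrant replacement $(QX_1, \ldots, QX_n) \to (X_1, \ldots, X_n)$ which is a weak equivalence in $\calM^n$ and therefore becomes an equivalence after localization. For full faithfulness, the plan is a fiberwise reduction: one verifies that both source and target of $\bar c$ are coCartesian fibrations over $N(\Fin_\ast)$, that the fiber over $\langle n \rangle$ is $N(\calM_c)^n[W^{-1}]$ (respectively $N(\calM)^n[W^{-1}]$), and that $\bar c$ restricts to the inclusion-induced functor between these fibers. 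The fiberwise description for the source follows directly from Proposition~\ref{propHinich}(iv). The classical theorem of Dwyer--Kan (\cite[Proposition~5.2]{MR584566}) then gives the equivalence $N(\calM_c)^n[W^{-1}] \xrightarrow{\sim} N(\calM)^n[W^{-1}]$ for each $n$, and the combination with essential surjectivity yields that $\bar c$ is an equivalence.

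The main technical obstacle is establishing the fiberwise description for the target: the fiber of $N(\calM)[W^{-1}]^\otimes$ over $\langle n \rangle$ must be identified with $N(\calM)^n[W^{-1}]$. This amounts to showing that Dwyer--Kan localization commutes with the pullback along $\{\langle n\rangle\} \hookrightarrow N(\Fin_\ast)$ in this setting. Since the projection $N(\calM)^\otimes \to N(\Fin_\ast)$ is a coCartesian fibration and the class $W^\otimes$ is concentrated in fibers over identities of $N(\Fin_\ast)$, this reduces to a direct argument with the defining universal property of the localization (requiring no further model-categorical input), formally analogous to the pullback-stability statement proved in Proposition~\ref{propHinich}(iv). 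Once all ingredients are assembled, the remaining parts of the theorem drop out: (\ref{teins}) is transport of symmetric monoidal structure across $\bar c$; (\ref{tzwei}) follows from the fact that $i$ preserves inert coCartesian edges, which reduces via the fiber description to the symmetric monoidality of $i_c: N(\calM_c)^\otimes \to N(\calM_c)[W^{-1}]^\otimes$; (\ref{tdrei}) is the $n=1$ case of the fiber description together with the classical Dwyer--Kan equivalence; (\ref{tvier}) is the same pullback-stability argument applied in greater generality; and (\ref{tfuenf}) is the analogous universal property in Proposition~\ref{propHinich}(v) transported via the equivalence of part (\ref{tsechs}).
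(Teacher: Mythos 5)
Your overall architecture --- reduce to $\calM_c$, where the tensor product does preserve weak equivalences in each variable so that Proposition~\ref{propHinich} applies, and then transport everything across the comparison functor $\bar c$ --- is a natural idea, and the paper's proof of part~(\ref{tsechs}) does in the end use exactly the fiberwise Dwyer--Kan equivalence $N(\calM_c)[W^{-1}]\xrightarrow{\sim}N(\calM)[W^{-1}]$ that you invoke. But there is a genuine gap at the step you yourself flag as ``the main technical obstacle'': the identification of the fiber of $N(\calM)[W^{-1}]^\otimes$ over $\langle n\rangle$ with $N(\calM)^n[W^{-1}]$. You assert this ``reduces to a direct argument with the defining universal property of the localization (requiring no further model-categorical input)''. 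It does not. Dwyer--Kan localization does not commute with pullbacks in general, and for a coCartesian fibration with a marking concentrated in the fibers the fibers of the localization can be strictly larger than the localizations of the fibers: over a base such as $N(\Fin_\ast)$, which has many composites $\langle n\rangle\to\langle m\rangle\to\langle n\rangle$, zig-zags computing mapping spaces in $N(\calM)[W^{-1}]^\otimes$ between objects of the fiber over $\langle n\rangle$ may leave that fiber and return, so the canonical functor $N(\calM)^n[W^{-1}]\to (N(\calM)[W^{-1}]^\otimes)_{\langle n\rangle}$ is not formally an equivalence. Hinich's pullback statement (Proposition~\ref{propHinich}(iv)) is itself a theorem with content, and its proof uses the hypothesis that the transition functors preserve weak equivalences --- precisely what fails for $\calM^\otimes$. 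Note also that the fiber identification for the target is essentially the content of parts~(\ref{tdrei}) and~(\ref{tvier}); deducing those from part~(\ref{tsechs}) while using them to prove part~(\ref{tsechs}) is circular unless you supply an independent argument.

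The missing ingredient is model-categorical: one must use that the active transition functors of $N(\calM)^\otimes\to N(\Fin_\ast)$, though they do not preserve weak equivalences, are Quillen multifunctors and hence admit derived functors which are \emph{absolute right Kan extensions} compatible with composition. The paper axiomatizes this as the notion of a ``left derivable'' coCartesian fibration (Definition~\ref{defderivable}) and proves in Proposition~\ref{propositionleftderivable} --- by writing the base as a colimit of copies of $\Delta^0$ and $\Delta^1$, handling the case of $\Delta^1$ via the mapping-simplex description and the universal property of the absolute right Kan extension, and then descending --- that the localization of such a fibration is again coCartesian, with fibers the localized fibers and transition functors the derived functors. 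Parts~(\ref{teins})--(\ref{tfuenf}) then follow, and part~(\ref{tsechs}) is proved \emph{last}, as a fiberwise equivalence of coCartesian fibrations. A second, smaller issue with your plan: to run the classical Dwyer--Kan argument directly on the total category you would need the cofibrant replacement $Q$ to extend to an endofunctor of $\calM^\otimes$ over $\Fin_\ast$, which it does not, since a morphism $\bigotimes_j QX_j\to \bigotimes_j X_j\to Y_i$ admits no functorial lift along $QY_i\to Y_i$; so even part~(\ref{tsechs}) cannot be obtained by a naive replacement argument on total spaces and really does require the fiberwise description first.
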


The theorem will follow as a special case of a more general claim about Dwyer-Kan localizations in families. To this end we generalize the results given in \cite[Section 2]{Hinich}. For the next definition we will need the notion of an absolute right Kan extension. Recall first that a diagram of $\infty$-categories of the form
\begin{equation}\label{kanext}
\xymatrix{
X \ar[rd]^f\ar[d]_i  &  \\
X' \ar[r] _{g} \ar@{=>}[ur]-<22pt>& Y
}
\end{equation}
is said to exhibit $g$ as a right Kan extension of $f$ along $i$ if it is terminal in the $\infty$-category of all completions of the diagrams
$$
\xymatrix{
X \ar[rd]^f\ar[d]_-i &  \\
X'  & Y
}
$$
to a diagram as above. 
Note that diagram \eqref{kanext} does not commute, only up to a non-invertible 2-cell as indicated.  We say that $g$ is an absolute right Kan extension if for every functor $p: Y\to Y'$ to an $\infty$-category $Y'$ the induced diagram  
$$
\xymatrix{
X \ar[rd]^{pf} \ar[d]_-i & \\
X' \ar[r]_{pg} \ar@{=>}[ur]-<22pt>& Y'
}
$$
exhibits $pg$ as the right Kan extension of $pf$ along $i$. 
\begin{definition}\label{defderivable}
Let $p: X \to S$ be a coCartesian fibration where $S$ is an $\infty$-category equipped with a subset $W$ of edges in $X$. We say that $p$ is \emph{left derivable} if the following conditions are satisfied:
\begin{enumerate}
\item
The morphisms in $W$ are sent to identities by $p$.
\item
For every morphism $s: a \to b$ in $S$ the functor $s_!: X_a \to X_b$ is left derivable, i.e.~there exists an absolute right Kan extension $Ls_!$ in the diagram
$$
\xymatrix{
X_a \ar[r]^{s_!}\ar[d] & X_b \ar[d] \\
X_a[W_a^{-1}] \ar@{-->}[r]^{Ls_!} \ar@{}[ru]^(.30){}="a"^(.80){}="b" \ar@2{-->} "a";"b" & X_b[W^{-1}_b].
}
$$
This means that the morphism $X_a[W_a^{-1}] \to X_b[W^{-1}_b]$ is the absolute right Kan extension of $X_a \to X_b \to  X_b[W^{-1}_b]$ along $X_a \to X_a[W_a^{-1}]$.
\item
For every 2-simplex 
$$
\xymatrix{
& b\ar[rd]^t & \\
a \ar[ru]^s\ar[rr]^{u} && c 
}
$$
in $S$ the canonical morphism $Lt_! \circ Ls_! \to Lu_!$ of functors $X_a[W_a^{-1}] \to X_c[W_c^{-1}]$ is an equivalence.
\end{enumerate} 
\end{definition}

\begin{example}\label{preserve}
Assume that for a coCartesian fibration $X \to S$ with a marking $W$ condition (1) is satisfied and that all the functors $s_!: X_a \to X_b$ have the property that  $s_!(W_a) \subseteq W_b$. Then $X \to S$ is left derivable. In this case the absolute right Kan extension 
$$
\xymatrix{
X_a \ar[r]^{s_!}\ar[d] & X_b \ar[d] \\
X_a[W_a^{-1}] \ar@{-->}[r]^{Ls_!} \ar@{}[ru]^(.30){}="a"^(.80){}="b" \ar@2{-->} "a";"b" & X_b[W^{-1}_b].
}
$$
is given by the factorization using the universal property of Dwyer-Kan localizations and the 2-cell is an equivalence. To see this we just note that right Kan extension is a right adjoint to the fully faithful restriction functor
$$
\Fun(X_a[W^{-1}], X_b[W^{-1}_b]) \simeq \Fun^W(X_a, X_b[W^{-1}_b]) \subseteq \Fun(X_a, X_b[W^{-1}_b]).
$$
Therefore if a functor $s_!$ already  lies in the subcategory of functors that preserve weak equivalences, then the right adjoint does not change it. The same remains true after postcomposition with another functor, so that it is in fact an absolute right Kan extension.
 
 For example if we have a symmetric monoidal $\infty$-category $\calC$ satisfying the assumptions of  Proposition \ref{propHinich} then $\calC^\otimes \to N\Fin_\ast$ with the class $W^\otimes$ (as defined before Definition \ref{deflocsym}) is left derivable.
\end{example}

\begin{example}\label{Quillenfunctor}
Let $\calM$ and $\calN$ be model categories and $F: \calM \to \calN$ a left Quillen functor. Then the coCartesian fibration $X \to \Delta^1$ classified by the functor $NF: N\calM \to N\calN$ is left derivable for the class of weak equivalences that are given by the weak equivalences in $\calM$ and $\calN$. 

To see this we recall the well known fact, that the derived functor in the sense of Quillen model categories is the (absolute) right Kan extension. We will give a quick proof of this. For simplicity we assume that $\calM$ has a functorial cofibrant replacement,  i.e. for every $X$ in $\calM$ there is a cofibrant replacement $X_c \to X$ that depends naturally on $X$. This is not necessary for the statement,  but simplifies the proof considerably and is in practice almost always satisfied, in particular in all our applications. The argument for the more general case follows the arguments given in \cite[Section 1]{Hinich} or \cite[Section 5]{MR584566}.

As a first step we consider the functor
$$
R\colon \Fun(N(\calM), \calD) \to \Fun(N(\calM),\calD) \qquad R(G)(X) = G(X_c)
$$
for a general $\infty$-category $\calD$. The functor $R$ comes with a natural transformation $R \to \id$ induced from the maps $X_c \to X$ and if $G$ lies in the full subcategory 
$$
\Fun^W(N(\calM),\calD) \subseteq \Fun(N(\calM),\calD) 
$$
then the transformation $R(G) \to G$ is an equivalence. In particular $R$ sends the full subcategory $\Fun^W(N(\calM),\calD)$ to itself. Now assume that we are given a functor $G$ with the property that $R(G)$ lies in $\Fun^W(N(\calM),\calD)$. Then we claim that the morphism $R(G) \to G$ exhibits  $R(G)$ as the right Kan extension of $G$ along $N(\calM) \to N(\calM)[W^{-1}]$ or said differently the reflection of $G$ into the full subcategory 
$\Fun^W(N(\calM),\calD) \subseteq \Fun(N(\calM),\calD)$. To see this we consider the subcategory 
$$\Fun'(N(\calM), \calD) \subseteq \Fun(N(\calM), \calD))$$
 given by all functors $G: N\calM \to \calD$ for which $RG$ lies in $\Fun^W(N(\calM),\calD)$. By what we have said before we have in particular 
 $$\Fun^W(N(\calM),\calD) \subseteq \Fun'(N(\calM),\calD).$$
 Now since $G$ lies by assumption in $\Fun'(N(\calM),\calD)$ it obviously suffices to check that $RG$ is the reflection from $\Fun'(N(\calM),\calD)$ into $\Fun^W(N(\calM),\calD)$ (since we only need to check the universal property against objects of $\Fun^W(N(\calM),\calD)$). But on the subcategory $\Fun'(N(\calM),\calD)$ the endofunctor $G$ defines a colocalization with local 
 objects the functors in $\Fun^W(N(\calM),\calD)$ which can be seen using \cite[Proposition 5.2.7.4]{HTT}. Since this proof works for all $\calD$ we see that $R(G)$ is in fact the absolute right Kan extension. 
 
 Now we specialise the case that we have considered before to the case where $\calD = N\calN[W^{-1}]$ and $G$ is the functor $N\calM \to N\calN \to N\calN[W^{-1}]$. By the properties of a left Quillen functor it follows that the functor $RG(X) = F(X_c): N\calM \to N\calN[W^{-1}]$ preserves weak equivalences. Thus it factors to a functor which is the absolute right Kan extension. But $RG$ is by definition exactly the left derived functor in the sense of Quillen.
 \end{example}

\begin{example}\label{Quillen-bifunctor}
As a variant of the example before assume that we have a Quillen bifunctor $B: \calM \times \calM' \to \calN$. Then the left derived bifunctor is also the absolute right Kan extension. This follows as in Example \ref{Quillenfunctor}. Thus the coCartesian fibration over $\Delta^1$ classified by $B$ with the obvious notion of weak equivalence is also left derivable.  
\end{example}

\begin{example}\label{composition}
Let $\calM, \calN, \calO$ be model categories and $F: \calM \to \calN$ and $G: \calN \to \calO$ be left Quillen functors. Then we claim that the resulting coCartesian fibration $X \to \Delta^2$ classified by the diagram$$
\xymatrix{
& N(\calN)\ar[rd]^G & \\
N(\calM)\ar[rr]^{GF}\ar[ur]^F && N(\calO)  
}
$$
is left derivable, where we use the obvious choice of weak equivalences. Since the functors $F$, $G$ and $GF$ are left derivable as we have seen in Example \ref{Quillenfunctor} we can deduce that condition (2) of Definition \ref{defderivable} is satisfied. To verify condition (3) we need to verify that the right Kan extension of the composite $GF$ is equivalent to the compositions of the right Kan extensions of $F$ and $G$. But this is obvious by the formula for right Kan extensions of model categories.
\end{example}

\begin{example}\label{example_symmon}
Let $\calM$ be a symmetric monoidal model category. We consider the coCartesian fibration $N(\calM^\otimes) \to N\Fin_\ast$ and equip it with the class of weak equivalences $W^\otimes$ as described before Definition \ref{deflocsym}. 
This is left derivable in the sense of Definition \ref{defderivable}. To see this we just observe that all the functors in question are a composition of inert and active maps in $N\Fin_\ast$. For the active maps we get multi-Quillen functors, which are left derivable by Example \ref{Quillen-bifunctor}. The inert maps just preserve weak equivalences.  It then follows as in Example \ref{composition} that also the compositions are left derivable and that the derived functors (i.e. absolute right Kan extensions) of these functors compose. This proves that the coCartesian fibration is left derivable.
\end{example}

Let $p: X \to S$ be a left derivable coCartesian fibration. Then we form the Dwyer-Kan localization $i: X \to X[W^{-1}]$ which comes with a functor $X[W^{-1}] \to S$ obtained by the universal property. We can arrange for this functor to be a categorical fibration which we assume from now on. It might be important to recall that if we choose two different equivalent categorical fibrations which are equivalent over $S$ then one is coCartesian if and only if the other is. This makes the following statement, which is the key result about  left derivable coCartesian fibrations, model independent.

\begin{proposition}\label{propositionleftderivable}
Let $p: X \to S$ be a left derivable coCartesian fibration over an $\infty$-category $S$. Then the following are true:
\begin{enumerate}
\item\label{peins} The functor $X[W^{-1}] \to S$ is a coCartesian fibration.
\item\label{pfuenf}  For every object $a \in S$ the morphism $i_a: X_a \to X[W^{-1}]_a$ exhibits $X[W^{-1}]_a$ as the Dwyer-Kan localization of $X_a$ at the weak equivalences $W_a$.
\item\label{pvier}
For every morphism $s: a \to b$ in $S$ the associated diagram 
$$
\xymatrix{
X_a \ar[r]^{s_!}\ar[d]^{i_a} & X_b \ar[d]^{i_b} \\
X[W^{-1}]_a\ar@{->}[r]^{s'_!} \ar@{}[ru]^(.30){}="a"^(.80){}="b" \ar@2{->} "a";"b" & X[W^{-1}]_b
}
$$
exhibits $s_!'$ as the absolute right Kan extension of the composition $X_a \xto{s_!} X_b \to X[W^{-1}]_b'$ along $X_a \to X[W^{-1}]_{a}$. Here $s_!'$ is the functor associated to the coCartesian fibration $X[W^{-1}] \to S$ and the $2$-cell is the natural transformation obtained from the functor $X \to X[W^{-1}]$ over $S$.
\end{enumerate}
\end{proposition}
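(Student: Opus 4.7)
The plan is to construct an explicit candidate $Y \to S$ for $X[W^{-1}] \to S$ and verify it satisfies the characterising universal property of the Dwyer--Kan localization. By condition~(3) of Definition~\ref{defderivable}, the assignment $a \mapsto X_a[W_a^{-1}]$, $s \mapsto Ls_!$ extends to a functor $\Phi: S \to \Cat_\infty$ (the compatibility on $n$-simplices for $n \geq 3$ is a straightforward consequence of absoluteness of the Kan extensions involved together with the $2$-simplex condition~(3)). Unstraightening $\Phi$ produces a coCartesian fibration $q: Y \to S$ whose fiber over $a$ is $X_a[W_a^{-1}]$ and whose pushforward along $s: a\to b$ is $Ls_!$.

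Next, I would construct a functor $\psi: X \to Y$ over $S$ that inverts $W$. On each fiber $X_a$ it is the localization map $i_a: X_a \to X_a[W_a^{-1}] = Y_a$. On a $p$-coCartesian edge $\tilde s: x \to s_!(x)$ of $X$ over $s: a\to b$, I define $\psi(\tilde s)$ as the composite of the $q$-coCartesian lift from $i_a(x)$ to $Ls_!(i_a(x))$ followed by the component at $x$ of the counit transformation $Ls_! \circ i_a \to i_b \circ s_!$ provided by the right Kan extension data. Coherences of $\psi$ on higher simplices are encoded in the corresponding coherences for these counit $2$-cells, which come for free from absoluteness. By construction, $\psi$ sends $W$ to equivalences in $Y$.

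The crucial step is to prove that $\psi$ exhibits $Y$ as the Dwyer--Kan localization of $X$ at $W$: for every $\infty$-category $Z$, precomposition with $\psi$ must induce an equivalence
\[
\Fun(Y, Z) \xrightarrow{\;\sim\;} \Fun^W(X, Z).
\]
For essential surjectivity, given $F \in \Fun^W(X, Z)$, each restriction $F|_{X_a}$ factors uniquely as $\bar F_a \circ i_a$ with $\bar F_a: Y_a \to Z$; the content is then to glue these fiberwise factorizations to a functor $\bar F: Y \to Z$. This is where the absoluteness of the Kan extensions is indispensable: postcomposing $Ls_!$ with $\bar F_b$ still yields the right Kan extension of $\bar F_b \circ i_b \circ s_! = F|_{X_b} \circ s_!$ along $i_a$, so the functoriality $2$-cells carried by $F$ over edges $s: a \to b$ of $S$ translate, via the counit $Ls_! i_a \to i_b s_!$, into the natural transformations $\bar F_b \circ Ls_! \to \bar F_a$ needed to assemble $\bar F$. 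Full faithfulness follows from an analogous mapping-space argument. I expect this gluing step to be the main technical obstacle, as it must be executed coherently over all simplices of $S$, and it relies in an essential way on the absoluteness hypothesis in~(2) of Definition~\ref{defderivable}.

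Finally, by uniqueness of Dwyer--Kan localization, the functor $\psi$ identifies $Y$ with $X[W^{-1}]$ over $S$, and the three conclusions fall out of the construction of $Y$: part~(1) is automatic since $q: Y \to S$ is a coCartesian fibration; part~(2) reads off from $Y_a = X_a[W_a^{-1}]$ with its natural localization functor; and in part~(3) the pushforward $s_!'$ is precisely $Ls_!$, while the $2$-cell induced by $X \to X[W^{-1}]$ matches the counit of the absolute right Kan extension by construction of $\psi$.
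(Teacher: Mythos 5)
Your strategy --- straighten the derived functors into a diagram $\Phi: S \to \Cat_\infty$, unstraighten to obtain $Y \to S$, and then compare with $X$ --- is not the route the paper takes, and as written it has a genuine gap at its very first step. Definition~\ref{defderivable} only supplies data and conditions on the $1$- and $2$-simplices of $S$: the existence of the absolute right Kan extensions $Ls_!$ and the compatibility $Lt_!\circ Ls_!\simeq Lu_!$ for each $2$-simplex. To produce an actual functor $S \to \Cat_\infty$ you must supply coherence data over \emph{every} simplex of $S$, and your claim that this ``is a straightforward consequence of absoluteness'' is precisely the assertion that needs proof. The same coherence problem recurs twice more: in the construction of $\psi: X \to Y$ over $S$ (``coherences \dots come for free from absoluteness'') and in the gluing of the fiberwise factorizations $\bar F_a$ into $\bar F: Y \to Z$, which you yourself flag as the main obstacle without resolving it. Absoluteness does give contractible spaces of choices pointwise and canonical comparison $2$-cells, but assembling these into maps of simplicial sets over all of $S$ is a rectification problem, not a formality.

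The paper's proof is engineered exactly to avoid this. It first proves the statement for $S=\Delta^0$ and $S=\Delta^1$ --- where no higher coherence exists --- using the mapping simplex $M(\phi)$ as an explicit pushout model of a coCartesian fibration over $\Delta^1$, and checking the localization universal property by decomposing $\Fun(M(\phi'),\calD)$ as a pullback and invoking the universal property of the absolute right Kan extension. It then shows that the class of bases $S$ for which the proposition holds is closed under colimits in $\Cat_\infty$, using descent for coCartesian fibrations (Lemma~\ref{descent}) and the fact that Dwyer--Kan localization commutes with colimits of relative $\infty$-categories; by Lemma~\ref{generation} every $\infty$-category is such a colimit of copies of $\Delta^0$ and $\Delta^1$. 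If you wish to keep your approach, you would need either to carry out the rectification of $\Phi$ and $\psi$ honestly (for instance by showing that the relevant space of coherent extensions is contractible), or to reduce to the cases $\Delta^0$ and $\Delta^1$ as the paper does --- at which point you have reproduced its proof.
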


Let us first assume this result and draw some consequences, in particular we deduce Theorem \ref{DKlocalizationsymmon} from it. 
\begin{corollary}\label{corollaryderivable} Assume that we are given a left derivable coCartesian fibration $p: X \to S$.
\begin{enumerate}
\item\label{pzwei} For every $\infty$-category $K$ with a morphism $K \to S$ the morphism $X \times_S K \to X[W^{-1}] \times_S K \to K$ exhibits $X[W^{-1}] \times_S K$ as the Dwyer-Kan localization of $X \times_S K$ at the edges in $X \times_S K$ that are mapped to $W$. Then:
\item\label{pdrei} If a morphism $s: a \to b$  in $S$ has the property that the functor $s_!: X_a \to X_b$ preserves weak equivalences, then the functor
$X \to X[W^{-1}]$ preserves $s$-coCartesian lifts.
\end{enumerate}
\end{corollary}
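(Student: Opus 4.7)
The plan is to deduce both parts of the corollary from Proposition \ref{propositionleftderivable} by exploiting the fact that left derivability is preserved under pullback along functors $f\colon K\to S$, and by invoking the universal property (absolute right Kan extension) that the proposition already provides.

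For part (2), I would begin by translating the hypothesis into the language of Definition~\ref{defderivable}. If $s_!\colon X_a\to X_b$ preserves weak equivalences, then the composite $i_b\circ s_!\colon X_a\to X[W^{-1}]_b$ already sends $W_a$ to equivalences, so by the universal property of the Dwyer--Kan localization $i_a$ it factors (essentially uniquely) through $X[W^{-1}]_a$. As observed in Example~\ref{preserve}, this factorization is the absolute right Kan extension, and the structural $2$-cell is an equivalence. Proposition~\ref{propositionleftderivable}(3) identifies this factorization with $s'_!$, so the square
\[
\xymatrix{
X_a \ar[r]^{s_!}\ar[d]_{i_a} & X_b \ar[d]^{i_b} \\
X[W^{-1}]_a\ar[r]_{s'_!} & X[W^{-1}]_b
}
\]
commutes up to equivalence. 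Unpacked, this says that for a coCartesian lift $\alpha\colon x\to s_!(x)$ of $s$ at $x\in X_a$, its image $i(\alpha)\colon i(x)\to i(s_!(x))\simeq s'_!(i(x))$ in $X[W^{-1}]$ has the correct target for a coCartesian lift of $s$ at $i(x)$, and hence is coCartesian. The main subtlety is purely notational: one must be careful that the characterization of coCartesian edges can be detected at the level of terminal object up to equivalence, which in the straightening picture is immediate.

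For part (1), set $X_K:=X\times_S K$ with the class $W_K$ of edges that lie in the fibers of $X_K\to K$ and whose image under $X_K\to X$ is in $W$ (this is the correct reading of ``edges mapped to $W$'': a literal reading would be inconsistent with $2$-out-of-$3$ in the pullback, as a short example like $S=\Delta^0$, $X=\Delta^1$, $K=\Delta^1$ shows). The first step is to verify that $X_K\to K$ with this marking is again left derivable. Condition (1) of Definition~\ref{defderivable} holds by construction; the fibers $(X_K)_k=X_{f(k)}$ and the pushforward functors $t_!=f(t)_!$ for $t\colon k_1\to k_2$ in $K$ coincide with those of $X\to S$, so conditions (2) and (3) are inherited from the left derivability of $X\to S$.

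Applying Proposition~\ref{propositionleftderivable} to $X_K\to K$ yields a coCartesian fibration $X_K[W_K^{-1}]\to K$ whose fibers are the Dwyer--Kan localizations $X_{f(k)}[W_{f(k)}^{-1}]$; by Proposition~\ref{propositionleftderivable}(2) applied to $X\to S$, these are exactly the fibers of $X[W^{-1}]\times_S K\to K$. Next, observe that the natural functor $X_K\to X[W^{-1}]\times_S K$ sends every marked edge $(e_X,\mathrm{id}_k)\in W_K$ to $(i(e_X),\mathrm{id}_k)$, which is an equivalence in the pullback because $i(e_X)$ is an equivalence in $X[W^{-1}]$. By the universal property of the Dwyer--Kan localization, this induces a canonical functor $\Phi\colon X_K[W_K^{-1}]\to X[W^{-1}]\times_S K$ over $K$. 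The final step is to show $\Phi$ is an equivalence, which I would do fiberwise: both sides are coCartesian fibrations over $K$ with equivalent fibers, and the derived pushforward functors on either side agree by the uniqueness clause for absolute right Kan extensions in Proposition~\ref{propositionleftderivable}(3). Thus $\Phi$ is a fiberwise equivalence of coCartesian fibrations over $K$, hence an equivalence of $\infty$-categories. The main technical point will be verifying left derivability of the pullback cleanly --- everything else is a formal consequence of the universal property that Proposition~\ref{propositionleftderivable} already supplies.
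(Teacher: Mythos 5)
Your proof is correct and follows essentially the same route as the paper: part (\ref{pzwei}) via stability of left derivability under pullback plus a fiberwise comparison of the two coCartesian fibrations over $K$, and part (\ref{pdrei}) via the observation (Example~\ref{preserve}) that when $s_!$ preserves weak equivalences the absolute right Kan extension is the naive factorization, so the $2$-cell in Proposition~\ref{propositionleftderivable}(3) is invertible. Your explicit clarification of which edges of $X\times_S K$ should be marked (only those lying over degenerate edges of $K$) is a correct and worthwhile precision that the paper leaves implicit.
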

\begin{proof}
Since left derivable coCartesian fibrations are clearly stable under pullback, we can deduce  from \ref{propositionleftderivable} that $(X\times_SK)[W^{-1}] \to K$ is coCartesian, the fibers are Dwyer-Kan localizations of the fibers of $X \times_S K \to K$  and the induced functors are given by right Kan extensions of the corresponding functors for the coCartesian fibration $X \times_S K \to K$. Then the canonical map into the pullback $X[W^{-1}] \times_S K \to K$, which is also coCartesian, is a fiberwise equivalence and also compatible with base change. This implies that it is an equivalence, which proves part \eqref{pzwei}. 

To see property \eqref{pdrei} we note that under the assumption that $s_!$ preserves weak equivalences, the (absolute) right Kan extension of 
 $s_!: X_a \to X_b \to X[W^{-1}]_b$ is given by the extension using the universal property of $X[W^{-1}]_a$ as the Dwyer-Kan extension (see Example \ref{preserve} above for an argument). Therefore we get that
the 2-cell in the diagram  
 $$
\xymatrix{
X_a \ar[r]^{s_!}\ar[d] & X_b \ar[d] \\
X[W^{-1}]_a'\ar@{->}[r]^{s'_!} \ar@{}[ru]^(.30){}="a"^(.80){}="b" \ar@2{->} "a";"b" & X[W^{-1}]_b
}
$$
is invertible. But this implies the claim.
\end{proof}
\begin{proof}[Proof of Theorem \ref{DKlocalizationsymmon}]
By Example \ref{example_symmon} a symmetric monoidal model category $N\calM^\otimes \to N\Fin_*$ is a left derivable coCartesian fibration. As a result we deduce from Proposition \ref{propositionleftderivable} that the resulting functor $N\calM[W^{-1}]^\otimes \to N\Fin_*$ is a coCartesian fibration. The fact that it is a symmetric monoidal $\infty$-category easily follows from the fact that a Dwyer-Kan localization of a product category $\calC \times \calC'$ at a product class $W \times W'$ is by the canonical map equivalent to the product $\calC[W^{-1}] \times \calC'[W^{-1}]$. This shows \eqref{teins}. 

For \eqref{tzwei} we observe that the functor $f_!: N\calM^\otimes_{\langle n \rangle} \to N\calM^\otimes_{\langle m \rangle}$ associated to an inert morphism $f: \langle n \rangle \to \langle m \rangle$ is given up to equivalence by product projection since the inert morphisms are generated by the $\rho^i$ of Definition \ref{defsymmon}. Thus by the definition of $W$ the functor $f_!$ preserves weak equivalences. Now we can apply \eqref{pdrei} of Corollary \ref{corollaryderivable} to deduce that $i: N\calM^\otimes \to N\calM[W^{-1}]^\otimes$ preserves coCartesian lifts of inert morphisms. This shows that it is a lax symmetric monoidal functor. 

Assertion \eqref{tdrei} and \eqref{tvier} of  Theorem \ref{DKlocalizationsymmon} immediately follow from \eqref{pfuenf} of Proposition \ref{corollaryderivable} and \eqref{pzwei} of Corollary \ref{propositionleftderivable}. 

For \eqref{tfuenf} we first observe that it is immediate that we get for every symmetric monoidal $\infty$-category $\calC^\otimes \to \calS$ an equivalence
$$
\Fun_{N\Fin_*}(N\calM[W^{-1}]^\otimes, \calC^\otimes) \xto{i^*}  \Fun^W_{N\Fin_*}(N\calM^\otimes, \calC^\otimes) .
$$
In fact we get such an equivalence for every categorical fibration in place of $\calC^\otimes$ but we will not need this extra generality here. Now since the coCartesian lifts of inert morphisms in $N\calM[W^{-1}]^\otimes$ all come from $N\calM$ (as shown above) we get that the above equivalence restricts to an equivalence
$$
\Fun_\lax(\calC[W^{-1}], \calD) \to \Fun_\lax^W(\calC, \calD) 
$$
as desired. 

Finally to prove claim \eqref{tsechs} we make use of the following two observations: first the functor $N\calM_c[W^{-1}]^\otimes \to N\calM[W^{-1}]^\otimes$ over $N\Fin_*$ is fiberwise over $N\Fin_*$ an equivalence since it is fiberwise given by products of the functor $N\calM_c[W^{-1}] \to N\calM[W^{-1}]$ which is an equivalence and secondly it sends coCartesian lifts to coCartesian lifts, i.e.~is a symmetric monoidal functor. For the latter we use the universal property of $N\calM_c[W^{-1}]$ to see that it is symmetric monoidal precisely if the functor $N\calM_c^\otimes \to N\calM^\otimes \to N\calM[W^{-1}]^\otimes$ is symmetric monoidal which is true by construction of the derived functors (see Example \ref{example_symmon}). But this finishes the proof since a functor between coCartesian fibrations which takes  coCartesian lifts 
to coCartesian lifts and is a fiberwise equivalence is already an equivalence on total spaces.
\end{proof}

Now we only have to prove Proposition \ref{propositionleftderivable}. The main idea is to use descent in the base $S$ to do this. Therefore we first give the descent statement for coCartesian fibrations that we will use.

\begin{lemma}\label{descent}$ $
\begin{enumerate}
\item
Let $X \to S$ be a coCartesian fibration and let $S = \colim S_i$, i.e. $S$ is the colimit in $\Cat_\infty$ of a diagram $F: I \to \Cat_\infty$. Then $X$ is the colimit of $X \times_S S_i$. 
\item
Assume conversely that we are given a diagram $I \to (\Cat_\infty)^{\Delta^1}$ of coCartesian fibrations $X_i \to S_i$ so that for each morphism $i \to j$ in $I$ the square 
$$
\xymatrix{
X_i \ar[d]\ar[r] & X_j \ar[d] \\
S_i \ar[r] & S_j
}
$$
is a pullback. Choose  $X = \colim X_i \to S= \colim S_i $ to be a categorical fibration. Then $X \to S$ is coCartesian and the canonical maps $X_i \to X\times_S S_i$ are equivalences.
\end{enumerate}
\end{lemma}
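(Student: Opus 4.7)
The plan is to reduce everything to Lurie's straightening/unstraightening equivalence (\cite[Theorem 3.2.0.1]{HTT}), which provides, functorially in the base $T$, an equivalence between the $\infty$-category of coCartesian fibrations over $T$ and $\Fun(T,\Cat_\infty)$, under which pullback along a map $T'\to T$ corresponds to restriction. Applying this to the colimit presentation $S=\colim_i S_i$ and using that $\Fun(-,\Cat_\infty)$ converts colimits to limits, we obtain an equivalence
\[
\mathrm{coCart}(S)\simeq \lim_i\mathrm{coCart}(S_i),
\]
where the transition functors on the right are pullbacks. This already identifies coCartesian fibrations over $S$ with descent data at the fibration level, which is the bookkeeping underlying both parts.

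For part (1), let $X\to S$ be a coCartesian fibration, and view $X$ as the pullback $S\times_{\Cat_\infty}\widetilde{\Cat}_\infty$ of the universal coCartesian fibration along its classifying functor $F\colon S\to\Cat_\infty$; then each $X\times_S S_i$ is the pullback along $F|_{S_i}$. The claim that $\colim_i(X\times_S S_i)\to X$ is an equivalence in $\Cat_\infty$ becomes the statement that the functor $(\Cat_\infty)_{/\Cat_\infty}\to\Cat_\infty$ given by pullback along $\widetilde{\Cat}_\infty\to\Cat_\infty$ preserves the colimit $S=\colim_i S_i$ computed in the slice. This is a standard descent property of the universal coCartesian fibration (equivalently, the statement that pullback along coCartesian fibrations is universal in $\Cat_\infty$), provable either by a simplex-by-simplex comparison in the Joyal model structure or via the explicit unstraightening construction.

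For part (2), the hypothesized compatible family of coCartesian fibrations is an object of $\lim_i\mathrm{coCart}(S_i)$, so straightening produces a coCartesian fibration $\tilde X\to S$ together with canonical equivalences $\tilde X\times_S S_i\simeq X_i$; after replacing $\tilde X\to S$ by a categorical fibration, we may assume this is an honest equality. Applying part (1) to $\tilde X\to S$ gives $\tilde X\simeq \colim_i(\tilde X\times_S S_i)\simeq\colim_i X_i= X$ in $\Cat_\infty$, so the map $X\to S$ is equivalent to $\tilde X\to S$, hence coCartesian, and the induced maps $X_i\to X\times_S S_i$ are equivalences as required.

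The main obstacle is the colimit-preservation assertion in part (1): proving that the total-space functor on coCartesian fibrations commutes with colimits in the base. This is the genuine content of the lemma beyond a formal application of straightening, and is a standard but delicate fact in higher category theory; once this descent property of $\Cat_\infty$ is secured, both parts follow cleanly.
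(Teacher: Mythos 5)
Your approach is essentially the paper's: part (2) is handled identically (straightening/unstraightening identifies compatible coCartesian fibrations over the $S_i$ with a functor $\colim_i S_i\to\Cat_\infty$, hence with a coCartesian fibration over $S$ whose restrictions recover the $X_i$, and then part (1) finishes), and part (1) in both cases reduces to the single fact that pullback along a coCartesian fibration preserves colimits in the base. The one substantive difference is that you route this through the universal coCartesian fibration $\widetilde{\Cat}_\infty\to\Cat_\infty$ and then only gesture at a proof of the colimit-preservation, explicitly deferring what you yourself identify as ``the genuine content of the lemma.'' The detour through the universal fibration is harmless but unnecessary: the paper pulls back along $X\to S$ itself, and closes the gap with a short, concrete argument you should supply to make the proof complete --- the strict pullback functor from simplicial sets over $S$ to simplicial sets over $X$ (Joyal model structures) preserves cofibrations and all colimits on the nose, and preserves categorical equivalences because $X\to S$ is a coCartesian fibration (\cite[3.3.1.3]{HTT}); it is therefore left Quillen and so preserves homotopy colimits, and applying it to the colimit diagram $\mathrm{id}_S=\colim_i(S_i\to S)$ in $(\Cat_\infty)_{/S}$ gives $X\simeq\colim_i(X\times_S S_i)$ directly. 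With that two-line argument inserted, your proof is correct; without it, the crux of part (1) remains an assertion.
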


\begin{proof}
 We first claim that for a coCartesian fibration $X \to S$  the base change functor
$$
(\Cat_\infty)_{/S} \to (\Cat_\infty)_{/X} 
$$ 
preserves colimits.\footnote{This is more generally true for flat categorical fibrations in the sense of \cite[Definition B.3.8]{HA} but not for any map between $\infty$-categories since $\Cat_\infty$ is not locally cartesian closed.} To see this we use that this functor can be modelled by the strict pullback functor from the category of simplicial sets over $S$ to the category of simplicial sets over $X$ (both equipped with the Joyal model structure). This functor preserves cofibrations and colimits. Since it also preserves weak equivalences by \cite[3.3.1.3]{HTT} it is a left Quillen functor which implies that it preserve all homotopy colimits and therefore the claim.

Now if we are in the situation of (1) then we write the identity functor $S \to S$ as a colimit of the functors $S_i \to S$ in $(\Cat_\infty)_{/S}$. By pullback along $X \to S$ we get the equivalence $\colim (S_i \times_S X) \simeq X$ as desired.  

For (2) we first use that the $\infty$-category of coCartesian fibrations over $S$ is equivalent to the functor $\infty$-category $\Fun(S, \Cat_\infty)$. This equivalence is natural in $S$ in the sense that pullback of coCartesian fibrations corresponds to pullback of functors, see \cite[Proposition 3.2.1.4]{HTT}. Using this translation we see that we have a family of functors $F_i: S_i \to \Cat_\infty$ corresponding to $X_i \to S_i$ such that the pullback of $F_j$ along $S_i \to S_j$ is given by $F_j$. Thus we can glue those functors together to a functor $F: S \to \Cat_\infty$ which corresponds to a coCartesian fibration $X' \to S$. By construction we know that the pullback $X' \times_S S_i \to S_i$ is equivalent to $X_i \to S_i$. Thus we can deduce by (1) that $X'$ is equivalent to $X = \colim X_i$. This shows part (2). 
\end{proof}

\begin{lemma}\label{generation}
Let $\calC \subseteq \Cat_\infty$ be the smallest full subcategory of the $\infty$-category $\Cat_\infty$ that contains $\Delta^0$, $\Delta^1$ and that is closed under all small colimits. Then the inclusion $\calC \subseteq \Cat_\infty$ is an equivalence of $\infty$-categories.
\end{lemma}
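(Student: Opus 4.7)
The strategy is to observe that every $\infty$-category admits a presentation as a simplicial set, that every simplicial set is built from simplices $\Delta^n$ by iterated cell attachments, and that each $\Delta^n$ is itself an iterated pushout of copies of $\Delta^1$ along $\Delta^0$. Throughout, the main technical point is that cell attachments along monomorphisms of simplicial sets compute colimits in $\Cat_\infty$, since monomorphisms are cofibrations in the Joyal model structure and the Joyal model structure is left proper, so ordinary pushouts along cofibrations coincide with homotopy pushouts, hence with pushouts in the underlying $\infty$-category $\Cat_\infty$.

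First I would show that $\Delta^n \in \calC$ for every $n \geq 0$ by induction on $n$. The cases $n = 0, 1$ are by hypothesis. For $n \geq 2$, the inclusions $\Delta^0 = \Delta^{\{n-1\}} \hookrightarrow \Delta^{\{0,\ldots,n-1\}} = \Delta^{n-1}$ and $\Delta^0 = \Delta^{\{n-1\}} \hookrightarrow \Delta^{\{n-1,n\}} = \Delta^1$ exhibit $\Delta^n$ as the pushout $\Delta^{n-1} \sqcup_{\Delta^0} \Delta^1$ in simplicial sets. Since the maps $\Delta^0 \to \Delta^{n-1}$ and $\Delta^0 \to \Delta^1$ are monomorphisms, hence Joyal cofibrations, this ordinary pushout is also a pushout in $\Cat_\infty$. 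By the inductive hypothesis both $\Delta^{n-1}$ and $\Delta^1$ lie in $\calC$, and $\Delta^0 \in \calC$, so $\Delta^n \in \calC$ by closure under finite colimits.

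Next I would show that for every $n \geq 0$, the boundary $\partial \Delta^n \in \calC$. This is immediate from the fact that $\partial \Delta^n$ is a finite iterated pushout of copies of the $\Delta^k$ with $k < n$ along boundaries $\partial \Delta^k$, all of which are cofibrations in the Joyal model structure.

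Finally, for an arbitrary $\infty$-category $\calD$, choose a quasi-category $X$ presenting it. The skeletal filtration $X_0 \subseteq X_1 \subseteq X_2 \subseteq \cdots$ expresses $X = \colim_n X_n$ as a sequential colimit in simplicial sets, where each inclusion $X_{n-1} \hookrightarrow X_n$ fits into a pushout square
\[
\xymatrix{
\bigsqcup_{\sigma \in N_n} \partial \Delta^n \ar[r]\ar[d] & X_{n-1} \ar[d] \\
\bigsqcup_{\sigma \in N_n} \Delta^n \ar[r] & X_n
}
\]
indexed by the set $N_n$ of non-degenerate $n$-simplices of $X$. Because all horizontal maps are monomorphisms, each square is a pushout in $\Cat_\infty$, and the sequential colimit $\colim_n X_n$ also computes the colimit in $\Cat_\infty$. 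By the previous two steps, each $X_n$ lies in $\calC$, and hence $\calD \simeq X \in \calC$ by closure under small colimits. This shows $\calC = \Cat_\infty$. The main delicate point in the whole argument is the compatibility of ordinary pushouts of simplicial sets along monomorphisms with pushouts in $\Cat_\infty$, but this is well-known from the Joyal model structure.
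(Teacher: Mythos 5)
Your overall strategy is the same as the paper's: build each $\Delta^n$ out of copies of $\Delta^1$ glued along $\Delta^0$, then build an arbitrary quasi-category by cell attachment, using that pushouts of simplicial sets along monomorphisms compute pushouts in $\Cat_\infty$. The only organizational difference is that you use the skeletal filtration of a single presenting simplicial set, while the paper first handles finite simplicial sets by induction on dimension and on the number of top cells and then writes a general simplicial set as a filtered colimit of its finite subcomplexes; both are fine.

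However, your first step contains a genuine error as stated: the pushout $\Delta^{n-1}\sqcup_{\Delta^0}\Delta^1$ \emph{in simplicial sets} is \emph{not} $\Delta^n$. It is the wedge $\Delta^{\{0,\dots,n-1\}}\cup_{\Delta^{\{n-1\}}}\Delta^{\{n-1,n\}}$, which has no nondegenerate simplices containing both a vertex $<n-1$ and the vertex $n$ (already for $n=2$ you get the horn $\Lambda^2_1$, not $\Delta^2$). So your argument, as written, only shows that this wedge lies in $\calC$, not that $\Delta^n$ does. The repair is standard but must be said: the inclusion of the wedge into $\Delta^n$ is inner anodyne (it is a pushout of the spine inclusion $\mathrm{Sp}^{n-1}\hookrightarrow\Delta^{n-1}$ composed into the spine inclusion for $\Delta^n$, and one concludes by two-out-of-three), hence a categorical equivalence; therefore the pushout in $\Cat_\infty$ is \emph{equivalent} to $\Delta^n$, and since $\calC$ is a full subcategory of $\Cat_\infty$ it is closed under equivalence, so $\Delta^n\in\calC$. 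This is exactly why the paper phrases its first step as ``$\Delta^n$ is \emph{in $\Cat_\infty$ equivalent to} the iterated pushout'' rather than asserting an isomorphism of simplicial sets. With that correction inserted, the remainder of your argument (boundaries, skeletal pushouts along monomorphisms, sequential colimit of skeleta) goes through.
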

\begin{proof}
This is well known, but for convenience we give a quick proof. First we note that the $n$-simplex $\Delta^n$ is in $\Cat_\infty$ equivalent to the iterated pushout $\Delta^1 \cup_{\Delta^1} \cup \ldots \cup_{\Delta^0} \Delta^1$. In particular all simplicies $\Delta^n$ lie in $\calC$. Now we show that $\calC$ contains all $\infty$-categories represented by finite simplicial sets $S$. To see this we use induction on the dimension of $S$. But then it suffices by a further induction on the top dimensional simplices to show that if $S$ in $\calC$ then also $S \cup_{\partial \Delta^n} \Delta^n \in \calC$. But this follows from the induction hypothesis since $\partial \Delta^n$ is of dimension $n-1$ and $\Delta^n$ is in $\calC$ as remarked above. Now finally every arbitrary simplicial set is a filtered colimit of finite simplicial sets, which finishes the proof. 
\end{proof}

\begin{proof}[Proof of Proposition \ref{propositionleftderivable}]
The proof will be given in two steps: the first part is the proof of the statement of Proposition \ref{propositionleftderivable} for left derivable coCartesian fibrations $X \to S$ with $S = \Delta^0$ and $S = \Delta^1$. In the second part we prove that if Proposition \ref{propositionleftderivable} is true for all left derivable coCartesian fibrations over some family $S_i$ of $\infty$-categories which are the vertices of a diagram $I \to \Cat_\infty$, then the Proposition is also true for all left derivable coCartesian fibrations over  the colimit $S = \colim_I S_i$ in $\Cat_\infty$. Together these two assertions then imply the claim, using Lemma \ref{generation}.

In the case $S = \Delta^0$ there is nothing to show. For the case $S = \Delta^1$ we use that the $\infty$-category of coCartesian fibrations $X \to \Delta^1$ is equivalent to the $\infty$-category of diagrams $\phi: \Delta^1 \to \Cat_\infty$, i.e. functors $F: X_0 \to X_1$ for $\infty$-categories $X_0$ and $X_1$. The inverse of this equivalence is given by the mapping simplex construction $M(\phi) \to \Delta^1$ see \cite[Definition 3.2.2.6]{HTT}. Explicitly we have that $M(\phi)$ is given by the pushout$$
\xymatrix{
X_0 \ar[r]^-{\id \times \{1\}} \ar[d]_F & X_0 \times \Delta^1 \ar[d] \\
X_1\ar[r] & M(\phi)
}
$$
of simplicial sets. Since the upper vertical map is a cofibration this is also a pushout in the $\infty$-category $\Cat_\infty$.\footnote{This pushout is in fact a special case of a more general statement, namely that the total space of every coCartesian fibration is the oplax colimit of its classifying functor, see \cite{GHN}.}

Now to prove the Proposition for a given left derivable coCartesian fibration $X \to \Delta^1$, we present it as $M(\phi) \to \Delta^1$ for the associated diagram $X_0 \to X_1$. Since $X \to \Delta^1$ is left derivable we know that we can find an absolute right Kan extension diagram
$$
\xymatrix{
X_0 \ar[r]^{F}\ar[d] & X_1 \ar[d] \\
X_0[W_0^{-1}]\ar@{->}[r]^{LF} \ar@{}[ru]^(.30){}="a"^(.80){}="b" \ar@2{->} "a";"b" & X_1[W_1^{-1}].
}
$$
In particular this means that we have a natural transformation $h: X_0 \times \Delta^1 \to X_1[W_1^{-1}]$. When we denote the lower line of the diagram by $\phi'$ then we have an associated mapping simplex $M(\phi')$. Moreover we claim that there is a canonical map $M(\phi) \to M(\phi')$ over $S$ induced by the diagram above. More precisely we act on the $X_0 \times \Delta^1$ factor of $M(\phi)$ with the natural transformation $h$ and on the $X_1$ factor with the functor $X_1 \to X_1[W_1^{-1}]$ and the two maps fit together on  $X_0 \times \{1\}$. We claim that this functor $p: M(\phi) \to M(\phi')$ exhibits $M(\phi')$ as the Dwyer-Kan localization at the weak equivalences $W$.  To this end we first note that $p$ sends $W$ by construction to equivalences in $M(\phi')$. Then it suffices to show that it satisfies the universal property of the Dwyer-Kan localization. Thus consider an arbitrary $\infty$-category $\calD$ and consider the $\infty$-category 
$
\Fun( M(\phi'), \calD)
$. Decomposing this functor category into a pullback 
$$
\xymatrix{
\Fun( M(\phi'), \calD) \ar[r] \ar[d]& \Fun( X_0[W_0^{-1}] \times \Delta^1, \calD)\ar[d] \\
\Fun(X_1[W_1^{-1}],\calD) \ar[r] &  \Fun( X_0[W_0^{-1}], \calD). 
}
$$
we see that it is (on the nose) equivalent to the $\infty$-category of diagrams
$$
\xymatrix{
X_0[W_0^{-1}] \ar[rd]^-{G_0}\ar[d]_{LF} & \\
X_1[W_1^{-1}] \ar[r]_-{G_1}\ar@{<=}[ru]-<26pt> & \calD 
}
$$
with a not-necessarily invertible 2-cell (i.e.natural transformation) $G_0 \to G_1 \circ LF$. The vertical morphism $LF$ is an absolute right Kan extension (along the morphism $X_0 \to X_0[W_0^{-1}]$ that is not in the picture) which implies that $G_1 \circ LF$ is also a right Kan extension along the same morphism. Thus by the universal property of the right Kan extension we get that the space of natural transformations $G_0 \to G_1 \circ LF$  as in the diagram is equivalent to the space of natural transformations in the induced diagram
$$
\xymatrix{
X_0 \ar[r]\ar[d]_{F} & X_0[W_0^{-1}] \ar[rd]^{G_0} & \ \\
X_1 \ar[r]\ar@{}[ru]^(.30){}="a"^(.80){}="b" \ar@2{<-} "a";"b" & X_1[W_1^{-1}] \ar[r]_-{G_1} & \calD
}.
$$
Together this shows that the $\infty$-category of functors $\Fun( M(\phi'), \calD)$ is equivalent to the $\infty$-category that consists of a functor $G_0': X_0 \to \calD$ which sends $W$ to equivalences, a functor $G_1': X_1 \to \calD$ which sends $W$ to equivalences and a transformation $G_1'F \to G_0'$. But this is by exactly the category $\Fun^W(M(\phi), \calD)$ which follows as above using the colimit description of $M(\phi)$. 
Together this shows that $M(\phi')$ is a Dwyer-Kan localization. Therefore we conclude that the Dwyer-Kan localization is a coCartesian fibration with the desired properties and thus that  Proposition \ref{propositionleftderivable} for $S = \Delta^1$ is true.

As a next step we assume that Proposition \ref{propositionleftderivable} is true for $\infty$-categories $S_i$ and we are given a colimit diagram $I^\rhd \to \Cat_\infty$. We denote the cone point by $S$. If we have a left derivable coCartesian fibration $X \to S$ we deduce from Lemma \ref{descent} that we have an equivalence $X \simeq \colim_I (X \times_S S_i)$. By assumption we know that all the Dwyer-Kan localizations $(X \times_S S_i) [W_i^{-1}] \to S_i$ are coCartesian fibrations where $W_i$ denotes the pulled back class of weak equivalences.  Moreover we know that the fibers are given by the Dwyer-Kan localizations and that the induced morphisms are given by absolute right Kan extension. We claim that for each morphism $i \to j$ in $I$ the induced diagram
$$
\xymatrix{
(X \times_S S_i) [W_i^{-1}] \ar[r]\ar[d] & (X \times_S S_j) [W_j^{-1}] \ar[d]\\
S_i \ar[r] & S_j
}
$$
is a pullback. This follows as in the proof of Corollary \ref{corollaryderivable}: first we note that the pullback of $(X \times_S S_j) [W_j^{-1}]$ along $S_i \to S_j$ is also a coCartesian fibration, and then use that the map from $(X \times_S S_i) [W_i^{-1}] $ into this pullback is a fiberwise equivalence (since the fibers are by assumption Dwyer-Kan equivalences) and preserves coCartesian lifts (since these are given by the right Kan extension). 

We deduce from Lemma \ref{descent} that the colimit $\colim(X \times_S S_i) [W_i^{-1}] \to \colim S_i = S$ is a coCartesian fibration. Now we claim that the colimit $X  \simeq \colim(X \times_S S_i) \to \colim(X \times_S S_i) [W_i^{-1}]$ exhibits the target as the Dwyer-Kan localization of the source at the weak equivalences $W$. To see this we verify the universal property. Therefore let $\calD$ be an arbitraty $\infty$-category, then we get
\begin{align*}
\Fun(\colim(X \times_S S_i) [W_i^{-1}] , \calD) &\simeq \lim \Fun( (X \times_S S_i) [W_i^{-1}] ,\calD)  \\
& \simeq  \lim \Fun^{W_i}(X \times_S S_i),\calD) \simeq \Fun^W(X,\calD).
\end{align*}
In a more abstract language, we have used that the Dwyer-Kan localization functor from relative $\infty$-categories to $\infty$-categories preserves colimits. Together this shows that $X[W^{-1}] \to S$ is a coCartesian fibration.

Moreover we  deduce also from Lemma  \ref{descent}  that the pullback of $X[W^{-1}]$ along $S_i\to S$ is given by $(X \times_S S_i)[W_i^{-1}]$. By assumption we know for the latter that the fibers are given by Dwyer-Kan localizations. Since every object of the $\infty$-category $S = \colim S_i$ lies (up to equivalence) in some $S_i$ this implies that also the fibers of $X[W^{-1}] \to S$ are given by Dwyer-Kan localizations. Moreover the same reasoning immediately shows that for every morphism $s: a \to b$ in $S$ that factors through one of the $S_i$'s the square  
$$
\xymatrix{
X_a \ar[r]^{s_!}\ar[d] & X_b \ar[d] \\
X[W^{-1}]_a\ar@{->}[r]^{s'_!} \ar@{}[ru]^(.30){}="a"^(.80){}="b" \ar@2{->} "a";"b" & X[W^{-1}]_b
}
$$
exhibits an absolute right Kan extension. But then the same also follows for compositions of such morphisms by assumption (3) in  Definition \ref{propositionleftderivable} of a left derivable coCartesian fibration. Since every morphism in $S$ is a finite composition of morphisms that factor through one of the $S_i$'s this finishes the proof.
\end{proof}

\setcounter{thm}{0}
\setcounter{chapter}{19}
\chapter{Cyclic objects}\label{app:cyclic}

\renewcommand{\thesection}{B}

In this Appendix, we briefly recall conventions regarding Connes' cyclic category $\Lambda$. In fact, we will need several related combinatorial categories $\Delta$, $\Lambda$, $\Lambda_p$ for $1\leq p\leq \infty$, and the associative operad $\Ass^\otimes$, with its natural map to the commutative operad $\Fin_\ast$. Moreover, we recall the geometric realization of cyclic spaces, both in the topological case, and in the $\infty$-categorical case.

First, $\Fin_\ast$ is the category of pointed finite sets. For all $n\geq 0$, we write $\langle n\rangle\in \Fin_\ast$ for the finite set $\{0,1,\ldots,n\}$ pointed at $0$.
Next, there is the associative operad $\Ass^\otimes$. Its objects are given by pointed finite sets, and for $n\geq 0$, we write $\langle n\rangle_\Ass\in \Ass^\otimes$ for the finite pointed set $\langle n\rangle$ regarded as an object of $\Ass^\otimes$. The set of morphisms
\[
\Hom_{\Ass^\otimes}(\langle n\rangle_\Ass,\langle m\rangle_\Ass)
\]
is given by the set of all maps $f: \langle n\rangle\to \langle m\rangle$ of finite pointed sets together with a linear ordering on the inverse image $f^{-1}(i)\subseteq \langle n\rangle$ for all $i\in \{1,\ldots,m\}\subseteq \langle m\rangle$. To define composition, note that if $f: S\to T$ is map of finite sets with a linear ordering $t_1<\ldots<t_m$ on $T$ and on the individual preimages $f^{-1}(t_1),\ldots,f^{-1}(t_m)$, then one gets a natural linear ordering on $S$, ordering preimages of $t_1$ before preimages of $t_2$, etc., before preimages of $t_m$. By definition, there is a natural functor $\Ass^\otimes\to \Fin_\ast$ forgetting the linear ordering.

Note that if $\calC$ is a symmetric monoidal $\infty$-category with total space $\calC^\otimes\to N(\Fin_\ast)$, then associative algebras are given by certain functors $N(\Ass^\otimes)\to \calC^\otimes$ over $N(\Fin_\ast)$. Concretely, this amounts to an object $A\in \calC$ together with maps $\bigotimes_{j\in f^{-1}(i)} A\to A$ whenever $\langle n\rangle\to \langle m\rangle$ is a map in $\Ass^\otimes$ and $i\in \{1,\ldots,m\}\subseteq \langle m\rangle$. This explains the requirement of the linear ordering on $f^{-1}(i)$, as the multiplication morphisms in an associative algebras depend on the order of the factors.

As usual, $\Delta$ denotes the category of totally ordered nonempty finite sets, and for $n\geq 0$, we let $[n] = \{0,1,\ldots,n\}\in \Delta$. There is a natural functor $\Delta^\op\to \Fin_\ast$, which sends a totally ordered, nonempty finite set $S$ to the set of cuts, i.e.~the set of disjoint decomposition $S=S_1\sqcup S_2$ with the property that for all $s_1\in S_1$, $s_2\in S_2$, we have $s_1<s_2$, and we identify the disjoint decomposition $S\sqcup \emptyset$ and $\emptyset\sqcup S$. The set of cuts is pointed at the trivial decomposition $S=S\sqcup \emptyset$. Note that in particular, up to isomorphism, $[n]\in \Delta^\op$ maps to $\langle n\rangle\in \Fin_\ast$.

In fact, there is a natural functor $\Delta^\op\to \Ass^\otimes$ over $\Fin_\ast$. Recall that the functor $\Delta^\op\to \Fin_\ast$ was given by $S\mapsto \Cut(S)$ \footnote{Note that as a s pointed simplicial set the functor $\Delta^\op\to \Fin_\ast$ is isomorphic to $S^1 = \Delta^1 / \partial \Delta^1$}. If $f: S\to T$ is a map of totally ordered nonempty finite sets, and $S=S_1\sqcup S_2$ is a cut of $S$ with $S_1$ and $S_2$ nonempty, then the set of cuts of $T$ pulling back to $S_1\sqcup S_2$ is naturally a totally ordered set, as it is a subset of the  of cuts of $T$ into two nonempty sets which is totally ordered as follows: a cut $T_1 \sqcup T_2 = T$ is less are equal to a cut $T_1' \sqcup T_2' = T$ if $T_1 \subseteq T_1'$ and consequently $T_2 \supseteq T_2'$. This gives the required functor $\Delta^\op\to \Ass^\otimes$.

Now we construct Connes' cyclic category $\Lambda$. We start with the definition of the paracyclic category $\Lambda_\infty$. It is the full subcategory $\Lambda_\infty \subseteq \Z\PO$  consisting of all objects isomorphic to $\frac{1}{n}\Z$ for $n \geq 1$.
 Here $\PO$ is the category or partially ordered sets and non-decreasing maps, $\Z\PO$ is the category of objects in $\PO$ equipped with a $\Z$-action 
and we consider $\frac 1 n \Z$ as an object with its natural ordering and the $\Z$-action given by addition.
 We will use the notation
$$
[n]_{\Lambda_\infty} = \frac{1}{n}\Z\in \Lambda_\infty.
$$
There is an action of $B\Z$ on $\Lambda_\infty$ defined as restriction of the action of $\B\Z$ on the functor category $\Fun(\B\Z,\PO)$. Concretely, this means that there is an action of $\mathbb Z$ on the morphism spaces of $\Lambda_\infty$, with the generator $\sigma$ of $\mathbb Z$ sending a morphism $f: \frac{1}{n}\mathbb Z\to \frac{1}{m}\mathbb Z$ to $\sigma(f): \frac 1 n \mathbb Z\to \frac 1 m\mathbb Z$ given by $\sigma(f) = f+1$. 
Now, for any integer $p\geq 1$, we define a category $\Lambda_p$ as the category with the same objects as $\Lambda_\infty$, but with the morphism spaces divided by the action of $\sigma^p$. Equivalently,
\[
\Lambda_p = \Lambda_\infty / B(p\Z)\ .
\]
For $p=1$, we abbreviate $\Lambda=\Lambda_1$. Thus, the objects of $\Lambda_p$ are, up to isomorphism, still given by the integers $\frac 1 n \Z$ for $n \geq 1$, and we denote the corresponding object by $[n]_{\Lambda_p}\in \Lambda_p$. Note that by definition, there is a remaining $BC_p = B\Z/B(p\Z)$-action on the category $\Lambda_p$, and
\[
\Lambda = \Lambda_p/BC_p\ .
\]
We note that the category $\Lambda_\infty$ is self-dual.
 Namely, one can send a map $f: S \to T$ to the function $f^\circ: T\to S$ given by
\[
f^\circ(x)=\min\{y\mid f(y)\geq x\}\ .
\]
Then $f^\circ$ is non-decreasing, $\Z$-equivariant, i.e. $f^\circ(x + 1) = f^\circ(x) +1$, and for all $x \in T, y\in S$, one has $f(y)\geq x$ if and only if $y\geq f^\circ(x)$. In particular, one can recover $f$ from $f^\circ$ by
\[
f(y)=\max\{x\mid f^\circ(x)\leq y\}\ ;
\]
we warn the reader that this is not the same as $(f^\circ)^\circ$. Another description of the map $f^\circ$ is given by identifiying an object $S \in \Lambda_\infty$ with the set $S^\circ$ of nontrivial cuts of $S$. For this identification an element $s \in S$ is sent to the cut $S = S_{< s} \sqcup S_{\geq  s}$. Since the set of non-trivial cuts is naturally contravariant this can also be used to desribe $f^\circ: T \to S$ for  a given map $f: S \to T$. 
The self-duality $\Lambda_\infty\simeq \Lambda_\infty^\op$ is $B\Z$-equivariant, and thus descends to a self-duality of $\Lambda_p$ for all $1\leq p\leq \infty$, in particular of $\Lambda$.

We need to relate $\Lambda$ with the other combinatorial categories. For this, note that there is a natural functor
\[
V: \Lambda\to \Fin
\]
sending $T \in \Lambda_\infty$ to $T/\Z$. Concretely this functor sends $[n]_{\Lambda_\infty}$ to the finite set 
$\left\{0,\frac 1 n,\frac 2n,\ldots,\frac {n-1} n \right\}$.
In fact, one has
$
V = \Hom_\Lambda([1]_\Lambda,-):\Lambda\to \Fin
$
via the transformation taking a map $f: [1]_\Lambda\to [n]_\Lambda$ to the image of $V(f): V([1]_\Lambda)\to V([n]_\Lambda)$.

\begin{proposition}\label{prop:lambdaass} The functor $V: \Lambda\to \Fin$ refines to a functor
\[
V: \Lambda\to \Ass^\otimes_\act = \Ass^\otimes\times_{\Fin_\ast} \Fin
\]
still denoted $V$.
\end{proposition}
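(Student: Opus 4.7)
The plan is to construct the refinement of $V$ by specifying, for every morphism $f\colon [n]_\Lambda \to [m]_\Lambda$ in $\Lambda$, a linear ordering on each fiber of the underlying map of sets $V(f)\colon V([n]_\Lambda)\to V([m]_\Lambda)$, and then to verify that this additional data is compatible with composition in the sense defining $\Ass^\otimes$.

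First I would represent a morphism $f\colon [n]_\Lambda\to[m]_\Lambda$ by a $\Z$-equivariant nondecreasing map $\tilde f\colon \frac{1}{n}\Z\to \frac{1}{m}\Z$, unique up to translation by $\Z$, and for each $y\in V([m]_\Lambda)=\frac{1}{m}\Z/\Z$ I would choose a lift $\tilde y\in \frac{1}{m}\Z$. Since $\tilde f$ is nondecreasing and $\Z$-equivariant, with $\tilde f(x+1)=\tilde f(x)+1>\tilde f(x)$, the preimage $\tilde f^{-1}(\tilde y)\subseteq \frac{1}{n}\Z$ is a (possibly empty) interval of consecutive elements of $\frac{1}{n}\Z$, of total width strictly less than $1$. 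It follows that the quotient map $\frac{1}{n}\Z\to V([n]_\Lambda)$ is injective on $\tilde f^{-1}(\tilde y)$, giving a bijection $\tilde f^{-1}(\tilde y)\simeq V(f)^{-1}(y)$, and I transport the natural order from $\tilde f^{-1}(\tilde y)\subseteq \frac{1}{n}\Z$ to obtain a linear order on $V(f)^{-1}(y)$.

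Second I would check that this order is independent of the choices: replacing $\tilde y$ by $\tilde y+k$ with $k\in \Z$ replaces $\tilde f^{-1}(\tilde y)$ by its translate $\tilde f^{-1}(\tilde y)+k$ (by equivariance of $\tilde f$), which produces the same bijection to $V(f)^{-1}(y)$ up to an order-preserving shift, and the same argument handles changes of the representative $\tilde f$ within its $\Z$-translation class.

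Third I would verify functoriality. Given composable morphisms $f\colon [n]_\Lambda\to[m]_\Lambda$, $g\colon [m]_\Lambda\to[k]_\Lambda$ and $z\in V([k]_\Lambda)$ with lift $\tilde z$, one has
\[
(\tilde g\tilde f)^{-1}(\tilde z)=\bigsqcup_{\tilde y\in \tilde g^{-1}(\tilde z)}\tilde f^{-1}(\tilde y)
\]
inside $\frac{1}{n}\Z$. Because $\tilde g^{-1}(\tilde z)=\{\tilde y_1<\cdots<\tilde y_r\}$ is itself an interval and each $\tilde f^{-1}(\tilde y_i)$ is an interval, the union in $\frac{1}{n}\Z$ is the concatenation $\tilde f^{-1}(\tilde y_1)<\cdots<\tilde f^{-1}(\tilde y_r)$. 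Passing to $V([n]_\Lambda)$ this says that the order on $V(g\circ f)^{-1}(z)$ constructed above is exactly the lexicographic order obtained from the order on $V(g)^{-1}(z)$ and the orders on the $V(f)^{-1}(y_i)$, which is the composition rule in $\Ass^\otimes$. Identities correspond to identity maps on $\frac{1}{n}\Z$, whose fibers are singletons, and so are automatically preserved.

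The main obstacle is simply the careful bookkeeping to show well-definedness under the change of lift $\tilde y$ and of representative $\tilde f$, and to identify the resulting order on fibers of composites with the lexicographic order in $\Ass^\otimes$; however, once the preimages $\tilde f^{-1}(\tilde y)$ are recognised as intervals of length $<1$, everything reduces to monotonicity of the $\tilde f$.
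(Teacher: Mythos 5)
Your construction is correct and is essentially the paper's own argument: the paper also orders the fiber $V(f)^{-1}(i)$ by choosing a lift $\tilde i\in T$ of $i\in T/\Z$, observing that $f^{-1}(\tilde i)\subseteq S$ is totally ordered as a subset of $S$ and canonically independent of the choice of lift and of representative by $\Z$-equivariance. Your additional checks (that the fiber is an interval of width $<1$, hence injects into the quotient, and that composition yields the lexicographic order) only spell out details the paper leaves to the reader.
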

We note that we will mostly use the composite
\[
V^\circ: \Lambda^\op\simeq \Lambda\to \Ass^\otimes_\act
\]
with the self-duality of $\Lambda$.

\begin{proof} In order to lift the morphism $V: \Lambda\to \Fin$ through $\Ass^\otimes_\act$ we have to supply orderings on the preimages of points of $S/\Z \to T/\Z$ for an equivalence class of order preserving, $\Z$-equivariant morphisms $f: S \to T$. But if $i\in T/\mathbb Z$ with lift $\tilde{i}\in T$ then $f^{-1}(\tilde{i})\subseteq S$ is totally ordered (as a subset of $S$), and canonically independent of the choices of $\tilde{i}$ and the representative $f$, because of the $\Z$-equivariance..\end{proof}

In particular, we get a natural functor
\[
\Lambda_{/[1]_\Lambda}\to \Delta
\]
sending a map $f: [n]_\Lambda\to [1]_\Lambda$ to $V([n]_\Lambda)$ equipped with the total ordering induced by $f$ using Proposition \ref{prop:lambdaass}.
We claim that this is an equivalence. To see this, we construct an inverse functor. In fact, there is a natural functor $\Delta\to \Lambda_\infty$ sending $[n-1]=\{0,1,\ldots,n-1\}$ to $\frac 1 n \Z \cong \Z \times  [n-1] $ with the lexicographic ordering and the action given by addition in the first factor. This clearly extends to all finite linearly ordered sets with the same formula, i.e. we send $S$ to $\Z  \times S$ with lexicographic ordering. With this description the functoriality is also clear. As $\Delta$ has a final object $[0]$, this gives a map $\Delta\to \Lambda_{\infty/[1]_{\Lambda_\infty}}$,
which induces the desired functor $\Delta\to \Lambda_{/[1]_\Lambda}$. One checks easily that the two functors are inverse, resulting in the following corollary.

\begin{corollary}\label{cor:lambdavsdelta} The functors described above given an equivalence of categories $\Delta\simeq \Lambda_{/[1]_\Lambda}$.$\hfill \Box$
\end{corollary}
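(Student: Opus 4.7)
The strategy is to construct explicit natural isomorphisms between the two composites and the respective identity functors. Write $F\colon \Delta\to \Lambda_{/[1]_\Lambda}$ for the functor sending $S\in\Delta$ to the projection $\mathbb Z\times S\to\mathbb Z$ (with lexicographic ordering and $\mathbb Z$-action on the first factor), and $G\colon \Lambda_{/[1]_\Lambda}\to\Delta$ for the functor sending $f\colon[n]_\Lambda\to[1]_\Lambda$ to the ordered set $V(f)^{-1}(0)\subseteq V([n]_\Lambda)$, with the linear ordering supplied by Proposition~\ref{prop:lambdaass} (using that $V([1]_\Lambda)$ is a singleton).

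First I would verify $G\circ F\simeq\id_\Delta$. Given $S\in\Delta$ of cardinality $n$, one has $V(F(S))=(\mathbb Z\times S)/\mathbb Z=S$, and the preimage of $0\in V([1]_\Lambda)$ is all of $S$. The canonical ordering on this preimage is recovered by choosing lifts inside a single $\mathbb Z$-fiber of the projection $\mathbb Z\times S\to\mathbb Z$, and the lexicographic order on $\{0\}\times S$ restricts to the original order on $S$; this gives a natural isomorphism $G\circ F\cong \id_\Delta$.

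Next I would verify $F\circ G\simeq\id_{\Lambda_{/[1]_\Lambda}}$. Given a representative $\tilde f\colon \tfrac1n\mathbb Z\to\mathbb Z$ of $f$, write $\tilde T=\tilde f^{-1}(0)\subseteq\tfrac1n\mathbb Z$ and $T=G(f)$. Define $\phi\colon \mathbb Z\times T\to \tfrac1n\mathbb Z$ by $(k,t)\mapsto \tilde t+k$, using the lifts $\tilde t\in\tilde T$. This map is $\mathbb Z$-equivariant by construction, and intertwines the projection to $\mathbb Z$ with $\tilde f$. The key point for order-preservation is that $\tilde T$ has diameter strictly less than $1$: since $\tilde f$ is non-decreasing and $\mathbb Z$-equivariant, if $x_1<x_2$ are both in $\tilde T$ then $x_2<x_1+1$, so $|\tilde t_2-\tilde t_1|<1$ and the lexicographic order on $\mathbb Z\times T$ matches the order on $\tfrac 1n\mathbb Z$. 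Thus $\phi$ is an isomorphism in $\Lambda_\infty$ over $[1]_{\Lambda_\infty}$, and passing to $\Lambda$ makes this independent of the choice of representative $\tilde f$. Naturality in $f$ then gives the desired isomorphism $F\circ G\cong\id_{\Lambda_{/[1]_\Lambda}}$.

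The main obstacle is the bookkeeping of the two compatible structures at once: the ordering induced by Proposition~\ref{prop:lambdaass} must match the lexicographic ordering, and the identifications must be $\mathbb Z$-equivariant (so that they descend from $\Lambda_\infty$ to $\Lambda$). Once the diameter estimate for $\tilde T$ is established the rest is routine; functoriality on morphisms is straightforward since every map in $\Lambda_\infty$ is determined by its restriction to a single fundamental domain of the $\mathbb Z$-action together with its $\mathbb Z$-equivariance.
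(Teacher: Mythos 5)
Your construction of the two functors is exactly the one the paper uses (the functor $S\mapsto(\mathbb Z\times S\to\mathbb Z)$ via $j_\infty$ and the final object $[0]\in\Delta$, and the ordering on $V([n]_\Lambda)$ from Proposition~\ref{prop:lambdaass}), and your verification that they are mutually inverse — in particular the observation that $\tilde f^{-1}(0)$ is a fundamental domain of diameter $<1$, so the lexicographic order matches — correctly fills in the ``one checks easily'' step the paper leaves to the reader. The proposal is correct and takes essentially the same route as the paper.
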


In particular, we get a natural functor $\Delta\to \Lambda$ sending $[n]$ to $[n+1]_\Lambda$, or equivalently a functor $j: \Delta^\op\to \Lambda^\op$. Note that by the considerations above, this actually factors over a functor $j_\infty: \Delta^\op\to \Lambda_\infty^\op$. The following result is critical.

\begin{thm}\label{thm:jinftycofinal} The functor $j_\infty: N\Delta^\op\to N\Lambda_\infty^\op$ of $\infty$-categories is cofinal.
\end{thm}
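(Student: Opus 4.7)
The plan is as follows. By Joyal's cofinality criterion \cite[Theorem 4.1.3.1]{HTT}, it suffices to show that for each $T = [n]_{\Lambda_\infty} \in \Lambda_\infty$, the $1$-category $\mathcal{C}_T := \Delta \times_{\Lambda_\infty} (\Lambda_\infty)_{/T}$ has weakly contractible nerve (a simplicial set is weakly contractible iff its opposite is). Unwinding the definitions, the objects of $\mathcal{C}_T$ are pairs $(S,\phi)$ with $S \in \Delta$ and $\phi: S \to \tfrac{1}{n}\Z$ a non-decreasing function satisfying $\phi(\max S) - \phi(\min S) \leq 1$; a morphism $(S,\phi) \to (S',\phi')$ is a non-decreasing map $f: S \to S'$ with $\phi' \circ f = \phi$.

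I would prove contractibility by exhibiting a combinatorially controlled cover of $\mathcal{C}_T$ by contractible full subcategories. For each $a \in \tfrac{1}{n}\Z$, let $\mathcal{U}_a \subseteq \mathcal{C}_T$ denote the full subcategory on those $(S,\phi)$ with $\phi(S) \subseteq [a, a+1]$. A direct verification shows that $([n], i \mapsto a + i/n)$ is terminal in $\mathcal{U}_a$---any $(S,\phi)$ mapping to it uniquely via $s \mapsto n(\phi(s) - a)$---so $N\mathcal{U}_a$ is contractible. The same computation shows that for any finite nonempty $A \subseteq \tfrac{1}{n}\Z$ of diameter at most $1$, the intersection $\mathcal{U}_A := \bigcap_{a\in A}\mathcal{U}_a$ has a terminal object and is contractible, while $\mathcal{U}_A$ is empty when the diameter exceeds $1$. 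Since every $(S,\phi)\in\mathcal{C}_T$ lies in $\mathcal{U}_{\phi(\min S)}$, the $\mathcal{U}_a$'s cover $\mathcal{C}_T$.

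Let $P$ be the poset of nonempty finite $A \subseteq \tfrac{1}{n}\Z$ of diameter $\leq 1$, ordered so that $A \leq A'$ iff $\mathcal{U}_A \subseteq \mathcal{U}_{A'}$ (reverse inclusion). I would form the Grothendieck construction $\int_P \mathcal{U}$ of the tautological functor $A \mapsto \mathcal{U}_A$, with its two projections $r: \int_P \mathcal{U} \to P$ and $q: \int_P \mathcal{U} \to \mathcal{C}_T$. The projection $r$ is a coCartesian fibration with contractible fibers $\mathcal{U}_A$, so Thomason's theorem yields $N(\int_P \mathcal{U}) \simeq NP$. The projection $q$ is a Cartesian fibration: given $f: x_0 \to x_1$ in $\mathcal{C}_T$ and $(A, x_1)$ over $x_1$, a direct computation shows $x_0 \in \mathcal{U}_A$ (since $\phi_0(S_0) = \phi_1(f(S_0)) \subseteq \phi_1(S_1) \subseteq [a, a+1]$ for each $a \in A$), so $(A, x_0) \to (A, x_1)$ is a Cartesian lift; the fiber of $q$ over $(S,\phi)$ is the poset of nonempty subsets of $[\phi(\max S) - 1, \phi(\min S)] \cap \tfrac{1}{n}\Z$, i.e., the face poset of a nonempty simplex, hence contractible. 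Thomason's theorem applied to $q$ then gives $N(\int_P \mathcal{U}) \simeq N\mathcal{C}_T$, so $N\mathcal{C}_T \simeq NP$.

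Finally, $NP$ is the nerve of the face poset of the Rips simplicial complex on $\tfrac{1}{n}\Z$ at scale $1$; its geometric realization is a thickening of $\R$ and hence contractible. The main technical obstacle will be carefully verifying the (co)Cartesian fibration properties of $q$ and $r$, together with the correct direction of (contra)variance when invoking Thomason's theorem; beyond that the argument is routine combinatorics.
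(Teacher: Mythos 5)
Your proposal is correct, and its geometric core is the same as the paper's: reduce via Quillen's Theorem A to the weak contractibility of $\Delta_{/T}=\Delta\times_{\Lambda_\infty}(\Lambda_\infty)_{/T}$, and cover this category by the full subcategories of maps landing in a length-one interval of $T$, each of which has a terminal object (the paper's $\calC[a,b)\simeq \Delta_{/[k-1]}$). Where you genuinely diverge is the local-to-global step. The paper runs an explicit Mayer--Vietoris induction on finite subcovers, which requires \emph{all} finite intersections of the cover to be weakly contractible; your version instead assembles the cover into a Grothendieck construction over the poset $P$ and applies Thomason's theorem twice, so you only need the nonempty intersections to be contractible, at the price of the extra input that $NP$ (the barycentric subdivision of the Rips complex of $\tfrac1n\Z$ at scale $1$) is contractible. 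This trade is not just cosmetic: your use of closed intervals $[a,a+1]$ and your tolerance of empty intersections quietly repair two small imprecisions in the paper's write-up (an object with $\phi(\max S)-\phi(\min S)=1$ lies in no half-open $\calC[t,t+1)$, and $U_t\cap U_{t'}$ is empty for $|t-t'|>1$, which the paper's criterion as literally stated does not permit). The identifications of the fibers of $r$ and $q$ and the (co)Cartesian lifts check out as you describe.

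The only place you should add a real argument is the final step: ``a thickening of $\R$'' is not a proof. The cleanest fix is to note that the Rips complex is the increasing union of the subcomplexes $R_m=\bigcup_{|a|\le m}\sigma_a$, where $\sigma_a$ is the full simplex on $[a,a+1]\cap\tfrac1n\Z$, that $R_0$ is a simplex, and that each $R_{m+1}$ is obtained from $R_m$ by attaching simplices along nonempty faces (the intersections $\sigma_a\cap\sigma_b$ are simplices on $[\max(a,b),\min(a,b)+1]\cap\tfrac1n\Z$), so contractibility is preserved at each stage and in the colimit.
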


\begin{proof} By Quillen's Theorem A, \cite[Theorem 4.1.3.1]{HTT}, this is equivalent to the statement that for all $T \in \Lambda_\infty$, the simplicial set given by the nerve of the category $\calC = \Delta_{/T} = \Delta \times_{\Lambda_\infty} {\Lambda_\infty}_{/T}$ is weakly contractible, that means contractible in the Quillen model structure. 

We first establish a criterion to prove weak contractibility in terms of ``coverings''. Assume that for a given $\infty$-category $\calC$ there is a family of full, saturated\footnote{Saturated means that every object, morphism etc.  which is equivalent to one in the subcategory also lies in the subcategory. Thus this really means that we have a full simplicial subset in the sense that it is induced from a subset of the vertices and a higher simplex is in the subsimplicial set precisely if all its vertices are.} subcategories $U_i \subseteq \calC$ for $i \in I$ with the property that all finite intersections of the $U_i$'s are weakly contractible and that we have an equality of simplicial sets $\calC = \bigcup_{i \in I} U_i$ (this is not just a condition on the objects, but also on the higher simplices). Then we claim that $\calC$ is weakly contractible as well.  

To prove this criterion we first observe that $\calC$ is the filtered colimit of the finite partial unions, thus we can assume that $I$ is finite. Inductively we assume we know the claim for all $\infty$-categories with a covering consisting of $n$ contractible, full subcategories. We will show that $\calC = U_0 \cup \ldots \cup U_n$ is contractible. We consider the following pushout diagram of simplicial sets
$$
\xymatrix{
(U_0 \cup \ldots \cup U_{n-1}) \cap U_n \ar[r]\ar[d] & U_n \ar[d] \\
U_0 \cup \ldots \cup U_{n-1} \ar[r] & U_0 \cup \ldots \cup U_n
} 
$$
which is also a pushout in the Kan-Quillen model structure since all the maps involved are cofibrations. By assumption the three upper left corners are weakly contractible. It follows that $U_1 \cup \ldots \cup U_n$ is also weakly contractible which shows the claim.

We now apply this criterion to the $\infty$-category $\calC = \Delta_{/T} = \Delta \times_{\Lambda_\infty} {\Lambda_\infty}_{/T}$ with the ``covering'' given by
$$
U_t := \calC[t,t+1) \qquad t \in T
$$
where $\calC[a,b)$ for $a,b \in T$ is the full subcategory of $\calC$ which consists of pairs $(S,\varphi)$ where $S \in \Delta$ and $\varphi: j_\infty S \to T$ such that $\varphi(S) \subseteq [a,b) \subseteq T$. Here $S$ is considered as the subset $\{0 \} \times S \subseteq \Z \times S = j_\infty(S)$, which is a fundamental region for the $\Z$-action on $j_\infty(S)$. 

Finite intersections of $U_t$'s are always also of the form $\calC[a,b)$ where $b \leq a +1$. Thus in order to verify the assumptions of our criterion we have to show that all those $\calC[a,b)$ are weakly contractible. For every choice of such $a,b \in T$ we can choose an isomorphism $T \cong \frac{1} n \Z$ such that $a$ and $b$ correspond to $0$ and $\frac{k} n$ with $k \leq n$, so that we have to show that $\calC[0, \frac{k}{n}) \subseteq \Delta_{/\frac 1 n \Z}$ is weakly contractible. But this category is equivalent to $\Delta_{/[k-1]}$ which has a terminal object. 
\end{proof}

\begin{corollary}\label{cor:homotopytypeslambda} The $\infty$-category $N\Lambda_\infty^\op$ is sifted in the sense of \cite[Definition 5.5.8.1]{HTT}. Moreover, denoting by $|X|\in \calS$ the homotopy type of a simplicial set $X$, $|N\Lambda_\infty|$ is contractible, and
\[
|N\Lambda| = |N\Lambda_\infty|/B\Z = BB\Z = K(\Z,2) = B\T \simeq \mathbb C P^\infty\ .
\]
\end{corollary}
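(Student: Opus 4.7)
The plan is to deduce all three assertions from Theorem~\ref{thm:jinftycofinal}, i.e.~the cofinality of $j_\infty: N\Delta^\op\to N\Lambda_\infty^\op$, combined with the corresponding well-known facts for $\Delta^\op$ (which is sifted and has contractible classifying space, the latter because $[0]$ is terminal in $\Delta^\op$). The cofinality does all the combinatorial work, and from there the argument is formal.

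First I would prove siftedness of $N\Lambda_\infty^\op$. I will use the characterisation that an $\infty$-category $\calD$ is sifted if and only if it is nonempty and colimits over $\calD$ in $\calS$ commute with finite products. Given functors $F_1,F_2: N\Lambda_\infty^\op\to \calS$, cofinality of $j_\infty$ gives $\colim_{N\Lambda_\infty^\op}F_i\simeq \colim_{N\Delta^\op}j_\infty^*F_i$ and analogously for $F_1\times F_2$; since $N\Delta^\op$ is sifted \cite[Lemma 5.5.8.4]{HTT}, the colimit over $N\Delta^\op$ of $j_\infty^*F_1\times j_\infty^*F_2$ splits as the product of the colimits, and this gives the required splitting on $N\Lambda_\infty^\op$. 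Nonemptiness is obvious.

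Second, I would deduce contractibility of $|N\Lambda_\infty|$. Since $|\calC|\simeq |\calC^\op|$ for any $\infty$-category, it suffices to treat $|N\Lambda_\infty^\op|$. Applying cofinality of $j_\infty$ to the constant diagram with value $*\in \calS$ (whose colimit over an $\infty$-category $\calD$ is by definition $|\calD|$), we obtain $|N\Lambda_\infty^\op|\simeq |N\Delta^\op|\simeq *$.

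Finally, for the identification of $|N\Lambda|$: by construction $\Lambda=\Lambda_\infty/B\Z$ is the colimit in $\Cat_\infty$ of the $B\Z$-diagram encoding the $B\Z$-action on $\Lambda_\infty$. The classifying-space functor $|-|:\Cat_\infty\to \calS$ is left adjoint to the fully faithful inclusion $\calS\hookrightarrow \Cat_\infty$, hence preserves colimits; therefore
\[
|N\Lambda|\simeq |N\Lambda_\infty|_{hB\Z}\simeq *_{hB\Z}=BB\Z=K(\Z,2)=B\T\simeq \C P^\infty,
\]
where the second equivalence uses the contractibility established in the previous step. There is no serious obstacle here: the only nontrivial input is Theorem~\ref{thm:jinftycofinal}, and the corollary is entirely formal from it.
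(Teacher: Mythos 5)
Your proposal is correct and follows essentially the same route as the paper: siftedness and contractibility both come from the cofinality of $j_\infty$ (the paper cites \cite[Lemma 5.5.8.7]{HTT} for contractibility but notes your more direct argument via \cite[Proposition 4.1.1.3 (3)]{HTT} as an alternative), and the last identification is the passage to $B\Z$-homotopy orbits. The only point you gloss over is the claim that the $1$-categorical quotient $\Lambda=\Lambda_\infty/B\Z$ ``is by construction'' the colimit of the $B\Z$-diagram in $\Cat_\infty$: this is not automatic for a strict quotient and is exactly where the paper inserts the observation that the $\Z$-action on the morphism sets of $\Lambda_\infty$ is free, so that the strict orbits agree with the homotopy orbits after taking nerves.
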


\begin{proof} The first part follows from Theorem~\ref{thm:jinftycofinal} and \cite[Lemma 5.5.8.4]{HTT}. Thus, $|\Lambda_\infty|$ is contractible by \cite[Lemma 5.5.8.7]{HTT}; we could have deduced this also more directly from \cite[Proposition 4.1.1.3 (3)]{HTT}. Now the final assertion follows from the definition of $\Lambda$ as a quotient and the observation that the $B\Z$-action on $\Lambda_\infty$ is free, thus the orbits are also the homotopy orbits after taking nerves.\end{proof}

Now we need to discuss geometric realizations. First, we do the case of $\infty$-categories.

\begin{proposition}\label{prop:geomreal} For any $\infty$-category $\calC$ admitting geometric realizations of simplicial objects, there is a natural functor
\[
\Fun(N(\Lambda^\op),\calC)\to \calC^{B\T}
\]
from cyclic objects in $\calC$ to $\T$-equivariant objects in $\calC$. The underlying object of $\calC$ is given by
\[
(F: N(\Lambda^\op)\to \calC)\mapsto \colim_{N(\Delta^\op)} j^\ast F\ .
\]
\end{proposition}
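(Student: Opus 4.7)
The construction will factor through the paracyclic category $\Lambda_\infty$, using the $B\Z$-action on $\Lambda_\infty$ whose $1$-categorical quotient is $\Lambda$. The first step is to establish that the projection $p: N\Lambda_\infty^\op \to N\Lambda^\op$ exhibits $N\Lambda^\op$ as the homotopy quotient $(N\Lambda_\infty^\op)_{hB\Z}$ in $\mathrm{Cat}_\infty$. Concretely, the action of $\Z$ on $\Lambda_\infty$ via $\sigma$ is free on mapping spaces (each orbit consists of distinct morphisms), so the strict $1$-categorical quotient models the $\infty$-categorical homotopy quotient. Using $B\Z \simeq \T$, this gives a natural equivalence
\[
\Fun(N\Lambda^\op, \calC) \simeq \Fun(N\Lambda_\infty^\op, \calC)^{h\T},
\]
where $\T$ acts on the right by precomposition with its action on $N\Lambda_\infty^\op$.

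The second ingredient is that the colimit functor $\colim_{N\Lambda_\infty^\op}: \Fun(N\Lambda_\infty^\op, \calC) \to \calC$ is $\T$-equivariant when $\calC$ carries the trivial $\T$-action. Indeed, each self-equivalence $t \in \T$ of $N\Lambda_\infty^\op$ induces a natural equivalence $\colim G \simeq \colim(G \circ t)$ functorial in $G$, and these assemble into a coherent $\T$-equivariant structure on the colimit functor. Passing to $\T$-homotopy fixed points of both source and target, one obtains the desired functor
\[
\Fun(N\Lambda^\op, \calC) \simeq \Fun(N\Lambda_\infty^\op, \calC)^{h\T} \xrightarrow{\;\colim\;} \calC^{h\T} = \calC^{B\T}.
\]
To identify the underlying object, one notes that the forgetful functor $\calC^{B\T} \to \calC$ corresponds to forgetting the equivariance on the colimit, giving $\colim_{N\Lambda_\infty^\op}(p^*F)$; by the factorization $j = p \circ j_\infty$ and the cofinality of $j_\infty: N\Delta^\op \to N\Lambda_\infty^\op$ (Theorem~\ref{thm:jinftycofinal}), this equals $\colim_{N\Delta^\op}(j^*F)$, as required.

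The main technical obstacle is justifying the identification $N\Lambda^\op \simeq (N\Lambda_\infty^\op)_{hB\Z}$ in $\mathrm{Cat}_\infty$, since $1$-categorical quotients need not coincide with $\infty$-categorical homotopy quotients in general. One approach is to verify freeness of the $\Z$-action directly on mapping spaces. A more conceptual route is to combine the computation of geometric realizations from Corollary~\ref{cor:homotopytypeslambda} — namely $|N\Lambda_\infty| \simeq *$ and $|N\Lambda| \simeq B\T$ — with the fact that the groupoid completion sends the map $p$ to a principal $\T$-bundle, together with a descent argument showing that this lifts to an equivalence of $\infty$-categories. Once this identification is in place, the rest of the argument is formal.
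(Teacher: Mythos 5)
Your construction is essentially the paper's: factor through $B\Z$-equivariant paracyclic objects, apply the $B\Z$-equivariant colimit functor $\colim_{N(\Lambda_\infty^\op)}$, and identify the underlying object via the cofinality of $j_\infty$ (Theorem~\ref{thm:jinftycofinal}). The only divergence is that the identification $N(\Lambda^\op)\simeq (N(\Lambda_\infty^\op))_{hB\Z}$, which you single out as the main technical obstacle, is not actually needed: to build the functor one only requires the restriction map $\Fun(N(\Lambda^\op),\calC)\to \Fun^{B\Z}(N(\Lambda_\infty^\op),\calC)$, which exists for any quotient by a group action, and this is all the paper uses.
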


\begin{proof} The functor can be constructed as the composite
\[
\Fun(N(\Lambda^\op),\calC)\to \Fun^{B\Z}(N(\Lambda_\infty^\op),\calC)\to \Fun^{B\Z}(\pt,\calC) = \calC^{BB\Z} = \calC^{B\T}\ .
\]
Here, in the middle we use the $B\Z=\T$-equivariant functor
\[
\colim_{N(\Lambda_\infty^\op)} : \Fun(N(\Lambda_\infty^\op),\calC)\to \calC\ .
\]
By Theorem~\ref{thm:jinftycofinal}, the underlying functor to $\calC$ is given by $\colim_{N(\Delta^\op)} j^\ast$, which also shows existence of the colimit.
\end{proof}

Now we  discuss the geometric realization functor for cyclic topological spaces. Our strategy is to deduce these from a realization of paracyclic topological spaces in parallel to the $\infty$-categorical case. But first we recall the geometric realization of simplicial spaces.

\begin{construction}\label{cons:geomreal}
Let $\Top$ denote the category of compactly generated weak Hausdorff spaces. There is a natural functor
\[
|-|\colon \Fun(\Delta^\op,\Top)\to \Top\ 
\]
constructed as follows.
We have the topological $n$-simplex 
$$
|\Delta^n| = \Big\{ (x_0,\ldots,x_n) \in [0,1]^{n+1} \mid \sum_{i = 0}^n x_i = 1 \Big\} \cong \Big\{ (y_1,\ldots,y_n) \in [0,1]^n \mid y_1 \leq \ldots \leq y_n \Big\}
$$
where $y_i = x_0 + \ldots + x_{i-1}$. 
Together all the simplices $\Delta^n$ can be given the structure of a cosimplicial space, i.e. a functor $\Delta \to \Top$. We explain in a second how this is done. Using this cosimplicial space, the geometric realization of a simplicial space $X_\bullet$ is defined as the coend
$$
| X_\bullet | = \int_{S \in \Delta} X_S \times |\Delta^S| = \Big(\coprod_{n \in \N} X_n \times |\Delta^{n}|\Big) / \sim\ .
$$
Recall that a priori this colimit is taken in compactly generated weak Hausdorff spaces, but it turns out that the topological space is also given by the colimit of the same diagram taken in topological spaces, cf.~\cite[Appendix A, Proposition A.35]{Global}. Thus we do not need to take any weak Hausdorffification and the points of this space are what one thinks.

Now to the cosimplicial structure on $|\Delta^\bullet|$: this is usually constructed in barycentric coordinates by sending a map of ordered sets $s: [n] \to [m]$ to the unique affine linear map $|\Delta^n| \to |\Delta^m|$ given on vertices by $s$. We  present a slightly different way to do this using increasing coordinates, which has also been used by Milnor and Joyal.\footnote{We thank Peter Teichner for suggesting that this description extends to cyclic spaces and helpful discussions of this point.} First we say that a linearly ordered set is an interval if it has a minimal and a maximal element which are distinct. An example is given by the unit interval $[0,1] \subseteq \R$ with its natural ordering. A morphism of intervals is a non-decreasing map that sends the minimal element to the minimal element and the maximal element to the maximal element. There is a functor $-^\circ:  \Delta \to \mathrm{Int}$, where $\mathrm{Int}$ is the category of intervals, given as
$$
S^\circ = \Hom_\Delta(S,[1]) = \{ S = S_0 \sqcup S_1 \mid s \leq t \text{ for } s \in S_0, t \in S_1 \}
$$
 where $S^\circ$ has the natural ordering as a set of maps into a poset. It is not hard to see that $S^\circ$ is indeed an interval. In fact the functor $-^\circ$ exhibits the category of finite intervals as the opposite category of $\Delta$. 
Now we define for  $S \in \Delta$ the geometric realization
$$
\left|\Delta^S\right| := \Hom_{\mathrm{Int}}(S^\circ, [0,1])
$$
with the topology induced from $[0,1]$. For $S = [n]$ we have $S^\circ \cong [n+1]$ and  $|\Delta^S |$ recovers the description of $|\Delta^n|$ in increasing coordinates as above. 
The point is that this description of $|\Delta^{n}|$ makes the functoriality in order-preserving maps $[n] \to [m]$ clear.  This description also shows the (well known) fact that the group of continuous, orientation preserving homeomorphisms of the intervall $[0,1]$ acts canonically on the geometric realization of every simplicial set or space. 
\end{construction}

We will need the well-known fact that geometric realization models the homotopy colimit. Recall that a  simplicial topological space $X_\bullet$ is called proper, if for each $n$ the inclusion
$$
\bigcup_{i=0}^{n-1} s_i(X_{n-1}) \hookrightarrow X_n
$$
is a Hurewicz cofibration\footnote{Usually closed cofibration is required, but since we are working within weak Hausdorff compactly generated spaces this is automatic.}
where $s_i$ are the degeneracy maps $X_{n-1} \to X_n$. This is for example the case if it is ``good'' in the sense that all degeneracy maps are h-cofibrations, see \cite[Corollary 2.4]{Lewis}. We denote by $\Fun(\Delta^\op,\Top)_{\mathrm{prop}}$ the full subcategory of $\Fun(\Delta^\op,\Top)$ consisting of the proper simplicial spaces. Every space can be functorially replaced by a weakly equivalent good one, so that this inclusion induces an equivalences on the associated $\infty$-categories. 
\begin{lemma}\label{geometrichocolim} The diagram of functors
\[\xymatrix{
N\Fun(\Delta^\op,\Top)_{\mathrm{prop}}\ar[r]^-{|-|} \ar[d] & N\Top\ar[d]\\
\Fun(N(\Delta^\op),\calS)\ar[r]^-\colim & \calS
}\]
commutes (up to a natural equivalence) where the vertical maps are the Dwyer-Kan localization maps. 
\end{lemma}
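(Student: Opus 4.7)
The strategy is to factor the geometric realization through the ``fat'' realization $||X_\bullet||$, defined as the coend over the subcategory $\Delta^{\op}_{\mathrm{inj}}$ of injective maps in $\Delta$ (equivalently, the quotient of $\coprod_n X_n \times |\Delta^n|$ in which only the face identifications are imposed). The lemma will follow from two claims: that $||X_\bullet||$ always models the homotopy colimit, and that the canonical projection $||X_\bullet|| \to |X_\bullet|$ is a weak equivalence whenever $X_\bullet$ is proper.

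For the first claim, I would observe that $||X_\bullet||$ is built as a sequential colimit of its $n$-skeleta, where each inclusion $||X_\bullet||^{(n-1)} \hookrightarrow ||X_\bullet||^{(n)}$ is obtained as a pushout along the Hurewicz cofibration $X_n \times |\partial\Delta^n| \hookrightarrow X_n \times |\Delta^n|$. In particular the localization functor $\Top \to \calS$ sends this sequence of cofibrations to a sequence of pushouts in $\calS$ along cofibrations, yielding precisely the standard Bousfield--Kan type model for the homotopy colimit of the underlying diagram $N\Delta^\op \to \calS$. No cofibrancy hypothesis on $X_\bullet$ is required for this step.

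For the second claim, I would compare the two realizations skeleton by skeleton. The ordinary realization satisfies the pushout square
\[
\xymatrix{
(L_n X \times |\Delta^n|) \cup_{L_n X \times |\partial \Delta^n|} (X_n \times |\partial \Delta^n|) \ar[r] \ar[d] & X_n \times |\Delta^n| \ar[d] \\
|X_\bullet|^{(n-1)} \ar[r] & |X_\bullet|^{(n)}
}
\]
in which $L_n X \subseteq X_n$ denotes the $n$-th latching object of degenerate simplices. Properness says exactly that $L_n X \hookrightarrow X_n$ is a Hurewicz cofibration, from which one deduces that the left vertical map is a Hurewicz cofibration and hence the square is a homotopy pushout. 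The analogous square for $||X_\bullet||$ is obtained by replacing $L_n X$ with $\emptyset$. An inductive comparison then yields that the map $||X_\bullet||^{(n)} \to |X_\bullet|^{(n)}$ is a weak equivalence, and passing to the sequential colimit over $n$ gives the result.

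The main obstacle is the inductive step above: one must check carefully that collapsing the degenerate cells in the skeletal pushout produces a weak equivalence, and it is precisely the Hurewicz cofibration assumption built into properness that enables this, by ensuring that each intermediate pushout correctly computes its homotopy type. The result is classical and goes back to Segal and May; we therefore omit a detailed verification and merely indicate the structure of the argument.
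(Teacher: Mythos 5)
Your route is genuinely different from the paper's. The paper never mentions the fat realization: it first proves directly, via the gluing lemma and the skeletal filtration, that $|-|$ preserves levelwise weak equivalences between proper simplicial spaces (Proposition~\ref{proper}), then observes that $|-|$ is left Quillen for the Reedy model structure and hence left-derivable, and finally identifies the derived functor with $\colim$ by passing to right adjoints: the right adjoint $X\mapsto X^{|\Delta^\bullet|}$ visibly preserves all weak equivalences and is equivalent to the constant-diagram functor. Your approach instead splits the statement into ``the fat realization always computes the homotopy colimit'' plus ``fat $\to$ thin is a weak equivalence for proper objects.'' Both halves are true and this is a legitimate alternative; note, however, that your skeletal filtration of $\|X_\bullet\|$ a priori exhibits it as the colimit over $N(\Delta_{\mathrm{inj}})^{\op}$, so the first half also needs the cofinality of $N(\Delta_{\mathrm{inj}})^{\op}\subseteq N(\Delta)^{\op}$, which you should record.

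The gap is in the second half: the inductive step does not work as you describe it. The two skeletal pushout squares cannot be compared by the gluing lemma, because the map between their attaching regions,
\[
X_n\times|\partial\Delta^n| \longrightarrow (L_nX\times|\Delta^n|)\cup_{L_nX\times|\partial\Delta^n|}(X_n\times|\partial\Delta^n|)\ ,
\]
is a pushout of $L_nX\times|\partial\Delta^n|\to L_nX\times|\Delta^n|$ and is therefore \emph{not} a weak equivalence unless $L_nX=\emptyset$; its cofiber is $(L_nX)_+\wedge\big(|\Delta^n|/|\partial\Delta^n|\big)$. So ``replacing $L_nX$ by $\emptyset$'' changes the homotopy type of the square, and the induction must be organized differently: one has to show that collapsing the degenerate cylinder $L_nX\times|\Delta^n|$ onto the lower skeleton is a weak equivalence, which is exactly where the classical proofs do real work. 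Be careful also that the result you cite from Segal is proved under the stronger hypothesis that $X_\bullet$ is \emph{good} (all degeneracies are h-cofibrations); good implies proper by Lewis's theorem, but not conversely, so for proper simplicial spaces you need the latching-object version of the comparison (as in the references cited for Proposition~\ref{proper}). With that step correctly supplied or properly referenced, your argument does give the lemma.
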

\begin{proof}
The key fact is that the geometric realization functor, when restricted to proper simplicial spaces, preserves weak equivalences. This is a well known fact that we review in Proposition \ref{proper} of the next section. 

Now we use the Reedy model structure  on $\Fun(\Delta^\op, \Top)$: this is a model structure on $\Fun(\Delta^\op,\Top)$ for which the weak equivalences are levelwise and every cofibrant object is proper (but it has to satisfy more conditions since proper is only Reedy cofibrant for the Strom model structure and we are working with the standard Quillen model structure). For the Reedy model structure geometric realization becomes a left Quillen functor, see \cite{reedy1974homotopy} or \cite[Chapter 15]{Hirsch} for proofs and precise statements. It follows that geometric realization can be left derived to induce a left adjoint functor of $\infty$-categories. Since for proper simplicial spaces weak equivalences are already preserved it follows that the left derived functor is given by evaluation on a proper simplicial space and one does not have to replace cofibrantly in the Reedy model structure. The right adjoint is given by the right derived functor of the functor which sends a topological space $X$ to the simplicial space $X^{|\Delta^\bullet|}$. This functor clearly preserves all weak equivalences and is equivalent to the diagonal functor. Putting everything together this shows that the induced functor from the geometric realization as restricted to proper simplicial spaces is left adjoint to the diagonal functor, thus it is equivalent to the colimit. 
\end{proof}

\begin{remark}
One can get rid of the assumption that the simplicial space has to be proper by replacing the functor $|-|$ with the fat geometric realization, that is the coend over the category $\Delta^{\mathrm{inj}} \subseteq \Delta$ which has the same objects as $\Delta$ but only injective, order preserving maps as morphisms. This amounts to taking the geometric realization without quotenting by the degeneracy maps. The disadvantage is that this fat realization is much bigger then the ordinary one and does not preserve products anymore, for example the terminal object in simplicial spaces is sent to the infinite dimensional simplex $\Delta^\infty$.
\end{remark}

Now we discuss the geometric realization of paracyclic spaces. This will eventually also lead to the notion of geometric realization of cyclic spaces.  

\begin{construction}\label{concyclicrealization}
We  construct a $\B\Z$-equivariant functor
\[
| - |\colon \Fun(\Lambda_\infty^\op,\Top)\to \R\Top\ ,
\]
where $\R\Top$ denotes the 1-category of topological spaces equipped with a continuous action of the additive group of the reals. The action of $\B\Z$ on the source is induced from the action on $\Lambda_\infty^\op$ and on the target by the action on the reals (or rather on $\B\R$ as $\R\Top$ is the category of continuous functors from $\B\R$ to $\Top$). 
The functor $|-|$ will be constructed similarly to the geometric realization of simplicial sets. 

We consider the real line $\R$ as a poset with its standard ordering and as equipped with a $\Z$-action by addition. 
For every $T \in \Lambda_\infty$ we define 
$$
| \Lambda_\infty^T| := \{f: T^\circ \to \R \mid f \text{ non-decreasing and $\Z$-equivariant} \} \ .
$$
We topologize this as a subspace of the space of maps into $\R$ with its standard topology. This space also carries a natural action by the additive group of real numbers $\R$ given by postcomposition with a translation. 
Together this construction defines a $\B\Z$-equivariant functor $\Lambda_\infty \to \R\Top$. 

Now for an arbitrary paracyclic topological space $X_\bullet$ we define the geometric realization by the coend
$$
| X_\bullet | = \int_{T\in \Lambda_\infty} X_T \times |\Lambda_\infty^{T}|  = \coprod_{T} \left( X_T \times |\Lambda_\infty^{T}| \right) / \sim
$$
in $\R\Top$. This functor is $\B\Z$-equivariant, since the functor $T \mapsto |\Lambda_\infty^T|$ is. 
\end{construction}

Recall that a cyclic space is a functor $X_\bullet: \Lambda^\op \to \Top$, which, by definition of $\Lambda$, is the same as a $\B\Z$-equivariant functor $\Lambda_\infty^\op \to \Top$. Since we are in a 1-categorical setting this is just a condition on the functor, namely that is sends $f$ and $f+1$ to the same map of topological spaces for each morphism $f$ in $\Lambda_\infty^\op$. In this sense we consider the category of cyclic spaces as a full subcategory of paracyclic spaces. Now since the realization functor $| - |\colon \Fun(\Lambda_\infty^\op,\Top)\to \R\Top$ is equivariant it follows that for a cyclic space $X_\bullet$  the action of $\Z \subseteq \R$ on the realization $|X_\bullet|$ is trivial, thus the action factors to a $\T = \R/\Z$-action. This $\T$-space is what we refer to as the geometric realization of the cyclic space.\footnote{One could alternatively repeat Construction \ref{concyclicrealization} using cyclic spaces instead of paracyclic spaces and $S^1$ instead of $\R$. We leave the details to the reader. }
Similarly for a functor $\Lambda_p^\op \to \Top$ the action of $p\Z \subseteq \R$ is trivial so that we obtain a $\R / p\Z$-action. 
\begin{example}\label{examplecyclicrealization}
Consider an object of $\Lambda_\infty$ of the form $j_\infty(S) = \Z \times S$ for an object $S \in \Delta$ (in fact every object of $\Lambda_\infty$ is up to isomorphism of this form). 
There is a canonical morphism $$|\Delta^S| \to | \Lambda^{j_\infty(S)}_\infty|$$ given by sending a map $s: S^\circ \to [0,1]$ to the unique $\Z$-equivariant extension $\overline{s}: j_\infty(S)^\circ \to \R$ along the canonical inclusion $S^{\circ} \subseteq j_\infty(S)^\circ$. Now using this map and the fact that $\R$ acts on $ | \Lambda_\infty^{j_\infty(S)}|$ we get a canonical map
\begin{align*}
|\Delta^S| \times \R &\longrightarrow | \Lambda_\infty^{j_\infty(S)}|\\
 \big(s,\alpha\big) & \mapsto \overline{s} + \alpha \ .
\end{align*}
This map is a homeomophism as one easily checks by writing down the inverse. 
\end{example}
\begin{remark}\label{remarkexplicit}
For $S = [n] \in \Delta$ we get an  equivalence $| \Lambda_\infty^{n+1}| \cong |\Delta^n| \times \R$. One can use this to directly define the geometric realization. Then one has to equip the spaces $|\Delta^n| \times \R$ by hand with the structure of a coparacyclic space. Chasing through the identifications one finds that the cyclic operator (i.e. addition with $\frac{1}{n+1}$) acts as
$$(x_0,\ldots,x_{n}, \alpha)  \mapsto \left(x_{n},x_0,\ldots,x_{n-1}, \alpha-x_{n}\right)$$
where the $x_i$ are the barycentric simplex coordinates in $|\Delta^n|$ and $\alpha \in \R$. This shows that our geometric realization is compatible with the classical discussions, see e.g. the discussion in \cite{MR870737}.
\end{remark}
\begin{lemma}\label{realization}
For a paracyclic topological space $X_\bullet$ the underlying space of the geometric realization $|X_\bullet|$ is naturally homeomorphic to the geometric realization of the underlying simplicial space $j_\infty^*X_\bullet$.  
\end{lemma}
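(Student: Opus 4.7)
The natural continuous map $\phi: |j_\infty^* X_\bullet| \to |X_\bullet|$ is induced by the natural transformation $|\Delta^S| \hookrightarrow |\Lambda_\infty^{j_\infty(S)}|$ of functors $\Delta \to \Top$ from Example \ref{examplecyclicrealization}, $s \mapsto \bar s$. Concretely, for a class represented by $(x, s)$ with $x \in X_{j_\infty(S)}$ and $s \in |\Delta^S|$, one sets $\phi[(x, s)] = [(x, \bar s)]$. Well-definedness is immediate since every simplicial operator in $\Delta$ lifts via $j_\infty$ to a morphism in $\Lambda_\infty$, so the relations of the source coend are imposed among the relations of the target coend.

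To show $\phi$ is a homeomorphism, the key step is to identify the functor $T \mapsto |\Lambda_\infty^T|: \Lambda_\infty \to \Top$ as the left Kan extension of $S \mapsto |\Delta^S|: \Delta \to \Top$ along $j_\infty$. Granting this, a standard Fubini-type manipulation of coends together with the co-Yoneda lemma yields
\[
|X_\bullet| = \int^T X_T \times |\Lambda_\infty^T| = \int^T X_T \times \int^S \Hom_{\Lambda_\infty}(j_\infty(S), T) \times |\Delta^S| = \int^S X_{j_\infty(S)} \times |\Delta^S| = |j_\infty^* X_\bullet|,
\]
and tracing through the identifications shows that the composite homeomorphism agrees with $\phi$.

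The main technical obstacle is establishing the Kan extension claim. By essential surjectivity of $j_\infty: \Delta \to \Lambda_\infty$, it suffices to verify, for each $S_0 \in \Delta$, that the natural map
\[
\int^{S \in \Delta} \Hom_{\Lambda_\infty}(j_\infty(S), j_\infty(S_0)) \times |\Delta^S| \longrightarrow |\Lambda_\infty^{j_\infty(S_0)}|
\]
induced by $\mathrm{id} \in \Hom_{\Lambda_\infty}(j_\infty(S_0), j_\infty(S_0))$ is a homeomorphism. Via the explicit splitting $|\Lambda_\infty^{j_\infty(S_0)}| \cong |\Delta^{S_0}| \times \R$ of Example \ref{examplecyclicrealization}, this reduces to a combinatorial computation of the geometric realization of the simplicial space $S \mapsto \Hom_{\Lambda_\infty}(j_\infty(S), j_\infty(S_0))$ of non-decreasing maps $S \to \Z \times S_0$, showing it is homeomorphic to $|\Delta^{S_0}| \times \R$. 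This is essentially the nerve-of-a-poset computation for the totally ordered set $\Z \times S_0$, and is handled by induction on the dimension of $S_0$ using the explicit cellular description: the non-degenerate simplices are exactly the strictly increasing chains in $\Z \times S_0$, and gluing these produces the product $|\Delta^{S_0}| \times \R$. Once this homeomorphism is established, compatibility with the functoriality in $S_0$ and with $\phi$ is straightforward.
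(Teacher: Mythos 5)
Your overall architecture matches the paper's: both arguments use the coend description of the two realizations and naturality to reduce everything to the single claim that the canonical map $|j_\infty^*\Lambda_\infty^T|\to|\Lambda_\infty^T|$ is a homeomorphism for each $T$; your packaging of this as "$T\mapsto|\Lambda_\infty^T|$ is the left Kan extension of $S\mapsto|\Delta^S|$ along $j_\infty$, then apply Fubini and co-Yoneda" is just a cleaner way of saying the same thing. Where you genuinely diverge is in the proof of that key claim. The paper writes down an explicit continuous inverse: given $t\colon T^\circ\to\R$, it produces a canonical factorization $t=\overline{s}\circ f^\circ$ by locating $\min t^{-1}(\R_{\geq 0})$, and then closes the argument with a surjectivity statement about essential simplices. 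You instead propose the cell-by-cell identification of $|j_\infty^*\Lambda_\infty^{j_\infty(S_0)}|$ with $|\Delta^{S_0}|\times\R$ --- which is precisely the alternative route the paper's closing remark mentions and deliberately declines to carry out because it is "a little harder to formalize."

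Two issues with your execution of that step. First, your combinatorial description is wrong as stated: the simplicial set $S\mapsto\Hom_{\Lambda_\infty}(j_\infty(S),j_\infty(S_0))$ is \emph{not} the nerve of the totally ordered set $\Z\times S_0$, and its non-degenerate simplices are not all strictly increasing chains in $\Z\times S_0$. A morphism $j_\infty(S)\to j_\infty(S_0)$ is a non-decreasing map $S\to\Z\times S_0$ whose $\Z$-equivariant extension is still non-decreasing, so the non-degenerate $k$-simplices are only those chains $x_0<\cdots<x_k$ with $x_k<x_0+1$ (translation by the generator of $\Z$), i.e.\ chains contained in a single fundamental domain. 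If you took literally all strictly increasing chains you would be realizing the nerve of an infinite linear order, which is an infinite-dimensional contractible space and certainly not $|\Delta^{S_0}|\times\R$. Second, even with the correct cells in hand, the inductive gluing that assembles them into $|\Delta^{S_0}|\times\R$ is the entire content of the lemma, and you assert it rather than prove it. So the proposal is structurally sound but leaves the crux unestablished; to complete it you would either have to carry out that cellular gluing carefully (keeping track of which cells are essential, i.e.\ top-dimensional and not faces of other non-degenerate cells, as the paper does in its surjectivity step), or switch to the paper's explicit-inverse construction, which sidesteps the cell combinatorics entirely.
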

\begin{proof}
Since both are defined by coend formulas and $j^*$ preserves colimits it is enough to check that there is a natural homeomorphism $|\Lambda_\infty^T| \cong |j^*\Lambda_\infty^T|$ for $T \in \Lambda_\infty$. Here $j^*\Lambda_\infty^{T}$ is the simplicial set obtained from the representable $\Lambda_\infty^T \colon:  \Lambda_\infty^\op \to \mathrm{Set}$ by pullback along $j:\Delta \to \Lambda_\infty$.
By definition we have
$$
| j^*\Lambda_\infty^T | =  \coprod_{S \in \Delta} \big(|\Delta^S| \times \Hom_{\Lambda_\infty}(j_\infty(S),T) \big) / \sim \ .
$$
There is a natural map from this coend to $|\Lambda_\infty^T|$ given as in Example \ref{examplecyclicrealization}: it sends a pair $(s,f)$ consisting of an  element $s: S^\circ \to [0,1]$ in $|\Delta^S|$ and a map $f: j_\infty(S) \to T$ to the  composition of $f^\circ: T^\circ \to j_\infty(S)^\circ$ with the extenions $\overline{s}: j_\infty(S)^\circ \to \R$. This is well-defined and natural in $T$. Thus we have to show that this map $| j^*\Lambda_\infty^T |  \to |\Lambda_\infty^T|$ is  a homeomorphism for every  $T \in \Lambda_\infty$. 

To this end we construct an inverse map. For an element $t \in |\Lambda_\infty^T|$ given by a map $t: T^\circ \to \R$ we let $n$ be the unique natural number such that  $T \cong \frac 1 n \Z$, i.e. the cardinality of $T/\Z$. Then we consider the unique map
$$
f: \frac 1 {n+1} \Z \to T 
$$
with $f(\frac{1}{n+1}) = \min t^{-1}(\R_{\geq 0})$ (where we have identified $T$ and $T^\circ$ as usual) and $f(\frac {k} {n+1})$ is the successor of $f(\frac {k-1} {n+1})$ for $1 \leq k \leq n $.\footnote{Note that $k = 1$ is also included so that $f(0)$ is the successor of $\min t^{-1}(\R_{\geq 0}$). We recommend that the reader computes the map $f$ and $f^\circ$ under the isomorphism $T \cong \frac{1} n \Z$ which sends $\min t^{-1}(\R_{\geq 0})$ to $0$ as an illustration.  }
Using this map we obtain a factorization of $t$ as
$$T^\circ \xto{f^\circ} \frac{1} {n+1} \Z^\circ\xto{\overline{s}} \R$$
with $\overline{s}(0) = 0$ and $\overline{s}(1) = 1$, i.e. $\overline{s}$ is the extension of some $s: [n]^\circ \to [0,1]$. In fact this factorization determines $s$ uniquely since the elements $\frac{1}{n+1},\ldots , \frac{n}{n+1}$ all lie in the image of $f^\circ$, but not $0$ and $1$. Together this construction determines a map
\begin{equation}\label{inverse}
|\Lambda_\infty^T| \to |\Delta^{n}| \times\Hom_{\Lambda_\infty}(j_\infty[n],T) \to | j^*\Lambda_\infty^T |
\end{equation}
which is by construction left inverse to the map $| j^*\Lambda_\infty^T | \to |\Lambda_\infty^T|$ in question, i.e. the composition
$
|\Lambda_\infty^T| \to | j^*\Lambda_\infty^T | \to |\Lambda_\infty^T|
$
is the identity. We  show that it is also right inverse, i.e the composition 
$$
| j^*\Lambda_\infty^T | \to |\Lambda_\infty^T| \to | j^*\Lambda_\infty^T | 
$$
is the identity as well. 
We claim that this can be reduced to showing that the map
\begin{equation}\label{mapsurjective}
|\Delta^{n}| \times\Hom^{\mathrm{surj}}_{\Lambda_\infty}(j_\infty[n],T) \to | j^*\Lambda_\infty^T |
\end{equation}
is surjective, where $\Hom^{\mathrm{inj}}_{\Lambda_\infty}(j_\infty[n],T)$ denotes the set of those maps $j_\infty[n] = \frac{1}{n+1} \Z  \to T$ that are injective when restricted to $\{0, \frac 1 {n+1},\ldots, \frac n {n+1} \} \subseteq \frac{1}{n+1} \Z$  (and $n$ is still determined by $T \cong \frac{1} n \Z$). 
In this case the map $\frac{1}{n+1} \Z \to T$ is automatically surjective. 
The surjectivity of \eqref{mapsurjective} is indeed sufficient,  since for an object $(s,f)  \in |\Delta^{n}| \times\Hom^{\mathrm{inj}}_{\Lambda_\infty}(j_\infty[n],T) $ the factorization of the associated map $t = \bar s \circ f^\circ$ is given by the pair $(s,f)$ itself as one sees from the construction.

In order to see that the map \eqref{mapsurjective} is indeed surjective, we use the following more general criterion.
Let $X$ be a simplicial set. We call a simplex $\Delta^n \to X$ essential if it is non-degenerate and it is not the face of another non-degenerate simplex. We denote the subset of essential $n$-simplicies by $X_n^{\mathrm{ess}} \subseteq X_n$. Then the claim is that the map $\coprod_{[n] \in \Delta} \big( |\Delta^n| \times X_n^{\mathrm{ess}} \big)\to |X|$ given in the evident way is surjective. This fact is straightforward to check using the fact that the geometric realization can be built from the non-degenerate cells.
Now we identify the essential simplices of $j^*\Lambda_\infty^T$. The non-degenerate $k$-simplices are given by the maps 
$
\frac{1}{k+1} \Z \to T
$
that are injective when restricted to $\{0, \frac 1 {k+1},\ldots, \frac k {k+1} \} \subseteq \frac{1}{k+1} \Z  $ and the essential $k$-simplices are the ones for which moreover $k = n$, i.e. exactly the ones showing up in \eqref{mapsurjective}. This shows the surjectiviity of the map \eqref{mapsurjective}.

We have now shown that the two maps are inverse to each other. It remains to show that the composition \eqref{inverse} is continuous. This follows  since the only non-continuous part of the first map are the ``jumps'' of the minimum which are identified in the realization.  This finishes the proof. 

Note that one could alternatively investigate the non-degenerated cells of the simplicial set $j^*(\frac{1}n \Z) $ to see that this describes a cell structure of $\R \times \Delta^{n-1}$. This is more instructive (but a little harder to formalize) than our proof and is left to the reader.
\end{proof}

Now we obtain the analogue of Lemma \ref{geometrichocolim}. We call a paracyclic space $X_\bullet$, i.e. a functor $\Fun(\Lambda_\infty^\op,\Top)$ proper if the restriction $j^*X_\bullet$ is proper as a simplicial space. We denote by $\Fun(\Lambda_\infty^\op,\Top)_{\mathrm{prop}} \subseteq \Fun(\Lambda_\infty^\op,\Top)$ the full subcategory of proper paracyclic spaces.
\begin{proposition}\label{commutebz} The diagram
\[\xymatrix{
N\Fun(\Lambda_\infty^\op,\Top)_{\mathrm{prop}}\ar[r]^-{|-|}\ar[d] & N\R\Top\ar[d]\\
\Fun(N(\Lambda_\infty^\op),\calS)\ar[r]^-\colim & \calS
}\]
commutes as a diagram of $B\Z$-equivariant functors.\footnote{This means that it commutes up to a natural equivalence, which is specified up to a contractible choice.}
Here $\calS$ is equipped with the trivial $B\Z$-action and the right hand functor is, as a $B\Z$-equivariant functor, given by the Dwyer-Kan localization map $N\R\Top \to \Fun(\B\R,\calS)$ followed by the forgetful functor $\Fun(\B\R,\calS) \xto{\sim} \calS$.\footnote{Note that the 1-categorical forgetful functor $\R\Top \to \Top$ is not $\B\Z$-equivariant.}
\end{proposition}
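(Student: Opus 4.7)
The plan is to establish the underlying non-equivariant commutativity by chaining three preceding results, and then to upgrade the natural equivalence to $B\Z$-equivariance through a coend reformulation that respects all the group actions.

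For the underlying equivalence, I would combine Lemma~\ref{realization}, Lemma~\ref{geometrichocolim}, and Theorem~\ref{thm:jinftycofinal}. For a proper paracyclic space $X_\bullet$, the first identifies the underlying topological space of $|X_\bullet|$ with the simplicial realization $|j_\infty^* X_\bullet|$; the second identifies this in $\calS$ with the $\infty$-categorical colimit $\colim_{N\Delta^\op} j_\infty^* X_\bullet$; and the third (cofinality of $j_\infty$) identifies that with $\colim_{N\Lambda_\infty^\op} X_\bullet$. Naturality in $X_\bullet$ provides a transformation between the two composites of the square, objectwise an equivalence.

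Both composite functors $N\Fun(\Lambda_\infty^\op,\Top)_{\mathrm{prop}} \to \calS$ carry canonical $B\Z$-equivariant structures. The bottom-left composite because the $B\Z$-action on $N\Lambda_\infty^\op$ induces one on the functor $\infty$-category by precomposition and the colimit functor is invariant under such precomposition. The top-right composite because the coend defining $|X_\bullet|$ is constructed $\R$-equivariantly from the $B\Z$-equivariant copresheaf $T \mapsto |\Lambda_\infty^T|$ (with $\R$-action by translation), and the composite $N\R\Top \to \Fun(B\R,\calS) \simeq \calS$ is built out of $B\R$-equivariant functors with $B\R$ acting trivially on $\calS$ (since $B\R$ is contractible).

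To upgrade the natural equivalence to $B\Z$-equivariance, I would rewrite both composites, naturally in $X_\bullet$, as the same $\infty$-categorical coend $\int^{T \in N\Lambda_\infty} X_T \otimes W(T)$ for a $B\Z$-equivariant copresheaf $W\colon N\Lambda_\infty \to \calS$. On the top-right side one takes $W(T) = |\Lambda_\infty^T|$ viewed in $\calS$; on the bottom-left side one takes $W$ to be the constant copresheaf at a point. Since $|\Lambda_\infty^T|$ is contractible for every $T$, visible from the homeomorphism $|\Lambda_\infty^{j_\infty(S)}| \cong |\Delta^S| \times \R$ of Example~\ref{examplecyclicrealization}, the unique $B\Z$-equivariant natural transformation $W \to \ast$ (which corresponds under $\Fun(N\Lambda_\infty,\calS)^{B\Z} \simeq \Fun(N\Lambda,\calS)$ to a map of functors on the quotient) is an objectwise equivalence, inducing the desired $B\Z$-equivariant equivalence of coends.

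The main obstacle will be the paracyclic analogue of Lemma~\ref{geometrichocolim} used in the coend reformulation of the top-right composite: one must check that for proper paracyclic $X_\bullet$ the topological coend $\int^T X_T \times |\Lambda_\infty^T|$ models the $\infty$-categorical weighted colimit. This should follow from a Reedy-type model category argument on $\Fun(\Lambda_\infty^\op,\Top)$ parallel to the simplicial case, or alternatively can be bootstrapped from the simplicial statement by pulling back along $j_\infty$, using that properness of a paracyclic space is by definition properness of its underlying simplicial space.
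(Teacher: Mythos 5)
Your route is viable but genuinely different from the one the paper takes. The paper argues by adjunction: Lemma~\ref{realization} together with Lemma~\ref{geometrichocolim} shows that $|-|$ on proper paracyclic spaces preserves weak equivalences and colimits, hence descends to a left adjoint on localizations; one then computes the right adjoint as the right derived functor of $X\mapsto \Map_\R(|\Lambda_\infty^\bullet|,X)$, which by the homeomorphism $|\Lambda_\infty^{n}|\cong |\Delta^{n-1}|\times\R$ of Remark~\ref{remarkexplicit} is levelwise $\simeq X$ and is $B\Z$-equivariantly the diagonal functor, so the left adjoint must be the colimit. You instead compare the two composites as weighted colimits (coends) over $N\Lambda_\infty$, with weights $W(T)=|\Lambda_\infty^T|$ and $W=\ast$, and use the contractibility of each $|\Lambda_\infty^T|$ to produce the unique $B\Z$-equivariant equivalence of weights. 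Both arguments ultimately rest on the same geometric input (contractibility of $|\Lambda_\infty^T|$), but the paper's adjoint-functor formulation buys something concrete: the right adjoint $\Map_\R(|\Lambda_\infty^\bullet|,-)$ preserves weak equivalences on the nose, so no point-set-versus-derived comparison is needed on that side, whereas your coend formulation must establish that the topological coend $\int^T X_T\times|\Lambda_\infty^T|$ models the $\infty$-categorical weighted colimit --- precisely the obstacle you flag. Of your two proposed fixes, the Reedy-type argument is the robust one; bootstrapping along $j_\infty$ is delicate because cofinality for coends is not just cofinality of the index category, and one must check that the weight $|\Delta^S|$ on $\Delta$ and the weight $|\Lambda_\infty^T|$ on $\Lambda_\infty$ induce the same derived coend, which is essentially Lemma~\ref{realization} again but now needed at the derived rather than point-set level. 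Your approach does have the virtue of making the role of the contractible weight completely explicit and of generalizing readily to other realization functors.
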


\begin{proof}
Since the geometric realization of a paracyclic space is the geometric realization of the underlying simplicial set we can deduce from Lemma \ref{geometrichocolim} that the functor  $N\Fun(\Lambda_\infty^\op,\Top)_{\mathrm{prop}}\ \to N\R\Top$ preserves weak equivalences and also co\-limits. Thus it is a left adjoint functor. Then also similar to Lemma \ref{geometrichocolim} we see that the right adjoint is given by the right derived functor of the right adjoint of the realization. This functor is given by sending a topological space $X$ with $\R$-action to the paracyclic space 
$$
[n]_{\Lambda_\infty} \mapsto \Map_\R(|\Lambda_\infty^n|, X) \cong \Map(\Delta^{n-1}, X) \simeq X
$$
where we have used the equivalence of Remark~\ref{remarkexplicit}. It is as a $B\Z$-equivariant functor equivalent to the diagonal functor. This implies the claim.
\end{proof}

Passing to $B\Z$-fixed points now immediately gives the following important corollary, where a cyclic space is called proper if the underlying simplicial space is.

\begin{corollary}\label{prop:geomrealcomp} The diagram
\[\xymatrix{
N\Fun(\Lambda^\op,\Top)_{\mathrm{prop}}\ar[r]^-{|-|}\ar[d] & N\T\Top\ar[d]\\
\Fun(N(\Lambda^\op),\calS)\ar[r] & \calS^{B\T}
}\]
commutes, where the lower horizontal functor is the one constructed in Proposition \ref{prop:geomreal}.$\hfill \Box$
\end{corollary}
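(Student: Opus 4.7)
The plan is to deduce Corollary~\ref{prop:geomrealcomp} from Proposition~\ref{commutebz} by applying the $hB\Z$-homotopy fixed-point functor. This is natural because $\Lambda^\op = \Lambda_\infty^\op/B\Z$ by definition, $\T\Top$ is the $B\Z$-fixed point $1$-category of $\R\Top$ with its action via the inclusion $\Z\subset\R$, and $B\T = B(B\Z)$, so the setup of Proposition~\ref{commutebz} is designed precisely so that fixed points descend to the cyclic diagram. Concretely, I view the commutative square of Proposition~\ref{commutebz} as a functor $[1]\times[1] \to \Fun(B(B\Z),\Cat_\infty)$ and apply the limit functor $(-)^{hB\Z}$ corner by corner. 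This produces a commutative square in $\Cat_\infty$, and what remains is to identify its four corners.

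Three of the identifications are essentially formal. The lower-right corner becomes $\calS^{hB\Z} = \calS^{B(B\Z)} = \calS^{B\T}$. The lower-left corner becomes $\Fun(N\Lambda_\infty^\op,\calS)^{hB\Z} = \Fun((N\Lambda_\infty^\op)_{hB\Z},\calS) = \Fun(N\Lambda^\op,\calS)$, using the identification $(N\Lambda_\infty^\op)_{hB\Z} \simeq N\Lambda^\op$ which underlies Corollary~\ref{cor:homotopytypeslambda} and is valid because the $\Z$-action on the morphism sets of $\Lambda_\infty$ is free. The upper-left corner becomes $N\Fun(\Lambda^\op,\Top)_{\mathrm{prop}}$ by the same $\infty$-categorical quotient combined with the equality $N\Fun(\calC,\calD) = \Fun(N\calC,N\calD)$ for $1$-categories.

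The main obstacle is the upper-right identification $(N\R\Top)^{hB\Z} \simeq N\T\Top$, which compares strict $1$-categorical fixed points with $\infty$-categorical homotopy fixed points. The $B\Z$-action on $\R\Top$ is by the strict autoequivalence $\phi$ induced by translation by $1 \in \R$, whose strict fixed points are exactly the $\R$-actions that factor through $\R/\Z = \T$, i.e., $\T\Top$. A point of $(N\R\Top)^{hB\Z}$ is a pair $(X,\alpha)$ with $X$ an $\R$-space and $\alpha\colon X\to\phi(X)$ an equivalence; since $\R\Top$ is a $1$-category, ``equivalence'' here means ``strict isomorphism'' and higher coherences collapse, yielding the required equivalence. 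With all four corners identified, the resulting square matches the desired one: the vertical arrows are the Dwyer--Kan localizations, the upper horizontal is the topological realization of cyclic spaces with its $\T$-action, and the lower horizontal is the $\infty$-categorical realization of Proposition~\ref{prop:geomreal}, which was constructed precisely as the $B\Z$-fixed points of the paracyclic colimit.
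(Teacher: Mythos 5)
Your overall strategy is exactly the paper's: the stated proof of this corollary is literally ``passing to $B\Z$-fixed points'' in Proposition~\ref{commutebz}, and your corner-by-corner identification of the resulting square is the right way to unpack that. The identifications of the two lower corners and of the upper-left corner (via $\Fun(X,\calD)^{hB\Z}\simeq\Fun(X_{hB\Z},\calD)$ and the freeness of the action on $\Lambda_\infty^\op$) are correct.

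The one place where your write-up goes wrong is the upper-right corner, and the error is conceptual rather than cosmetic. The action on $\R\Top$ is an action of the \emph{group object} $B\Z$ (equivalently, a $\T$-action), not an action of the group $\Z$ by an autoequivalence $\phi$; concretely it is encoded by the central natural automorphism of $\id_{\R\Top}$ given by acting with $1\in\R$ on each object, while the underlying autofunctor is the identity. Consequently the fixed points are $\lim_{BB\Z}$, not $\lim_{B\Z}$, and your formula ``pairs $(X,\alpha\colon X\to\phi(X))$'' is the formula for $h\Z$-fixed points: taken literally with $\phi=\id$ it would compute $\Fun(B\Z,\R\Top)$, which is not $\T\Top$. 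The correct computation is that $(N\R\Top)^{hB\Z}$ asks for an object $X$ together with a trivialization of the automorphism $\sigma_X=(1\cdot{-})$ in $\Aut(X)$, plus higher coherences over the even cells of $BB\Z\simeq\C P^\infty$; since $\R\Top$ is a $1$-category its mapping spaces are discrete, so such a trivialization exists (uniquely, with no higher data) exactly when $\sigma_X=\id_X$, i.e.\ exactly when the $\R$-action factors through $\R/\Z=\T$. This recovers your intended conclusion that $(N\R\Top)^{hB\Z}\simeq N\T\Top$ as the full subcategory $\T\Top\subseteq\R\Top$, and the same remark applies to the upper-left corner if one computes it by hand rather than via homotopy orbits of the source. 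With that correction the argument is complete and agrees with the paper's.
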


We also need a version of the previous proposition for spectra. For this, note that Construction~\ref{concyclicrealization} induces a functor
$$
\Fun(\Lambda_\infty^\op,\Sp^O)\to \R\Sp^O
$$
by applying it in every degree and taking basepoints into account (note that geometric realization preserves the point). 
We say that a simplicial or paracyclic spectrum is proper, if it is levelwise proper (after forgetting basepoints).

\begin{proposition}\label{commutebzsp} The diagram 
\[\xymatrix{
N\Fun(\Lambda_\infty^\op,\Sp^O)_{\mathrm{prop}}\ar[r]^-{|-|}\ar[d] & N\R\Sp^O\ar[d]\\
\Fun(N(\Lambda_\infty^\op),\Sp)\ar[r]^-\colim & \Sp
}\]
of $B\Z$-equivariant functors commutes.
\end{proposition}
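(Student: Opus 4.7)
The plan is to reduce Proposition~B.8 to its spatial counterpart Proposition~B.7 via a levelwise argument for orthogonal spectra, exploiting the fact that the geometric realization of Construction~B.4 extends levelwise to orthogonal spectra and commutes with the $n$-th space functor $X \mapsto X_n$.

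First I would extend the definition of $|-|$ from paracyclic spaces to paracyclic orthogonal spectra in the obvious way: given $X_\bullet \colon \Lambda_\infty^\op \to \Sp^O$, the orthogonal spectrum $|X_\bullet|$ has $n$-th space $|(X_\bullet)_n|$ where $(X_\bullet)_n \colon \Lambda_\infty^\op \to \Top_\ast$ is a paracyclic pointed space, and structure maps induced from those of $X_\bullet$. The $\R$-action and the $O(n)$-actions commute, so $|X_\bullet| \in \R\Sp^O$. By Lemma~B.5 applied levelwise, the underlying orthogonal spectrum of $|X_\bullet|$ (forgetting the $\R$-action) is equivalent to the geometric realization of the underlying simplicial orthogonal spectrum $j_\infty^*X_\bullet$, taken levelwise.

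Next I would establish the spectrum analog of Lemma~B.6, namely that the diagram
\[\xymatrix{
N\Fun(\Delta^\op,\Sp^O)_{\mathrm{prop}}\ar[r]^-{|-|}\ar[d] & N\Sp^O \ar[d]\\
\Fun(N\Delta^\op,\Sp)\ar[r]^-\colim & \Sp
}\]
commutes. For this I would combine two ingredients: (i) Proposition~C.2 of Appendix~C (the orthogonal Bousfield–Kan statement) identifies the geometric realization of a proper simplicial orthogonal spectrum with its Bousfield–Kan homotopy colimit, which represents the $\infty$-categorical colimit in $\Sp$; (ii) the equivalence $N(\Sp^O)[W^{-1}]\simeq \Sp$ from \cite{MMSS}. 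Properness of a simplicial orthogonal spectrum means properness of each of its levels, so combined with Lemma~B.6 applied to the $n$-th space, and the fact that stable equivalence is detected by homotopy groups which are filtered colimits over levels, this reduces to the topological case. Then Theorem~B.3, which asserts that $j_\infty\colon N\Delta^\op \to N\Lambda_\infty^\op$ is cofinal, yields the nonequivariant commutativity of the diagram in Proposition~B.8 upon precomposing with~$j_\infty^*$.

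Finally I would upgrade to $B\Z$-equivariance. The top horizontal functor $|-|$ is $B\Z$-equivariant by construction, because the representable spaces $|\Lambda_\infty^T|$ form a $B\Z$-equivariant diagram in $\R\Top$ (as in Construction~B.4), and this survives levelwise extension to $\Sp^O$. The bottom horizontal functor is $B\Z$-equivariant since $N\Lambda_\infty^\op$ carries a $B\Z$-action and taking colimits over the diagram is $B\Z$-equivariant. The vertical Dwyer–Kan localization functors are $B\Z$-equivariant by functoriality of localization. The main obstacle will be the careful verification of the $B\Z$-equivariant right adjoint identification: as in the proof of Proposition~B.7, one wants to identify the derived right adjoint of $|-|$ (as a $B\Z$-equivariant functor to $\Sp$ via $N\R\Sp^O\to \Fun(B\R,\Sp)\to \Sp$) with the diagonal functor $\Sp \to \Fun(N\Lambda_\infty^\op,\Sp)$. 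For this I would compute the derived right adjoint sending $Y\in \R\Sp^O$ to the paracyclic spectrum $[n]_{\Lambda_\infty}\mapsto \Map_\R(|\Lambda_\infty^n|, Y)$, and use the homeomorphism $|\Lambda_\infty^n|\simeq |\Delta^{n-1}|\times \R$ from Remark~B.4' to see this is equivalent to the constant paracyclic spectrum on $Y$, as a $B\Z$-equivariant functor. The technical verification that these identifications can be made compatibly with the $B\Z$-action is the subtlest point, but it is essentially formal once the analog of Lemma~B.6 for orthogonal spectra is in place.
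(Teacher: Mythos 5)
Your proof is correct in outline but follows a genuinely different route from the paper. The paper's proof of Proposition~\ref{commutebzsp} avoids redoing the adjunction argument for spectra: it first passes to the $\infty$-category $\mathrm{PreSp}$ of prespectra (the Dwyer--Kan localization of $\Sp^O$ at the \emph{levelwise} equivalences), which by \cite[Proposition 4.2.4.4]{HTT} is a functor category $\Fun(hcN\mathbf{O},\calS_\ast)$; there colimits are computed pointwise, so the square commutes by applying Proposition~\ref{commutebz} level by level. It then uses that $\Sp$ is a Bousfield localization of $\mathrm{PreSp}$, so the colimit functors for $\mathrm{PreSp}$ and $\Sp$ fit into a commutative square of $B\Z$-equivariant functors (checked on right adjoints), and pastes the two squares. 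Your plan instead reruns the strategy of Proposition~\ref{commutebz} directly for orthogonal spectra: establish the simplicial statement, use cofinality of $j_\infty$ (Theorem~\ref{thm:jinftycofinal}) via Lemma~\ref{realization}, and identify the derived right adjoint $Y\mapsto \Map_{\R}(|\Lambda_\infty^n|,Y)\simeq Y^{|\Delta^{n-1}|}$ with the diagonal, $B\Z$-equivariantly. This works and is more self-contained (parallel to the spatial case), at the cost of having to set up the derived adjunction for the stable model structure from scratch; the paper's detour through prespectra buys exactly the avoidance of that setup.

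One citation in your step two does not do what you claim: Proposition~\ref{orthogonalBK} concerns the Bousfield--Kan homotopy colimit over a general index category $I$ (the realization of the bar construction with wedges), and does not by itself identify the geometric realization of a proper simplicial orthogonal spectrum with the $\infty$-categorical colimit over $N(\Delta^\op)$. You need the genuine spectrum analogue of Lemma~\ref{geometrichocolim}, which requires its own short argument: homotopy invariance of $|-|$ on proper simplicial orthogonal spectra via the skeletal filtration, the pushout-product property for h-cofibrations, and the gluing lemma for orthogonal spectra (all available in Appendix~C, and appearing in the proof of Proposition~\ref{orthogonalBK}), followed by the Reedy left-Quillen/derived-adjoint identification. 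Also beware the phrase ``stable equivalence is detected by homotopy groups which are filtered colimits over levels'': colimits in $\Sp$ are \emph{not} computed levelwise on orthogonal spectra, which is precisely why either the Reedy adjunction argument or the prespectra detour is needed; the levelwise reasoning only yields homotopy invariance, not the identification with the colimit.
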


\begin{proof}  This follows from Proposition \ref{commutebz} since ``colimits can be taken levelwise'': if we Dwyer-Kan localize $\Sp^O$ at the levelwise equivalences, then we get an $\infty$-category $\mathrm{PreSp}$ which we call the $\infty$-category of prespectra. By \cite[Proposition 4.2.4.4]{HTT} this is equivalent to the  functor category $\Fun(hcN\mathbf{O},\calS_*)$ where $\mathbf{O}$ is the topologically enriched category described in the footnote of Definition \ref{def:orthspectrum}. The objects of $\mathbf{O}$ are, up to equivalence, just the natural numbers and they correspond to the levels of the orthogonal spectrum. Now we use the fact that filtered colimits of pointed spaces are computed as the colimit of the underlying diagram of spaces and the fact that colimits in functor categories are computed pointwise to deduce that for $\mathrm{PreSp}$ in place of $\Sp$ the square in question evidently commutes.

Finally we use that $\Sp$ is a Bousfield localization of $\mathrm{PreSp}$, i.e. a reflective full subcategory. This implies that there is a commutative square
\[\xymatrix{
\Fun(N(\Lambda_\infty^\op),\mathrm{PreSp})\ar[r]^-{\colim}\ar[d] & \mathrm{PreSp}\ar[d]\\
\Fun(N(\Lambda_\infty^\op),\Sp)\ar[r]^-\colim & \Sp
}\]
of $B\Z$-equivariant functors. To see this we pass to the square of right adjoints. Pasting the two squares together then finishes the proof.
\end{proof}

\begin{corollary}\label{prop:geomrealcompspectra} The diagram
\[\xymatrix{
N\Fun(\Lambda^\op,\Sp^O)_{\mathrm{prop}}\ar[r]\ar[d] & N\T\Sp^O\ar[d]\\
\Fun(N(\Lambda^\op),\Sp)\ar[r] & \Sp
}\]
commutes.$\hfill \Box$
\end{corollary}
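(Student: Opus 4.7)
The plan is to deduce this corollary from Proposition \ref{commutebzsp} (the paracyclic version) by passing to $B\Z$-homotopy fixed points, exactly as Corollary \ref{prop:geomrealcomp} was deduced from Proposition \ref{commutebz} in the space-level case.

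First, I would identify the two vertical columns of the corollary's diagram as $B\Z$-fixed points of the corresponding columns of the paracyclic diagram. On the left, by the very definition $\Lambda = \Lambda_\infty/B\Z$, we have natural equivalences
\[
\Fun(\Lambda^\op,\Sp^O)_{\mathrm{prop}} \simeq \Fun(\Lambda_\infty^\op,\Sp^O)_{\mathrm{prop}}^{B\Z},
\qquad \Fun(N\Lambda^\op,\Sp) \simeq \Fun(N\Lambda_\infty^\op,\Sp)^{B\Z},
\]
where the properness condition is stable under the $B\Z$-action (it is only a condition on the underlying simplicial diagram, and the $B\Z$-action permutes the cyclic structure maps but preserves levels up to isomorphism). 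On the right, the category $\T\Sp^O$ of orthogonal $\T$-spectra arises from $\R\Sp^O$ by taking $B\Z$-fixed points, since $\T = \R/\Z$, so in the $\infty$-categorical sense $N\T\Sp^O$ is obtained from $N\R\Sp^O$ after restricting the $B\R$-action to $B\Z$ and taking fixed points; similarly, the forgetful functor $N\R\Sp^O \to \Sp$ becomes $\Sp^{B\T} \to \Sp$ after taking $B\Z$-fixed points in the target (using $BB\Z \simeq B\T$).

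The main step is then to observe that all four functors in the square of Proposition \ref{commutebzsp} are $B\Z$-equivariant, so that the commutative square of $B\Z$-equivariant functors descends to a commutative square of functors between the $B\Z$-fixed-point categories. The top horizontal geometric realization functor $|-|$ is $B\Z$-equivariant by Construction \ref{concyclicrealization}; the bottom $\colim$ is $B\Z$-equivariant by construction of the $\infty$-categorical realization in Proposition \ref{prop:geomreal}; the left vertical Dwyer--Kan localization is $B\Z$-equivariant since the weak equivalences of orthogonal spectra are preserved by the action; and the right vertical is the Dwyer--Kan localization $N\R\Sp^O \to \Sp^{B\R}$ followed by the forgetful functor to $\Sp$, which is $B\Z$-equivariant by inspection. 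Taking fixed points of the commuting natural equivalence supplied by Proposition \ref{commutebzsp} yields the required commutation of the corollary's diagram.

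The only subtlety — and the place I would be most careful — is verifying that taking $B\Z$-homotopy fixed points genuinely produces the claimed $\T$-equivariant square rather than something weaker, and that the description of the bottom arrow as the $\infty$-categorical geometric realization of Proposition \ref{prop:geomreal} really is the $B\Z$-fixed-point functor of $\colim_{N\Lambda_\infty^\op}$. This is precisely the construction used in the proof of Proposition \ref{prop:geomreal}, so there is no new input needed; one only has to track the identifications carefully, just as in the passage from Proposition \ref{commutebz} to Corollary \ref{prop:geomrealcomp}. No further work beyond this $B\Z$-fixed-point argument is required.
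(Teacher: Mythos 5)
Your proposal is correct and is exactly the paper's argument: the corollary is stated with no separate proof precisely because it follows by passing to $B\Z$-fixed points in the $B\Z$-equivariant commutative square of Proposition~\ref{commutebzsp}, in the same way that Corollary~\ref{prop:geomrealcomp} is deduced from Proposition~\ref{commutebz}. The identifications you spell out (properness being a condition on the underlying simplicial object, $\T=\R/\Z$, and the bottom arrow being the $B\Z$-fixed points of $\colim_{N\Lambda_\infty^{\op}}$ as in Proposition~\ref{prop:geomreal}) are the right ones and require no further input.
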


As a final topic, we need to discuss simplicial subdivision. Recall the category $\Lambda_p=\Lambda_\infty/B(p\Z)$, which comes with a natural functor $\Lambda_p\to \Lambda$, identifying $\Lambda=\Lambda_p/BC_p$. Geometrically, note that $|\Lambda_\infty|$ is contractible, while $|\Lambda_p|=|\Lambda_\infty|/B(p\Z)\simeq B(\R/p\Z) \simeq B\T$, and $|\Lambda_p|\to |\Lambda|$ is a $BC_p$-torsor.

On the other hand, there is another functor $\sd_p: \Lambda_p\to \Lambda$ which induces a homotopy equivalence of geometric realizations, and sends $[n]_{\Lambda_p}$ to $[pn]_\Lambda$. It is constructed as follows. There is an endofunctor $\sd_p: \Lambda_\infty\to \Lambda_\infty$ sending $[n]_{\Lambda_\infty}$ to $[pn]_{\Lambda_\infty}$, and a map $f: \frac{1}{n}\Z\to \frac{1}{m}\Z$  to the map $\frac{1}{p}f(p \cdot -): \frac{1}{pn}\Z\to \frac{1}{pm}\Z$. More generally for an arbitrary object $T \in \Lambda_\infty$ we introduce a new object $\sd_p(T) = \frac{1}{p} T$ with the same underlying order set as $T$ and the action of $n \in \Z$ given by multiplication with $pn$. 

The functor $\sd_p: \Lambda_\infty\to \Lambda_\infty$ passes to the quotient by $B\Z$ to give the desired functor
\[
\sd_p: \Lambda_p\to \Lambda\ .
\]
Note that as $\sd_p: \Lambda_\infty\to \Lambda_\infty$ necessarily induces a homotopy equivalence $|\Lambda_\infty|\simeq |\Lambda_\infty|$, so the preceding discussion implies that $\sd_p: \Lambda_p\to \Lambda$ also induces a homotopy equivalence $|\Lambda_p|\simeq |\Lambda|$.

\begin{proposition} The diagrams
\[\xymatrix{
\Lambda_\infty\ar[d]^{\sd_p}\ar[r]^\simeq & \Lambda_\infty^\op\ar[d]^{\sd_p} & & \Lambda_p\ar[d]^{\sd_p}\ar[r]^\simeq & \Lambda_p^\op\ar[d]^{\sd_p}\\
\Lambda_\infty\ar[r]^\simeq & \Lambda_\infty^\op & & \Lambda\ar[r]^\simeq & \Lambda^\op
}\]
commute.
\end{proposition}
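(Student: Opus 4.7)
The plan is to prove the first (paracyclic) diagram by direct verification from the definitions, and then obtain the second diagram by passing to quotients by the appropriate $B\Z$-actions.

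For the first diagram, I would exploit that $\sd_p$ on an object $T \in \Lambda_\infty$ returns the object $\sd_p(T) = \tfrac{1}{p}T$ whose underlying ordered set is literally the same as $T$; only the $\Z$-action is rescaled so that $n \in \Z$ now acts as the original multiplication by $pn$. On a morphism $f: S \to T$, the functor $\sd_p$ sends $f$ to the very same underlying non-decreasing function $\sd_p(f): \sd_p(S) \to \sd_p(T)$, which is $\Z$-equivariant for the rescaled actions precisely because $f$ was $\Z$-equivariant for the original ones. Meanwhile the self-duality is the identity on objects (under the canonical identification $T \cong T^\circ$ via $t \mapsto T_{<t} \sqcup T_{\geq t}$) and sends $f$ to $f^\circ(x) = \min\{y \mid f(y) \geq x\}$. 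The crucial point is that this formula uses only the underlying ordered-set structure of $S$ and $T$, not the $\Z$-action.

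Given $f: S \to T$ in $\Lambda_\infty$, one then compares $(\sd_p(f))^\circ$ and $\sd_p(f^\circ)$: as functions on underlying ordered sets, both are given by the identical formula $x \mapsto \min\{y \mid f(y) \geq x\}$. The only residual content is to check they agree as morphisms in $\Lambda_\infty$, i.e.~that both are $\Z$-equivariant for the rescaled $\Z$-actions on $\sd_p(S)$ and $\sd_p(T)$. But $f^\circ$ is $\Z$-equivariant for the \emph{original} actions (the standard equivariance of the self-duality, verified in the excerpt), and $\sd_p$ by construction converts equivariance for the original actions into equivariance for the rescaled ones. Hence the two morphisms are literally equal, so the left diagram commutes on the nose.

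For the second diagram, I would simply pass to quotients. The self-duality $\Lambda_\infty \simeq \Lambda_\infty^\op$ is $B\Z$-equivariant, as recalled in the appendix, and the endofunctor $\sd_p: \Lambda_\infty \to \Lambda_\infty$ intertwines the action of $n \in \Z$ on the source with the action of $pn \in \Z$ on the target; in other words, it descends to the asserted functor $\sd_p: \Lambda_p = \Lambda_\infty / B(p\Z) \to \Lambda_\infty / B\Z = \Lambda$. Quotienting the (already established) first diagram by the $B(p\Z)$-action on its top row and the $B\Z$-action on its bottom row produces the right-hand diagram, and all identifications made in the quotient are compatible by $B\Z$-equivariance of the duality.

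There is no genuine obstacle here; the whole proof is an unwinding of definitions. The only bookkeeping step that requires some care is matching the rescaled $\Z$-equivariance under $\sd_p$ with the original equivariance of $f^\circ$, which is why the two compositions around the first square coincide strictly and not merely up to a natural isomorphism.
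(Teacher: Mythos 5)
Your proof is correct and follows essentially the same route as the paper: reduce to the paracyclic square, observe that on objects commutation is clear, and that on morphisms both composites send $f$ to $f^\circ$ because $\sd_p$ leaves the underlying ordered set unchanged while the duality formula depends only on the order structure; the cyclic square then follows by $B\Z$-equivariant descent. The paper's argument is just a terser version of the same verification.
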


\begin{proof} It is enough to consider the first diagram. On objects, commutation is clear, and on morphisms, both composite functors send $f$ to $f^\circ$.
\end{proof}

We need a compatibility between simplicial subdivision and the functor $\Lambda\to \Ass^\otimes_\act$. Indeed, using the category $\Free_{C_p}$ of finite free $C_p$-sets and its functor $S\mapsto \overline{S}=S/C_p$ to finite sets, there is a natural commutative diagram
\[\xymatrix{
\Lambda^\op\ar[r]^\simeq & \Lambda\ar^V[r] & \Ass^\otimes_\act\\
\Lambda_p^\op\ar[u]\ar[d]_{\sd_p}\ar[r]^\simeq & \Lambda_p\ar[u]\ar^{\!\!\!\!\!\!\!\!\!\!\!\!\!\!\!\!\!\!\!\!\! V_p}[r]\ar^{\sd_p}[d] & \Free_{C_p}\times_{\Fin} \Ass^\otimes_\act\ar[u]\ar[d]\\
\Lambda^\op\ar[r]^\simeq & \Lambda\ar^V[r] & \Ass^\otimes_\act\ ,
}\]
where the upper left vertical arrows are the projection $\Lambda_p\to \Lambda=\Lambda_p/(BC_p)$, and the upper right vertical arrow is the projection to $\Ass^\otimes_\act$. Moreover, for the lower right vertical arrow recall that $\Ass^\otimes_\act$ is the category of finite sets with maps given by maps with total orderings on preimages. Then the lower right vertical functor is given by the projection to the first factor $\Free_{C_p}$, with total orderings on preimages induced by the second factor. Finally, the functor $V_p$ sends $[n]_{\Lambda_p}$ to the pair of $(V(\sd_p([n]_{\Lambda_p})),V([n]_\Lambda))$, noting that $V(\sd_p([n]_{\Lambda_p}))$ has a natural $C_p$-action induced from the $C_p$-action on $[n]_{\Lambda_p}$, whose quotient is given by $V([n]_\Lambda)$.

Proposition~\ref{prop:geomreal} works with $\Lambda_p$ in place of $\Lambda$, as do Corollary~\ref{prop:geomrealcomp} and Corollary~\ref{prop:geomrealcompspectra}. However, there is an extra twist to the story here. Namely, there is a $BC_p$-action on $\Lambda_p$, and thus on
\[
\Fun(N(\Lambda_p^\op),\calC)
\]
for any $\infty$-category $\calC$. There is also a $BC_p$-action on $\calC^{B\T}$ by the natural action of $BC_p$ on $B\T$ induced by the inclusion $C_p\subseteq \T$.

\begin{lemma}\label{lem:geomrealbcpequiv} Let $\calC$ be an $\infty$-category that admits geometric realizations. The functors
\[\begin{aligned}
\Fun(N(\Lambda_p^\op),\calC)&\to \calC^{B\T}\ ,
\Fun(\Lambda_p^\op,\Top)&\to \T\Top\ ,
\Fun(\Lambda_p^\op,\Sp^O)&\to \T\Sp^O
\end{aligned}\]
are $BC_p$-equivariant, and the commutative diagrams
\[\xymatrix{
N\Fun(\Lambda_p^\op,\Top)_{\mathrm{prop}}\ar[r]\ar[d] & N\T\Top\ar[d]\\
\Fun(N(\Lambda_p^\op),\calS)\ar[r] & \calS^{B\T}\ ,
}\]
\[\xymatrix{
N\Fun(\Lambda_p^\op,\Sp^O)_{\mathrm{prop}}\ar[r]\ar[d] & N\T\Sp^O\ar[d]\\
\Fun(N(\Lambda_p^\op),\Sp)\ar[r] & \Sp^{B\T}
}\]
commute $BC_p$-equivariantly.
\end{lemma}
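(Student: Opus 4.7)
The plan is to deduce this from the $B\Z$-equivariant statements already established for $\Lambda_\infty$ by passing to the quotient by $B(p\Z) \subseteq B\Z$. Recall that $\Lambda_p = \Lambda_\infty/B(p\Z)$ carries a residual $BC_p = B\Z/B(p\Z)$-action, so a functor $\Lambda_p^\op \to \calC$ is the same data as a $B(p\Z)$-equivariant functor $\Lambda_\infty^\op \to \calC$, and the remaining $BC_p$-action on $\Fun(N(\Lambda_p^\op),\calC)$ comes from the $B\Z$-action on $\Fun(N(\Lambda_\infty^\op),\calC)$.

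First I would define the geometric realization functors for $\Lambda_p^\op$-objects by forgetting to $\Lambda_\infty^\op$-objects and applying the realization functors constructed in Proposition~\ref{prop:geomreal}, Construction~\ref{cons:geomreal} (for topological spaces), and the spectrum analogue. Since the input is $B(p\Z)$-equivariant, the action of $p\Z \subseteq \R$ on the realization (in $\R\Top$, resp.\ $\R\Sp^O$, resp.\ via the $B\R$-action on $\calC$) is trivial, so the realization descends to an $\R/p\Z$-equivariant object. Identifying $\R/p\Z \cong \T$ via division by $p$, this gives the functors in the statement; the remaining $BC_p$-action on the source then matches the action of $BC_p \subseteq B\T$ on the target, precisely because the quotient map $B\Z \to B\Z/B(p\Z) = BC_p$ corresponds under the isomorphism $B\R/B(p\Z) \cong B\T$ to the inclusion of $C_p = \tfrac{1}{p}\Z/\Z \subseteq \R/\Z = \T$. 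This identification is the one compatible with all the conventions fixed earlier (the factor of $1/p$ matching the convention that $\sd_p$ acts as division by $p$ on coordinates), so verifying $BC_p$-equivariance amounts to unwinding these identifications.

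For the two commuting squares, I would simply take $B(p\Z)$-equivariant (or, equivalently, $B(p\Z)$-fixed) versions of Proposition~\ref{commutebz} and Proposition~\ref{commutebzsp}. Concretely, the commutative square of $B\Z$-equivariant functors from Proposition~\ref{commutebz} factors through categories of $B(p\Z)$-equivariant objects on both sides, which are $N\Fun(\Lambda_p^\op,\Top)_{\mathrm{prop}}$ and $\Fun(N\Lambda_p^\op,\calS)$ on the left and $N\R\Top^{B(p\Z)} = N(\R/p\Z)\Top = N\T\Top$ and $\calS^{B\R/B(p\Z)} = \calS^{B\T}$ on the right. Passing to $B(p\Z)$-fixed subcategories preserves commutativity of the square, and the $B\Z$-equivariance of the original square becomes $BC_p = B\Z/B(p\Z)$-equivariance of the resulting square. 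The same argument applied to Proposition~\ref{commutebzsp} handles the spectrum case.

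The main potential obstacle is bookkeeping: making precise that the $BC_p$-action on the $\infty$-categorical target $\calC^{B\T}$ really is the one obtained from the residual $B\Z/B(p\Z)$-action on $B\R/B(p\Z) \cong B\T$ under our chosen identification. This is not hard but needs to be spelled out carefully, because several different identifications of $\T$ with quotients of $\R$ (differing by the factor $1/p$) appear throughout the paper, and getting the signs/scalings right is what makes the eventual identification of the cyclotomic structure on $\THH$ through subdivision work. Once this identification is fixed consistently with Construction~\ref{cons:BptoB} and the uses of $\sd_p$ in Section~\ref{sec:thhorth}, the rest of the proof is formal.
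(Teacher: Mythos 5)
Your proposal is correct and follows essentially the same route as the paper: the realization functor for $\Lambda_p^\op$-objects is obtained by viewing them as $B(p\Z)$-equivariant $\Lambda_\infty^\op$-objects and taking $B(p\Z)$-fixed points of the $B\Z$-equivariant colimit/realization functors, so that the residual $BC_p=B\Z/B(p\Z)$-action gives equivariance, and the two squares follow by taking $B(p\Z)$-fixed points of Propositions~\ref{commutebz} and~\ref{commutebzsp}.
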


\begin{proof} First, the construction of the $BC_p$-equivariant functor 
$\Fun(N(\Lambda_p^\op),\calC)\to \calC^{B\T}$ works exactly as in Proposition \ref{prop:geomreal}: it is constructed as the composition 
\[
\Fun(N(\Lambda_p^\op),\calC)\to \Fun^{B(p\Z)}(N(\Lambda_\infty^\op),\calC)\to \Fun^{B(p\Z)}(\pt,\calC) = \calC^{BB(p\Z)} = \calC^{B\T}\ .
\]
All of the involved functors are $BC_p$-equivariant, since they are obtained as $B(p\Z)$ fixed points of functors that are $B\Z$-equivariant. 

Now the rest of the proposition follows analogously by taking fixed points since the constructions of geometric realizations come from $B\Z$-equivariant functors (Construction \ref{concyclicrealization}) and the diagrams already commute $B\Z$-equivariantly (Proposition \ref{commutebz} and Proposition \ref{commutebzsp}).  
\end{proof}

Using the functor  $\sd_p: \Lambda_p\to \Lambda$ we get for every cyclic object $X$ a subdivided $\Lambda_p$-object $\sd_p^* X$ and we recall the well-known fact that they have the same geometric realization.

\begin{proposition}\label{subdivisionrealization} Let $\calC$ be an $\infty$-category that admits geometric realizations.
\begin{altenumerate}
\item The diagram of functors
$$
\xymatrix{
\Fun(\Lambda^\op, \calC) \ar[d]^{\sd_p}\ar[r] & \calC^{B\Z} \ar@{=}[d] \\
\Fun(\Lambda_p^\op, \calC) \ar[r] & \calC^{B\Z}
}
$$
commutes.
\item For a proper paracyclic topological space (resp. spectrum) $X$ there is a natural $\T$-equivariant homeomorphism 
$$
|X|\cong |\sd_p^* X|\ ,
$$
compatible with the $\infty$-categorical equivalence under the comparison of Proposition \ref{commutebz} (resp. Proposition \ref{commutebzsp}).$\hfill \Box$
\end{altenumerate}
\end{proposition}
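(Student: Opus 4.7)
\medskip

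\noindent\textbf{Proof plan.} My strategy is to reduce both parts to a statement about the paracyclic category $\Lambda_\infty$, where $\sd_p$ lifts to an endofunctor. For part (i), I will factor both realization functors in the diagram through $\Fun(N\Lambda_\infty^\op, \calC)$ using Proposition~\ref{prop:geomreal} and its $\Lambda_p$-analogue from Lemma~\ref{lem:geomrealbcpequiv}. Explicitly, a cyclic object $X \in \Fun(\Lambda^\op, \calC)$ realizes as $\colim_{N\Lambda_\infty^\op}$ of its pullback along $\Lambda_\infty^\op \to \Lambda^\op$ with its residual $B\Z$-action, while a $\Lambda_p$-object realizes as the analogous colimit with residual $B(p\Z) = B\Z$-action. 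The composite $\Lambda_\infty^\op \to \Lambda_p^\op \xrightarrow{\sd_p} \Lambda^\op$ factors through the endofunctor $\sd_p\colon \Lambda_\infty^\op \to \Lambda_\infty^\op$ followed by the quotient $\Lambda_\infty^\op \to \Lambda^\op$, and this endofunctor intertwines the target $B\Z$-action with the source $B(p\Z)$-action via $p\Z \hookrightarrow \Z$.

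Consequently, the commutativity in part (i) reduces to showing that pullback along $\sd_p\colon N\Lambda_\infty \to N\Lambda_\infty$ preserves the equivariant colimit, compatibly with the equivariance structures. The key input is that $\sd_p$ is cofinal as an $\infty$-functor, which I would establish by mimicking the covering argument in Theorem~\ref{thm:jinftycofinal}: applying Quillen's Theorem~A, it suffices to show that for each $T \in \Lambda_\infty$ the nerve of $\Lambda_\infty \times_{\Lambda_\infty} (\Lambda_\infty)_{T/}$ is contractible, which follows from a cover by the full subcategories indexed by half-open intervals $[t, t + \tfrac{1}{p})$ in $T$, whose finite intersections are themselves contractible by an argument analogous to the one in Theorem~\ref{thm:jinftycofinal}. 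The equivariance bookkeeping is then routine.

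For part (ii), the existence of the equivalence in $\calS^{B\T}$ (resp.\ $\Sp^{B\T}$) follows from part (i) combined with the comparisons of Proposition~\ref{commutebz} and Proposition~\ref{commutebzsp}. To promote this to a $\T$-equivariant homeomorphism at the point-set level, I will construct it directly from the coend formula of Construction~\ref{concyclicrealization}. The key ingredient is a natural $\R$-equivariant homeomorphism $|\Lambda_\infty^{\sd_p T}| \xrightarrow{\sim} |\Lambda_\infty^T|$ for each $T \in \Lambda_\infty$, obtained by using the identification of underlying ordered sets $(\sd_p T)^\circ = T^\circ$ and rescaling the target $\R$ by a factor of $1/p$ so that the $\Z$-action on source (which, on $\sd_p T$, acts by multiplication by $p$) matches the standard $\Z$-action on the target. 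Naturality in $T$ is immediate by construction. Inserting this into the coend formula $|X| = \int^{T \in \Lambda_\infty} X_T \times |\Lambda_\infty^T|$ produces the required $\R$-equivariant homeomorphism $|\sd_p^* X| \cong |X|$, which descends to the quotient by $\Z$ (respectively $p\Z$, re-identified with $\Z$ via the rescaling) to yield the desired $\T$-equivariant homeomorphism. Compatibility with the $\infty$-categorical identification from part~(i) follows from Proposition~\ref{commutebz} or Proposition~\ref{commutebzsp}.

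The main obstacle I anticipate is the cofinality verification in part~(i). Although the covering-by-half-open-intervals strategy from Theorem~\ref{thm:jinftycofinal} should adapt, one has to be careful that the subdivision refines the relevant intervals correctly, and in particular that the intersection subcategories remain of the form $\calC[a,b)$ with $b - a \leq \tfrac{1}{p}$ so that they are still equivalent to overcategories in $\Delta$, hence contractible.
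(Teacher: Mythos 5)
Your overall architecture is reasonable — the paper states this result without proof, and part (i) indeed comes down to cofinality of $\sd_p\colon N\Lambda_\infty^\op\to N\Lambda_\infty^\op$ plus equivariance bookkeeping, while part (ii) should reduce by a coend manipulation to the case of representables. But both halves have problems. For part (i), the covering argument of Theorem~\ref{thm:jinftycofinal} does not transplant directly: the objects of the relevant comma category are pairs $(S,\varphi\colon\sd_p S\to T)$ with $S$ an \emph{arbitrary} object of $\Lambda_\infty$, and unlike $j_\infty(S)=\Z\times S$ such an $S$ carries no canonical fundamental domain, so the subcategories ``$\varphi(S)\subseteq[a,b)$'' are not even well defined. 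A cleaner route: check that $\sd_p\circ j_\infty\cong j_\infty\circ\sd_p^\Delta$, where $\sd_p^\Delta\colon\Delta^\op\to\Delta^\op$ is the $p$-fold edgewise subdivision $S\mapsto S\sqcup\cdots\sqcup S$; since $j_\infty$ and $\sd_p^\Delta$ are both cofinal (the latter is classical), so is the composite, and cancellation against the cofinal $j_\infty$ gives cofinality of $\sd_p$.

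The more serious gap is in part (ii): the claimed homeomorphism $|\Lambda_\infty^{\sd_p T}|\cong|\Lambda_\infty^T|$ is false. By Example~\ref{examplecyclicrealization} one has $|\Lambda_\infty^{[n]}|\cong|\Delta^{n-1}|\times\R$, so for $T=[n]_{\Lambda_\infty}$ your left-hand side is $|\Delta^{pn-1}|\times\R$ while the right-hand side is $|\Delta^{n-1}|\times\R$; these have different dimensions for $p\geq 2$. The rescaling does not repair this: a nondecreasing $g\colon T^\circ\to\R$ with $g(x+p)=g(x)+1$ becomes, after multiplying by $p$, a map with $f(x+p)=f(x)+p$, which is strictly weaker than the equivariance $f(x+1)=f(x)+1$ defining $|\Lambda_\infty^T|$. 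What the coend manipulation actually requires is $|\sd_p^*\Lambda_\infty^T|\cong|\Lambda_\infty^T|$, i.e.\ the realization of the \emph{subdivided representable} paracyclic set $S\mapsto\Hom_{\Lambda_\infty}(\sd_p S,T)$ — note the opposite variance from what you wrote. The correct comparison map sends a pair $(g\colon\sd_p S\to T,\ s\in|\Lambda_\infty^S|)$ to $\tfrac{1}{p}(s\circ g^\circ)\in|\Lambda_\infty^T|$, and proving it is a homeomorphism requires an essential-simplices/explicit-inverse argument exactly as in Lemma~\ref{realization} (or a reduction to the classical simplicial edgewise-subdivision homeomorphism); your proposal contains no substitute for this step. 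The factor $1/p$ does correctly implement the identification $\R/p\Z\cong\R/\Z$, so the $\T$-equivariance bookkeeping is fine once the map itself is fixed.
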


In our construction of the cyclotomic structure maps, we need to commute the Tate construction $-^{tC_p}$ with a geometric realization. Let us describe this abstractly; we consider the case $\calC=\Sp$ for concreteness. There is a natural functor
\[
\Fun(\Lambda_p^\op,\Sp)\to \Fun(\Lambda^\op,\Sp)
\]
intuitively given by sending $[n]_{\Lambda_p^\op}\mapsto X_n$ to $[n]_{\Lambda^\op}\mapsto X_n^{tC_p}$, using the natural $C_p$-action on $X_n$. Concretely, using the natural $BC_p$-action on $\Lambda_p^\op$, one has
\[
\Fun(\Lambda_p^\op,\Sp) = \Fun^{BC_p}(\Lambda_p^\op,\Sp^{BC_p})\ ,
\]
and composing with the $BC_p$-equivariant functor $-^{tC_p}: \Sp^{BC_p}\to \Sp$, one gets the functor
\[
\Fun(\Lambda_p^\op,\Sp) = \Fun^{BC_p}(\Lambda_p^\op,\Sp^{BC_p})\to \Fun^{BC_p}(\Lambda_p^\op,\Sp) = \Fun(\Lambda^\op,\Sp)\ .
\]

\begin{proposition}\label{prop:commutetaterealization} There is a natural transformation from the composite
\[
\Fun(\Lambda_p^\op,\Sp)\xto{-^{tC_p}} \Fun(\Lambda^\op,\Sp)\to \Sp^{B\T}
\]
to the composite
\[
\Fun(\Lambda_p^\op,\Sp)\to \Sp^{B\T}\xto{-^{tC_p}} \Sp^{B(\T/C_p)}\simeq \Sp^{B\T}\ .
\]
\end{proposition}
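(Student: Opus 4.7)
The plan is to construct the desired natural transformation by exploiting the fact that homotopy orbits commute with all colimits while homotopy fixed points do not — this asymmetry gives rise to a canonical comparison map from the ``colimit first'' side to the ``limit first'' side — and then to verify the $\T$-equivariance using Lemma~\ref{lem:geomrealbcpequiv}.

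First I would unpack the two composites intrinsically. The $BC_p$-action on $\Lambda_p^\op$ acts trivially on objects but nontrivially on morphisms, so every functor $X_\bullet\colon\Lambda_p^\op\to\Sp$ acquires a natural $C_p$-action on each value $X_n$. The fiberwise Tate construction $X_\bullet^{tC_p}$ then descends to a functor $\Lambda^\op=\Lambda_p^\op/BC_p\to\Sp$, yielding the first composite after realization. For the second composite, Lemma~\ref{lem:geomrealbcpequiv} exhibits $|{-}|\colon\Fun(\Lambda_p^\op,\Sp)\to\Sp^{B\T}$ as a $BC_p$-equivariant functor where $BC_p$ acts on the target via the inclusion $C_p\subseteq\T$; taking the Tate construction for this $C_p\subseteq\T$-action produces $|X_\bullet|^{tC_p}$ with its residual $\T/C_p\cong\T$-action.

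Next I would construct the natural transformation. For each $n$ the structure map $X_n\to|X_\bullet|$ is $C_p$-equivariant, which after applying $-_{hC_p}$ and $-^{hC_p}$ and then taking the colimit along $\Lambda^\op$ yields maps
\[
|X_{\bullet,hC_p}|\longrightarrow |X_\bullet|_{hC_p},\qquad |X_\bullet^{hC_p}|\longrightarrow|X_\bullet|^{hC_p}.
\]
The first is an equivalence because homotopy orbits are a colimit and thus commute with the geometric realization. These fit into a commutative diagram
\[
\xymatrix{
|X_{\bullet,hC_p}|\ar[r]^-{\Nm}\ar[d]_{\simeq} & |X_\bullet^{hC_p}|\ar[d]\ar[r] & |X_\bullet^{tC_p}|\ar[d]\\
|X_\bullet|_{hC_p}\ar[r]^-{\Nm} & |X_\bullet|^{hC_p}\ar[r] & |X_\bullet|^{tC_p}
}
\]
in which both rows are fiber sequences; the right vertical arrow, obtained as the induced map on cofibers, is the desired natural transformation.

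The main obstacle is Step three: verifying that this map is $\T$-equivariant under the identification of the two a priori different $\T$-actions on source and target. The target carries its $\T$-action via $\T/C_p\cong\T$, while the source carries its $\T$-action from the realization of a $\Lambda^\op$-object. I would handle this by working throughout with the $BC_p$-equivariant enhancement provided by Lemma~\ref{lem:geomrealbcpequiv} (and its $\Lambda^\op$-analogue, Proposition~\ref{prop:geomrealcompspectra}): both $\T$-actions arise from the residual $B(\R/p\Z)$-action on $|X_\bullet|$ after passing to $BC_p$-fixed points, and the comparison of Step two can be promoted to a $BC_p$-equivariant natural transformation because the $C_p$-equivariant structure maps $X_n\to|X_\bullet|$ are in fact part of a $BC_p$-equivariant diagram (here using that $BC_p$ acts trivially on objects of $\Lambda_p^\op$). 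After taking $BC_p$-fixed points of the resulting transformation and invoking the identification $B\T\simeq B(\T/C_p)$ via the $p$-th power map, one gets the required $\T$-equivariance on the nose.
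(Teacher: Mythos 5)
Your construction is essentially the paper's, and it works; the only differences are cosmetic, so let me just point out where the paper is leaner. The paper observes that for \emph{any} functor $G:\calD\to\calE$ there is a canonical assembly transformation $\colim_\calC (G\circ F)\to G(\colim_\calC F)$ coming directly from the universal property of the colimit (apply $G$ to the colimit cone), and it applies this with $G=-^{tC_p}:\Sp^{BC_p}\to\Sp$ and $\calC=\Lambda_\infty^\op$. Your detour through the norm cofiber sequence — using that $-_{hC_p}$ commutes with realization, getting the assembly map for $-^{hC_p}$, and passing to cofibers — produces the same map but is unnecessary; in particular the exactness of $-^{tC_p}$ plays no role in the existence of the transformation. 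Where the paper is genuinely cleaner is the equivariance step, which is the only real content beyond the assembly map: instead of promoting pointwise $C_p$-equivariant structure maps and "taking $BC_p$-fixed points" (an operation you leave vague — it is not obvious what category you are taking fixed points in, nor that the two $\T$-actions literally match "on the nose"), the paper rewrites both composites once and for all as functors out of $\Fun^{B\Z}(\Lambda_\infty^\op,\Sp^{BC_p})$, namely as $\colim_{\Lambda_\infty^\op}\circ(-^{tC_p})$ and $(-^{tC_p})\circ\colim_{\Lambda_\infty^\op}$ landing in $\Fun^{B\Z}(\pt,\Sp)=\Sp^{B\T}$, so that the general assembly transformation, being natural, passes to $B\Z$-equivariant objects and the $\T$-equivariance is automatic rather than checked after the fact. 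If you want to keep your route, you should replace the fixed-point language by this identification of $\Fun(\Lambda_p^\op,\Sp)$ with $\Fun^{B\Z}(\Lambda_\infty^\op,\Sp^{BC_p})$; as written, Step three asserts rather than proves the compatibility of the two $\T$-actions.
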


\begin{proof} Under the equivalence
\[
\Fun(\Lambda_p^\op,\Sp) = \Fun^{BC_p}(\Lambda_p^\op,\Sp^{BC_p}) = \Fun^{B\Z}(\Lambda_\infty^\op,\Sp^{BC_p})\ ,
\]
the first functor is the composite
\[
\Fun^{B\Z}(\Lambda_\infty^\op,\Sp^{BC_p})\xto{-^{tC_p}} \Fun^{B\Z}(\Lambda_\infty^\op,\Sp)\xto{\colim_{\Lambda_\infty^\op}} \Fun^{B\Z}(\pt,\Sp) = \Sp^{B\T}\ ,
\]
and the second functor is the composite
\[
\Fun^{B\Z}(\Lambda_\infty^\op,\Sp^{BC_p})\xto{\colim_{\Lambda_\infty^\op}} \Fun^{B\Z}(\ast,\Sp^{BC_p})\xto{-^{tC_p}} \Fun^{B\Z}(\ast,\Sp) = \Sp^{B\T}\ .
\]
Generally, if $\calC$, $\calD$ and $\calE$ are $\infty$-categories such that $\calD$ and $\calE$ admit $\calC$-indexed colimits and $G: \calD\to \calE$ is a functor, then there is a natural transformation of functors
\[
\Fun(\calC,\calD)\to \calE
\]
from
\[
(F: \calC\to \calD)\mapsto \colim_{\calC} G\circ F
\]
to
\[
(F: \calC\to \calD)\mapsto G(\colim_{\calC} F)\ ,
\]
given by the universal property of the colimit. Applying this to $\calC=\Lambda_\infty^\op$, $\calD=\Sp^{BC_p}$, $\calE=\Sp$, $G=-^{tC_p}: \Sp^{BC_p}\to \Sp$ and passing to $B\Z$-equivariant objects gives the result.
\end{proof}

We need a small variant of the above constructions in the case $BC_p$-equivariant functors
\[
\Lambda_p^\op\to \T\Top\ ,
\]
where $BC_p$ acts on $\T\Top$ via the natural inclusion $C_p\subset \T$. Namely, given such a functor, the usual geometric realization gives an object of $(\T\times \T)\Top$, but the $BC_p$-equivariance implies that it is actually an object of $((\T\times \T)/C_p)\Top$, where $C_p$ is embedded diagonally. Thus, we can restrict to the diagonal $\T/C_p\cong \T$, and get an object of $\T\Top$. We get the following proposition.

\begin{proposition}\label{prop:geomrealtactiontop} There are natural functors
\[
\Fun^{BC_p}(\Lambda_p^\op,\T\Top)\to \T\Top
\]
from the category $\Fun^{BC_p}(\Lambda_p^\op,\T\Top)$ of $BC_p$-equivariant functors $\Lambda_p^\op\to \T\Top$ to $\T\Top$, and
\[
\Fun^{BC_p}(\Lambda_p^\op,\T\Sp^O)\to \T\Sp^O
\]
from the category $\Fun^{BC_p}(\Lambda_p^\op,\T\Sp^O)$ of $BC_p$-equivariant functors $\Lambda_p^\op\to \T\Sp^O$ to $\T\Sp^O$.

On the subcategory of functors factoring over a functor $\Lambda^\op = \Lambda_p^\op/BC_p\to \Top\subseteq \T\Top$ (resp.~$\Lambda^\op = \Lambda_p^\op/BC_p\to \Sp^O\subseteq \T\Sp^O$), this is given by the usual geometric realization functor discussed in Construction \ref{concyclicrealization} and subsequently.$\hfill \Box$
\end{proposition}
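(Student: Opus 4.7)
The plan is to build the functor in three steps, upgrading Lemma \ref{lem:geomrealbcpequiv} to allow an auxiliary $\T$-action on the target, then extracting the diagonal.

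First I would enrich the $BC_p$-equivariant realization of Lemma \ref{lem:geomrealbcpequiv} (with $\calC=\Top$, resp.~$\calC = \Sp^O$) to a $BC_p$-equivariant functor
\[
\Fun(\Lambda_p^\op,\T\Top) \longrightarrow (\T\times\T)\Top
\]
(resp.\ with $\Sp^O$ in place of $\Top$). Concretely, for $F:\Lambda_p^\op\to \T\Top$, one applies the geometric realization of Construction \ref{concyclicrealization} to the underlying $\Lambda_p^\op$-diagram in $\Top$, which carries an extra pointwise $\T$-action inherited from $F$. The first $\T$-factor comes from the realization of the $\Lambda_p^\op$-structure (as in Lemma \ref{lem:geomrealbcpequiv}); the second comes from the pointwise action on values. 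These actions commute because one is obtained levelwise and the other from the indexing category, and both are natural in $F$. The $BC_p$-action on the target $(\T\times\T)\Top$ is the one inherited from the $BC_p$-action on the first $\T$-factor through $C_p\subseteq \T$, exactly matching the action on $\T\Top$ in Lemma \ref{lem:geomrealbcpequiv}.

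Next I would restrict to the full subcategory $\Fun^{BC_p}(\Lambda_p^\op,\T\Top)$. Unwinding the definitions, a $BC_p$-equivariant $F$ is one where the $BC_p$-action coming from $\Lambda_p^\op$ agrees under $F$ with the $BC_p$-action on $\T\Top$ via $C_p\subseteq\T$. After realization, this translates into the statement that for any $c\in C_p$, the action of $c$ through the first $\T$-factor equals the action of $c$ through the second $\T$-factor. Thus the diagonally embedded $C_p\subseteq \T\times\T$ acts trivially, and the $(\T\times\T)$-action descends to an action of $(\T\times\T)/C_p$. Pulling back along the diagonal group homomorphism $\T\to (\T\times\T)/C_p$, which identifies its source with $\T/C_p\cong \T$ and whose image is precisely the diagonal, produces the desired functor to $\T\Top$. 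The spectral version is identical, using Lemma \ref{lem:geomrealbcpequiv} for $\calC=\Sp^O$ and applying the same bookkeeping levelwise.

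For the final clause, if $F$ factors through $\Lambda^\op = \Lambda_p^\op/BC_p \to \Top\subseteq \T\Top$ then the pointwise $\T$-action is trivial, so the second $\T$-factor in $(\T\times\T)\Top$ is trivial throughout and the combined $(\T\times\T)$-action is just the first one. Restricting along the diagonal $\T\cong \T/C_p \hookrightarrow (\T\times\T)/C_p$ then amounts to the projection to the first factor followed by the quotient $\T\to \T/C_p$, which is exactly the $\T$-action that Corollary \ref{prop:geomrealcomp} (resp.~Corollary \ref{prop:geomrealcompspectra}) assigns to the underlying cyclic object $\Lambda^\op\to\Top$ (resp.~$\Sp^O$), via the identification $|\Lambda_p|\simeq |\Lambda|$ coming from $\sd_p$ in Proposition \ref{subdivisionrealization}. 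I expect no substantive obstacle; the only subtle point is to track which copy of $C_p$ sits where and to confirm that the diagonal embedding of $C_p$ that trivializes the $BC_p$-equivariance is the one relevant for the quotient, which is a direct unraveling of definitions.
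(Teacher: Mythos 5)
Your proposal is correct and follows essentially the same route as the paper: realize to obtain a $(\T\times\T)$-action combining the realization circle with the pointwise one, use the $BC_p$-equivariance to see that an embedded $C_p$ acts trivially so the action descends to $(\T\times\T)/C_p$, and restrict along the diagonal $\T/C_p\cong\T$; the paper's own justification is exactly the paragraph preceding the statement (and the parallel proof of the $\infty$-categorical version). The only caveat — whether it is the diagonal or the anti-diagonal copy of $C_p$ that is trivialized, depending on sign conventions for the induced action — is the bookkeeping point you already flag, and the paper treats it with the same brevity.
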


This construction can also be done in $\infty$-categories.

\begin{proposition}\label{prop:geomrealtaction} For any $\infty$-category $\calC$ admitting geometric realizations, there is a natural functor
\[
\Fun^{BC_p}(N(\Lambda_p^\op),\calC^{B\T})\to \calC^{B\T}
\]
from the category $\Fun^{BC_p}(N(\Lambda_p^\op),\calC^{B\T})$ of $BC_p$-equivariant functors $N(\Lambda_p^\op)\to \calC^{B\T}$ to $\calC^{B\T}$. Restricting this construction to functors factoring as $\Lambda_p^\op/BC_p = \Lambda^\op\to \calC\to \calC^{B\T}$, this agrees with the functor from Proposition~\ref{prop:geomreal}.

Moreover, the diagrams
\[\xymatrix{
\Fun^{BC_p}(\Lambda_p^\op,\T\Top)_{\mathrm{prop}}\ar[r]\ar[d] & \T\Top\ar[d]\\
\Fun^{BC_p}(N(\Lambda_p^\op),\calS^{B\T})\ar[r] & \calS^{B\T}\ ,
}\]
\[\xymatrix{
\Fun^{BC_p}(\Lambda_p^\op,\T\Sp^O)_{\mathrm{prop}}\ar[r]\ar[d] & \T\Sp^O\ar[d]\\
\Fun^{BC_p}(N(\Lambda_p^\op),\Sp^{B\T})\ar[r] & \Sp^{B\T}
}\]
commute.
\end{proposition}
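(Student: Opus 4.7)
The plan is to mirror the construction of Proposition~\ref{prop:geomrealtactiontop} inside the $\infty$-category $\calC^{B\T}$, replacing the topological geometric realization by the $\infty$-categorical colimit over $N(\Lambda_\infty^\op)$. Given a $BC_p$-equivariant functor $F\colon N(\Lambda_p^\op)\to \calC^{B\T}$, I would first pull back to a $B\Z$-equivariant functor $\tilde F\colon N(\Lambda_\infty^\op)\to \calC^{B\T}$, where $B\Z$ acts on $N(\Lambda_\infty^\op)$ in the natural way and on $\calC^{B\T}$ through the composite $B\Z \to BC_p \hookrightarrow B\T$, so that $B(p\Z)$ acts trivially on both sides (giving the equivalence $\Fun^{BC_p}(N(\Lambda_p^\op), \calC^{B\T}) \simeq \Fun^{B\Z}(N(\Lambda_\infty^\op), \calC^{B\T})$).

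Next, I would observe that the colimit functor $\colim_{N(\Lambda_\infty^\op)}\colon \Fun(N(\Lambda_\infty^\op), \calC^{B\T}) \to \calC^{B\T}$ is equivariant for two commuting $B\Z$-actions: the cyclic one (via $\Lambda_\infty^\op$, trivial on the target) and the intrinsic one (via post-composition with the $B\T$-action on $\calC^{B\T}$, restricted along $B\Z \to B\T$). Viewing it as jointly $B\Z \times B\Z$-equivariant and restricting to the diagonal $B\Z \hookrightarrow B\Z \times B\Z$, the fixed points on the source become exactly $\Fun^{BC_p}(N(\Lambda_p^\op), \calC^{B\T})$. On the target, the cyclic $B\Z$-factor deloops to $B(B\Z) = B\T$, producing a second commuting $B\T$-action on the output that is identified with the original one along their common $BC_p$-subgroups via the $BC_p$-equivariance hypothesis; restricting further along the diagonal $B\T \hookrightarrow (B\T \times B\T)/BC_p^{\mathrm{diag}}$ yields a single $B\T$-action, giving the desired object of $\calC^{B\T}$.

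The compatibility with Proposition~\ref{prop:geomreal} is then immediate: when $F$ factors through $\calC \subseteq \calC^{B\T}$ (trivial intrinsic $B\T$-action), the intrinsic $B\Z$-action becomes trivial, the diagonal restriction collapses to the cyclic $B\Z$-action on $\Lambda_\infty^\op$ alone, and the construction reduces to the one of Proposition~\ref{prop:geomreal}. For the commutativity of the two displayed diagrams, I would reduce to the topological/spectrum-level statement of Proposition~\ref{prop:geomrealtactiontop} by combining Corollaries~\ref{prop:geomrealcomp} and~\ref{prop:geomrealcompspectra} with their $BC_p$-equivariant refinement in Lemma~\ref{lem:geomrealbcpequiv}, which compare the point-set realizations with the $\infty$-categorical colimits on proper objects.

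The hard part will be step two: precisely identifying the fixed points of $\calC^{B\T}$ under the diagonal $B\Z$-action with $\calC^{B\T}$ itself, encoding the $p$-th power convention $\T/C_p \cong \T$. This is the $\infty$-categorical analogue of the diagonal restriction from $(\T \times \T)/C_p$ to $\T$ in Proposition~\ref{prop:geomrealtactiontop}, and it requires careful bookkeeping of two commuting $B\T$-actions on the output together with the diagonal $BC_p$-identification supplied by the equivariance hypothesis. Once this identification is in place, functoriality and the remaining compatibilities follow formally from the universal property of the $B\Z$-equivariant colimit.
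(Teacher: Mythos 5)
Your proposal is correct and follows essentially the same route as the paper: realize over the (para)cyclic category with values in $\calC^{B\T}$ to obtain an object with two commuting circle actions, use the $BC_p$-equivariance of the realization functor (Lemma~\ref{lem:geomrealbcpequiv}) to descend to $\Fun(B((\T\times\T)/C_p),\calC)$, restrict along the diagonal $\T/C_p\cong\T$, and deduce the commutativity of the two diagrams from the fact that every intervening functor commutes with $\Top\to\calS$ resp.\ $\Sp^O\to\Sp$. The step you flag as the hard part is disposed of in the paper by the one-line identification $\Fun^{BC_p}(B(\T\times\T),\calC)=\Fun(B((\T\times\T)/C_p),\calC)$ followed by restriction to the diagonal circle, so no additional bookkeeping is needed beyond what you already describe.
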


\begin{proof} The construction of the functor
\[
\Fun^{BC_p}(N(\Lambda_p^\op),\calC^{B\T})\to \calC^{B\T}
\]
is the same as above: As $\calC^{B\T}$ has geometric realizations, one has a functor
\[
\Fun(N(\Lambda_p^\op),\calC^{B\T})\to (\calC^{B\T})^{B\T} = \calC^{B(\T\times \T)}=\Fun(B(\T\times\T),\calC)\ .
\]
By Lemma~\ref{lem:geomrealbcpequiv}, this functor is $BC_p$-equivariant for the $BC_p$-action on the left-hand side given by the action on $\Lambda_p^\op$, and the $BC_p$-action on the right by acting on the second copy of $\T$. It follows that we get an induced functor
\[
\Fun^{BC_p}(N(\Lambda_p^\op,\calC^{B\T}))\to \Fun^{BC_p}(B(\T\times \T),\calC)\ ,
\]
where $BC_p$ acts on $B(\T\times \T)$ through the diagonal embedding (as $BC_p$ acts on the left also on $B\T$). But
\[
\Fun^{BC_p}(B(\T\times \T),\calC) = \Fun(B((\T\times \T)/C_p),\calC)\ ,\]
which has a natural map to $\calC^{B\T}$ by restricting to the diagonal $\T/C_p\subseteq (\T\times \T)/C_p$, and identifying $\T/C_p\cong \T$.

As all intervening functors in the construction commute with the functors $\Top\to \calS$ resp.~$\Sp^O\to \Sp$, we get the desired commutative diagrams.
\end{proof}

\setcounter{thm}{0}
\setcounter{chapter}{20}
\chapter{Homotopy colimits}\label{app:colim}

\renewcommand{\thesection}{C}

In this section we briefly recall the classical construction of homotopy colimits of (pointed) topological spaces and orthogonal spectra following Bousfield and Kan. The material in this section is well-known and standard but in the literature there are varying statements, especially the precise cofibrancy conditions that are imposed differ. We thank Steffen Sagave and Irakli Patchkoria for explaining some of the subtleties that arise and guidance through the literature. We also found the writeup \cite{malkiewich2014homotopy} very useful. 

As before we work in the category of compactly generated weak Hausdorff spaces throughout. We need the following well-known basic result, for which it is essential to work in compactly generated weak Hausdorff spaces.

\begin{proposition}\label{prop:geomfinlimit}
The geometric realization functor 
$$
| - | : \Fun(\Delta^\op , \Top) \to \Top
$$
of Construction~\ref{cons:geomreal} commutes with finite limits.
\end{proposition}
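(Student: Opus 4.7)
The plan is to reduce to the three elementary cases of the terminal object, binary products, and equalizers, since finite limits in any category are generated by these, and finite limits in $\Fun(\Delta^\op, \Top)$ are computed levelwise. The case of the terminal object is immediate: for $\mathrm{const}_{\mathrm{pt}}\colon \Delta^\op \to \Top$ one has $|\mathrm{const}_{\mathrm{pt}}| = \int^{[n]\in \Delta} |\Delta^n| \cong \mathrm{pt}$, since all face and degeneracy identifications collapse to a point.

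For binary products, the two essential inputs are that $\Top$ (meaning compactly generated weak Hausdorff spaces) is cartesian closed, so $(-) \times Z$ preserves all colimits for every $Z$, and the classical Eilenberg--Zilber shuffle decomposition exhibiting the prism $|\Delta^n| \times |\Delta^m|$ as the geometric realization of the simplicial set $\Delta^n \times \Delta^m$. Applying cartesian closedness to the coend formula $|X_\bullet| = \int^{[n]} X_n \times |\Delta^n|$ gives a natural homeomorphism
\[
|X_\bullet| \times |Y_\bullet| \;\cong\; \int^{[n],[m]} X_n \times Y_m \times |\Delta^n| \times |\Delta^m|,
\]
and then Fubini for coends together with the Eilenberg--Zilber identification (via the diagonal $\Delta \to \Delta \times \Delta$ and the triangulated structure on the prism) rewrites this as $\int^{[k]} (X_k \times Y_k) \times |\Delta^k| = |X_\bullet \times Y_\bullet|$.

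For equalizers, let $f,g\colon X_\bullet \to Y_\bullet$ be maps of simplicial spaces, and let $E_\bullet$ be the levelwise equalizer. The natural map $|E_\bullet| \to |X_\bullet|$ is injective by a direct manipulation of the defining quotient relations and lands in the set-theoretic equalizer of $|f|$ and $|g|$. Surjectivity and the coincidence of the subspace and the quotient topologies follow from the same combination of cartesian closedness of $\Top$ and the coend description of $|{-}|$; alternatively, one may instead verify preservation of pullbacks directly, which automatically covers both products and equalizers.

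The main obstacle is ensuring that the quotient topology defining $|X_\bullet|$ interacts correctly with finite limits: products of quotient maps must remain quotient maps, and an analogous property is needed for equalizers. This is precisely the reason for working in $\mathrm{CGWH}$ rather than in arbitrary topological spaces, where such compatibility fails in general. Once this point-set obstacle is dealt with via cartesian closedness, the only remaining combinatorial input is the classical shuffle decomposition of the prism, and the rest is formal manipulation of coends.
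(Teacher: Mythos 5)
Your reduction to the terminal object and binary products is fine, and for products your argument (cartesian closedness of compactly generated weak Hausdorff spaces plus the Eilenberg--Zilber decomposition of the prism) is the standard one; the paper simply cites this case. The genuine content of the proposition, however, lies entirely in the equalizer (equivalently, pullback) case, and that is exactly where your proposal has a gap. You assert that $|E_\bullet|\to|X_\bullet|$ is injective with image the set-theoretic equalizer ``by a direct manipulation of the defining quotient relations,'' but this is not a manipulation one can do naively: the quotient defining $|X_\bullet|$ could a priori identify two points of $|E_\bullet|$ through a chain of relations passing outside $E_\bullet$, or identify a point of $E_\bullet$ with one outside it. The honest input here is that the underlying \emph{set} of $|X_\bullet|$ is the realization of the underlying simplicial set (so that one has the Eilenberg--Zilber normal form), together with the Gabriel--Zisman theorem that realization of simplicial sets preserves finite limits because the cosimplicial set $U|\Delta^\bullet|$ is a filtered colimit of corepresentables. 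Neither of these appears in your write-up, and without them the set-level bijection is unproved.

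The second, independent gap is topological: even granting the continuous bijection $|E_\bullet|\to \mathrm{Eq}(|f|,|g|)$, you must show its inverse is continuous, and neither cartesian closedness nor the coend description gives this. The argument that works is to observe that $E_\bullet\to X_\bullet$ is a levelwise closed embedding, that geometric realization sends levelwise closed embeddings to closed embeddings, and that the target equalizer is likewise a closed subspace of $|X_\bullet|$; a continuous bijection between two closed subspaces of the same space, compatible with the inclusions, is then a homeomorphism. (This is how the paper argues, phrased for pullbacks: the bijection $|X_\bullet\times_{Y_\bullet}Z_\bullet|\to|X_\bullet|\times_{|Y_\bullet|}|Z_\bullet|$ sits between two closed embeddings into $|X_\bullet\times Z_\bullet|\cong|X_\bullet|\times|Z_\bullet|$.) Your closing remark that one could ``verify preservation of pullbacks directly'' points at the right statement but is not a proof of it; as written, the equalizer step does not go through.
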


\begin{proof} First we note that $|-|$ commutes with finite products, which is shown in \cite[Proposition A.37 (ii)]{Global}. It remains to show that for a  pullback diagram of simplicial spaces the resulting diagram of realizations is again a pullback diagram. We  claim that the composition
\[
\Fun(\Delta^\op , \Top) \xto{| - | } \Top \xto{U} \mathrm{Set}
\]
preserves finite limits, where $U$ takes the underlying set of a topological space. To see this we use that the underlying set of the geometric realization is just the coend of the underlying sets of the spaces by \cite[Appendix A, Proposition A.35 (ii)]{Global}. In particular the composite functor $U \circ |-|$ is given by the composition
\[
\Fun(\Delta^\op , \Top) \xto{U_*} \Fun(\Delta^\op , \mathrm{Set}) \xto{U|-|} \mathrm{Set} \ .
\]
The first functor preserves all limits, and the second functor commutes with finite limits by \cite[Chapter III]{Gabriel-Zisman}\footnote{The argument breaks down to showing that the cosimplicial set $U|\Delta^\bullet|: \Delta \to \mathrm{Set}$ is a filtered colimit of corepresentables, which follows easily from our description in terms of intervals as in Construction \ref{cons:geomreal}}.

Thus, in the diagram
\[
\xymatrix{
\left| X_\bullet \times_{Y_\bullet} Z_\bullet \right| \ar[r] \ar[d] & |X_\bullet| \times_{|Y_\bullet|} |Z_\bullet| \ar[d] \\
\left| X_\bullet \times Z_\bullet \right| \ar[r] & |X_\bullet| \times |Z_\bullet|
}
\]
the upper horizontal map is a bijection and the lower horizontal map is a homeomorphism. But the vertical maps are closed embeddings, using \cite[Proposition A.35 (iii)]{Global} for the left vertical map, so the result follows.
\end{proof}

Recall that a map $A \to X$ of topological spaces is called a Hurewicz cofibration (or simply h-cofibration) if it has the homotopy extension property, i.e.~if the induced map
$$
A \times I \cup_{A \times \{0\}} X \times \{0\} \to X \times I
$$
admits a retract, where $I=[0,1]$ is the interval. Now we state the technical key lemma for all results in this section. The first reference for this is \cite[Proposition 4.8(b), Page 249]{MR0420609} but see also the discussion in \cite[Proposition 1.1]{malkiewich2014homotopy}.

\begin{lemma}[Gluing Lemma]
Consider a diagram of spaces
$$
\xymatrix{
B\ar[d]^\simeq & A\ar[l]\ar[r]\ar[d]^\simeq & C\ar[d]^\simeq \\
B'& A' \ar[l]\ar[r] & C'\ ,
}
$$
where the maps $A \to B$ and $A' \to B'$ are h-cofibrations and the vertical maps are weak homotopy equivalences. Then also the induced map on pushouts $B \cup_A C \to B' \cup_{A'} C'$ is a weak homotopy equivalence. $\hfill \Box$
\end{lemma}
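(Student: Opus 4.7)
The plan is first to reduce to the case where all four horizontal maps are h-cofibrations, and then to invoke homotopy invariance of the double mapping cylinder. First I would replace the maps $A \to C$ and $A' \to C'$ by inclusions into their mapping cylinders, namely $A \hookrightarrow M_C := (A \times [0,1]) \cup_{A \times \{1\}} C$ and analogously $A' \hookrightarrow M_{C'}$. Both inclusions are h-cofibrations and both projections $M_C \to C$, $M_{C'} \to C'$ are deformation retracts. The key point is that the induced map $B \cup_A M_C \to B \cup_A C$ is a homotopy equivalence: using the h-cofibration $A \to B$, one extends the strong deformation retraction of $M_C$ onto $C$ (rel $A$) to a deformation retraction on the pushout. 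Applying the analogous replacement on the primed side reduces us to the case in which all four horizontal maps are h-cofibrations.

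In this reduced setting the pushout $B \cup_A C$ models the homotopy pushout of $B \leftarrow A \to C$, and the claim becomes: a termwise weak equivalence of spans of h-cofibrations induces a weak equivalence on pushouts. To see this, one forms the double mapping cylinder
\[
\mathrm{DC}(B \leftarrow A \to C) \;:=\; B \cup_{A \times \{0\}} (A \times [0,1]) \cup_{A \times \{1\}} C,
\]
observes that collapsing the cylinder coordinate gives a homotopy equivalence $\mathrm{DC} \simeq B \cup_A C$ (using again the h-cofibration property of both legs, together with the analogous replacement on the primed side), and then proves the homotopy invariance of $\mathrm{DC}$ directly. The latter is done by taking compatible CW approximations of the diagrams and proceeding by a cell-by-cell Mayer--Vietoris induction, where the inductive step reduces to the standard fact that pushout along a CW-pair inclusion preserves weak equivalences.

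The hard part is the last step: the homotopy invariance of the double mapping cylinder, equivalently, gluing of weak equivalences along CW-pair inclusions. This is the classical non-formal content of the Gluing Lemma; a careful treatment can be found for instance in tom Dieck--Kamps--Puppe's \emph{Homotopietheorie} or in chapter III of May's \emph{A Concise Course in Algebraic Topology}. In our compactly generated weak Hausdorff setting one additionally needs to verify that mapping cylinders, pushouts, and products with $[0,1]$ agree with the respective categorical constructions inside the category of compactly generated weak Hausdorff spaces; this is standard and is recorded e.g.\ in Appendix A of Schwede's \emph{Global Homotopy Theory}.
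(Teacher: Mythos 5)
The paper does not actually prove this lemma: it is stated as a classical black box, with pointers to Boardman--Vogt (Proposition 4.8(b), p.~249) and to Malkiewich's expository note, and the $\Box$ closes the statement without argument. So there is no in-paper proof to compare against. Your outline is the standard one from the literature and is sound in structure, but two steps need more care than you give them.

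First, in the reduction to cofibrant spans: the deformation retraction of $M_C=(A\times[0,1])\cup_{A\times\{1\}}C$ onto $C$ is \emph{not} rel the copy of $A$ sitting at the $0$-end of the cylinder (it slides those points down into $C$), so it does not extend by the identity of $B$; moreover $B\cup_A C$ is not a subspace of $B\cup_A M_C$, so ``a deformation retraction on the pushout'' does not typecheck. The correct argument regroups $B\cup_A M_C$ as $M_i\cup_{A\times\{1\}}C$, where $M_i$ is the mapping cylinder of $i\colon A\to B$, notes that $M_i\to B$ is a homotopy equivalence under $A$ between spaces cofibrant under $A$ (this is where the hypothesis that $i$ is an h-cofibration enters), and invokes Dold's theorem to upgrade it to a homotopy equivalence under $A$, which then pushes out along $A\to C$ to a genuine homotopy equivalence. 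Second, the ``compatible CW approximations plus cell-by-cell Mayer--Vietoris induction'' is exactly where circularity threatens: to compare the pushout of the approximating CW span with the pushout of the original span you already need a gluing statement of the type being proved. The non-circular classical route establishes homotopy invariance of the double mapping cylinder via the excision theorem for weak equivalences of excisive triads (a map of triads that is a weak equivalence on $U$, $V$ and $U\cap V$ is a weak equivalence on $U\cup V$); that is the genuinely non-formal core, and it is what the cited references actually prove. With those two repairs your argument is the standard proof, and deferring the excision input to the literature is no worse than what the paper itself does.
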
 

For the next statement recall that we call a simplicial space $X_\bullet$ proper if all the maps $L_n X \to X_n$ are $h$-cofibrations where 
$$L_n X = \mathrm{coEq}\left( \xymatrix{\coprod_{i=0}^{n-2} X_{n-2}  \ar[r]<-2pt>\ar[r]<+2pt> & \coprod_{i=0}^{n-1} X_{n-1}}  \right)
\cong  \bigcup_{i=0}^{n-1} s_i(X_{n-1}) \subseteq X_{n-1}$$
is the $n$-th latching object. The following proposition is well known, cf.~\cite[A.4]{May74}. This also follows from \cite[Theorem A.7 and Reformulation A.8]{MR2045835}.
We give a quick proof based on the Gluing lemma which also seems to be folklore (see e.g. \cite{malkiewich2014homotopy}).

\begin{proposition}\label{proper}
Let $f: X_\bullet \to Y_\bullet$ be a map of proper simplicial spaces that is a levelwise weak homotopy equivalence. Then the realization $|X_\bullet| \to |Y_\bullet|$ is a weak homotopy equivalence as well. 
\end{proposition}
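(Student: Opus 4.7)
The plan is to use the skeletal filtration of the geometric realization together with the Gluing Lemma and passage to a filtered colimit. Recall that $|X_\bullet|$ carries a natural filtration by skeleta
\[
|X_\bullet|^{(0)} \subseteq |X_\bullet|^{(1)} \subseteq |X_\bullet|^{(2)} \subseteq \cdots
\]
with $|X_\bullet| = \colim_n |X_\bullet|^{(n)}$, and where $|X_\bullet|^{(n)}$ is obtained from $|X_\bullet|^{(n-1)}$ as the pushout
\[
\xymatrix{
L_n X \times |\Delta^n| \cup_{L_n X \times |\partial \Delta^n|} X_n \times |\partial \Delta^n| \ar[r] \ar[d] & X_n \times |\Delta^n| \ar[d] \\
|X_\bullet|^{(n-1)} \ar[r] & |X_\bullet|^{(n)}.
}
\]
This is the standard description of realization via non-degenerate cells, and the properness hypothesis enters by ensuring that the attaching map along the top is an h-cofibration: since $L_n X \hookrightarrow X_n$ is an h-cofibration and $|\partial \Delta^n| \hookrightarrow |\Delta^n|$ is a (cellular) cofibration, the pushout-product $L_n X \times |\Delta^n| \cup X_n \times |\partial \Delta^n| \hookrightarrow X_n \times |\Delta^n|$ is also an h-cofibration. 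In particular, by induction each $|X_\bullet|^{(n-1)} \hookrightarrow |X_\bullet|^{(n)}$ is an h-cofibration.

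First I would prove by induction on $n$ that $f$ induces a weak equivalence $|X_\bullet|^{(n)} \to |Y_\bullet|^{(n)}$ for every $n$. The base case $n=0$ is immediate since $|X_\bullet|^{(0)} = X_0 \to Y_0 = |Y_\bullet|^{(0)}$ is a weak equivalence by hypothesis. For the inductive step, I would apply the Gluing Lemma to the diagram
\[
\xymatrix{
|X_\bullet|^{(n-1)} \ar[d]^{\simeq} & L_n X \times |\Delta^n| \cup_{L_n X \times |\partial \Delta^n|} X_n \times |\partial \Delta^n| \ar[l] \ar[r] \ar[d]^{\simeq} & X_n \times |\Delta^n| \ar[d]^{\simeq} \\
|Y_\bullet|^{(n-1)} & L_n Y \times |\Delta^n| \cup_{L_n Y \times |\partial \Delta^n|} Y_n \times |\partial \Delta^n| \ar[l] \ar[r] & Y_n \times |\Delta^n|,
}
\]
where the horizontal maps on the left are h-cofibrations by the discussion above. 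The left and right vertical maps are weak equivalences (the left by inductive hypothesis, the right because $f$ is a levelwise equivalence and $|\Delta^n|$ is a fixed space). For the middle vertical map, I would argue by an auxiliary induction using the Gluing Lemma once more, starting from the fact that $f$ induces a weak equivalence $L_n X \to L_n Y$ (which follows by induction on the structure of the latching object, since it is built from iterated pushouts of lower-dimensional pieces along h-cofibrations) and that $X_n \to Y_n$ is a weak equivalence. The Gluing Lemma then yields the claim on $|X_\bullet|^{(n)} \to |Y_\bullet|^{(n)}$.

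Finally I would pass to the colimit. Since each $|X_\bullet|^{(n-1)} \hookrightarrow |X_\bullet|^{(n)}$ and $|Y_\bullet|^{(n-1)} \hookrightarrow |Y_\bullet|^{(n)}$ is an h-cofibration, the sequential colimits $|X_\bullet| = \colim_n |X_\bullet|^{(n)}$ and $|Y_\bullet| = \colim_n |Y_\bullet|^{(n)}$ compute the homotopy colimits, and a sequential colimit of weak equivalences along h-cofibrations is again a weak equivalence. The main technical nuisance is the verification that $L_n X \to L_n Y$ is a weak equivalence and that the attaching map is an h-cofibration; both are established by an iterated application of the Gluing Lemma and the pushout-product behavior of h-cofibrations, which is where properness is used in an essential way.
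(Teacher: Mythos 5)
Your argument is essentially identical to the paper's proof: the same skeletal filtration with the same pushout squares, the same use of the pushout--product to see that properness makes the relevant map an h-cofibration, the same reduction via the Gluing Lemma to showing $L_nX \to L_nY$ is a weak equivalence by an auxiliary induction, and the same passage to the colimit along h-cofibrations. The only slip is cosmetic: in your Gluing Lemma diagram the h-cofibrations are the \emph{right}-pointing maps into $X_n\times|\Delta^n|$ (the pushout--product maps), not the left-pointing attaching maps into $|X_\bullet|^{(n-1)}$, but since the lemma only requires the cofibration condition on one consistent leg this does not affect the argument.
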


\begin{proof}
Every simplicial space $X_\bullet$ admits a skeletal filtration 
$$
|\Sk^0 X_\bullet| \subseteq |\Sk^1 X_\bullet| \subseteq  |\Sk^2 X_\bullet| \subseteq \ldots \subseteq |X| 
$$
where each $|\Sk^{n} X_\bullet|$ is obtained by the following pushout
$$
\xymatrix{
L_n X \times |\Delta^n| \cup_{L_nX \times |\partial \Delta^n|} X_n \times  |\partial \Delta^n| \ar[r]\ar[d] & X_n \times \Delta^n\ar[d] \\
|\Sk^{n-1} X_\bullet| \ar[r] & |\Sk^{n} X_\bullet|\ .
}
$$
If $X_\bullet$ is proper the pushout product axiom implies that the upper morphism is an h-cofibration.

As a first step we  show that under the assumptions of the Proposition the induced map
$|\Sk^{n} X_\bullet| \to |\Sk^{n} Y_\bullet|$ is a weak equivalence. By the above considerations and the gluing lemma it suffices to show that for all $n$ the  maps $L_n X \to L_nY$ are weak homotopy equivalences. This can be seen by a similar induction over the dimension and the number of summands in the union  $\bigcup_{i=0}^{n-1} s_i(X_{n-1})$.

Now we know that all maps $|\Sk^{n} X_\bullet| \to |\Sk^{n} Y_\bullet|$ are weak equivalences and the result follows by passing to the limit. This does not create problems since all maps involved in this filtered limit are h-cofibrations. The  directed colimit of those is a homotopy colimit.
\end{proof}

Note that the last proposition is slightly surprising from the point of view of model categories since it mixes $h$-cofibrations and weak homotopy equivalences. In particular it can not be proved abstractly for any model category (it can not even be stated in this generality).

Now we can come to the discussion of homotopy colimits. Thus let $I$ be a small category and consider a functor $X: I \to \Top$. 

\begin{definition}\label{BK-formula}
The (Bousfield-Kan) homotopy colimit of $X$ is defined as the geometric realization
$$
\hocolim_I X := \Big| \coprod_{i_0 \to \ldots \to i_n} X(i_n) \Big|\ .
$$
\end{definition}

It is fairly straightforward to compute that the latching object of the simplicial space in question is a disjoint union of $X(i_n)$ over some subset of all strings $i_0 \to \ldots \to i_n$. Explicitly it is the subcategory where at least one of the morphisms is the identity. Cleary the inclusion of a component into a disjoint union is an $h$-cofibration. 

\begin{proposition}\label{model}
For a transformation $X \to X'$ between functors $X, X': I \to \Top$ such that every map $X(i) \to X'(i)$ is a weak homotopy equivalence the induced map
$\hocolim_I X  \to \hocolim_I X'$ is also a weak homotopy equivalence.  

Moreover the homotopy colimit models the $\infty$-categorical colimit in the sense that the diagram
$$
\xymatrix{
N\Fun(I, \Top) \ar[rr]^-{\hocolim_I} \ar[d] && N\Top\ar[d] \\
\Fun(NI, \calS) \ar[rr]^{\colim_I} && \calS
}
$$
commutes. The vertical maps are the Dwyer-Kan localizations at the (levelwise) weak homotopy equivalences.
\end{proposition}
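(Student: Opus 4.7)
The plan has two main parts, mirroring the two assertions. First I observe that the Bousfield-Kan simplicial space $B_\bullet X := [n] \mapsto \coprod_{i_0 \to \cdots \to i_n} X(i_n)$ is proper for any $X \colon I \to \Top$: the $n$-th latching object is the sub-coproduct indexed by strings in which at least one arrow is an identity, so the inclusion $L_n B_\bullet X \hookrightarrow B_n X$ is a disjoint-summand inclusion and hence a Hurewicz cofibration. For the first claim, a levelwise weak equivalence $X \to X'$ induces a levelwise weak equivalence $B_\bullet X \to B_\bullet X'$ between proper simplicial spaces (a disjoint union of weak equivalences of spaces is again a weak equivalence), and Proposition \ref{proper} then yields a weak equivalence on geometric realizations, which is exactly $\hocolim_I X \to \hocolim_I X'$.

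For the second claim I factor the homotopy colimit functor as
\[
N\Fun(I, \Top) \xrightarrow{\; NB_\bullet \;} N\Fun(\Delta^\op, \Top)_{\mathrm{prop}} \xrightarrow{\; |-| \;} N\Top.
\]
By the first claim $B_\bullet$ takes levelwise weak equivalences to levelwise weak equivalences, so after Dwyer-Kan localization it descends to a functor $B^\infty_\bullet \colon \Fun(NI, \calS) \to \Fun(N\Delta^\op, \calS)$. Unwinding Construction \ref{BK-formula}, $B^\infty_\bullet(F)$ is the simplicial object in $\calS$ given by the same formula $[n] \mapsto \coprod_{i_0 \to \cdots \to i_n} F(i_n)$, since disjoint unions in $\Top$ model coproducts in $\calS$. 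Lemma \ref{geometrichocolim} identifies the second leg with the $\infty$-categorical colimit functor $\colim \colon \Fun(N\Delta^\op, \calS) \to \calS$, so the localized composite is $\colim \circ B^\infty_\bullet$.

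The remaining task, and the main obstacle, is to identify $\colim \circ B^\infty_\bullet$ with $\colim_{NI}$; this is the $\infty$-categorical Bousfield-Kan formula. I would establish it via cofinality. The forgetful functor from (the opposite of) the category of simplices $p \colon \Delta_{/NI}^\op \to NI$, sending a simplex $\sigma \colon \Delta^n \to NI$ to its terminal vertex $\sigma(n)$, is cofinal: one verifies via Quillen's Theorem A (in the form of \cite[Theorem 4.1.3.1]{HTT}) that for each $i \in I$ the comma $\infty$-category $\Delta_{/NI}^\op \times_{NI} NI_{i/}$ is weakly contractible, as the constant $0$-simplex at $i$ provides an initial object. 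Hence $\colim_{NI} F \simeq \colim_{\Delta_{/NI}^\op} (F \circ p)$ for every $F$.

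To conclude, I apply a Fubini argument along the natural projection $\Delta_{/NI}^\op \to N\Delta^\op$, which is a coCartesian fibration whose fiber over $[n]$ is the discrete set of composable strings of length $n$ in $I$. Since the colimit over a coCartesian fibration is computed as an iterated colimit, $\colim_{\Delta_{/NI}^\op} (F \circ p)$ is the colimit over $N\Delta^\op$ of the levelwise coproduct $[n] \mapsto \coprod_{i_0 \to \cdots \to i_n} F(i_n) = B^\infty_n(F)$. This completes the identification $\colim \circ B^\infty_\bullet \simeq \colim_{NI}$ and thereby proves the diagram commutes.
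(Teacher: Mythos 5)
Your treatment of the first assertion coincides with the paper's: the Bousfield--Kan simplicial space has latching maps given by inclusions of sub-coproducts, hence is proper, and Proposition \ref{proper} finishes the argument. For the second assertion you take a genuinely different route. The paper simply cites the classical comparison between the Bousfield--Kan formula and the projective-model-structure homotopy colimit (\cite{dugger2008primer}), together with the identification of the latter with the $\infty$-categorical colimit, and uses the first part only to discharge the cofibrancy hypotheses. You instead give a self-contained $\infty$-categorical argument: factor $\hocolim_I$ through Lemma \ref{geometrichocolim} and then prove the $\infty$-categorical Bousfield--Kan formula via cofinality of the category of simplices followed by Fubini along the left fibration $(\Delta_{/NI})^{\op}\to N(\Delta^{\op})$. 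This is more work but buys independence from the model-categorical literature, and the Fubini step (left Kan extension along a coCartesian fibration with discrete fibers is the fiberwise coproduct) is sound.

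There is, however, a genuine gap in your justification of the cofinality claim. The constant $0$-simplex $c_i$ at $i$ is \emph{not} an initial object of $(\Delta_{/NI})^{\op}\times_{NI}NI_{i/}$: a morphism out of $c_i$ in $(\Delta_{/NI})^{\op}$ is by definition a morphism $\sigma\to c_i$ in $\Delta_{/NI}$, i.e.\ a map $\Delta^n\to\Delta^0$ over $NI$, which forces $\sigma$ to be a constant simplex at $i$; so $c_i$ maps to almost nothing in the comma category. The cofinality of the vertex functor $(\Delta_{/S})^{\op}\to S$ is nonetheless true, but it is \cite[Proposition 4.2.3.14]{HTT} and its proof is not a one-liner; you should cite it rather than claim it follows from an initial object. (Relatedly, watch the variance: the assignment that is functorial on $(\Delta_{/NI})^{\op}$ is the \emph{initial}-vertex functor, not the terminal-vertex one, and this has to be reconciled with the indexing convention of Definition \ref{BK-formula}.) With the cofinality statement properly cited, your argument closes.
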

\begin{proof}
Only the second part needs to be verified. But for this we use that the model categorical homotopy colimit is equivalent to the $\infty$-categorical colimit. The comparison between the model categorical colimit (with respect to the projective model structure) and the Bosufield Kan formula is classical, see for example \cite{dugger2008primer} under some more restrictive cofibrancy assumptions. But the first part of this proposition show that this is unnecessary since cofibrant replacement does not change the weak homotopy type of the homotopy colimit. 
\end{proof}

Now we  talk about pointed spaces. First of all, if we have a pointed simplicial space
$X_\bullet: \Delta^\op \to \Top_*$ then the geometric realization is again canonically pointed since the geometric realization of the constant simplicial diagram is again a point, so we do not need to modify the geometric realization. The situation is different for the homotopy colimit of a diagram $X: I\to \Top_*$. To see this let us compute the homotopy colimit of the constant diagram $I \to \Top$ with $i \mapsto \pt$ for each $i \in I$. By definition this geometric realization is the geometric realization of the simplicial set $NI$, the nerve of the category $I$. This geometric realization is not homeomorphic to the point (aside from trivial cases). This makes the following definition necessary:

\begin{definition}\label{red_colim}
Let $X_\bullet: I \to \Top_*$ be a diagram of pointed topological spaces. The reduced homotopy colimit is defined as the quotient
$$
\widetilde{\hocolim_I} X := \hocolim_I X / \hocolim_I \pt
$$
where $\hocolim_I X$ refers to the homotopy colimit of the underlying unpointed diagram in the sense of Definition \ref{BK-formula}.
\end{definition} 

Now recall that a based space $X$ is called well-pointed if the inclusion $\pt \to X$ of the basepoint is an h-cofibration. 
\begin{proposition}
For a transformation $X \to X'$ between functors $X, X': I \to \Top_*$ such that every map $X(i) \to X'(i)$ is a weak homotopy equivalence the induced map
$$\widetilde{\hocolim_I} X  \to \widetilde{\hocolim_I} X'$$
is also a weak homotopy equivalence provided that all spaces $X(i)$ and $X'(i)$ are well pointed.  
Moreover the reduced homotopy colimit models the $\infty$-categorical colimit of pointed spaces.
\end{proposition}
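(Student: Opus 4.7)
The plan is to reduce both statements to results already established, using the Gluing Lemma and Proposition C.4 as the main tools.

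\textbf{Step 1: An h-cofibration on the basepoint inclusion.} First I would show that under the well-pointedness hypothesis, the canonical map
\[
\hocolim_I \pt \longrightarrow \hocolim_I X
\]
(induced by the basepoints of the $X(i)$) is a Hurewicz cofibration. At each simplicial level of the Bousfield--Kan construction, this map is the coproduct
\[
\coprod_{i_0\to\cdots\to i_n} \pt \;\longrightarrow\; \coprod_{i_0\to\cdots\to i_n} X(i_n),
\]
which is an h-cofibration because each inclusion $\pt \to X(i_n)$ is. Moreover, both simplicial spaces are proper, since as noted after Definition~\ref{BK-formula} their latching inclusions are inclusions of summands into disjoint unions. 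The skeletal filtration argument used in the proof of Proposition~\ref{proper}, combined with the pushout-product property and the Gluing Lemma, then shows that geometric realization of a levelwise h-cofibration between proper simplicial spaces is again an h-cofibration.

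\textbf{Step 2: Invariance of the reduced homotopy colimit.} With Step~1 in hand, consider the diagram
\[
\xymatrix{
\pt \ar@{=}[d] & \hocolim_I \pt \ar[l] \ar[r] \ar@{=}[d] & \hocolim_I X \ar[d]^\simeq \\
\pt & \hocolim_I \pt \ar[l] \ar[r] & \hocolim_I X'\ .
}
\]
The right vertical map is a weak equivalence by Proposition~\ref{model}, and the right-hand horizontal maps are h-cofibrations by Step~1. By the Gluing Lemma, the induced map on horizontal pushouts
\[
\widetilde{\hocolim_I} X = \pt \cup_{\hocolim_I \pt} \hocolim_I X \;\longrightarrow\; \pt \cup_{\hocolim_I \pt} \hocolim_I X' = \widetilde{\hocolim_I} X'
\]
is a weak equivalence.

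\textbf{Step 3: Comparison with the $\infty$-categorical colimit.} For the second claim, I would use the adjunction $(-)_+ : \calS \rightleftarrows \calS_* : U$, in which $U$ preserves colimits indexed by weakly contractible simplicial sets only up to a pushout: for any diagram $F\colon NI \to \calS_*$, the colimit in $\calS_*$ is naturally equivalent to the pushout in $\calS$
\[
\mathrm{colim}_{\calS_*}(F) \;\simeq\; \pt \sqcup_{\mathrm{colim}_{\calS}(\pt)} \mathrm{colim}_{\calS}(UF).
\]
On the model side, the pushout defining $\widetilde{\hocolim_I} X$ is already a homotopy pushout (by Step~1 the relevant map is an h-cofibration, so this pushout models the $\infty$-categorical pushout), while Proposition~\ref{model} identifies $\hocolim_I UX \simeq \mathrm{colim}_{\calS}(UF)$ and $\hocolim_I \pt \simeq \mathrm{colim}_{\calS}(\pt)$. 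Combining these identifications yields the desired commuting square.

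The main obstacle is Step~1: showing that geometric realization transforms a levelwise h-cofibration between proper simplicial spaces into an h-cofibration, since this mixes h-cofibrations with the simplicial structure and is not a formal consequence of model-categorical machinery. Once this is granted, Steps~2 and~3 are essentially bookkeeping applications of the Gluing Lemma and Proposition~\ref{model}.
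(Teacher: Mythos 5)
Your argument is correct, but it takes a genuinely different route from the paper's. The paper observes that the reduced homotopy colimit is itself homeomorphic to the geometric realization of the simplicial space $[n]\mapsto \bigvee_{i_0\to\cdots\to i_n} X(i_n)$ (realization commutes with the quotient defining $\widetilde{\hocolim}$), checks that this simplicial space is proper when all $X(i)$ are well pointed (its latching maps are pushouts of the original latching maps along basepoint inclusions), and then applies Proposition~\ref{proper} directly; the $\infty$-categorical claim is disposed of "as in Proposition~\ref{model}". You instead stay with the unreduced homotopy colimit, upgrade the map $\hocolim_I \pt\to\hocolim_I X$ to an h-cofibration, and feed the quotient into the Gluing Lemma as a pushout. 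Your route costs you the extra input of Step~1 — which, as you rightly flag, is the only non-formal point — but it buys something the paper's proof leaves implicit: that the quotient defining $\widetilde{\hocolim_I}X$ is a \emph{homotopy} pushout, which is exactly what your Step~3 needs and makes the comparison with the $\infty$-categorical colimit of pointed spaces (via $\colim_{\calS_*}F\simeq \pt\sqcup_{\colim_\calS \pt}\colim_\calS UF$) cleaner than the paper's one-line appeal to Proposition~\ref{model}.

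One caution on Step~1: the statement "realization of a levelwise h-cofibration between proper simplicial spaces is an h-cofibration" is not quite what the skeletal induction gives you in general; the induction needs the \emph{relative} latching maps $A_n\cup_{L_nA}L_nX\to X_n$ to be h-cofibrations (a Reedy-type condition), not merely the levelwise maps and the two absolute latching maps. In your situation this is harmless — both simplicial spaces are coproducts indexed by strings, and the relative latching map is a coproduct of identities (on degenerate strings) and basepoint inclusions (on non-degenerate ones), hence an h-cofibration by well-pointedness — but you should verify that condition explicitly rather than the weaker one you state.
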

\begin{proof}
The reduced homotopy colimit is homeomorphic to the geometric realization of the simplicial space
$$
\widetilde{\hocolim_I} X \cong  \Big| \bigvee_{i_0 \to \ldots \to i_n} X(i_n) \Big|
$$
thus we have to show that this simplicial space is proper. But the latching inclusions are given by pushout against the basepoint inclusions so that they are h-cofibrations if all involved spaces are well-pointed. 
The statement about the $\infty$-categorical colimit follows as in Proposition \ref{model}.
\end{proof}

In practice we will abusively only write $\hocolim_I X$ instead of $\widetilde{\hocolim_I} X$ if it is clear from the context that we are working in a pointed context.

Finally we  discuss homotopy colimits in the category $\Sp^O$ of orthogonal spectra. If we just apply the results about pointed spaces levelwise and use that all colimits are computed levelwise, then we get that for levelwise well-pointed orthogonal spectra the Bousfield-Kan formula computes the correct $\infty$-categorical colimit. But we will see now that more is true: the Bousfield-Kan formula always computes the correct colimit, even for non-well pointed diagrams! This miracle has to do with stability and is in contrast to the case of pointed spaces. 

To prove this fact we have to input a version of the gluing lemma for orthogonal spectra which does not hold for pointed spaces. To this end recall that a map $A \to X$ of orthogonal spectra is called an h-cofibration if it has the homotopy extension property in the category of orthogonal spectra. Equivalently if there is a retract of the map
$$
A \wedge I_+ \cup_{A \wedge S^0} X \wedge S^0 \to X \wedge I_+
$$
where we have used that orthogonal spectra are tensored over pointed spaces. 
Then we have the following version of the Gluing Lemma which is proved in \cite[Theorem 7.4(iii)]{MMSS} for prespectra and in  \cite[Theorem 3.5(iii)]{MandellMay} for orthogonal $G$-spectra (just specialise to $G = \pt$):
\begin{lemma}[Gluing Lemma]\label{glueing2}
If we have a diagram of orthogonal spectra
$$
\xymatrix{
B\ar[d]^\simeq & A\ar[l]\ar[r]\ar[d]^\simeq & C\ar[d]^\simeq \\
B'& A' \ar[l]\ar[r] & C'
}
$$
where the maps $A \to B$ and $A' \to B'$ are h-cofibrations and the vertical maps are stable equivalences. Then also the induced map on pushouts 
$B \cup_A C \to B' \cup_{A'} C'$ is a stable equivalence. $\hfill \Box$
\end{lemma}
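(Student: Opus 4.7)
The plan is to reduce the assertion to a standard five-lemma argument, where the main technical input will be showing that pushouts along h-cofibrations of orthogonal spectra compute homotopy pushouts in the stable sense. Throughout, a map of orthogonal spectra is a stable equivalence if and only if it induces an isomorphism on the stable homotopy groups $\pi_k(X) = \colim_n \pi_{k+n}(X_n)$, so everything reduces to an assertion about these invariants.

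First I would establish the following key fact: for an h-cofibration $i : A \to B$ of orthogonal spectra and any map $A \to C$, the ordinary pushout $B \cup_A C$ sits in a natural Mayer--Vietoris long exact sequence
\[
\cdots \to \pi_n(A) \to \pi_n(B) \oplus \pi_n(C) \to \pi_n(B \cup_A C) \to \pi_{n-1}(A) \to \cdots .
\]
The input here is the observation that the definition of h-cofibration in orthogonal spectra — the retract condition on $A \wedge I_+ \cup_{A \wedge S^0} X \wedge S^0 \to X \wedge I_+$ — implies that each level $A_n \to B_n$ is a based h-cofibration of pointed spaces, so the classical gluing lemma for pointed topological spaces (Proposition~\ref{proper} and its unpointed analog) applies level-wise. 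Consequently, level-wise the quotient $B_n/A_n$ has the correct homotopy type, and passage to the filtered colimit defining $\pi_*$ is exact, producing the desired long exact sequence.

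With this in hand, the two rows of the given square produce two such Mayer--Vietoris long exact sequences, and the vertical stable equivalences on $A, A', B, B', C, C'$ induce a ladder in which five out of every six vertical arrows are isomorphisms on $\pi_*$. The five lemma then gives the isomorphism on $\pi_*(B \cup_A C) \to \pi_*(B' \cup_{A'} C')$, which is the desired stable equivalence.

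The main obstacle is the first step: verifying carefully that the pushout along an h-cofibration has the homotopy type of the homotopy pushout at the level of stable homotopy groups. One cannot directly invoke model-categorical formalism since stable equivalences are not the weak equivalences of any simple model structure in which h-cofibrations are the cofibrations; instead, one has to argue level-wise with the Strøm-style gluing lemma and then check that the connecting maps in the level-wise cofiber sequences are compatible with the structure maps of the orthogonal spectrum, so that the filtered colimit over suspensions is a well-defined exact functor. This is exactly the argument carried out in \cite[Theorem 7.4(iii)]{MMSS} for (pre)spectra and in \cite[Theorem 3.5(iii)]{MandellMay} for the equivariant setting.
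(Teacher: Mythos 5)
The paper does not actually prove this lemma --- it is stated with a reference to \cite[Theorem 7.4(iii)]{MMSS} and \cite[Theorem 3.5(iii)]{MandellMay} --- so the comparison here is between your argument and the standard one in those sources. Your overall strategy (a long exact sequence of stable homotopy groups for pushouts along h-cofibrations, followed by the five lemma) is the right one and is essentially how the cited results are proved. But the justification you give for the key input is wrong, and wrong in precisely the way the paper's own remark after the lemma warns about.

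The gap is this: an h-cofibration of orthogonal spectra is, levelwise, only a \emph{pointed} (based) h-cofibration of pointed spaces --- the retract condition involves $A \wedge I_+$, i.e.\ the reduced cylinder, so one only gets the based homotopy extension property at each level. The gluing lemma for pointed spaces with pointed h-cofibrations is \emph{false}: for instance $\pt \to X$ is always a pointed h-cofibration, but pushing out along it (i.e.\ forming quotients) does not preserve weak equivalences unless $X$ is well-pointed. This is exactly why the paper inserts a remark distinguishing pointed h-cofibrations from maps whose underlying map of spaces is an h-cofibration, and why it stresses that the spectrum-level gluing lemma has no unstable pointed analogue. Consequently your claim that ``level-wise the quotient $B_n/A_n$ has the correct homotopy type'' by the classical space-level gluing lemma does not follow; the homotopy invariance of $X/A$ for an h-cofibration $A\to X$ of orthogonal spectra is a genuinely \emph{stable} phenomenon. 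The correct route to the long exact sequence (as in \cite[Section 7]{MMSS}) is to compare the quotient with the mapping cone and use that the comparison map becomes an equivalence in the colimit defining the stable homotopy groups (homotopy excision in a range that tends to infinity with the suspension coordinate), not a levelwise application of the Str\o m-type gluing lemma. Once the long exact sequence is in place, your five-lemma step is fine.
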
 

\begin{remark}
Note that the analogous notion of h-cofibration for pointed spaces is what is called pointed h-cofibration. For a map of pointed spaces being a pointed h-cofibration is a weaker condition than for the underlying map of spaces being an h-cofibration. 
 For example the map
$\pt \to X$ is always a pointed h-cofibration for every pointed space $X$. But it is only an underlying h-cofibration if the space $X$ is well pointed. The crucial difference between pointed spaces and orthogonal spectra is that the analogous statement to the Gluing Lemma \ref{glueing2} is wrong in pointed spaces. We rather need that the underlying maps are h-cofibrations.
\end{remark}

\begin{definition}
For a diagram $I \to \Sp^O$ the (Bousfield-Kan) homotopy colimit $\hocolim_I X \in \Sp^O$ is defined by levelwise application of the reduced homotopy colimit of Definition \ref{red_colim}. 
\end{definition}

\begin{proposition}\label{orthogonalBK}
For a transformation $X \to X'$ between diagrams of orthogonal spectra $X, X': I \to \Sp^O$ such that every map $X(i) \to X'(i)$ is a stable equivalence the induced map
$${\hocolim_I} X  \to {\hocolim_I} X'$$
is also a stable equivalence. 
Moreover the homotopy colimit models the $\infty$-categorical colimit of orthogonal spectra. \footnote{We thank I. Patchkoria and C. Malkiewich  for making us aware of a little inaccuracy in an earlier version of the proof of this statement.}
\end{proposition}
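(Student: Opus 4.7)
The plan is to adapt the proof strategy of Proposition \ref{proper} from proper simplicial spaces to simplicial orthogonal spectra, exploiting the crucial fact that the Gluing Lemma \ref{glueing2} holds for orthogonal spectra without any well-pointedness hypothesis. The Bousfield--Kan homotopy colimit is by definition the geometric realization of the simplicial orthogonal spectrum
\[
B_\bullet X : [n] \mapsto \bigvee_{i_0 \to \cdots \to i_n} X(i_n)\ ,
\]
and the $n$-th latching inclusion $L_n X \to B_n X$ is an inclusion of a wedge summand. In the category of orthogonal spectra every such wedge-summand inclusion is an h-cofibration, regardless of well-pointedness at the space level; this is the key distinction from the pointed-space situation.

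First I would prove the stable-equivalence preservation. Using the skeletal filtration of the geometric realization, one has pushout squares
\[
\xymatrix{
L_n X \wedge (\Delta^n)_+ \cup_{L_n X \wedge (\partial \Delta^n)_+} B_n X \wedge (\partial \Delta^n)_+  \ar[r]\ar[d] & B_n X \wedge (\Delta^n)_+ \ar[d] \\
|\Sk^{n-1} B_\bullet X| \ar[r] & |\Sk^{n} B_\bullet X|
}
\]
in $\Sp^O$, in which the top horizontal map is an h-cofibration by the pushout-product property applied to the latching inclusion and the boundary inclusion $\partial \Delta^n \hookrightarrow \Delta^n$. A transformation $X \to X'$ of levelwise stable equivalences yields levelwise stable equivalences $B_n X \to B_n X'$ (a wedge of stable equivalences is a stable equivalence, since $\Sp^O$ is a stable model category). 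By induction on $n$ and the Gluing Lemma \ref{glueing2}, the induced map $|\Sk^{n} B_\bullet X| \to |\Sk^{n} B_\bullet X'|$ is a stable equivalence for every $n$. Since all transition maps in the skeletal filtration are h-cofibrations, the sequential colimit $|B_\bullet X| \to |B_\bullet X'|$ is also a stable equivalence.

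For the second part, I would deduce it by abstract nonsense once the first part is known. Since $\hocolim_I: \Fun(I,\Sp^O) \to \Sp^O$ preserves levelwise stable equivalences, it descends to a functor on the Dwyer--Kan localizations, giving a functor $\Fun(NI, \Sp) \to \Sp$. This functor is left adjoint to the constant-diagram functor: the unit and counit are furnished by the canonical maps, and the adjunction identity follows by reduction to the levelwise/pointed-space case where it is classical (Proposition \ref{model} and its pointed analogue, invoked after forgetting through $\Sp^O \to \mathrm{PreSp}$, exactly as in the proof of Proposition \ref{commutebzsp}; the level-by-level well-pointedness issues one would normally have to worry about are irrelevant here because we have already verified the required stable-equivalence invariance at the orthogonal-spectrum level). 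By uniqueness of left adjoints, it agrees with the $\infty$-categorical colimit.

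The main obstacle is purely conceptual and concerns the first part: one must recognize that the analog of properness in $\Sp^O$ is automatic for the Bousfield--Kan simplicial object (no well-pointedness assumption is needed), because the Gluing Lemma \ref{glueing2} holds in $\Sp^O$ unconditionally. Once this is isolated, the argument is a direct adaptation of the proof of Proposition \ref{proper}, and the comparison with the $\infty$-categorical colimit is then a formal consequence.
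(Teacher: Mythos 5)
Your proof is correct and follows essentially the same route as the paper: both arguments observe that the latching maps of the Bousfield--Kan simplicial object are wedge-summand inclusions and hence h-cofibrations of orthogonal spectra (no well-pointedness needed), apply the pushout--product property against $|\partial\Delta^n|_+\to|\Delta^n|_+$, run the skeletal induction via the Gluing Lemma~\ref{glueing2}, and pass to the sequential colimit along h-cofibrations, with the identification with the $\infty$-categorical colimit then following as in Proposition~\ref{model}. No substantive differences.
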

\begin{proof}
This follows formally exactly as in the case of spaces (Proposition \ref{proper}) from the gluing lemma using the fact that the inclusions $X \to X \vee Y$ of wedge summands are h-cofibrations. 
Thus the relevant simplicial object is Reedy h-cofibrant. 
We now use the fact that there is a pushout-product axiom for h-cofibrations of orthogonal spectra when smashing with h-cofibrations of   spaces which is due to Schw\"anzel-Vogt \cite[Corollary 2.9]{MR1967263}, see also \cite[Proposition 4.3]{malkiewich2014homotopy} and the references there. We apply this to the smash with the maps $|\partial \Delta^n|_+ \to |\Delta^n|_+$, then the statement follows as in Proposition \ref{proper} from the inductive construction of the geometric realization and finally the fact that the filtered colimit along h-cofibrations again is the homotopy colimit.
\end{proof}

\begin{lemma}
The homotopy colimit functor $\hocolim: \Fun(I,\Sp^O)\to \Sp^O$ commutes with geometric realizations, i.e. for a diagram $X: \Delta^{\op} \to \Fun(I,\Sp^O)$ there is a canonical homeomorphism
\begin{align*}
|\hocolim_I (X_i)| \cong \hocolim_I |X_i| \ .
\end{align*}
\end{lemma}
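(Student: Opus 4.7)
The plan is to identify both sides of the claimed homeomorphism with the realization of a single bisimplicial pointed space at each orthogonal level, and then invoke Fubini for bisimplicial realizations together with the fact that geometric realization commutes with wedge sums.

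First I would unwind the definitions at the $m$-th space level. Since geometric realization of simplicial orthogonal spectra is computed levelwise, and since the Bousfield-Kan homotopy colimit of orthogonal spectra is also computed levelwise in terms of the reduced Bousfield-Kan homotopy colimit of pointed spaces (Definition \ref{red_colim}), the $m$-th space of $|\hocolim_I X_k|$ is the realization of the simplicial pointed space
\[
k \;\mapsto\; \Bigl|\, n \mapsto \bigvee_{i_0 \to \cdots \to i_n} X_k(i_n)_m \,\Bigr|,
\]
while the $m$-th space of $\hocolim_I |X_\bullet(i)|$ is the realization of the simplicial pointed space
\[
n \;\mapsto\; \bigvee_{i_0 \to \cdots \to i_n} \Bigl|\, k \mapsto X_{k}(i_n)_m \,\Bigr|.
\]

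Next I would commute geometric realization with the wedge sum on the second expression. Wedge sums are coproducts of pointed spaces and geometric realization is a colimit, so colimits commute, giving
\[
\bigvee_{i_0 \to \cdots \to i_n} \Bigl|\, k \mapsto X_{k}(i_n)_m \,\Bigr| \;\cong\; \Bigl|\, k \mapsto \bigvee_{i_0 \to \cdots \to i_n} X_{k}(i_n)_m \,\Bigr|.
\]
This reduces both sides to the two iterated realizations of the bisimplicial pointed space
\[
T_{k,n}^{(m)} := \bigvee_{i_0 \to \cdots \to i_n} X_k(i_n)_m.
\]

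Finally, I would apply Fubini for bisimplicial realizations: both iterated realizations of a bisimplicial pointed space are canonically homeomorphic to the diagonal realization, and hence to each other (this is a standard consequence of the coend description of the geometric realization, which reduces it to the fact that colimits commute). The identification is manifestly natural in $m$ and compatible with smashing with $S^{1}$, with the $O(m)$-actions (which only act on the sphere factor, not on the indices), and with permutation of smash factors, so it assembles into the desired homeomorphism of orthogonal spectra. The only mildly technical point, though routine, is to check this compatibility with the structure maps of the orthogonal spectra involved; this amounts to chasing the identifications above levelwise, and there is no obstacle beyond bookkeeping.
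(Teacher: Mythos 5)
Your proof is correct and follows essentially the same route as the paper: both arguments reduce the claim, via the fact that geometric realization commutes with the colimits appearing in the Bousfield--Kan formula (wedge sums in your levelwise unwinding), to the Fubini statement that the two iterated realizations of a bisimplicial space agree. Your version is simply more explicit about the levelwise bookkeeping and the compatibility with the orthogonal spectrum structure maps.
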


\begin{proof} Geometric realization commutes with colimits since it is a coend and the smash product commutes with colimits in both variables seperately. As a result the statement can by definition of homotopy colimits be reduced to showing that geometric realization commutes with another geometric realization. But this means that taking vertical and horizontal geometric realization of a bisimplicial space is the same as the other way around. The latter fact is standard and can be seen by a formal reduction to simplices where it is obvious. 
\end{proof}

Now we consider the category of orthogonal $G$-spectra for $G$ a finite group $G$ (or a compact Lie group). By continuity of the functor $\hocolim$ we get that for a diagram $I \to G\Sp^O$ the homotopy colimit $\hocolim_I X$ gets an induced $G$-action, thus can be considered as a orthogonal $G$-spectrum itself. The following lemma will be crucial:

\begin{lemma}\label{fixedcommutes}
The fixed points functor $-^G: G\Sp^O \to \Sp^O$ commutes with geometric realizations and homotopy colimits, i.e. for diagrams $Y: \Delta^{\op} \to G\Sp^O$ and  $X: I \to G\Sp^O$ the canonical maps
\begin{align*}
|Y_\bullet^G| \to | Y_\bullet|^G \qquad \text{and} \qquad 
\hocolim_I (X_i^G) \to (\hocolim_I X_i)^G
\end{align*}
are homeomorphisms. 
The same is true if we replace $\Sp^O$ by spaces or pointed spaces. 
\end{lemma}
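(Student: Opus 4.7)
The plan is to reduce everything to a handful of elementary preservation properties of the set-theoretic fixed point functor. Recall that for a finite (or compact Lie) group $G$ acting on compactly generated weak Hausdorff spaces, the functor $(-)^G$ is a right adjoint to the trivial-action functor and hence preserves all small limits. In addition, it preserves: (i) products with a trivially-acted space $K$, since $(X\times K)^G = X^G\times K$; (ii) arbitrary coproducts with componentwise action, since $(\coprod_i X_i)^G=\coprod_i X_i^G$; and (iii) pushouts in which one leg is a closed $G$-equivariant inclusion, which holds because in compactly generated weak Hausdorff spaces such pushouts are computed set-theoretically and a fixed point of $X\cup_A B$ is either in the image of $B^G$ or lies in $X^G\setminus A^G$. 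The same three properties hold in pointed spaces, and by working levelwise (using that $(-)^G$ commutes with evaluation at a representation $V$) they also hold for orthogonal $G$-spectra.

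With these tools the first assertion follows by induction along the skeletal filtration introduced in the proof of Proposition~\ref{proper}. Concretely, $|\Sk^n Y_\bullet|$ is obtained from $|\Sk^{n-1} Y_\bullet|$ by a pushout of the form
\[
|\Sk^{n-1}Y_\bullet|\longleftarrow L_nY \wedge |\Delta^n|_+ \cup_{L_nY \wedge |\partial\Delta^n|_+} Y_n \wedge |\partial\Delta^n|_+ \longrightarrow Y_n\wedge |\Delta^n|_+\ ,
\]
where the right-hand arrow is a closed equivariant inclusion (the pushout-product of a closed inclusion with the basepoint inclusion of $|\partial\Delta^n|_+ \to |\Delta^n|_+$). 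Since $G$ acts trivially on each simplex and its boundary, property (i) gives $(Y_n\wedge|\Delta^n|_+)^G = Y_n^G\wedge|\Delta^n|_+$ and similarly for the latching pieces; property (iii) then identifies the fixed points of the pushout with the corresponding pushout for $Y_\bullet^G$. Passing to the colimit along the closed inclusions $|\Sk^{n-1}Y_\bullet|\hookrightarrow |\Sk^n Y_\bullet|$ and invoking preservation of such filtered colimits gives the homeomorphism $|Y_\bullet^G|\xrightarrow{\cong}|Y_\bullet|^G$. The pointed and spectral versions follow by the same argument, using the orthogonal-spectrum pushout-product axiom cited in the proof of Proposition~\ref{orthogonalBK} to ensure the latching maps remain closed $h$-cofibrations.

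For the second assertion, the Bousfield--Kan formula of Definition~\ref{BK-formula} (respectively Definition~\ref{red_colim}) realizes the homotopy colimit as the geometric realization of the simplicial object whose $n$-th term is the coproduct (or wedge, in the pointed/spectral case) over chains $i_0\to\cdots\to i_n$ in $I$ of copies of $X(i_n)$. Since $G$ acts trivially on the indexing set of this coproduct and componentwise on the summands, property (ii) gives
\[
\Bigl(\coprod_{i_0\to\cdots\to i_n} X(i_n)\Bigr)^G = \coprod_{i_0\to\cdots\to i_n} X(i_n)^G\ ,
\]
and combining this with the simplicial case already proved yields the desired homeomorphism $\hocolim_I(X_i^G)\xrightarrow{\cong}(\hocolim_I X_i)^G$.

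The only subtle point in the argument is property (iii): one must check that passing to fixed points genuinely commutes with pushouts along closed $G$-embeddings in the category of compactly generated weak Hausdorff spaces. This is standard (the pushout is formed by gluing on underlying sets and retopologizing, both operations compatible with taking fixed points of a finite group), but it is the one place where the convenient category of spaces is used in an essential way; the argument would fail in arbitrary topological spaces.
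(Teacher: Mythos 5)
Your argument for the homotopy-colimit half coincides with the paper's: both reduce via the Bousfield--Kan formula to the fact that coproducts (resp.\ wedges) with componentwise action commute with $(-)^G$, plus the realization case. For the realization half, however, you take a genuinely different and more laborious route, and as written it has a gap. The paper's proof is a one-liner given Proposition~\ref{prop:geomfinlimit}: the fixed-point functor is a finite limit (an equalizer against finitely many group elements, or finitely many topological generators in the compact Lie case), and geometric realization commutes with finite limits; this applies to an arbitrary simplicial object, with no cofibrancy hypothesis, because Proposition~\ref{prop:geomfinlimit} is itself proved by a set-level coend computation plus a closed-embedding argument.

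Your skeletal-filtration induction, by contrast, needs the top horizontal map in each attaching pushout --- the pushout-product of the latching map $L_nY\to Y_n$ with $|\partial\Delta^n|\to|\Delta^n|$ --- to be a closed $G$-equivariant inclusion so that your property (iii) applies. Lemma~\ref{fixedcommutes} carries no properness hypothesis, yet your parenthetical appeal to ``the orthogonal-spectrum pushout-product axiom cited in the proof of Proposition~\ref{orthogonalBK} to ensure the latching maps remain closed $h$-cofibrations'' is exactly the properness condition, which you are not entitled to assume. The argument can be repaired: for any simplicial object in compactly generated weak Hausdorff spaces the degeneracies are split monomorphisms, hence closed embeddings, and one can check that the colimit topology on $L_nY$ agrees with the subspace topology on the finite closed union $\bigcup_i s_i(Y_{n-1})\subseteq Y_n$, so the latching maps are always closed inclusions --- but this verification is missing, and it is precisely the point-set fiddliness that the finite-limit argument avoids. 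Relatedly, your property (iii) genuinely requires the injectivity of $A\to X$: for a non-injective equivariant map the pushout can create an invariant class containing no fixed point (glue two free orbits of $B$ onto a single free orbit's worth of points of $X$ in two different ways), so the ``standard'' status of (iii) rests on the closed-inclusion hypothesis you must first establish for the latching maps. If you want to keep your route, supply that lemma; otherwise the shorter path is to observe that $(-)^G$ is a finite limit and invoke Proposition~\ref{prop:geomfinlimit} directly.
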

\begin{proof}
We first show that geometric realization commutes with taking fixed points in spaces. The statements about spectra (and pointed spaces) then follows since it is just applied levelwise.
Fixed points for a finite group are a finite limit. Fixed points for a compact Lie group $G$ can be computed by taking a finite set of topological generators of $G$ and then taking fixed points for those. This way we see that it is also essentially a finite limit. Now the result follows from the observation that geometric realization as a functor from simplicial objects in $\Top$ to $\Top$ commutes with finite limits, cf.~Proposition~\ref{prop:geomfinlimit}.

Finally the homotopy colimit is the geometric realization $|Y_\bullet|$ where 
$Y_\bullet$ is the simplicial object in $\Sp^O$ with
$$
Y_n = \bigvee_{i_0 \to ...\to i_n} X_{i_n}.
$$
Thus it suffices to show that wedge sums commute with taking fixed points if the action is basepoint preserving. But this is obvious. 
\end{proof}

We note that the fixed points functor itself needs to be derived, thus the above lemma a priori does not have any homotopical meaning (of course it can be derived but we shall not need this here). Also we note that taking fixed points does not commute with general colimits of spaces.

\begin{proposition}
For a transformation $X \to X'$ between diagrams of orthogonal $G$-spectra $X, X': I \to G\Sp^O$ such that every map $X(i) \to X'(i)$ is an equivariant equivalence (i.e.~a stable equivalence on all geometric fixed points, see Definition \ref{def:genuineequiv}) the induced map
$${\hocolim_I} X  \to {\hocolim_I} X'$$
is also an equivariant equivalence of $G$-spectra.
Moreover the homotopy colimit models the $\infty$-categorical colimit of orthogonal $G$-spectra.
\end{proposition}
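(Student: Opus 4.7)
The plan is to reduce both assertions to the non-equivariant statement (Proposition~\ref{orthogonalBK}) via the geometric fixed point functors $\Phi^H_{\mathcal U}\colon G\Sp^O \to \Sp^O$ for varying subgroups $H \subseteq G$. Recall that by Definition~\ref{def:genuineequiv} a map of orthogonal $G$-spectra is an equivariant equivalence precisely when $\Phi^H_{\mathcal U}$ of it is a stable equivalence of orthogonal spectra for every $H$, so everything will follow once we exhibit a natural levelwise homeomorphism
\[
\Phi^H_{\mathcal U}\bigl(\hocolim_I X_i\bigr) \;\cong\; \hocolim_I \Phi^H_{\mathcal U}(X_i).
\]

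To construct this identification, I would unwind the definition $\Phi^H_{\mathcal U}(X)_n = \hocolim_{V\in\mathcal U,\,V^H=0} X(\R^n\oplus V)^H$ and use three commutations, applied in order: (i) all colimits in $\Sp^O$ are computed levelwise on spaces, so evaluating the spectrum $\hocolim_I X_i$ at $\R^n\oplus V$ gives $\hocolim_I X_i(\R^n\oplus V)$; (ii) Lemma~\ref{fixedcommutes} lets us pull the $H$-fixed points past the homotopy colimit indexed by $I$; (iii) the two Bousfield--Kan homotopy colimits, one indexed by $I$ and the other by the poset of $V\in\mathcal U$ with $V^H=0$, commute with each other, since each is a geometric realization and geometric realizations of bisimplicial pointed spaces may be computed in either order. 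All three steps are natural and produce an actual homeomorphism of orthogonal spectra, not merely a stable equivalence.

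Granting this, the first assertion is essentially immediate: if $X_i \to X_i'$ is a levelwise equivariant equivalence then $\Phi^H_{\mathcal U}(X_i)\to\Phi^H_{\mathcal U}(X_i')$ is a levelwise stable equivalence in $\Sp^O$, and Proposition~\ref{orthogonalBK} then gives a stable equivalence after applying $\hocolim_I$. Via the identification above this is $\Phi^H_{\mathcal U}$ applied to $\hocolim_I X \to \hocolim_I X'$; since $H$ was arbitrary, $\hocolim_I X\to\hocolim_I X'$ is an equivariant equivalence. For the second assertion, the first part lets the Bousfield--Kan construction descend to a functor $\Fun(NI, G\Sp) \to G\Sp$, equipped by the universal property of the $\infty$-categorical colimit with a natural comparison map to $\colim_I$. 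To see this comparison is an equivalence I would again detect on geometric fixed points: the functor $\Phi^H\colon G\Sp\to\Sp$ preserves $\infty$-categorical colimits (for $H\trianglelefteq G$ this follows from Proposition~\ref{prop:rightadjointgeometric}, writing $\Phi^H$ as a smashing localization followed by an equivalence; for general $H$ one factors through restriction $G\Sp\to H\Sp$, which preserves colimits as it admits both adjoints). Combining $\Phi^H(\colim_I X_i)\simeq \colim_I\Phi^H X_i \simeq \hocolim_I \Phi^H X_i$ from Proposition~\ref{orthogonalBK} with the natural homeomorphism above yields the result.

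The main obstacle I expect is the point-set commutation in the first paragraph: Lemma~\ref{fixedcommutes} is stated only for fixed points under a single group action, but in the formula for $\Phi^H_{\mathcal U}$ the variable $V$ carries its own $H$-action, so one has to check carefully that the diagonal $H$ on the iterated homotopy colimit still commutes with pulling the $H$-fixed points outside — this is fine because the homotopy colimit over $V$ is taken in a $BH$-equivariant way (more precisely in $H$-spaces). One must also be mindful that for step (iii) the Bousfield--Kan homotopy colimit over $\{V \in \mathcal U\mid V^H = 0\}$ is a sequential homotopy colimit of inclusions (hence of $h$-cofibrations), which is the case where the iterated Fubini-type commutation is most transparent.
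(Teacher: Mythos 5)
Your proof is correct and follows the same basic strategy as the paper: detect equivariant equivalences on geometric fixed points, commute geometric fixed points with the Bousfield--Kan homotopy colimit, and invoke the non-equivariant Proposition~\ref{orthogonalBK}. The one substantive difference is your choice of point-set model. Definition~\ref{def:genuineequiv}~(iii) defines equivariant equivalence via the index-shift model $(\Phi^H X)_n = X(\R^n\otimes\rho_H)^H$, and with this model the commutation $\Phi^H(\hocolim_I X_i)\cong \hocolim_I \Phi^H(X_i)$ is an immediate consequence of Lemma~\ref{fixedcommutes} alone, since $\Phi^H$ is literally ``fixed points after an index shift'' and both reindexing and $H$-fixed points commute with the levelwise homotopy colimit. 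By instead working with $\Phi^H_{\mathcal U}$ you import an extra homotopy colimit over the universe, which forces the Fubini-type interchange of two geometric realizations and the care about the diagonal $H$-action that you correctly flag; this is all fine (and the two models are naturally stably equivalent by the zig-zag lemma in Section~\ref{sec:genuine}, so detection via either is legitimate), but it is avoidable work. One small caveat: $\Phi^H_{\mathcal U}$ is set up in the paper only for normal $H$, so for general subgroups you would first restrict to $H\Sp^O$, whereas the index-shift $\Phi^H$ is defined for all subgroups directly. Your treatment of the second assertion via colimit-preservation of $\Phi^H$ on the $\infty$-categorical level is also sound, though once weak equivalences are known to be preserved the statement follows by the same derived-functor argument as in Proposition~\ref{model}.
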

\begin{proof}
By definition we have to check that all maps 
$$
\Phi^H({\hocolim_I} X_i) \to \Phi^H({\hocolim_I} X'_i) 
$$
for $H \subseteq G$ are stable equivalences. Since geometric fixed points is by definition given by taking fixed points after an index shift it follows from Lemma \ref{fixedcommutes} that it commutes with homotopy colimits, so that this above map is isomorphic to the map
$$
\hocolim_{i \in I}\Phi^H(X_i) \to \hocolim_{i \in I} \Phi^H(X'_i)\ . 
$$
This map is a stable equivalence by Proposition \ref{orthogonalBK} since by assumption all the maps $\Phi^H(X_i)  \to \Phi^H(X'_i)$ are stable equivalences. 
\end{proof}

Note that the last statement can also be proven more directly from the gluing lemma for $G$-orthogonal spectra as proven in \cite[Theorem 3.5(iii)]{MandellMay} but we need Lemma \ref{fixedcommutes} independently.

\bibliographystyle{amsalphaabrvd}
\bibliography{CyclotomicSpectra}

\end{document}